\newcommand{\mycomment}[1]{}
\title{Homotopical operadic calculus in positive characteristic}
\author{Brice Le Grignou \quad Victor Roca i Lucio}
\address{Brice Le Grignou,}
\email{\href{mailto:bricelegrignou@gmail.com}{bricelegrignou@gmail.com}}
\address{Victor Roca i Lucio, Ecole Polytechnique Fédérale de Lausanne, EPFL,
CH-1015 Lausanne, Switzerland}
\email{\href{mailto:victor.rocalucio@epfl.ch}{victor.rocalucio@epfl.ch}}
\date{\today}
\subjclass[2020]{18M70,18N40,18N55,18N60}
\keywords{Homotopical operadic calculus, algebraic operads, Koszul duality, bar-cobar adjunctions, positive characteristic.}
\newtheorem{theoremintro}{Theorem}
\newtheorem*{definitionintro}{Definition}
\begin{document}

\theoremstyle{plain}
\newtheorem{theorem}{Theorem}
\newtheorem*{theorem*}{Theorem}
\newtheorem{lemma}{Lemma}
\newtheorem{proposition}{Proposition}
\newtheorem{assumption}{Assumption}
\newtheorem{corollary}{Corollary}

\theoremstyle{definition}
\newtheorem{definition}{Definition}
\newtheorem{hypothesis}{Hypothesis}

\theoremstyle{remark}
\newtheorem{remark}{\sc Remark}
\newtheorem{example}{\sc Example}
\newtheorem*{notation}{\sc Notation}


\newcommand{\qi}{\xrightarrow{ \,\smash{\raisebox{-0.65ex}{\ensuremath{\scriptstyle\sim}}}\,}}
\newcommand{\lqi}{\xleftarrow{ \,\smash{\raisebox{-0.65ex}{\ensuremath{\scriptstyle\sim}}}\,}}

\newcommand{\draftnote}[1]{\marginpar{\raggedright\textsf{\hspace{0pt} \tiny #1}}}
\newcommand{\ac}{{\scriptstyle \text{\rm !`}}}

\newcommand{\Ch}{\categ{Ch}}
\newcommand{\Catesmall}{\categ{Cat}_{\categ E, \mathrm{small}}}
\newcommand{\Operade}{\Operad_{\categ E}}
\newcommand{\Operadecprime}{\Operad_{\categ E}}
\newcommand{\Operadesmall}{\Operad_{\categ E, \mathrm{small}}}
\newcommand{\eII}{\mathcal{I}}
\newcommand{\ecateg}[1]{\mathcal{#1}}
\newcommand{\cmonlax}{\categ{CMon}_\lax}
\newcommand{\cmonoplax}{\categ{CMon}_\oplax}
\newcommand{\cmonstrong}{\categ{CMon}_\strong}
\newcommand{\cmonstrict}{\categ{CMon}_\strict}
\newcommand{\cat}{\mathrm{cat}}
\newcommand{\lax}{\mathrm{lax}}
\newcommand{\oplax}{\mathrm{oplax}}
\newcommand{\strong}{\mathrm{strong}}
\newcommand{\strict}{\mathrm{strict}}
\newcommand{\pl}{\mathrm{pl}}
\newcommand{\qp}{\mathrm{qp}}
\newcommand{\gr}{\mathrm{gr}}

\newcommand{\Cats}{\mathsf{Cats}}
\newcommand{\Functors}{\mathsf{Functors}}

\newcommand{\Ob}{\mathrm{Ob}}
\newcommand{\tr}{\mathrm{tr}}
\newcommand{\catch}{\mathsf{Ch}}
\newcommand{\categ}[1]{\mathsf{#1}}
\newcommand{\set}[1]{\mathrm{#1}}
\newcommand{\catoperad}[1]{\mathsf{#1}}
\newcommand{\operad}[1]{\mathcal{#1}}
\newcommand{\algebra}[1]{\mathrm{#1}}
\newcommand{\coalgebra}[1]{\mathrm{#1}}
\newcommand{\cooperad}[1]{\mathcal{#1}}
\newcommand{\ocooperad}[1]{\overline{\mathcal{#1}}}
\newcommand{\catofmod}[1]{{#1}\mathrm{-}\mathsf{mod}}
\newcommand{\catofcog}[1]{#1\mathrm{-}\mathsf{cog}}
\newcommand{\catcog}[1]{\Cog\left(#1\right)}

\newcommand{\catdgmod}[1]{\categ{dg}~#1\text{-}\categ{mod}}
\newcommand{\catpdgmod}[1]{\categ{pdg}~#1\text{-}\categ{mod}}
\newcommand{\catgrmod}[1]{\categ{gr}~#1\text{-}\categ{mod}}

\newcommand{\catdgalg}[1]{\categ{dg}~#1\text{-}\categ{alg}}
\newcommand{\catpdgalg}[1]{\categ{pdg}~#1\text{-}\categ{alg}}
\newcommand{\catgralg}[1]{\categ{gr}~#1\text{-}\categ{alg}}
\newcommand{\catcurvalg}[1]{\categ{curv}~#1\text{-}\categ{alg}}

\newcommand{\catdgcompalg}[1]{\categ{dg}~#1\text{-}\categ{alg}^{\mathsf{qp}\text{-}\mathsf{comp}}}
\newcommand{\catpdgcompalg}[1]{\categ{pdg}~#1\text{-}\categ{alg}^{\mathsf{qp}\text{-}\mathsf{comp}}}
\newcommand{\catgrcompalg}[1]{\categ{gr}~#1\text{-}\categ{alg}^{\mathsf{qp}\text{-}\mathsf{comp}}}
\newcommand{\catcurvcompalg}[1]{\categ{curv}~#1\text{-}\categ{alg}^{\mathsf{qp}\text{-}\mathsf{comp}}}

\newcommand{\catdgcog}[1]{\categ{dg}~#1\text{-}\categ{cog}}
\newcommand{\catpdgcog}[1]{\categ{pdg}~#1\text{-}\categ{cog}}
\newcommand{\catgrcog}[1]{\categ{gr}~#1\text{-}\categ{cog}}
\newcommand{\catcurvcog}[1]{\categ{curv}~#1\text{-}\categ{cog}}

\newcommand{\dgoperads}{\categ{dg}~\categ{Operads}}
\newcommand{\pdgoperads}{\categ{pdg}~\categ{Operads}}
\newcommand{\groperads}{\categ{gr}~\categ{Operads}}
\newcommand{\curvcooperads}{\categ{curv}~\categ{Cooperads}}
\newcommand{\grcooperads}{\categ{gr}~\categ{Cooperads}}

\newcommand{\pdgcooperads}{\categ{pdg}~\categ{Cooperads}}
\newcommand{\dgcooperads}{\categ{dg}~\categ{Cooperads}}
\newcommand{\conil}{\mathsf{conil}}

\newcommand{\catalg}[1]{\Alg\left(#1\right)}

\newcommand{\catofcolcomod}[1]{\mathsf{Col}\mathrm{-}{#1}\mathrm{-}\mathsf{comod}}
\newcommand{\catofcoalgebra}[1]{{#1}\mathrm{-}\mathsf{cog}}
\newcommand{\catofalg}[1]{\operad{#1}\mathrm{-}\mathsf{alg}}
\newcommand{\catofalgebra}[1]{{#1}\mathrm{-}\mathsf{alg}}
\newcommand{\mbs}{\mathsf{S}}
\newcommand{\catocol}[1]{\mathsf{O}_{\set{#1}}}
\newcommand{\catoftrees}{\mathsf{Trees}}
\newcommand{\cattcol}[1]{\catoftrees_{\set{#1}}}
\newcommand{\catcorcol}[1]{\mathsf{Corol}_{\set{#1}}}
\newcommand{\Einfty}{\mathcal{E}_{\infty}}
\newcommand{\nuEinfty}{\mathcal{nuE}_{\infty}}
\newcommand{\Fun}[3]{\mathrm{Fun}^{#1}\left(#2,#3\right)}
\newcommand{\III}{\operad{I}}
\newcommand{\treeoperad}{\mathbb{T}}
\newcommand{\treemodule}{\mathbb{T}}
\newcommand{\core}{\mathrm{Core}}
\newcommand{\forget}{\mathrm{U}}
\newcommand{\treemonad}{\mathbb{O}}
\newcommand{\cogcomonad}[1]{\mathbb{L}^{#1}}
\newcommand{\cofreecog}[1]{\mathrm{L}^{#1}}

\newcommand{\barfunctor}[1]{\mathrm{B}_{#1}}
\newcommand{\baradjoint}[1]{\mathrm{B}^\dag_{#1}}
\newcommand{\cobarfunctor}[1]{\mathrm{C}_{#1}}
\newcommand{\cobaradjoint}[1]{\mathrm{C}^\dag_{#1}}
\newcommand{\Operad}{\mathsf{Operad}}
\newcommand{\coOperad}{\mathsf{coOperad}}

\newcommand{\Aut}[1]{\mathrm{Aut}(#1)}

\newcommand{\verte}[1]{\mathrm{vert}(#1)}
\newcommand{\edge}[1]{\mathrm{edge}(#1)}
\newcommand{\leaves}[1]{\mathrm{leaves}(#1)}
\newcommand{\inner}[1]{\mathrm{inner}(#1)}
\newcommand{\inp}[1]{\mathrm{input}(#1)}

\newcommand{\field}{\mathbb{K}}
\newcommand{\mbk}{\mathbb{K}}
\newcommand{\mbn}{\mathbb{N}}

\newcommand{\id}{\mathrm{Id}}
\newcommand{\ii}{\mathrm{id}}
\newcommand{\unit}{\mathds{1}}

\newcommand{\Lin}{Lin}

\newcommand{\BijC}{\mathsf{Bij}_{C}}

\newcommand{\kk}{\Bbbk}
\newcommand{\PP}{\mathcal{P}}
\newcommand{\C}{\mathcal{C}}
\newcommand{\Sy}{\mathbb{S}}
\newcommand{\Tree}{\mathsf{Tree}}
\newcommand{\treemod}{\mathbb{T}}
\newcommand{\Dend}{\Omega}
\newcommand{\aDend}{\Omega^{\mathsf{act}}}
\newcommand{\cDend}{\Omega^{\mathsf{core}}}
\newcommand{\cDendpart}{\cDend_{\mathsf{part}}}

\newcommand{\build}{\mathrm{Build}}
\newcommand{\col}{\mathrm{col}}

\newcommand{\HOM}{\mathrm{HOM}}
\newcommand{\Hom}[3]{\mathrm{hom}_{#1}\left(#2 , #3 \right)}
\newcommand{\ov}{\overline}
\newcommand{\otimeshadamard}{\otimes_{\mathbb{H}}}

\newcommand{\I}{\mathcal{I}}
\newcommand{\Aa}{\mathcal{A}}
\newcommand{\BB}{\mathcal{B}}
\newcommand{\CC}{\mathcal{C}}
\newcommand{\DD}{\mathcal{D}}
\newcommand{\EE}{\mathcal{E}}
\newcommand{\FF}{\mathcal{F}}
\newcommand{\II}{\mathbb{1}}
\newcommand{\RR}{\mathcal{R}}
\newcommand{\UU}{\mathcal{U}}
\newcommand{\VV}{\mathcal{V}}
\newcommand{\WW}{\mathcal{W}}
\newcommand{\AAA}{\mathscr{A}}
\newcommand{\BBB}{\mathscr{B}}
\newcommand{\CCC}{\mathscr{C}}
\newcommand{\DDD}{\mathscr{D}}
\newcommand{\EEE}{\mathscr{E}}
\newcommand{\FFF}{\mathscr{F}}

\newcommand{\PPP}{\mathscr{P}}
\newcommand{\QQQ}{\mathscr{Q}}

\newcommand{\QQ}{\mathcal{Q}}

\newcommand{\KKK}{\mathscr{K}}
\newcommand{\KK}{\mathcal{K}}

\newcommand{\ra}{\rightarrow}

\newcommand{\Ai}{\mathcal{A}_{\infty}}
\newcommand{\uAi}{u\mathcal{A}_{\infty}}
\newcommand{\uEinfty}{u\mathcal{E}_{\infty}}
\newcommand{\uAW}{u\mathcal{AW}}

\newcommand{\uAlg}{\mathsf{Alg}}
\newcommand{\nuAlg}{\mathsf{nuAlg}}
\newcommand{\cAlg}{\mathsf{cAlg}}

\newcommand{\ucAlg}{\mathsf{ucAlg}}
\newcommand{\Cog}{\mathsf{Cog}}
\newcommand{\nuCog}{\mathsf{nuCog}}
\newcommand{\uAWcog}{u\mathcal{AW}-\mathsf{cog}}

\newcommand{\uCog}{\mathsf{uCog}}
\newcommand{\cCog}{\mathsf{cCog}}
\newcommand{\ucCog}{\mathsf{ucCog}}
\newcommand{\cNilCog}{\mathsf{cNilCog}}
\newcommand{\ucNilCog}{\mathsf{ucNilCog}}
\newcommand{\NilCog}{\mathsf{NilCog}}

\newcommand{\Cocom}{\mathsf{Cocom}}
\newcommand{\uCocom}{\mathsf{uCocom}}
\newcommand{\NilCocom}{\mathsf{NilCocom}}
\newcommand{\uNilCocom}{\mathsf{uNilCocom}}
\newcommand{\Liealg}{\mathsf{Lie}-\mathsf{alg}}
\newcommand{\cLiealg}{\mathsf{cLie}-\mathsf{alg}}
\newcommand{\Alg}{\mathsf{Alg}}
\newcommand{\Linfty}{\mathcal{L}_{\infty}}
\newcommand{\CMC}{\mathfrak{CMC}}
\newcommand{\Tfree}{\mathbb{T}}

\newcommand{\Hinich}{\mathsf{Hinich} -\mathsf{cog}}

\newcommand{\Ccomod}{\mathscr C -\mathsf{comod}}
\newcommand{\Pmod}{\mathscr P -\mathsf{mod}}

\newcommand{\cCoop}{\mathsf{cCoop}}

\newcommand{\Set}{\mathsf{Set}}
\newcommand{\sSet}{\mathsf{sSet}}
\newcommand{\dgMod}{\mathsf{dgMod}}
\newcommand{\gMod}{\mathsf{gMod}}
\newcommand{\catOrd}{\mathsf{Ord}}
\newcommand{\catBij}{\mathsf{Bij}}
\newcommand{\catSmod}{\mbs\mathsf{mod}}
\newcommand{\EEtw}{\mathcal{E}\text{-}\mathsf{Tw}}
\newcommand{\OpBim}{\mathsf{Op}\text{-}\mathsf{Bim}}

\newcommand{\Palg}{\mathcal{P}-\mathsf{alg}}
\newcommand{\Qalg}{\mathcal{Q}-\mathsf{alg}}
\newcommand{\Pcog}{\mathcal{P}-\mathsf{cog}}
\newcommand{\Qcog}{\mathcal{Q}-\mathsf{cog}}
\newcommand{\Ccog}{\mathcal{C}-\mathsf{cog}}
\newcommand{\Dcog}{\mathcal{D}-\mathsf{cog}}
\newcommand{\uCoCog}{\mathsf{uCoCog}}

\newcommand{\Artinalg}{\mathsf{Artin}-\mathsf{alg}}

\newcommand{\colim}[1]{\underset{#1}{\mathrm{colim}}}
\newcommand{\Map}{\mathrm{Map}}
\newcommand{\Def}{\mathrm{Def}}
\newcommand{\Bij}{\mathrm{Bij}}
\newcommand{\op}{\mathrm{op}}

\newcommand{\undern}{\underline{n}}
\newcommand{\dginterval}{{N{[1]}}}
\newcommand{\dgsimplex}[1]{{N{[#1]}}}

\newcommand{\cofree}{ T^c}
\newcommand{\Tw}{ Tw}
\newcommand{\End}{\mathcal{E}\mathrm{nd}}
\newcommand{\catEnd}{\mathsf{End}}
\newcommand{\coEnd}{\mathrm{co}\End}
\newcommand{\Mult}{\mathrm{Mult}}
\newcommand{\coMult}{\mathrm{coMult}}

\newcommand{\Lie}{\mathcal{L}\mathr{ie}}
\newcommand{\As}{\mathcal{A}\mathrm{s}}
\newcommand{\uAs}{\mathrm{u}\As}
\newcommand{\coAs}{\mathrm{co}\As}
\newcommand{\Com}{\mathcal{C}\mathrm{om}}
\newcommand{\uCom}{\mathrm{u}\Com}
\newcommand{\Perm}{\catoperad{Perm}}
\newcommand{\uBE}{\mathrm{u}\mathcal{BE}}
\newcommand{\uBEs}{{\uBE}^{\mathrm s}}

\newcommand{\comp}{\circ}
\newcommand{\restrictionextension}{\mathrm{RE}}
\newcommand{\extension}{\mathrm{E}}
\newcommand{\extensionone}{\mathrm{E}_1}
\newcommand{\extensiontwo}{\mathrm{E}_2}
\newcommand{\restrictionone}{\mathrm{R}_1}
\newcommand{\restrictiontwo}{\mathrm{R}_2}

\newcommand{\itemt}{\item[$\triangleright$]}

\newcommand{\poubelle}[1]{}

\newcommand{\D}{\operad D}

\newcommand{\Victor}[1]{\textcolor{blue}{#1}}
\newcommand{\Brice}[1]{\textcolor{red}{#1}}

\maketitle

\begin{abstract}
Algebraic operads provide a powerful tool to understand the homotopy theory of the types of (co)algebras they encode. So far, the principal results and methods that this theory provides were only available in characteristic zero. The reason is that operads carry an action of all the symmetric groups, whose representation theory becomes much more involved in positive characteristic. The goal of this paper is to extend these results and methods to a positive characteristic setting. We solve the main problems that appear in this new setting by using the notion of a quasi-planar cooperad as the building block of the theory.  
\end{abstract}

\setcounter{tocdepth}{1}
\tableofcontents
\setcounter{tocdepth}{1}
\tableofcontents

\section*{Introduction}
Operads are algebraic objects which encode other types of algebraic structures: Lie algebras, associative algebras, Batalin--Vilkovisky algebras, Gerstenhaber algebras, $\mathcal{A}_\infty$-algebras, $\mathcal{L}_\infty$-algebras, and many more. The theory of algebraic operads provides us with a large set of methods which allow us to study the homotopy theory of differential graded (dg) $\mathcal{P}$-algebras if they are encoded by a dg operad $\mathcal{P}$. We will call this set of methods \textit{homotopical operadic calculus}. We refer to \cite{LodayVallette} for a detailed account of the theory of algebraic operads.

\medskip

Let us explain these methods. Over a characteristic zero field, the category of dg $\mathcal{P}$-algebras always admits a model category structure where weak-equivalences are given by quasi-isomorphisms and fibrations are given by degree-wise epimorphisms. This model structure can be obtained by transfer along the free-forgetful adjunction and presents the homotopy theory of dg $\mathcal{P}$-algebras. Cofibrant objects in this category do not admit an easy description, and the homotopy category is in general quite hard to understand. One way to get an easier description of this homotopy category is via bar-cobar adjunctions. To any operad $\mathcal{P}$ one can always associate a cooperad $\mathcal{C}$ which is Koszul dual in some sense. There are in fact two canonical choices for $\mathcal{C}$: in certain cases, a minimal choice given by classical Koszul duality, or in general the one given by the operadic bar construction on $\mathcal{P}$. For any choice of Koszul dual cooperad $\C$, this duality is instantiated in the data of a twisting morphism $\alpha: \C \longrightarrow \mathcal{P}$. And for any such twisting morphism $\alpha$, there exists a bar-cobar adjunction 

\[
\begin{tikzcd}[column sep=5pc,row sep=3pc]
          \mathsf{dg}~\C\text{-}\mathsf{cog} \arrow[r, shift left=1.1ex, "\Omega_{\alpha}"{name=F}] & \mathsf{dg}~ \mathcal{P}\text{-}\mathsf{alg}, \arrow[l, shift left=.75ex, "\mathrm{B}_{\alpha}"{name=U}]
            \arrow[phantom, from=F, to=U, , "\dashv" rotate=-90]
\end{tikzcd}
\]

between the category of dg $\mathcal{P}$-algebras and the category of dg $\C$-coalgebras. Notice that these constructions are generalizations of the classical bar construction of \cite{Bar} and of the classical cobar of \cite{Cobar} for associative (co)algebras to other types of algebras. This general construction also recovers the adjunction between dg Lie algebras and conilpotent dg cocommutative coalgebras of \cite{Quillen69}. Let us mention that having a bar-cobar adjunction is essential for Koszul duality in the sense of \cite{Priddy70}, which can be generalized to many types of algebras, see \cite{Joan2,Najib,Berglund1}. Furthermore, it is crucial in order to generalize André--Quillen cohomology \cite{andre,quillen} to other types of algebras, see \cite{Joan1}.

\medskip

The natural model category structure on dg $\mathcal{P}$-algebras can always be transferred along the bar-cobar adjunction, and if $\alpha$ satisfies a "Koszulness" property, this adjunction becomes a Quillen equivalence. This type of arguments was first developed for dg Lie algebras by V. Hinich in \cite{Hinich} using the adjunction of \cite{Quillen69}. Note that these are the constructions that bridge between Quillen and Sullivan's approaches of \cite{Quillen69,Sullivan77}. Then K. Lefèvre-Hasegawa developed these methods in \cite{LefevreHasegawa03} for non-unital dg associative algebras and conilpotent dg coassociative coalgebras. The analogue for unital dg associative algebras was developed by L. Positselski in \cite{twokinds}, where one needs to consider \textit{curved} conilpotent dg coalgebras on the other side. The general non-unital case was proven by B. Vallette in \cite{Brunohomotopy}. Finally, we refer to \cite{unitalalgebras} for the general constructions in the unital case. 

\medskip

A great advantage of looking at the transferred model category structure on dg $\C$-coalgebras is that fibrant-cofibrant objects admit a very easy description: they are given by dg $\C$-coalgebras whose underlying graded $\C$-coalgebra structure is cofree (usually called quasi-free objects). This naturally leads to the notion of an $\infty$-morphism between two dg $\mathcal{P}$-algebras: it is given by a morphism of dg $\C$-coalgebras between their respective bar constructions. This relaxed notion of morphisms has a great advantage: any $\infty$-quasi-isomorphism admits an homotopy inverse, and it can be shown that two dg $\mathcal{P}$-algebras are linked by a \textit{zig-zag} of quasi-isomorphisms if and only if there exists an $\infty$-quasi-isomorphism between them. For instance, this fact was crucially exploited by M. Kontsevich in order to prove the deformation-quantization of Poisson manifolds in \cite{Kontsevich03}. 

\medskip

A cofibrant replacement $\mathcal{P}_\infty$ of an operad $\mathcal{P}$ encodes a notion of a $\mathcal{P}$-algebra \textit{up to homotopy}. Any quasi-isomorphism of operads induces a Quillen equivalence between their respective categories of algebras, therefore considering algebraic structures up to homotopy does not change the homotopy category. A key feature of algebraic structures up to homotopy is the homotopy transfer theorem: given a contraction of dg modules 

\vspace{1pc}
\hspace{10.5pc}
\begin{tikzcd}[column sep=3.5pc,row sep=3pc]
V \arrow[r, shift left=1.1ex, "p"{name=F}] \arrow[loop left, distance=3em, start anchor={[yshift=-1.5ex]west}, end anchor={[yshift=1.5ex]west}]{}{h}
&H~, \arrow[l, shift left=.75ex, "i"{name=U}]
\end{tikzcd}
\vspace{1pc}

and a dg $\mathcal{P}_\infty$-algebra structure on $V$, there exists a dg $\mathcal{P}_\infty$-algebra structure on $H$ such that $p$ and $i$ can also be extended into "inverse" $\infty$-quasi-isomorphisms. This is particularly useful when one chooses a contraction between a dg module  and its homology. Since the transferred structure onto the homology does not lose any homotopical information, one can always replace a dg $\mathcal{P}_\infty$-algebra by  the transferred dg $\mathcal{P}_\infty$-algebra structure on its homology. We refer to \cite{LodayVallette} for an account of these methods and their history.

\medskip

Let briefly mention some of the applications of these methods: in rational homotopy theory, one can use them to study formality questions \cite{Berglund2,GeoffroyGabriel} or to construct rational models \cite{Liemodels,BrunoDaniel}. In symplectic geometry, $\infty$-morphisms and the homotopy transfer theorem play a key role in \cite{Fukaya}, and more general operadic methods can be applied in Morse theory, see \cite{mazuir1,mazuir2}. See \cite{BrunoSurvey} and \cite{LodayVallette} for more detailed instances of applications.

\medskip

Dual results were recently proven for coalgebras by D. Lejay and the first author in \cite{linearcoalgebras}. Coalgebras over cooperads only encode a certain type of coalgebras which are called \textit{conilpotent}. If one wants to encode all coalgebras (say, coassociative, cocommutative, etc) without a conilpotency condition, one cannot use cooperads. We use here (reversed) operads. In this case, given a twisting morphism $\alpha: \C \longrightarrow \mathcal{P}$, they constructed a \textit{complete} bar-cobar adjunction

\[
\begin{tikzcd}[column sep=5pc,row sep=3pc]
          \mathsf{dg}~\mathcal{P}\text{-}\mathsf{cog} \arrow[r, shift left=1.1ex, "\widehat{\Omega}_{\alpha}"{name=F}] & \mathsf{dg}~ \mathcal{C}\text{-}\mathsf{alg}^{\mathsf{comp}}, \arrow[l, shift left=.75ex, "\widehat{\mathrm{B}}_{\alpha}"{name=U}]
            \arrow[phantom, from=F, to=U, , "\dashv" rotate=-90]
\end{tikzcd}
\]

between dg $\mathcal{P}$-coalgebras and complete dg $\C$-algebras. The notion of an algebra over a cooperad gives a new type of algebraic structures called \textit{absolute algebras}. These are algebraic structures endowed with meaningful notion of infinite sums of operations without presupposing an underlying topology. Here \textit{complete} means that the canonical topology induced by the absolute structure is separated. Most of the classical algebraic structures encoded by an operad (Lie algebras, associative algebras, $\mathcal{L}_\infty$-algebras, etc.) have an absolute analogue. Once again, one can transfer (when it exists) the model category structure of dg $\mathcal{P}$-coalgebras to dg $\C$-algebras using the complete bar-cobar adjunction. And if the operad $\mathcal{P}$ considered is given by the cobar construction $\Omega \C$ of $\C$, then the complete bar-cobar adjunction becomes a Quillen equivalence. The aforementioned methods, as well as the key notion of an absolute algebra were used by the second author in \cite{integration} to develop the integration theory of curved absolute $\mathcal{L}_\infty$-algebras, extending the work of \cite{Getzler09} and \cite{robertnicoud2020higher}.

\medskip

Fibrant-cofibrant dg $\C$-algebras are also given by quasi-free objects: one can then define $\infty$-morphisms and $\infty$-quasi-isomorphisms for coalgebras. They were shown to be "invertible" as well, which makes them a useful tool to study the homotopy theory of $\mathcal{P}$-coalgebras. Finally, an analogue version of the homotopy transfer theorem also holds for dg $\Omega\C$-coalgebras. Note that coalgebras over an operad can also be interpreted from a properadic perspective, and $\infty$-morphisms coincide with those introduced by E. Hoffbeck, J. Leray and B. Vallette in \cite{BrunoJohanEric}.

\medskip

Linear duality is a bridge between these two bar-cobar adjunctions, and thus between the homotopy theory of algebras and coalgebras. Indeed, the linear dual of a dg $\mathcal{P}$-coalgebra is always a dg $\mathcal{P}$-algebra, and the linear dual of a dg $\C$-coalgebra is a dg $\C$-algebra. These functors admit adjoints, and one obtains a \textit{duality square} made of four commuting adjunctions: 
\[
\begin{tikzcd}[column sep=5pc,row sep=5pc]
\catdgalg{\operad P}^{\mathsf{op}} \arrow[r,"\mathrm{B}_{\alpha}^{\mathsf{op}}"{name=B},shift left=1.1ex] \arrow[d,"(-)^\circ "{name=SD},shift left=1.1ex ]
&\catdgcog{\operad C}^{\mathsf{op}} \arrow[d,"(-)^*"{name=LDC},shift left=1.1ex ] \arrow[l,"\Omega_{\alpha}^{\mathsf{op}}"{name=C},,shift left=1.1ex]  \\
\catdgcog{\operad P}\arrow[r,"\widehat{\Omega}_{\alpha} "{name=CC},shift left=1.1ex]  \arrow[u,"(-)^*"{name=LD},shift left=1.1ex ]
&\mathsf{dg}~ \mathcal{C}\text{-}\mathsf{alg}^{\mathsf{comp}}~, \arrow[l,"\widehat{\mathrm{B}}_{\alpha}"{name=CB},shift left=1.1ex] \arrow[u,"(-)^\vee"{name=TD},shift left=1.1ex] \arrow[phantom, from=SD, to=LD, , "\dashv" rotate=0] \arrow[phantom, from=C, to=B, , "\dashv" rotate=-90]\arrow[phantom, from=TD, to=LDC, , "\dashv" rotate=0] \arrow[phantom, from=CC, to=CB, , "\dashv" rotate=-90]
\end{tikzcd}
\] 

This square can be promoted (under some conditions) into a square of Quillen adjunctions, using the model category structures constructed so far. This duality square allowed the second author to show that the homotopy theory of algebras and coalgebras are equivalent on objects with finite dimensional homology, see \cite{absolutealgebras} for more details on these constructions. Finally, let us mention that all these results also extend to non-augmented operads, which encode counital types of coalgebras, where on the other side one needs to consider curved algebras over a curved cooperad.

\medskip

\textbf{An overview of the positive characteristic case.} A (dg) operad is a collection $\{\mathcal{P}(n)\}_{n \in \mathbb{N}}$ of (dg) $\kk[\mathbb{S}_n]$-modules, for all $n \geq 0$, together with an additional structure that allows one to compose elements. The category of $\kk[\mathbb{S}_n]$-modules is fairly well-behaved when $\kk$ is a characteristic zero field: it is semi-simple, there is a combinatorial description of simple objects, and much more. In particular, the homotopy theory of dg $\kk[\mathbb{S}_n]$-modules behaves in a similar fashion to the homotopy theory of dg $\kk$-modules. 

\medskip

Things change drastically over a positive characteristic field. Understanding the representation theory of the symmetric groups is an active subject of research, with many open questions, see for instance \cite{ICMSymmetric}. The homotopy category of dg $\kk[\mathbb{S}_n]$-modules becomes very rich and hard to understand. A concrete consequence of these facts is that many statements in fail or at least, require extra hypothesis, over a positive characteristic field. 

\medskip

The first obstruction is given by the fact that, in general, dg $\mathcal{P}$-algebras do not admit a model category structure transferred from dg modules. Under the assumption that $\mathcal{P}(n)$ is a projective dg $\kk[\mathbb{S}_n]$-module for all $n \geq 0$, M. Spitzweck constructed in \cite{Spitzweck} a \textit{semi-model} category structure on dg $\mathcal{P}$-algebras, which is a weaker notion of a model category structure. B. Fresse then showed in \cite{Fresse} that the category of dg operad themselves admits a semi-model category structure and that any quasi-isomorphism between $\mathbb{S}$-projective dg operads induces an Quillen equivalence of semi-model categories. Parallel to these results, there is also the axiomatic point of view developed by C. Berger and I. Moerdijk in \cite{BergerMoerdijk}. They consider \textit{admissible} operads, those that admit a transferred model structure from dg modules, and give conditions under which an operad is admissible (for instance, if it is cofibrant). The same authors also constructed general resolutions for $\mathbb{S}$-projective dg operads in \cite{Wconstruction}. These results already have breakthrough applications. As an example, understanding the homotopy theory of $\mathcal{E}_\infty$-algebras in positive characteristic allowed M. Mandell to construct algebraic models for $p$-adic homotopy types, see \cite{Mandell1,Mandell2}. 

\medskip

The homotopy theory of operads themselves plays a crucial role in homotopical operadic calculus. In this direction, M. Dehling and B. Vallette developed in \cite{BrunoMalte} a general method that gives cofibrant resolutions of the operadic structure \textit{and} of the action of the symmetric groups \textit{at the same time}. Furthermore, they show that their cofibrant resolution, applied to an operad $\mathcal{P}$, is in fact isomorphic to $\Omega \mathrm{B}(\mathcal{E} \otimes \mathcal{P})$: here $\Omega,\mathrm{B}$ are respectively the cobar and bar constructions at the operadic level, and $\mathcal{E}$ is the Barratt-Eccles of \cite{BergerFresse}. This gives a clear indication that the Barratt-Eccles operad provides a universal way to construct resolutions that also take into account the action of the symmetric groups. Related to these ideas is the work of E. Hoffbeck, who constructed an analogue of André--Quillen cohomology for algebras over an dg operad in positive characteristic in \cite{eric1}, and developed the obstruction theory associated to it in \cite{eric2}. The later work crucially uses $\mathcal{E}$ and an universal filtration on $\mathrm{B}(\mathcal{E} \otimes \mathcal{P})$. 

\medskip

In a different direction, L. Brantner, R. Campos and J. Nuiten develop in \cite{pdalgebras} a theory of divided power operads, in order to encode divided power algebras. In particular, they endow these categories with semi-model category structures. This allows them to give point-set models of partition Lie algebras as defined in \cite{brantnermathew}, where they were shown to encode formal moduli problems over a positive characteristic field. Finally, let us mention several results in the direction of classical Koszul duality which are valid over any ring \cite{FressePartitionPoset,ChapotonVallette,FresseEn}. 

\medskip

\textbf{Main results.} The purpose of this paper is to generalize all the aforementioned methods of homotopical operadic calculus over any field. Let us mention first that the main motivation for developing these methods in positive characteristic is the article \cite{deuxiemepapier}. In \textit{op.cit}, we provide a new point of view on formal moduli problems via the duality square constructed here together with the main results of \cite{presentationalgebras}. Putting these two results together allows us to give a new proof of many of the main results concerning formal moduli problems, while generalizing the Koszul duality inherent to these types of results over a field of any characteristic. Another application is the paper by the second author about the integration theory of partition $\mathcal{L}_\infty$-algebras \cite{lietheoryp}, which also extensively uses the results of this paper. We expect that they will find many more applications, as their analogues in characteristic zero have so far.

\medskip

From now on, let us assume we work over a field $\kk$. The main idea to bypass all the major difficulties that arise over a positive characteristic field is to introduce the notion of a quasi-planar dg cooperad. Informally speaking, it amounts to a dg cooperad whose underlying graded cooperad is non-symmetric/planar, and even if its differential can interact in non-trivial ways with the underlying action of the symmetric groups, this interaction is still "controlled" by a suitable filtration on the cooperad. Note that even if our results are valid in the more general unital/curved setting, we will state them in the augmented/dg setting for simplicity. 

\begin{definitionintro}[Quasi-planar dg cooperad]
Let $(\C,d_\C)$ be a conilpotent dg cooperad. It is \textit{quasi-planar} if there exists a ladder of conilpotent dg cooperads

\[
\C^{(0)} \longrightarrow \C^{(1)} \longrightarrow \cdots \longrightarrow \C^{(i)} \longrightarrow \cdots 
\]
\vspace{0.1pc}

such that $\C$ is the colimit of the diagram and such that the following conditions are satisfied.

\medskip
\begin{enumerate}
\item For all $i$, the underlying conilpotent graded cooperad $\C^{(i)}_{\mathrm{gr}}$ is planar, meaning that there exists a conilpotent graded non-symmetric cooperad $\C_\pl^{(i)}$ and a given isomorphism

\[
\C_\pl^{(i)} \otimes \mathbb{S} \cong \C^{(i)}_{\mathrm{gr}}~,
\]
\vspace{0.1pc}

where $(- \otimes \mathbb{S})$ denotes the left adjoint to the forgetful functor from conilpotent graded cooperads to conilpotent graded non-symmetric cooperads. Moreover, the filtration preserves this planar structure in the sense that the graded map $\C^{(i)} \to \C^{(j)}$ (for $i <j$) is the image through the functor $-\otimes \mathbb S$ of the a map $\C^{(i)}_\pl \to \C^{(j)}_\pl$.

\medskip

\item For all $i$, the restriction of the coderivation of $d_{\C(i+1)}$ to $\C_\pl(i+1) \otimes 1$ factors through

        \[
        \C_\pl^{(i+1)} \otimes 1 \longrightarrow \left( \C_\pl^{(i+1)} \otimes 1 \right) + \C^{(i)} \hookrightarrow \C^{(i+1)};
        \]
        \vspace{0.1pc}
in other words, the differential of $\gr_{i+1} \C = (\C_\pl^{(i+1)}/\C_\pl^{(i)}) \otimes \mathbb S$ has the form $d_\pl \otimes \id_{\mathbb S}$.
\end{enumerate}
\end{definitionintro}

A first version of this notion is already present in \cite{linearcoalgebras}. Let us point out that the ladder can be indexed by any small ordinal $\alpha$, not only by $\omega$. Nevertheless, using Theorem \ref{thm A intro}, we will show that any quasi-planar cooperad $\C$ admits a \textit{canonical} quasi-planar ladder indexed by $\omega$. 

\medskip

Two key observations. Firstly, for any dg operad $\mathcal{P}$, the dg cooperad $\mathrm{B}(\mathcal{E} \otimes \mathcal{P})$ is a quasi-planar cooperad. Secondly, any operad $\mathcal{P}$ can be replaced by $\Omega\mathrm{B}(\mathcal{E} \otimes \mathcal{P})$ up to quasi-isomorphism. If $\mathcal{P}$ is $\mathbb{S}$-projective, this replacement does not change the underlying homotopy category. And if it is not, there was no meaningful homotopy category for dg $\mathcal{P}$-algebras to begin with. So, for all intents and purposes, one can restrict to operads of the form $\Omega \C$, where $\C$ is quasi-planar conilpotent dg cooperad.

\medskip

Therefore, let us fix a quasi-planar dg cooperad $\C$. We show that its cobar construction $\Omega \C$ is a cofibrant object in the semi-model category of dg operad of \cite{Fresse}. Not only that, we also show that any dg operad of the form $\Omega \C$, with $\C$ quasi-planar, admits a canonical $\operad E$-comonoid structure, given by explicit formulas.

\begin{theoremintro}[Theorem \ref{thm:becomodule}]\label{thm A intro}
Let $\C$ be a quasi-planar dg cooperad. There is a canonical morphism of dg operads 
\[
\begin{tikzcd}[column sep=2pc,row sep=0pc]
\Delta_{\operad E, \C}: \Omega\C  \arrow[r]
&\mathcal{E} \otimes \Omega\C \\
\end{tikzcd}
\]

which endows the dg operad $\Omega \operad C$ with a left $\operad E$-comodule structure.
\end{theoremintro}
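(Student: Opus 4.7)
The plan is to construct $\Delta_{\mathcal{E}, \C}$ by induction along a quasi-planar filtration $\C^{(0)} \to \C^{(1)} \to \cdots$ with colimit $\C$. Since the cobar construction preserves filtered colimits of conilpotent dg cooperads and the Hadamard product $\mathcal{E} \otimes (-)$ does as well, it suffices to build a compatible sequence of dg operad morphisms $\Delta^{(i)}: \Omega\C^{(i)} \to \mathcal{E} \otimes \Omega\C^{(i)}$ and pass to the colimit. Each $\Omega\C^{(i)}$ is the free graded operad on $s^{-1}\overline{\C}^{(i)} \cong s^{-1}(\overline{\C}^{(i)}_\pl \otimes \mathbb{S})$, so a graded operad morphism out of it is determined by its $\mathbb{S}$-equivariant restriction to these generators.

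On a planar generator $s^{-1}(\mu \otimes \sigma)$ with $\mu \in \overline{\C}^{(i)}_\pl(n)$ and $\sigma \in \mathbb{S}_n$, the leading term of $\Delta^{(i)}$ has the form
\[
s^{-1}(\mu \otimes \sigma) \longmapsto \sigma \otimes s^{-1}(\mu \otimes \sigma) + (\text{higher-degree corrections in } \mathcal{E}),
\]
where $\sigma$ on the right is viewed as a zero-simplex in $\mathcal{E}(n)_0 = \kk[\mathbb{S}_n]$. For the base case $i = 0$, the cooperad structure on $\C^{(0)}$ is strictly planar and no correction is needed; compatibility with the cobar differential follows from the Hopf composition rule $(\sigma_1 \otimes a) \circ_i (\sigma_2 \otimes b) = (\sigma_1 \circ_i \sigma_2) \otimes (a \circ_i b)$ on $\mathcal{E} \otimes \Omega\C^{(0)}$ together with the planarity of the coproduct $\Delta_\C$.

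The inductive step is where condition (2) of quasi-planarity becomes essential. Writing $d_\C(\mu \otimes e_n) = \mu' \otimes e_n + y$ for a new planar generator $\mu \in \overline{\C}_\pl^{(i+1)}/\overline{\C}_\pl^{(i)}$, the ``error'' $y = \sum_\alpha \mu_\alpha \otimes \sigma_\alpha$ lies entirely in $\C^{(i)}$ and its image under $\Delta^{(i)}$ is therefore already known to be $\sum_\alpha \sigma_\alpha \otimes s^{-1}(\mu_\alpha \otimes \sigma_\alpha)$ plus lower-filtration corrections. The obstruction to naively setting $\Delta^{(i+1)}(s^{-1}(\mu \otimes e_n)) := e_n \otimes s^{-1}(\mu \otimes e_n)$ is then a sum of terms of the form $(\sigma_\alpha - e_n) \otimes s^{-1}(\mu_\alpha \otimes \sigma_\alpha)$; crucially, each difference $\sigma_\alpha - e_n$ is the boundary of the $1$-simplex $[e_n \vert \sigma_\alpha] \in \mathcal{E}(n)_1$, so using the contractibility of $\mathcal{E}(n)$ as a dg $\kk[\mathbb{S}_n]$-module one can produce a correction term whose total boundary absorbs the discrepancy, yielding a well-defined extension $\Delta^{(i+1)}$ compatible with the differentials.

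The main technical obstacle is ensuring that these corrections can be chosen so that $\Delta^{(i+1)}$ remains a morphism of \emph{operads}, not merely of dg $\mathbb{S}$-modules; this is handled by an obstruction-theoretic argument relying on the fact that $\mathcal{E}$ is a $\Sigma$-cofibrant Hopf operad and that $\Omega\C^{(i+1)}$ is a cofibrant dg operad. Finally, the left $\mathcal{E}$-comodule axioms hold by direct inspection on generators: counitality reduces to $\epsilon_\mathcal{E}(\sigma) = 1$ for $\sigma \in \mathbb{S}_n$, with all corrections lying in the kernel of the augmentation $\epsilon_\mathcal{E}: \mathcal{E} \to \Com$, and coassociativity reduces to the diagonal identity $\Delta_\mathcal{E}(\sigma) = \sigma \otimes \sigma$ on degree zero, both identities then extending to all of $\Omega\C$ by operadic functoriality.
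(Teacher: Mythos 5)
There is a genuine gap, and it sits exactly at the heart of what the theorem asserts. Your inductive scheme produces, at each stage, a map whose higher corrections are obtained from contractibility of $\operad E(n)$ together with an unspecified ``obstruction-theoretic argument'' guaranteeing compatibility with the operad structure. But cofibrancy of $\Omega\C^{(i+1)}$ and the fact that $\operad E \otimes \Omega\C \to \Omega\C$ is an acyclic fibration only yield a \emph{section}, i.e.\ an $\operad E$-splitting of $\Omega\C$; this is precisely the weaker statement recorded in Corollary \ref{cor: cofibrant implies E-split}, and the paper explicitly remarks afterwards that the content of Theorem \ref{thm:becomodule} is the strictly coassociative, \emph{canonical} comodule structure, which such a lifting argument does not provide. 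Your treatment of coassociativity makes this gap visible: the claim that it ``reduces to $\Delta_{\operad E}(\sigma)=\sigma\otimes\sigma$ in degree zero and then extends by operadic functoriality'' is not a proof. Coassociativity is a constraint on \emph{all} the higher corrections simultaneously, and corrections chosen degree by degree from contracting homotopies have no reason to satisfy it on the nose (at best one would obtain a homotopy-coherent comodule, which would then need rectification — a different and harder statement). Canonicity is lost for the same reason.

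The paper avoids all choices by writing down a closed formula: on a planar generator $s^{-1}x \otimes \{\id\}$ of arity $n$,
\[
\Delta_{\operad E, \C}\bigl(s^{-1}x \otimes \{\id\}\bigr) \;=\; \sum_{k \geq 0} \sum_{\underline\sigma \in \mathbb S_n^k} \rho(\underline\sigma) \otimes D_{\underline\sigma}\bigl(s^{-1}x \otimes \{\id\}\bigr)~,
\]
where the $D_\sigma$ are the components of the coderivation with respect to the planar splitting $\C \cong \C_\pl \otimes \mathbb S$ and $\rho(\underline\sigma)=(\id_n,\sigma_k,\sigma_{k-1}\sigma_k,\ldots,\sigma_1\cdots\sigma_k)$. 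Finiteness of the sum is where the quasi-planar ladder enters (Lemma \ref{lemma: finitude des d sigma}, by ordinal induction, close in spirit to your use of condition (2)); compatibility with the three pieces $d_\theta$, $d_{s^{-1}\overline{\C}}$, $d_\Delta$ of the cobar differential is checked by explicit Barratt--Eccles combinatorics (admissible sequences, shuffles, Lemmas \ref{lemma:inversecompinbe} and \ref{propositionbeecomposition}); and coassociativity follows from the identity $\Delta_{\operad E}(\rho(\underline\sigma)) = \sum_i \rho((\sigma_i,\ldots,\sigma_k)) \otimes \rho((\sigma_1,\ldots,\sigma_{i-1}))^{\sigma_i\cdots\sigma_k}$ (Lemma \ref{lemma:monoidalpropertyoftherhomap}), not from any degree-zero reduction. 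To repair your argument you would have to either reproduce such explicit formulas for the corrections or develop a genuine obstruction theory for strict $\operad E$-comodule structures on operads; neither is supplied in the proposal.
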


This theorem can be seen as the positive characteristic analogue of the following fact: any dg operad $\mathcal{P}$ admits a canonical $\mathrm{u}\mathcal{C}\mathrm{om}$-comonoid structure. Notice that is is precisely this $\mathrm{u}\mathcal{C}\mathrm{om}$-comonoid structure that gives the canonical convolution curved $\mathcal{L}_\infty$-algebra structure on the hom-graded modules between types of coalgebras and types of algebras which are linked by Koszul duality. Here, it will produce a canonical convolution (curved absolute) \textit{partition} $\mathcal{L}_\infty$-algebra structures on the hom-graded modules between types of coalgebras and types of algebras, in the sense of \cite{pdalgebras}, see also \cite{deuxiemepapier}. Exploring these convolution structure will be the subject of future work; we refer to \cite{mappingcoalgebrascharp}.

\medskip

By standard axiomatic arguments in the same spirit as in \cite{BergerMoerdijk}, the dg operad $\Omega \C$ is both admissible and coadmissible, meaning both dg $\Omega \C$-algebras and dg $\Omega \C$-coalgebras admit a transferred structures from dg modules. Then we consider the bar-cobar adjunction relative to the universal twisting morphism $\iota: \C \longrightarrow \Omega \C$

\[
\begin{tikzcd}[column sep=5pc,row sep=3pc]
    \mathsf{dg}~\C\text{-}\mathsf{cog} \arrow[r, shift left=1.1ex, "\Omega_{\iota}"{name=F}] & \mathsf{dg}~ \Omega\C \text{-}\mathsf{alg}~. \arrow[l, shift left=.75ex, "\mathrm{B}_{\iota}"{name=U}]
            \arrow[phantom, from=F, to=U, , "\dashv" rotate=-90]
\end{tikzcd}
\]

\begin{theoremintro}[Theorems \ref{thm: existence de structure de modèles} and \ref{thm: the bar-cobar is a Quillen equivalence}]
The exists a combinatorial model category structure on the category of dg $\C$-coalgebras given by the following sets of maps:

\medskip

\begin{enumerate}
\item the set of weak-equivalences is given by morphisms $f$ such that $\Omega_\iota(f)$ is a quasi-isomorphism,

\medskip

\item the set of cofibrations is given by degree-wise monomorphisms,

\medskip 

\item the set of fibrations is given by morphisms with the right lifting property with respect to acyclic cofibrations.

\medskip
\end{enumerate}

Furthermore, considering this model structure promotes the bar-cobar adjunction into Quillen equivalence. 
\end{theoremintro}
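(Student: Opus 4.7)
The plan is to apply Jeff Smith's recognition theorem for combinatorial model categories to $\mathsf{dg}~\mathcal{C}\text{-}\mathsf{cog}$, which is locally presentable since it is the category of coalgebras over an accessible comonad on dg modules. Concretely, I would verify: (i) the monomorphisms are the saturation of a set $I$ of generators; (ii) the class of weak equivalences $\mathcal{W} := \Omega_\iota^{-1}(\text{quasi-isomorphisms})$ is an accessible subcategory closed under retracts and 2-out-of-3; and (iii) trivial cofibrations are closed under pushouts and transfinite compositions, together with the solution set condition. Item (i) is standard in locally presentable categories, and item (ii) is immediate from the corresponding properties of quasi-isomorphisms combined with the accessibility of $\Omega_\iota$.

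The key technical input for (iii) is that $\Omega_\iota$ sends any degree-wise monomorphism of dg $\mathcal{C}$-coalgebras to a cofibration in the semi-model structure on $\mathsf{dg}~\Omega\mathcal{C}\text{-}\mathsf{alg}$, which exists because $\Omega\mathcal{C}$ is admissible and cofibrant as an operad thanks to the preceding results. The idea is to use the quasi-planar filtration on $\mathcal{C}$ to induce a filtration on any dg $\mathcal{C}$-coalgebra $D$ whose associated graded is planar in a controlled way, and thus to exhibit $\Omega_\iota(f : C \hookrightarrow D)$ as a transfinite composition of cellular attachments of $\Omega\mathcal{C}$-algebras. Since $\Omega_\iota$ is a left adjoint it preserves all colimits, hence pushouts and transfinite compositions of trivial monomorphisms are sent to the corresponding constructions on trivial cofibrations in $\mathsf{dg}~\Omega\mathcal{C}\text{-}\mathsf{alg}$, which remain weak equivalences. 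This step is where the main obstacle lies: controlling precisely how the quasi-planar filtration interacts with arbitrary coalgebra monomorphisms requires a careful analysis analogous to the cellular constructions in \cite{Brunohomotopy,linearcoalgebras}, with the additional complications introduced by the symmetric group actions now resolved via the quasi-planar structure. The generating trivial cofibrations $J$ are then obtained by factoring, via the small object argument, the images under $\mathrm{B}_\iota$ of the generating trivial cofibrations of $\mathsf{dg}~\Omega\mathcal{C}\text{-}\mathsf{alg}$.

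For the Quillen equivalence, observe that by construction $\Omega_\iota$ both preserves and reflects weak equivalences. By a standard criterion, it then suffices to show that the counit $\varepsilon_A : \Omega_\iota \mathrm{B}_\iota A \to A$ is a quasi-isomorphism for every dg $\Omega\mathcal{C}$-algebra $A$. This reduces to the acyclicity of the twisted composite product $\mathcal{C} \circ_\iota \Omega\mathcal{C}$ associated to the universal twisting morphism $\iota : \mathcal{C} \to \Omega\mathcal{C}$, a classical computation performed via an explicit contracting homotopy that is insensitive to the characteristic of $\kk$. The weak equivalence of the unit $\eta_C : C \to \mathrm{B}_\iota \Omega_\iota C$ then follows from the triangle identity $\varepsilon_{\Omega_\iota C} \circ \Omega_\iota(\eta_C) = \mathrm{id}_{\Omega_\iota C}$ and 2-out-of-3 applied to $\Omega_\iota(\eta_C)$.
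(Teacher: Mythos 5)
Your proposal mislocates the central difficulty. The step you dismiss as "a classical computation \ldots insensitive to the characteristic of $\kk$" --- the acyclicity of the counit $\epsilon_A:\Omega_\iota\mathrm{B}_\iota A\to A$ --- is precisely where the paper has to work hardest, and it is exactly the point where the classical argument breaks down in positive characteristic. The usual contracting homotopy on the twisted complex (note also that the relevant object for the counit is $\Omega\C\circ_\iota\C$ acting on $A$, not $\C\circ_\iota\Omega\C$) is defined by singling out a vertex of a tree, which does not make equivariant sense on the symmetric composite product; moreover the differential of a quasi-planar $\C$ genuinely mixes the planar generators with the symmetric group action, and coinvariants are not exact. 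The paper resolves this by using quasi-planarity in an essential way: the homotopy $h$ of Subsection \ref{section: The contracting homotopy on trees} is defined on the \emph{planar} generators $\operad P_\pl\comp_\pl\C_\pl$ ("suspend the leftmost top node"), and one then filters by the quasi-planar filtration so that the non-planar part of the differential, the curvature term and the $\C$-action term all drop in filtration; on the associated graded one shows $\partial(H)+\pi_A$ is an isomorphism (Proposition \ref{keypropquilleneqcoalg}, Lemma \ref{lemmaquillenequivalence}). Without this construction your Quillen-equivalence argument has no content, since for a general conilpotent dg cooperad in characteristic $p$ the counit statement is not available "off the shelf". Conversely, the step you flag as the main obstacle --- that degree-wise monomorphisms are sent to cofibrations of algebras --- is comparatively soft in the paper: it follows from the notion of elementary cofibration (Proposition \ref{prop:elemcof}), whose cobar image is a cellular pushout, together with the \emph{coradical} filtration (not the quasi-planar one) writing any injection as a transfinite composition of elementary cofibrations (Proposition \ref{propcofibrations}).

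There is also a gap on the existence of the model structure itself. Smith's theorem requires, besides the closure properties you list, that every map with the right lifting property against the generating cofibrations be a weak equivalence (equivalently, in the paper's left-transfer formulation of Theorem \ref{thm: existence de structure de modèles}, an acyclicity condition must be verified); your outline never addresses this. The paper handles it by constructing a natural \emph{cylinder object} $\mathrm{Cyl}(W)=\mathrm{B}_\C A\times_{\mathrm{B}_\C\Omega_\C W}W$ from a cylinder $A$ of $\Omega_\C W$ and proving it is good via an explicit homotopy $H$ and a filtered quasi-isomorphism argument along the quasi-planar ladder (Proposition \ref{propcylinder}); since all objects are cofibrant once cofibrations are identified with monomorphisms, the left-transfer criterion applies. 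In addition, your claim that the monomorphisms form the saturation of a set "is standard in locally presentable categories" is not automatic; the paper obtains generating (acyclic) cofibrations from the transferred description of cofibrations via a Makkai--Par\'e-type result (Proposition \ref{propmakkaipare}). So the overall skeleton (create weak equivalences through $\Omega_\iota$, identify cofibrations with monomorphisms, reduce the equivalence to the counit) agrees with the paper, but the two pillars carrying the characteristic-$p$ weight --- the quasi-planar contracting homotopy for the counit and the cylinder/acyclicity verification --- are missing or asserted away.
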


This means that the homotopy theory of dg $\Omega \C$-algebras can be read using their Koszul dual dg $\C$-coalgebras. Here again, a great advantage is that fibrant-cofibrant dg $\C$-coalgebras are exactly quasi-free dg $\C$-coalgebras which are also images of dg $\Omega \C$-algebras. Notice that, in principle, the notion of a coalgebra over a cooperad in positive characteristic encodes types of \textit{divided power} conilpotent coalgebras. Nevertheless, since $\C$ is quasi-planar, there are no divided power operations that appear at the level of algebraic structures.

\medskip

We define $\infty$-morphisms and $\infty$-quasi-isomorphisms of dg $\Omega \C$-algebras: we show that they are "invertible", therefore two dg $\Omega \C$-algebras are linked by a zig-zag of quasi-isomorphisms if and only if there exists an $\infty$-quasi-isomorphism between them. We also prove a version of the homotopy transfer theorem: given a dg $\Omega \C$-algebra $A$, we construct an $\infty$-quasi-isomorphic dg $\Omega \C$-algebra structure on the homology of $A$. This means that $A$ can be replaced by its homology without losing any homotopical data. These methods should lead to applications in the study of formality questions over positive characteristic field, in the same spirit as in \cite{GeoffroyGabriel}. 

\medskip

Then we turn to dg $\Omega\C$-coalgebras, and generalize the results of \cite{linearcoalgebras} in this case. Consider the complete bar-cobar adjunction relative to $\iota$

\[
\begin{tikzcd}[column sep=5pc,row sep=3pc]
          \mathsf{dg}~\Omega \C\text{-}\mathsf{cog} \arrow[r, shift left=1.1ex, "\widehat{\Omega}_{\iota}"{name=F}] & \mathsf{dg}~\C\text{-}\mathsf{alg}^{\mathsf{qp}\text{-}\mathsf{comp}}~, \arrow[l, shift left=.75ex, "\widehat{\mathrm{B}}_{\iota}"{name=U}]
            \arrow[phantom, from=F, to=U, , "\dashv" rotate=-90]
\end{tikzcd}
\]
between the category of dg $\Omega \C$-coalgebras and the category of \textit{qp-complete} dg $\C$-algebras. Here qp-complete algebra refer to algebras which are complete for the \textit{canonical} quasi-planar ladder of the quasi-planar cooperad $\C$. 

\begin{theoremintro}[Theorems \ref{thm: existence model structure for absolute} et \ref{thm: complete bar-cobar is a Quillen equivalence}]
The exists a combinatorial model category structure on the category of qp-complete dg $\C$-algebras given by the following sets of maps:

\medskip

\begin{enumerate}
\item the set of weak-equivalences is given by morphisms $f$ such that $\widehat{\mathrm{B}}_{\iota}(f)$ is a quasi-isomorphism,

\medskip

\item the set of fibrations is given by degree-wise epimorphisms,

\medskip 

\item the set of cofibrations is given by morphisms with the left lifting property with respect to acyclic fibrations.

\medskip
\end{enumerate}

Furthermore, considering this model structure promotes the bar-cobar adjunction into Quillen equivalence. 
\end{theoremintro}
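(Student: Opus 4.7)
The plan is to construct the model structure directly from the two specified classes of maps and then to verify all the axioms by using the complete bar-cobar adjunction as the main technical tool. The category of qp-complete dg $\C$-algebras is locally presentable, being reflective in an appropriate category of complete filtered dg objects, so the small object argument will yield the factorization axioms once we exhibit generating sets of (acyclic) cofibrations. These should be taken as the image under $\widehat{\Omega}_\iota$ of generating (acyclic) cofibrations of the coalgebra model structure produced in the previous theorem.

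First I would verify the stability properties (two-out-of-three and retract closure) of the weak equivalences, both of which follow from the analogous properties for quasi-isomorphisms of dg modules together with the functoriality of $\widehat{\mathrm{B}}_\iota$. Next, the small object argument produces the two factorization axioms; the key smallness input comes from the canonical quasi-planar ladder on $\C$, which allows one to filter algebras and check accessibility layer by layer along $\C^{(0)} \to \C^{(1)} \to \cdots$.

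The delicate step is identifying the fibrations---defined by right lifting against acyclic cofibrations---with the class of degreewise epimorphisms. One inclusion follows by direct lifting against an explicit generating family of acyclic cofibrations with contractible sources, obtained by applying $\widehat{\Omega}_\iota$ to standard acyclic dg $\Omega \C$-coalgebras. The reverse inclusion is the main obstacle: one must construct a path object on any qp-complete dg $\C$-algebra such that every degreewise epimorphism lifts against the trivial cofibrations it produces. Such a path object is built from the convolution of $\C$ with a simplicial model of the interval, and its correct construction hinges on Theorem~\ref{thm A intro}: the $\operad E$-comodule structure on $\Omega \C$ supplies the coherent operations needed to endow the path object with a qp-complete dg $\C$-algebra structure compatible with the quasi-planar ladder.

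For the Quillen equivalence, I would show that the derived unit $C \to \widehat{\mathrm{B}}_\iota \widehat{\Omega}_\iota C$ is a weak equivalence on any cofibrant dg $\Omega \C$-coalgebra $C$. This is achieved via a spectral sequence associated to the canonical ladder of $\C$: by property~(2) of quasi-planarity, the associated graded cooperad has the form $(\C^{(i+1)}_{\mathsf{pl}} / \C^{(i)}_{\mathsf{pl}}) \otimes \mathbb{S}$ with purely planar differential, so the statement reduces ladder-wise to the non-symmetric case, where the classical cooperadic bar-cobar resolution provides the required quasi-isomorphism. Convergence of the spectral sequence relies crucially on qp-completeness of the algebras, making the assembly of the ladder-wise equivalences into a global weak equivalence the final technical step.
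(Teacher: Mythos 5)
Your overall architecture (right transfer along the complete bar--cobar adjunction, reduction to the planar associated graded via the canonical quasi-planar ladder, and verifying the Quillen equivalence by showing the unit $V \to \widehat{\mathrm{B}}_{\iota}\widehat{\Omega}_{\iota}V$ is a quasi-isomorphism on every coalgebra) is in the right spirit, but the two technical pillars you rely on do not hold up as stated. The most serious gap is your path object. Theorem~A provides an $\operad E$-comodule structure on the operad $\Omega\C$, which yields path objects for dg $\Omega\C$-\emph{algebras} and cylinders for dg $\Omega\C$-\emph{coalgebras}; it does not produce a map $\C \to \operad E \otimes \C$ nor any mechanism endowing a convolution object $[I,\L]$ with a qp-complete $\C$-algebra structure when $\L$ is an algebra over the \emph{cooperad} $\C$. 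The equivariance problem (the interval is only an $\operad E$-coalgebra, not cocommutative, while the structure map of a $\C$-algebra involves genuine $\mathbb S_n$-invariants) is exactly the positive-characteristic obstruction, and "convolution of $\C$ with a simplicial interval" does not resolve it. The paper instead builds the path object of $\L$ as the qp-completion of the pushout $\widehat{\Omega}_{\iota}P \amalg_{\widehat{\Omega}_{\iota}\widehat{\mathrm{B}}_{\iota}\L}\L$, where $P$ is a path object of $\widehat{\mathrm{B}}_{\iota}\L$ in dg $\Omega\C$-coalgebras, and then proves it is a weak equivalence by an explicit contracting homotopy analyzed on the associated graded of the quasi-planar coladder. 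Relatedly, your identification of fibrations with degree-wise epimorphisms is not actually proved by the steps you list: a path object gives the acyclicity condition for the transfer, but it does not show that an arbitrary degree-wise epimorphism has the right lifting property against all acyclic cofibrations. The paper's mechanism here is the notion of \emph{elementary fibration} together with the decomposition of any epimorphism of qp-complete algebras as a transfinite (backward) composition of the maps $F^{n}_{\mathrm{qp}}\L \times_{F^{n}_{\mathrm{qp}}\L'} \L' \to F^{n-1}_{\mathrm{qp}}\L \times_{F^{n-1}_{\mathrm{qp}}\L'} \L'$ along the canonical quasi-planar coladder; nothing in your outline supplies a substitute. (Also, your generating cofibrations should come from dg $\Omega\C$-coalgebras, not from the $\C$-coalgebra category of the previous theorem, though this is a minor slip.)

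For the Quillen equivalence your ladder-wise reduction is the correct idea, but the asserted "spectral sequence whose convergence relies on qp-completeness" hides the real difficulty: $\widehat{\mathrm{B}}_{\iota}\widehat{\Omega}_{\iota}V$ is built from infinite products of invariants indexed by trees, where a naive filtration argument neither obviously converges nor is exhaustive. The paper handles this by constructing an explicit degree-$1$ homotopy $H$ on a "restriction--extension" pullback diagram containing $\widehat{\mathrm{B}}_{\iota}\widehat{\Omega}_{\iota}V$, and by proving that $\partial(H)+\pi_V$ is an isomorphism, checked on a multi-graded object obtained from the qp filtration (on whose associated graded the differential of $\C$ becomes planar and commutes with $H$) followed by finite secondary filtrations by the number of nodes and by tensor position; invertibility of this explicit operator replaces any convergence claim. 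Until you either reproduce such an argument or justify the convergence and the "classical" planar complete bar--cobar resolution you invoke, this step, like the path object, remains a genuine gap rather than a variant proof.
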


Again, this means that one can read the homotopy theory of dg $\Omega \C$-coalgebras by using dg $\C$-algebras. Here again, fibrant-cofibrant dg $\C$-algebras are much simpler as they are exactly given by quasi-free dg $\C$-algebras which are essentially the images of the dg $\Omega \C$-coalgebras under the complete cobar functor. The notion of a dg $\C$-algebra corresponds to the absolute version of divided powers dg $\C^*$-algebras, and is also naturally endowed with divided power operations. 

\medskip

We define $\infty$-morphisms and $\infty$-quasi-isomorphisms of dg $\Omega \C$-coalgebras too: we show that they are "invertible", therefore two dg $\Omega \C$-coalgebras are linked by a zig-zag of quasi-isomorphisms if and only if there exists an $\infty$-quasi-isomorphism between them. We also prove a similar version of the homotopy transfer theorem: given a dg $\Omega \C$-algebra $V$, we construct a $\infty$-quasi-isomorphic dg $\Omega \C$-coalgebra structure on the homology of $V$, which allows one to pass from a coalgebra $V$ to its homology without loosing homotopical data. 

\medskip

Note that once the theory is established for operads of the form $\Omega \C$, with $\C$ a quasi-planar dg cooperad, then we can easily extend all the above results to any cofibrant dg operad $\operad P$. Indeed, given a dg operad $\operad P$, we construct what we call the \textit{quasi-planar} bar-cobar adjunctions 

\[
\begin{tikzcd}[column sep=5pc,row sep=3pc]
          \mathsf{dg}~\operad P\text{-}\mathsf{alg} \arrow[r, shift left=1.1ex, "\mathrm{B}^{\mathrm{q.p}}_{\pi}"{name=A}]
         &\mathsf{curv}~\mathrm{B}(\operad E \otimes \operad P) \text{-}\mathsf{cog}~, \arrow[l, shift left=.75ex, "\Omega^{\mathrm{q.p}}_{\pi}"{name=B}] \arrow[phantom, from=A, to=B, , "\dashv" rotate=90]
\end{tikzcd}
\begin{tikzcd}[column sep=5pc,row sep=3pc]
          \mathsf{dg}~\operad P\text{-}\mathsf{cog} \arrow[r, shift left=1.1ex, "\widehat{\Omega}^{\mathrm{q.p}}_{\pi}"{name=A}]
         &\mathsf{curv}~\mathrm{B}(\operad E \otimes \operad P) \text{-}\mathsf{alg}^{\mathsf{qp}\text{-}\mathsf{comp}}~, \arrow[l, shift left=.75ex, "\widehat{\mathrm{B}}^{\mathrm{q.p}}_{\pi}"{name=B}] \arrow[phantom, from=A, to=B, , "\dashv" rotate=-90]
\end{tikzcd}
\]

and in the case when $\operad P$ is cofibrant, all the above results can be translated \textit{mutatis mutandis} to this setting. Furthermore, let us mention that if one replicates the constructions of \cite{Joan1} using the quasi-planar bar-cobar adjunction, one recovers the cohomology theories constructed in \cite{eric1,eric2}. 

\medskip

Finally, we extend the \textit{duality square} of \cite{absolutealgebras} to this new setting. Recall that it intertwines the bar-cobar adjunction with the complete bar-cobar adjunction via a pair of duality adjunctions. Using the model category structures constructed so far, we prove that all the functors in this duality square are Quillen adjunctions. As in the characteristic zero case, this allows us to show that the homotopy category of dg $\operad P$-algebras with finite dimensional homology is equivalent to the homotopy category of dg $\operad P$-coalgebras with finite dimensional homology, where $\operad P$ is a cofibrant dg operad.

\medskip

These results also open the door to a classical theory of Koszul duality in the positive characteristic setting, both at the operadic level and at the algebra/coalgebra level. Since quasi-planar dg cooperads seem to be the right notion, it would be interesting to understand, given an operad $\mathcal{P}$, what is (and when does it exist) the \textit{minimal} quasi-planar dg cooperad $\C$ such that there exists a quasi-isomorphism $\Omega\C \qi \mathcal{P}$. In this case, one could think of $\C$ as the Koszul dual of $\mathcal{P}$, in the original sense of \cite{GinzburgKapranov,GetzlerJones94}. Very similar ideas were already explained in \cite[Section 2.7]{BrunoMalte}, where the authors also suggest a similar approach using their theory of \textit{higher cooperads}. Finally, at the algebra level, one should be able to ask similar questions using the bar-cobar constructions of this paper, knowing that they have a reasonable homotopical behaviour. These questions are all beyond the reach of this paper. Nevertheless, let us mention that comparing the approach carried out in this paper with \cite{BrunoMalte}, and specially comparing quasi-planar cooperads with higher cooperads, shall be the subject of future work in \cite{mappingcoalgebrascharp}.

\medskip

\subsection*{Acknowledgements}
We would like to thank Guille Carrión, Aras Ergus, Eric Hoffbeck, Geoffroy Horel, Joost Nuiten and Bruno Vallette for interesting discussions. We would also like to thank Bruno Vallette for useful comments on the draft version. 

\subsection*{Notations and conventions}\label{subsection: notations and conventions}

\begin{enumerate}
    \item \textit{Universe.} We fix a universe $\mathscr{U}$. A small set is an element of this universe. A large set is a subset of it.
    
\medskip

    \item \textit{Ordinal products.} Let $\omega$ be the first infinite ordinal which is the poset of natural numbers. Let $\omega \cdot \omega$ be the ordinal whose underlying poset is the two times product of $\omega$ equipped with the lexicographic order. In general, if $\alpha$ and $\beta$ are two small ordinals, we will denote $\alpha \cdot \beta$ the small ordinal whose underlying poset is the product poset equipped with the lexicographic order.
    
    \medskip
    
    Given a small ordinal $\alpha$ (and more generally a poset), one can view it as a category. Objects are indexed by the ordinal $\alpha$, and there is only one non-trivial arrow between two objects $i$ and $j$ in $\alpha$, if and only if $i < j$. 
    
    \medskip
    
    \item \textit{Ladders.} Let $\mathsf{C}$ be a cocomplete category and let $\alpha$ be a small ordinal. The data of an $\alpha$-ladder amounts to the data of a functor cocontinuous functor
    
    \[
    c: 1+ \alpha \longrightarrow \mathsf{C}~.
    \]
    
    This corresponds to a "ladder" diagram of objects in $\mathsf{C}$
    
    \[
    0 \longrightarrow c(0) \longrightarrow c(1) \longrightarrow \cdots \longrightarrow c(i) \longrightarrow \cdots
    \]
    
    indexed by $\alpha$, such that for every limit ordinal $k \in \alpha$, we have 
    \[
   	c(k) \cong \colim{i < k}~ c(i)~. 
    \]
    The colimit of the ladder diagram will be denoted by 
    \[
    c(\alpha) \coloneqq \colim{i \in \alpha}~ c(i)~.
    \]
   	Usually and depending on the category $\mathsf{C}$, additional requirements will be made on the transition maps $c(i) \longrightarrow c(i+1)$. For instance, they will be required to be monomorphisms or some kind of cofibrations.
   	
   	\medskip
   	
   	We considering ladder diagrams in categories of operads and cooperads, the notation $c(i)$ for the image of $i \in \alpha$ will be replaces by $c^{(i)}$ in order to avoid confusions with the arity. 
    
    \medskip
    
    \item \textit{Coladders.} Let $\mathsf{D}$ be a complete category and let $\alpha$ be a small ordinal. The data of an $\alpha$-coladder amounts to the data of a functor continuous functor
    
    \[
    d: (1+\alpha)^{\op} \longrightarrow \mathsf{D}~.
    \]
    
    This corresponds to a "coladder" diagram of objects in $\mathsf{D}$
    
    \[
    0 \longleftarrow d(0) \longleftarrow d(1) \longleftarrow \cdots \longleftarrow d(i) \longleftarrow \cdots
    \]
    indexed by $\alpha$, such that for every limit ordinal $k \in \alpha$, we have 
    \[
   	d(k) \cong \lim_{i < k} d(i)~. 
    \]
    The limit of the coladder diagram will be denoted by 
    \[
    d(\alpha) \coloneqq \lim_{i \in \alpha} d(i)~.
    \]
   	Usually and depending on the category $\mathsf{D}$, additional requirements will be made on the transition maps $d(i+1) \longrightarrow d(i)$. For instance, they will be required to be epimorphisms or some kind of fibrations.
   	
   	\medskip
    
    \item \textit{Permutations and shuffles.} The symmetric group on $n$ elements, given by the set of bijections between $\{1,\cdots, n\}$ and itself, will be denoted by $\mathbb S_n$. Elements in $\mathbb S_n$ are called permutations. A permutation $\sigma$ in $\mathbb S_n$ is determined by its values $\sigma(1),\cdots,\sigma(n)$.  
    
    \medskip
    
    Let $\sigma$ be a permutation in $\mathbb S_{a+b}$. It is a $(a,b)$-shuffle if it satisfies that
   	\[
   	\sigma(1) < \cdots < \sigma(a) \quad \text{and} \quad \sigma(a+1) < \cdots < \sigma(a+ b)~. 
   	\]
    The set of all $(a,b)$-shuffles in $\mathbb S_{a+b}$ will be denoted by $\mathrm{Sh}(a,b)$. 
    
\end{enumerate}


\newpage

\section{Differential graded modules, $\mathbb{S}$-modules, operads and cooperads}

\vspace{2pc}

In this section, we describe categorical properties of the ground categories of graded, pre-differential and differential modules over a field $\kk$ of any characteristic. We recall some results about the homotopy theory of dg $\kk[G]$-modules, where $G$ is a finite group. These results will be useful when considering dg $\mathbb S$-modules. Finally, we recall the definitions of planar and symmetric (co)operads. 

\subsection{Graded modules, differential graded modules and pre-differential graded modules}

\begin{definition}[Graded modules]
    Let  $\catgrmod{\kk}$ be the category of graded $\kk$-modules. Objects are given by families $X = \{X_n\}_{n \in \mathbb Z}$ of $\kk$-modules indexed by the set of integers $\mathbb Z$, and morphisms by collections of linear maps indexed by $\mathbb Z$ which respect the grading.
\end{definition}

    It forms a closed symmetric monoidal category endowed with the tensor product $X \otimes Y$ given by 
    $$
    (X\otimes Y)_n \coloneqq \bigoplus_{i+j=n} X_i \otimes Y_j~,
    $$
    where the internal hom $[X,Y]$ is given by 
    $$
    [X, Y]_n \coloneqq \prod_k [X_k , Y_{n+k}],
    $$
    for every two graded $\kk$-modules $X,Y$ and every integer $n$ in $\mathbb Z$. An element $x$ of $X_n$ will be called an homogeneous element of degree $n$ of $X$. We denote the degree of a homogeneous element by $|x|$. A degree $n$ map from $X$ to $Y$ is just a homogeneous element of degree $n$ of $[X,Y]$.

    \medskip

    Moreover, let $f \in [X,X']_n$ and $g \in [Y, Y']_m$. We denote $f \otimes g$ the element of $[X \otimes Y ,X' \otimes Y']_{n+m}$ defined as
    $$
    (f \otimes g)(x \otimes y) \coloneqq (-1)^{|g||x|} f(x) \otimes g(y)
    $$
    and $[f, g]$ the element of $[[X', Y], [X, Y']]_{n+m}$ defined as
    $$
    [f , g](h) \coloneqq (-1)^{|f||h|} g \circ h \circ f.
    $$

The category of graded $\kk$-modules enjoys several categorical properties.

\medskip
\begin{enumerate}
    \item The (monadic) forgetful functor towards graded sets commutes with sifted colimits (it commutes with filtered colimits and reflexive coequalisers).

\medskip

    \item The tensor product commutes with colimits and with finite limits.

\medskip

    \item Subsequently the functor $X \mapsto X^{\otimes n}$ commutes with sifted colimits and with \textit{finite} cosifted limits. 

\medskip
    \item Filtered colimits commute with finite limits (since both are computed in graded sets).

\medskip
    \item Coproducts commute with finite limits.

\medskip
    \item Products commute with finite colimits.
\end{enumerate}

\begin{example}
    However, cofiltered limits do not commute in general with finite colimits. Indeed, in the context where $\kk = \mathbb R$, let $X$ be the sub $\mathbb R$-module of $\mathbb R^{\mathbb N}$ spanned by sequence $(x_0, x_1, \ldots)$ so that $\sum_n |x_n| < +\infty$ and let $s: X \longrightarrow X$ be the shift endomorphism
    $$
    s(x_0, x_1, x_2,\ldots) = (0, x_0, x_1, x_2,\ldots).
    $$
    Moreover, let $Y\subseteq X$ the sub $\mathbb R$-module spanned by sequence whose sum is zero. It is stable through $s$. Moreover the following sequence is exact
    $$
    0 \longrightarrow Y \hookrightarrow X \xrightarrow{\Sigma} \mathbb R \longrightarrow 0
    $$
    where $\Sigma$ denotes the sum.
    The following diagram is commutative 
    $$
    \begin{tikzcd}
        \cdots 
        \ar[r, "s"]
        &Y
        \ar[r,"s"] \ar[d, hook]
        & \cdots
        \ar[r, "s"]
        & Y
        \ar[r, "s"] \ar[d, hook]
        & Y \ar[d, hook]
        \\
        \cdots \ar[r, "s"']
        &X \ar[r,"s"'] \ar[d, "\Sigma"]
        & \cdots \ar[r, "s"']
        & X \ar[r,"s"'] \ar[d, "\Sigma"]
        & X \ar[d, "\Sigma"]
        \\
        \cdots \ar[r, equal]
        &\mathbb R \ar[r,equal]
        & \cdots \ar[r,equal]
        & \mathbb R \ar[r,equal]
        & \mathbb R  .    
    \end{tikzcd}
    $$
    The limit of the two first row are $0$ while the limit of the last row is $\mathbb R$. Thus, it cannot be the quotient of the second limit by the first one.
\end{example}

\begin{definition}[Pre-differential graded modules]
    We denote $\catpdgmod{\kk}$ the category of pre-differential graded (pdg) $\kk$-modules. Objects are given by graded $\kk$-modules $\{X_n\}_{n \in \mathbb Z}$ equipped with a degree $-1$ endomorphism $d: X_{(-)} \longrightarrow X_{(-)- 1}$, and morphisms by morphisms of graded modules which commute with the pre-differentials.
\end{definition}

The closed symmetric monoidal category structure on graded $\kk$-modules induces a closed symmetric monoidal category structure on pdg $\kk$-modules. For any two pdg $\kk$-modules $X,Y$, the graded tensor product $X \otimes Y$ is equipped with the pre-differential
    $$
    d_{X \otimes Y} \coloneqq d_x \otimes \id_Y  + d_X \otimes \id_Y~,
    $$
    and the internal hom $[X, Y]$ with the pre-differential
    $$
    d_{[X, Y]} \coloneqq [\id, d_Y] - [d_x, \id].
    $$

\begin{definition}[Differential graded modules]
    We denote $\catdgmod{\kk}$ the category of differential graded (dg) $\kk$-modules. It is the full subcategory of pdg $\kk$-modules $(X, d)$  that $d^2=d \circ d=0$. 
\end{definition}

\begin{remark}
A differential graded module is also referred as a chain complex.
\end{remark}

It inherits from pdg $\kk$-modules the structure of a closed symmetric monoidal category. One can check that if $X$ and $Y$ are dg modules, their tensor product $X \otimes Y$ and their internal hom $[X, Y]$ is again a dg module.

\begin{definition}[Spheres and disks]
    Let $n \in \mathbb Z$ be an integer. The $n$-th disk, denoted by $D^n$, is the dg module given by
    $$
    D^n_m
    \coloneqq
    \begin{cases}
        \kk \text{ if } m = n-1, n~,
        \\
        0  \text{ otherwise}~,
    \end{cases}
    $$
    where the differential $D^n_n \longrightarrow D^n_{n-1}$ is the identity of map of $\kk$.

    \medskip
    
    The $n$-th sphere, denoted by $S^n$, is the dg module given by
    $$
    S^n_m
    \coloneqq
    \begin{cases}
        \kk \text{ if } m = n~,
        \\
        0  \text{ otherwise}.
    \end{cases}
    $$
\end{definition}

\begin{definition}[Suspensions]
    Let $k \in \mathbb Z$ be an integer and let $X$ is a graded (resp. pdg, dg) $\kk$-module. The tensor product $S^k \otimes X$ is referred as the $k$-th suspension of $X$ and will be denoted by $s^k X$.
\end{definition}

The category of dg $\kk$-modules can be endowed with a combinatorial model category structure, determined by the following sets of maps:

\medskip

    \begin{enumerate}
        \item the set of weak equivalences is given by quasi-isomorphisms;

        \medskip
        
        \item the set of fibrations is given by degree-wise epimorphisms;

        \medskip
        
        \item the set of cofibrations is given by degree-wise injections;

        \medskip
        
        \item the set of generating cofibrations is given by the canonical inclusion maps $S^n \longrightarrow D^{n+1}$ for all $n$ in $\mathbb Z$;

        \medskip
        
        \item the set of generating acyclic cofibrations is given by the inclusion maps $0 \longrightarrow D^{n+1}$ for all $n$ in $\mathbb Z$.
    \end{enumerate}

\begin{remark}
Since $\kk$ is a field, this model category structure is both the \textit{injective} model structure and the \textit{projective} model structure.
\end{remark}
    
This model category structure can be left-transferred along the forgetful-truncation adjunction 
\[
\begin{tikzcd}[column sep=5pc,row sep=3pc]
          \catdgmod{\kk}_{\geq 0} \arrow[r, shift left=1.1ex, "\mathrm{U}"{name=F}] & \catdgmod{\kk}, \arrow[l, shift left=.75ex, "\tau_{\geq 0}"{name=U}]
            \arrow[phantom, from=F, to=U, , "\dashv" rotate=-90]
\end{tikzcd}
\]

to dg $\kk$-modules in non-negative degrees. It can also be right-transferred via the truncation-forgetful adjunction 
\[
\begin{tikzcd}[column sep=5pc,row sep=3pc]
          \catdgmod{\kk} \arrow[r, shift left=1.1ex, "\tau_{\leq 0}"{name=F}] & \catdgmod{\kk}_{\leq 0}, \arrow[l, shift left=.75ex, "\mathrm{U}"{name=U}]
            \arrow[phantom, from=F, to=U, , "\dashv" rotate=-90]
\end{tikzcd}
\]

to dg $\kk$-modules in non-positive degrees. 

\subsection{Finite group action on dg modules}
\label{sectionfinite group}
Let $G$ be a finite group. The data of an action of $G$ on a dg $\kk$-module is equivalent to the data of a dg $\kk[G]$-module, where $\kk[G]$ denotes the group algebra of $G$. 

\medskip

The functor
\[
\begin{tikzcd}[column sep=4.5pc,row sep=0.5pc]
\mathsf{dg}~\kk\text{-}\mathsf{mod} \arrow[r]
&\mathsf{dg}~\kk[G]\text{-}\mathsf{mod} \\
X \arrow[r,mapsto]
& \kk[G] \otimes X
\end{tikzcd}
\]
is both left and right adjoint to the forgetful functor $U_G$, and fits in the following adjunction diagram
\[
\begin{tikzcd}[column sep=6pc,row sep=2pc]
\mathsf{dg}~\kk[G]\text{-}\mathsf{mod}
\arrow[r,"U_G"{name=B, below}]
&\mathsf{dg}~\kk\text{-}\mathsf{mod}~.
\arrow[l,bend right=30,"{\kk[G]~ \otimes-}",swap,""{name=A,above}] \arrow[l,bend left=30,"{\kk[G]~ \otimes-}",""{name=C,above}] \arrow[phantom, from=B, to=A, "\dashv" rotate=-90] \arrow[phantom, from=C, to=B, "\dashv" rotate=-90]
\end{tikzcd}
\]

There is a bialgebra structure on $\kk[G]$, where the coproduct $\Delta$ is induced by the set-theoretical diagonal of $G$. It allows us to endow the category of dg $\kk[G]$-modules with with a monoidal category structure. The tensor product of two dg $\kk[G]$-modules $X, Y$ is given by the underlying dg module $X \otimes Y$ endowed with the following dg $\kk[G]$-module structure:
\[
\begin{tikzcd}[column sep=2.4pc,row sep=0.5pc]
\kk[G] \otimes (X \otimes Y) \arrow[r,"\Delta ~\otimes~ \mathrm{id}"]
&\kk[G \times G] \otimes (X \otimes Y) \arrow[r,"\cong"]
&(\kk[G] \otimes X) \otimes (\kk[G] \otimes Y) \arrow[r,"\gamma_X~\otimes~\gamma_Y"]
&X \otimes Y~,
\end{tikzcd}
\]

where $\gamma_X, \gamma_Y$ denote, respectively, the dg $\kk[G]$-module structures of $X$ and $Y$.

\medskip

One can either right-transfer or left-transfer the model category structure on dg modules to dg $\kk[G]$-modules. This gives \textit{two different combinatorial model structures}. 
\medskip
\begin{enumerate}
    \item The right-transferred structure is called the \textit{projective model structure}. Fibrations are given by degree-wise epimorphisms and weak-equivalences by quasi-isomorphisms.
\medskip
    \item The left-transferred structure is called the \textit{injective model structure}.  Cofibrations are given by degree-wise monomorphisms and weak-equivalences by quasi-isomorphisms.
\end{enumerate}

\begin{definition}[Projective and injective modules]
Let $X$ be a dg $\kk[G]$-module. 

\medskip

\begin{enumerate}
    \item $X$ is said to be \textit{projective} if it is a fibrant-cofibrant object in the projective model structure. 

\medskip

    \item $X$ is said to be \textit{injective} if it is a fibrant-cofibrant object in the injective model structure.
\end{enumerate}
\end{definition}

\begin{lemma}
Let $X$ be dg $\kk[G]$-module. 

\medskip

\begin{enumerate}
\item If $X$ is injective, then its linear dual $X^*$ is projective.

\medskip

\item If $X$ is projective, then its linear dual $X^*$ is injective.
\end{enumerate}
\end{lemma}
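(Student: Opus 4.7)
The plan is to exploit two properties of the linear duality functor $(-)^*$: it is exact (sending monomorphisms to epimorphisms and vice versa, and preserving quasi-isomorphisms, since $\kk$ is a field), and it is self-adjoint in the contravariant sense via the natural isomorphism
\[
\mathrm{Hom}_{\kk[G]}(A, B^*) \cong \mathrm{Hom}_{\kk[G]}(A \otimes B, \kk) \cong \mathrm{Hom}_{\kk[G]}(B, A^*).
\]
This gives an adjunction $F \dashv G$, where both $F \colon \catdgmod{\kk[G]}^{\op} \to \catdgmod{\kk[G]}$ and $G \colon \catdgmod{\kk[G]} \to \catdgmod{\kk[G]}^{\op}$ send an object to its linear dual.

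For statement (2), I would give a direct lifting argument. Given $X$ projective, an acyclic monomorphism $i \colon A \hookrightarrow B$, and a map $f \colon A \to X^*$, the adjunction converts $f$ into $f^\vee \colon X \to A^*$. Since $i^* \colon B^* \twoheadrightarrow A^*$ is an acyclic epimorphism and $X$ has the left lifting property against such morphisms (being cofibrant in the projective model structure), one obtains a lift $\tilde g \colon X \to B^*$ with $i^* \tilde g = f^\vee$. Its adjoint $\tilde f \colon B \to X^*$ extends $f$ by naturality of the adjunction. So $X^*$ has the right lifting property against acyclic monomorphisms, and is therefore fibrant in the injective model structure, i.e., injective.

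For statement (1), the direct analogue fails because the natural adjunction yields $\mathrm{Hom}(X^*, B) \cong \mathrm{Hom}(B, X^{**})$ rather than $\mathrm{Hom}(B, X)$, and $X^{**}$ differs from $X$ whenever $X$ has infinite-dimensional parts. Instead, I would argue via a Quillen adjunction. Equip $\catdgmod{\kk[G]}^{\op}$ with the opposite of the projective model structure and $\catdgmod{\kk[G]}$ with the injective one. Cofibrations of the source are then fibrations of the projective structure, i.e., degree-wise epimorphisms, and these are sent by $F = (-)^*$ to degree-wise monomorphisms, which are precisely the cofibrations of the injective model structure; the same holds for acyclic cofibrations. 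Hence $F$ is left Quillen, and the right adjoint $G$ preserves fibrant objects. An injective $X$ is fibrant in the injective model structure; its image $X^* = G(X)$ is thus fibrant in the opposite of the projective model structure on $\catdgmod{\kk[G]}^{\op}$, which, by the very definition of the opposite structure, means $X^*$ is cofibrant in the projective model structure on $\catdgmod{\kk[G]}$, i.e., projective.

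The main obstacle is the asymmetric behavior of duality in the infinite-dimensional setting, which prevents a symmetric lifting-based proof of (1). The Quillen argument circumvents this by working only with cofibrations (characterized by the very concrete classes of monomorphisms or epimorphisms in one of the two model structures) and then invoking the general principle that a right Quillen functor preserves fibrant objects.
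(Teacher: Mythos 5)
Your treatment of part (2) is correct and is, in substance, the paper's own proof: the paper simply asserts that linear duality is a Quillen adjunction between $\catdgmod{\kk[G]}$ with the projective structure and $\catdgmod{\kk[G]}^{\op}$ with the opposite of the injective structure, and your lifting argument is exactly the unwinding, via the bijection $\mathrm{Hom}_{\kk[G]}(A,B^*) \cong \mathrm{Hom}_{\kk[G]}(B,A^*)$, of the fact that this left Quillen functor preserves cofibrant objects.

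Part (1), however, contains a genuine gap: the handedness of your adjunction is backwards. The bijection $\mathrm{Hom}_{\kk[G]}(A,B^*) \cong \mathrm{Hom}_{\kk[G]}(B,A^*)$ exhibits $(-)^*\colon \catdgmod{\kk[G]} \to \catdgmod{\kk[G]}^{\op}$ as the \emph{left} adjoint and $(-)^*\colon \catdgmod{\kk[G]}^{\op} \to \catdgmod{\kk[G]}$ as the \emph{right} adjoint; the unit is the biduality map $X \to X^{**}$, and there is no natural map $X^{**} \to X$ that could serve for the opposite handedness. Equivalently, your $F\colon \catdgmod{\kk[G]}^{\op} \to \catdgmod{\kk[G]}$ cannot be a left adjoint, since it does not preserve colimits: the coproduct in $\catdgmod{\kk[G]}^{\op}$ is the product in $\catdgmod{\kk[G]}$, and $\left(\prod_i X_i\right)^* \not\cong \bigoplus_i X_i^*$. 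So the step ``$F$ is left Quillen, hence the right adjoint $G$ preserves fibrant objects'' is unfounded: $G$ is in fact the left adjoint, and left adjoints have no reason to preserve fibrant objects. If you rerun the Quillen formalism with the correct handedness, for either pairing of model structures, you only recover statement (2): for instance, with the injective structure on $\catdgmod{\kk[G]}$ and the opposite projective structure on $\catdgmod{\kk[G]}^{\op}$, the right Quillen functor is $(-)^*\colon \catdgmod{\kk[G]}^{\op} \to \catdgmod{\kk[G]}$, its fibrant objects are precisely the projective dg $\kk[G]$-modules, and preservation of fibrant objects says that duals of projectives are injective --- never that duals of injectives are projective. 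The biduality obstruction you correctly identified for the naive lifting argument is thus the same obstruction that defeats this route; note also that the paper's proof consists only of the one-line Quillen-adjunction assertion you already used for (2), so statement (1) is not obtained by formally dualizing it, and your proposal as written does not establish it.
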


\begin{proof}
    Linear duality defines a Quillen adjunction 

    \[
\begin{tikzcd}[column sep=5pc,row sep=3pc]
          \mathsf{dg}~\kk[G]\text{-}\mathsf{mod}  \arrow[r, shift left=1.1ex, "(-)^*"{name=F}] & \mathsf{dg}~\kk[G]\text{-}\mathsf{mod}^{\mathsf{op}} , \arrow[l, shift left=.75ex, "(-)^*"{name=U}]
            \arrow[phantom, from=F, to=U, , "\dashv" rotate=-90]
\end{tikzcd}
\]

when the category on the left is endowed with the projective model structure and the category on the right with the opposite of the injective model structure.
\end{proof}

The two adjunctions $\kk[G] \otimes - \dashv U_G \dashv \kk[G] \otimes -$ restrict to dg $\kk$-modules and dg $\kk[G]$-modules which are either in non-negative degrees or in non-positive degrees.

\medskip

\begin{enumerate}
    \item The category of dg $\kk[G]$-modules in non-negative degrees can be endowed with the \textit{projective model structure.} Fibrations are given by degree-wise epimorphisms and weak-equivalences by quasi-isomorphisms. Moreover, cofibrations are degree-wise injections whose cokernel is degre-ewise projective.

    \medskip
    
    \item The category of dg $\kk[G]$-modules in non-positive degrees can be endowed with the \textit{injective model structure.} Cofibrations are given by degree-wise injections and weak-equivalences by quasi-isomorphisms. Moreover, fibrations are degree-wise epimorphisms whose kernel is degree-wise injective.
\end{enumerate}

\medskip

\begin{definition}[Quasi-free modules]
    A dg $\kk[G]$-module is \textit{quasi-free} if its underlying graded $\kk[G]$-module is free, that is, in the essential image of the functor $\kk[G] \otimes -$ from graded $\kk$-module to graded $\kk[G]$-modules.
\end{definition}

A quasi-free dg $\kk[G]$-module $X$ is degree-wise projective and degree-wise injective. Nevertheless, it might not be cofibrant in the projective model structure nor fibrant in the injective model structure. 

\medskip

\begin{enumerate}
    \item If $X$ is \textit{bounded below}, it is cofibrant in the projective model structure since, up to some a finite degree translation, it is the image of a cofibrant object of $\catdgmod{\kk[G]}_{\geq 0}$.

\medskip

    \item If $X$ is \textit{bounded above}, it is fibrant in the injective model structure since, up to some a finite degree translation, it is the image of a fibrant object of $\catdgmod{\kk[G]}_{\leq 0}$.
\end{enumerate}

\begin{remark}
To the best of our knowledge, these results are particular instances of results in \cite{DgalgebraHalperin}. As we were unable to find it, we refer to \cite{Abbasirad2014HomotopyTO} for more details.
\end{remark}

\begin{proposition}\label{prop: tensor with a quasi-free}
Quasi-free dg $\kk[G]$-modules are stable through tensor product in the following sense: $X \otimes Y$ is quasi-free whenever $X$ or $Y$ is quasi-free. 
\end{proposition}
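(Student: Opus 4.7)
The plan is to reduce the statement to the underlying graded level, since being quasi-free is a condition on the underlying graded $\kk[G]$-module. Assume without loss of generality that $X$ is quasi-free, so that as a graded $\kk[G]$-module there is an isomorphism $X \cong \kk[G] \otimes V$ for some graded $\kk$-module $V$, where $\kk[G]$ acts by left multiplication on the first tensor factor.

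The key step is the classical ``untwisting'' isomorphism of graded $\kk[G]$-modules
\[
\phi_Y \colon \kk[G] \otimes Y \xrightarrow{\ \cong\ } \kk[G] \otimes U_G(Y)~, \qquad g \otimes y \longmapsto g \otimes g^{-1}y~,
\]
where the source carries the diagonal action induced by the coproduct $\Delta(g) = g \otimes g$ of $\kk[G]$, while the target carries the free action on the first tensor factor only. Its inverse sends $g \otimes y$ to $g \otimes gy$, and a direct computation shows $\phi_Y$ is $\kk[G]$-equivariant: on the source, $h \cdot (g \otimes y) = hg \otimes hy$, and $\phi_Y$ sends this to $hg \otimes (hg)^{-1}hy = hg \otimes g^{-1}y$, which matches $h \cdot \phi_Y(g \otimes y)$ on the target.

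Tensoring $\phi_Y$ on the left by $V$ (with trivial $\kk[G]$-action), one obtains an isomorphism of graded $\kk[G]$-modules
\[
X \otimes Y \;\cong\; \kk[G] \otimes V \otimes Y \;\cong\; \kk[G] \otimes \bigl( V \otimes U_G(Y) \bigr)~,
\]
which exhibits $X \otimes Y$ as the free graded $\kk[G]$-module on the graded $\kk$-module $V \otimes U_G(Y)$. Hence $X \otimes Y$ is quasi-free, and the case where only $Y$ is quasi-free follows by the symmetry of the tensor product on dg $\kk[G]$-modules. There is no essential obstacle here; the only care required is to keep track of which copy of $\kk[G]$ carries the action and to verify the equivariance of $\phi_Y$, which is a direct consequence of the group-like nature of the coproduct on $\kk[G]$.
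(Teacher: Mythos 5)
Your proof is correct and follows essentially the same route as the paper: the paper also reduces to the underlying graded level and uses the same group-algebra untwisting isomorphism (its explicit inverse $x \otimes g \otimes z \mapsto g \otimes (g^{-1}(x) \otimes z)$ is exactly your $\phi^{-1}$, with the roles of $X$ and $Y$ swapped). The only difference is cosmetic: you spell out the equivariance check and invoke symmetry, while the paper states the isomorphism directly.
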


\begin{proof}
Let $X$ and $Y$ be dg $\kk[G]$-modules, and assume that
$Y \cong \kk[G] \otimes Z$ as graded $\kk[G]$-modules. The canonical morphism of dg $\kk[G]$-modules
$$
\kk[G] \otimes (U_G(X) \otimes Z) \longrightarrow X \otimes Y
$$
is an isomorphism, with inverse
$$
x \otimes g \otimes z \mapsto g\otimes ( g^{-1}(x) \otimes z).
$$
\end{proof}

\begin{remark}
The above proposition holds if one replaces $\kk[G]$ by any Hopf algebra $H$.
\end{remark}

\begin{proposition}\label{prop: iso avec la norme}
Let $X$ be a quasi-free dg $\kk[G]$-module. The canonical norm map 
\[
\mathbb{N}_X: X_G \longrightarrow X^G
\]
from the coinvariants to the invariants is an isomorphism of dg modules.
\end{proposition}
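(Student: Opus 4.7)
The plan is to reduce the statement to the trivial case $X = \kk[G]$. The norm map $\mathbb{N}_X \colon X_G \to X^G$ is defined on a coinvariant class by $[x] \mapsto \sum_{g \in G} g \cdot x$. A standard reindexing shows this lands in invariants and is independent of the representative, and it commutes with $d_X$ because the action of $G$ does. Hence $\mathbb{N}_X$ is automatically a morphism of dg $\kk$-modules, and to prove it is an isomorphism it is enough to check the statement at the level of underlying graded modules.

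By the quasi-freeness hypothesis, I fix an isomorphism of graded $\kk[G]$-modules $X \cong \kk[G] \otimes Z$, where $G$ acts by left multiplication on the first factor and trivially on $Z$. The differential $d_X$ need not respect this splitting, but this is irrelevant at the graded level, which is all that remains to be checked. Both $(-)_G$ (as a left adjoint) and $(-)^G$ (being computed pointwise on elements of a direct sum) commute with arbitrary direct sums of $\kk[G]$-modules, and the norm map is natural with respect to these decompositions. Choosing a graded $\kk$-basis of $Z$ thus reduces the problem to the single summand $X = \kk[G]$, up to suspension.

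For $X = \kk[G]$, the computation is classical: $\kk[G]_G$ and $\kk[G]^G$ are both one-dimensional over $\kk$, generated respectively by the class $[1]$ and by the norm element $N \coloneqq \sum_{g \in G} g$. The map $\mathbb{N}_{\kk[G]}$ sends $[1]$ to $N$, which is manifestly an isomorphism, and the graded reduction above then promotes this to an isomorphism for arbitrary quasi-free $X$. No genuine obstacle arises in the argument; the only point worth flagging is the tension between the differential $d_X$ and the graded splitting $\kk[G] \otimes Z$, which is bypassed by separating the chain-level content of the norm map (which is automatic) from its bijectivity (which is verified purely at the graded level).
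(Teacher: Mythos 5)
Your proof is correct and follows essentially the same route as the paper, which simply observes that the claim "follows from the fact that $X$ is quasi-free and that limits and colimits are computed degree-wise"; your write-up just makes explicit the reduction to the graded level, the splitting $X \cong \kk[G] \otimes Z$, and the classical computation $\kk[G]_G \cong \kk \cdot [1] \xrightarrow{\ \sim\ } \kk \cdot N \cong \kk[G]^G$.
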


\begin{proof}
Follows from the fact that $X$ is quasi-free and that limits and colimits are computed degree-wise.
\end{proof}

Finally, the projective and injective model category structures are compatible in the following sense.

\begin{proposition}\label{lemmatensored}
The category of dg $\kk[G]$-modules together with the projective model structure is homotopically enriched-tensored-cotensored over the category dg $\kk[G]$-module together with the injective model structure. For every injective cofibration (i.e: a degree-wise injection) $f:A \rightarrowtail B$ and every projective cofibration $g: X \rightarrowtail Y$, the morphism

\[
f \diamond g :(A \otimes Y) \coprod_{A~ \otimes ~X} (B \otimes X) \rightarrowtail B \otimes Y
\]

is a projective cofibration. Furthermore, it is acyclic whenever $f$ or $g$ is.
\end{proposition}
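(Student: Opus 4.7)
The plan is to verify the pushout-product statement by first reducing to the case where $g$ is a generating projective cofibration, and then using Proposition \ref{prop: tensor with a quasi-free} to transfer the problem to the classical pushout-product axiom for the monoidal model structure on $\catdgmod{\kk}$.

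Fix the injective cofibration $f$. The assignment $g \mapsto f \diamond g$ commutes with retracts, pushouts, and transfinite compositions, because $A \otimes (-)$ and $B \otimes (-)$ both preserve all colimits. Since every projective cofibration is a retract of a relative cell complex built from the generating projective cofibrations $g_n : \kk[G] \otimes S^n \hookrightarrow \kk[G] \otimes D^{n+1}$ (and every acyclic projective cofibration from $0 \hookrightarrow \kk[G] \otimes D^{n+1}$), it suffices to verify the conclusion when $g$ is one of these generators.

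The key input is the natural isomorphism of dg $\kk[G]$-modules
\[
M \otimes (\kk[G] \otimes Z) \;\cong\; \kk[G] \otimes (U_G(M) \otimes Z)
\]
from Proposition \ref{prop: tensor with a quasi-free}, natural in $M \in \catdgmod{\kk[G]}$ and in $Z \in \catdgmod{\kk}$. Applying it to $f : A \to B$ and to $S^n \hookrightarrow D^{n+1}$, and commuting $\kk[G] \otimes (-)$ past the pushout (which it preserves as a left adjoint), one obtains an isomorphism
\[
f \diamond g_n \;\cong\; \kk[G] \otimes \bigl( U_G(f) \diamond_{\kk} (S^n \hookrightarrow D^{n+1}) \bigr),
\]
where $\diamond_\kk$ denotes the pushout-product in $\catdgmod{\kk}$. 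Since $U_G(f)$ is a degree-wise monomorphism, the classical pushout-product axiom for the standard monoidal model structure on dg $\kk$-modules implies that the right-hand pushout-product is a cofibration in $\catdgmod{\kk}$, and it is acyclic whenever $f$ is, by flatness of every dg $\kk$-module over the field $\kk$. Since $\kk[G] \otimes (-) : \catdgmod{\kk} \to \catdgmod{\kk[G]}$ is left Quillen for the projective model structure, its image is a projective (acyclic) cofibration. The remaining case of the generating acyclic cofibration gives $f \diamond g \cong \kk[G] \otimes (U_G(f) \otimes D^{n+1})$, an acyclic projective cofibration because $D^{n+1}$ is contractible.

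The main technical point is to check that the quasi-free identification is sufficiently natural: it must intertwine the diagonal $\kk[G]$-action on $M \otimes (\kk[G] \otimes Z)$ with the free action on $\kk[G] \otimes (U_G(M) \otimes Z)$, be compatible with both differentials, and respect the pushout defining $f \diamond g_n$. Once this is in place, the conclusion reduces cleanly to the well-known pushout-product axiom for $\catdgmod{\kk}$.
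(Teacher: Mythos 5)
Your proposal is correct and follows essentially the same route as the paper: reduce to the generating projective (acyclic) cofibrations, use the quasi-free isomorphism of Proposition \ref{prop: tensor with a quasi-free} to rewrite $f \diamond g$ as $\kk[G] \otimes \bigl(U_G(f) \diamond_{\kk} (S^n \hookrightarrow D^{n+1})\bigr)$, conclude by the monoidal model structure on dg $\kk$-modules and closure of the class of such $g$ under pushouts, transfinite composition and retracts. The naturality point you flag at the end is exactly what the paper checks by exhibiting the isomorphism of commutative squares, and it is immediate from the explicit inverse given in Proposition \ref{prop: tensor with a quasi-free}.
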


\begin{proof}
Let us suppose that $g$ is a generating projective cofibration, given by the inclusion $S^{k} \otimes \kk[G] \longrightarrow D^{k+1} \otimes  \kk[G]$. In that case, one has a canonical isomorphism of diagrams between the commutative square
    $$
    \begin{tikzcd}[column sep=2.5pc,row sep=2.5pc]
        A \otimes (S^{k} \otimes \kk[G])
        \ar[r] \ar[d]
        & A \otimes (D^{k+1} \otimes \kk[G])
        \ar[d]
        \\
        B \otimes (S^{k} \otimes \kk[G])
        \ar[r]
        & B \otimes (D^{k+1} \otimes \kk[G])
    \end{tikzcd}
    $$
and the commutative square
    $$
    \begin{tikzcd}[column sep=2.5pc,row sep=2.5pc]
        (U_{G}(A) \otimes S^{k}) \otimes \kk[G]
        \ar[r] \ar[d]
        & (U_{G}(A) \otimes D^{k+1}) \otimes \kk[G]
        \ar[d]
        \\
        (U_{G}(B) \otimes S^{k}) \otimes \kk[G]
        \ar[r]
        & (U_{G}(B) \otimes D^{k+1}) \otimes \kk[G]~.
    \end{tikzcd}
    $$
Thus, the map $f \diamond g$ is isomorphic to the image through the functor $- \otimes \kk[G]$ of the map
\[
(U_{G}(A) \otimes D^{k+1}) \coprod_{(U_{G}(A)~ \otimes ~ S^{k})} ((U_{G}(B) \otimes S^{k})) \to (U_{G}(B) \otimes D^{k+1})~,
\]
which is a cofibration of dg $\kk$-modules. Hence $f \diamond g$ is a projective cofibration. Now the set of morphisms $g$ so that $f \diamond g$ is a projective cofibration is stable through pushouts, transfinite composition and retracts. Hence it contains all the projective cofibrations.

\medskip

Finally, if $f$ is acyclic or $g$ is acyclic, then $f \diamond g$ is acyclic too, as it is an acylic cofibration of dg $\kk$-modules and since the model category on dg $\kk$-modules is a monoidal model category.
\end{proof}

\subsection{$\mathbb N$-modules, planar operads and planar cooperads}
We consider $\mathbb N$ as a category where objects are natural integers and where there are only identity morphisms. 

\begin{definition}[dg $\mathbb N$-modules]
    A dg $\mathbb N$-modules amounts to the data of a functor 
    \[
    X: \mathbb N \longrightarrow \catdgmod{\kk}.
    \]    
    The object $X(n)$ is called the arity $n$ part of $X$. We denote $\catdgmod{\mathbb N}$
    the category of dg $\mathbb N$-modules. 
\end{definition}

\begin{remark}
The category of dg $\mathbb N$-modules admits a canonical combinatorial model category structure, determined by the following sets of maps:

\medskip

\begin{enumerate}
        \item the set of weak equivalences $f:X \qi Y$ is given by arity-wise quasi-isomorphisms $f(n): X(n) \qi Y(n)$ for all $n \geq 0$;

        \medskip
        
        \item the set of fibrations $f:X \twoheadrightarrow Y$ is given by  arity-wise degree-wise epimorphisms $f(n): X(n) \twoheadrightarrow Y(n)$ for all $n \geq 0$;

        \medskip
        
        \item the set of cofibrations $f:X \rightarrowtail Y$ is given by  arity-wise degree-wise injections $f(n): X(n) \rightarrowtail Y(n)$ for all $n \geq 0$.
    \end{enumerate}
\end{remark}

The planar horizontal product on dg $\mathbb N$-modules $X,Y$ is given by the Day convolution product

\[
(X \circledast_\pl Y)(n) \coloneqq \bigoplus_{k+l = n} X(k) \otimes Y(l)~.
\]
\vspace{0.1pc}

This endows dg $\mathbb N$ with a symmetric monoidal category structure, where the unit is given by $\kk$ concentrated in arity $0$.

\medskip

There is another monoidal structure given by the \textit{planar composition product} 

\[
(X \comp_\pl Y)(n) \coloneqq \bigoplus_{k \geq 0} X(k) \otimes Y^{\circledast_\pl k} (n)~.
\]
\vspace{0.1pc}

The unit for the composition is given by $\operad I$, defined as follows 
$$
\II (n) \coloneqq
\begin{cases}
    0 \text{ if }n \neq 1~,
    \\
    \kk \text{ if }n = 1.
\end{cases}
$$

\begin{definition}[Planar dg operad]
A \textit{planar dg operad} $\operad {P}$ amounts to the data of a monoid $(\mathcal{P},\gamma,\eta)$ in the category of dg $\mathbb N$-modules with respect to the composition product. 
\end{definition}

\begin{definition}[Augmented planar dg operad]
An \textit{augmented planar dg operad} $\operad P$ amounts to the data of a planar dg operad $(\mathcal{P},\gamma,\eta)$ equipped with a morphism of planar dg operads $\nu: \operad P \longrightarrow \operad I$ such that $\nu \circ \eta = \mathrm{id}.$
\end{definition}

Given an augmented planar dg operad $\operad P$, we will denote by $\overline{\operad P}$ the kernel of the augmentation map. 

\begin{definition}[Planar dg cooperad]
A \textit{planar dg cooperad} $\operad C$ amounts to the data of a comonoid $(\C, \Delta, \epsilon)$ in the category of dg $\mathbb N$-modules with respect to the composition product. 
\end{definition}

Given a planar dg cooperad $\C$, we will denote by $\overline{\operad C}$ the kernel of the counit map. 

\begin{definition}[Coaugmented planar dg cooperad]
A \textit{coaugmented planar dg cooperad} $\operad C$ amounts to the data of a planar dg cooperad $(\C, \Delta, \epsilon)$ equipped with a morphism of planar dg cooperads $\mu: \operad I \longrightarrow \operad C$ such that $\epsilon \circ \mu = \mathrm{id}$. 
\end{definition}

\begin{remark}
All the definitions of this subsection make sense in the graded or the pre-differential setting. 
\end{remark}

\subsection{$\mathbb S$-modules, operads and cooperads}
\label{sectioncomositionproduct}
In this subsection, we deal with dg (resp. graded or pdg) $\mathbb S$-modules.

\begin{definition}[dg $\mathbb S$-module]
    Let $\mathbb S$ be the groupoid whose objects are natural integers and whose morphisms are given by 
    $$
    \hom_{\mathbb S}(n,m) = 
    \begin{cases}
        \emptyset \text{ if }n \neq m~,
        \\
        \mathbb S_n \text{ if }n = m~.
    \end{cases}
    $$
    A \textit{dg} $\mathbb S$\textit{-module} $M$ amounts to the data of a functor 
   	\[
   	M: \mathbb S^\op \longrightarrow \catdgmod{\kk}
   	\]
    from $\mathbb S^\op$ to dg modules. It corresponds to collection of dg modules $\{M(n)\}$ for $n \geq 0$, where each $M(n)$ is endowed with a (right) action of $\mathbb{S}_n$. We denote by $\mathsf{dg}~\mathbb{S}\text{-}\mathsf{mod}$ the category of dg $\mathbb{S}$-modules. 
\end{definition}

\begin{remark}
We define analogously the categories of graded or pdg $\mathbb{S}$-modules.
\end{remark}

There is a diagram of adjunctions 
\[
\begin{tikzcd}[column sep=6pc,row sep=2pc]
\mathsf{dg}~\mathbb{S}\text{-}\mathsf{mod}
\arrow[r,"U"{name=B, below}]
&\mathsf{dg}~\mathbb{N}\text{-}\mathsf{mod}~,
\arrow[l,bend right=30,"-\otimes~\mathbb{S}",swap,""{name=A,above}] \arrow[l,bend left=30,"-\otimes~\mathbb{S}",""{name=C,above}] \arrow[phantom, from=B, to=A, "\dashv" rotate=-90] \arrow[phantom, from=C, to=B, "\dashv" rotate=-90]
\end{tikzcd}
\]

between the categories of dg $\mathbb{N}$-modules and of dg $\mathbb{S}$-modules. The functor $- \otimes \mathbb S$ is given by 
\[
(X \otimes \mathbb S)(n) \coloneqq X(n) \otimes \kk[\mathbb S_n]~,
\]
for all $n \geq 0$. We will also denote by $- \otimes \mathbb S$ the endofunctor of dg $\mathbb N$-modules that is given by the (co)free dg $\mathbb S$-module functor composed with the forgetful functor.

\medskip

One can either right-transfer or left-transfer the model category structure on dg $\mathbb{N}$-modules to dg $\mathbb{S}$-modules. This gives \textit{two different combinatorial model structures}. 
\medskip
\begin{enumerate}
    \item The right-transferred structure is called the \textit{projective model structure}. Fibrations are given by degree-wise arity-wise epimorphisms and weak-equivalences by arity-wise quasi-isomorphisms.
\medskip
    \item The left-transferred structure is called the \textit{injective model structure}. Cofibrations are given by degree-wise arity-wise monomorphisms and weak-equivalences by arity-wise quasi-isomorphisms.
\end{enumerate}

\begin{remark}
All the results of Subsection \ref{sectionfinite group} can be translated to dg $\mathbb{S}$-modules as the hold for any finite group $G$ and since the homotopy theory of dg $\mathbb{S}$-modules is determined arity-wise.
\end{remark}

The \textit{composition product} $\circ$ of two dg $\mathbb S$-modules $M,N$ is defined as follows

\[
M \comp N(n) \coloneqq \bigoplus_{k\geq 0} M(k) \otimes_{\mathbb{S}_k} \left( \bigoplus_{i_1 + \cdots + i_k = n} \mathrm{Ind}_{\mathbb{S}_{i_1} \times \cdots \times \mathbb{S}_{i_k}}^{\mathbb{S}_n} (N(i_1) \otimes \cdots \otimes N(i_k))\right)~.
\]
\vspace{0.1pc}

The unit for the composition is given by $\operad I$, defined as follows 
$$
\II (n) \coloneqq
\begin{cases}
    0 \text{ if }n \neq 1~,
    \\
    \kk \text{ if }n = 1.
\end{cases}
$$

They endow the category of dg $\mathbb S$-modules with a monoidal category structure.

\begin{definition}[dg operad]
A \textit{dg operad} $\operad {P}$ amounts to the data of a monoid $(\mathcal{P},\gamma,\eta)$ in the category of dg $\mathbb S$-modules with respect to the composition product. 
\end{definition}

\begin{definition}[augmented dg operad]
An \textit{augmented dg operad} $\operad P$ amounts to the data of a dg operad $(\mathcal{P},\gamma,\eta)$ equipped with a morphism of dg operads $\nu: \operad P \longrightarrow \operad I$ such that $\nu \circ \eta = \mathrm{id}.$
\end{definition}

Given an augmented planar dg operad $\operad P$, we will denote by $\overline{\operad P}$ the kernel of the augmentation map. 

\begin{definition}[$\mathbb{S}$-something dg operad]
Let $\operad P$ be a dg operad.

\begin{enumerate}
\item It is called $\mathbb S$-\textit{projective} if its underlying dg $\mathbb S$-module is cofibrant for the projective model structure.

\medskip

\item It is called $\mathbb S$-\textit{injective} if its underlying dg $\mathbb S$-module is fibrant for the injective model structure. 

\medskip

\item It is called $\mathbb S$-\textit{quasi-free} if its underlying dg $\mathbb S$-module is quasi-free.
\end{enumerate}
\end{definition}

\begin{remark}
An $\mathbb S$-projective dg operad is usually called an $\mathbb S$-cofibrant dg operad in the literature. We adopt this non-standard terminology in order to be able to differentiate between $\mathbb S$-projective and $\mathbb{S}$-injective dg operads, which both have cofibrant underlying dg $\mathbb{S}$-modules, but in different model category structures.
\end{remark}

\begin{definition}[dg cooperad]
A \textit{dg cooperad} $\operad C$ amounts to the data of a comonoid $(\C, \Delta, \epsilon)$ in the category of dg $\mathbb S$-modules with respect to the composition product. 
\end{definition}

Given a dg cooperad $\C$, we will denote by $\overline{\operad C}$ the kernel of the counit map. 

\begin{definition}[coaugmented dg cooperad]
A \textit{coaugmented dg cooperad} $\operad C$ amounts to the data of a dg cooperad $(\C, \Delta, \epsilon)$ equipped together with a morphism of planar dg cooperads $\mu: \operad I \longrightarrow \operad C$ such that $\epsilon \circ \mu = \mathrm{id}$. 
\end{definition}

There is a strong monoidal structure on the functor $-\otimes \mathbb S$ which yields two adjunctions 

\[
\begin{tikzcd}[column sep=5pc,row sep=3pc]
          \dgoperads_\pl  \arrow[r, shift left=1.1ex, "-\otimes \mathbb S"{name=F}] &\dgoperads , \arrow[l, shift left=.75ex, "\mathrm{U}_{\mathbb S}"{name=U}]
            \arrow[phantom, from=F, to=U, , "\dashv" rotate=-90]
\end{tikzcd}
\]
\[
\begin{tikzcd}[column sep=5pc,row sep=3pc]
          \dgcooperads_\pl  \arrow[r, shift left=1.1ex, "-\otimes \mathbb S"{name=F}] &\dgcooperads, \arrow[l, shift left=.75ex, "\mathrm{U}_{\mathbb S}"{name=U}]
            \arrow[phantom, from=F, to=U, , "\dashv" rotate=90]
\end{tikzcd}
\]

that lift, respectively, the adjunction $-\otimes \mathbb S \dashv U_{\mathbb S}$ and the adjunction $U_{\mathbb S} \dashv -\otimes \mathbb S$ that relate dg $\mathbb S$-modules to dg $\mathbb N$-modules.

\medskip

\begin{remark}
The adjunction $- \otimes \mathbb S \dashv U_{\mathbb S}$ relating dg operads to planar dg operads is monadic since its right adjoint preserves coreflexive equalisers and is conservative. However, the other adjunction $U_{\mathbb S} \dashv - \otimes \mathbb S$ is not a priori comonadic.
\end{remark}

\begin{remark}
All the definitions of this subsection make sense in the graded or the pre-differential setting. 
\end{remark}

\subsection{Tree modules and conilpotent cooperads}
In this subsection, we briefly recall how operads are algebras over the tree monad and how conilpotent cooperads are exactly coalgebras over the tree comonad, both in the planar and in the symmetric case. These constructions can all be found in \cite{LodayVallette}. For a more detailed discussion about the point of view adopted here, see the forthcoming note \cite{notesym}.

\medskip

\textbf{Planar tree endofunctor.} For every dg $\mathbb N$-module $X$, one can define the \textit{planar tree module} $\treemod_\pl(X)$ of $X$, which is the dg $\mathbb N$-module given, for $m \geq 0$, by 
\[
\treemod_\pl(X)(m)  = \bigoplus_t t(X)~,
\]
where the sum is taken over the isomorphism classes of planar trees with $m$ leaves.

\medskip

Let $n$ be in $\mathbb N$. We define the following sub-functors of the planar tree module:

\medskip

\begin{itemize}
	\item The \textit{reduced planar tree endofunctor} $\overline{\treemod}_\pl(X)$, given by the sum over all non-trivial planar trees;
	
\medskip

    \item The $n$\textit{-levelled planar tree endofunctor} $\treemod_{\pl, \leq n}(X)$, given by the sum over planar trees whose height is equal or lower than $n$ (recall that the height of the trivial tree with no node is $0$);
    
\medskip

    \item The $n$-\textit{weight planar tree endofunctor} $\treemod_{\pl}^{(\leq n)}(X)$, given by the sum over planar trees with $n$ nodes or less.
 
\end{itemize}

\medskip

All these constructions are natural in $X$ and define endofunctors of the category of dg $\mathbb N$-modules. 

\medskip

\textbf{Planar operads.} The planar tree module endofunctor $\treemod_\pl$ admits a monad structure induced by the grafting of planar trees.

\begin{proposition}
The category of algebras over the monad $\treemod_\pl$ is canonically isomorphic to the category of planar dg operads.
\end{proposition}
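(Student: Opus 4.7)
The plan is to exhibit mutually inverse functors between the category of planar dg operads and the category of algebras over the monad $\treemod_\pl$. The strategy rests on the observation that the composition product $-\comp_\pl-$ corresponds to two-level trees: the summand $\operad P\comp_\pl \operad P$ inside $\treemod_\pl(\operad P)$ consists exactly of those trees of height two whose root vertex is labelled in $\operad P$ and whose depth-one vertices are labelled in $\operad P$. The two key observations that make the proof go through are that the trivial tree (just a leaf, no internal vertex) yields the identity sub-$\mathbb N$-module $\operad I \subseteq \treemod_\pl(\operad P)$, and that every planar tree can be built by iterated grafting of corollas, which controls the monad multiplication $\mu_{\treemod_\pl} : \treemod_\pl \treemod_\pl \longrightarrow \treemod_\pl$.

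First, I would construct a functor $\Phi$ from planar dg operads to $\treemod_\pl$-algebras. Given $(\operad P, \gamma, \eta)$, I define $\gamma_\infty : \treemod_\pl(\operad P) \longrightarrow \operad P$ by induction on the number of internal vertices using the filtration $\treemod_\pl^{(\leq n)}$: the trivial tree is sent to $\operad P$ via $\eta$; a corolla labelled by $p \in \operad P(k)$ is sent to $p$; and for a tree with at least two internal vertices, I pick any internal edge, contract it via $\gamma$, and apply the inductively constructed map to the resulting tree of smaller weight. Associativity of $\gamma$ ensures that the result is independent of the chosen edge, so $\gamma_\infty$ is well defined. The compatibility of $\gamma_\infty$ with both the unit $\eta_{\treemod_\pl}$ and the multiplication $\mu_{\treemod_\pl}$ then follows by the same inductive argument, since $\mu_{\treemod_\pl}$ is itself defined via iterated grafting.

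Conversely, I would define a functor $\Psi$ from $\treemod_\pl$-algebras to planar dg operads: given $(\operad P, \gamma_\infty)$, the composition $\gamma$ is defined as the restriction of $\gamma_\infty$ to the sub-dg $\mathbb N$-module $\operad P \comp_\pl \operad P \subseteq \treemod_\pl(\operad P)$ of two-level trees, and the unit $\eta$ is defined as the restriction of $\gamma_\infty$ to the summand $\operad I \subseteq \treemod_\pl(\operad P)$ of the trivial tree. The unit and associativity axioms for a monoid with respect to $\comp_\pl$ then follow directly from the monad axioms, applied to the embeddings of two-level and three-level trees into $\treemod_\pl(\operad P)$ and $\treemod_\pl \treemod_\pl (\operad P)$.

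The main obstacle is the verification that $\Phi$ and $\Psi$ are mutually inverse. The identity $\Psi \circ \Phi = \mathrm{id}$ is immediate since $\operad P \comp_\pl \operad P$ is by construction the space of two-level trees. The converse identity $\Phi \circ \Psi = \mathrm{id}$ amounts to showing that a $\treemod_\pl$-algebra structure is determined by its restriction to two-level trees and to the trivial tree, which one proves by induction on the number of internal vertices: the monad associativity axiom expresses the action on a tree with $n+1$ internal vertices as an iterated composition of actions on trees with fewer vertices, grafted together. This is the standard combinatorial identification of the free planar operad on $X$ with the planar tree module $\treemod_\pl(X)$, and once this is established, the statement can equivalently be deduced from Beck's monadicity theorem applied to the free-forgetful adjunction between planar dg operads and dg $\mathbb N$-modules.
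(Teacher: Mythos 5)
Your argument is correct: it is the standard identification of $\treemod_\pl(X)$ with the free planar operad on $X$ (unit and composition recovered from the trivial tree and the two-level trees, evaluation of arbitrary trees defined by iterated edge contraction using the partial compositions built from $\gamma$ and $\eta$, well-defined by associativity and unitality). The paper gives no proof of this proposition but defers to \cite{LodayVallette} and the forthcoming note \cite{notesym}, and your proof is essentially the argument those references establish, so there is nothing to compare beyond that.
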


\textbf{Conilpotent planar cooperads.} The \textit{reduced} planar tree module endofunctor $\overline{\treemod}_\pl$ admits a comonad structure induced by partitioning planar trees. Furthermore, there is a fully faithful functor
\[
\mathrm{Conil}: \mathsf{dg}~\overline{\treemod}_\pl\text{-}\mathsf{cog} \longrightarrow (\dgcooperads_\pl)_{\operad I/}
\]
from dg $\overline{\treemod}_\pl$-coalgebras to coaugmented planar dg cooperads.

\begin{definition}[Conilpotent planar dg cooperad]
Let $\C$ be a coaugmented planar dg cooperad. It is \textit{conilpotent} if it belongs to the essential image of the functor $\mathrm{Conil}$ from dg $\overline{\treemod}_\pl$-coalgebras to planar dg cooperads. We denote $\dgcooperads^{\categ{conil}}_\pl$ the full sub-category of coaugmented planar dg cooperads spanned by conilpotent ones.
\end{definition}

\begin{remark}
The idea behind this definition is the following: a (non-counital) cooperad can also be described in terms of partial decomposition maps
\[
\Delta_i: \C(n+k-1) \longrightarrow \C(n) \otimes \C(k)~,
\]
and it is conilpotent if and only if any iteration of these partial decompositions is eventually trivial. If this is the case, then the data of all the possible iterations is exactly encoded by the $\overline{\treemod}_\pl$-coalgebra structure. 
\end{remark}

For every natural integer $n \geq 1$, the comonad structure on $\overline{\treemod}_\pl$ restricts to $\overline{\treemod}_\pl^{(\leq n)}$. The inclusion of comonads $\overline{\treemod}_\pl^{(\leq n)} \rightarrowtail \overline{\treemod}_\pl$ induces an endofunctor $\mathrm{F}^{\mathrm{rad}}_n$ in the category of conilpotent planar dg cooperads.

\begin{definition}[Coradical filtration]
Let $\C$ be a conilpotent planar dg cooperad. Its $n$-\textit{coradical filtration} is given by the conilpotent planar dg cooperad $\mathrm{F}^{\mathrm{rad}}_n \C$. It induces a ladder diagram 
\[
\mathrm{F}^{\mathrm{rad}}_0 \C \rightarrowtail \mathrm{F}^{\mathrm{rad}}_1 \C \rightarrowtail \cdots \mathrm{F}^{\mathrm{rad}}_n \C \rightarrowtail \cdots~,
\]
indexed by $\mathbb{N}$, where all the arrows are monomorphisms.
\end{definition}

For any conilpotent planar dg cooperad $\C$, there is a canonical isomorphism between $\C$ and the colimit of the following ladder diagram
\[
\mathrm{F}^{\mathrm{rad}}_0 \C \rightarrowtail \mathrm{F}^{\mathrm{rad}}_1 \C \rightarrowtail \cdots \mathrm{F}^{\mathrm{rad}}_n \C \rightarrowtail \cdots
\]
in the category of conilpotent planar dg cooperads. 

\begin{proposition}
Let $n \geq 0$ and $\C$ be a conilpotent planar dg cooperad. Then $\mathrm{F}^{\mathrm{rad}}_n \C$ fits in the following pullback 
\[
        \begin{tikzcd}[column sep=2.5pc,row sep=2.5pc]
           \mathrm{F}^{\mathrm{rad}}_n \C \arrow[dr, phantom, "\lrcorner", very near start]
            \ar[r] \ar[d]
            & \overline{\treemod}_{\pl}^{(\leq n)} \C
            \ar[d,rightarrowtail]
            \\
            \C
            \ar[r,"\delta_\C"]
            & \overline{\treemod}_{\pl} \C.
        \end{tikzcd}
\]
in the category of dg $\mathbb{N}$-modules, where $\delta_\C$ denotes the dg $\overline{\treemod}_{\pl}$-coalgebra structure of $\C$.
\end{proposition}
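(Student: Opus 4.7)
The plan is to verify that the pullback $P$ satisfies the same universal property as $\mathrm{F}^{\mathrm{rad}}_n \C$. Writing $T = \overline{\treemod}_\pl$ and $T' = \overline{\treemod}_\pl^{(\leq n)}$, recall that $\mathrm{F}^{\mathrm{rad}}_n \C$ is by construction the value at $\C$ of the comonad on $T\text{-cog}$ coming from the adjunction $\iota_* : T'\text{-cog} \rightleftarrows T\text{-cog} : R$ induced by the sub-comonad inclusion. Concretely, it is the largest sub-$T$-coalgebra of $\C$ whose structure map factors through $T'$. First I would observe that the map $\mathrm{F}^{\mathrm{rad}}_n \C \to P$ is essentially automatic: the counit $\mathrm{F}^{\mathrm{rad}}_n\C \hookrightarrow \C$ together with the factored structure $\mathrm{F}^{\mathrm{rad}}_n\C \to T'(\mathrm{F}^{\mathrm{rad}}_n\C) \hookrightarrow T'(\C)$ assembles into a commuting square over $T(\C)$, producing the required map by the universal property of the pullback.

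For the reverse direction $P \to \mathrm{F}^{\mathrm{rad}}_n \C$, I would endow $P$ with a $T'$-coalgebra structure refining its inclusion into $\C$. Since $T$ is built from coproducts and tensor products over a field, it preserves monomorphisms, and one readily checks $T'(P) = T(P) \cap T'(\C)$ as subobjects of $T(\C)$. Hence the desired factorisation of $\delta_\C|_P : P \to T'(\C)$ through $T'(P)$ reduces to showing that $P$ is already a sub-$T$-coalgebra of $\C$, i.e.\ $\delta_\C(P) \subseteq T(P)$. Once this is granted, the universal property of $\mathrm{F}^{\mathrm{rad}}_n \C$ yields the map $P \to \mathrm{F}^{\mathrm{rad}}_n\C$, and the two maps are mutually inverse by uniqueness in each universal property.

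The sub-coalgebra property is the main obstacle. The key ingredients are coassociativity $T(\delta_\C) \circ \delta_\C = \Delta_T \circ \delta_\C$ and the sub-comonad restriction $\Delta_T(T'(\C)) \subseteq T'(T'(\C))$ established in the preceding paragraph. For $c \in P$, decompose $\delta_\C(c) = \sum_{|t|\leq n} x_t$ with $x_t \in t(\C) = \C^{\otimes k_t}$; coassociativity together with the sub-comonad property forces $\delta_\C^{\otimes k_t}(x_t) \in T'(\C)^{\otimes k_t}$ for every such $t$. Deducing from this that $x_t \in P^{\otimes k_t}$ is not purely formal, since in general the kernel of a tensor power is larger than the tensor power of the kernel. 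I would close the gap by induction on the coradical depth of $c$ in $\C$, exploiting conilpotency of $\C$: the base case of elements annihilated by $\delta_\C$ is trivial, and the inductive step propagates the membership in $P^{\otimes k_t}$ through each layer of the coradical filtration, which exhausts $\C$ by conilpotency. Compatibility of the resulting coalgebra structure with the $T$-coalgebra structure of $\C$ follows from another application of coassociativity.
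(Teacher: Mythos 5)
Your overall strategy---verifying that the pullback $P$ has the universal property of the coreflection $\mathrm{F}^{\mathrm{rad}}_n\C$---is genuinely different from the paper's proof, which is a single sentence: the statement is the dual of the adjoint lifting formula of Proposition \ref{propadjointliftingepi} (a morphism of comonads that is objectwise a monomorphism induces a corestriction functor whose right adjoint is computed by exactly this pullback). Your reduction of the problem to the inclusion $\delta_\C(P)\subseteq \overline{\treemod}_\pl(P)$, via $\overline{\treemod}_\pl^{(\leq n)}(P)=\overline{\treemod}_\pl(P)\cap\overline{\treemod}_\pl^{(\leq n)}(\C)$, is correct. However, the step you yourself identify as the main obstacle is not closed by your sketch, and as written this is a genuine gap: the proposed induction on coradical depth has an empty base case, because $\delta_\C$ is \emph{split injective} (the counit axiom $\epsilon\circ\delta_\C=\mathrm{id}$ for coalgebras over the comonad), so ``elements annihilated by $\delta_\C$'' means only $0$; and the inductive step (``propagates the membership through each layer'') is asserted rather than proved. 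There is also a faint circularity risk in organising the argument around the coradical filtration, which is the very functor being computed.

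The good news is that the induction is unnecessary: the tensor-power issue you flag disappears once injectivity of $\delta_\C$ is used. Since $P=\delta_\C^{-1}\bigl(\overline{\treemod}_\pl^{(\leq n)}\C\bigr)$, the induced map $\C/P \to \overline{\treemod}_\pl\C\,/\,\overline{\treemod}_\pl^{(\leq n)}\C$ is injective, and over a field a tensor product of injections is injective. Coassociativity plus the sub-comonad property give, for each tree component $x_t$ of $\delta_\C(c)$ with $c\in P$, that $t(\delta_\C)(x_t)$ has all its labels in $\overline{\treemod}_\pl^{(\leq n)}\C$; testing against each tensor factor of the injective map above then forces the image of $x_t$ in every $\C\otimes\cdots\otimes(\C/P)\otimes\cdots\otimes\C$ to vanish, i.e.\ $x_t\in t(P)$. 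With this replacement for your final step, your element-level argument becomes a complete (if much longer) proof; the paper's categorical route buys brevity and reusability, since the same appendix proposition is invoked repeatedly elsewhere in the text.
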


\begin{proof}
This is a direct application of Proposition \ref{propadjointliftingepi}.
\end{proof}

\textbf{Tree endofunctor.}
Let $M$ be a dg $\mathbb{S}$-module, we can define the \textit{tree endofunctor} $\treemod(M)$ as the following reflexive coequalizer 
\[
\begin{tikzcd}[column sep=3pc,row sep=4pc]
\mathrm{Coeq}\Bigg(\displaystyle (\treemod_\pl((\mathrm{U}(M) \otimes \mathbb S) \otimes \mathbb S \arrow[r,"",shift right=1.1ex,swap]  \arrow[r,""{name=SD},shift left=1.1ex ]
&\treemod_\pl(\mathrm{U}(M)) \otimes \mathbb S\Bigg) \arrow[r,dashed]
&\treemod (M)~,
\end{tikzcd}
\]
where one of the maps is build from the dg $\mathbb{S}$-module structure of $M$ and the other using the monad structures $- \otimes \mathbb S$. Note that this definition is equivalent to the more standard one in \cite{LodayVallette}.

\begin{notation}
Let $n$ be a natural integer. We define analogously variants of the tree endofunctor by replacing the planar tree endofunctor $\treemod_\pl$ in the above coequalizer. 

\medskip

\begin{itemize}
\item The \textit{reduced tree} endofunctor $\overline{\treemod}$ is given by replacing $\treemod_\pl$ with $\overline{\treemod}_\pl$.

\medskip

\item We denote by $\treemod_{\leq n}$ the endofunctor obtained by replacing $\treemod_\pl$ with $\overline{\treemod}_{\pl,\leq n}$.
    
\medskip

\item We denoted by $\treemod^{(n)}$ the endofunctor obtained by replacing $\treemod_\pl$ with $\overline{\treemod}_{\pl}^{(n)}$.
    
\medskip

\item We denoted by $\treemod^{(\leq n)}$ the endofunctor obtained by replacing $\treemod_\pl$ with $\overline{\treemod}_{\pl}^{(\leq n)}$.
\end{itemize}
\end{notation}

\begin{proposition}
Let $X$ be a dg $\mathbb{N}$-module. The canonical map 
\[
\nu_X: \treemod (X \otimes \mathbb S) \longrightarrow \treemod_\pl (X) \otimes \mathbb S
\]
is a isomorphism of dg $\mathbb{S}$-modules, natural in $X$. 
\end{proposition}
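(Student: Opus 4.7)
The plan is to identify both sides of the map $\nu_X$ as the underlying dg $\mathbb{S}$-module of the free symmetric dg operad on the dg $\mathbb{N}$-module $X$, and to deduce that $\nu_X$ is an isomorphism via the essential uniqueness of left adjoints.

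The first ingredient is the fact that the functor $-\otimes \mathbb{S}: (\catdgmod{\mathbb{N}}, \comp_\pl) \to (\catdgmod{\mathbb{S}}, \comp)$ is strong monoidal, that is, there is a natural isomorphism
\[
(X \otimes \mathbb{S}) \comp (Y \otimes \mathbb{S}) \cong (X \comp_\pl Y) \otimes \mathbb{S}~,
\]
whose verification reduces arity-wise to the classical identity $\kk[\mathbb{S}_n/\prod_j \mathbb{S}_{i_j}] \otimes_\kk \bigotimes_j \kk[\mathbb{S}_{i_j}] \cong \kk[\mathbb{S}_n]$ about induced representations. This strong monoidal structure is exactly the one used later in the excerpt to lift $-\otimes \mathbb{S}$ into a left adjoint from planar dg operads to dg operads.

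Composing left adjoints, one then obtains two presentations of the free symmetric dg operad on a dg $\mathbb{N}$-module $X$: the composite $X \mapsto \treemod(X \otimes \mathbb{S})$, going through dg $\mathbb{S}$-modules, and the composite $X \mapsto \treemod_\pl(X) \otimes \mathbb{S}$, going through planar dg operads (where the dg operad structure on the target is induced by the strong monoidal functor applied to the free planar operad structure). Both are left adjoint to the two-step forgetful functor $\dgoperads \to \catdgmod{\mathbb{S}} \to \catdgmod{\mathbb{N}}$, hence canonically naturally isomorphic. Passing to underlying dg $\mathbb{S}$-modules yields the sought natural isomorphism.

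The final and main technical step will be to identify this abstract isomorphism with the canonical map $\nu_X$ of the statement. Unpacking the coequalizer defining $\treemod(X \otimes \mathbb{S})$, this reduces to exhibiting an explicit map
\[
\treemod_\pl(X \otimes \mathbb{S}) \otimes \mathbb{S} \longrightarrow \treemod_\pl(X) \otimes \mathbb{S}~,
\]
obtained by first applying the strong monoidal natural transformation (pushing decoration-local permutations onto the global leaf label) and then the multiplication $\mathbb{S} \otimes \mathbb{S} \to \mathbb{S}$, and verifying that it coequalizes the two parallel arrows built respectively from the $\mathbb{S}$-action on $X \otimes \mathbb{S}$ and from the monad structure of $-\otimes \mathbb{S}$. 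This is the main diagram chase underlying the proof. Alternatively, since $U: \catdgmod{\mathbb{S}} \to \catdgmod{\mathbb{N}}$ is monadic and so creates reflexive coequalizers, one may reduce the verification to dg $\mathbb{N}$-modules, where $\treemod_\pl(X) \otimes \mathbb{S}$ is exhibited as a split coequalizer with splitting coming from the unit $X \to X \otimes \mathbb{S}$ of the adjunction.
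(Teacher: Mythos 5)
Your proof is correct. For the record, the paper contains no argument for this proposition: it is stated as a standard fact, with the constructions of this subsection deferred to \cite{LodayVallette} and the forthcoming note \cite{notesym}, so your write-up supplies a proof that the paper leaves implicit. Both of your routes are sound and natural. The adjoint-uniqueness step works because the strong monoidality of $-\otimes \mathbb{S}$ (which the paper itself invokes to lift $-\otimes\mathbb{S} \dashv \mathrm{U}_{\mathbb{S}}$ to operads) makes $X \mapsto \treemod_{\pl}(X)\otimes\mathbb{S}$ left adjoint to the forgetful functor $\dgoperads \to \catdgmod{\mathbb{N}}$ through planar operads, while $X \mapsto \treemod(X\otimes\mathbb{S})$ is left adjoint to the same composite through dg $\mathbb{S}$-modules; but, as you acknowledge yourself, this only produces \emph{some} natural isomorphism, and matching it with the specific map $\nu_X$ still comes down to the coequalizer computation, so the split-coequalizer argument carries the actual content (you could drop the abstract step altogether).

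Two small precisions on that last step. First, monadicity of $\mathrm{U}\colon \catdgmod{\mathbb{S}} \to \catdgmod{\mathbb{N}}$ only guarantees creation of $\mathrm{U}$-split coequalizers, not of all reflexive ones; this is harmless here, both because the fork in question does become split after applying $\mathrm{U}$ and, more simply, because colimits of dg $\mathbb{S}$-modules are computed arity-wise, so $\mathrm{U}$ preserves and reflects them. Second, when you write down the splitting, both auxiliary sections should be induced by the unit $\eta_X\colon X \to X\otimes\mathbb{S}$ inserted in the \emph{inner} slot, i.e. $\treemod_{\pl}(\eta_X)\otimes\mathbb{S}$ on the apex and $\treemod_{\pl}(\eta_X\otimes\mathrm{id}_{\mathbb{S}})\otimes\mathbb{S}$ one level up; if one instead uses the unit of the outer copy of $-\otimes\mathbb{S}$, the split-fork identity relating the section to the ``monad-structure'' arrow fails, whereas with the inner unit all three identities are immediate and the coequalizing map is exactly your ``push the node permutations onto the global leaf label'' map, i.e. $\nu_X$.
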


\medskip

\textbf{Operads.} There is a monad structure on the tree endofunctor $\treemod$ which can be constructed using the monad structure on the planar tree endofunctor. We refer to \cite{notesym} for more details.

\begin{proposition}
The category of algebras over the monad $\treemod$ is canonically isomorphic to the category of dg operads.
\end{proposition}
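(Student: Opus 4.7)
The plan is to identify $\treemod$ with the free monoid monad for the composition product $\circ$ of dg $\mathbb{S}$-modules and then invoke the classical fact that algebras over a free monoid monad are precisely monoids---which, by definition, are dg operads. This parallels the planar statement proved just above, with the added combinatorial subtlety of keeping track of the symmetric group actions.

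First, I would equip $\treemod$ with its monad structure by transferring the grafting monad structure of $\treemod_\pl$. For a free dg $\mathbb{S}$-module $M = X \otimes \mathbb{S}$, the natural isomorphism $\nu_X : \treemod(X \otimes \mathbb{S}) \cong \treemod_\pl(X) \otimes \mathbb{S}$ together with the strong monoidality of $-\otimes \mathbb{S}$ with respect to the two composition products transports the planar monad structure. One then extends to arbitrary dg $\mathbb{S}$-modules using the coequalizer presentation of $\treemod(M)$, together with the fact that $\treemod_\pl$ preserves reflexive coequalisers, so that the monad structure on $\treemod_\pl(U(M)) \otimes \mathbb{S}$ descends along the coequalizer to a monad structure on $\treemod(M)$.

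Once the monad structure is in place, I would identify its algebras with dg operads in the usual manner. Given a $\treemod$-algebra $(M, \gamma)$, restricting $\gamma$ to the sub-$\mathbb{S}$-module of trivial (one-leaf) trees yields a unit $\operad{I} \to M$, while restricting to two-level trees yields a composition map $M \circ M \to M$; the monad axioms then translate into associativity and unitality. Conversely, given a dg operad $(\operad{P}, \gamma, \eta)$, one constructs a $\treemod$-algebra structure by iterating $\gamma$ along the internal edges of each tree, with the $\mathbb{S}$-equivariance and associativity of $\gamma$ ensuring that the resulting family of operations is well-defined on the coequalizer defining $\treemod$. These two constructions are manifestly inverse to each other, and they are natural in morphisms of $\treemod$-algebras and morphisms of dg operads respectively.

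The main obstacle is the careful bookkeeping involved in verifying that the monad multiplication $\mu_M : \treemod(\treemod(M)) \to \treemod(M)$ is well-defined: one must check that grafting of planar trees decorated by elements of $M$ is compatible with the symmetrisation relations imposed by the coequalizer, in the sense that the double nested coequalizers involved in $\treemod \circ \treemod$ unfold coherently. This amounts to a purely combinatorial verification about relabelling leaves of grafted trees, which is precisely the type of construction recorded in the forthcoming note \cite{notesym} referenced in the text. Once this verification is carried out, the identification with dg operads follows formally.
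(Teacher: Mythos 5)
The paper gives no proof of this proposition: it is stated as a known fact, with the monad structure on $\treemod$ and the identification of its algebras deferred to \cite{LodayVallette} and the forthcoming note \cite{notesym}, and your sketch follows exactly the route the paper sets up — transfer the planar grafting monad along the monadic adjunction $-\otimes \mathbb S \dashv U_{\mathbb S}$ using the reflexive-coequalizer presentation of $\treemod$, then match $\treemod$-algebra structures with unit and composition data extracted from trivial and two-level trees. Your outline is correct, and the combinatorial verification you defer (compatibility of grafting with the symmetrisation relations in the coequalizer) is precisely the content the paper itself outsources to \cite{notesym}.
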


\textbf{Conilpotent cooperads.} There is a comonad structure on the reduced tree endofunctor $\overline{\treemod}$, which again can be constructed from the comonad structure on the reduced planar tree endofunctor $\overline{\treemod}_\pl$. We refer to \cite{notesym} for more details. There is a fully faithful functor 
\[
\mathrm{Conil}: \mathsf{dg}~\overline{\treemod}\text{-}\mathsf{cog} \longrightarrow (\dgcooperads)_{\operad I/}
\]
from dg $\overline{\treemod}$-coalgebras to coaugmented dg cooperads.

\begin{definition}[Conilpotent dg cooperad]
Let $\C$ be a coaugmented dg cooperad. It is \textit{conilpotent} if it belongs to the essential image of the functor $\mathrm{Conil}$ from dg $\overline{\treemod}$-coalgebras to dg cooperads. We denote $\dgcooperads^{\categ{conil}}$ the full sub-category of coaugmented dg cooperads spanned by conilpotent ones.
\end{definition}

The adjunction 
\[
\begin{tikzcd}[column sep=5pc,row sep=3pc]
          \dgcooperads_\pl  \arrow[r, shift left=1.1ex, "-\otimes \mathbb S"{name=F}] &\dgcooperads, \arrow[l, shift left=.75ex, "\mathrm{U}_{\mathbb S}"{name=U}]
            \arrow[phantom, from=F, to=U, , "\dashv" rotate=90]
\end{tikzcd}
\]
restricts to an adjunction 
\[
\begin{tikzcd}[column sep=5pc,row sep=3pc]
          \dgcooperads_\pl^{\categ{conil}}  \arrow[r, shift left=1.1ex, "-\otimes \mathbb S"{name=F}] &\dgcooperads^{\categ{conil}}, \arrow[l, shift left=.75ex, "\mathrm{U}_{\mathbb S}"{name=U}]
            \arrow[phantom, from=F, to=U, , "\dashv" rotate=90]
\end{tikzcd}
\]
between conilpotent planar dg cooperads and conilpotent dg cooperads.

\medskip

For every natural integer $n \geq 1$, the comonad structure on $\overline{\treemod}$ restricts to $\overline{\treemod}^{(\leq n)}$. The inclusion of comonads $\overline{\treemod}^{(\leq n)} \rightarrowtail \overline{\treemod}$ induces an endofunctor $\mathrm{F}^{\mathrm{rad}}_n$ in the category of conilpotent dg cooperads.

\begin{definition}[Coradical filtration]
Let $\C$ be a conilpotent dg cooperad. Its $n$-\textit{coradical filtration} is given by the conilpotent dg cooperad $\mathrm{F}^{\mathrm{rad}}_n \C$. It induces a ladder diagram 
\[
\mathrm{F}^{\mathrm{rad}}_0 \C \rightarrowtail \mathrm{F}^{\mathrm{rad}}_1 \C \rightarrowtail \cdots \mathrm{F}^{\mathrm{rad}}_n \C \rightarrowtail \cdots~,
\]
indexed by $\mathbb{N}$, where all the arrows are monomorphisms.
\end{definition}

For any  conilpotent dg cooperad $\C$, there is a canonical isomorphism between $\C$ and the colimit of the following ladder diagram
\[
\mathrm{F}^{\mathrm{rad}}_0 \C \rightarrowtail \mathrm{F}^{\mathrm{rad}}_1 \C \rightarrowtail \cdots \mathrm{F}^{\mathrm{rad}}_n \C \rightarrowtail \cdots
\]
in the category of conilpotent dg cooperads.

\medskip

All the construction we have performed in this section commutes with the forgetful functors 
$$
\catdgmod{\kk} \longrightarrow \catpdgmod{\kk} \longrightarrow \catgrmod{\kk}~.
$$
This implies that if a dg (or pdg) conilpotent cooperad $\C$ proceeds, as a graded conilpotent cooperad, from a planar one $\C_\pl$ (in the sense that there is an isomorphism $\C \cong \C_\pl \otimes \mathbb S$ of graded cooperads), then the conilpotent dg cooperad $\overline{\mathrm{F}}_n \C$ fits in the following pullback diagram
\[
        \begin{tikzcd}[column sep=2.5pc,row sep=2.5pc]
           \mathrm{F}^{\mathrm{rad}}_n \C \arrow[dr, phantom, "\lrcorner", very near start]
            \ar[r] \ar[d]
            & \overline{\treemod}^{(\leq n)} \C
            \ar[d,rightarrowtail]
            \\
            \C
            \ar[r,"\delta_\C"]
            & \overline{\treemod} \C~,
        \end{tikzcd}
\]
in the category of dg $\mathbb{S}$-modules, where $\delta_\C$ denotes the dg $\overline{\treemod}$-coalgebra structure of $\C$.

\begin{remark}
If the characteristic of the base field $\kk$ is zero, both the tree module $\treemod (-)$ and the composition product $\circ$ preserve {\bf{finite}} cosifted limits. Therefore the forgetful functors from (conilpotent) dg cooperads to dg $\mathbb S$-modules preserve these limits. Thus, as in the planar case, given a dg cooperad $\C$, the conilpotent dg cooperad $\overline{\mathrm{F}}_n \C$ fits in a pullback diagram as above.
\end{remark}


\newpage

\section{Homotopy theory of operads and quasi-planar cooperads}

\vspace{2pc}

The main goal of this section is to introduce the notion of a \textit{quasi-planar cooperad}. We will describe in a future work its relation to the notion of a higher cooperad introduced in \cite{BrunoMalte}. First, we recall the bar-cobar constructions at the operadic level (the unital/curved version of \cite{unitalalgebras}). Then, we recall the semi-model category structure of \cite{Fresse}, which encodes the homotopy theory of dg operads. Finally, we introduce the notion of a quasi-planar cooperad, construct basic examples, and show their cobar constructions provide us with cofibrant resolutions which are particularly well-behaved when working over a field of any characteristic. Furthermore, for any quasi-planar conilpotent curved cooperad $\C$, where construct a canonical $\mathcal{E}$-comonoid structure over $\Omega \C$, where $\mathcal{E}$ is the Barratt-Eccles operad. This allows us to define a \textit{canonical} quasi-planar ladder for any quasi-planar conilpotent curved cooperad $\C$, which can be viewed as the positive characteristic analogue of the coradical filtration.

\subsection{Conilpotent curved cooperads and the operadic bar-cobar adjunction}\label{subsection: operadic bar-cobar}
The Koszul dual notion of a non-necessarily augmented dg operad is a conilpotent curved cooperad. We recall the bar-cobar adjunction at the operadic level that links these two notions constructed in \cite{unitalalgebras}.

\begin{definition}[Curved cooperad]\label{def curved cooperad}
A \textit{curved cooperad} $\operad C$ amounts to the data of a pdg cooperad $(\C,\Delta,\epsilon,d)$ equipped with a degree $-2$ map $\theta: \operad C \longrightarrow \operad I$ called the \textit{curvature} such that $\theta \circ d = 0$, and such that the following diagram commutes: 
\[
\begin{tikzcd}[column sep=8.5pc,row sep=2.5pc]
\operad C
\arrow[r,"\Delta_{(1)}"] \arrow[rd,"d^2"']
& \treemod^{(2)} \overline{\operad C} \arrow[d,"(\id ~\otimes~  \theta)~-~(\theta~ \otimes~ \id)~"] 
\\
&\operad C~.
\end{tikzcd}
\]
Here $\Delta_{(1)}$ is given by the sum over all possible decompostions into pairs. 
\end{definition}

\begin{remark}
A curved cooperad with zero curvature is a dg cooperad.
\end{remark}

\begin{remark}
A curved cooperad is said to be coaugmented (resp. conilpotent) if its underlying pdg cooperad is. 
\end{remark}

\textbf{The operadic bar construction.} Given a dg operad $\operad P$, one can build a conilpotent curved cooperad $\mathrm{B} \operad P$ whose underlying conilpotent graded cooperad is given by $\treemod (s\operad P \oplus s^2\operad I)$. We endow it with the unique coderivation whose projection onto the generators is the sum of the following maps:

\medskip

\begin{enumerate}
    \item the map
    \begin{align*}
\treemod (s\operad P \oplus s^2\operad I)
\twoheadrightarrow \treemod^{(2)} (s\operad P) 
& \longrightarrow s \operad P
\\
(sp \otimes_i sp') \otimes \{\sigma\}
& \mapsto (-1)^{|p|} s (p \circ_i p')^\sigma
\end{align*}
where $sp \otimes_i sp'$ labels a 2 nodes tree whose second node is plugged at the $i^{th}$ leaf of the root node,

\medskip

\item the map
\begin{align*}
\treemod (s\operad P \oplus s^2\operad I)
\twoheadrightarrow s\operad P
& \longrightarrow s \operad P
\\
sp 
& \mapsto -s d(p)~,
\end{align*}
\item the map
\begin{align*}
\treemod (s\operad P \oplus s^2\operad I)
\twoheadrightarrow s^2\operad I
& \longrightarrow s \operad P
\\
s^2 1
& \mapsto -s 1_{\operad P}.
\end{align*}
\end{enumerate}

\medskip

We denote by $d_\gamma$, $d_{\operad P}$ and $d_u$ the respective coderivations induced by these maps. The curvature is given by the following map
\begin{align*}
\Theta: \treemod (s\operad P \oplus s^2\operad I)
\twoheadrightarrow s^2\operad I
& \longrightarrow \operad I
\\
s^2 1
& \mapsto 1.
\end{align*}
One can check that $(\treemod (s\operad P \oplus s^2\operad I), d_\gamma + d_{\PP} + d_u,\Theta)$ forms a conilpotent curved cooperad.

\medskip

\textbf{The operadic cobar construction.} Given a coaugmented curved cooperad $\operad C$, one can build a dg operad  $\Omega \operad C$ whose underlying graded operad is $\treemod (s^{-1}\overline{\operad C})$. We endow it with the unique derivation whose restriction to the generators is the sum of the following maps:

\medskip

\begin{enumerate}
\item the map
\begin{align*}
s^{-1} \overline{\operad C}
& \longrightarrow \treemod^{(2)} (s^{-1} \overline{\operad C})
\hookrightarrow \treemod (s^{-1} \overline{\operad C})
\\
s^{-1} c
& \mapsto - \sum (-1)^{|c_{(1)}|}s c_{(1)} \otimes  s c_{(2)} \otimes \{\sigma\}
\end{align*}
where $\Delta_{(1)}(c) = \sum c_{(1)} \otimes  c_{(2)} \otimes \{\sigma\}$ denotes the sum of all possible decompositions of $c$ into a pairs,

\medskip
    
\item the map
    \begin{align*}
s^{-1} \overline{\operad C} & \longrightarrow s^{-1} \overline{\operad C} \hookrightarrow \treemod (s^{-1} \overline{\operad C})
\\
s^{-1} c
& \mapsto - s^{-1} d(c)
\end{align*}

\medskip

\item the map
\begin{align*}
s^{-1} \overline{\operad C} &\longrightarrow \operad I
\hookrightarrow \treemod (s^{-1} \overline{\operad C})
\\
s^{-1} c
& \mapsto \theta(c).
\end{align*}
\end{enumerate}

We denote by $d_\Delta, d_\C$ and $d_\theta$ the respective derivations induced by these maps. One can check $(\treemod (s^{-1}\overline{\operad C}), d_\Delta + d_\C + d_\theta)$ forms a dg operad.

\medskip

\textbf{The operadic bar-cobar adjunction.} This two functors form an adjunction 

\[
\begin{tikzcd}[column sep=5pc,row sep=3pc]
          \dgoperads  \arrow[r, shift left=1.1ex, "\mathrm{B}"{name=F}] & \curvcooperads^{\mathsf{conil}} , \arrow[l, shift left=.75ex, "\Omega"{name=U}]
            \arrow[phantom, from=F, to=U, , "\dashv" rotate=90]
\end{tikzcd}
\]

between dg operads and conilpotent curved cooperads. 

\begin{remark}
If a dg operad $\mathcal{P}$ is augmented, then its bar construction is in fact a conilpotent dg cooperad. Up to natural weak-equivalences, this adjunction coincides with the bar-cobar adjunction of \cite{LodayVallette} between augmented dg operads and conilpotent dg cooperads.
\end{remark}


\subsection{The Barratt--Eccles operad and the Hadamard tensor product} 
Let us review the construction of the Barratt--Eccles operad of C. Berger and B. Fresse in \cite{BergerFresse}.  

\begin{definition}[Barratt--Eccles dg operad]
    The \textit{unital barrat-Eccles dg operad}, denoted by $\operad E$, is defined as follows. For $n \geq 2$, the arity $n$ component $\operad E(n)$ is given, in degree $m$, by the free $\kk$-module generated by the sequences of distinct permutations
    $$
    (\sigma_0, \sigma_1, \ldots, \sigma_m) \in \mathbb S_n^{m+1}
    $$
    where $\sigma_i \neq \sigma_{i+1}$ for $0\leq i \leq m-1$. The right $\mathbb S_n$-action is given by
    $$
(\sigma_0, \sigma_1, \ldots, \sigma_m)^\sigma = (\sigma_0\sigma, \sigma_1 \sigma, \ldots, \sigma_m \sigma).
    $$
    The differential of $\mathcal{E}(n)$ is given as follows:
    $$
    d((\sigma_0, \sigma_1, \ldots, \sigma_m)) = (\sigma_1, \ldots, \sigma_m)
    - (\sigma_0, \sigma_2, \ldots, \sigma_m) + \cdots + (-1)^m (\sigma_0, \sigma_1, \ldots, \sigma_{m-1})~.
    $$
    For $n=0,1$, $\operad E(0) = \operad E(1) = \kk$ endowed with the trivial action and the zero differential. We refer to \cite[Section 1.13]{BergerFresse} for the specific formulae of the operadic compositions.
\end{definition}

\begin{remark}
Notice that, for all $n \geq 0$, the dg $\kk[S_n]$-module $\mathcal{E}(n)$ is quasi-free and concentrated in positive degrees, therefore it is also projective. Therefore $\operad E$ is both an $\mathbb S$-quasi-free and an $\mathbb S$-projective dg operad.
\end{remark}

The canonical morphism of operads $\operad E \longrightarrow \mathrm{u}\operad{C}\mathrm{om}$ is arity-wise a quasi-isomorphism. To see this, it suffices to notice it admits a section and that the degree $1$ endomorphism of $\operad E$
$$
h: (\sigma_0, \sigma_1, \ldots, \sigma_m) \mapsto (1,\sigma_0, \sigma_1, \ldots, \sigma_m)
$$
satisfies $\partial(h) = \pi_{\mathrm{u}\operad C \mathrm{om}}$, where $\pi_{\mathrm{u}\operad C \mathrm{om}}$ is the projection onto the image of this section.

\begin{definition}
    We define $\operad E_\pl$ as the sub-graded $\mathbb N$-module of $\operad E$ given, in arity $n \geq 2$ and degree $m$, by the free $\kk$-module generated by the sequences 
    $$
    (\sigma_0, \sigma_1, \ldots, \sigma_m) \in \mathbb S_n^{m+1}
    $$
    where $\sigma_i \neq \sigma_{i+1}$ for $0\leq i \leq m-1$ and such that $\sigma_0 =1$. In arities $n = 0,1$, we have $\operad E_\pl(0)= \operad E_\pl(1)= \kk$. 
\end{definition}

The canonical morphism of graded $\mathbb S$-modules
    $$
    \operad E_\pl \otimes \mathbb S \longrightarrow \operad E
    $$
is an isomorphism.

\begin{definition}[The Hadamard tensor product]
    Let $\operad P$ be a dg operad. We denote by $\operad E \otimes \operad P$ the \textit{Hadamard tensor} product of $\operad P$ with $\operad E$. It is the dg operad whose underlying dg $\mathbb S$-module is
    $$
    (\operad E \otimes \operad P)(n) = \operad E(n) \otimes \operad P(n)
    $$
    equipped with the diagonal action of $\mathbb S_n$. The operad structure is given by the map
    $$
    (\operad E \otimes \operad P) \comp (\operad E \otimes \operad P)
    \longrightarrow  (\operad E \otimes \operad E) \otimes (\operad P \comp \operad P)
    \longrightarrow \operad E \otimes \operad P~.
    $$
\end{definition}

\begin{proposition}
Let $\operad P$ be a dg operad. The dg $\mathbb S$-module of the Hadamard tensor product $\operad E \otimes \operad P$ is quasi-free with generators $\operad E_\pl \otimes U_{\mathbb S}(\operad P)$. 
\end{proposition}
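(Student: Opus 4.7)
The plan is to reduce the statement to Proposition \ref{prop: tensor with a quasi-free} applied arity-wise. For each $n \geq 0$, the underlying graded $\kk[\mathbb{S}_n]$-module of $\mathcal{E}(n)$ is quasi-free, by the canonical isomorphism $\mathcal{E}_\pl(n) \otimes \kk[\mathbb{S}_n] \cong \mathcal{E}(n)$ already recorded in the text. Since $(\mathcal{E}\otimes \mathcal{P})(n) = \mathcal{E}(n) \otimes \mathcal{P}(n)$ with the diagonal $\mathbb{S}_n$-action, Proposition \ref{prop: tensor with a quasi-free} immediately implies that the underlying graded $\kk[\mathbb{S}_n]$-module of $(\mathcal{E}\otimes\mathcal{P})(n)$ is again quasi-free. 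Stringing these isomorphisms together arity-wise yields the claim, but one still needs to identify the generators with $\mathcal{E}_\pl \otimes U_{\mathbb{S}}(\mathcal{P})$.

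To pin down the generators, I would spell out the isomorphism in the spirit of the proof of Proposition \ref{prop: tensor with a quasi-free}. Writing an element of $\mathcal{E}(n)$ under the iso $\mathcal{E}_\pl(n)\otimes \kk[\mathbb{S}_n] \xrightarrow{\cong} \mathcal{E}(n)$ as $z\cdot g$ with $z \in \mathcal{E}_\pl(n)$ and $g\in \mathbb{S}_n$, I would define
\[
\phi_n: \bigl(\mathcal{E}_\pl(n) \otimes U_{\mathbb{S}_n}(\mathcal{P}(n))\bigr) \otimes \kk[\mathbb{S}_n] \longrightarrow \mathcal{E}(n) \otimes \mathcal{P}(n), \qquad z \otimes p \otimes g \longmapsto (z\cdot g) \otimes (p\cdot g),
\]
with inverse $(z\cdot g) \otimes q \longmapsto z \otimes (q\cdot g^{-1}) \otimes g$. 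One then checks directly that this is $\mathbb{S}_n$-equivariant with respect to the diagonal action on the target and the action on $\kk[\mathbb{S}_n]$ alone on the source, so $\phi_n$ is an isomorphism of graded $\kk[\mathbb{S}_n]$-modules. Naturality in $n$ is automatic since everything is defined arity-wise, so the family $\{\phi_n\}$ assembles into an isomorphism of graded $\mathbb{S}$-modules witnessing that $\mathcal{E}\otimes \mathcal{P}$ is quasi-free with the announced generators.

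No real obstacle is expected; the only mildly delicate point is getting the equivariance bookkeeping right when converting between the diagonal action on $\mathcal{E}(n)\otimes \mathcal{P}(n)$ and the "translated" action on $(\mathcal{E}_\pl(n) \otimes U_{\mathbb{S}_n}(\mathcal{P}(n))) \otimes \kk[\mathbb{S}_n]$, which is exactly the same change-of-variables $x \otimes g \otimes z \mapsto g \otimes(g^{-1}(x)\otimes z)$ used in the proof of Proposition \ref{prop: tensor with a quasi-free}.
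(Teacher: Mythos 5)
Your proposal is correct and is essentially the paper's argument: the paper's proof is the one-line appeal to Proposition \ref{prop: tensor with a quasi-free} applied arity-wise, using the isomorphism $\operad E_\pl(n) \otimes \kk[\mathbb S_n] \cong \operad E(n)$. The explicit change-of-variables you write down is exactly the isomorphism $(e \otimes \{\sigma\}) \otimes p \mapsto (e \otimes p^{\sigma^{-1}}) \otimes \{\sigma\}$ that the paper itself records later when proving that $\mathrm{B}(\operad E \otimes \operad P)$ is quasi-planar.
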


\begin{proof}
Follows from Proposition \ref{prop: tensor with a quasi-free}.
\end{proof}

Notice that the canonical map $\operad E \otimes \operad P \qi \operad P$ is also an arity-wise quasi-isomorphims.

\begin{remark}
We choose, as a convention, to systematically consider $\operad E$ on the left hand side of the tensor products $\operad E \otimes \operad P$ for consistency reasons that will become more apparent later. One could have chosen the opposite convention, all the results also hold. 
\end{remark}


\subsection{Another presentation of the Barratt--Eccles operad}
In this subsection, we give a new presentation of the Barratt-Eccles operad and its composition. The computations introduced here will be used in Subsections \ref{subsection: E-comodule} and \ref{subsection: quasi-planar canonical filtration}.

\medskip

Let us describe the composition in the Barratt-Eccles operad in terms of shuffle permutations. Throughout this subsection and the next one, given two permutations $\mu \in \mathbb S_p, \psi \in \mathbb S_q$ and $1 \leq i \leq p$, $\mu \comp_i \psi$ refers to the composition of $\mu$ with $\psi$ at the $i$-th leaf. This composition is given by the partial compositions of the associative operad in the category of sets.

\medskip

\begin{definition}[Admissible permutations]
    Let $p, q$ be two natural integers with $p \geq 1$, let $1 \leq i \leq p$ and let $n = p+q -1$. We say that
    a permutation $\sigma \in \mathbb S_n$ is
    \begin{enumerate}
    \medskip
    
        \item $(p,q,i)$-\textit{bottom admissible} if there exists a permutation $\mu \in \mathbb S_p$ such that
        $$
        \sigma = \mu \comp_{\mu^{-1}(i)} \id_q~;
        $$
        \item $(p,q,i)$-\textit{top admissible} if there exists a permutation $\psi \in \mathbb S_q$ such that 
        \[
        \sigma = \id_p \comp_i \psi~;
        \]
        \item $(p,q,i)$-\textit{admissible} if it is $(p,q,i)$-top admissible or $(p,q,i)$-bottom admissible;
       	\medskip
        \item $(p,q,i)$-\textit{non admissible} if it is not $(p,q,i)$-admissible.
    \end{enumerate}
\end{definition}

\begin{lemma}\label{lemma: twisted composition of permutation injective}
Given $p,q \geq 1$ and $1 \leq i \leq p$, the function 
\begin{align*}
    \mathbb S_p \times \mathbb S_q &\longrightarrow \mathbb S_n
    \\
    (\mu, \psi) &\mapsto \mu \comp_{\mu^{-1}(i)} \psi
\end{align*}
is injective. In particular, the intersection of $(p,q,i)$-bottom admissible permutations with $(p,q,i)$-top admissible permutations only contains the trivial permutation.
\end{lemma}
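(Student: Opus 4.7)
The plan is to make explicit how the word of $\sigma = \mu \circ_{\mu^{-1}(i)} \psi$ encodes $\mu$ and $\psi$, and show that both can be recovered from $\sigma$ alone. Recall the combinatorial description of the partial composition in the associative operad of sets: writing permutations as words, if $j = \mu^{-1}(i)$, the word of $\sigma = \mu \circ_j \psi \in \mathbb{S}_{p+q-1}$ is obtained from the word of $\mu$ by replacing the letter $i$ in position $j$ by the length-$q$ block $(\psi(1)+i-1,\ \psi(2)+i-1,\ \ldots,\ \psi(q)+i-1)$ and shifting every letter of $\mu$ strictly greater than $i$ by $q-1$.

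From this description I can directly read off a retraction. First, the set of $q$ consecutive positions $j, j+1, \ldots, j+q-1$ in $\sigma$ whose values form the interval $\{i, i+1, \ldots, i+q-1\}$ is intrinsic to $\sigma$: it is the unique maximal run of consecutive positions whose image is an interval of the form $\{i,\ldots,i+q-1\}$. The word restricted to these positions, once shifted down by $i-1$, recovers $\psi \in \mathbb{S}_q$. Contracting these $q$ positions back into a single letter $i$ and shifting values strictly greater than $i+q-1$ down by $q-1$ gives a word of length $p$ which is precisely $\mu \in \mathbb{S}_p$. Since $(\mu,\psi)$ is determined by $\sigma$, the map is injective.

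For the intersection statement, suppose $\sigma$ is both $(p,q,i)$-bottom admissible and $(p,q,i)$-top admissible. Then there exist $\mu \in \mathbb{S}_p$ and $\psi \in \mathbb{S}_q$ with
\[
\sigma = \mu \comp_{\mu^{-1}(i)} \id_q = \id_p \comp_i \psi.
\]
By the injectivity just established, the pairs $(\mu, \id_q)$ and $(\id_p, \psi)$ coincide, hence $\mu = \id_p$ and $\psi = \id_q$, so $\sigma = \id_p \comp_i \id_q = \id_n$.

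The only real point that requires care is setting up the index bookkeeping for partial composition so that the retraction is unambiguous; this is routine once the word-level formula above is fixed.
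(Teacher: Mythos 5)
Your proof is correct and takes essentially the same approach as the paper: you characterize the image of $(\mu,\psi)\mapsto \mu\circ_{\mu^{-1}(i)}\psi$ by the fact that the preimage of the value segment $\{i,\dots,i+q-1\}$ is a block of consecutive positions, then recover $\psi$ from the restriction of $\sigma$ to that block and $\mu$ by collapsing it, which is exactly the paper's argument. The only difference is that you make the one-line word formula for the partial composition explicit, which the paper leaves implicit.
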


\begin{proof}
A permutation $\sigma$ in $\mathbb S_n$ can be written as a composition $\mu \circ_{\mu^{-1}(i)} \psi$, with $\mu \in \mathbb S_p$ and $\psi \in \mathbb S_q$ if and only if $\sigma^{-1}$ sends the segment $\{i,\cdots, i + q-1\}$ to a segment. Then the permutation $\psi$ is determined by the induced function between these two segments and $\mu$ is determined by the function induced by collapsing these each of these segments into one element. 
\end{proof}

Notice that for every $\sigma_1, \sigma_2 \in \mathbb S_n$ and $\mu_1, \mu_2 \in \mathbb S_m$ and $1 \leq 1 \leq n$ we have 
$$
(\sigma_2 \sigma_1) \comp_i (\mu_2 \mu_1)
=
(\sigma_2 \comp_{\sigma_1(i)} \mu_2) (\sigma_1 \comp_{i} \mu_1)~.
$$

\begin{definition}
Let $k,p\geq 1, q \geq 0$, $1 \leq i \leq p$ and $n = p+q-1$ be natural integers and let us consider two sequences of permutations $\underline\mu \in \mathbb S_p^k$ and $\underline\psi \in \mathbb S_q^k$. We define $\underline\mu \ltimes_i \underline \psi$ as the sequence $\underline \sigma = (\sigma_1, \ldots, \sigma_k) \in \mathbb S_n^k$ given by  

\[
\left\{
\begin{tikzcd}[column sep=0pc,row sep=0.25pc]
&\sigma_1 \coloneqq \mu_1 \comp_{\mu_1^{-1}(i)} \psi_1;\\
&\vdots \\
&\sigma_j \coloneqq \mu_j \comp_{\mu_{j}^{-1} \cdots \mu_1^{-1}(i)} \psi_j \quad \text{for}  \quad 1 < j < k;\\
&\vdots \\
&\sigma_k \coloneqq \mu_k \comp_{\mu_{k}^{-1} \cdots \mu_1^{-1}(i)} \psi_k.
\end{tikzcd}
\right.
\]
\end{definition}

One can notice that 
    $$
    (\mu_1, \ldots, \mu_{k-1}, \mu_{k}) \ltimes_i (\psi_1, \ldots, \psi_{k-1}, \psi_{k}) = (\mu_1 \comp_{\mu_1^{-1}(i)} \psi_1) \sqcup
    ((\mu_2, \ldots, \mu_{k}) \ltimes_{\mu_1^{-1}(i)} (\psi_2, \ldots, \psi_{k} ))~,
    $$
where $\sqcup$ stands for the concatenation of sequences of permutations.

\begin{lemma}\label{lemma: twisted sequence composition of permutation injective}
    If $q \geq 1$, the function $- \ltimes_i - : \mathbb S_p^k \times \mathbb S_q^k \longrightarrow \mathbb S_n^k$ is injective.
\end{lemma}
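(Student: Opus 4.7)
The plan is to prove injectivity by induction on $k$, using the recursive formula for $\ltimes_i$ that is stated just above the lemma and using the preceding Lemma \ref{lemma: twisted composition of permutation injective} as the base case.

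For $k=1$, the map $-\ltimes_i -$ sends $(\mu_1, \psi_1)$ to $\mu_1 \circ_{\mu_1^{-1}(i)}\psi_1$, which is exactly the map shown to be injective in Lemma \ref{lemma: twisted composition of permutation injective} (this is where we use $q \geq 1$).

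For the inductive step $k > 1$, suppose that $(\underline\mu, \underline\psi)$ and $(\underline\mu', \underline\psi')$ in $\mathbb S_p^k \times \mathbb S_q^k$ have the same image in $\mathbb S_n^k$. The recursive identity
\[
(\mu_1, \ldots, \mu_k) \ltimes_i (\psi_1, \ldots, \psi_k)
= \bigl(\mu_1 \comp_{\mu_1^{-1}(i)} \psi_1\bigr) \sqcup \bigl((\mu_2, \ldots, \mu_k) \ltimes_{\mu_1^{-1}(i)} (\psi_2, \ldots, \psi_k)\bigr)
\]
recalled just before the lemma lets us read off the first coordinate: $\mu_1 \comp_{\mu_1^{-1}(i)} \psi_1 = \mu_1' \comp_{(\mu_1')^{-1}(i)} \psi_1'$. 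By the $k=1$ case this forces $\mu_1 = \mu_1'$ and $\psi_1 = \psi_1'$. In particular $\mu_1^{-1}(i) = (\mu_1')^{-1}(i)$, which we now call $i'$, and $1 \leq i' \leq p$.

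The remaining coordinates then satisfy
\[
(\mu_2, \ldots, \mu_k)\ltimes_{i'}(\psi_2, \ldots, \psi_k) = (\mu_2', \ldots, \mu_k')\ltimes_{i'}(\psi_2', \ldots, \psi_k'),
\]
and the inductive hypothesis applied to the map $\mathbb S_p^{k-1}\times \mathbb S_q^{k-1} \to \mathbb S_n^{k-1}$ with base point $i'$ yields $(\mu_j, \psi_j) = (\mu_j', \psi_j')$ for all $2 \leq j \leq k$, completing the induction. There is no real obstacle here: the entire argument is a mechanical unwinding of the recursive definition, and all the genuine content is already packaged into the $k=1$ case handled by the previous lemma.
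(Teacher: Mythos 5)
Your proof is correct and follows exactly the paper's argument: the paper also proves the lemma by a straightforward induction on $k$, with Lemma \ref{lemma: twisted composition of permutation injective} handling the base case (and the inductive step unwound via the recursive formula, just as you do). No gaps — you have simply written out the details the paper leaves implicit.
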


\begin{proof}
    This follows from a straightforward induction on $k$ using Lemma \ref{lemma: twisted composition of permutation injective}.
\end{proof}

\begin{definition}[Admissible sequence]
Let $p, q, k$ be three natural integers such that $p,k \geq 1$, let $1 \leq i \leq p$ and let $n = p+q -1$. A sequence of non-trivial permutations $\underline\sigma \in \mathbb S_n^k$ is
    \begin{enumerate}
    \medskip
    
        \item $(p,q,i)$\textit{-admissible} if there exists two sequences of permutations $\underline\mu \in \mathbb S_p^k$ and $\underline\psi \in \mathbb S_q^k$ such that $\underline\sigma = \underline\mu \ltimes_i \underline\psi$, and if for every $1 \leq j \leq k$ at least (and necessarily at most) one of the two permuations $\mu_j, \psi_j$ is trivial;
        
    \medskip
    
        \item $(p,q,i)$\textit{-non admissible} if it is not $(p,q,i)$-admissible.
    \end{enumerate}
\end{definition}

\begin{definition}
    Let $\underline\mu \in \mathbb S_p^a$ and $\underline\psi \in \mathbb S_q^b$
    be two sequences of permutations with $p,a,b \geq 1$ and let $\phi$ be a
    $(a,b)$-shuffle. We denote $\underline\mu \comp_{i,\phi} \underline\psi$
    the sequence of permutation in $\mathbb S_n^k$ (where $k=a + b$ and $n=p+q-1$) given by
    $$
    \underline\mu \comp_{i,\phi} \underline\psi \coloneqq
    \phi(\underline\mu \sqcup \id_p^b) \ltimes_i \phi(\id_q^a \sqcup \underline\psi)~,
    $$
    where
    \begin{enumerate}
    \medskip
    
        \item $\phi(\underline\mu \sqcup \id_p^b) \in \mathbb S_p^k$ is the sequence of permutations $(\mu'_1, \ldots, \mu'_k)$ such that $\mu'_j = \mu_{\phi^{-1} (j)}$ if $\phi^{-1} (j) \leq a$ and $\mu'_j = \id_p$ otherwise;
        
    \medskip
    
        \item $\phi(\id_q^a \sqcup \underline\psi ) \in \mathbb S_q^k$ is the sequence of permutations $(\psi'_1, \ldots, \psi'_k)$ such that $\psi'_j = \psi_{\phi^{-1} (j)-a}$ if $\phi^{-1} (j) \geq a +1$ and $\psi'_j = \id_q$ otherwise.
    \end{enumerate}
\end{definition}

\begin{lemma}
    The function
\[
\begin{tikzcd}[column sep=2pc,row sep=0.5pc]
\displaystyle \coprod_{a+b =k} (\mathbb S_p-\{\id_p\})^a \times (\mathbb S_q-\{\id_p\})^b \times \mathrm{Sh}(a,b) \arrow[r]
&(\mathbb S_n-\{\id_p\})^k \\
(\underline\mu, \underline\psi, \phi) \arrow[r,mapsto]
&\underline\mu \comp_{i, \phi} \underline\psi
\end{tikzcd}
\]
    is injective, and its image is given by $(p,q,i)$-admissible sequences.
\end{lemma}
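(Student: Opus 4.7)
\medskip

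The plan is to establish the three pieces in sequence: the image lands in $(p,q,i)$-admissible sequences, every such admissible sequence is hit, and the preimage is unique. I would begin by unpacking the definition. Given a triple $(\underline\mu,\underline\psi,\phi)$ with $\underline\mu \in (\mathbb S_p-\{\id_p\})^a$, $\underline\psi \in (\mathbb S_q-\{\id_q\})^b$ and $\phi \in \mathrm{Sh}(a,b)$, the two auxiliary sequences are
\[
\underline\mu' = \phi(\underline\mu \sqcup \id_p^b) \in \mathbb S_p^k, \qquad \underline\psi' = \phi(\id_q^a \sqcup \underline\psi) \in \mathbb S_q^k.
\]
At position $j$, the definitions force $\mu'_j \neq \id_p$ and $\psi'_j = \id_q$ when $\phi^{-1}(j) \leq a$, and $\mu'_j = \id_p$ and $\psi'_j \neq \id_q$ when $\phi^{-1}(j) > a$. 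Therefore exactly one of $\mu'_j,\psi'_j$ is trivial for each $j$, so $\underline\sigma := \underline\mu' \ltimes_i \underline\psi'$ is $(p,q,i)$-admissible by definition, which handles containment of the image.

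\medskip

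For surjectivity onto admissible sequences, I would start from any $(p,q,i)$-admissible $\underline\sigma \in (\mathbb S_n-\{\id_n\})^k$. By definition there exist $\underline\mu'\in\mathbb S_p^k$ and $\underline\psi'\in\mathbb S_q^k$ with $\underline\sigma = \underline\mu' \ltimes_i \underline\psi'$ and at each $j$ exactly one of $\mu'_j,\psi'_j$ is trivial (the other being nontrivial since $\sigma_j \neq \id_n$). Let $a$ be the number of indices $j$ with $\mu'_j \neq \id_p$ and $b = k-a$. Define $\phi \in \mathbb S_k$ by listing in increasing order the $a$ indices where $\mu'$ is nontrivial, followed by the $b$ indices where $\psi'$ is nontrivial; this is a $(a,b)$-shuffle. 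Reading off the nontrivial entries in order produces $\underline\mu \in (\mathbb S_p-\{\id_p\})^a$ and $\underline\psi \in (\mathbb S_q-\{\id_q\})^b$, and one checks directly from the definitions that $\underline\mu \comp_{i,\phi} \underline\psi = \underline\sigma$.

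\medskip

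For injectivity, suppose two triples $(\underline\mu_1,\underline\psi_1,\phi_1)$ and $(\underline\mu_2,\underline\psi_2,\phi_2)$ with parameters $(a_1,b_1)$ and $(a_2,b_2)$ produce the same sequence $\underline\sigma$. Applying Lemma \ref{lemma: twisted sequence composition of permutation injective} (which uses $q \geq 1$) to the equality
\[
\phi_1(\underline\mu_1 \sqcup \id_p^{b_1}) \ltimes_i \phi_1(\id_q^{a_1} \sqcup \underline\psi_1) = \phi_2(\underline\mu_2 \sqcup \id_p^{b_2}) \ltimes_i \phi_2(\id_q^{a_2} \sqcup \underline\psi_2)
\]
forces the two pairs of length-$k$ sequences to coincide. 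Then $a_1 = a_2$ (the number of positions with nontrivial $\mu'$) and $b_1 = b_2$, and the ordered list of such positions determines both shuffles, forcing $\phi_1 = \phi_2$. Finally, reading the nontrivial entries recovers $\underline\mu_i$ and $\underline\psi_i$ uniquely. The only real subtlety is confirming that $(p,q,i)$-admissibility, as phrased, excludes the degenerate situation in which both $\mu'_j$ and $\psi'_j$ happen to be trivial at some $j$; this is ruled out by the hypothesis that each $\sigma_j$ is nontrivial. I expect no serious obstacle beyond keeping the bookkeeping of $\phi$ and the two subsequences straight.
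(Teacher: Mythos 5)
Your proof is correct and follows the same route as the paper: the paper disposes of this lemma in one line as "a direct consequence of the definition of a $(p,q,i)$-admissible sequence combined with Lemma \ref{lemma: twisted sequence composition of permutation injective}", which is exactly the combination you spell out (admissibility and the shuffle bookkeeping for the image, injectivity of $\ltimes_i$ for uniqueness). Your version merely makes the bookkeeping explicit, including the correct observation that nontriviality of the entries of $\underline\sigma$ rules out the degenerate positions.
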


\begin{proof}
    This is a direct consequence of the definition of a $(p,q,i)$-admissible sequence combined with Lemma \ref{lemma: twisted sequence composition of permutation injective}.
\end{proof}

Let again $\operad E$ denote the Barratt-Eccles dg operad of \cite{BergerFresse}.

\begin{definition}
Let $\underline \sigma \coloneqq (\sigma_1, \ldots, \sigma_k) \in \mathbb S_n^k$ be a sequence of permutations, we define $\rho(\underline \sigma)$, an element of $\operad E(n)_k$, as follows
    as follows
    \begin{enumerate}
    \medskip
        \item if $k=0$, then $\rho(\underline\sigma) \coloneqq \rho(*) = (\id_n)$,
        
    \medskip
    
        \item if $k \geq 1$ and $\underline\sigma = (\sigma_1, \ldots, \sigma_k)$, 
        $$
        \rho (\underline\sigma) \coloneqq (\id_n, \sigma_k , \sigma_{k-1}\sigma_k, \ldots, \sigma_1 \cdots \sigma_k)~.
        $$
    \end{enumerate}    
\end{definition}

\begin{remark}
A direct computation shows that $\{ \rho(\underline\sigma)\}_{\underline\sigma}$ for $\underline \sigma \coloneqq (\sigma_1, \ldots, \sigma_k) \in \mathbb S_n^k$ forms a basis of $(\operad E(n)_\pl)_k$ for all $n,k \geq 0$ as a graded $\mathbb{N}$-module. Thus it is a	 basis of $\operad E(n)_k$ as a graded $\mathbb{S}$-module.
\end{remark}

\medskip

\textbf{Partial compositions.} Let us recall the partial compositions of the Barratt-Eccles dg operad, constructed in \cite[Section 1.1.3]{BergerFresse}. For two elements $x = (\mu_0, \ldots,\mu_a) \in \operad E(p)_a$ and $y = (\psi_0, \ldots,\psi_b) \in \operad E(p)_b$, given $1 \leq i \leq a$ and an $(a,b)$-shuffle $\phi$, let us consider 
$$
x \comp_{i, \phi}^{\operad E} y \in \operad E(p+q-1)_{a+b}~.
$$
It is the sequence of permutations $(\sigma_0, \ldots, \sigma_{a+b})$ defined by 
$$
\sigma_j = \mu_{j_d} \comp_i \psi_{j_u}~,
$$
where 
\begin{align*}
    j_d &= \#\{k \in \mathbb N| 1 \leq k \leq a,  \ \phi(k) \leq j\}~,
    \\
    j_u &= \#\{k \in \mathbb N| a+1 \leq k \leq a+b,  \ \phi(k) \leq j\}.
\end{align*}

The partial compositions in $\mathcal{E}$ are given by 
$$
x \comp_i^{\operad E} y = \sum_{\phi \in \mathrm{Sh}(a,b)} (-1)^{\epsilon(\phi)}x \comp_{i, \phi}^{\operad E} y~,
$$
where the sum is taken over all $(a,b)$-shuffles and where $\epsilon(\phi)$ is the sign of the permutation $\phi$.

\begin{notation}
    For every $(\sigma_1, \ldots, \sigma_n) \in \operad E(k)_{n+1}$ we denote
    $$
    \mathrm{inv}((\sigma_1, \ldots, \sigma_n)) = (\sigma_n, \ldots, \sigma_1) \in \operad E(k)_{n+1}.
    $$
\end{notation}

\medskip

\begin{lemma}\label{lemma: composition Barratt eccles}
    For every $\underline\mu \in \mathbb S_p^a, \underline\psi \in \mathbb S_q^b$ and every $(a,b)$-shuffle $\phi$, we have
    $$
    (\mathrm{inv}\rho(\underline\mu)) \comp_{\underline\mu^{-1}(i), \phi}^{\operad E} (\mathrm{inv}\rho(\underline \psi))
    =
    \mathrm{inv}\rho(\underline\mu \comp_{i, \phi} \underline \psi)~,
    $$
    where $\underline\mu^{-1}(i) = \mu_a^{-1}\cdots \mu_1^{-1}(i)$. Subsequently,
    $$
    (\mathrm{inv}\rho(\underline\mu)) \comp_{\underline\mu^{-1}(i)} (\mathrm{inv}\rho(\underline \psi))
    =
    \sum_{\phi \in \mathrm{Sh}(a,b)} (-1)^{\epsilon({\phi})} \mathrm{inv}\rho(\underline\mu \comp_{i, \phi} \underline \psi)~,
    $$
    where the sum is taken over the $(a,b)$-shuffles and $\epsilon({\phi})$ is the sign of the shuffle ${\phi}$.
\end{lemma}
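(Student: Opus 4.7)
The plan is to prove the first identity entry by entry, viewing both sides as sequences of $a+b+1$ permutations in $\mathbb S_n$, and then to deduce the second identity by summing over all $(a,b)$-shuffles. Set $\tilde\mu_k \coloneqq \mu_{k+1}\cdots \mu_a$ and $\tilde\psi_k \coloneqq \psi_{k+1}\cdots \psi_b$ (so $\tilde\mu_a = \id_p$ and $\tilde\psi_b = \id_q$), so that $\mathrm{inv}\rho(\underline\mu) = (\tilde\mu_0, \ldots, \tilde\mu_a)$ and $\mathrm{inv}\rho(\underline\psi) = (\tilde\psi_0, \ldots, \tilde\psi_b)$. By the Berger--Fresse formula recalled above, the $j$-th entry of $(\mathrm{inv}\rho(\underline\mu)) \comp^{\operad E}_{i',\phi} (\mathrm{inv}\rho(\underline\psi))$ equals $\tilde\mu_{j_d} \comp_{i'} \tilde\psi_{j_u}$ with $i' = \underline\mu^{-1}(i)$. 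Writing $\underline\sigma = \underline\mu \comp_{i,\phi} \underline\psi = (\sigma_1, \ldots, \sigma_{a+b})$, the $j$-th entry of $\mathrm{inv}\rho(\underline\sigma)$ is the product $\sigma_{j+1}\sigma_{j+2}\cdots \sigma_{a+b}$ (with the empty product for $j = a+b$ equal to $\id_n$). The first identity is thus equivalent to
\[
\sigma_{j+1}\sigma_{j+2}\cdots\sigma_{a+b} = \tilde\mu_{j_d} \comp_{i'} \tilde\psi_{j_u}, \quad 0 \leq j \leq a+b,
\]
which I would prove by descending induction on $j$, the base case $j = a+b$ being immediate.

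For the inductive step, expand $\sigma_{j+1} = \bar\mu_{j+1} \comp_{i_{j+1}} \bar\psi_{j+1}$ with $i_{j+1} = \bar\mu_{j+1}^{-1}\cdots \bar\mu_1^{-1}(i)$, and split into the two cases $\phi^{-1}(j+1) \leq a$ and $\phi^{-1}(j+1) \geq a+1$. Because the shuffle $\phi$ is order-preserving on $\{1,\ldots,a\}$ and on $\{a+1,\ldots,a+b\}$, in the first case one has $\bar\mu_{j+1} = \mu_{j_d+1}$, $\bar\psi_{j+1} = \id_q$, $(j+1)_d = j_d+1$, $(j+1)_u = j_u$; while in the second case $\bar\mu_{j+1} = \id_p$, $\bar\psi_{j+1} = \psi_{j_u+1}$, $(j+1)_d = j_d$, $(j+1)_u = j_u+1$. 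A direct telescoping from the identity $i' = \mu_a^{-1}\cdots \mu_1^{-1}(i)$ then yields $i_{j+1} = \tilde\mu_{j_d+1}(i')$ in the first case and $i_{j+1} = \tilde\mu_{j_d}(i')$ in the second.

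In both cases the assertion follows from the inductive hypothesis combined with a single application of the interchange identity in the associative operad:
\[
(\alpha \comp_{\beta(k)} \eta)(\beta \comp_k \chi) = (\alpha\beta) \comp_k (\eta\chi), \quad \alpha, \beta \in \mathbb S_p, \ \eta, \chi \in \mathbb S_q, \ k \in \{1, \ldots, p\},
\]
which is a standard consequence of the equivariance and associativity axioms of operadic composition applied to $\mathcal{A}s$. In the first case one takes $(\alpha, \beta, \eta, \chi, k) = (\mu_{j_d+1}, \tilde\mu_{j_d+1}, \id_q, \tilde\psi_{j_u}, i')$, and in the second $(\alpha, \beta, \eta, \chi, k) = (\id_p, \tilde\mu_{j_d}, \psi_{j_u+1}, \tilde\psi_{j_u+1}, i')$, invoking the telescoping relations $\tilde\mu_{j_d} = \mu_{j_d+1}\tilde\mu_{j_d+1}$ and $\tilde\psi_{j_u} = \psi_{j_u+1}\tilde\psi_{j_u+1}$ to identify the right-hand side. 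Once the first identity is established, the second follows immediately by summing over $\phi \in \mathrm{Sh}(a,b)$ with the signs $(-1)^{\epsilon(\phi)}$ and invoking the definition of $\comp_i^{\operad E}$.

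The main difficulty in this plan is not operadic in nature but rather the careful bookkeeping of slot indices across two distinct tracking mechanisms: the inner twisting $\ltimes_i$, which follows the leaf $i$ through the successive products $i_{j+1} = \bar\mu_{j+1}^{-1}\cdots \bar\mu_1^{-1}(i)$, and the global slot $i' = \underline\mu^{-1}(i)$ at which the Berger--Fresse partial composition takes place. The explicit identifications $i_{j+1} = \tilde\mu_{j_d+1}(i')$ and $i_{j+1} = \tilde\mu_{j_d}(i')$ are precisely the compatibility statements between these two schemes, and they are exactly what allows the interchange law to collapse the telescoped product $\sigma_{j+1}\cdots \sigma_{a+b}$ into the single operation $\tilde\mu_{j_d} \comp_{i'} \tilde\psi_{j_u}$.
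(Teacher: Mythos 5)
Your proof is correct and follows essentially the same route as the paper: both arguments compare the two sides entry by entry via the Berger--Fresse formula for $\comp^{\operad E}_{i,\phi}$ and the interchange relation $(\sigma_2\sigma_1)\comp_i(\mu_2\mu_1)=(\sigma_2\comp_{\sigma_1(i)}\mu_2)(\sigma_1\comp_i\mu_1)$, the paper merely asserting the common explicit sequence ("one can check") where you carry out that check by a descending induction with the correct slot bookkeeping $i_{j+1}=\tilde\mu_{(j+1)_d}(i')$. The second identity then follows, as in the paper, by summing over shuffles with signs.
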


\begin{proof}
    For $1 \leq j \leq a+b$ we denote $\mu'_j = \mu_{\phi^{-1}(j)}$ if $\phi^{-1}(j) \leq a$ and $\mu'_j = \id_p$ otherwise; similarly $\psi'_j = \psi_{\phi^{-1}(j)-a}$ if $\phi^{-1}(j) \geq a+1$ and $\mu'_j = \id_q$ otherwise.
    One can check that both 
    \[
    (\mathrm{inv}\rho(\underline\mu)) \comp_{\underline\mu^{-1}(i), \phi}^{\operad E} (\mathrm{inv}\rho(\underline \psi)) \quad \text{and} \quad 
    \mathrm{inv}\rho(\underline\mu \comp_{i, \phi} \underline \psi)
    \]
   	are equal to 
    
    \[
    (\id_{a+b}, \mu'_{a+b} \comp_{\underline\mu^{-1}(i)} \psi'_{a+b},
    \cdots, (\mu'_1 \cdots \mu'_{a+b} )\comp_{\underline\mu^{-1}(i)} (\psi'_1 \cdots\psi'_{a+b}))~.
    \]
\end{proof}

\begin{definition}[Opposite shuffle]
Let $\phi$ be a $(a,b)$-shuffle. Its \textit{opposite} $(a,b)$\textit{-shuffle} $\overline{\phi}$ is given by 

$$
\overline{\phi}(j) = 
\begin{cases}
    a+b+1 - \phi (a+1 - j) \text{ if }j \leq a~;
    \\
    a+b+1 - \phi (2a+b 1 - j) \text{ if }j \geq a+1~.
\end{cases}
    $$
\end{definition}

\medskip

In other words, the opposite $(a,b)$-shuffle $\overline{\phi}$ is given by inverting indepedently the segments $\{1, \ldots, a\}$ and $\{a+1, \ldots, a+b\}$, applying $\phi$ to them, and finally inverting the whole segment $\{1, \ldots, a+b\}$. By inverting, we mean applying the unique permutation that yields the maximum number of inversions. Notice that the signatures of a $(a,b)$-shuffle and its opposite are related by the following formula
    $$
    (-1)^{\epsilon(\overline{\phi})} = (-1)^{ab +\epsilon({\phi})}~.
    $$

\begin{lemma}\label{lemma: inversion Barratt eccles}
    For every $(\mu_0, \ldots, \mu_a) \in \operad E(p)_{a}, (\psi_0, \ldots, \psi_b) \in \operad E(q)_{b}$, $1 \leq i \leq p$ and every $(a,b)$-shuffle $\phi$ we have
    $$
    \mathrm{inv}(\mu_0, \ldots, \mu_{a}) \comp_{i, \phi}^{\operad E} \mathrm{inv}(\psi_0, \ldots, \psi_b) = 
    \mathrm{inv}((\mu_0, \ldots, \mu_{a}) \comp_{i, \overline\phi}^{\operad E} (\psi_0, \ldots, \psi_b))~.
    $$
    
    Subsequently
    
    $$
    \mathrm{inv}(\mu_0, \ldots, \mu_{a}) \comp_i \mathrm{inv}(\psi_0, \ldots, \psi_b) = (-1)^{ab} 
    \mathrm{inv}((\mu_0, \ldots, \mu_{a}) \comp_i (\psi_0, \ldots, \psi_b))~.
    $$
\end{lemma}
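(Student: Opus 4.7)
The plan is to prove the first identity by a direct computation with the explicit formula for the partial composition in $\operad E$, reducing it to a combinatorial identity about how the indices $j_d, j_u$ transform under $\phi \mapsto \overline\phi$. The second identity will then follow by summing over shuffles and exploiting the sign formula $(-1)^{\epsilon(\overline\phi)} = (-1)^{ab+\epsilon(\phi)}$.

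Concretely, write $x = (\mu_0, \ldots, \mu_a)$ and $y = (\psi_0, \ldots, \psi_b)$. By definition of $\comp_{i,\phi}^{\operad E}$, the sequence $\mathrm{inv}(x) \comp_{i,\phi}^{\operad E} \mathrm{inv}(y)$ is $(\tau_0, \ldots, \tau_{a+b})$ with
\[
\tau_j = \mu_{a - j_d^\phi(j)} \comp_i \psi_{b - j_u^\phi(j)},
\]
where $j_d^\phi(j) = \#\{k \in \{1,\ldots,a\} : \phi(k) \leq j\}$ and $j_u^\phi(j) = \#\{k \in \{a+1,\ldots,a+b\} : \phi(k) \leq j\}$. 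On the other hand, $\mathrm{inv}(x \comp_{i,\overline\phi}^{\operad E} y)$ is $(\sigma_{a+b}, \ldots, \sigma_0)$ with $\sigma_j = \mu_{j_d^{\overline\phi}(j)} \comp_i \psi_{j_u^{\overline\phi}(j)}$. Proving the first identity thus reduces to the combinatorial claim
\[
j_d^{\overline\phi}(a+b-j) = a - j_d^\phi(j), \qquad j_u^{\overline\phi}(a+b-j) = b - j_u^\phi(j),
\]
for $0 \leq j \leq a+b$. Using the defining formula $\overline\phi(k) = a+b+1 - \phi(a+1-k)$ for $k \leq a$, the inequality $\overline\phi(k) \leq a+b-j$ is equivalent to $\phi(a+1-k) > j$; reindexing $k' = a+1-k$ identifies the counted set with $\{k' \in \{1,\ldots,a\} : \phi(k') > j\}$, of cardinality $a - j_d^\phi(j)$. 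The $j_u$ case is identical, using $\overline\phi(a+l) = a+b+1 - \phi(a+b+1-l)$.

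Given the first identity, the second follows formally by summing over $(a,b)$-shuffles and using that the map $\phi \mapsto \overline\phi$ is an involution on $\mathrm{Sh}(a,b)$. Indeed,
\begin{align*}
\mathrm{inv}(x) \comp_i \mathrm{inv}(y)
&= \sum_{\phi \in \mathrm{Sh}(a,b)} (-1)^{\epsilon(\phi)}\, \mathrm{inv}(x) \comp_{i,\phi}^{\operad E} \mathrm{inv}(y) \\
&= \sum_{\phi \in \mathrm{Sh}(a,b)} (-1)^{\epsilon(\phi)}\, \mathrm{inv}\bigl(x \comp_{i,\overline\phi}^{\operad E} y\bigr) \\
&= (-1)^{ab} \sum_{\phi' \in \mathrm{Sh}(a,b)} (-1)^{\epsilon(\phi')}\, \mathrm{inv}\bigl(x \comp_{i,\phi'}^{\operad E} y\bigr) \\
&= (-1)^{ab}\, \mathrm{inv}(x \comp_i y),
\end{align*}
where we substituted $\phi' = \overline\phi$, used the sign identity together with $\epsilon(\overline{\overline\phi}) = \epsilon(\phi)$, and the $\kk$-linearity of $\mathrm{inv}$.

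There is no genuine obstacle here; the only delicate point will be the careful bookkeeping of the indices in the first step, specifically checking that the inversion of the output sequence indexed by $j \mapsto a+b-j$ matches exactly the reflection $\phi \mapsto \overline\phi$ of shuffles. Once this reverse-and-reflect correspondence is pinned down, the rest of the argument is purely formal.
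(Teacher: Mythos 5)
Your proof is correct and follows essentially the same route as the paper: both reduce the first identity to the counting fact that under the reflection $j \mapsto a+b-j$ of positions and $\phi \mapsto \overline\phi$ of shuffles one has $j_d^{\overline\phi}(a+b-j) = a - j_d^{\phi}(j)$ and $j_u^{\overline\phi}(a+b-j) = b - j_u^{\phi}(j)$, and then obtain the second identity formally from the sign relation $(-1)^{\epsilon(\overline\phi)} = (-1)^{ab+\epsilon(\phi)}$ by reindexing the sum over shuffles. The only cosmetic difference is that the paper applies $\mathrm{inv}$ to the left-hand side and identifies the result with $x \comp_{i,\overline\phi}^{\operad E} y$, whereas you compare entries of both sides directly; these are the same computation.
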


\begin{proof}
Let us denote
$$
(\sigma_0, \ldots, \sigma_{a+b})
=
\mathrm{inv}(\mathrm{inv}(\mu_0, \ldots, \mu_{a}) \comp_{i, \phi}^{\operad E} \mathrm{inv}(\psi_0, \ldots, \psi_b))~.
$$
By definition, one has
$$
\sigma_{a+b-j} = \mu_{a-j_d} \comp_i \psi_{b-j_u}
$$
for every $0 \leq j \leq a+b$, where as above
\begin{align*}
    j_d &= \#\{k \in \mathbb N| 1 \leq k \leq a,  \ \phi(k) \leq j\}
    \\
    j_u &= \#\{k \in \mathbb N| a+1 \leq k \leq a+b,  \ \phi(k) \leq j\}.
\end{align*}
By denoting $\overline{j}= a+b-j$,
 $\overline{j}_d= a-j_d$ and $\overline{j}_u= b-j_u$, this rewrites as 
$$
\sigma_{\overline{j}} = \mu_{\overline{j}_d} \comp_i \psi_{\overline{j}_u}
$$
for every $0 \leq \overline j \leq a+b$. Combined with the fact that 
\begin{align*}
    \overline{j}_d = a-j_d &=\#\{k \in \mathbb N| 1 \leq k \leq a,  \ \overline\phi(k) > j\}
    \\
    &=\#\{k \in \mathbb N| 1 \leq k \leq a,  \ \overline\phi(k) \geq j+1\}
    \\
    &=\#\{k \in \mathbb N| 1 \leq k \leq a,  \ a+b+1 - \overline\phi(k) \leq a+b+1 -(j+1)\}
    \\
    &=\#\{k \in \mathbb N| 1 \leq k \leq a,  \ a+b+1 - \overline\phi(a+1-k) \leq \overline j\}
    \\
    &=\#\{k \in \mathbb N| 1 \leq k \leq a,  \ \overline\phi(k) \leq \overline{j}\}~,
\end{align*}
and similarly
$$
\overline{j}_u = \#\{k \in \mathbb N| a+1 \leq k \leq a+b,  \ \overline\phi(k) \leq \overline{j}\}~,
$$
we get that 
$$
(\sigma_0, \ldots, \sigma_{a+b}) = (\mu_0, \ldots, \mu_{a}) \comp_{i, \overline\phi}^{\operad E} (\psi_0, \ldots, \psi_b)~.
$$
\end{proof}

\begin{lemma}\label{lemma:inversecompinbe}
For every $\underline\mu \in \mathbb S_p^a, \underline\psi \in \mathbb S_q^b$, we have
    
    \[
    \rho(\underline\mu) \comp_{\underline\mu^{-1}(i)} \rho(\underline \psi) = (-1)^{ab}\sum_{\phi \in \mathrm{Sh}(a,b)} {\epsilon({\phi})} \rho(\underline\mu \comp_{i, \phi} \underline \psi)~,
    \]
    \vspace{0.1pc}
    
    where $\underline\mu^{-1}(i) = \mu_a^{-1}\cdots \mu_1^{-1}(i)$, where the sum is taken over the $(a,b)$-shuffles and where $\epsilon({\phi})$ is the sign of the $(a,b)$-shuffle ${\phi}$.
\end{lemma}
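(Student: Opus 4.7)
The plan is to obtain this identity as a direct consequence of the two preceding lemmas, namely Lemma \ref{lemma: composition Barratt eccles} and Lemma \ref{lemma: inversion Barratt eccles}. Nothing new needs to be computed; the statement is essentially a translation of the composition formula via the involution $\mathrm{inv}$.

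First, I would apply Lemma \ref{lemma: inversion Barratt eccles} with the composition index $\underline\mu^{-1}(i)$ (which is a legitimate integer between $1$ and $p$) to the pair of elements $\rho(\underline\mu) \in \operad E(p)_a$ and $\rho(\underline\psi) \in \operad E(q)_b$. This yields
\[
(\mathrm{inv}\rho(\underline\mu)) \comp_{\underline\mu^{-1}(i)} (\mathrm{inv}\rho(\underline\psi))
= (-1)^{ab}\, \mathrm{inv}\bigl(\rho(\underline\mu) \comp_{\underline\mu^{-1}(i)} \rho(\underline\psi)\bigr).
\]
On the other hand, Lemma \ref{lemma: composition Barratt eccles} computes the left-hand side of this equation as the signed sum
\[
(\mathrm{inv}\rho(\underline\mu)) \comp_{\underline\mu^{-1}(i)} (\mathrm{inv}\rho(\underline\psi))
= \sum_{\phi \in \mathrm{Sh}(a,b)} (-1)^{\epsilon(\phi)}\, \mathrm{inv}\rho(\underline\mu \comp_{i,\phi} \underline\psi).
\]

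Next, I would equate these two expressions and apply $\mathrm{inv}$ to both sides. Since $\mathrm{inv}$ is an involutive $\kk$-linear endomorphism of each $\operad E(n)$, this immediately gives
\[
(-1)^{ab}\, \rho(\underline\mu) \comp_{\underline\mu^{-1}(i)} \rho(\underline\psi)
= \sum_{\phi \in \mathrm{Sh}(a,b)} (-1)^{\epsilon(\phi)}\, \rho(\underline\mu \comp_{i,\phi} \underline\psi).
\]
Multiplying through by $(-1)^{ab}$ yields the desired identity. There is no real obstacle here; the only thing worth double-checking is that the sign conventions in the two cited lemmas match the convention used in the statement of the current lemma (where $\epsilon(\phi)$ is interpreted as the sign of the shuffle), which is indeed the case by inspection of the previous proofs.
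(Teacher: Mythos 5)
Your proposal is correct and follows exactly the paper's route: the paper's proof of Lemma \ref{lemma:inversecompinbe} is precisely the one-line combination of Lemma \ref{lemma: inversion Barratt eccles} and Lemma \ref{lemma: composition Barratt eccles}, which you spell out by equating the two expressions for $(\mathrm{inv}\rho(\underline\mu)) \comp_{\underline\mu^{-1}(i)} (\mathrm{inv}\rho(\underline\psi))$ and applying the linear involution $\mathrm{inv}$. Your remark on the sign convention (the statement's $\epsilon(\phi)$ denoting the sign itself, versus the $(-1)^{\epsilon(\phi)}$ of the cited lemmas) is the only point of care, and you handle it correctly.
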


\begin{proof}
    This is a combination of Lemma \ref{lemma: inversion Barratt eccles} and Lemma \ref{lemma: composition Barratt eccles}.
\end{proof}

\textbf{Comonoid structure.} As described in \cite[Section 1.1]{BergerFresse}, the Barratt-Eccles dg operad $\operad E$ has the structure of a comonoid for the Hadamard tensor product. This structure is given by the map

\[
\begin{tikzcd}[column sep=2pc,row sep=0pc]
\Delta_{\operad E}: \operad E \arrow[r]
&\operad E \otimes \operad E \\
(\sigma_0, \ldots, \sigma_n)  \arrow[r,mapsto]
&\displaystyle \sum_{i=0}^k (\sigma_0, \ldots, \sigma_i) \otimes (\sigma_i, \ldots, \sigma_n)~.
\end{tikzcd}
\]

\begin{lemma}\label{lemma:monoidalpropertyoftherhomap}
For every $(\sigma_1, \ldots, \sigma_k) \in \mathbb S_n^k$, one has that
    $$
    \Delta_{\operad E} (\rho((\sigma_1, \ldots, \sigma_k)))
    = \sum_{i=1}^{k+1} \rho((\sigma_i, \ldots, \sigma_k)) \otimes \rho((\sigma_1, \ldots, \sigma_{i-1}))^{\sigma_i \cdots \sigma_k}~,
    $$
    where $\Delta_{\operad E}$ is the structure of a comonoid of the Barratt-Eccles dg operad $\operad E$ for the Hadamard tensor product.
\end{lemma}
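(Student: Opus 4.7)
The statement is essentially a direct computation, so my plan is to unfold both sides and match them term by term, with a single clean reindexing in the middle.

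First, I would expand the definition of $\rho$ to write
\[
\rho((\sigma_1, \ldots, \sigma_k)) = (\id_n, \sigma_k, \sigma_{k-1}\sigma_k, \ldots, \sigma_1 \cdots \sigma_k),
\]
which is an element of $\operad E(n)_k$ with entries $\mu_j \coloneqq \sigma_{k-j+1}\cdots\sigma_k$ for $0 \leq j \leq k$ (empty product when $j=0$). Applying the comonoid structure map $\Delta_{\operad E}$ recalled just above the lemma then yields
\[
\Delta_{\operad E}(\rho((\sigma_1, \ldots, \sigma_k))) = \sum_{j=0}^{k} (\mu_0, \ldots, \mu_j) \otimes (\mu_j, \ldots, \mu_k).
\]

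Next I would perform the substitution $i = k-j+1$, so that $i$ runs from $1$ (when $j=k$) to $k+1$ (when $j=0$), matching the index range of the right-hand side of the claim. For the left tensor factor $(\mu_0,\ldots,\mu_j)$, substituting $\mu_l = \sigma_{k-l+1}\cdots\sigma_k$ gives
\[
(\id_n, \sigma_k, \sigma_{k-1}\sigma_k, \ldots, \sigma_i\cdots \sigma_k),
\]
which by the very definition of $\rho$ is exactly $\rho((\sigma_i, \sigma_{i+1},\ldots,\sigma_k))$.

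For the right tensor factor $(\mu_j, \mu_{j+1}, \ldots, \mu_k)$, I would factor out the common right-multiplication by $\sigma_i\cdots\sigma_k$ on all entries. Explicitly, it reads
\[
(\sigma_i\cdots\sigma_k,\ \sigma_{i-1}\sigma_i\cdots\sigma_k,\ \ldots,\ \sigma_1\cdots\sigma_{i-1}\sigma_i\cdots\sigma_k),
\]
and this coincides with $(\id_n, \sigma_{i-1}, \sigma_{i-2}\sigma_{i-1}, \ldots, \sigma_1\cdots\sigma_{i-1})\cdot(\sigma_i\cdots\sigma_k)$, which is precisely $\rho((\sigma_1,\ldots,\sigma_{i-1}))^{\sigma_i\cdots\sigma_k}$ by the definition of the right $\mathbb S_n$-action on $\operad E(n)$.

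Putting the two factors together and summing over $i$ produces the asserted formula. There is no real obstacle; the only delicate point is keeping the reindexing consistent, in particular checking the boundary cases $i=1$ (where the left factor is $\rho((\sigma_1,\ldots,\sigma_k))$ and the right factor is the sequence of length one $(\id_n)^{\sigma_1\cdots\sigma_k} = (\sigma_1\cdots\sigma_k)$, corresponding to $\rho(*)$ right-translated) and $i=k+1$ (where the roles are reversed). Both boundary cases work out directly from the conventions for the empty sequence $\rho(*) = (\id_n)$.
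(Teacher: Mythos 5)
Your proof is correct and is exactly the "direct inspection" the paper invokes: unfolding $\rho$, applying the Alexander--Whitney-type diagonal $\Delta_{\operad E}$, and reindexing via $i = k-j+1$ identifies each summand with $\rho((\sigma_i,\ldots,\sigma_k)) \otimes \rho((\sigma_1,\ldots,\sigma_{i-1}))^{\sigma_i\cdots\sigma_k}$, including the boundary terms. The paper gives no further detail, so your write-up simply makes its one-line argument explicit.
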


\begin{proof}
This follows from direct inspection.    
\end{proof}

\subsection{Homotopy theory of operads}
There is a semi-model category structure on the category of dg operads constructed by B. Fresse in \cite{Fresse}.

\begin{theorem}[{\cite[Chapter 12]{Fresse}}]
The category of dg operads admits a \textit{semi-model structure}, determined by the following sets of maps:
\medskip
\begin{itemize}
    \item the set of fibrations is given by morphisms of dg operads which are arity-wise degree-wise epimorphisms,
    \medskip
    \item the set of weak-equivalences is given by morphisms of dg operads which are arity-wise quasi-isomorphism,
    \medskip
    \item the set of cofibrations is given by morphisms of dg operads which have the left lifting property with respect to 
    maps that are both fibrations and weak-equivalences.
    \medskip
\end{itemize}
\end{theorem}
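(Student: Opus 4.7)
The plan is to obtain this semi-model structure by right-transferring the projective model structure on dg $\mathbb S$-modules along the free-forgetful adjunction
\[
\treemod : \catdgmod{\mathbb S} \rightleftarrows \dgoperads : U .
\]
We declare a morphism of dg operads to be a fibration (resp.\ weak equivalence) if its underlying map of dg $\mathbb S$-modules is one in the projective model structure on $\catdgmod{\mathbb S}$, i.e.\ an arity-wise degree-wise epimorphism (resp.\ an arity-wise quasi-isomorphism). Cofibrations are then defined by left lifting against acyclic fibrations. Generating (acyclic) cofibrations of dg operads are taken to be the images under $\treemod$ of the generating (acyclic) cofibrations of $\catdgmod{\mathbb S}$, namely the maps $\treemod(S^n \otimes \kk[\mathbb S_k] \otimes \mathbb S) \to \treemod(D^{n+1}\otimes \kk[\mathbb S_k]\otimes \mathbb S)$ and $\treemod(0) \to \treemod(D^{n+1}\otimes \kk[\mathbb S_k]\otimes \mathbb S)$.

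Next, I would verify the hypotheses of Kan's transfer principle. Smallness of the generating objects in $\dgoperads$ follows from the fact that the forgetful functor preserves filtered colimits, since these are computed in $\catdgmod{\mathbb S}$. The small object argument then produces the two functorial factorization systems. The only non-trivial semi-model-category axiom is acyclicity of pushouts of generating acyclic cofibrations: given a pushout square in $\dgoperads$
\[
\begin{tikzcd}
\treemod(0) \ar[r] \ar[d] & \operad P \ar[d,"j"] \\
\treemod(D^{n+1}\otimes \kk[\mathbb S_k]\otimes \mathbb S) \ar[r] & \operad P' ,
\end{tikzcd}
\]
one must show that $j$ is a quasi-isomorphism of dg $\mathbb S$-modules, and more generally that transfinite compositions of such pushouts are quasi-isomorphisms.

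The standard way to handle this pushout is to filter $\operad P'$ by the number of occurrences of the new generator in each tree monomial, and to identify the associated graded with tensor products of $\operad P$ and the cone $D^{n+1}\otimes\kk[\mathbb S_k]\otimes\mathbb S$, taken up to natural $\mathbb S$-equivariance. This is exactly where positive characteristic creates trouble: these associated graded pieces involve coinvariants for actions of finite groups (governing how the new cells are permuted among themselves), and such coinvariants do not in general preserve acyclicity in positive characteristic. The main obstacle is therefore to restrict the analysis to pushouts along maps whose source $\operad P$ is already cofibrant. For cofibrant $\operad P$, the underlying dg $\mathbb S$-module admits a filtration whose associated graded is $\mathbb S$-quasi-free, so the relevant group-coinvariants are norm-isomorphic to invariants (Proposition \ref{prop: iso avec la norme}), and acyclicity of the cone factor propagates to the whole pushout via Proposition \ref{lemmatensored}.

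This cofibrancy restriction is precisely what forces a \emph{semi}-model rather than model structure: the lifting and factorization axioms involving acyclic (co)fibrations hold only when the source is cofibrant. With this caveat, one also obtains that cofibrations are retracts of transfinite compositions of pushouts of generating cofibrations, and that the two factorization functors have the required lifting properties against the appropriate class. I would expect the pushout-filtration analysis, and in particular the identification of the associated graded with $\mathbb S$-quasi-free objects under the cofibrancy assumption on $\operad P$, to be the main technical step; everything else is formal transfer.
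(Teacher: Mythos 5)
The paper does not actually prove this statement: it is imported as a citation to Fresse's Chapter 12, so there is no in-paper argument to compare against. Your outline follows essentially the same route as that reference — right-transfer of the projective structure on dg $\mathbb S$-modules along the free--forgetful adjunction $\treemod \dashv \mathrm{U}$, the small object argument for the factorizations, and a filtration analysis of free cell extensions $\operad P \longrightarrow \operad P \sqcup \treemod(\text{cells})$, with the failure of coinvariants to preserve acyclicity in positive characteristic forcing cofibrancy restrictions and hence only a semi-model structure. So at the level of strategy your proposal is the standard (and the cited) one.

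Two caveats keep it from being a proof rather than an outline. First, the entire mathematical content of the theorem is the cell-attachment lemma you explicitly defer (``the main technical step''); nothing in the transfer formalism is at issue, so deferring that step defers the theorem. Second, the input you invoke to handle it — that a cofibrant operad carries a filtration whose associated graded is $\mathbb S$-quasi-free, so that norm maps and the pushout-product property yield acyclicity — is itself one of the main results of Fresse's Chapter 12 and is proved there by the very same filtration analysis of cell attachments; taken as a black box it makes the argument circular. The usual fix is a simultaneous induction over cellular extensions: one proves the attachment lemma under the hypothesis that the underlying $\mathbb S$-module of the base operad is projectively cofibrant (or quasi-free with suitable filtration), and in the same induction shows that cellular, hence cofibrant, operads have this property. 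Relatedly, the finite groups whose coinvariants appear in the associated graded of $\operad P \sqcup \treemod(D^{n+1}\otimes \kk[\mathbb S_k])$ are automorphism groups of trees permuting identical branches (mixing $\operad P$-labelled and cell-labelled vertices), not merely the symmetric groups acting on the new cells, so the freeness needed for the norm isomorphism must be verified at that level — this is precisely where the $\mathbb S$-freeness of both the attached cells and the graded pieces of the base operad enters.
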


Let us denote by $[1]$ the category $0 \longrightarrow 1$ with two objects and a single non-trivial arrow. A functor from this category is equivalent to the data of two objects and a morphism between them.

\begin{lemma}\label{lemma:operadelementarycof}
Let us consider a morphism of dg operad $\mathcal{P}^{(0)} \longrightarrow \mathcal{P}^{(1)}$. Suppose there exists a morphism of graded $\mathbb{N}$-modules $X^{(0)} \longrightarrow X^{(1)}$ such that the following diagram of functors
    \[
    \begin{tikzcd}[column sep=3.5pc,row sep=3.5pc]
        {[1]}
        \ar[rr, "\operad P"] \ar[d, "X",swap]
        && \dgoperads
        \ar[d,"\mathrm{U}"]
        \\
        \catgrmod{\mathbb{N}}
        \ar[r,"\treemod_\pl"]
        & \groperads_\pl
        \ar[r,"\mathbb S \otimes -"]
        & \groperads
    \end{tikzcd}
    \]
    commutes. Furthermore, suppose that
    \medskip
    \begin{enumerate}
        \item the map $X^{(0)} \longrightarrow X^{(1)}$ is a degree-wise injection;

        \medskip
        
        \item the restriction of the derivation of $\operad P^{(1)}$
        to the generators $X^{(1)}$ factors as
        $$
        X^{(1)} \longrightarrow \operad P^{(0)} + X^{(1)} \hookrightarrow \operad P^{(1)}~.
        $$
    \end{enumerate}
    Then the map $\operad P^{(0)} \longrightarrow \operad P^{(1)}$ is a cofibration.
\end{lemma}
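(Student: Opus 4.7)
The plan is to verify directly that $\iota \colon \operad P^{(0)} \to \operad P^{(1)}$ has the left lifting property against every acyclic fibration of dg operads $\pi \colon \operad Q \twoheadrightarrow \operad R$, which characterises cofibrations in Fresse's semi-model structure. Fix a commutative square with top morphism $\alpha$ and bottom morphism $\beta$, and seek a dg operad morphism $\gamma \colon \operad P^{(1)} \to \operad Q$ extending $\alpha$ and lifting $\beta$ along $\pi$.

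By assumption~(1), the underlying graded operad of $\operad P^{(1)}$ is $\treemod_\pl(X^{(1)}) \otimes \mathbb{S}$, and the composite functor $\treemod_\pl(-) \otimes \mathbb{S}$ is left adjoint to the forgetful functor $\groperads \to \catgrmod{\mathbb{N}}$. Specifying $\gamma$ as a morphism of graded operads extending $\alpha$ is therefore the same as specifying a morphism of graded $\mathbb{N}$-modules $X^{(1)} \to \mathrm{U}_{\mathbb{S}}(\operad Q)$ extending $X^{(0)} \to \mathrm{U}_{\mathbb{S}}(\operad Q)$. Since $\kk$ is a field and $X^{(0)} \hookrightarrow X^{(1)}$ is a degree-wise monomorphism, fix a splitting $X^{(1)} = X^{(0)} \oplus Y$ of graded $\mathbb{N}$-modules. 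Because $\operad P^{(0)}$ is stable under $d$, condition~(2) forces the differential of every basis element $y \in Y$ of degree $n$ to take the form $d(y) = p_y + y'_y$, with $p_y \in \operad P^{(0)}$ of degree $n-1$ and $y'_y$ a linear combination of basis elements of $Y_{n-1}$.

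We now build $\gamma$ on $Y$ by induction on $n$. Let $\operad P^{(1),<n}$ denote the sub-graded-operad of $\operad P^{(1)}$ generated by $\operad P^{(0)}$ together with $Y_{<n}$; by the triangular form of $d$ above it is closed under the differential, hence is a sub-dg-operad. Suppose inductively that $\gamma$ has been defined as a dg operad morphism on $\operad P^{(1),<n}$ lifting $\beta$ along $\pi$. For each basis element $y \in Y_n$, the element $\gamma(d(y))$ is then well-defined and is a cycle. Using the arity-wise surjectivity of $\pi$, pick any $\tilde y \in \operad Q$ with $\pi(\tilde y) = \beta(y)$; the defect $d(\tilde y) - \gamma(d(y))$ is then a cycle lying in $\ker \pi$. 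Since $\pi$ is an acyclic fibration, $\ker \pi$ is arity-wise an acyclic dg module (as the kernel of an arity-wise acyclic epimorphism), so there exists $w \in \ker \pi$ of the same arity with $d(w) = d(\tilde y) - \gamma(d(y))$. Setting $\gamma(y) \coloneqq \tilde y - w$ yields $\pi(\gamma(y)) = \beta(y)$ and $d(\gamma(y)) = \gamma(d(y))$, as required. Iterating on $n$ and extending by the universal property of the free graded operad produces the desired morphism $\gamma$.

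The main obstacle is ensuring that the induction on $n$ goes through when $Y$ is not a priori bounded below: this is handled by a transfinite induction along the well-founded order on basis elements of $Y$ obtained from the triangular form of $d$, under which the degree strictly decreases along each dependency chain. Apart from this, the argument is the standard obstruction-theoretic lifting based on the arity-wise acyclicity of the kernel of a trivial fibration, with the planarity assumption~(1) serving precisely to avoid any symmetric-group projectivity issue when lifting generators.
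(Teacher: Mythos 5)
There is a genuine gap at the point you yourself flag. After splitting $X^{(1)} \cong X^{(0)} \oplus Y$, condition (2) gives $d(y) = p_y + y'_y$ with $y'_y \in Y_{n-1}$, so defining $\gamma(y)$ requires $\gamma$ to be already defined on generators of strictly lower degree. When $Y$ is unbounded below this dependency relation is \emph{not} well-founded: take for instance $Y$ spanned by elements $y_n$ in every degree $n \in \mathbb{Z}$ with $d(y_n) = y_{n-1}$ (plus arbitrary components in $\operad P^{(0)}$). Every dependency chain is infinite and there are no minimal elements, so the "transfinite induction along the well-founded order obtained from the triangular form of $d$" has no starting point; strict decrease of degree along chains does not yield well-foundedness over $\mathbb{Z}$. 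As written, your cell-by-cell obstruction argument therefore only proves the lemma when the new generators are bounded below, which is not assumed.

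The paper avoids this entirely by attaching all of $Y$ at once rather than generator by generator: the square-zero condition shows that the component $d_Y$ of the derivation makes $Y$ a dg module and that the other component defines a chain map $\psi\colon S^{-1}\otimes Y \to \operad P^{(0)}$, and then $\operad P^{(0)} \to \operad P^{(1)}$ is exhibited as the pushout of the free map $\treemod((S^{-1}\otimes Y)\otimes \mathbb S) \to \treemod((D^{0}\otimes Y)\otimes \mathbb S)$ along $\psi$. Since $S^{-1}\otimes Y \to D^{0}\otimes Y$ is a cofibration of dg modules for \emph{any} $Y$ (over a field all degree-wise injections are cofibrations, no boundedness needed), its image under $-\otimes\mathbb S$ and then $\treemod$ is a cofibration of dg operads, and cofibrations are stable under pushout. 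If you want to keep your direct-lifting strategy, the repair is the same idea in disguise: do not lift one basis element at a time, but observe that a dg operad lift on $\operad P^{(1)}$ extending $\alpha$ is exactly a solution of the single lifting problem of dg $\mathbb S$-modules for $(S^{-1}\otimes Y)\otimes\mathbb S \to (D^{0}\otimes Y)\otimes\mathbb S$ against the underlying acyclic fibration of $\pi$, which exists because the left-hand map is a projective cofibration; planarity (assumption (1)) is what lets you work with the free $\mathbb S$-module on planar generators, exactly as you intended.
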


\begin{proof}
    Let us decompose
    $X^{(1)} \cong X^{(0)} \oplus Y$ as graded $\mathbb{N}$-modules. The restriction to $Y$ of the derivation of $\operad P^{(1)}$ is
    a degree $-1$ map $(d_Y, \phi) :Y \longrightarrow Y \oplus  \operad P^{(0)}$. The fact that the whole derivation of $\operad P^{(1)}$ squares to zero amounts to the facts that
    \medskip
    \begin{enumerate}
        \item the map $d_Y$ squares to zero and thus $Y$ gets the structure of a dg module ;
    \medskip
        \item the map $\psi : S^{-1} \otimes Y \longrightarrow \operad P^{(0)}$, that sends $s \otimes y$ to $\phi(y)$ is a morphism of dg modules.
    \end{enumerate}

    \medskip
    
    Thus the following diagram
    \[
    \begin{tikzcd}[column sep=3.5pc,row sep=3.5pc]
        \treemod((S^{-1} \otimes Y) \otimes \mathbb S)
        \ar[r, "\psi"] \ar[d]  \arrow[dr, phantom, "\ulcorner", very near end]
        & \operad P^{(0)}
        \ar[d]
        \\
        \treemod((D^{0} \otimes Y) \otimes \mathbb S)
        \ar[r]
        & \operad P^{(1)} .
    \end{tikzcd}
    \]
    is a pushout square of dg operads. Since the left vertical map is cofibration, so is the right vertical one.
\end{proof}

Given a small ordinal $\alpha$, one can view it as a category. Objects are indexed by the ordinal $\alpha$, and there is only one non-trivial arrow between two objects $i$ and $j$ in $\alpha$, if and only if $i < j$. Cocontinuous functors from $\alpha$ to a category give what we call $\alpha$-indexed \textit{ladders}. We refer to Subsection \ref{subsection: notations and conventions} for more details.

\begin{proposition}\label{proposition:operadqpcofibrant}
Let $\alpha$ be a small ordinal and let 
\[
\operad P^{(0)} \longrightarrow \operad P^{(1)} \longrightarrow \operad P^{(2)} \longrightarrow \cdots \longrightarrow \operad P^{(i)} \longrightarrow \cdots~,
\]
be an $\alpha$-indexed ladder of dg operads. Suppose there exists an $\alpha$-indexed ladder 
\[
X^{(0)} \longrightarrow X^{(1)} \longrightarrow X^{(2)} \longrightarrow \cdots \longrightarrow X^{(i)} \longrightarrow \cdots~,
\]
of graded $\mathbb{N}$-modules such that the following diagram of functors
    \[
    \begin{tikzcd}[column sep=3.5pc,row sep=3.5pc]
        \alpha
        \ar[rr, "\operad P"] \ar[d, "X",swap]
        && \dgoperads
        \ar[d,"\mathrm{U}"]
        \\
        \catgrmod{\kk}
        \ar[r,"\treemod_\pl"]
        & \groperads_\pl
        \ar[r,"\mathbb S \otimes -"]
        & \groperads~.
    \end{tikzcd}
    \]
    commutes. Furthermore, suppose that
    \medskip
    \begin{enumerate}
        \item for every $i, i+1 \in \alpha$, the map $X^{(i)} \longrightarrow X^{(i+1)}$ is a degree-wise injection;

    \medskip
    
        \item for every $i \in\alpha$, the restriction of the derivation of $\operad P^{(i+1)}$
        to the generators $X^{(i+1)}$ factors as
        $$
        X^{(i+1)} \longrightarrow \operad P^{(i)} + X^{(i+1)} \hookrightarrow \operad P^{(i+1)}.
        $$
    \end{enumerate}
    Then the colimit dg operad 
    \[
    \operad P^{(\alpha)} \coloneqq \colim{i \in \alpha} ~\operad P^{(i)} 
    \]
    is cofibrant.
\end{proposition}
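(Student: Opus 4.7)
The plan is to combine an iterated application of Lemma \ref{lemma:operadelementarycof} with the closure of cofibrations under transfinite composition in Fresse's semi-model category of dg operads.

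First, for every pair of successive indices $i, i+1 \in \alpha$, the restriction of both ladders to the subcategory $\{i < i+1\}$ supplies exactly the input required by Lemma \ref{lemma:operadelementarycof}: a compatible pair of maps $X^{(i)} \to X^{(i+1)}$ and $\operad P^{(i)} \to \operad P^{(i+1)}$ whose underlying graded operad extension is obtained from $X^{(i)} \to X^{(i+1)}$ by applying $(-\otimes \mathbb S) \circ \treemod_\pl$, degree-wise injectivity from hypothesis (1), and the factorization of the derivation of $\operad P^{(i+1)}$ restricted to $X^{(i+1)}$ through $\operad P^{(i)} + X^{(i+1)}$ from hypothesis (2). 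Lemma \ref{lemma:operadelementarycof} then guarantees that each successor morphism $\operad P^{(i)} \to \operad P^{(i+1)}$ is a cofibration.

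Next, by the cocontinuity built into the definition of an $\alpha$-ladder, for every limit ordinal $k \in \alpha$ one has $\operad P^{(k)} \cong \colim{i < k}\, \operad P^{(i)}$, and the colimit $\operad P^{(\alpha)}$ realises the map from $\operad P^{(0)}$ to $\operad P^{(\alpha)}$ as a transfinite composition of the successor cofibrations established above. Since the class of cofibrations in the semi-model category of dg operads is closed under pushouts and transfinite compositions, the map $\operad P^{(0)} \to \operad P^{(\alpha)}$ is itself a cofibration.

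It remains to handle the bootstrap step, namely to observe that the map from the initial dg operad to $\operad P^{(0)}$ is a cofibration. This is a degenerate instance of Lemma \ref{lemma:operadelementarycof}: viewing $\operad P^{(0)}$ as obtained from the initial dg operad by freely adjoining the generators $X^{(0)}$, the inclusion $0 \hookrightarrow X^{(0)}$ is trivially a degree-wise monomorphism, and the needed factorization of the derivation on $X^{(0)}$ is provided by the ladder data itself. Composing with the cofibration $\operad P^{(0)} \to \operad P^{(\alpha)}$ yields that the whole map from the initial dg operad to $\operad P^{(\alpha)}$ is a cofibration, so $\operad P^{(\alpha)}$ is cofibrant.

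The only delicate point is confirming that the compatibility between the two ladders of dg operads and graded $\mathbb N$-modules descends to each successor square so that Lemma \ref{lemma:operadelementarycof} genuinely applies stage by stage; this is where the commutative diagram of functors in the hypothesis is used essentially, and it is also what ensures that no new generators are introduced at limit ordinals, so that the entire ladder is indeed a transfinite composition of elementary cofibrant extensions rather than anything more complicated.
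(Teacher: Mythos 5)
Your overall strategy is the same as the paper's: the successor maps $\operad P^{(i)} \to \operad P^{(i+1)}$ are cofibrations by Lemma \ref{lemma:operadelementarycof}, and the limit stages are absorbed by the stability of cofibrations under transfinite composition; the paper merely packages this as an ordinal induction on the set of indices $i$ for which $\operad P^{(i)}$ is a cofibrant object.

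The one step that is not actually licensed by the stated hypotheses is your bootstrap. You assert that the map from the initial dg operad into $\operad P^{(0)}$ is a ``degenerate instance'' of Lemma \ref{lemma:operadelementarycof} because ``the needed factorization of the derivation on $X^{(0)}$ is provided by the ladder data itself.'' It is not: hypothesis (2) only constrains the derivations at successor stages $\operad P^{(i+1)}$, and says nothing about how the derivation of $\operad P^{(0)}$ acts on the generators $X^{(0)}$ --- a priori it could send a generator to a decomposable element (this is exactly what cobar-type differentials do), in which case the lemma gives nothing. The paper's proof closes this by invoking its ladder conventions instead: the bottom of the ladder is the initial dg operad, i.e.\ $\operad P^{(0)} \cong \operad I$, which is cofibrant with no further argument, and the induction starts there. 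So your argument is essentially the paper's, but the base case should be justified by $\operad P^{(0)} \cong \operad I$ (equivalently $X^{(0)} = 0$ under the ladder convention), not by appealing to a derivation condition the hypotheses do not supply.
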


\begin{proof}
    Let $v$ be the subset of $\alpha +1$ of elements $i$ so that $\operad P^{(i)}$ is cofibrant. It satisfies the following properties:
    \medskip
    
    \begin{enumerate}
        \item it contains the first element $0 \in \alpha +1$ since $\operad P^{(0)} \cong \operad I$;

        \medskip
        
        \item for $i, i+1 \in \alpha+1$, if $i \in v$, then $i+1 \in v$ by Lemma \ref{lemma:operadelementarycof};

        \medskip
        
        \item if $i \in \alpha+1$ is a limit ordinal and if $v \sqcap \{j < i\} = \{j < i\}$, then $i \in v$ since cofibrations are stable through transfinite composition.
    \end{enumerate}
    
    \medskip
    
    Therefore $v = \alpha$, which implies that $P^{(\alpha)}$ is cofibrant.
\end{proof}


\subsection{Conilpotent cooperad ladders}

\begin{definition}[Cooperad ladder]
Let $\alpha$ be a small ordinal. An $\alpha$-\textit{cooperad ladder} amounts to the data of a functor 
\[
\operad C: \alpha \longrightarrow \curvcooperads^{\mathsf{conil}}~,
\]
which satisfies the following conditions:

\medskip

\begin{enumerate}
    \item for every limit ordinal $i \in \alpha$, the map
    \[
\colim{j < i}~\C^{(j)} \to \C^{(i)}
    \]
    is an isomorphism; equivalently, the extension of $\C^{(-)}$ to $1 + \alpha$
    that sends $0$ to $\operad I$ is cocontinuous;
    \item for every $i, i+1 \in \alpha$, the map $\operad C^{(i)} \longrightarrow \operad C^{(i+1)}$
is an arity-wise degree-wise injection;

\medskip

    \item the decomposition map of the underlying pdg conilpotent cooperad
\[
\begin{tikzcd}
\operad C^{(i+1)} \arrow[r]
&\treemod \operad C^{(i+1)} \arrow[r]
&\overline \treemod \operad C^{(i+1)}
\end{tikzcd}
\]
factors through $\overline \treemod \operad C^{(i)}$. 
\end{enumerate}

\medskip

We denote
\[
\operad C^{(\alpha)} \coloneqq \colim{i \in \alpha}~\operad C^{(i)}~,
\]
the colimit of the cooperad ladder.
\end{definition}

The \textit{associated graded} of this ladder is given by 
$$
\gr_{i} ~\operad C \coloneqq \operad C^{(i)}/ \colim{j < i}~\operad C^{(j)}~.
$$
In fact, we have that $\gr_{i} ~\operad C \cong \operad C^{(i)}/ \operad C^{(i-1)}$ and that $\gr_{0} ~\operad C = \operad C^{(0)}$. Furthermore, $\gr_i~ \operad C = 0$ when $i$ is a limit ordinal. 

\begin{remark}
    For all $i \in \alpha$, the associated graded object $\gr_i \operad C$ is a dg module. Indeed, the induced coderivation on it squares to zero because of the condition in Definition \ref{def curved cooperad}. 
\end{remark}

\begin{example}[Coradical ladder]
Let $\operad C$ be a conilpotent curved cooperad. The coradical filtration of $\operad C$
$$
\mathrm{F}^{\mathrm{rad}}_0~ \operad C \rightarrowtail \mathrm{F}^{\mathrm{rad}}_1~ \operad C
\rightarrowtail \cdots \rightarrowtail \mathrm{F}^{\mathrm{rad}}_n~\operad C \rightarrowtail \cdots \rightarrowtail \colim{i \in \omega}~ \mathrm{F}^{\mathrm{rad}}_i~\operad C \cong \operad C
$$
is a cooperad ladder called the \textit{coradical ladder}. Notice that it is also the coradical development of the constant ladder.
\end{example}

\subsection{Quasi-planar cooperad ladders and quasi-planar cooperads}

\begin{definition}[pseudo-planar cooperad]
   A \textit{pseudo-planar} conilpotent curved cooperad $\C$ amounts to the data $(\C_\pl, \C, \varphi_\C)$ of a
    graded planar conilpotent cooperad $\C_\pl$ and a conilpotent curved cooperad $\C$ together with an isomorphism of graded conilpotent cooperads
    \[
    \varphi_\C: \C \cong \C_\pl \otimes \mathbb S.
    \]
    A morphism  $f: (\C_\pl, \C, \varphi_\C) \longrightarrow (\operad D_\pl, \operad D,\varphi_\D)$ of pseudo-planar curved conilpotent cooperads is the data of a morphism of graded planar conilpotent cooperads $f_\pl: \operad C_\pl \to \operad D_\pl$ such that the composite map
    \[
    f:\C \cong \C_\pl \otimes \mathbb S
    \xrightarrow{f_\pl \otimes \mathbb S} \operad D_\pl \otimes \mathbb S \cong \operad D
    \]
    is a morphism of conilpotent curved cooperads. We denote the category of pseudo-planar conilpotent curved cooperads by $\curvcooperads^{\mathsf{conil}}_{\mathrm{pseudo-pl}}$.
\end{definition}

\begin{remark}
Notice that the category of seudo planar conilpotent curved cooperads fits into the following pseudo-commutative square diagram
    \[
\begin{tikzcd}[column sep=3.5pc,row sep=3.5pc]
        \curvcooperads^{\mathsf{conil}}_{\mathrm{pseudo-pl}}
        \ar[r] \ar[d]
        & \curvcooperads^{\mathsf{conil}}
        \ar[d,"\mathrm{U}"]
        \\
        \grcooperads^{\mathsf{conil}}_\pl
        \ar[r, "- \otimes \mathbb S"']
        & \grcooperads^{\mathsf{conil}}~,
    \end{tikzcd}
    \]
which is a homotopy pullback. 
\end{remark}

\begin{definition}[Quasi-planar cooperad ladder]
A \textit{quasi-planar cooperad ladder} amounts to the data
of a small ordinal $\alpha$ and a functor
\begin{align*}
     \alpha &\to \curvcooperads^{\mathsf{conil}}_{\mathrm{pseudo-pl}}
     \\
     i & \mapsto (\C_\pl^{(i)}, \C^{(i)}, \varphi_{\C^{(i)}})~,
\end{align*}

    that satisfies the following conditions:
    
    \medskip
    \begin{enumerate}
        \item the functor $\operad C$ is a cooperad ladder (thus $\operad C_\pl$ is also cocontinuous);

    \medskip
    
        \item for every $i, i+1 \in \alpha$, the restriction of the coderivation of $\operad C^{(i+1)}$ to $\operad C_\pl^{(i+1)} \otimes 1$ factors through
        
        \[
        \operad C_\pl^{(i+1)} \otimes 1 \longrightarrow \operad C_\pl^{(i+1)} \otimes 1 + \operad C^{(i)} \hookrightarrow \operad C^{(i+1)};
        \]
        \vspace{0.1pc}
        
        in other words, the differential of $\gr_{i+1} ~\operad C = \left(\operad C_\pl^{(i+1)}/ \operad C_\pl^{(i)}\right) \otimes \mathbb S$ has the form $d_\pl \otimes \id_{\mathbb S}$.
    \end{enumerate}
\end{definition}

\begin{definition}[Quasi-planar cooperad]
A \textit{quasi-planar} conilpotent curved cooperad $\C$ amounts to the data of a pseudo-planar curved conilpotent cooperad $(\C_\pl, \C, \varphi_\C)$ which is the colimit of a quasi-planar cooperad ladder.
\end{definition}

\begin{remark}
The main ideas for this definition where already in \cite[Definition 11.2]{linearcoalgebras}. 
\end{remark}

\begin{remark}
In a future work, we will show describe the relation between the notion of a quasi-planar cooperad and the notion of a \textit{higher cooperad} introduced in \cite{BrunoMalte}.
\end{remark}

\begin{proposition}\label{prop: quasi-planaire implique S-projectif dans le cas dg}
Let $\C$ be a quasi-planar conilpotent dg cooperad. The underlying dg $\mathbb{S}$-module of $\C$ is projective.
\end{proposition}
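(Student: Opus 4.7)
The strategy is to prove projectivity arity by arity: by definition, it suffices to show that for each $n \geq 0$, the dg $\kk[\mathbb S_n]$-module $\C(n)$ is cofibrant in the projective model structure on dg $\kk[\mathbb S_n]$-modules. I would fix a quasi-planar ladder $(\C^{(i)})_{i \in \alpha}$ realizing $\C$, extend the indexing by placing $\operad I$ in position $0$, and proceed by transfinite induction along the ladder, using that projective cofibrations are closed under transfinite composition and cocontinuous at limit stages.

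The key step is to show that each successor inclusion $\C^{(i)}(n) \hookrightarrow \C^{(i+1)}(n)$ is a projective cofibration. Two features of the quasi-planar structure combine to make this possible. First, the pseudo-planar isomorphisms $\C^{(j)} \cong \C^{(j)}_\pl \otimes \mathbb S$, together with the underlying inclusion $\C^{(i)}_\pl(n) \hookrightarrow \C^{(i+1)}_\pl(n)$ of graded $\kk$-modules (which splits because $\kk$ is a field), produce a splitting of the short exact sequence
$$0 \to \C^{(i)}(n) \to \C^{(i+1)}(n) \to \gr_{i+1}\C(n) \to 0$$
as a sequence of \emph{graded} $\kk[\mathbb S_n]$-modules. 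Second, the quasi-planar differential condition identifies this cokernel, with its induced dg structure, as $V_{i+1}(n) \otimes \kk[\mathbb S_n]$ equipped with the diagonal differential $d_\pl \otimes \id_{\kk[\mathbb S_n]}$, where $V_{i+1}(n) \coloneqq (\C^{(i+1)}_\pl/\C^{(i)}_\pl)(n)$ is a mere dg $\kk$-module.

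Since $\kk$ is a field, I would decompose $V_{i+1}(n)$ as a direct sum of spheres and disks in dg $\kk$-modules. Tensoring with $\kk[\mathbb S_n]$ then exhibits the cokernel as a coproduct of copies of $S^k \otimes \kk[\mathbb S_n]$ and $D^k \otimes \kk[\mathbb S_n]$, that is, of domains and codomains of the generating cofibrations of the projective model structure. Transporting this cell decomposition through the graded splitting, and ordering the cells compatibly (for each disk pair $(w, w' = d_\pl w)$, attaching $w'$ first, whose boundary is a cycle in $\C^{(i)}(n)$, and then $w$, whose boundary is $w'$ plus a term in $\C^{(i)}(n)$) presents $\C^{(i)}(n) \hookrightarrow \C^{(i+1)}(n)$ as a transfinite composition of pushouts of generating cofibrations, hence as a projective cofibration.

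The main obstacle is carrying out this cell decomposition rigorously. While it is immediate that $V_{i+1}(n) \otimes \kk[\mathbb S_n]$ is projective cofibrant on its own, upgrading this to the statement that the inclusion is a projective cofibration requires one to precisely determine, for each chosen basis element of $V_{i+1}(n)$, the attaching map into $\C^{(i)}(n)$, which mixes the internal differential $d_\pl$ of $V_{i+1}(n)$ with the $\kk[\mathbb S_n]$-equivariant twist $\hat\psi: V_{i+1}(n) \to \C^{(i)}(n)$ measuring the failure of the graded splitting to commute with the differential of $\C^{(i+1)}$. The coherence of these attaching maps, ensuring that $d^2 = 0$ is respected at each stage, follows from the fact that $\hat\psi$ anticommutes with the differentials, which is forced by the dg structure of $\C^{(i+1)}$. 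Once this is set up, transfinite composition along the full ladder concludes that $\operad I(n) \to \C(n)$ is a projective cofibration, so $\C(n)$ is projective cofibrant for every $n$.
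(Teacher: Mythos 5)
Your proposal is correct and takes essentially the same approach as the paper: transfinite induction along the quasi-planar ladder, using the pseudo-planar graded splitting together with the quasi-planar condition on the differential to exhibit each successor inclusion as a relative cell attachment in the projective model structure, and then closure of cofibrations under transfinite composition. The only difference is that the paper bypasses what you call the main obstacle (sphere/disk decomposition, cell ordering, attaching-map bookkeeping) by attaching all of $Y \cong \gr_{i}\C$ at once: the map $\C^{(i-1)} \to \C^{(i)}$ is a single pushout along $(S^{-1}\otimes Y_\pl)\otimes \mathbb S \rightarrowtail (D^{0}\otimes Y_\pl)\otimes \mathbb S$, which is already a projective cofibration as the image under $-\otimes\mathbb S$ of a cofibration of dg $\mathbb N$-modules, the attaching map being the component of the differential landing in $\C^{(i-1)}$ (a chain map because the total differential squares to zero).
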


\begin{proof}
Let us prove this by induction on the quasi-planar ladder that defines $\C$. The dg $\mathbb{S}$-module $\C^{(0)}$ is free as a dg $\mathbb{S}$-module, therefore it is projective. 

\medskip

Let us write $\C^{(i)}$ as the direct sum $\C^{(i-1)} \oplus Y$ in the category of graded $\mathbb{S}$-modules. Then $\C^{(i)}$ is obtained by attaching $Y$ to $\C^{(i-1)}$ via a pushout of the form
\[
\begin{tikzcd}[column sep=3.5pc,row sep=3.5pc]
       (S^{-1} \otimes Y) \otimes \mathbb S
        \ar[r] \ar[d]  \arrow[dr, phantom, "\ulcorner", very near end]
        & \operad C^{(i-1)}
        \ar[d]
        \\
       (D^{0} \otimes Y) \otimes \mathbb S
        \ar[r]
        & \operad C^{(i)}~,
\end{tikzcd}
\]
which implies that $\C^{(i)}$ is again cofibrant in the category of dg $\mathbb{S}$-modules endowed with the projective model structure. We conclude by the fact that the colimit of cofibrant objects is again a cofibrant.
\end{proof}

\begin{remark}
When $\C$ is a quasi-planar conilpotent curved cooperad, the pre-differential of its underlying pdg $\mathbb{S}$-module can still be constructed by attaching cells at each step. 
\end{remark}

\medskip

Quasi-planar conilpotent curved cooperads are sent to cofibrant dg operads in the semi-model category structure of \cite{Fresse} by the operadic cobar construction of Subsection \ref{subsection: operadic bar-cobar}.

\begin{proposition}\label{proposition:cooperadqpcofibrant}
Let $\operad C$ be a quasi-planar conilpotent curved cooperad. Then the dg operad $\Omega\operad C$ is cofibrant.
\end{proposition}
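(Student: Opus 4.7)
The plan is to apply Proposition \ref{proposition:operadqpcofibrant} to the $\alpha$-indexed ladder of dg operads $\operad P^{(i)} \coloneqq \Omega \operad C^{(i)}$, where $(\operad C^{(i)})_{i \in \alpha}$ is a quasi-planar ladder presenting $\operad C$. Since $\Omega$ is a left adjoint, it preserves colimits, so the ladder is cocontinuous and its colimit is $\Omega \operad C$. For the graded $\mathbb{N}$-module ladder witnessing the planar form of each $\operad P^{(i)}$, I would take $X^{(i)} \coloneqq s^{-1}\overline{\operad C_\pl^{(i)}}$. The required commutative diagram $\treemod_\pl(X^{(i)}) \otimes \mathbb{S} \cong U(\operad P^{(i)})$ then follows from the natural isomorphism $\treemod(X \otimes \mathbb{S}) \cong \treemod_\pl(X) \otimes \mathbb{S}$ combined with the pseudo-planar identification $\overline{\operad C^{(i)}} \cong \overline{\operad C_\pl^{(i)}} \otimes \mathbb{S}$.

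The injectivity of $X^{(i)} \hookrightarrow X^{(i+1)}$ reduces to the injectivity of $\operad C_\pl^{(i)} \to \operad C_\pl^{(i+1)}$, which holds because applying $- \otimes \mathbb{S}$ produces the map $\operad C^{(i)} \hookrightarrow \operad C^{(i+1)}$, already known to be a degree-wise injection by the cooperad ladder axioms, and $- \otimes \mathbb{S}$ reflects such injections. The crucial point is the second hypothesis of Proposition \ref{proposition:operadqpcofibrant}: that the derivation of $\operad P^{(i+1)}$, restricted to the generators $X^{(i+1)}$ viewed as $s^{-1}\overline{\operad C_\pl^{(i+1)}} \otimes 1 \subset s^{-1}\overline{\operad C^{(i+1)}}$, lands in $\operad P^{(i)} + X^{(i+1)}$.

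To check this, I would decompose the cobar derivation as $d_\Delta + d_{\operad C} + d_\theta$ and treat each piece separately. The component $d_{\operad C}$ sends $s^{-1}c$ to $-s^{-1}d_{\operad C^{(i+1)}}(c)$; the second condition in the definition of a quasi-planar cooperad ladder states exactly that $d_{\operad C^{(i+1)}}$ carries $\operad C_\pl^{(i+1)} \otimes 1$ into $\operad C_\pl^{(i+1)} \otimes 1 + \operad C^{(i)}$, so the image lies in $X^{(i+1)} + s^{-1}\overline{\operad C^{(i)}} \subset X^{(i+1)} + \operad P^{(i)}$. The component $d_\Delta$ is built from $\Delta_{(1)}$, and the cooperad ladder axiom guarantees that the full decomposition of any element of $\overline{\operad C^{(i+1)}}$ factors through $\overline{\treemod}\, \operad C^{(i)}$; projecting onto the two-vertex part shows that $d_\Delta(s^{-1}c) \in \treemod^{(2)}(s^{-1}\overline{\operad C^{(i)}}) \subset \operad P^{(i)}$. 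Finally, $d_\theta$ sends $s^{-1}c$ to $\theta(c) \in \operad I$, which already lives in $\operad P^{(i)}$.

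The main obstacle is bookkeeping: one must carefully identify $X^{(i+1)}$ with $s^{-1}\overline{\operad C_\pl^{(i+1)}} \otimes 1$ inside $\operad P^{(i+1)}$ via the pseudo-planar isomorphism, and use the two defining conditions of the quasi-planar ladder (planar differential plus cooperad-ladder factorization of the decomposition) in tandem. Once the three components above are verified to land in $\operad P^{(i)} + X^{(i+1)}$, all hypotheses of Proposition \ref{proposition:operadqpcofibrant} are met, and we conclude that $\Omega \operad C = \operad P^{(\alpha)}$ is cofibrant in the semi-model category of dg operads.
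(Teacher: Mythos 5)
Your proof is correct and follows the same route as the paper: the paper's proof simply observes that the composite $\alpha \xrightarrow{\operad C} \curvcooperads^{\mathsf{conil}} \xrightarrow{\Omega} \dgoperads$ satisfies the hypotheses of Proposition \ref{proposition:operadqpcofibrant}, which is exactly the application you make. Your write-up just supplies the verification (generators $X^{(i)} = s^{-1}\overline{\operad C_\pl^{(i)}}$, injectivity of the transition maps, and the component-wise analysis of $d_\Delta + d_{\operad C} + d_\theta$ on generators) that the paper leaves implicit.
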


\begin{proof}
We can notice that the composition of the following functors

\[
\begin{tikzcd}[column sep=2.5pc,row sep=0pc]
\alpha \arrow[r,"\operad C"]
&\curvcooperads^{\mathsf{conil}} \arrow[r,"\Omega"]
&\dgoperads
\end{tikzcd}
\]
\vspace{0.1pc}

satisfies the conditions of Proposition \ref{proposition:operadqpcofibrant}, which allows us to conclude.
\end{proof}


\subsection{The bar-Barratt-Eccles construction is quasi-planar}

Let $\operad P$ be a dg operad. Our goal is to prove that the curved conilpotent cooperad $\mathrm{B}(\operad E \otimes \operad P)$ has the canonical structure of a quasi-planar cooperad.

For all arities $m \geq 0$, remember that there is a canonical isomorphism of graded $\mathbb S$-modules

\[
\begin{tikzcd}[column sep=2pc,row sep=0pc]
\operad E(m) \otimes \operad P(m)  \cong (\operad E_\pl(m) \otimes \mathbb S_m) \otimes \operad P(m) \arrow[r]
& (\operad E_\pl(m) \otimes \operad P(m)) \otimes \mathbb S_m \\
 (e \otimes \{\sigma\}) \otimes p  \arrow[r,mapsto]
&\displaystyle (e \otimes p^{\sigma^{-1}}) \otimes \{\sigma\}~,
\end{tikzcd}
\]
\vspace{0.1pc}

    where the right action of $\mathbb S_m$ on $\operad E(m) \otimes \operad P(m)$ is diagonal, given by $(e \otimes p)^{\sigma} = e^\sigma \otimes p^\sigma$, and where $(\operad E_\pl(m) \otimes \operad P(m)) \otimes \mathbb S_m$ is the free right $\mathbb S_m$-module on $\operad E_\pl(m) \otimes \operad P(m)$. This isomorphism is described in Proposition \ref{prop: tensor with a quasi-free}. Notice that it commutes with $\id_{\operad E} \otimes d_{\operad P}$.

    \medskip

    Let us denote $\mathrm{B}(\operad E \otimes \operad P)_\pl$ the graded planar conilpotent cooperad
    \[
\mathrm{B}(\operad E \otimes \operad P)_\pl = \treemod_\pl (s \operad E_\pl \otimes \operad P \oplus s^2 \operad I).
    \]
    One has a canonical isomorphism of graded conilpotent cooperad 
    \[
\mathrm{B}(\operad E \otimes \operad P) = \treemod (s \operad E \otimes \operad P \oplus s^2 \operad I) \cong \treemod \left((s \operad E_\pl \otimes \operad P \oplus s^2 \operad I) \otimes \mathbb S \right) \cong
\left(\treemod_\pl(s \operad E_\pl \otimes \operad P \oplus s^2 \operad I) \otimes \mathbb S \right)
= \mathrm{B}(\operad E \otimes \operad P)_\pl \otimes \mathbb S.
    \]

\medskip

     One can to filter the graded $\mathbb N$-module $s(\operad E_\pl \otimes \operad P)$ by the degree of the elements of $\operad E_\pl$:
    \[
    (\tilde F_n s(\operad E_\pl \otimes \operad P))(m) \coloneqq
    s(\operad E_{\pl, \leq n-1}(m)\otimes \operad P(m))~, \quad n \in \mathbb N~,
    \]
    where $\operad E_{\pl,\leq n}(m)$ is the graded $\mathbb S$-module that consists in elements of $\operad E_\pl(m)$ whose degree is at most $n$
    $$
    \operad E_{\pl,\leq n}(m)_k = 
    \begin{cases}
        \operad E_\pl(m)_k \text{ if }k \leq n;
        \\
        0 \text{ otherwise.}
    \end{cases}
    $$
    One can notice that
    \[
    (\tilde F_0 s(\operad E_\pl \otimes \operad P))(m) = 
    0~,
    \]
    Then, one filters $s (\operad E_\pl \otimes \operad P) \oplus s^2\operad I$ as
    \begin{align*}
        &\tilde F_0 (s (\operad E_\pl \otimes \operad P) \oplus s^2\operad I) = 0
        \\
        &\tilde F_n (s (\operad E_\pl \otimes \operad P) \oplus s^2\operad I)
        = \tilde F_n (s (\operad E_\pl \otimes \operad P)
        \oplus s^2\operad I, \quad n \geq 1.
    \end{align*}
    Thus, one can also filter $\mathrm{B}(\operad E \otimes \operad P)_\pl = \treemod_\pl (s (\operad E_\pl \otimes \operad P) \oplus s^2\operad I)$ by
    $$
    \tilde F_n \mathrm{B}(\operad E \otimes \operad P)_\pl(m) = \bigoplus_t   \sum_{i_1+ \ldots + i_k = n}
    \bigotimes_{j=1}^k ~ \tilde F_{i_j} (s (\operad E_\pl \otimes \operad P)
        \oplus s^2\operad I) (l_j) ~,
    $$
    where the first sum is taken over the planar trees $t$ with $m$-leaves ; it has $k$ nodes whose arities are $l_1, \ldots, l_k$.

\medskip

    Finally, this induces a filtration $\tilde F_n \mathrm{B}(\operad E \otimes \operad P) \simeq (\tilde F_n \mathrm{B}(\operad E \otimes \operad P)_\pl) \otimes \mathbb S$ on the underlying graded conilpotent cooperad of $\mathrm{B}(\operad E \otimes \operad P)$.
    
\begin{remark}
Let us point out that the same filtration was used in \cite[Section 2]{eric2} in order to develop the obstruction theory of morphisms of algebras over a dg operad in positive characteristic.
\end{remark}

\begin{proposition}\label{prop: B(E otimes P) quasi-planar avec la bonne filtration}
For every integer $n \geq 0$, $\tilde F_n \mathrm{B}(\operad E \otimes \operad P)$ is a conilpotent curved sub-cooperad of $\mathrm{B}(\operad E \otimes \operad P)$. Moreover,
    the diagrams
    \[
    \tilde F_0 \mathrm{B}(\operad E \otimes \operad P)_\pl \longrightarrow \tilde F_1 \mathrm{B}(\operad E \otimes \operad P)_\pl \longrightarrow \cdots \longrightarrow \tilde F_n \mathrm{B}(\operad E \otimes \operad P)_\pl
    \longrightarrow \cdots
    \]
    \[
    \tilde F_0 \mathrm{B}(\operad E \otimes \operad P) \longrightarrow \tilde F_1 \mathrm{B}(\operad E \otimes \operad P) \longrightarrow \cdots \longrightarrow \tilde F_n \mathrm{B}(\operad E \otimes \operad P)
    \longrightarrow \cdots
    \]
    form a quasi-planar $\omega$-ladder whose colimit is given by the pseudo-planar conilpotent curved cooperad $(\mathrm{B}(\operad E \otimes \operad P)_\pl,\mathrm{B}(\operad E \otimes \operad P))$.
\end{proposition}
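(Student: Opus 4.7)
The plan is to verify the defining conditions of a quasi-planar cooperad ladder using a single weight function on decorated planar trees. Assign weight $d+1$ to a node decorated by $s(e \otimes p)$ with $e \in \operad E_\pl$ of Barratt--Eccles degree $d$, and weight $1$ to a $s^2 \operad I$-node. The filtration piece $\tilde F_n \mathrm{B}(\operad E \otimes \operad P)_\pl$ is then the span of planar trees of total weight at most $n$, matching the definition in the excerpt. The key computational point is that each summand of the bar coderivation either preserves this weight and acts planarly on generators, or strictly decreases it.

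\textbf{Sub-cooperad structure.} The underlying conilpotent graded cooperad structure on $\mathrm{B}(\operad E \otimes \operad P)$ arises from partitioning trees along internal edges, which does not modify any node decoration and therefore preserves each $\tilde F_n$. The curvature $\Theta$ factors through the $s^2 \operad I$-summand, which lies in $\tilde F_1$. So modulo stability under the coderivation, each $\tilde F_n$ is a sub-conilpotent curved cooperad; the coaugmentation $\operad I \hookrightarrow \tilde F_n$ is just the inclusion of the trivial tree.

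\textbf{Coderivation and the quasi-planar condition.} The summand $d_u$ converts a weight-$1$ unit node $s^2 1$ into the weight-$1$ planar node $s(\id \otimes 1_{\operad P})$. The summand $d_\gamma$ composes two adjacent $s(e_i \otimes p_i)$-nodes with $e_i \in \operad E_\pl$ of Barratt--Eccles degrees $a, b$, so of combined weight $a+b+2$, via the partial composition of $\operad E \otimes \operad P$; by Lemma \ref{lemma:inversecompinbe} the resulting $\operad E$-component has degree exactly $a+b$, and the new single node has weight $a+b+1 < a+b+2$. The internal differential on $\operad E \otimes \operad P$ splits as $d_{\operad E} \otimes \id + \id \otimes d_{\operad P}$, where $d_{\operad E}$ strictly decreases Barratt--Eccles degree (hence weight) and $\id \otimes d_{\operad P}$ preserves it. All three summands thus respect the filtration. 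Passing to the associated graded
\[
\gr_{n+1} \mathrm{B}(\operad E \otimes \operad P) \cong \bigl(\tilde F_{n+1} \mathrm{B}(\operad E \otimes \operad P)_\pl \big/ \tilde F_n \mathrm{B}(\operad E \otimes \operad P)_\pl\bigr) \otimes \mathbb S,
\]
the strictly weight-decreasing pieces $d_\gamma$ and $d_{\operad E} \otimes \id$ vanish, leaving only $d_u + \id \otimes d_{\operad P}$, which is manifestly of the form $d_\pl \otimes \id_{\mathbb S}$. This yields condition~(2) of a quasi-planar ladder.

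\textbf{Ladder conditions and colimit.} Injectivity of each $\tilde F_n \hookrightarrow \tilde F_{n+1}$ is immediate. For the decomposition factorisation, a non-trivial partition of a weight-$(n+1)$ tree $T$ into subtrees $T_1, \ldots, T_k$ with $k \geq 2$ has each $T_i$ non-empty of weight at least $1$, whence $w(T_i) \leq n+1 - (k-1) \leq n$, placing every $T_i$ in $\tilde F_n$. Every element of $\mathrm{B}(\operad E \otimes \operad P)$ involves finitely many nodes of finite Barratt--Eccles degree, hence finite total weight, so $\mathrm{B}(\operad E \otimes \operad P) = \colim{n \in \omega} \tilde F_n \mathrm{B}(\operad E \otimes \operad P)$, and similarly on the planar side. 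The main subtle point is tracing the isomorphism $\operad E \otimes \operad P \cong (\operad E_\pl \otimes \operad P) \otimes \mathbb S$ through $d_\gamma$: a priori the partial composition of two $\operad E_\pl$-elements in $\operad E$ mixes planar generators with the $\mathbb S$-action, and one must verify that after projecting back to $\operad E_\pl$ the Barratt--Eccles degree remains additive, which is precisely what Lemmas \ref{lemma: composition Barratt eccles} and \ref{lemma:inversecompinbe} provide.
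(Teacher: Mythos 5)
Your proposal is correct and follows essentially the same route as the paper: filter by the Barratt--Eccles degree (your weight function is exactly the paper's $\tilde F$), note that the weight-preserving pieces of the coderivation ($d_u$ and $\id \otimes d_{\operad P}$) are planar, and that everything potentially non-planar strictly drops the weight, so the differential on the associated graded is of the form $d_\pl \otimes \id_{\mathbb S}$. The only cosmetic difference is that the paper isolates the non-planar component as the non-planar part of $d_{\operad E}$ (implicitly using that $\operad E_\pl$ is stable under operadic composition, so $d_\gamma$ is planar), whereas you dispose of $d_\gamma$ by its weight drop; the remaining verifications you spell out are the ones the paper treats as immediate.
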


\begin{proof}
The only point that is not clear is the fact that the non-planar component of the pre-differential
of $\tilde F_{n+1} \mathrm{B}(\operad E \otimes \operad P)$ actually targets $\tilde F_n \mathrm{B}(\operad E \otimes \operad P)$. This follows from the fact that this non-planar component is given by applying the non-planar part of the differential of $\operad E$.
\end{proof}

\begin{remark}
A first version of this proposition was proven in \cite[Proposition 11.31]{linearcoalgebras}.
\end{remark}

\begin{example}
Let $f: \operad P \longrightarrow \operad Q$ be a morphism of dg operads. The morphism of conilpotent curved cooperads $\mathrm{B}(f \otimes \operad E): \mathrm{B}(\operad E \otimes \operad P) \longrightarrow \mathrm{B}(\operad E \otimes \operad Q)$ is pseudo-planar in the sense that it restricts to a map
\[
\mathrm{B}(f \otimes \operad E)_\pl: \mathrm{B}(\operad E \otimes \operad P)_\pl \longrightarrow \mathrm{B}(\operad E \otimes \operad Q)_\pl~,
\]
in a compatible way with the pseudo-planar isomorphisms.
\end{example}

Over a characteristic zero field, dg operads admit a model category where weak-equivalences are given by arity-wise quasi-isomorphisms and where fibrations are given by arity-wise degree-wise epimorphisms. One can then consider the transferred model category structure, along the operadic bar-cobar adjunction, on conilpotent curved cooperads constructed in \cite{grignou2021algebraic}. Every conilpotent curved cooperad is then shown to be weak-equivalent to a quasi-planar conilpotent curved cooperad.

\begin{proposition}[{\cite[Lemma 11.32]{linearcoalgebras}}]\label{prop: retour en caractéristique zero}
    Let $\kk$ be a field of characteristic zero and let $\operad C$ be a conilpotent curved cooperad. There exists a quasi-planar conilpotent curved cooperad $\operad C'$ and an acyclic cofibration $\operad C \qi \operad C'$. 
\end{proposition}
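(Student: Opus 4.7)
The plan is to set $\operad C' \coloneqq \mathrm{B}(\operad E \otimes \Omega \operad C)$, which is quasi-planar by Proposition \ref{prop: B(E otimes P) quasi-planar avec la bonne filtration}, and to construct the required acyclic cofibration as the transpose under the bar-cobar adjunction of a section of the canonical projection $\pi \colon \operad E \otimes \Omega \operad C \longrightarrow \Omega \operad C$ induced by the quasi-isomorphism $\operad E \qi \mathrm{u}\mathcal{C}\mathrm{om}$.

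In characteristic zero every dg $\mathbb S$-module is cofibrant in the projective model structure, so $\Omega \operad C$ is an $\mathbb S$-cofibrant dg operad, and in particular cofibrant in the semi-model structure of \cite{Fresse}. The operad morphism $\pi$ is arity-wise degree-wise surjective and an arity-wise quasi-isomorphism, hence a trivial fibration of dg operads; by cofibrancy of $\Omega \operad C$, it admits a section $s \colon \Omega \operad C \longrightarrow \operad E \otimes \Omega \operad C$ of dg operads. Transposing $s$ under $\Omega \dashv \mathrm{B}$ yields a morphism of conilpotent curved cooperads
\[
\hat s \ \coloneqq \ \mathrm{B}(s) \circ \eta_{\operad C} \ \colon \ \operad C \ \longrightarrow \ \mathrm{B} \Omega \operad C \ \longrightarrow \ \operad C' \, ,
\]
where $\eta_{\operad C}$ denotes the unit of the adjunction.

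The map $\hat s$ is an arity-wise degree-wise injection: the unit $\eta_{\operad C}$ is the tautological inclusion of $\operad C$ as the weight-one part of $\mathrm{B}\Omega \operad C$, and $\mathrm{B}(s)$ is obtained by applying the tree functor to the injective morphism $s$, so both are injective. Moreover, $\hat s$ is a weak equivalence: by construction $\mathrm{B}(\pi) \circ \hat s = \eta_{\operad C}$, and the unit $\eta_{\operad C}$ is a weak equivalence in characteristic zero since the bar-cobar adjunction is a Quillen equivalence by \cite{grignou2021algebraic}, while $\mathrm{B}(\pi)$ is a weak equivalence because it is the image under $\mathrm{B}$ of an arity-wise quasi-isomorphism between $\mathbb S$-cofibrant dg operads. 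Applying 2-out-of-3, $\hat s$ is a weak equivalence, and therefore yields the desired acyclic cofibration $\operad C \qi \operad C'$ into a quasi-planar cooperad.

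The main technical input is the homotopy invariance of the bar construction on $\mathbb S$-cofibrant dg operads in characteristic zero, ensuring that $\mathrm{B}(\pi)$ is a weak equivalence; this is a standard consequence of the Quillen equivalence of \cite{grignou2021algebraic} together with the explicit filtration of $\mathrm{B}$ by number of nodes. Everything else reduces to the existence of the section $s$, which is pure cofibrancy, and to the tree-level computation $\mathrm{B}(\pi) \circ \mathrm{B}(s) = \mathrm{B}(\mathrm{id})$.
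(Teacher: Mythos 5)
Your proposal is correct and takes essentially the same route as the paper: the paper also sets $\operad C' = \mathrm{B}(\operad E \otimes \Omega \operad C)$, obtains a section of the canonical quasi-isomorphism $\operad E \otimes \Omega\operad C \to \Omega\operad C$ from cofibrancy, and exhibits the acyclic cofibration as the composite $\operad C \rightarrowtail \mathrm{B}\Omega\operad C \rightarrowtail \mathrm{B}(\operad E \otimes \Omega\operad C)$, so your two-out-of-three check through $\mathrm{B}(\pi)$ is merely a more explicit verification of the same map. One small but real slip in your justification: the inference \emph{$\Omega\operad C$ is $\mathbb S$-cofibrant, hence cofibrant in the semi-model structure of \cite{Fresse}} is invalid ($\mathbb S$-cofibrancy never implies operadic cofibrancy; in characteristic zero $\uCom$ is $\mathbb S$-cofibrant but not cofibrant). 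What you actually need, and what the paper implicitly invokes, is the classical fact that in characteristic zero the cobar construction of any conilpotent curved cooperad is cofibrant; this follows, for instance, by running the cell-attachment argument of Proposition \ref{proposition:operadqpcofibrant} along the coradical ladder, using that every $\kk[\mathbb S_n]$-module is projective in characteristic zero.
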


\begin{proof}
The canonical quasi-morphism $\mathcal{E} \otimes \Omega \C  \qi \Omega \C$ admits a section, since both dg operads are cofibrant. Therefore it suffices to consider the following composition
\[
\C \rightarrowtail \mathrm{B}\Omega \C \rightarrowtail \mathrm{B}(\mathcal{E} \otimes \Omega \C)~,
\]
which is an acylic cofibration since it is the composition of two acyclic cofibrations.
\end{proof}

\begin{remark}
Proposition \ref{prop: retour en caractéristique zero} will imply that when $\kk$ is a field of characteristic zero, one does recover the previously known homotopical operadic calculus of \cite{unitalalgebras} and \cite{linearcoalgebras}.
\end{remark}

\begin{corollary}\label{cor: cofibrant implies E-split}
    Let $\operad C$ be a conilpotent curved cooperad. The following assertions
    are equivalent
    \medskip
    
    \begin{enumerate}
        \item the operad $\Omega \operad C$ is cofibrant;

    \medskip
    
        \item the morphism of dg operads $\Omega \operad C \otimes \operad E \longrightarrow \Omega \operad C$
        admits a section.
    \end{enumerate}
\end{corollary}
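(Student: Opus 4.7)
The plan is to split the proof into the two implications. The direction $(1) \Rightarrow (2)$ is a straightforward lifting argument. I would first observe that the canonical map $\pi: \operad E \otimes \Omega \C \longrightarrow \Omega \C$, induced by the augmentation $\operad E \longrightarrow \mathrm{u}\operad C\mathrm{om}$, is an acyclic fibration in Fresse's semi-model structure: it is arity-wise a quasi-isomorphism (as recorded just after the definition of the Hadamard tensor product), and arity-wise degree-wise an epimorphism since the augmentation $\operad E(n) \twoheadrightarrow \kk$ admits the obvious section $1 \mapsto (\id_n)$. Assuming $\Omega \C$ is cofibrant, the left lifting property of the cofibration $\operad I \rightarrowtail \Omega \C$ against $\pi$, applied to $\id_{\Omega \C}$, produces the desired section.

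For $(2) \Rightarrow (1)$, the idea is to exhibit $\Omega \C$ as a retract of a cofibrant dg operad by transporting the section through the bar-cobar adjunction, rather than trying to prove directly that $\operad E \otimes \Omega \C$ is cofibrant. Starting from a section $s$ of $\pi$, applying the bar functor yields $\mathrm{B}(s): \mathrm{B}\Omega \C \longrightarrow \mathrm{B}(\operad E \otimes \Omega \C)$ as a section of $\mathrm{B}(\pi)$; applying $\Omega$ then gives $\Omega\mathrm{B}(s)$ as a section of $\Omega\mathrm{B}(\pi)$. Hence $\Omega \mathrm{B} \Omega \C$ is a retract of $\Omega\mathrm{B}(\operad E \otimes \Omega \C)$. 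By Proposition \ref{prop: B(E otimes P) quasi-planar avec la bonne filtration}, the curved conilpotent cooperad $\mathrm{B}(\operad E \otimes \Omega \C)$ is quasi-planar, and by Proposition \ref{proposition:cooperadqpcofibrant}, its cobar $\Omega\mathrm{B}(\operad E \otimes \Omega \C)$ is therefore cofibrant. Since cofibrant objects are closed under retracts, $\Omega \mathrm{B} \Omega \C$ is cofibrant as well.

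To finish, the triangle identity of the operadic bar-cobar adjunction supplies the factorization $\id_{\Omega \C} = \epsilon_{\Omega \C} \circ \Omega(\eta_\C)$, exhibiting $\Omega \C$ as a retract of the cofibrant operad $\Omega \mathrm{B} \Omega \C$, and so $\Omega \C$ is itself cofibrant. The main conceptual point is that one should not try to show $\operad E \otimes \Omega \C$ is directly cofibrant; instead, one lifts the retraction structure all the way through the cobar of its bar, where cofibrancy is automatic because of the canonical quasi-planar structure on $\mathrm{B}(\operad E \otimes -)$. I do not expect significant obstacles: the argument only chains together the functoriality of $\mathrm{B}$ and $\Omega$, the triangle identity of the adjunction recalled in Subsection \ref{subsection: operadic bar-cobar}, and the two quoted propositions.
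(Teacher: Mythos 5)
Your proof is correct and follows essentially the same route as the paper: the forward direction is the standard lifting of $\id_{\Omega\operad C}$ against the acyclic fibration $\operad E \otimes \Omega\operad C \to \Omega\operad C$ (which the paper dismisses as clear), and the converse exhibits $\Omega\operad C$ as a retract of $\Omega\mathrm{B}(\operad E \otimes \Omega\operad C)$, whose cofibrancy comes from Propositions \ref{prop: B(E otimes P) quasi-planar avec la bonne filtration} and \ref{proposition:cooperadqpcofibrant}. Your explicit factorization through $\Omega\mathrm{B}\Omega\operad C$ and the triangle identity is just a slightly more detailed packaging of the paper's composite section $\Omega\operad C \to \Omega\mathrm{B}\Omega\operad C \to \Omega\mathrm{B}(\operad E \otimes \Omega\operad C)$.
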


\begin{proof}
    If $\Omega \operad C$ is cofibrant, it is clear that the map
    $\Omega \operad C \otimes \operad E \longrightarrow \Omega \operad C$
    admits a section. Conversely, let us suppose that this map admits a section. Then the projection $\Omega \mathrm{B}((\Omega \operad C) \otimes \operad E) \longrightarrow \Omega \operad C$ also admits a section given by
    $$
\Omega \operad C \longrightarrow \Omega \mathrm{B} \Omega \operad C \longrightarrow \Omega \mathrm{B} (\mathcal{E} \otimes \Omega \C)~.
    $$
    Therefore $\Omega \operad C$ is a retract of $\Omega \mathrm{B} (\mathcal{E} \otimes \Omega \C)$. Since $\Omega \mathrm{B} (\mathcal{E} \otimes \Omega \C)$ is cofibrant (by Proposition \ref{proposition:cooperadqpcofibrant} combined with Proposition \ref{prop: B(E otimes P) quasi-planar avec la bonne filtration}), then its retract $\Omega \operad C$ is also cofibrant.
\end{proof}

\begin{remark}
In fact, for any $\operad C$ be a quasi-planar conilpotent curved cooperad, the dg operad $\Omega \C$ is not only $\mathcal{E}$-split, but there exists explicit canonical map $\Omega\C \longrightarrow \mathcal{E} \otimes \Omega \C$ which endows $\Omega \C$ with a $\mathcal{E}$-comodule structure. This is done in the next Subsection \ref{subsection: E-comodule}.
\end{remark}


\subsection{The universal structure of a Barratt-Eccles comodule}\label{subsection: E-comodule}
In this subsection, we equip any dg operad of the form $\Omega\C$ with a $\mathcal{E}$-comodule structure, where $\C$ is again a quasi-planar conilpotent curved cooperad and where $\mathcal{E}$ denotes the Barratt-Eccles operad. This comonoid structure can be viewed as the positive characteristic analogue of the canonical $\mathrm{u}\mathcal{C}\mathrm{om}$-comonoid structure that exists for any dg operad. By that, we mean that it produces universal convolution (curved absolute) partition $\mathcal{L}_\infty$-algebra structures on the hom-graded modules between types of coalgebras and types of algebras. For more details, we refer to the forthcoming paper \cite{mappingcoalgebrascharp}.

\medskip

Recall that the restriction of the differential on $\Omega \C$ to the generators $s^{-1}\overline{\C}_\pl$ decomposes into three maps
\begin{itemize}
    \item $d_{s^{-1}\overline{\C}}: s^{-1}\overline{\C}_\pl \longrightarrow s^{-1}\overline{\C}_\pl \otimes \mathbb S \cong s^{-1}\overline{\C}$, which is given by $s^{-1}d_{\overline{\C}}$;
    \item $d_\Delta: s^{-1} \overline{\C}_\pl \longrightarrow \treemod_\pl^{(2)} s^{-1} \overline{\C}_\pl$, induced by the (planar) partial decompositions;
    \item $d_\theta: s^{-1}\overline{\C}_\pl \longrightarrow \operad I$, induced by the curvature of $\C$.
\end{itemize}

For $p \geq 1, q \geq 0, 1 \leq i \leq p$, let us denote 
\[
\Delta^{p,q}_i: s^{-1} \overline{\C}_\pl (p+q-1) \longrightarrow
 s^{-1} \overline{\C}_\pl (p) \otimes s^{-1} \overline{\C}_\pl (q)
\]

the $i$-th (planar) partial decomposition map of the cooperad $\C_\pl$ into arity $p$ and arity $q$ elements. It is the composition of the map $d_\Delta$ at arity $p+q-1$ with the projection onto planar trees $t$ with two nodes
\begin{itemize}
    \item the root node with $p$ leaves;
    \item the second node with $q$ leaves and that is plugged to the root node at its ith leaf.
\end{itemize}

Let $M$ be a pdg $\mathbb S$-module that is \textit{quasi-free}, that is, such that there exists a graded $\mathbb N$-module $N$ and an isomorphism of graded $\mathbb S$-module $M \cong N \otimes \mathbb S$. We are interested in how the pre-differential interacts with the action of the symmetric groups. Note for any given $n \geq 0$, there is an isomorphism

\[
M(n) \cong \bigoplus_{\sigma \in \mathbb{S}_n} N(n) \otimes \{\sigma\}~,
\]

of graded modules.

\begin{definition}[$\sigma$ pre-differential]
Let $M$ be a pdg $\mathbb S$-module that is \textit{quasi-free}, that is, such that there exists a graded $\mathbb N$-module $N$ and an isomorphism of graded $\mathbb S$-module $M \cong N \otimes \mathbb S$. Let $\sigma$ be in $\mathbb S_n$, the $\sigma$ \textit{pre-differential}, denoted by $d_\sigma$, is the degree $-1$ endomorphism of $N(n)$ given by the composite

\[
N(n) \simeq N(n) \otimes \{\id\} \hookrightarrow M(n)  \xrightarrow{d_M} M(n) \twoheadrightarrow N(n) \otimes \{\sigma \} \simeq N(n)~.
\]
\vspace{0.1pc}

We denote by $D_\sigma$ the $\mathbb S_n$-equivariant degree $-1$ endomorphism of $M(n)$ whose restriction to the generators $N$ is the composition

\[
N(n) \otimes \{\id\} \hookrightarrow M(n)  \xrightarrow{d_M} M(n) \twoheadrightarrow N(n) \otimes \{\sigma \} \hookrightarrow M(n)~.
\]
\vspace{0.1pc}

For any $x$ in $N(n)$, we have $D_\sigma(X \otimes \{\id\}) = d_\sigma (x) \otimes \{\sigma \}$.
\end{definition}

Similarly, for a sequence of permutations $\underline \sigma \coloneqq (\sigma_1, \ldots, \sigma_k) \in \mathbb S_n^k$,
we set

\[
d_{\underline{\sigma}} \coloneqq d_{\sigma_1} \cdots d_{\sigma_k} \quad \text{and} \quad D_{\underline{\sigma}} \coloneqq D_{\sigma_1} \cdots D_{\sigma_k}~.
\]
\vspace{0.1pc}

Notice that, for any $x$ in $N(n)$, we have
\[
D_\emptyset(x \otimes \{\id\}) = x \otimes \{\id\}~; \quad
D_{(\sigma_1, \ldots, \sigma_k)}(x \otimes \{\id\}) = 
d_{(\sigma_1, \ldots, \sigma_k)}(x) \otimes \{\sigma_1 \cdots\sigma_k\}~.
\]
\vspace{0.1pc}

The case of interest is when $M$ is the underlying pdg $\mathbb S$-module of the quasi-planar curved cooperad $\C$ and $N$ is the graded $\mathbb{N}$-module $\C_\pl$ of planar generators. 

\begin{lemma}\label{lemma: finitude des d sigma}
Let $n \geq 0$ and let $\C$ be a quasi-planar conilpotent curved cooperad. For every planar generator $x \in \C_\pl(n)$, there exists a natural integer $k_x$ such that for every sequence of permutations $(\sigma_1, \ldots, \sigma_k) \in \mathbb S_n^k$ of length $k \geq k_x$, we have 
    $$
    d_{(\sigma_1, \ldots, \sigma_k)}(x) = 0~.
    $$
The same result holds, \textit{mutatis mutandis}, for $s^{-1} \overline{\C}$ equipped with the pre-differential $d_{s^{-1}\overline{\C}} = s^{-1}d_{\overline{\C}}$.
\end{lemma}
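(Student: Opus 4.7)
The plan is to induct on the minimal position $i(x)$ of $x$ in the quasi-planar ladder $\{\C_\pl^{(i)}\}_{i \in \alpha}$ underlying $\C$. By cocontinuity of the planar ladder, this minimum is attained at a successor ordinal or at zero. The base case $i(x) = 0$ concerns the initial level of the extended ladder, which sends $0$ to $\operad I$: either $n \neq 1$ and $x = 0$, or $x \in \operad I(1)$ carries no differential, so $k_x = 1$ suffices.

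For the inductive step with $i(x) = j + 1$, condition~(2) of the definition of quasi-planar ladder asserts that $d(x \otimes \{\id\})$ lies in $\C_\pl^{(j+1)} \otimes \{\id\} + \C^{(j)}$. Decomposing $\mathbb{S}$-equivariantly as $d(x \otimes \{\id\}) = \sum_{\sigma} d_\sigma(x) \otimes \{\sigma\}$ yields $d_\id(x) \in \C_\pl^{(j+1)}$ and, crucially, $d_\sigma(x) \in \C_\pl^{(j)}$ for every $\sigma \neq \id$; thus each non-identity derivation $d_\sigma$ strictly drops the minimal ladder level. For the identity derivation, applying the curved cooperad equation $d^2 = ((\id \otimes \theta) - (\theta \otimes \id)) \circ \Delta_{(1)}$ to $x \otimes \{\id\}$ and extracting the $\{\id\}$-isotypic component yields
\[
d_\id^2(x) = -\sum_{\sigma \neq \id} d_{\sigma^{-1}}(d_\sigma(x)) + \Theta(x),
\]
where $\Theta(x)$ is the $\{\id\}$-part of the curvature contribution. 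Each $d_\sigma(x)$ lies in $\C_\pl^{(j)}$, and a further $d_{\sigma^{-1}}$ with $\sigma^{-1}\neq\id$ drops the level once more, so the sum belongs to $\C_\pl^{(j-1)}$. Since $\Delta_{(1)}$ respects the quasi-planar filtration (being part of the underlying cooperad structure) and $\theta$ is compatible with the pseudo-planar structure, the curvature term $\Theta(x)$ also lives at level $\leq j$. Therefore $d_\id^2(x) \in \C_\pl^{(j)}$, strictly dropping the ladder level.

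Combining these facts, for any sequence $(\sigma_1, \ldots, \sigma_k)$ and reading $d_{\sigma_1} \cdots d_{\sigma_k}(x)$ from right to left, each non-identity letter strictly drops the current intermediate element's minimal ladder level, and each consecutive pair of identity letters does the same. An elementary counting shows that the total level drop in a length-$k$ sequence is at least $\lfloor k/2 \rfloor$, hence any $k \geq 2\, i(x) + 2$ forces the outcome to land at a negative level and thus to vanish. Setting $k_x := 2\, i(x) + 2$ concludes the argument, and the same proof transfers verbatim to $s^{-1}\overline{\C}$, whose pre-differential is induced by that of $\C$. The principal technical obstacle is verifying that $\Theta(x)$ indeed stays at level $\leq j$: this requires carefully unwinding the interaction between the curvature $\theta$, the partial decomposition $\Delta_{(1)}$, and the pseudo-planar isomorphisms $\varphi_{\C^{(i)}}$ along the ladder.
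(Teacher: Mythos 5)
Your core mechanism is the same as the paper's: by quasi-planarity each non-identity $d_\sigma$ pushes a planar generator into an earlier stage of the ladder, and a pair of identity letters does so as well because $d_{\id}^2$ is controlled by the curvature relation (equivalently, $\gr_{j+1}\C$ is a dg module). The genuine gap is in how you convert this into the existence of a \emph{natural integer} $k_x$. The quasi-planar ladder of $\C$ is indexed by an arbitrary small ordinal $\alpha$, not by $\omega$: the canonical $\omega$-indexed ladder is only constructed later, using this very lemma through Theorem \ref{thm:becomodule}, so assuming finite indexing here would be circular. Hence the minimal stage $i(x)$ may be an infinite ordinal, in which case ``$k_x:=2\,i(x)+2$'' is not a natural integer, and the counting ``total drop at least $\lfloor k/2\rfloor$ forces a negative level'' does not yield a finite bound: strict ordinal decrease along each individual sequence does not give uniformity over all sequences $\underline\sigma$, since different sequences may land in stages that are not a priori bounded by a single natural number. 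The missing ingredient is the finiteness of $\mathbb S_n$ together with a well-founded recursion: at a successor stage one observes that the finitely many elements $d_{(\sigma_1,\sigma_2)}(x)$, $(\sigma_1,\sigma_2)\in\mathbb S_n^2$, all lie in the previous stage, applies the induction hypothesis to each, and sets $k_x = 2 + \max$ of the resulting bounds; limit stages are handled by cocontinuity. This is exactly how the paper's transfinite induction concludes, with the statement ``a finite $k_x$ exists'' rather than an explicit formula in $i(x)$.

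Two smaller points. First, the step you flag as the ``principal technical obstacle'' --- that the identity component $\Theta(x)$ of the curvature term lies in stage $\leq j$ --- is not delicate and you leave it unproved: it is immediate from condition (3) in the definition of a cooperad ladder (the decomposition map of $\C^{(j+1)}$ factors through $\overline{\treemod}\,\C^{(j)}$), which is precisely why the associated graded $\gr_{j+1}\C$ has $d^2=0$. Second, your base case conflates $\C^{(0)}$ with the adjoined bottom stage $\operad I$: if $i(x)=0$ then $x\in\C^{(0)}_\pl$, which is in general much larger than $\operad I$, and its planar differential $d_{\id}$ need not vanish. The correct base case, as in the paper, is that $\C^{(0)}$ is planar and admits no non-trivial decomposition, so $d_\sigma=0$ for $\sigma\neq\id$ and $d_{\id}^2=0$, giving $k_x=2$ there.
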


\begin{proof}
Since the cooperad $\C$ is quasi-planar, it is the colimit of a quasi-planar ladder $\{\C^{(i)}\}_{i \in \alpha}$. Let us prove by an ordinal induction that $\C^{(i)}$ satisfies the property of the lemma for every $i<\alpha+1$.

\medskip

The conilpotent curved cooperad $\C^{(0)}$ is planar, therefore $d_\sigma$ is zero whenever the permutation $\sigma$ is non-trivial. So we only need to worry about the planar pre-differential $D_{\id}$. Now notice that by definition $\C^{(0)}$ admits no non-trivial decomposition, therefore its pre-differential squares to zero. This proves the property for $\C^{(0)}$.
    
\medskip

Now, let us suppose that $\C^{(j)}$ satisfies the property for every $j<i$. If $i$ is a limit ordinal, $\C^{(i)}$ also satisfies the property. Otherwise, $i$ has the form $i = l+1$. One can notice that on $\gr_i\C$, the maps $D_\sigma$ is zero whenever the permutation $\sigma$ is non-trivial and that $d^2 = D_{\id}^2=0$. So, for every $x \in \C^{(i)}(n)$, and for every two permutations $\sigma_1, \sigma_2$, $D_{(\sigma_1, \sigma_2)}(x)$ is in $\C^{(l)}(n)$. This proves the property for $\C^{(i)}$, and allows us to conclude. Finally, the result for $s^{-1} \overline{\C}$ is a direct consequence of the previous one.
\end{proof}

\medskip

\textbf{The comodule map.} Let us consider the morphism of graded $\mathbb{N}$-modules 

\[
\begin{tikzcd}[column sep=2pc,row sep=0pc]
\Delta_{\operad E, \C}: s^{-1}\overline{\C}_\pl  \arrow[r]
&\mathcal{E}_\pl \otimes s^{-1}\overline{\C} \\
s^{-1}x \otimes \{\id\}  \arrow[r,mapsto]
&\displaystyle \sum_{k \geq 0} \sum_{\underline\sigma \in \mathbb S_n^k} \rho(\underline\sigma) \otimes D_{\underline\sigma}(s^{-1}x \otimes \{\id\})~,
\end{tikzcd}
\]

where $s^{-1} x \in s^{-1} \overline{\C}_\pl (n)$ is a planar generator and where the sum is taken over all sequences of permutations $\underline\sigma = (\sigma_1, \ldots, \sigma_k) \in \mathbb S_n^k$ for all $k \geq 0$. The formula is well-defined by Lemma \ref{lemma: finitude des d sigma}. Moreover, notice that $\Delta_{\operad E, \C}$ is the restriction of the morphism of graded operads
$$
\Omega \C = \treemod(s^{-1}\overline{\C})
\xrightarrow{\Delta_{\operad E, \C}}
\treemod(\operad E \otimes s^{-1}\overline{\C})
\longrightarrow \treemod(\operad E) \otimes \treemod(s^{-1}\overline{\C})
\longrightarrow \operad E \otimes \Omega \C~,
$$
which we also denote by $\Delta_{\operad E, \C}$.

\begin{theorem}\label{thm:becomodule}
The map

\[
\begin{tikzcd}[column sep=2pc,row sep=0pc]
\Delta_{\operad E, \C}: s^{-1}\overline{\C}_\pl  \arrow[r]
&\mathcal{E}_\pl \otimes s^{-1}\overline{\C} \\
s^{-1}x \otimes \{\id\}  \arrow[r,mapsto]
&\displaystyle  \sum_{k \geq 0} \sum_{\underline\sigma \in \mathbb S_n^k}  \rho(\underline\sigma) \otimes D_{\underline\sigma}(s^{-1}x \otimes \{\id\})~,
\end{tikzcd}
\]

where $s^{-1} x \in s^{-1} \overline{\C}_\pl (n)$ is a planar generator and where the sum is taken over all sequences of permutations $\underline\sigma = (\sigma_1, \ldots, \sigma_k) \in \mathbb S_n^k$ for all $k \geq 0$, induces a morphism of dg operads 
\[
\Omega \operad C \longrightarrow \operad E \otimes \Omega \operad C
\]
which endows $\Omega \operad C$ with a left $\operad E$-comodule structure.
\end{theorem}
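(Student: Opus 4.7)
The plan is to extend the given formula from the generators $s^{-1}x \otimes \{\id\}$ to all of $\Omega \C$ via the freeness of $\Omega \C$ as a graded operad, and then to verify in turn: $\mathbb{S}$-equivariance, compatibility with the differentials, and the counit and coassociativity axioms of a left $\operad E$-comodule. Extension is automatic since $\Omega\C = \treemod(s^{-1}\overline{\C})$ is free as a graded operad. $\mathbb{S}$-equivariance on $s^{-1}x \otimes \{\tau\}$ is forced by the diagonal $\mathbb{S}_n$-action on $\operad E(n) \otimes \Omega\C(n)$, giving $\Delta_{\operad E, \C}(s^{-1}x \otimes \{\tau\}) = \Delta_{\operad E, \C}(s^{-1}x \otimes \{\id\}) \cdot \tau$, and Lemma \ref{lemma: finitude des d sigma} ensures arity-wise finiteness of the defining sum.

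For differential compatibility, I would check on generators $y = s^{-1}x \otimes \{\id\}$. Decompose $d_{\Omega\C}$ restricted to generators as $d_\Delta + d_\C + d_\theta$ and $d_{\operad E \otimes \Omega\C}$ as the tensor differential. Expanding $d_{\operad E \otimes \Omega\C}\circ\Delta_{\operad E, \C}(y)$, the simplicial differential of $\operad E$ produces, via the face-map formula
\[
d_{\operad E}\rho((\sigma_1,\ldots,\sigma_k)) = \rho((\sigma_1,\ldots,\sigma_{k-1}))^{\sigma_k} + \sum_{j=1}^{k-1}(-1)^j \rho((\sigma_1,\ldots,\sigma_{k-j}\sigma_{k-j+1},\ldots,\sigma_k)) + (-1)^k \rho((\sigma_2,\ldots,\sigma_k)),
\]
terms indexed by sequences $\underline\tau$ of length $k-1$ with an inserted split $\sigma_{k-j}\sigma_{k-j+1} = \tau_{k-j}$. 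Reindexing, the middle-face contributions at a fixed position $l$ of $\underline\tau$ collect into the operator $\sum_{\alpha\beta = \tau_l} D_\alpha D_\beta$, which is exactly the $\tau_l$-component of $(d_\C)^2$. By the curved-cooperad identity $(d_\C)^2 = (\id \otimes \theta - \theta \otimes \id)\Delta_{(1)}$, this reproduces the $d_\Delta$- and $d_\theta$-parts of $\Delta_{\operad E, \C}(d_{\Omega\C}(y))$, while the remaining $d_\C$ contribution on $y$ matches the $k=1$ face terms on the other side. Sign and shuffle bookkeeping is handled using Lemma \ref{lemma:inversecompinbe}, which expresses operadic composition in $\operad E$ on elements of the form $\rho(\underline\sigma)$.

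For the left $\operad E$-comodule axioms: counitality with respect to $\epsilon: \operad E \to \uCom$ (the unit of the Hadamard tensor product, induced by the canonical projection $\operad E \to \uCom$) follows because $\epsilon$ kills positive simplicial degree, so only the $k=0$ term $\rho(\emptyset) \otimes y$ survives and projects to $y$. For coassociativity, apply Lemma \ref{lemma:monoidalpropertyoftherhomap} to rewrite $(\Delta_{\operad E} \otimes \id)\Delta_{\operad E, \C}(y)$ and reindex by concatenation $\underline\sigma = \underline\mu \cdot \underline\tau$, producing $\sum_{\underline\mu, \underline\tau} \rho(\underline\tau) \otimes \rho(\underline\mu)^{\prod\underline\tau} \otimes D_{\underline\mu}D_{\underline\tau}(y)$. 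The other side $(\id \otimes \Delta_{\operad E, \C})\Delta_{\operad E, \C}(y)$ produces the same expression after using $\mathbb{S}$-equivariance of $\Delta_{\operad E, \C}$ and of the $D_{\underline\mu}$'s to pull the $\prod\underline\tau$-action out across the inner coaction.

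The main obstacle will be the differential-compatibility step: the simplicial face maps of $\operad E$, the $\mathbb{S}$-twisted composition of pre-differentials $D_\sigma$, the partial-decomposition $d_\Delta$ and the curvature $d_\theta$ all combine intricately, and the matching rests on the curved-cooperad identity $(d_\C)^2 = (\id \otimes \theta - \theta\otimes \id)\Delta_{(1)}$ together with careful sign and shuffle bookkeeping organized by the quasi-planar structure.
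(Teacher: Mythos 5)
Your overall skeleton (extend by freeness of the graded operad $\Omega\C = \treemod(s^{-1}\overline{\C})$, check compatibility with differentials on generators, then counit and coassociativity via Lemma \ref{lemma:monoidalpropertyoftherhomap}) matches the paper, and your treatment of coassociativity and of the internal differential $d_{\C}$ via the outer face maps of $d_{\operad E}$ is in the right direction. But the central step of your differential-compatibility argument is wrong as stated. You claim that the middle-face contributions of $d_{\operad E}$, collected as $\sum_{\alpha\beta=\tau_l} D_\alpha D_\beta$, ``reproduce the $d_\Delta$- and $d_\theta$-parts of $\Delta_{\operad E,\C}(d_{\Omega\C}(y))$'' via the curvature identity. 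This cannot happen for weight reasons: the middle-face terms $\delta_i(\rho(\underline\sigma))\otimes D_{\underline\sigma}(y)$ live in the generator component $\operad E\otimes s^{-1}\overline{\C}$ of $\operad E\otimes\Omega\C$, whereas $\Delta_{\operad E,\C}(d_\Delta(y))$ lives in $\operad E\otimes\treemod^{(2)}(s^{-1}\overline{\C})$ (two-vertex trees) and $\Delta_{\operad E,\C}(d_\theta(y))$ in $\operad E\otimes\operad I$. In the paper the middle faces do not match anything; they vanish outright (Lemma \ref{lemma becooperadd}): when $\mu_i\neq\id$ the operator $\sum_{\sigma\sigma'=\mu_i}D_\sigma D_{\sigma'}$ is zero because the curvature identity forces $d_{s^{-1}\overline\C}^2$ to be planar, and when $\mu_i=\id$ the coefficient $\rho(\underline\mu)$ is a degenerate simplex, hence zero. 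So the curvature identity is used only to kill terms, not to generate the cobar terms $d_\Delta$ and $d_\theta$; those are checked against entirely different pieces of $d_{\operad E\otimes\Omega\C}$.

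As a consequence, the genuinely hard part of the theorem --- compatibility of $\Delta_{\operad E,\C}$ with $d_\Delta$, i.e.\ that applying the partial cocompositions after $D_{\underline\sigma}$ agrees with composing in $\operad E\otimes\Omega\C$ after decomposing --- is not addressed in your proposal beyond a passing mention of Lemma \ref{lemma:inversecompinbe}. In the paper this is Lemma \ref{lemma:becomodulecommutesdiff2}, and it needs real input: from $d_{\Omega\C}^2=0$ one extracts the interaction of $\Delta_i^{p,q}$ with a single $D_\sigma$ in terms of $(p,q,i)$-admissibility (Lemma \ref{lemma betransferfirst}, with the arity-zero case Lemma \ref{lemma betransferzero}), one then proves by induction the factorization formula $\Delta_i^{p,q}D_{\underline\sigma} = \sum_{\underline\mu\comp_{i,\phi}\underline\psi=\underline\sigma}(-1)^k\epsilon(\phi)(D_{\underline\mu}\otimes D_{\underline\psi})\Delta^{p,q}_{\underline\mu^{-1}(i)}$ (Lemma \ref{propositionbeecomposition}), and finally matches this, shuffle signs included, with the composition of $\rho$-elements in $\operad E$ (Lemma \ref{lemma:inversecompinbe}). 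This admissibility/shuffle analysis is exactly the ``main obstacle'' you flag at the end, but your proposed mechanism for it both misidentifies where those terms live and omits the argument, so the proof as proposed does not go through.
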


\begin{proof}
By definition, it induces a morphism of graded operads. By Lemma \ref{lemma:becomodulecommutesdiff0}, Lemma \ref{lemma:becomodulecommutesdiff1} and Lemma \ref{lemma:becomodulecommutesdiff2}, this induced morphism commutes with the differentials. Thus, it is a morphism of dg operads. By Lemma \ref{lemma becomodulecoass}, it defines a left $\operad E$-comodule structure.
\end{proof}

The rest of this subsection is devoted to proving Theorem \ref{thm:becomodule}.

\begin{lemma}\label{lemma:becomodulecommutesdiff0}
The diagram
    \[
\begin{tikzcd}[column sep=3pc,row sep=3pc]
        s^{-1}\overline{\operad C}_{\pl}
        \ar[r, "\Delta_{\operad E, \C}"] \ar[d, "d_\theta"']
        & \operad E\otimes s^{-1} \overline{\operad C}_{\pl}  \otimes \mathbb S
        \ar[d, "\id \otimes d_\theta"]
        \\
        \operad I
        \ar[r, "\Delta_{\operad E, \C}"']
        & \operad E \otimes \operad I \otimes \mathbb S
\end{tikzcd}
\]

is commutative, where $d_\theta$ is the term of the differential in $\Omega\C$ induced by the curvature $\theta$ of $\C$.
\end{lemma}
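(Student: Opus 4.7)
The plan is to exploit the fact that the curvature derivation $d_\theta: s^{-1}\overline{\C} \to \operad I$ is concentrated in arity one, since the unit cooperad $\operad I$ vanishes in all other arities. Because every operator $D_{\underline\sigma}$ preserves arity, this immediately forces both composites in the square to vanish on any planar generator $s^{-1}x$ with $x \in \overline{\C}_\pl(n)$ for $n \neq 1$. So the problem reduces to the arity one case.

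In arity one the symmetric group $\mathbb S_1$ is trivial, so every sequence $\underline\sigma \in \mathbb S_1^k$ consists only of identity permutations. For $k \geq 1$ the resulting element $\rho(\underline\sigma) = (\id_1, \id_1, \ldots, \id_1)$ has repeated consecutive entries and hence vanishes in $\operad E(1)_k$ by the definition of the Barratt--Eccles operad. Therefore the only surviving summand in $\Delta_{\operad E, \C}(s^{-1}x \otimes \{\id\})$ for $x \in \overline{\C}_\pl(1)$ is the $k=0$ contribution $(\id_1) \otimes s^{-1}x \otimes \{\id\}$. Applying $\id \otimes d_\theta$ to this then gives $(\id_1) \otimes \theta(x) \otimes \{\id\}$.

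For the other direction one first applies $d_\theta$ to obtain $\theta(x) \cdot 1_{\operad I} \in \operad I$, and then applies the horizontal arrow $\Delta_{\operad E, \C}$. That bottom arrow is the restriction to $\operad I \subset \Omega\C$ of the morphism of graded operads $\Delta_{\operad E, \C}: \Omega\C \to \operad E \otimes \Omega\C$; being a morphism of operads, it sends the unit to the unit, so it maps $1_{\operad I}$ to $(\id_1) \otimes 1_{\operad I} \otimes \{\id\}$. Both paths therefore yield $\theta(x)\cdot (\id_1) \otimes 1_{\operad I} \otimes \{\id\}$, and the square commutes.

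There is essentially no genuine obstacle here: the argument is bookkeeping driven entirely by the arity filtration together with the triviality of $\mathbb S_1$. The only mild conceptual input is to recognise that the bottom arrow is forced to preserve units because $\Delta_{\operad E, \C}$ is by construction a morphism of graded operads, a feature that will reappear in the subsequent Lemmas \ref{lemma:becomodulecommutesdiff1} and \ref{lemma:becomodulecommutesdiff2}, where the actual combinatorics of the Barratt--Eccles structure enter in a nontrivial way.
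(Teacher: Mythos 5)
Your proof is correct and follows essentially the same route as the paper: the square is trivial in arity $n \neq 1$ because $d_\theta$ lands in $\operad I$, and in arity $1$ the key point is exactly the identity $\Delta_{\operad E, \C}(x \otimes \id) = (\id) \otimes x \otimes \{\id\}$, which you justify via the triviality of $\mathbb S_1$ (equivalently, $\operad E(1)$ is concentrated in degree $0$) and unit-preservation of the operad morphism. No gaps.
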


\begin{proof}
    The diagram trivially commutes at arity $n \neq 1$. In arity $1$, the commutation is a direct consequence of the fact that $\Delta_{\operad E, \C} (x \otimes \id) = (\id) \otimes x \otimes \{\id\}$ for any $x \in s^{-1} \overline{\C}_\pl(1)$.
\end{proof}

\begin{lemma}\label{lemma becooperadd}
For every natural integers, $n \geq 2, k \geq 1$, we have an equality of degree $-1$ maps from $s^{-1}\overline{\C}(n)$ to $\operad E (n)\otimes s^{-1}\overline{\C}(n)$:
    
    \[
    \sum_{\underline\sigma \in \mathbb S_n^k}
    d_{\operad E} \rho(\underline{\sigma}) \otimes D_{\underline\sigma}(-) =
    \sum_{\underline\sigma \in \mathbb S_n^k}
    \rho((\sigma_1, \ldots, \sigma_{k-1}))^{\sigma_k} \otimes D_{\underline\sigma}(-)
    + (-1)^k \rho((\sigma_2, \ldots, \sigma_{k})) \otimes D_{\underline\sigma}(-)~.
    \]
\end{lemma}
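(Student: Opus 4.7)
The plan is to expand $d_{\operad E}\rho(\underline\sigma)$ using the standard alternating-sum simplicial differential on the Barratt--Eccles operad, identify the two boundary face terms with the right-hand side, and show that all interior face terms vanish after summation. Since $\rho(\underline\sigma) = (\id_n, \sigma_k, \sigma_{k-1}\sigma_k, \ldots, \sigma_1\cdots\sigma_k)$ has $k+1$ entries, one gets $d_{\operad E}\rho(\underline\sigma) = \sum_{j=0}^{k} (-1)^j \rho_j(\underline\sigma)$, where $\rho_j(\underline\sigma)$ denotes the sequence with its $j$-th entry deleted. A direct expansion shows that $\rho_0(\underline\sigma) = \rho((\sigma_1, \ldots, \sigma_{k-1}))^{\sigma_k}$ (by factoring $\sigma_k$ on the right) and $\rho_k(\underline\sigma) = \rho((\sigma_2, \ldots, \sigma_k))$, producing precisely the two summands on the right-hand side.

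For an interior face $j \in \{1, \ldots, k-1\}$, the deleted sequence $\rho_j(\underline\sigma)$ equals $\rho(\underline\tau^{(j)})$, where $\underline\tau^{(j)} \in \mathbb{S}_n^{k-1}$ is obtained from $\underline\sigma$ by merging the adjacent entries $\sigma_{k-j}$ and $\sigma_{k-j+1}$ into their product. Grouping the sum $\sum_{\underline\sigma \in \mathbb{S}_n^{k}}$ by $\underline\tau$ and summing over the $|\mathbb{S}_n|$ possible factorizations of $\tau_{k-j}$, the inner sum $\sum_{\alpha\beta=\tau_{k-j}} D_\alpha D_\beta$ can be identified with the $\tau_{k-j}$-sector of $d_\C^2$: on $x \otimes \{\id\}$ it produces $(d_\C^2)_{\tau_{k-j}}(x) \otimes \{\tau_{k-j}\}$, where $(d_\C^2)_\tau$ denotes the component that sends the $\{\id\}$-sector to the $\{\tau\}$-sector of $\C \cong \C_\pl \otimes \mathbb{S}$. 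The $j$-th interior contribution thus rewrites as
\[
(-1)^j \sum_{\underline\tau \in \mathbb{S}_n^{k-1}} \rho(\underline\tau) \otimes D_{\tau_1}\cdots D_{\tau_{k-j-1}}\, (d_\C^2)_{\tau_{k-j}}\, D_{\tau_{k-j+1}}\cdots D_{\tau_{k-1}}(-).
\]

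To show that each such summand is zero, it suffices by linearity, the decomposition $s^{-1}\overline{\C}(n) \cong s^{-1}\overline{\C}_\pl(n) \otimes \Bbbk[\mathbb{S}_n]$, and the $\mathbb{S}_n$-equivariance of every $D_{\underline\sigma}$, to evaluate on inputs $y_0 = s^{-1}x \otimes \{\id\}$ with $x \in \C_\pl(n)$. Iterating the $D_{\tau_l}$ on $y_0$ keeps the first tensor factor inside $s^{-1}\overline{\C}_\pl$, producing elements of the form $z \otimes \{\nu\}$ with $z$ planar. The key vanishing is then $(d_\C^2)_\tau(z) = 0$ for $\tau \neq \id$ and $z \in \C_\pl$: since the pseudo-planar isomorphism $\varphi_\C: \C \cong \C_\pl \otimes \mathbb{S}$ is an isomorphism of graded conilpotent cooperads, the partial decomposition $\Delta_{(1)}$ sends $\C_\pl \otimes \{\id\}$ into $(\C_\pl \otimes \{\id\}) \otimes_{(1)} (\C_\pl \otimes \{\id\})$; combined with the fact that the curvature $\theta$ is concentrated in arity $1$ where $\mathbb{S}_1$ is trivial, the defining identity $d_\C^2 = (\theta \otimes \id - \id \otimes \theta)\Delta_{(1)}$ shows that $d_\C^2$ also takes planar generators into the $\{\id\}$-sector. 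The complementary case $\tau_{k-j} = \id$ is handled for free: then $\underline\tau$ contains a trivial entry, which forces $\rho(\underline\tau) = 0$ in $\operad E$ since the underlying sequence acquires two consecutive equal terms. Summing over all interior $j$ yields zero, and the identity collapses to the announced boundary form.

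The main expected obstacle is precisely this planar vanishing $d_\C^2(\C_\pl \otimes \{\id\}) \subseteq \C_\pl \otimes \{\id\}$: it requires transporting the partial coproduct across the symmetrization functor $-\otimes \mathbb{S}$ via the pseudo-planar isomorphism, and carefully exploiting that $\theta$ is supported only in arity $1$. Once this is in place, the rest of the argument consists of bookkeeping of signs and reparametrizations of the sums.
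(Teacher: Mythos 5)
Your proof is correct and follows essentially the same route as the paper: expand the simplicial differential of $\rho(\underline\sigma)$, identify the two extreme faces with the right-hand side, and kill each interior face by regrouping the sum over factorizations $\alpha\beta=\tau_{k-j}$, using that the curvature equation together with planarity of the underlying graded cooperad forces $\sum_{\alpha\beta=\tau}D_\alpha D_\beta=0$ for $\tau\neq\id$, while $\tau_{k-j}=\id$ makes $\rho(\underline\tau)$ degenerate and hence zero. The only difference is that you spell out the reduction to planar generators and the planarity of $d_\C^2$ in more detail than the paper, which simply asserts them.
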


\medskip

\begin{proof}
    We have
    
    $$
    d_{\operad E} (\rho(\underline\sigma))
    = \rho((\sigma_1, \ldots, \sigma_{k-1}))^{\sigma_k}
    + \sum_{0 <i<k} (-1)^{k-i} \delta_i(\rho(\underline\sigma))
    + (-1)^k  \rho((\sigma_2, \ldots, \sigma_k))~,
    $$
    where
    $$
    \delta_i(\rho(\underline\sigma))= \rho(\sigma_1 \ldots, \sigma_{i}\sigma_{i+1}, \ldots, \sigma_k)~.
    $$
    This gives us
    \begin{align*}
        \sum_{\underline\sigma \in \mathbb S_n^k}
    d_{\operad E} \rho(\overline{\sigma}) \otimes D_{\underline\sigma}(-)
    =& \sum_{\underline\sigma \in \mathbb S_n^k}
    \Big( \rho((\sigma_1, \ldots, \sigma_{k-1}))^{\sigma_k} \otimes D_{\underline\sigma}(-)
    \\
    &+ \sum_{0<i<k} (-1)^{k-i}\sum_{\underline\sigma} \delta_i(\rho(\underline\sigma)) \otimes D_{\underline\sigma}(-)\Big)
    \\
    &+ (-1)^k \sum_{\underline\sigma \in \mathbb S_n^k}\rho((\sigma_2, \ldots, \sigma_{k})) \otimes D_{\underline\sigma}(-)~.
    \end{align*}

Since the underlying graded conilpotent cooperad of $\C$ is planar, the curvature equation tells us that $d_{s^{-1}\overline{\C}}^2$ is planar and that, for every permutation $\sigma \in \mathbb S_n$,
$$
\sum_{\mu \xi = \sigma} D_{\mu}D_\xi
=
\begin{cases}
    d_{s^{-1}\overline{\C}}^2 \text{ if }\sigma=\id ~,
    \\
    0 \text{ otherwise.}
\end{cases}
$$
Noticing that $\delta_i(\rho(\underline\sigma))$ depends only on the product $\sigma_i \sigma_{i+1}$ and not on the particular values of $\sigma_i$ and $\sigma_{i+1}$, we have
$$
\sum_{\underline\sigma \in \mathbb S_n^k} \delta_i(\rho(\underline\sigma)) \otimes D_{\underline\sigma}(-)=
\sum_{\underline\mu=(\mu_1, \ldots, \mu_{k-1}) \in \mathbb S_n^{k-1}} \sum_{\sigma\sigma' = \mu_i}
\rho(\underline\mu) \otimes D_{(\mu_1, \ldots, \sigma\sigma', \ldots, \mu_{k-1})}(-)~.
$$
If $\mu_i=\id$ then $\rho(\underline\mu)=0$ and if $\mu_i \neq \id$ then $D_{(\mu_1, \ldots, \sigma\sigma', \ldots, \mu_{k-1})}=0$. Thus 
$$
\sum_{\underline\sigma \in \mathbb S_n^k} \delta_i(\rho(\underline\sigma)) \otimes D_{\underline\sigma}(-)=0~,
$$
which concludes the proof.
\end{proof}

\begin{lemma}\label{lemma:becomodulecommutesdiff1}
    The following diagram

\[
\begin{tikzcd}[column sep=3pc,row sep=3pc]
        s^{-1}\overline{\operad C}_{\pl}
        \ar[r, "\Delta_{\operad E, \C}"] \ar[d, "d_{s^{-1}\overline{\C}}"']
        & \operad E \otimes s^{-1} \overline{\operad C}_{\pl}  \otimes \mathbb S
        \ar[d, "d_{\operad E} \otimes \id + \id \otimes d_{s^{-1}\overline{\C}}"]
        \\
        s^{-1}\overline{\operad C}_{\pl}\otimes \mathbb S
        \ar[r, "\Delta_{\operad E, \C}"']
        & \operad E \otimes s^{-1} \overline{\C}_\pl  \otimes \mathbb S
\end{tikzcd}
\]

is commutative.
\end{lemma}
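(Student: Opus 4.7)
Since both composites are $\mathbb{S}$-equivariant by construction, it suffices to check commutativity on planar generators $s^{-1}x \otimes \{\id\} \in s^{-1}\overline{\C}_\pl(n)$; moreover, by Lemma \ref{lemma: finitude des d sigma}, both resulting expressions are finite sums indexed by sequences of permutations.

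For the left-hand composite, first apply $d_{s^{-1}\overline{\C}}$ to obtain $\sum_{\sigma_0 \in \mathbb{S}_n} d_{\sigma_0}(s^{-1}x) \otimes \{\sigma_0\}$, where each $d_{\sigma_0}(s^{-1}x)$ is again a planar generator. Using the defining formula of $\Delta_{\operad E, \C}$ together with its $\mathbb{S}$-equivariance, and reindexing each resulting sequence $(\tau_1, \ldots, \tau_k, \sigma_0)$ as $\underline\mu$ of length $l = k+1$ with $\mu_l = \sigma_0$, one obtains
\[
\sum_{l \geq 1}\sum_{\underline\mu \in \mathbb{S}_n^l} \rho(\mu_1, \ldots, \mu_{l-1})^{\mu_l} \otimes D_{\underline\mu}(s^{-1}x \otimes \{\id\})~.
\]

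For the right-hand composite, split the differential into $(d_{\operad E} \otimes \id)$ and $(\id \otimes d_{s^{-1}\overline{\C}})$, the latter carrying the Koszul sign $(-1)^{|\rho(\underline\sigma)|} = (-1)^k$ from moving the degree $-1$ map past $\rho(\underline\sigma)$. Lemma \ref{lemma becooperadd} applies to the first term: the interior faces of the Barratt--Eccles boundary vanish thanks to the curvature equation on $\C$, leaving only
\[
\sum_{l \geq 1}\sum_{\underline\mu}\bigl[\rho(\mu_1, \ldots, \mu_{l-1})^{\mu_l} + (-1)^l \rho(\mu_2, \ldots, \mu_l)\bigr] \otimes D_{\underline\mu}(s^{-1}x \otimes \{\id\})~.
\]
For the second term, prepending a new permutation $\sigma_0$ to the sequence and reindexing as $\underline\mu = (\sigma_0, \sigma_1, \ldots, \sigma_k)$ of length $l = k+1$ yields $\sum_{l \geq 1}\sum_{\underline\mu} (-1)^{l-1} \rho(\mu_2, \ldots, \mu_l) \otimes D_{\underline\mu}(s^{-1}x \otimes \{\id\})$. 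Adding the two contributions, the $\rho(\mu_2, \ldots, \mu_l)$ pieces cancel via $(-1)^l + (-1)^{l-1} = 0$, and the remaining terms match the left-hand side exactly.

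The main obstacle is the careful bookkeeping of the Koszul sign on the tensor-product differential and of the signs in the Barratt--Eccles boundary, together with the two reindexings (at the end of the sequence for the LHS, and at the beginning of the sequence for the second RHS term). The essential conceptual input is Lemma \ref{lemma becooperadd}, which encodes both the curvature equation and the planarity of the associated graded; the rest is a formal manipulation of sequences of permutations.
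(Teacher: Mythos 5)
Your proof is correct and follows essentially the same route as the paper's: compute both composites on planar generators as sums over sequences of permutations, feed the $(d_{\operad E} \otimes \id)$-term through Lemma \ref{lemma becooperadd} (where the curvature equation kills the interior faces of the Barratt--Eccles boundary), and cancel the remaining pieces after reindexing at the end, respectively the beginning, of the sequence. The only cosmetic difference is that the paper disposes of arities $0$ and $1$ by a separate trivial check (Lemma \ref{lemma becooperadd} is stated for $n \geq 2$), which your argument tacitly covers since in those arities the sums reduce to the $k=0$ term.
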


\begin{proof}
In arity $0$ and $1$, the commutation is straightforward to check. In arity $n \geq 2$, we have $d_{s^{-1}\overline{\C}} = \sum_{\sigma \in \mathbb{S}_n} D_\sigma$. This gives the following equalities of maps on $s^{-1} \overline{\C}(n)$

\begin{align*}
    \Delta_{\operad E, \C} d_{s^{-1}\overline{\C}}(-)  &= \sum_{\underline\sigma \in \mathbb S_n^k,~ \sigma \in \mathbb S_n} \rho(\underline\sigma)^{\sigma} \otimes D_{\underline\sigma}D_\sigma(-)~,
    \\
    (\id \otimes d_{s^{-1}\overline{\C}}) \Delta_{\operad E, \C}(-) &= \sum_{k \geq 0} \sum_{\underline\sigma \in \mathbb S_n^k, ~ \sigma \in \mathbb S_n} (-1)^k \rho(\underline\sigma) \otimes D_{\sigma} D_{\underline \sigma}(-)~.
\end{align*}
It follows from Lemma \ref{lemma becooperadd} that 
$$
\Delta_{\operad E, \C} d_{s^{-1} \overline{\C}}(-) = (d_{\operad E} \otimes \id) \Delta_{\operad E, \C}(-) + (\id \otimes d_{s^{-1} \overline{\C}}) \Delta_{\operad E, \C}(-)~,
$$
and thus that the square also commutes in arity $n \geq 2$.
\end{proof}

\begin{lemma}\label{lemma betransferfirst}
    Let $p,q \geq 1$, $1 \leq i \leq p$,$n= p+q-1$ and let $\sigma \in \mathbb S_n$ be a non-trivial permutation. One has the following egalites between maps from $s^{-1}\overline{\C}_\pl)(n)$ to $(\treemod_\pl^{(2)} (s^{-1}\overline{\C}_\pl)) (n)\otimes \mathbb S_n$ depending on the $(p,q,i)$-admissibility of $\sigma$.
    \begin{enumerate}
        \item If $\sigma$ is $(p,q,i)$-top admissible, that is, if there exists a unique non-trivial permutation $\psi \in \mathbb S_q$ such that $\sigma = \id_p \comp_i \psi$, then
        $$
        \Delta_i^{p,q} D_\sigma =- (\id \otimes D_\psi) \Delta_i^{p,q}~.
        $$
        \item If $\sigma$ is $(p,q,i)$-bottom admissible, that is, if there exists a unique non-trivial permutation $\mu \in \mathbb S_p$ such that $\sigma = \mu \comp_{\mu^{-1}(i)} \id_q$, then
        $$
        \Delta_{i}^{p,q} D_\sigma = -  (D_\mu \otimes \id) \Delta_{\mu^{-1}(i)}^{p,q}~.
        $$
        \item If $\sigma$ is $(p,q,i)$-non admissible, then
        $$
        \Delta_i^{p,q} D_\sigma = 0~.
        $$
    \end{enumerate}
\end{lemma}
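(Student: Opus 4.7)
The plan is to derive the three cases simultaneously from the coderivation identity satisfied by the pre-differential $d_\C$ with respect to the infinitesimal decomposition of the cooperad $\C$, combined with the pseudo-planar isomorphism $\C \cong \C_\pl \otimes \mathbb{S}$.

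A key preliminary is the identity
\[
\Delta_i^{p,q}(y \otimes \{\tau\}) = \Delta_i^{p,q}(y) \otimes \{\tau\}
\]
in $\treemod_\pl^{(2)}(\C_\pl)(n) \otimes \mathbb{S}_n$, valid for $y \in \C_\pl(n)$ and $\tau \in \mathbb{S}_n$. This follows from the $\mathbb{S}_n$-equivariance of the symmetric infinitesimal decomposition $\Delta_{(1)}$, together with the fact that on the planar generators $\C_\pl \otimes \{\id\}$ the symmetric decomposition coincides with the planar one under the isomorphism $\treemod^{(2)}(\C) \cong \treemod_\pl^{(2)}(\C_\pl) \otimes \mathbb{S}$. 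Applied with $y = d_\sigma(x)$ and $\tau = \sigma$, this identifies the left-hand side $\Delta_i^{p,q}\,D_\sigma\,(x \otimes \{\id\})$ with $\Delta_i^{p,q}(d_\sigma(x)) \otimes \{\sigma\}$.

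Next, I would apply the coderivation identity
\[
\Delta_{(1)} \circ d_\C = (d_\C \otimes \id + \id \otimes d_\C) \circ \Delta_{(1)}
\]
to $x \otimes \{\id\}$. Expanding the left-hand side as $\sum_{\tau} \Delta_{(1)}(d_\tau(x) \otimes \{\tau\})$ and projecting onto planar position $i$ and the $\sigma$-isotype recovers $\Delta_i^{p,q}(d_\sigma(x)) \otimes \{\sigma\}$. On the right-hand side, I would translate each symmetric two-node tree back to the planar picture via the pseudo-planar composition formula $(y \otimes \{\mu\}) \circ_j (z \otimes \{\psi\}) = (y \circ_{\mu^{-1}(j)}^{\pl} z) \otimes \{\mu \circ_{\mu^{-1}(j)} \psi\}$, which prescribes the shift in planar position and the modification of the $\mathbb{S}_n$-label forced by the symmetric equivariance.

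After projecting the right-hand side onto ``planar position $i$, label $\sigma$'', Lemma \ref{lemma: twisted composition of permutation injective} guarantees that the $(d_\C \otimes \id)$ contribution is non-zero if and only if $\sigma$ admits a necessarily unique $(p,q,i)$-bottom admissible decomposition $\sigma = \mu \circ_{\mu^{-1}(i)} \id_q$, producing the term $(D_\mu \otimes \id)\,\Delta_{\mu^{-1}(i)}^{p,q}(x) \otimes \{\sigma\}$; analogously, the $(\id \otimes d_\C)$ contribution is non-zero if and only if $\sigma$ admits a unique $(p,q,i)$-top admissible decomposition $\sigma = \id_p \circ_i \psi$, producing $(\id \otimes D_\psi)\,\Delta_i^{p,q}(x) \otimes \{\sigma\}$. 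For a non-trivial $\sigma$ these two situations are mutually exclusive by the same lemma, and both vanish when $\sigma$ is non-admissible.

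Rearranging the coderivation identity to isolate the left-hand side produces the minus signs in the lemma, and the three cases follow at once. The main obstacle will be the careful bookkeeping of conventions: the translation between symmetric and planar positions through the composition formula, the Koszul signs in the coderivation identity (further interacting with the suspension $s^{-1}$), and the precise equivariance formula; the combinatorial backbone, however, is supplied entirely by Lemma \ref{lemma: twisted composition of permutation injective}.
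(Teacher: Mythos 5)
Your proof is correct and follows essentially the same route as the paper: the paper obtains your coLeibniz identity by restricting $d^2=0$ on $\Omega\C$ to the two-node-tree component (an identity among $d_\Delta$ and $d_{s^{-1}\overline{\C}}$, i.e.\ exactly the maps defining $\Delta_i^{p,q}$ and $D_\sigma$), then projects onto the $(p,q,i)$-shape with label $\sigma$ and reads off the three cases from the uniqueness of the decomposition of $\sigma$, just as you do via Lemma~\ref{lemma: twisted composition of permutation injective}. The only difference is presentational: phrasing the input as $d^2_{\Omega\C}=0$ makes the minus signs automatic, whereas your formulation of the coderivation identity at the undesuspended level of $\C$ defers the desuspension/Koszul sign bookkeeping that you correctly flag as the delicate point.
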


\begin{proof}
Since the derivation of $\Omega \C$ squares to zero, one has the following equation
\[
d_\Delta d_{s^{-1}\overline{\C}} + (d_{s^{-1}\overline{\C}} \otimes \id + \id \otimes d_{s^{-1}\overline{\C}}) d_\Delta = 0
\]
between maps from $s^{-1}\overline{\C}_\pl$ to $(\treemod^{(2)} (s^{-1}\overline{\C}))\simeq (\treemod_\pl^{(2)} (s^{-1}\overline{\C}_\pl)) \otimes \mathbb S$. In arity $n$, we denote
$\pi_{p,q,i,\sigma}$ the projection
$$
(\treemod_\pl^{(2)} (s^{-1}\overline{\C}_\pl)) (n)\otimes \mathbb S_n \twoheadrightarrow
\treemod^{(2)}_{\pl, p,q,i} (s^{-1} \overline{\C}_\pl) \otimes \{\sigma\}~,
$$
where $\treemod^{(2)}_{\pl, p,q,i} (s^{-1} \overline{\C}_\pl)$ denotes the planar trees labelled by $s^{-1} \overline{\C}_\pl$ that have two nodes: the root node with $p$ input and another node with $q$ inputs, where the top node is attached to the $i^{th}$ input of the root node.
One has
$$
\pi_{p,q,i,\sigma} d_\Delta d_{s^{-1}\overline{\C}} = \Delta_i^{p,q} D_\sigma.
$$
Moreover,
$$
\pi_{p,q,i,\sigma} (d_{s^{-1}\overline{\C}} \otimes \id)d_\Delta
=
\begin{cases}
    (D_\mu \otimes \id) \Delta_{\mu^{-1}(i)}^{p,q} \text{ if $\sigma$ is bottom admissible;}
    \\
    0 \text{ otherwise.}
\end{cases}
$$
and
$$
\pi_{p,q,i,\sigma} (\id \otimes d_{s^{-1}\overline{\C}}) d_\Delta
=
\begin{cases}
    (\id \otimes D_\psi) \Delta_{i}^{p,q} \text{ if $\sigma$ is top admissible;}
    \\
    0 \text{ otherwise.}
\end{cases}
$$
Then, the three equations of the proposition are just the three possible cases (in terms of $(p,q,i)$-admissibility of $\sigma$) of the equation
$$
\pi_{p,q,i,\sigma}(d_\Delta d_{s^{-1}\overline{\C}} + (d_{s^{-1}\overline{\C}} \otimes \id + \id \otimes d_{s^{-1}\overline{\C}}) d_\Delta )= 0~.
$$
\end{proof}

\begin{lemma}\label{lemma betransferzero}
Let us denote by $u$ the generator element of $u\operad A ss(0) = \operad E(0)$. 
    Let $p \geq 1$, let $\sigma \in \mathbb S_p$ be a non-trivial permutation and let $1 \leq i \leq p+1$. Then,
    
    $$
    \sum_{\mu~ \comp_{\mu^{-1}(i)} u = \sigma} (D_\mu \otimes \id) \Delta_{\mu^{-1}(i)}^{p, 0}
    = - \Delta_{i}^{p,0} D_\sigma ~, 
    $$
    where the sum ranges over all $\mu$ in $\mathbb S_{p+1}$ such that $\mu~ \comp_{\mu^{-1}(i)} u = \sigma$.
\end{lemma}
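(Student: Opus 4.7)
The plan is to imitate the argument of Lemma \ref{lemma betransferfirst}. I would start from the equation $d_{\Omega\C}^2 = 0$ applied to the planar generators $s^{-1}\overline{\C}_\pl$, and project onto the two-node summand $\treemod^{(2)}_{\pl, p, 0, i}(s^{-1}\overline{\C}_\pl) \otimes \{\sigma\}$; call this projection $\pi$. Expanding $(d_{s^{-1}\overline{\C}} + d_\Delta + d_\theta)^2 = 0$ and pushing through $\pi$, I expect only the cross terms $\pi \cdot d_\Delta \, d_{s^{-1}\overline{\C}}$ and $\pi \cdot (d_{s^{-1}\overline{\C}} \otimes \id + \id \otimes d_{s^{-1}\overline{\C}}) d_\Delta$ to survive.

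Indeed, $d_{s^{-1}\overline{\C}}^2$ lands in the one-node summand and $d_\Delta^2$ lands in trees with at least three nodes (and in fact vanishes by coassociativity of the cooperadic partial decomposition); the terms $d_\theta \, d_\Delta$ and $d_\theta \, d_{s^{-1}\overline{\C}}$ collapse nodes through the curvature $\theta$, which factors through $\operad I$, and thus land outside the two-node summand with two positive-arity legs; the terms $d_\Delta \, d_\theta$ and $d_{s^{-1}\overline{\C}} \, d_\theta$ vanish on the nose because $d_\Delta$ and $d_{s^{-1}\overline{\C}}$ both vanish on the unit $\operad I$; and $d_\theta^2 = 0$ for degree reasons.

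Next I would compute the two surviving pieces, exactly as in Lemma \ref{lemma betransferfirst}. The term $\pi \cdot d_\Delta \, d_{s^{-1}\overline{\C}}$ reproduces $\Delta_i^{p,0} D_\sigma$ directly. The piece $\pi \cdot (\id \otimes d_{s^{-1}\overline{\C}}) d_\Delta$ vanishes because the arity-$0$ second factor of $\Delta_i^{p,0}$ carries the trivial $\mathbb S_0$-action, so this piece can only produce the outer permutation $\{\id\}$, whereas $\pi$ selects $\{\sigma\}$ with $\sigma$ non-trivial by hypothesis. Finally, for $\pi \cdot (d_{s^{-1}\overline{\C}} \otimes \id) d_\Delta$, I would expand $d_\Delta(s^{-1}c \otimes \{\id\})$ as a sum of partial decompositions $\Delta_j^{p,0}(s^{-1}c) \otimes \{\id\}$ over the admissible $j$, then apply $d_{s^{-1}\overline{\C}} = \sum_\mu D_\mu$ to the upper factor. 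Tracking how the outer twist $\{\mu\}$ interacts with the two-node tree shape bearing the arity-$0$ node, the projection onto shape $(p,0,i)$ with permutation $\{\sigma\}$ selects exactly those $\mu$ with $j = \mu^{-1}(i)$ and $\mu \comp_{\mu^{-1}(i)} u = \sigma$.

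Combining the three contributions yields $\Delta_i^{p,0} D_\sigma + \sum_{\mu \comp_{\mu^{-1}(i)} u = \sigma}(D_\mu \otimes \id)\Delta_{\mu^{-1}(i)}^{p,0} = 0$, which rearranges into the stated identity. The main obstacle I anticipate is the last symmetric-action computation: translating the projection-to-$\{\sigma\}$ condition into the exact summation condition $\mu \comp_{\mu^{-1}(i)} u = \sigma$ requires carefully keeping track of how the outer twist on the upper factor combines, via the operadic composition with the arity-$0$ unit, into the effective permutation on the remaining leaves of the two-node tree.
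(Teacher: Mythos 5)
Your proposal is correct and follows exactly the route the paper takes: its proof of this lemma is just the remark that the same arguments as in Lemma \ref{lemma betransferfirst} apply, namely projecting the relation $d^2=0$ on $\Omega\operad C$ onto the two-node summand of shape $(p,0,i)$ twisted by $\{\sigma\}$ and identifying the surviving terms. You also correctly handle the two $q=0$ peculiarities, namely that the top-admissible contribution dies because $\mathbb S_0$ is trivial while $\sigma$ is not, and that the bottom contribution becomes a sum over all $\mu$ with $\mu \comp_{\mu^{-1}(i)} u = \sigma$ since the composition with the arity-zero element is no longer injective (Lemma \ref{lemma: twisted composition of permutation injective} requires $q \geq 1$).
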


\begin{proof}
    It follows from the same arguments as those used in the proof of Lemma \ref{lemma betransferfirst}.
\end{proof}

\begin{lemma}\label{propositionbeecomposition}
Let $k \geq 0$, $p \geq 1$, $q \geq 0$, $1 \leq i \leq p$, $n= p+q-1$ and let $\underline\sigma \in \mathbb S_n^k$ be a sequence of non-trivial permutations. Then

\[
\Delta_{i}^{p,q} D_{\underline\sigma}(-) = \sum_{ \underline \mu \comp_{i, \phi} \underline \psi = \underline \sigma}
 (-1)^k\epsilon(\phi) (D_{\underline \mu} \otimes D_{\underline \psi})\Delta_{\underline{\mu}^{-1}(i)}^{p,q}(-)~,
\]
where the sum ranges over all $\underline \mu \in \mathbb S_p^a$ and all $\underline \psi \in \mathbb S_q^b$, with $a \geq 1$, $b \geq 1$ and $a+b =k$, such that there exists a $(a,b)$-shuffle $\phi$ such that $\underline \mu \comp_{i, \phi} \underline \psi = \underline \sigma$, and where $\underline{\mu}^{-1}(i) = \mu_a^{-1} \cdots \mu_1^{-1}(i)$.
\end{lemma}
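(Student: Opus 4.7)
The plan is to proceed by induction on $k$. The base case $k = 0$ is immediate since both sides reduce to $\Delta_i^{p,q}$, corresponding to the unique empty decomposition with $a = b = 0$.

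For the inductive step, I will write $D_{\underline\sigma} = D_{\sigma_1} D_{\underline\sigma'}$ where $\underline\sigma' = (\sigma_2, \ldots, \sigma_k)$, and apply Lemma \ref{lemma betransferfirst} (or Lemma \ref{lemma betransferzero} in the boundary case $q = 0$) to $\Delta_i^{p,q} D_{\sigma_1}$. The three possible behaviors of $\sigma_1$ correspond bijectively to the three kinds of admissible decompositions $\underline\mu \comp_{i, \phi} \underline\psi = \underline\sigma$: bottom-admissible $\sigma_1$ produces exactly those decompositions with $\phi(1) = 1$; top-admissible $\sigma_1$ produces those with $\phi(a+1) = 1$; and non-admissible $\sigma_1$ forces the first entry of any candidate decomposition to be admissible, so no such decomposition exists, in which case both sides vanish.

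In the bottom case $\sigma_1 = \mu_1 \comp_{\mu_1^{-1}(i)} \id_q$, the lemma produces the factor $-(D_{\mu_1} \otimes \id)$ together with the index shift $i \mapsto \mu_1^{-1}(i)$. Applying the inductive hypothesis to the remainder, every decomposition $\underline\sigma' = \underline\mu' \comp_{\mu_1^{-1}(i), \phi'} \underline\psi'$ will extend uniquely to a decomposition of $\underline\sigma$ by prepending $\mu_1$, producing an $(a'+1, b')$-shuffle $\phi$ with $\phi(1) = 1$. Since this position contributes no inversions, $\epsilon(\phi) = \epsilon(\phi')$, and the accumulated sign $(-1)\cdot(-1)^{k-1} = (-1)^k$ matches the desired coefficient.

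The top case $\sigma_1 = \id_p \comp_i \psi_1$ is where the computation is most delicate, and I expect the sign bookkeeping here to be the main obstacle. The lemma gives a factor $-(\id \otimes D_{\psi_1})$, and after applying induction I will need to commute $\id \otimes D_{\psi_1}$ past $D_{\underline\mu'} \otimes D_{\underline\psi'}$ in order to recognize the result as $D_{\underline\mu} \otimes D_{\underline\psi}$ with $\underline\psi = (\psi_1, \underline\psi')$; by the Koszul sign rule this commutation produces a factor $(-1)^{a'}$ (since both $D_{\psi_1}$ and the $D_{\mu'_j}$'s have odd degree). On the combinatorial side, the extended shuffle $\phi$ satisfies $\phi(a'+1) = 1$, and this minimal value creates exactly $a'$ inversions against the preceding $\mu$-positions (which must take values $\geq 2$), yielding $\epsilon(\phi) = (-1)^{a'}\epsilon(\phi')$. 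These two factors of $(-1)^{a'}$ will cancel, leaving the desired $(-1)^k\epsilon(\phi)$ coefficient and completing the induction.
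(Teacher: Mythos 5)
Your proof is correct and takes essentially the same route as the paper, whose own argument is just the assertion of "a straightforward induction using Lemma \ref{lemma betransferfirst}" (and Lemma \ref{lemma betransferzero} when $q=0$). Your splitting off of $\sigma_1$, the bijection between its top/bottom/non-admissible behaviour and the decompositions with $\phi(1)=1$, $\phi(a+1)=1$, or none, and the matching of the Koszul sign $(-1)^{a'}$ against the $a'$ inversions created by inserting the value $1$ at position $a'+1$ are exactly the details the paper leaves implicit.
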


\begin{proof}
First, in the case where $q \geq 1$, a straightforward induction using Lemma \ref{lemma betransferfirst} shows that:
\begin{enumerate}
\item if $\underline\sigma$ is $(p,q,i)$-admissible, that is, if there exist an unique shuffle permutation $\phi \in \mathbb S_k$ (which is an $(a,b)$-shuffle) and an unique pair of sequences of permutations $\underline \mu \in \mathbb S_p^a, \underline \psi \in \mathbb S_q^b$ such that 
\[
\underline \sigma = \underline \mu \comp_{i, \phi} \underline \psi~,
\]
then
\[
\Delta_{i}^{p,q} D_{\underline\sigma}(-) =
 (-1)^k\epsilon(\phi) (D_{\underline \mu} \otimes D_{\underline \psi})\Delta_{j}^{p,q}(-)~,
\]
where $j= \mu_a^{-1} \cdots \mu_1^{-1}(i)$.

\medskip

\item If $\underline\sigma$ is not $(p,q,i)$-admissible, then 
\[
\Delta_{i}^{p,q} D_{\underline\sigma}(-) = 0~.
\]
\end{enumerate}

In the case where $q = 0$, the result follows from a similar induction using Lemma \ref{lemma betransferzero}.
\end{proof}

\begin{lemma}\label{lemma:becomodulecommutesdiff2}
    The following diagram 

\[
\begin{tikzcd}[column sep=3pc,row sep=3pc]
        s^{-1}\overline{\C}_{\pl}
        \ar[rr, "\Delta_{\operad E, \C}"] \ar[d, "d_\Delta"']
        && \operad E \otimes s^{-1} \overline{\C}_\pl \otimes \mathbb S
        \ar[d, "\id \otimes d_\Delta"]
        \\
        \treemod^{(2)}_{\pl}(s^{-1}\overline{\C}_{\pl})
        \ar[r, "\treemod^{(2)}_{\pl}(\Delta_{\operad E, \C})"]
        & \treemod^{(2)}_{\pl}(\operad E  \otimes s^{-1}\overline{\C})
        \ar[r, "\gamma"]
        & \operad E \otimes \treemod^{(2)}_{\pl}(s^{-1}\overline{\C}_{\pl}) \otimes \mathbb S.
\end{tikzcd}
\]

is commutative, where $\gamma$ is the composition within the operad $\operad E \otimes \Omega \operad C$.
\end{lemma}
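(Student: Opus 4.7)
The plan is to compute both composites applied to a planar generator $s^{-1}x \in s^{-1}\overline{\C}_\pl(n)$, project the result onto each $2$-node planar tree shape $(p,q,i)$ in $\treemod^{(2)}_{\pl}(s^{-1}\overline{\C}_\pl)$, and identify the two projections by combining Lemma \ref{propositionbeecomposition} with Lemma \ref{lemma:inversecompinbe}. Since the direct sum decomposition over all $(p,q,i)$ and all permutations $\tau \in \mathbb S_n$ exhausts $\operad E \otimes \treemod^{(2)}_\pl(s^{-1}\overline{\C}_\pl) \otimes \mathbb S$, such a matching of projections suffices to establish the commutativity.

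For the right-then-down path, using the formula for $\Delta_{\operad E, \C}$ together with the fact that $d_\Delta$ is $\mathbb S$-equivariant and equals the planar partial decomposition on planar generators, the $(p,q,i)$-projection reads
\[
\sum_{k \geq 0}\sum_{\underline\sigma \in \mathbb S_n^k} \rho(\underline\sigma) \otimes (\Delta_i^{p,q} D_{\underline\sigma})(s^{-1}x).
\]
Lemma \ref{propositionbeecomposition} expresses each $(\Delta_i^{p,q} D_{\underline\sigma})(s^{-1}x)$ as a sum over all factorisations $\underline\sigma = \underline\mu \circ_{i, \phi} \underline\psi$ with $\underline\mu \in \mathbb S_p^a$, $\underline\psi \in \mathbb S_q^b$ made of non-trivial permutations and with $\phi$ an $(a,b)$-shuffle. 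Re-indexing the double sum by the triples $(\underline\mu,\underline\psi,\phi)$ produces
\[
\sum_{(\underline\mu,\underline\psi,\phi)} (-1)^{a+b}\epsilon(\phi)\, \rho(\underline\mu \circ_{i,\phi} \underline\psi) \otimes (D_{\underline\mu} \otimes D_{\underline\psi}) \Delta_{\underline\mu^{-1}(i)}^{p,q}(s^{-1}x).
\]

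For the down-then-right path, since $s^{-1}x$ is a planar generator, $d_\Delta(s^{-1}x) = \sum_{p,q,j} \Delta_j^{p,q}(s^{-1}x)$. Applying $\treemod_\pl^{(2)}(\Delta_{\operad E, \C})$ decorates each of the two nodes by its $\operad E$-expansion, after which $\gamma$ performs the Hadamard composition simultaneously at the $j$-th leaf on both tensor factors. Using the isomorphism $\treemod^{(2)}(s^{-1}\overline{\C}) \cong \treemod_\pl^{(2)}(s^{-1}\overline{\C}_\pl) \otimes \mathbb S$ and the $\mathbb S$-equivariance of the partial compositions in $\Omega\C$, one pushes the permutations $\mu_1 \cdots \mu_a$ and $\psi_1 \cdots \psi_b$ attached to the labels $D_{\underline\mu}(x^{(1)})$ and $D_{\underline\psi}(x^{(2)})$ out to the leaves; this identifies the planar shape $(p,q,i)$ contribution as coming precisely from the choice $j = \underline\mu^{-1}(i)$. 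The $\operad E$-factor then reads $\rho(\underline\mu) \circ^{\operad E}_{\underline\mu^{-1}(i)} \rho(\underline\psi)$, which by Lemma \ref{lemma:inversecompinbe} equals $(-1)^{ab}\sum_\phi \epsilon(\phi)\,\rho(\underline\mu \circ_{i,\phi} \underline\psi)$, reproducing exactly the signed shuffle sum of the LHS.

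The main technical difficulty lies in the sign bookkeeping and in tracking how the composition position is transformed under the pseudo-planar normalisation: the $(-1)^{ab}$ produced by Lemma \ref{lemma:inversecompinbe} must combine with the Koszul signs arising from the reshuffling in the Hadamard product and from the equivariance relations in $\Omega\C$ to recover the $(-1)^{a+b} = (-1)^k$ factor of Lemma \ref{propositionbeecomposition}, and the shuffle signatures $\epsilon(\phi)$ must align on the nose. Once this bookkeeping is verified, the two composites agree on each $(p,q,i)$-piece, which concludes the argument.
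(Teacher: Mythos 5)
Your proposal is correct and follows essentially the same route as the paper: project onto the $(p,q,i)$-component of two-node planar trees and match the two composites via Lemma \ref{propositionbeecomposition} and Lemma \ref{lemma:inversecompinbe}. The paper's proof runs this as a one-directional chain of equalities in which the sign bookkeeping you defer is made explicit — the Koszul factor $(-1)^{|\mu||\psi|}$ from reshuffling in the Hadamard product, the shuffle signs $\epsilon(\phi)$, and the final $(-1)^{|\sigma|}$ from commuting $\Delta^{p,q}_i$ past $\rho(\underline\sigma)$ — and it cancels exactly as you predict.
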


\begin{proof}
First let $p\geq 1,q \geq 0$, $1 \leq i \leq p$ and $n = p+q-1$. As in the proof of Lemma \ref{lemma betransferfirst}, we denote $\treemod^{(2)}_{\pl, p,q,i} (s^{-1} \overline{\C}_\pl)$ the planar trees labelled by $s^{-1} \overline{\C}_\pl$ that have two nodes: the root node with $p$ inputs and another node with $q$ inputs, which is attached to the $i^{th}$ input of the root node.
Let us prove that, in arity $n$, the composition of the two maps 
$$
s^{-1} \overline{\C}_\pl (n) \rightrightarrows
\operad E(n) \otimes \treemod^{(2)}_{\pl}(s^{-1}\C_{\pl})(n) \otimes \mathbb S_n
$$
with the projection
$$
\id \otimes \pi_{p,q,i} \otimes \id:
\operad E(n) \otimes \treemod^{(2)}_{\pl}(s^{-1}\C_{\pl})(n) \otimes \mathbb S_n
\twoheadrightarrow
\operad E(n) \otimes \treemod^{(2)}_{\pl, p,q,i} (s^{-1} \overline{\C}_\pl) \otimes \mathbb S_n
$$
are equal. This amounts to show that 
$$
\sum_{\underline\mu \in \mathbb S_p^a, \underline\psi \in \mathbb S_q^b} 
(-1)^{|\mu||\psi|}
(\rho(\underline(\mu)) \comp_{\underline\mu^{-1}(i)} \rho(\underline(\psi)))
\otimes
((D_{\underline{\mu}} \otimes D_{\underline{\psi}}) \Delta^{p,q}_{\underline{\mu}^{-1}(i)})
= 
(\id \otimes \Delta^{p,q}_{i}) \left(\sum_{\underline\sigma \in \mathbb S_n^k} \rho(\underline\sigma)\otimes 
 D_{\underline\sigma}\right)~,
$$
which follows from the sequence of equalities
\begin{align*}
&\sum_{\underline\mu \in \mathbb S_p^a, \underline\psi \in \mathbb S_q^b} 
(-1)^{|\mu||\psi|}
(\rho(\underline(\mu)) \comp_{\underline\mu^{-1}(i)} \rho(\underline(\psi)))
\otimes
((D_{\underline{\mu}} \otimes D_{\underline{\psi}}) \Delta^{p,q}_{\underline{\mu}^{-1}(i)})
\\
=& \sum_{\underline\mu \in \mathbb S_p^a, \underline\psi \in \mathbb S_q^b, \phi \in \mathrm{Sh}(a,b)} (-1)^{\epsilon(\phi)} \rho(\underline\mu \comp_{i, \phi} \underline\psi) \otimes
((D_{\underline{\mu}} \otimes D_{\underline{\psi}}) \Delta^{p,q}_{\underline{\mu}^{-1}(i)}) \quad \text{ by Lemma \ref{lemma:inversecompinbe}}
\\
=&
\sum_{\underline\sigma \in \mathbb S_n^k}
 \sum_{\underline\mu \comp_{i, \phi} \underline\psi= \underline\sigma} (-1)^{\epsilon(\phi)} \rho(\underline\sigma) \otimes
((D_{\underline{\mu}} \otimes D_{\underline{\psi}}) \Delta^{p,q}_{\underline{\mu}^{-1}(i)})
\\
=&
\sum_{\underline\sigma \in \mathbb S_n^k} \rho(\underline\sigma)\otimes \left(
 \sum_{\underline\mu \comp_{i, \phi} \underline\psi= \underline\sigma} (-1)^{\epsilon(\phi)}  
((D_{\underline{\mu}} \otimes D_{\underline{\psi}}) \Delta^{p,q}_{\underline{\mu}^{-1}(i)})\right)
\\
=&
\sum_{\underline\sigma \in \mathbb S_n^k} \rho(\underline\sigma)\otimes \left(
  (-1)^{|\sigma|} 
( \Delta^{p,q}_{i} D_{\underline\sigma})\right)
\quad\text{ by Lemma \ref{propositionbeecomposition}}
\\
=&
(\id \otimes \Delta^{p,q}_{i}) (\sum_{\underline\sigma} \rho(\underline\sigma)\otimes 
 D_{\underline\sigma})~.
\end{align*}
This proves the lemma.
\end{proof}

\begin{lemma}\label{lemma becomodulecoass}
    The following diagram commutes
    \[
\begin{tikzcd}[column sep=3pc,row sep=3pc]
        s^{-1}\overline{\operad C}
        \ar[r, "\Delta_{\operad E, \C}"] \ar[d, "\Delta_{\operad E, \C}"']
        & \mathcal{E} \otimes s^{-1}\overline{\operad C}
        \ar[d, "\Delta_{\mathcal{E}} \otimes \id"]
        \\
        \mathcal{E} \otimes s^{-1}\overline{\operad C}
        \ar[r, "\id \otimes \Delta_{\operad E, \C}"]
        & \mathcal{E} \otimes \mathcal{E} \otimes s^{-1}\overline{\operad C}~,
\end{tikzcd}
\]
\end{lemma}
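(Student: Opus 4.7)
The plan is to reduce the coassociativity square to a check on planar generators and then to a purely combinatorial matching of indexing sets. Both composites $(\Delta_{\mathcal E}\otimes\id)\circ\Delta_{\operad E,\C}$ and $(\id\otimes\Delta_{\operad E,\C})\circ\Delta_{\operad E,\C}$ are morphisms of dg operads $\Omega\C\to\mathcal E\otimes\mathcal E\otimes\Omega\C$ by Theorem \ref{thm:becomodule} and the coassociativity of $\Delta_{\mathcal E}$, hence are determined by their restriction to the generating graded $\mathbb N$-module $s^{-1}\overline{\C}_{\pl}$. It therefore suffices to check the identity on elements of the form $s^{-1}x\otimes\id$ with $s^{-1}x\in s^{-1}\overline{\C}_{\pl}(n)$.

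Fix such an element. Expanding the right-then-down composite using the defining formula for $\Delta_{\operad E,\C}$ and Lemma \ref{lemma:monoidalpropertyoftherhomap} produces
\[
\sum_{k\geq 0}\sum_{\underline\sigma\in\mathbb S_n^k}\sum_{i=1}^{k+1}\rho((\sigma_i,\ldots,\sigma_k))\otimes\rho((\sigma_1,\ldots,\sigma_{i-1}))^{\sigma_i\cdots\sigma_k}\otimes D_{\underline\sigma}(s^{-1}x\otimes\id).
\]
For the down-then-right composite, the inner $\Delta_{\operad E,\C}$ must be evaluated on $D_{\underline\sigma}(s^{-1}x\otimes\id)=d_{\underline\sigma}(s^{-1}x)\otimes\{\sigma_1\cdots\sigma_k\}$, which is not in general a planar generator. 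I will invoke the $\mathbb S$-equivariance of $\Delta_{\operad E,\C}$ (immediate from Theorem \ref{thm:becomodule}) together with the fact that the $\mathbb S$-action on the Hadamard product $\mathcal E\otimes s^{-1}\overline{\C}$ is diagonal to obtain
\[
\sum_{k,k'\geq 0}\sum_{\underline\sigma\in\mathbb S_n^k,\,\underline\mu\in\mathbb S_n^{k'}}\rho(\underline\sigma)\otimes\rho(\underline\mu)^{\sigma_1\cdots\sigma_k}\otimes d_{\underline\mu}d_{\underline\sigma}(s^{-1}x)\otimes\{\mu_1\cdots\mu_{k'}\sigma_1\cdots\sigma_k\}.
\]

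To conclude, I will exhibit the bijection $(\underline\sigma,\underline\mu)\longleftrightarrow(\underline\xi,i)$ defined by $\underline\xi:=\underline\mu\sqcup\underline\sigma$ and $i:=|\underline\mu|+1$: cutting $\underline\xi=(\xi_1,\ldots,\xi_m)$ at position $i-1$ recovers $\underline\mu=(\xi_1,\ldots,\xi_{i-1})$ and $\underline\sigma=(\xi_i,\ldots,\xi_m)$. Under this bijection, the first two tensor factors of the two sums agree on the nose, while the remaining factor is identified by the relations $d_{\underline\mu}d_{\underline\sigma}=d_{\underline\xi}$ and $\mu_1\cdots\mu_{k'}\sigma_1\cdots\sigma_k=\xi_1\cdots\xi_m$, which together express that $D_{\underline\xi}(s^{-1}x\otimes\id)$ coincides with the tensor factor on the left-hand side. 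I expect no serious obstacle beyond the bookkeeping of the double index: no signs appear, as the expansion in Lemma \ref{lemma:monoidalpropertyoftherhomap} is sign-free and the diagonal $\mathbb S$-action on Hadamard tensor products eliminates any Koszul rearrangement. The only point requiring a moment of care is the $\mathbb S$-equivariant reformulation of $\Delta_{\operad E,\C}$ needed to handle non-identity permutation labels in the iterated comodule map.
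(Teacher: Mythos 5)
Your proof is correct and follows essentially the same route as the paper, which disposes of the lemma by direct inspection using Lemma \ref{lemma:monoidalpropertyoftherhomap}; your computation simply spells out that inspection, reducing to planar generators by $\mathbb{S}$-equivariance and matching the two resulting sums via the concatenation/cut bijection $(\underline\mu,\underline\sigma)\leftrightarrow(\underline\xi,i)$. One small caveat: for the reduction to generators it is cleaner to cite the fact, stated in the construction of the comodule map, that $\Delta_{\operad E,\C}$ is the restriction of a morphism of graded operads (hence of graded $\mathbb{S}$-modules), rather than Theorem \ref{thm:becomodule}, since the comodule part of that theorem is itself proved using the present lemma.
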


\begin{proof}
This is direct consequence of Lemma \ref{lemma:monoidalpropertyoftherhomap}
\end{proof}

\begin{corollary}\label{corollary: adjoint map is pseudo-planar}
    The canonical morphism of conilpotent curved cooperads
    \[
    \C \to \mathrm{B}(\operad E \otimes \Omega \C)
    \]
    that is adjoint to the morphism of dg operads $\Delta_{\operad E, \C}: \Omega \C \to \operad E \otimes \Omega \C$ is pseudo-planar in the sense that it restricts to a morphism of graded planar cooperads
    \[
\C_\pl \to \mathrm{B}(\operad E \otimes \Omega \C)_\pl~,
    \]
    which is compatible with the pseudo-planar isomorphisms.
\end{corollary}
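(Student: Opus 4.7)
The strategy is to use the operadic bar-cobar adjunction of Subsection~\ref{subsection: operadic bar-cobar} to translate the claim into a statement about the projection onto cogenerators, which can then be verified by inspection of the explicit formula of Theorem~\ref{thm:becomodule}. Under this adjunction, $\Delta_{\operad E, \C}$ corresponds to a twisting morphism $\alpha: \C \to \operad E \otimes \Omega\C$, and the adjoint map $\tilde\alpha: \C \to \mathrm{B}(\operad E \otimes \Omega\C)$ of conilpotent curved cooperads is uniquely determined, by the cofree cooperad property of $\mathrm{B}$, by its projection
\[
\pi \circ \tilde\alpha: \C \longrightarrow V \coloneqq s(\operad E \otimes \Omega\C) \oplus s^2 \operad I
\]
onto cogenerators. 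The analogous universal property holds in the planar setting: morphisms of graded planar conilpotent cooperads $\C_\pl \to \mathrm{B}(\operad E \otimes \Omega\C)_\pl = \treemod_\pl(V_\pl)$, where $V_\pl \coloneqq s\operad E_\pl \otimes \Omega\C \oplus s^2 \operad I$, correspond to morphisms of graded $\mathbb N$-modules $\C_\pl \to V_\pl$. It therefore suffices to show that $\pi \circ \tilde\alpha$, under the pseudo-planar identifications $\C \cong \C_\pl \otimes \mathbb S$ and $V \cong V_\pl \otimes \mathbb S$, is the image under the functor $-\otimes \mathbb S$ of some morphism of graded $\mathbb N$-modules $\C_\pl \to V_\pl$.

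\textbf{Key computation.} The $s^2\operad I$-component of $\pi \circ \tilde\alpha$ is concentrated in arity $1$ and is thus trivially planar, so the content lies in the $s(\operad E \otimes \Omega\C)$-component, which on $\overline{\C}$ is, up to a suspension, the composite $\overline{\C} \xrightarrow{s^{-1}} s^{-1}\overline{\C} \xrightarrow{\Delta_{\operad E, \C}} \operad E \otimes \Omega\C$. Restricting to a planar generator $s^{-1} x \otimes \{\id\}$ with $x \in \overline{\C}_\pl(n)$, the explicit formula of Theorem~\ref{thm:becomodule} gives
\[
\Delta_{\operad E, \C}(s^{-1}x \otimes \{\id\}) = \sum_{k \geq 0}\sum_{\underline\sigma \in \mathbb S_n^k} \rho(\underline\sigma) \otimes \bigl( d_{\underline\sigma}(s^{-1}x) \otimes \{\sigma_1 \cdots \sigma_k\}\bigr).
\]
The crucial observation is that every sequence $\rho(\underline\sigma) = (\id_n, \sigma_k, \sigma_{k-1}\sigma_k, \ldots, \sigma_1 \cdots \sigma_k)$ begins with $\id_n$ and therefore lies in the sub-$\mathbb N$-module $\operad E_\pl \subset \operad E$. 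Via the pseudo-planar isomorphism $\operad E \otimes \Omega\C \cong (\operad E_\pl \otimes \Omega\C) \otimes \mathbb S$ of Proposition~\ref{prop: tensor with a quasi-free}, each summand is then identified with an element of $(\operad E_\pl \otimes \Omega\C) \otimes \{\id\}$, so that the restriction of $\pi \circ \tilde\alpha$ to $\C_\pl \otimes \{\id\}$ indeed lands in $V_\pl \otimes \{\id\}$.

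\textbf{Conclusion and main difficulty.} By the cofree planar cooperad property, this will produce the sought morphism of graded planar conilpotent cooperads $f_\pl: \C_\pl \to \mathrm{B}(\operad E \otimes \Omega\C)_\pl$. The compatibility $f_\pl \otimes \mathbb S = \tilde\alpha$ under the pseudo-planar identifications then follows from the uniqueness clause of the symmetric cofree cooperad property, once one observes that the natural isomorphism $\treemod(V_\pl \otimes \mathbb S) \cong \treemod_\pl(V_\pl) \otimes \mathbb S$ intertwines the respective projections to cogenerators. The main technical point will be the $\mathbb S$-equivariance bookkeeping in the key computation: one must carefully track the $\mathbb S_n$-action when identifying $\operad E \otimes \Omega\C$ with $(\operad E_\pl \otimes \Omega\C) \otimes \mathbb S$, but this is a direct unfolding once the definition of $D_{\underline\sigma}$ and the formula for the pseudo-planar isomorphism of Proposition~\ref{prop: tensor with a quasi-free} are spelled out.
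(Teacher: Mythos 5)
Your proposal is correct and follows essentially the same route as the paper: the whole point is that the explicit formula for $\Delta_{\operad E, \C}$ sends a planar generator $s^{-1}x \otimes \{\id\}$ to a sum of terms whose Barratt--Eccles factor $\rho(\underline\sigma)$ starts with $\id_n$, hence lies in $\operad E_\pl$, so the restriction of $\Delta_{\operad E,\C}$ to $s^{-1}\overline{\C}_\pl$ factors through $(\operad E \otimes \Omega\C)_\pl = \operad E_\pl \otimes \Omega\C$, and the conclusion follows by transposing along the bar--cobar adjunction using the (planar and symmetric) cofree conilpotent cooperad universal properties. The paper states this in one sentence; your write-up simply makes the adjunction bookkeeping explicit.
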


\begin{proof}
    This is a direct consequence of the fact that the restriction of the map
    $\Delta_{\operad E, \C}: \Omega \C \to \operad E \otimes \Omega \C$
    to $s^{-1}\C_\pl$ factors through $(\operad E \otimes \Omega \C)_\pl = \operad E_\pl \otimes \Omega \C$.
\end{proof}


\subsection{The comodule structure on a bar-Barratt-Eccles construction}\label{subsection: the comodule structure on a bar-Barratt-Eccles construction}
Let $\operad P$ be a dg operad. The goal of this subsection is to compute the map
$$
\Delta_{\operad E, \mathrm{B}(\operad E \otimes \operad P)}:
s^{-1} \overline{\mathrm{B}}(\operad E \otimes \operad P)
\longrightarrow \operad E \otimes s^{-1} \overline{\mathrm{B}}(\operad E \otimes \operad P)~.
$$

The structure of an Hadamard comonoid on the Barratt-Eccles operad $\operad E$
yields a map of graded $\mathbb S$-modules
\[
s(\operad E \otimes \operad P)
\xrightarrow{s(\Delta_{\operad E } \otimes \id)}
s(\operad E \otimes \operad E \otimes \operad P)
\cong \operad E \otimes s(\operad E \otimes \operad P)
\]
Together with the map
\[
s^2 \operad I \cong \operad I \otimes s^2 \operad I \longrightarrow \operad E \otimes s^2 \operad I
\]
it yields a map
\[
s(\operad E \otimes \operad P \oplus s\operad I) \longrightarrow \operad E \otimes  s(\operad E \otimes \operad P \oplus s\operad I).
\]
Then, we get a composite map:
\[
\begin{tikzcd}
s^{-1} \overline{\mathrm{B}}(\operad E \otimes \operad P)
\ar[d, equal]
\\
s^{-1} \overline{\treemod} (s(\operad E \otimes \operad P \oplus s\operad I))
\ar[d]
\\
s^{-1} \overline{\treemod} (\operad E \otimes s(\operad E \otimes \operad P \oplus s\operad I))
\ar[d]
\\
s^{-1} \overline{\treemod} (\operad E) \otimes \overline{\treemod} (s(\operad E \otimes \operad P \oplus s\operad I))
\ar[d]
\\
s^{-1} \operad E \otimes \overline{\treemod} (s(\operad E \otimes \operad P \oplus s\operad I))
\ar[d, equal]
\\
\operad E \otimes s^{-1}\overline{\mathrm{B}}(\operad E \otimes \operad P).
\end{tikzcd}
\]
One can notice that the restriction of this map to $s^{-1} \overline{\treemod}_\pl (s(\operad E_\pl \otimes \operad P \oplus s\operad I))$ factors through $s^{-1} \operad E_\pl \otimes \overline{\treemod} (s(\operad E \otimes \operad P \oplus s\operad I))$ since the restriction of $\Delta_{\operad E}: \operad E \longrightarrow \operad E \otimes \operad E$ to $\operad E_\pl$ factors through $\operad E_\pl \otimes \operad E$ and since $\operad E_\pl$ is stable through the operadic compososition
of $\operad E$.

\begin{proposition}\label{proposition: deltae on bar}
    The map $\Delta_{\operad E, \mathrm{B}(\operad E \otimes \operad P)}$ is equal to the above composition.
\end{proposition}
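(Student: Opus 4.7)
The plan is to verify the equality by a direct computation on planar tree generators, reducing to a vertex-wise analysis.

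First, I would reduce to the planar case. Both maps are morphisms of graded $\mathbb{N}$-modules out of the planar generators $s^{-1}\overline{\mathrm{B}}(\operad E \otimes \operad P)_\pl$: for the left-hand side this is Corollary \ref{corollary: adjoint map is pseudo-planar}, and for the right-hand side it follows from the construction, since $\Delta_{\operad E}$ preserves $\operad E_\pl$ (sending a planar sequence $(\id, \beta_1, \ldots, \beta_l)$ to a sum of tensor products whose $\operad E$-factors are planar) and operadic composition of $\rho$-elements stays in $\operad E_\pl$ by Lemma \ref{lemma:inversecompinbe}. Once equality is checked on planar generators it extends by $-\otimes\mathbb{S}$-equivariance.

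Second, I would compute both maps on a single-vertex planar generator $s^{-1}(s((\id, \beta_1, \ldots, \beta_l) \otimes p))$. The bar coderivation $d_{\gamma} + d_{\operad E \otimes \operad P} + d_u$ has only one non-planar contribution, coming from the initial face of $d_{\operad E}$ which drops $\sigma_0 = \id$; iterating this face $j$ times produces the planar form $(\id, \beta_{j+1}\beta_j^{-1}, \ldots, \beta_l \beta_j^{-1}) \otimes p^{\beta_j^{-1}}$ with $\mathbb{S}$-component $\beta_j$. This computes $D_{\underline{\sigma}}$ on the generator, non-zero only for the telescoping sequence $\underline{\sigma} = (\beta_j\beta_{j-1}^{-1}, \ldots, \beta_2\beta_1^{-1}, \beta_1)$, whose product collapses to $\beta_j$. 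On the other side, $\Delta_{\operad E}(\id, \beta_1, \ldots, \beta_l) = \sum_{j} (\id, \beta_1, \ldots, \beta_j) \otimes (\beta_j, \ldots, \beta_l)$, and $(\id, \beta_1, \ldots, \beta_j) = \rho((\beta_j\beta_{j-1}^{-1}, \ldots, \beta_1))$ by a direct unwinding of the definition of $\rho$. The term-by-term match then follows.

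Third, for multi-vertex trees, the same analysis applies vertex-by-vertex. The operator $D_{\underline{\sigma}}$ decomposes as a sum over distributions of $\underline{\sigma}$ among the vertices as shuffles of per-vertex telescoping sequences, which Lemma \ref{lemma:inversecompinbe} identifies with the operadic composition in $\operad E$ of the vertex-wise $\rho$-pieces produced by $\Delta_{\operad E}$. The hard part will be the bookkeeping: tracking signs and right $\mathbb{S}_n$-actions through the repeated planar normalizations of truncated sequences. The telescoping identity ensures the $\mathbb{S}$-components line up, the action on the $\operad P$-factor is controlled by the isomorphism $\operad E \otimes \operad P \cong (\operad E_\pl \otimes \operad P) \otimes \mathbb{S}$ used in Subsection \ref{subsection: the comodule structure on a bar-Barratt-Eccles construction}, and the shuffle signs of Lemma \ref{lemma:inversecompinbe} match those arising from the non-commuting iterated face operations on distinct vertices.
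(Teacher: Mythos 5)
Your proposal is correct in spirit but follows a genuinely different route from the paper for the essential step. Your single-vertex computation is exactly the paper's base case: it is the content of Lemma \ref{lemma:initrecurrenceecomoduleb} together with the lemma preceding it, which identifies $D_{(\mu_i,\ldots,\mu_n)}$ applied to $\rho((\sigma_1,\ldots,\sigma_n))\otimes p$ with the truncation-and-twist you describe (your telescoping sequences are the suffixes $(\sigma_i,\ldots,\sigma_n)$ in the $\rho$-coordinates), and the matching against $\Delta_{\operad E}$ via Lemma \ref{lemma:monoidalpropertyoftherhomap}. Where you diverge is the multi-vertex case: you propose to verify the identity tree-by-tree, decomposing $D_{\underline\sigma}$ into shuffles of per-vertex telescoping sequences and invoking Lemma \ref{lemma:inversecompinbe} to reassemble the $\rho$-pieces. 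The paper instead argues structurally: both maps fit into the square of Lemma \ref{lemma:diagramrecurrenceecomoduleb} involving $\id\otimes d_\Delta$ (for $\Delta_{\operad E,\mathrm{B}(\operad E\otimes\operad P)}$ this is Lemma \ref{lemma:becomodulecommutesdiff2}, already proved in the general quasi-planar setting; for the composite built from the Hadamard comonoid structure it is an easy check), they agree on $s^{-1}s(\operad E\otimes\operad P\oplus s^2\operad I)$ by the base case, and injectivity of $\id\otimes d_\Delta$ plus induction on the number of nodes forces equality. What the paper's route buys is precisely the avoidance of the bookkeeping you flag as ``the hard part'': the shuffle signs, the re-planarization of subtrees after a non-planar face at an interior node, and the induced action on the $\operad P$-labels. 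Carrying out your direct verification would essentially amount to re-proving, for this particular cooperad, the admissibility analysis of Lemmas \ref{lemma betransferfirst}--\ref{propositionbeecomposition}, so while your plan is not wrong, the multi-vertex step remains asserted rather than demonstrated, and it is the only place where real work is left. One further small point you implicitly use and should make explicit: for $\sigma\neq\id$ only the initial face of $d_{\operad E}$ contributes to $D_\sigma$ on a tree (the terms $d_\gamma$, $d_u$, $d_{\operad P}$ and the other faces are planar), which is what reduces $D_{\underline\sigma}$ to iterated initial faces distributed over the nodes.
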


\begin{proof}
    Let us denote $f_1 = \Delta_{\operad E, \mathrm{B}(\operad E \otimes \operad P)}$ and let $f_2$ be the above composition. One can check that $f_2$ makes the diagram of Lemma \ref{lemma:diagramrecurrenceecomoduleb} commute as well as $f_1$ (by Lemma \ref{lemma:becomodulecommutesdiff2}). Moreover the restriction of $f_1$ and $f_2$ are equal on $s^{-1}s(\operad E \otimes \operad P \oplus s\operad I)$ by Lemma \ref{lemma:initrecurrenceecomoduleb}. We conclude by Lemma \ref{lemma:diagramrecurrenceecomoduleb}.
\end{proof}

\begin{lemma}
Let $p \in \operad P(m)_k$, let $(\sigma_1, \ldots, \sigma_n) \in \mathbb S_m^n$ and let $(\mu_i, \ldots, \mu_n) \in \mathbb S_m^{n-i+1}$ be non trivial permutations. Then
the element
\[
D_{(\mu_i, \ldots, \mu_n)}  \left( \rho((\sigma_1, \ldots, \sigma_n))  \otimes p \right) \in (\operad E \otimes \operad P)(m)_{k-n-1+i}~,
\]
is equal to
    \[
    \begin{cases}
         \left( \rho((\sigma_1, \ldots, \sigma_{i-1})) \otimes p^{(\sigma_i \cdots \sigma_n)^{-1}}
        \right)^{\sigma_i \cdots \sigma_n} \text{ if } (\mu_i, \ldots, \mu_n) = (\sigma_i, \ldots, \sigma_n)~,
        \\
        0 \text{ otherwise}.
    \end{cases}
    \]
\end{lemma}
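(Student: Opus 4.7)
The plan is to proceed by descending induction on $i$, using the factorisation $D_{(\mu_i, \ldots, \mu_n)} = D_{\mu_i} \circ D_{(\mu_{i+1}, \ldots, \mu_n)}$ and the $\mathbb S_m$-equivariance of each $D_\sigma$. The base case $i = n+1$ is trivial (the empty product yields $\rho((\sigma_1, \ldots, \sigma_n)) \otimes p$). The single-step case $i = n$ is the key computation, and the rest is formal iteration.

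For the single-step case, under the quasi-free identification
\[
\varphi \colon \operad E(m) \otimes \operad P(m) \;\cong\; (\operad E_\pl(m) \otimes \operad P(m)) \otimes \mathbb S_m~,
\]
the element $\rho((\sigma_1, \ldots, \sigma_n)) \otimes p$ corresponds to $(\rho((\sigma_1, \ldots, \sigma_n)) \otimes p) \otimes \{\id\}$, since $\rho((\sigma_1,\ldots,\sigma_n))$ starts with $\id$ and hence lies in $\operad E_\pl$. I would expand
\[
d\bigl(\rho((\sigma_1, \ldots, \sigma_n)) \otimes p\bigr) = d_{\operad E}\bigl(\rho((\sigma_1, \ldots, \sigma_n))\bigr) \otimes p + (-1)^n \rho((\sigma_1, \ldots, \sigma_n)) \otimes d_{\operad P}(p)
\]
and apply Lemma \ref{lemma becooperadd}. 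The crucial observation is that among the summands of $d_{\operad E}(\rho)$, only the leading face $\rho((\sigma_1, \ldots, \sigma_{n-1}))^{\sigma_n}$ escapes $\operad E_\pl$; the $\delta_i$ middle terms, the trailing face $(-1)^n \rho((\sigma_2, \ldots, \sigma_n))$, and the $d_{\operad P}$-contribution all start with $\id$. Rewriting the leading term via the diagonal action as $(\rho((\sigma_1, \ldots, \sigma_{n-1})) \otimes p^{\sigma_n^{-1}})^{\sigma_n}$ identifies its $\{\sigma_n\}$-component as $\rho((\sigma_1, \ldots, \sigma_{n-1})) \otimes p^{\sigma_n^{-1}}$. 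Hence for non-trivial $\mu_n$, $D_{\mu_n}(\rho((\sigma_1, \ldots, \sigma_n)) \otimes p)$ equals $(\rho((\sigma_1, \ldots, \sigma_{n-1})) \otimes p^{\sigma_n^{-1}})^{\sigma_n}$ when $\mu_n = \sigma_n$, and vanishes otherwise.

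For the inductive step, assume the formula holds for $(\mu_{i+1}, \ldots, \mu_n)$. If the intermediate value is zero, $D_{\mu_i}$ of it vanishes and the claim is immediate. Otherwise, by the inductive hypothesis with $\tau \coloneqq \sigma_{i+1} \cdots \sigma_n$, we have
\[
D_{(\mu_{i+1}, \ldots, \mu_n)}\bigl(\rho((\sigma_1, \ldots, \sigma_n)) \otimes p\bigr) = \bigl(\rho((\sigma_1, \ldots, \sigma_i)) \otimes p^{\tau^{-1}}\bigr)^\tau~.
\]
Applying $D_{\mu_i}$, the $\mathbb S_m$-equivariance moves $\tau$ outside, and the single-step computation applied to $\rho((\sigma_1, \ldots, \sigma_i)) \otimes p^{\tau^{-1}}$ gives zero unless $\mu_i = \sigma_i$, in which case the result is
\[
\Bigl(\bigl(\rho((\sigma_1, \ldots, \sigma_{i-1})) \otimes (p^{\tau^{-1}})^{\sigma_i^{-1}}\bigr)^{\sigma_i}\Bigr)^\tau = \bigl(\rho((\sigma_1, \ldots, \sigma_{i-1})) \otimes p^{(\sigma_i \sigma_{i+1} \cdots \sigma_n)^{-1}}\bigr)^{\sigma_i \sigma_{i+1} \cdots \sigma_n}~,
\]
which is exactly the claimed value. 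The main subtlety is only the careful bookkeeping of the diagonal $\mathbb S_m$-action through $\varphi$; the substantive mathematical content is concentrated in the fact that precisely the face $\hat\partial_0$ of $\rho$ takes us out of $\operad E_\pl$, producing the unique $\{\sigma_n\}$-contribution in the differential.
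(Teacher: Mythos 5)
Your proof is correct and is essentially the paper's own argument: the paper disposes of this lemma with a one-line ``direct inspection'', and your computation --- observing that only the face of $d_{\operad E}$ which drops the leading identity takes $\rho((\sigma_1,\ldots,\sigma_n))\otimes p$ out of $\operad E_\pl\otimes\operad P$, contributing exactly $\left(\rho((\sigma_1,\ldots,\sigma_{n-1}))\otimes p^{\sigma_n^{-1}}\right)^{\sigma_n}$ to the $\{\sigma_n\}$-component, and then iterating by descending induction via the $\mathbb S_m$-equivariance of the operators $D_\mu$ --- is precisely that inspection written out in full. The only cosmetic point is that what you actually invoke from Lemma~\ref{lemma becooperadd} is the face decomposition of $d_{\operad E}\rho(\underline\sigma)$ established inside its proof rather than its statement, which does not affect correctness.
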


\begin{proof}
    This follows from a direct inspection.
\end{proof}

\begin{lemma}\label{lemma:initrecurrenceecomoduleb}
    The following square diagram of graded $\mathbb S$-modules commutes
    \[
    \begin{tikzcd}
        s^{-1} s(\operad E_\pl \otimes \operad P) \otimes \mathbb S
        \ar[dd, "\Delta_{\operad E, B(\operad E \otimes \operad P)}"']
        \ar[r, "\simeq"]
        & s^{-1}s (\operad E \otimes \operad P)
        \ar[d, "s^{-1}(\Delta_{\operad E} \otimes \id)"]
        \\
        & s^{-1} s(\operad E \otimes \operad E \otimes \operad P)
        \ar[d, "\simeq"]
        \\
        \operad E_\pl \otimes s^{-1}s (\operad E \otimes \operad P) \otimes \mathbb S
        \ar[r, "\simeq"']
        & \operad E \otimes s^{-1}s (\operad E \otimes \operad P).
    \end{tikzcd}
    \]
\end{lemma}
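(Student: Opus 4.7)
The plan is to verify the commutativity pointwise on a generating family. Since both arrows in the square are morphisms of graded $\mathbb{S}$-modules, by $\mathbb{S}$-equivariance it suffices to check the equality on elements of the form $\xi = s^{-1}s(\rho(\underline{\sigma}) \otimes p) \otimes \{\mathrm{id}\}$, where $\underline{\sigma} = (\sigma_1, \ldots, \sigma_n) \in \mathbb{S}_m^n$ is a sequence of non-identity permutations and $p \in \mathcal{P}(m)$.

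For the ``going around'' side, the top isomorphism sends $\xi$ to $s^{-1}s(\rho(\underline{\sigma}) \otimes p)$. Applying $s^{-1}(\Delta_\mathcal{E} \otimes \mathrm{id})$ and using Lemma \ref{lemma:monoidalpropertyoftherhomap} to expand
\[
\Delta_\mathcal{E}(\rho(\underline{\sigma})) = \sum_{i=1}^{n+1} \rho((\sigma_i, \ldots, \sigma_n)) \otimes \rho((\sigma_1, \ldots, \sigma_{i-1}))^{\sigma_i \cdots \sigma_n}~,
\]
and then identifying $s^{-1}s(\mathcal{E} \otimes \mathcal{E} \otimes \mathcal{P}) \cong \mathcal{E} \otimes s^{-1}s(\mathcal{E} \otimes \mathcal{P})$, produces
\[
\sum_{i=1}^{n+1} \rho((\sigma_i, \ldots, \sigma_n)) \otimes s^{-1}s(\rho((\sigma_1, \ldots, \sigma_{i-1}))^{\sigma_i \cdots \sigma_n} \otimes p)~.
\]

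For the other side, I will unwind the defining formula of Theorem \ref{thm:becomodule}, namely
$\Delta_{\mathcal{E}, \mathrm{B}(\mathcal{E} \otimes \mathcal{P})}(\xi) = \sum_{k \geq 0} \sum_{\underline{\mu}} \rho(\underline{\mu}) \otimes D_{\underline{\mu}}(\xi)$. The key point is that $\xi$ is a single-node cogenerator of the bar construction, so $d_\gamma$ and $d_u$ vanish on its label and the coderivation reduces to $-s d(\rho(\underline{\sigma}) \otimes p)$. Decomposing $d_\mathcal{E}(\rho(\underline{\sigma}))$ via the Barratt--Eccles differential, only the leading term $\rho((\sigma_1, \ldots, \sigma_{n-1}))^{\sigma_n}$ is non-planar, while all middle $\delta_i$-terms and the trailing term $(-1)^n \rho((\sigma_2, \ldots, \sigma_n))$ already belong to $\mathcal{E}_\pl$. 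Transporting the leading term through the iso $\mathcal{E}(m) \otimes \mathcal{P}(m) \cong (\mathcal{E}_\pl(m) \otimes \mathcal{P}(m)) \otimes \mathbb{S}_m$ of Proposition \ref{prop: tensor with a quasi-free}, one obtains $d_\mu(s(\rho(\underline{\sigma}) \otimes p)) = 0$ unless $\mu = \mathrm{id}$ or $\mu = \sigma_n$, and explicitly $d_{\sigma_n}(s(\rho(\underline{\sigma}) \otimes p)) = -s(\rho((\sigma_1, \ldots, \sigma_{n-1})) \otimes p^{\sigma_n^{-1}})$. Iterating this by induction on $k$, I will show that $D_{\underline{\mu}}(\xi) = 0$ unless $\underline{\mu}$ is a tail $(\sigma_i, \ldots, \sigma_n)$ of $\underline{\sigma}$, in which case
\[
D_{(\sigma_i, \ldots, \sigma_n)}(\xi) = \pm\, s^{-1}s(\rho((\sigma_1, \ldots, \sigma_{i-1})) \otimes p^{(\sigma_i \cdots \sigma_n)^{-1}}) \otimes \{\sigma_i \cdots \sigma_n\}~.
\]
Finally, transporting this expression back into $s^{-1}\overline{\mathrm{B}(\mathcal{E} \otimes \mathcal{P})}$ through the pseudo-planar isomorphism collapses the $\otimes \{\sigma_i \cdots \sigma_n\}$ into $\rho((\sigma_1, \ldots, \sigma_{i-1}))^{\sigma_i \cdots \sigma_n} \otimes p$, matching the ``going around'' sum term by term.

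The main obstacle is sign tracking. Contributions come from the degree shifts $s$ and $s^{-1}$ via the Koszul rule, from the alternating structure of the Barratt--Eccles differential, from the action swap formula $(e \otimes \{\sigma\}) \otimes p \mapsto (e \otimes p^{\sigma^{-1}}) \otimes \{\sigma\}$, and from each passage through the pseudo-planar iso. Once all conventions are fixed consistently, the $\pm$ appearing in the formula for $D_{(\sigma_i, \ldots, \sigma_n)}(\xi)$ will match the sign coming from the Barratt--Eccles diagonal, but this coherence has to be verified carefully step by step.
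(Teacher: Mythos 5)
Your proposal follows essentially the same route as the paper: its (very terse) proof also evaluates both composites on the basis elements $s^{-1}s\rho(\underline\sigma)\otimes p$, using Lemma \ref{lemma:monoidalpropertyoftherhomap} for the composite through $s^{-1}(\Delta_{\operad E}\otimes\id)$ and the unlabelled lemma immediately preceding this one --- which is exactly your tail computation that $D_{\underline\mu}$ vanishes unless $\underline\mu$ is a tail $(\sigma_i,\ldots,\sigma_n)$ of $\underline\sigma$ --- for $\Delta_{\operad E,\mathrm{B}(\operad E\otimes\operad P)}$, concluding that both send the generator to $\sum_i\rho((\sigma_i,\ldots,\sigma_n))\otimes s^{-1}s\,\rho((\sigma_1,\ldots,\sigma_{i-1}))^{\sigma_i\cdots\sigma_n}\otimes p$. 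The sign issue you defer does resolve as you hope: the minus sign in the bar coderivation $sq\mapsto -s\,d(q)$ is cancelled by the Koszul sign incurred when applying $s^{-1}d_{\overline{\mathrm{B}}(\operad E\otimes\operad P)}$ to $s^{-1}s(\rho(\underline\sigma)\otimes p)$, so every surviving term carries coefficient $+1$, matching the signless diagonal formula of Lemma \ref{lemma:monoidalpropertyoftherhomap}.
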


\begin{proof}
    The two maps from $s^{-1} s(\operad E_\pl \otimes \operad P) \otimes \mathbb S$ to $\operad E \otimes s^{-1}s (\operad E \otimes \operad P)$ are morphisms of graded $\mathbb S$-modules that send
    and element $s^{-1}s\rho ((\sigma_1, \ldots, \sigma_n)) \otimes p$ to
    \[
\sum_i \rho((\sigma_i, \ldots, \sigma_n)) \otimes s^{-1}s \rho ((\sigma_1, \ldots, \sigma_{i-1}))^{\sigma_i \cdots \sigma_n} \otimes p.
    \]
\end{proof}

\begin{lemma}\label{lemma:diagramrecurrenceecomoduleb}
Let us consider two maps
\[
f_1,f_2 : s^{-1}\overline{\mathrm{B}}(\operad E \otimes \operad P)_{\pl} \longrightarrow \operad E_\pl 
\otimes s^{-1}\overline{\mathrm{B}}(\operad E \otimes \operad P)~.
\]
Let us suppose that the following diagram commutes for $i\in \{1,2\}$
\[
\begin{tikzcd}[column sep=3pc,row sep=3pc]
        s^{-1}\overline{\mathrm{B}}(\operad E \otimes \operad P)_{\pl}
        \ar[rr, "f_i"] \ar[d, "d_\Delta"']
        && \operad E \otimes s^{-1} \overline{\C}_\pl \otimes \mathbb S
        \ar[d, "\id \otimes d_\Delta"]
        \\
        \treemod^{(2)}_{\pl}(s^{-1}\overline{\mathrm{B}}(\operad E \otimes \operad P)_{\pl})
        \ar[r, "\treemod^{(2)}_{\pl}(f_i)"]
        & \treemod^{(2)}_{\pl}(\operad E  \otimes s^{-1}\overline{\mathrm{B}}(\operad E \otimes \operad P))
        \ar[r, "\gamma"]
        & \operad E \otimes \treemod^{(2)}_{\pl}(s^{-1}\overline{\mathrm{B}}(\operad E \otimes \operad P)_\pl) \otimes \mathbb S.
\end{tikzcd}
\]
Then the two assertions are equivalent
\begin{enumerate}
    \item $f_1 = f_2$;
    \item the restriction to $s^{-1}s(\operad E \otimes \operad P)_{\pl}$ of $f_1$ and
    $f_2$ are equal.
\end{enumerate}
\end{lemma}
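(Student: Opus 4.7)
The implication $(1) \Rightarrow (2)$ is immediate. For the converse, the plan is to induct on the bar-weight $n$ (the number of vertices) of elements of $s^{-1}\overline{\mathrm{B}}(\operad E \otimes \operad P)_{\pl}$, which one identifies with the cofree planar cooperad $s^{-1}\overline{\treemod}_\pl(V)$ for $V = s(\operad E_\pl \otimes \operad P) \oplus s^2\operad I$. The base case $n = 1$ concerns the subspace $s^{-1}V$: on $s^{-1}s(\operad E \otimes \operad P)_{\pl}$, agreement is the hypothesis, and on the complementary summand $s^{-1}s^2\operad I$, which sits in arity one where $\mathbb S_1$ and $\operad E_\pl(1)$ are trivial, I would verify agreement directly from the diagram, using that $d_\Delta$ vanishes on weight-one elements so that each $f_i$ is pinned down by the remaining arity-one data.

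For the inductive step, fix $n \geq 2$ and suppose $f_1 = f_2$ holds on all elements of bar-weight strictly less than $n$. Given $x$ of weight $n$, the element $d_\Delta(x) \in \treemod^{(2)}_\pl(s^{-1}\overline{\mathrm{B}}_\pl)$ is a finite sum of 2-piece decompositions $s^{-1}v_1 \otimes_i s^{-1}v_2$ with both $v_j$ of bar-weight $\geq 1$ and total weight $n$, so each $v_j$ has weight strictly less than $n$. The inductive hypothesis gives $\treemod^{(2)}_\pl(f_1)(d_\Delta x) = \treemod^{(2)}_\pl(f_2)(d_\Delta x)$, and the commutativity of the diagram yields
\[
(\id \otimes d_\Delta)\bigl(f_1(x) - f_2(x)\bigr) = 0~.
\]
Since $s^{-1}\overline{\mathrm{B}}(\operad E \otimes \operad P) = s^{-1}\overline{\treemod}(V)$ is cofree, its partial decomposition $d_\Delta$ is injective on the bar-weight $\geq 2$ component, so the difference $f_1(x) - f_2(x)$ must lie in the weight-one target $\operad E_\pl \otimes s^{-1}V$.

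The hard part will be eliminating this residual weight-one contribution. The plan is to observe that both $f_1$ and $f_2$ preserve the bar-weight filtration: the right-hand side of the compatibility diagram sends a bar-weight $n$ input to an output of bar-weight $n$ in $\operad E \otimes \treemod^{(2)}_\pl(s^{-1}\overline{\mathrm{B}}_\pl) \otimes \mathbb S$, by additivity of bar-weight under the operadic composition $\gamma$ of $\operad E$ together with the weight-summing property of $d_\Delta$. A parallel induction on $n$—or equivalently, iterating the compatibility diagram so as to express $(\id \otimes d_\Delta^{(n-1)}) \circ f_i$ purely in terms of the weight-one restriction of $f_i$—then forces the image $f_i(x)$ to live in the bar-weight $n$ part of the target modulo $\operad E_\pl \otimes s^{-1}V$, and hence shows that the weight-one residual vanishes. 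This closes the induction and yields $f_1 = f_2$ globally.
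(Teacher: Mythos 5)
Your overall strategy is the same as the paper's: induct on the number of nodes (the coradical filtration of $\mathrm{B}(\operad E \otimes \operad P)$) and compare $f_1$ and $f_2$ through the commuting square and the map $\id \otimes d_\Delta$. The paper disposes of the inductive step by asserting that $\id \otimes d_\Delta$ is injective; you have in effect rediscovered that this map kills the weight-one part $\operad E \otimes s^{-1}\bigl(s(\operad E\otimes\operad P)\oplus s^2\operad I\bigr)_\pl \otimes \mathbb S$ of the target (weight-one elements of a cofree conilpotent cooperad are primitive, so $d_\Delta$ vanishes on them), and hence that the square only yields that $f_1(x)-f_2(x)$ lies in this kernel. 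That is exactly the delicate point, and the patch you propose does not hold up. Nothing in the hypotheses says that $f_1,f_2$ preserve the bar-weight, and such preservation cannot be extracted from the diagram: the diagram only determines $(\id\otimes d_\Delta)\circ f_i$, which is blind to precisely the weight-one components you are trying to control, so iterating the square at $x$ itself gives no information about them. Moreover, even granting your conclusion that $f_i(x)$ lies in the weight-$n$ part of the target modulo $\operad E_\pl\otimes s^{-1}V$, that statement is a reformulation of the ambiguity, not its resolution; the final step ``hence the weight-one residual vanishes'' is a non sequitur. Any argument that pins down these components must bring in data beyond the commutation at $x$ --- for instance the constraints imposed at inputs of higher weight, where the weight-one component of $f_i$ on weight-$n$ elements re-enters through $\gamma\circ\treemod^{(2)}_\pl(f_i)\circ d_\Delta$, or additional structure of the specific maps the lemma is applied to --- and your sketch supplies neither.

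The base case has the same defect in miniature: on $s^{-1}s^2\operad I$ the map $d_\Delta$ vanishes, so the commuting square imposes no condition whatsoever there, and each $f_i$ is not ``pinned down by the remaining arity-one data''; agreement on that summand has to be part of the input (as it effectively is in the paper's application, where the two maps are compared on all of $s^{-1}s(\operad E\otimes\operad P\oplus s\operad I)$). So while you have correctly isolated the step that the paper's one-line proof passes over, the argument you give to close it is circular, and the proposal as it stands has a genuine gap at exactly that step.
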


\begin{proof}
It is clear that (2) is a consequence of (1). Let us assume (2).
    Noticing that the vertical map
    \[
\id \otimes d_\Delta : \operad E \otimes s^{-1} \overline{\C}_\pl \otimes \mathbb S
\longrightarrow
\operad E \otimes \treemod^{(2)}_{\pl}(s^{-1}\overline{\mathrm{B}}(\operad E \otimes \operad P)_\pl) \otimes \mathbb S
    \]
    is injective, the assertion (1) follows from a straightforward induction on the coradical filtration of $\mathrm{B}(\operad E \otimes \operad P)$, that is, the number of nodes of the trees that make up $\mathrm{B}(\operad E \otimes \operad P)$.
\end{proof}


\subsection{The quasi-planar filtration of quasi-planar cooperads}\label{subsection: quasi-planar canonical filtration}
Let $\C$ be a quasi-planar conilpotent curved cooperad that is the colimit of a quasi-planar cooperad ladder $(\C^{(i)})_{i \in \alpha}$. We built in the previous subsection \ref{subsection: E-comodule} a morphism of dg operads
$\Omega \C \longrightarrow \operad E \otimes \Omega \C $
which induces a morphism of conilpotent curved cooperads
$$
\C \longrightarrow \mathrm{B}(\operad E \otimes \Omega \C)
$$
that is pseudo-planar in the sense that its to $\C_\pl$ factors through $\mathrm{B}(\operad E \otimes \operad P)_\pl$, see Corollary \ref{corollary: adjoint map is pseudo-planar}.

\begin{lemma}\label{lemma: pullback pseudo-planar ccc}
    Let us consider a cospan diagram in the category of pseudo-planar curved conilpotent cooperads
    \[
\D' \xrightarrow{f} \D \xleftarrow{g} \D''
    \]
    such that both maps $f,g$ are arity-wise degree-wise injective.
Then this diagram has a pullback $(\D_\pl,\D''')$. Moreover the
commutative square diagram
   \[
\begin{tikzcd}[column sep=3pc,row sep=3pc]
    \D'''_\pl
    \ar[r] \ar[d] \arrow[dr, phantom, "\lrcorner", very near start]
    &\D' \ar[d]
    \\
    \D''_\pl
    \ar[r]
    & \D_\pl
\end{tikzcd}
\]
is a pullback in the category of graded conilpotent cooperads and in the category of graded $\mathbb N$-modules and the commutative square diagram
\[
\begin{tikzcd}[column sep=3pc,row sep=3pc]
    \D'''
    \ar[r] \ar[d] \arrow[dr, phantom, "\lrcorner", very near start]
    &\D' \ar[d]
    \\
    \D''
    \ar[r]
    & \D
\end{tikzcd}
\]
is a pullback both in the category of conilpotent curved cooperads and in the category of pdg $\mathbb S$-modules.
\end{lemma}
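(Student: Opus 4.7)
The plan is to construct the pullback by means of arity-wise degree-wise intersections inside $\D$ and $\D_\pl$ respectively, exploiting that the given maps are injective, and then verify that the resulting pair is pseudo-planar.

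First, I would define $\D'''$ as the sub pdg $\mathbb S$-module of $\D$ given arity-wise and degree-wise by the intersection $f(\D') \cap g(\D'')$. Since $f$ and $g$ are arity-wise degree-wise injective morphisms of conilpotent curved cooperads, the decomposition map, pre-differential and curvature of $\D$ restrict to this submodule, endowing $\D'''$ with a conilpotent curved cooperad structure (conilpotency being inherited from the embedding $\D''' \hookrightarrow \D'$). A direct verification of the universal property shows that $\D'''$ realizes the pullback both in pdg $\mathbb S$-modules and in conilpotent curved cooperads, which gives the second of the two commutative squares. The same argument, applied to the planar parts, produces $\D'''_\pl$ as the intersection $f_\pl(\D'_\pl) \cap g_\pl(\D''_\pl)$ inside $\D_\pl$, simultaneously a pullback in graded $\mathbb N$-modules and in graded conilpotent planar cooperads, giving the first square.

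The crucial remaining step is to identify $\D'''_\pl \otimes \mathbb S$ with $\D'''$ as graded conilpotent cooperads under (the restriction of) $\varphi_\D$, thereby exhibiting $(\D'''_\pl, \D''')$ as an object of $\curvcooperads^{\mathsf{conil}}_{\mathrm{pseudo-pl}}$ and endowing the cospan diagram with a pullback in that category. The underlying observation is that for any two graded $\mathbb N$-submodules $A, B$ of a graded $\mathbb N$-module $X$, the canonical map
\[
(A \cap B) \otimes \kk[\mathbb S_n] \longrightarrow (A \otimes \kk[\mathbb S_n]) \cap (B \otimes \kk[\mathbb S_n])
\]
inside $X \otimes \kk[\mathbb S_n]$ is an isomorphism, by flatness of $\kk[\mathbb S_n]$. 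Combined with the pseudo-planarity of $f$ and $g$, that is, the compatibility of $f_\pl, g_\pl$ with $\varphi_{\D'}, \varphi_{\D''}, \varphi_\D$, this identifies $\D'''_\pl \otimes \mathbb S$ with $f(\D') \cap g(\D'') = \D'''$.

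The main obstacle I anticipate is this last compatibility step: ensuring the interaction between the planar intersection and the pseudo-planar trivialisations is coherent throughout the cube. Once this is carefully verified, the universal property of $(\D'''_\pl, \D''')$ as a pullback in pseudo-planar conilpotent curved cooperads follows from the two separate universal properties at the planar and non-planar levels, using that morphisms in $\curvcooperads^{\mathsf{conil}}_{\mathrm{pseudo-pl}}$ are precisely data on the planar side whose image under $-\otimes \mathbb S$ commutes with the curved cooperad structures.
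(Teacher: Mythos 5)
Your proposal is correct and follows essentially the same route as the paper: both construct the pullback arity-wise as an intersection, use that tensoring over the field $\kk$ (in particular $-\otimes\mathbb S$) preserves intersections/limits to identify the symmetric pullback with $\D'''_\pl\otimes\mathbb S$, and deduce the curvature condition on $\D'''$ from the degree-wise injection into $\D'$. The only cosmetic difference is the order of construction (the paper builds $\D'''_\pl$ first and obtains $\D'''$ via limit-preservation of $-\otimes\mathbb S$ and the forgetful functor, while you build $\D'''$ by hand and then match the planar part), which does not change the substance of the argument.
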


\begin{proof}
    Let $\D'''_\pl$ be the pullback intersection in the category of graded $\mathbb N$-modules of the cospan $\D'_\pl \longrightarrow \D_\pl \leftarrow \D''_\pl$. 
    As a direct consequence of the fact that the tensor product of graded $\kk$-modules preserves intersections, $\D'''_\pl$ has the canonical structure of a graded conilpotent cooperads that makes it the pullback in the category of graded conilpotent cooperads of the same cospan.
    
    \medskip

    Now, let $\D'''$ be the pullback in the category of conilpotent pdg cooperads of the cospan $\D' \longrightarrow \D \leftarrow \D''$. 
    Since both functors
    \[
    \grcooperads_\pl^{\mathrm{conil}} \xrightarrow{-\otimes \mathbb S}
    \grcooperads^{\mathrm{conil}} \leftarrow \pdgcooperads^{\mathrm{conil}}
\]
preserves limits, one has a canonical isomorphism of graded conilpotent cooperads $\D''' \simeq \D'''_\pl \otimes \mathbb S$. In particular, the map $\D''' \longrightarrow \D'$ is degree-wise injective. Subsequently $\D'''$ is curved as $\D'$ is and thus it is a pullback in the category of curved conilpotent cooperads. The pair $(\D'''_\pl, \D''')$ is the expected pullback in the category of pseudo-planar curved conilpotent cooperads.
\end{proof}

\begin{definition}[Quasi-planar filtration]
Let $\C$ be a quasi-planar conilpotent curved cooperad. For every integer $n \geq 0$, we define the $n$\textit{-th quasi-planar filtration} as the pair $(F_n^{\qp}\C_\pl, F_n^{\qp}\C)$ given by the following pullback in the category of pseudo-planar curved conilpotent cooperads
    \[
\begin{tikzcd}[column sep=3pc,row sep=3pc]
    F_n^{\qp} \C
    \ar[r] \ar[d] \arrow[dr, phantom, "\lrcorner", very near start]
    &\C \ar[d]
    \\
    \tilde F_n \mathrm{B}(\operad E \otimes \Omega \C )
    \ar[r]
    & \mathrm{B}(\operad E \otimes \Omega \C )~.
\end{tikzcd}
    \]
This gives a diagram of pseudo-planar conilpotent curved cooperads
\[
\operad I = F_0^{\qp} \C \longrightarrow F_1^{\qp} \C \longrightarrow \cdots \longrightarrow F_n^{\qp} \C \longrightarrow \cdots
\]
called the \textit{quasi-planar filtration} of $\C$.
\end{definition}

As described in Lemma \ref{lemma: pullback pseudo-planar ccc}, the diagram
   \[
\begin{tikzcd}[column sep=3pc,row sep=3pc]
    F_n^{\qp} \C_\pl
    \ar[r] \ar[d] \arrow[dr, phantom, "\lrcorner", very near start]
    &\C_\pl \ar[d]
    \\
    \tilde F_n \mathrm{B}(\operad E \otimes \Omega \C )_\pl
    \ar[r]
    & \mathrm{B}(\operad E \otimes \Omega \C )_\pl
\end{tikzcd}
\]
is a pullback  both in the category of graded planar conilpotent cooperads and in the category of graded $\mathbb N$-modules and the diagram 
\[
\begin{tikzcd}[column sep=3pc,row sep=3pc]
    F_n^{\qp} \C
    \ar[r] \ar[d] \arrow[dr, phantom, "\lrcorner", very near start]
    &\C \ar[d]
    \\
    \tilde F_n \mathrm{B}(\operad E \otimes \Omega \C )
    \ar[r]
    & \mathrm{B}(\operad E \otimes \Omega \C )
\end{tikzcd}
    \]
is a pullback both in the category of conilpotent curved cooperads and in the category of conilpotent pdg cooperads and in the category of pdg $\mathbb N$-modules. In particular, the map $F_n^{\qp} \C_\pl \rightarrowtail \C_\pl$ is degree-wise injective as well as the map $F_n^{\qp} \C \longrightarrow \C$.

\begin{proposition}
The quasi-planar filtration of the quasi-planar curved conilpotent cooperad $\C$ is a quasi-planar $\omega$-ladder whose colimit is $\C$.
\end{proposition}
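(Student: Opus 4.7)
The plan is to propagate the quasi-planar ladder structure already established on $\{\tilde F_n \mathrm{B}(\operad E \otimes \Omega\C)\}_{n\geq 0}$ by Proposition \ref{prop: B(E otimes P) quasi-planar avec la bonne filtration} through the pseudo-planar morphism $\phi\colon \C \longrightarrow \mathrm{B}(\operad E \otimes \Omega\C)$ of Corollary \ref{corollary: adjoint map is pseudo-planar}. The key observation is that, since the bottom horizontal map in the defining pullback is a degree-wise injection, the pullback $F_n^{\qp}\C$ realises simply as the preimage $\phi^{-1}(\tilde F_n \mathrm{B}(\operad E \otimes \Omega\C))$ inside $\C$, and similarly $F_n^{\qp}\C_\pl = \phi_\pl^{-1}(\tilde F_n \mathrm{B}(\operad E \otimes \Omega\C)_\pl)$ inside $\C_\pl$.

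The ladder axioms are essentially formal. By Lemma \ref{lemma: pullback pseudo-planar ccc}, each $F_n^{\qp}\C$ is pseudo-planar with planar part $F_n^{\qp}\C_\pl$, and the transition maps $F_n^{\qp}\C \hookrightarrow F_{n+1}^{\qp}\C$ are arity-wise degree-wise injections since pullbacks of monomorphisms are monomorphisms. For the cooperadic part of the ladder condition, I would chase the fact that the decomposition map of $F_{n+1}^{\qp}\C$ factors through $\overline\treemod \C$ and, after applying $\overline\treemod \phi$, lands inside $\overline\treemod \tilde F_n \mathrm{B}(\operad E \otimes \Omega\C)$ by the analogous property of the ambient ladder; the universal property of the pullback then produces the required factorisation through $\overline\treemod F_n^{\qp}\C$.

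For the colimit, I would use that filtered colimits along degree-wise monomorphisms commute with finite limits in pdg $\mathbb N$-modules (and hence, by Lemma \ref{lemma: pullback pseudo-planar ccc}, in pseudo-planar conilpotent curved cooperads computed there). Combined with $\mathrm{B}(\operad E \otimes \Omega\C) = \colim_n \tilde F_n \mathrm{B}(\operad E \otimes \Omega\C)$, taking the colimit of the defining pullback squares yields
\[
\colim_{n \in \omega} F_n^{\qp}\C \;\cong\; \C \times_{\mathrm{B}(\operad E \otimes \Omega\C)} \mathrm{B}(\operad E \otimes \Omega\C) \;\cong\; \C,
\]
both on the planar and on the symmetric level.

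The only genuinely non-formal step, and thus the main obstacle, is verifying the quasi-planar axiom: that the restriction of the coderivation of $F_{n+1}^{\qp}\C$ to $F_{n+1}^{\qp}\C_\pl \otimes 1$ lands in $F_{n+1}^{\qp}\C_\pl \otimes 1 + F_n^{\qp}\C$. Given $x \in F_{n+1}^{\qp}\C_\pl \otimes 1$, I would decompose $dx = y + z$ uniquely in $\C \cong \C_\pl \otimes \mathbb S$ with $y \in \C_\pl \otimes 1$ and $z \in \C_\pl \otimes \overline{\kk[\mathbb S]}$ (the summand on the non-trivial permutations). Because $\phi$ is pseudo-planar, the images $\phi(y)$ and $\phi(z)$ supply the analogous planar/non-planar decomposition of $d\phi(x)$ in $\mathrm{B}(\operad E \otimes \Omega\C) \cong \mathrm{B}(\operad E \otimes \Omega\C)_\pl \otimes \mathbb S$. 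The quasi-planar condition for $\tilde F_\bullet \mathrm{B}(\operad E \otimes \Omega\C)$ forces $\phi(y) \in \tilde F_{n+1}\mathrm{B}(\operad E \otimes \Omega\C)_\pl \otimes 1$ and $\phi(z) \in \tilde F_n \mathrm{B}(\operad E \otimes \Omega\C)$, so by the pullback description, $y \in F_{n+1}^{\qp}\C_\pl \otimes 1$ and $z \in F_n^{\qp}\C$, as required.
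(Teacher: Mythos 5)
Your proposal is correct and takes essentially the same route as the paper: both deduce the quasi-planar ladder axioms for $F^{\qp}_\bullet \C$ by pulling back those of the ladder $\tilde F_\bullet \mathrm{B}(\operad E \otimes \Omega \C)$ of Proposition \ref{prop: B(E otimes P) quasi-planar avec la bonne filtration} along the pseudo-planar morphism of Corollary \ref{corollary: adjoint map is pseudo-planar}, and both obtain the colimit statement by commuting the directed colimit with the defining pullbacks in graded $\mathbb N$-modules. The only step you phrase loosely, namely deducing the factorisation of the decomposition map through $\overline{\treemod}\, F_n^{\qp}\C$ from the ``universal property of the pullback'', really uses that $\overline{\treemod}$ preserves these intersections of subobjects, which is exactly the tensor-products-preserve-intersections argument underlying Lemma \ref{lemma: pullback pseudo-planar ccc}.
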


\begin{proof}
 The fact that the diagram forms a quasi-planar ladder follows from the fact that the diagram
 \[
    \tilde F_0 \mathrm{B}(\operad E \otimes \operad P) \longrightarrow \tilde F_1 \mathrm{B}(\operad E \otimes \operad P) \longrightarrow \cdots \longrightarrow \tilde F_n \mathrm{B}(\operad E \otimes \operad P)
    \longrightarrow \cdots
 \]
 is a quasi-planar $\omega$-ladder. The fact that its colimit is $\C$ is a consequence of the fact that the colimt of the diagram just above is $\mathrm{B}(\operad E \otimes \operad P)$ and of the fact that pullbacks in graded $\mathbb N$-modules commute with directed colimits.
\end{proof}

\begin{proposition}
    For every dg operad $\operad P$, the quasi-planar filtration cooperad ladder on $\mathrm{B}(\operad E \otimes \operad P)$ is canonically isomorphic to the quasi-planar $\omega$-ladder $\tilde F_n \mathrm{B}(\operad E \otimes \operad P)$.
\end{proposition}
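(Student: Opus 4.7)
The plan is to reduce, via Lemma \ref{lemma: pullback pseudo-planar ccc}, the isomorphism of ladders to the set-theoretic equality
\[
\tilde F_n \mathrm{B}(\operad E \otimes \operad P) \;=\; \varphi^{-1}\bigl( \tilde F_n \mathrm{B}(\operad E \otimes \Omega \mathrm{B}(\operad E \otimes \operad P)) \bigr)
\]
of sub-objects of $\mathrm{B}(\operad E \otimes \operad P)$ at the level of underlying planar $\mathbb{N}$-modules, for each $n \geq 0$, where $\varphi$ denotes the canonical pseudo-planar cooperad map induced by the $\operad E$-comodule structure of Theorem \ref{thm:becomodule} (see Corollary \ref{corollary: adjoint map is pseudo-planar}). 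Level by level, this equality factors as a forward and a reverse inclusion, both of which I would prove by a weight count.

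First I would unwind $\varphi$ on planar generators using Proposition \ref{proposition: deltae on bar}. To a planar tree $t$ with $k$ nodes decorated by generators of $s(\operad E_\pl \otimes \operad P) \oplus s^2 \operad I$, the map $\varphi$ associates the sum, ranging over all planar outer trees $T$ obtained from connected partitions of the nodes of $t$ into sub-trees and over all Hopf-coproduct splittings $\Delta_{\operad E}(e_v) = \sum e_v^{(1)} \otimes e_v^{(2)}$ at each node $v$, of the tree in $\mathrm{B}(\operad E \otimes \Omega \mathrm{B}(\operad E \otimes \operad P))$ whose outer node corresponding to a sub-tree $S$ carries, as its $\operad E$-part, the operad composition in $\operad E$ of the top halves $e_v^{(1)}$ for $v \in S$, and carries as its $\Omega$-part the inner tree obtained from $S$ by replacing each decoration $e_v \otimes p_v$ with $e_v^{(2)} \otimes p_v$; curvature generators $s^2 1$ are treated via $\operad I \hookrightarrow \operad E$.

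Define the weight of such a decorated planar tree by $w(t) \coloneqq \sum_v (1 + \deg_{\operad E}(e_v))$, where curvature nodes contribute $1$; then $\tilde F_n$ consists exactly of trees with $w \leq n$, on both sides. For the forward inclusion, each summand of $\varphi(t)$ has target weight $|T| + \sum_v \deg(e_v^{(1)}) \leq k + \sum_v \deg_{\operad E}(e_v) = w(t)$, since $|T| \leq k$, since $\operad E$-degrees add under operad composition, and since $\deg(e_v^{(1)}) \leq \deg(e_v)$ by the shape of the Barratt--Eccles coproduct; thus $w(t) \leq n$ forces $\varphi(t) \in \tilde F_n$. For the reverse inclusion, I would single out the summand of $\varphi(t)$ where $T$ has the same shape as $t$ with each outer node a singleton sub-tree, and where, at each node $v = (\sigma_0, \dots, \sigma_{d_v})$, one picks the extremal Hopf-coproduct term $e_v^{(1)} = e_v$, $e_v^{(2)} = (\sigma_{d_v})$: this summand is non-zero, has weight exactly $w(t)$, and lies in a specific direct summand of $\mathrm{B}(\operad E \otimes \Omega \mathrm{B}(\operad E \otimes \operad P))$ indexed by outer tree shape and per-node $\operad E$-degrees to which no other summand of $\varphi(t)$ contributes. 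So $\varphi(t) \in \tilde F_n$ forces $w(t) \leq n$, i.e.\ $t \in \tilde F_n$.

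The main obstacle is the reverse inclusion: one must exhibit a non-vanishing summand of $\varphi(t)$ in a prescribed direct component of the target whose weight matches $w(t)$ and verify that it is not cancelled by any other contribution to the $\varphi$-sum. This rests on the explicit Hopf coproduct formula in $\operad E$ recalled in Lemma \ref{lemma:monoidalpropertyoftherhomap} and on the direct-sum decomposition of $\mathrm{B}(\operad E \otimes \Omega \mathrm{B}(\operad E \otimes \operad P))$ by outer tree shape and per-node $\operad E$-degree. Combining both inclusions yields the equality of underlying planar $\mathbb{N}$-modules at every filtration level, and hence the isomorphism of the two quasi-planar $\omega$-ladders, the transition maps on each side being inclusions inherited from the ambient cooperad.
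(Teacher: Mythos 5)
Your reduction of the statement, via Lemma \ref{lemma: pullback pseudo-planar ccc}, to the equality of sub-objects $\tilde F_n \mathrm{B}(\operad E \otimes \operad P) = \varphi^{-1}\bigl(\tilde F_n \mathrm{B}(\operad E \otimes \Omega\mathrm{B}(\operad E \otimes \operad P))\bigr)$ is correct, and your forward inclusion is sound: the weight count using additivity of $\operad E$-degrees under operadic composition and the form of $\Delta_{\operad E}$ is exactly the paper's first step, which it phrases as the observation that the composite $\tilde F_n \mathrm{B}(\operad E \otimes \operad P)\to \mathrm{B}(\operad E \otimes \Omega\mathrm{B}(\operad E \otimes \operad P))$ factors through $\tilde F_n$ because $\Delta_{\operad E, \mathrm{B}(\operad E\otimes\operad P)}$ proceeds from the Hadamard comonoid structure on $\operad E$ (Proposition \ref{proposition: deltae on bar}).

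The gap is in your reverse inclusion. You argue tree by tree: for a basis tree $t$ you exhibit a distinguished summand of $\varphi(t)$ of weight exactly $w(t)$ sitting in a component that receives no other contribution \emph{from $\varphi(t)$}, and conclude that $\varphi(t)\in\tilde F_n$ forces $w(t)\le n$. But the inclusion $\varphi^{-1}(\tilde F_n)\subseteq\tilde F_n$ concerns arbitrary elements: if $x=\sum_j c_j t_j$ and $\varphi(x)\in\tilde F_n$, you cannot immediately deduce $\varphi(t_j)\in\tilde F_n$ for each $j$; you must also rule out cancellation between the distinguished summands of \emph{distinct} trees $t_j$. This can be done (the extremal splitting has inner $\operad E$-degree zero and keeps the full label outside, so the distinguished component determines the tree once a basis of $\operad P$ and the planar/$\mathbb S$ bookkeeping of Lemma \ref{lemma:initrecurrenceecomoduleb} are fixed), but you have not given this step, and carrying it out amounts to reconstructing the device the paper uses instead: compose $\varphi$ with the map $\mathrm{B}(\operad E\otimes\Omega\mathrm{B}(\operad E\otimes\operad P))\to\mathrm{B}(\operad E\otimes\operad P)$ induced by the counit $\Omega\mathrm{B}(\operad E\otimes\operad P)\to\operad P$. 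Since $\Delta_{\operad E,\mathrm{B}(\operad E\otimes\operad P)}$ comes from the Hadamard comonoid, this composite is the identity of $\mathrm{B}(\operad E\otimes\operad P)$, and the retraction visibly preserves the $\tilde F$-filtration; hence the induced map $F_n^{\qp}\mathrm{B}(\operad E\otimes\operad P)\to\tilde F_n\mathrm{B}(\operad E\otimes\operad P)$ is surjective (the identity of $\tilde F_n$ factors through it) and injective (its composite with $\tilde F_n\hookrightarrow\mathrm{B}(\operad E\otimes\operad P)$ is the injective structure map of the pullback), with no leading-term or non-cancellation analysis needed. Either patch your reverse inclusion with the cross-tree non-cancellation argument, or replace it by this retraction.
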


\begin{proof}
    Knowing that the map
    \[
    \Delta_{\operad E, \mathrm{B}(\operad E \otimes \operad P)}:
    s^{-1}\overline{\mathrm{B}} (\operad E \otimes \operad P)
    \longrightarrow \operad E \otimes s^{-1}\overline{\mathrm{B}} (\operad E \otimes \operad P)
    \]
    proceeds from the structure of an Hadamard comonoid on the operad $\operad E$ (see Proposition \ref{proposition: deltae on bar}), one can notice that the composite injective map
    \[
\tilde F_n \mathrm{B}(\operad E \otimes \operad P) \longrightarrow \mathrm{B}(\operad E \otimes \operad P)
\longrightarrow
\mathrm{B}(\operad E \otimes \Omega B(\operad E \otimes \operad P))
    \]
factors through $\tilde F_n \mathrm{B}(\operad E \otimes \Omega \mathrm{B}(\operad E \otimes \operad P))$. We thus get the following commutative diagram
    \[
    \begin{tikzcd}
    \tilde F_n \mathrm{B}(\operad E \otimes \operad P)
    \ar[rd] \ar[rrd, bend left] \ar[rdd, bend right]
    \\
        &F_n^\qp \mathrm{B}(\operad E \otimes \operad P)
        \ar[r] \ar[d]
        & \mathrm{B}(\operad E \otimes \operad P)
        \ar[d]
        \\
        &\tilde F_n \mathrm{B}(\operad E \otimes \Omega \mathrm{B}(\operad E \otimes \operad P))
        \ar[r] \ar[d]
        & \mathrm{B}(\operad E \otimes \Omega \mathrm{B}(\operad E \otimes \operad P))
        \ar[d]
        \\
        &\tilde F_n \mathrm{B}(\operad E \otimes \operad P)
        \ar[r]
        & \mathrm{B}(\operad E \otimes \operad P)~,
    \end{tikzcd}
    \]
    where the bottom square is induced by the map
    \[
    \Omega \mathrm{B}(\operad E \otimes \operad P) \longrightarrow \operad P~.
    \]
    The fact that the map $\Delta_{\operad E, \mathrm{B}(\operad E \otimes \operad P)}$ proceeds from the structure of an Hadamard comonoid on the operad $\operad E$ implies that the composite top-down endomorphism of $\mathrm{B}(\operad E \otimes \operad P)$ at the right of the square is the identity. Thus, the composite endomorphism on
    the left of $\tilde F_n \mathrm{B}(\operad E \otimes \operad P)$ is also the identity.
    
    \medskip

    To conclude, the map $F_n^\qp \mathrm{B}(\operad E \otimes \operad P) \longrightarrow \tilde F_n \mathrm{B}(\operad E \otimes \operad P)$ is both degree-wise surjective and injective and therefore it is a natural isomorphism.
\end{proof}


\subsection{The cofibrant resolution} Finally, for any dg operad $\operad P$, there is always a cofibrant replacement of the form $\Omega \C$, where $\C$ is a quasi-planar conilpotent curved cooperad. This coincides a previous result for augmented dg operads, see \cite[Theorem 2, Proposition 17]{BrunoMalte}. A general version of this theorem for any $\mathbb{S}$-projective operad $\mathcal{P}$ was also proven in \cite[Theorem 8.5.4]{Wconstruction}.

\begin{proposition}
    For every operad $\operad P$, $\Omega \mathrm{B}(\operad E \otimes \operad P)$ is cofibrant and the canonical morphism
    $$
    \psi: \Omega \mathrm{B} (\operad E \otimes \operad P)
    \qi \operad P
    $$
    is an arity-wise quasi-isomorphism.
\end{proposition}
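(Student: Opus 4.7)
The plan is to split the statement into its two claims---cofibrancy and the arity-wise quasi-isomorphism---and to dispatch each by combining the results assembled earlier in this section with a classical bar-cobar argument.

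For cofibrancy, I would simply invoke Proposition \ref{prop: B(E otimes P) quasi-planar avec la bonne filtration}, which exhibits $\mathrm{B}(\operad E \otimes \operad P)$ as a quasi-planar conilpotent curved cooperad via the $\omega$-ladder $\{\tilde F_n \mathrm{B}(\operad E \otimes \operad P)\}_{n \geq 0}$, and then apply Proposition \ref{proposition:cooperadqpcofibrant}, which says precisely that the cobar construction of any quasi-planar conilpotent curved cooperad is cofibrant in Fresse's semi-model structure on dg operads. This part is essentially automatic from what has been built so far.

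For the arity-wise quasi-isomorphism, I would factor the canonical morphism as
\[
\psi:\; \Omega \mathrm{B}(\operad E \otimes \operad P) \xrightarrow{\epsilon_{\operad E \otimes \operad P}} \operad E \otimes \operad P \longrightarrow \operad P,
\]
where the first arrow is the counit of the operadic bar-cobar adjunction applied to the dg operad $\operad E \otimes \operad P$, and the second is the canonical arity-wise quasi-isomorphism $\operad E \otimes \operad P \qi \operad P$ noted earlier in the excerpt. Since the second map is already known to be an arity-wise quasi-isomorphism, by the 2-out-of-3 property it is enough to show that the bar-cobar counit $\epsilon_{\operad Q}: \Omega\mathrm{B}\operad Q \to \operad Q$ is an arity-wise quasi-isomorphism for every dg operad $\operad Q$.

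This last statement is the only non-formal ingredient of the proof and I expect it to be the main obstacle. The standard argument uses the syzygy filtration on $\Omega \mathrm{B} \operad Q$ induced by the coradical filtration of $\mathrm{B} \operad Q$; the associated graded at each arity reduces to the reduced bar-cobar complex of a cofree cooperad, which is acyclic by a contraction visibly independent of the characteristic. In the unital/curved generality of the present setting, this has been carried out in \cite{unitalalgebras}, and the argument is valid over any ground field, so one can simply cite it to conclude.
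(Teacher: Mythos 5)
Your proposal has the same skeleton as the paper's proof: cofibrancy is obtained exactly as you say, from Proposition \ref{prop: B(E otimes P) quasi-planar avec la bonne filtration} together with Proposition \ref{proposition:cooperadqpcofibrant}, and the quasi-isomorphism is likewise reduced to showing that the counit $\epsilon \colon \Omega\mathrm{B}(\operad E \otimes \operad P) \to \operad E \otimes \operad P$ is an arity-wise quasi-isomorphism, using $\operad E \otimes \operad P \qi \operad P$ and 2-out-of-3. Where you diverge is in the justification of that counit statement. You appeal to the general claim that $\Omega\mathrm{B}\operad Q \qi \operad Q$ for every dg operad $\operad Q$, citing \cite{unitalalgebras} and asserting characteristic-independence; the paper instead proves the statement directly for $\operad E \otimes \operad P$: it constructs a section $s$ of $\epsilon$ in dg $\mathbb S$-modules, an explicit degree-one homotopy $h$ (whose value on the trivial tree, $1 \mapsto s^{-1}s^2 1$, is the extra-degeneracy device absorbing the unit and curvature terms $d_u$, $d_\theta$), and shows that $\partial(h) + s\epsilon$ is an isomorphism by filtering with the tree filtration relative to the quasi-planar filtration $\tilde F_\bullet$, the case where the unit of $\operad P$ does not split being the trivial case $\operad P = 0$. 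Your route buys brevity; the paper's buys a self-contained argument in positive characteristic, which is the point of the whole enterprise.

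The one step you should not wave through is precisely the citation: \cite{unitalalgebras} (like \cite{grignou2021algebraic}) is written under a characteristic-zero standing hypothesis, so "simply cite it" requires you to verify that its proof of the counit quasi-isomorphism never uses characteristic zero. This is plausible (over a field, free operads and cofree conilpotent cooperads are direct sums over trees with labelled leaves, so no coinvariants appear and the contraction need not be $\mathbb S$-equivariant), but it is a verification, not an assertion — and the paper's explicit homotopy is in effect that verification. Note also that your description of the associated graded as "the reduced bar-cobar complex of a cofree cooperad" is the augmented picture; in the curved/unital setting the generator $s^2\operad I$ and the curvature component of the differential must be built into the filtration and the contraction, which is exactly what the paper's $h$ and its modified filtration (with $F_0 = 0$) accomplish.
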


\begin{proof}
The fact that $\Omega \mathrm{B} (\operad E \otimes \operad P)$ is cofibrant follows from the fact that $\mathrm{B} (\operad E \otimes \operad P)$ is quasi-planar and that $\Omega$ sends quasi-planar conilpotent curved cooperads to cofibrant dg operads.

\medskip

Let us us show that the map $\psi: \Omega \mathrm{B} (\operad E \otimes \operad P) \longrightarrow \operad P$ is an arity-wise quasi-isomorphism. It suffices to show that the counit morphism $\epsilon: \Omega \mathrm{B} (\operad E \otimes \operad P) \longrightarrow \operad E \otimes \operad P$ is an arity-wise quasi-isomorphism. It admits a canonical section $s$ in the category of dg $\mathbb S$-modules, given by the following composition:
    $$
    \operad E \otimes \operad P \cong  s^{-1}s(\operad E \otimes \operad P) \longrightarrow s^{-1}\mathrm{B} (\operad E \otimes \operad P) \longrightarrow \Omega \mathrm{B} (\operad E \otimes \operad P)~.
    $$
Let us denote $\pi_{\operad P} = s \circ \epsilon$. Since $\epsilon \circ s = \id$, the counit $\epsilon$ is an arity-wise quasi-isomorphism if and only if $\pi_{\operad P}$ is arity-wise quasi-isomorphism. So let us prove that $\pi_{\operad P}$ is an arity-wise quasi-isomorphism.

\medskip

Let us first assume that the unit map $\eta:\operad I \longrightarrow \operad P$ has a left inverse in the category of graded $\mathbb S$-modules. Let us denote $\operad Q$ the graded $\mathbb S$-module defined as follows 
$$
\operad Q \coloneqq \operad E_\pl \otimes \operad P~.
$$
The map $\operad I \longrightarrow \operad Q$ also has a left inverse. We denote $\overline{\operad Q}$ the kernel of this section.
Let us denote $\operad C$ the following planar graded conilpotent cooperad 
$$
\operad C \coloneqq \treemod_\pl(s \operad Q \oplus s^2\operad I)
\cong \treemod_\pl(s \overline{\operad Q} \oplus s\operad I \oplus s^2\operad I)~.
$$
We have a canonical isomorphism of graded operads
$$
\Omega \mathrm{B} (\operad E \otimes \operad P)
\simeq 
\treemod_\pl 
(s^{-1}\overline{\operad C}) \otimes \mathbb S~.
$$

Let $h$ be the degree $1$ endomorphism of $\Omega \mathrm{B} (\operad E \otimes \operad P)$ given as follows.
\begin{enumerate}
    \item On the trivial tree (with no node):
    \begin{align*}
        \operad I &\longrightarrow s^{-1} s^2\operad I \longrightarrow s^{-1} \overline{\operad C}
        \\
        1 &\mapsto s^{-1} s^2 1~.
    \end{align*}
    \item On planar trees with one node, it is zero.
    \item On planar trees $t$ with two nodes or more it is
    $$
    \begin{tikzcd}
        t(\overline{\operad C}) \otimes \mathbb S
        \ar[d, equal]
        \\
        s^{-1}\overline{\operad C}(n_1) \otimes s^{-1}\overline{\operad C}(n_2) \otimes \cdots \otimes \mathbb S
        \ar[d, two heads]
        \\
        s^{-1}F_1^{\mathrm{rad}}(\overline{\operad C})(n_1) \otimes s^{-1}\overline{\operad C}(n_2) \otimes \cdots \otimes \mathbb S
        \ar[d, "\simeq"]
        \\
        s^{-2} F_1^{\mathrm{rad}}(\overline{\operad C})(n_1) \otimes \overline{\operad C}(n_2) \otimes \cdots \otimes \mathbb S
        \ar[d]
        \\
        s^{-1}\overline{\operad C}(n_1) \otimes \overline{\operad C}(n_2) \otimes \cdots \otimes \mathbb S
        \ar[d, "\simeq"]
        \\
        s^{-1}\overline{\operad C}(n_1+n_2-1)  \otimes \cdots \otimes \mathbb S
    \end{tikzcd}
    $$
\end{enumerate}
where the third and the fourth map are given by 
$$
s^{-1}x \otimes s^{-1} y \otimes \cdots \otimes \{\sigma\}
\mapsto -(-1)^{|x|} s^{-2}x \otimes y \otimes \cdots \otimes \{\sigma\}
\mapsto -(-1)^{|x|} s^{-1}x \otimes y \otimes \cdots \otimes \{\sigma\}~.
$$
Let show that $\partial (h) + \pi_{\operad P}$ is an isomorphism. We filter $\Omega \mathrm{B} (\operad E \otimes \operad P)$ using the tree filtration relative to the quasi-planar filtration of $\mathrm{B} (\operad E \otimes \operad P)$, with the difference that we define the first part not to be the trivial tree but $0$. This gives
\begin{align*}
F_0\Omega \mathrm{B} (\operad E \otimes \operad P) &= 0~,
\\
F_1\Omega \mathrm{B} (\operad E \otimes \operad P) &= 
\operad I \oplus s^{-1} (s (\operad E_{\leq 0} \otimes \operad P) \oplus s^2 \operad I)~.
\end{align*}
This filtration is preserved by $h$, $\pi_{\operad P}$ and the derivation. Moreover, the map on the associated graded object $\gr(\partial(h) + \pi_{\operad P})$ is an isomorphism; so is $\partial(h) + \pi_{\operad P}$. This means that $\pi_{\operad P}$ is an arity-wise quasi-isomorphism as it is chain homotopic to an arity-wise quasi-isomorphism.

\medskip

Finally, if the unit $\operad I \longrightarrow \operad P$ does not admit a left inverse in the category of graded $\mathbb S$-modules, then $\operad P = 0$ and the result holds trivially.
\end{proof}


\newpage

\section{Algebras, coalgebras, and Bar-Cobar adjunctions}

\vspace{2pc}

In this section, we recall the various notions of (co)algebras over a (co)operads, as well as the bar-cobar adjunctions that interrelate them. Along the way, we develop some of their main categorical and algebraic properties. Finally, we review the different methods that can give a homotopical meaning to the categories of dg (co)algebras over an dg operad. 

\subsection{From $\mathbb S$-modules to functors}
The category of dg modules is enriched and tensored in dg $\mathbb S$-modules as follows.

\medskip

\begin{enumerate}
    \item For every dg modules $X, Y$, the mapping dg $\mathbb S$-module is $\Mult(X, Y)$
    given by 
    $$
    \Mult(X, Y)(n) \coloneqq [X^{\otimes n},Y].
    $$
    In the case where $Y=X$, we set $\mathrm{End}(X) \coloneqq \Mult(X, X)$.

    \medskip
    
    \item For every dg module  $X$ and every dg $\mathbb S$-module $M$, the tensorisation of $X$ by $M$ is just given by $M \comp X$ as dg module, where $X$ is considered as dg $\mathbb S$-modules concentrated in arity zero. Thus for every additional dg $\mathbb S$-module $N$, the structural map
    $$
    (N \comp M) \comp X \longrightarrow N \comp (M \comp X)
    $$
    is an isomorphism.
\end{enumerate}
\medskip

It is also enriched and cotensored in dg $\mathbb S$-modules as follows.

\medskip

\begin{enumerate}
    \item For every dg modules $X, Y$, the mapping dg $\mathbb S$-module is $\mathrm{coMult}(X, Y)$
    given by 
    $$
    \coMult(X, Y)(n) \coloneqq [X,Y^{\otimes n}].
    $$
    In the case where $Y=X$, we set $\mathrm{coEnd}(X) \coloneqq \coMult(X, X)$.

    \medskip
    
    \item For every dg module  $X$ and every dg $\mathbb S$-module $M$, the cotensorisation of $X$ by $M$ is $X^M$ given by
    $$
    X^M = \prod_{n \geq 0} \left[M(n), X^{\otimes n}\right]^{\mathbb S_n}~.
    $$
    For every additional dg $\mathbb S$-module $N$, the structural map
    $$
    \varphi_{M,N}: \left(X^M\right)^N \longrightarrow X^{N \comp M} 
    $$
    is a degree-wise injection.
\end{enumerate}
\medskip

Similarly, dg modules are on the one hand enriched and tensored over dg $\mathbb N$-module through the bifunctors
$$
 X, Y \mapsto \Mult_\pl(X,Y)  \coloneqq [X^{\otimes -}, Y]; \quad
 M, X \mapsto M \comp_\pl X
$$
and they are on the other hand enriched and cotensored over dg $\mathbb N$-module through the bifunctors
$$
 X, Y \mapsto \coMult_\pl(X,Y)  \coloneqq [X, Y^{\otimes -}]; \quad
X, M \mapsto X^{M} \prod_n [M(n), X^{\otimes n}]~.
$$
There are canonical isomorphisms of dg $\mathbb N$-modules $\Mult_\pl(-,-) \cong U_{\mathbb S}\Mult(-,-)$ and $\coMult_\pl(-,-) \cong U_{\mathbb S}\coMult(-,-)$. We obtain by transposing along the adjunction two canonical natural isomorphisms
$$
(M \otimes \mathbb S) \comp X \simeq M \comp_\pl X~, \quad
X^{M \otimes \mathbb S} \simeq X^M~,
$$
for every dg module  $X$ and every dg $\mathbb N$-module $M$.

\medskip

Let us state some categorical properties that the constructions perfomed so far satisfy. 

\begin{lemma}
    Let $M$ be a dg $\mathbb S$-module. The endofunctor $M \comp -$ commutes with sifted colimits and the endofunctor $(-)^M$ commutes with $\beta$-filtered colimits for a small regular cardinal $\beta$.
\end{lemma}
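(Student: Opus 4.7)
The plan is to analyze each of the two functors factor by factor, using the elementary categorical properties of graded modules recorded earlier in the paper together with standard facts about locally presentable categories.

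For the first claim, since the dg module $X$ is viewed as a dg $\mathbb{S}$-module concentrated in arity $0$, the composition product collapses to $(M \circ X)(0) = \bigoplus_{k \geq 0} M(k) \otimes_{\mathbb{S}_k} X^{\otimes k}$ with vanishing higher arities. I would then argue term-by-term: direct sums commute with all colimits; the tensor product $M(k) \otimes -$ and the coinvariant functor $(-)_{\mathbb{S}_k}$ are left adjoints and so preserve all colimits; and by property (3) of graded modules recalled at the start of Section 1, the functor $X \mapsto X^{\otimes k}$ preserves sifted colimits. Composing these observations, $M \circ -$ preserves sifted colimits.

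For the second claim, I would unfold the defining formula
\[
X^M \;=\; \prod_{n \geq 0} \bigl[M(n),\, X^{\otimes n}\bigr]^{\mathbb{S}_n},
\]
and inspect each piece with respect to a $\beta$-filtered colimit $\mathrm{colim}_i\, X_i$. The tensor power $X \mapsto X^{\otimes n}$ commutes with sifted, hence filtered, colimits. The invariant functor $(-)^{\mathbb{S}_n}$ is a finite limit (an equalizer of finitely many endomorphisms) and therefore commutes with all filtered colimits. The internal hom $[M(n), -]$ commutes with $\beta$-filtered colimits provided $M(n)$ is $\beta$-presentable, which holds as soon as $\beta$ exceeds the presentability rank of $M(n)$ (recall that the category of dg $\kk$-modules is locally finitely presentable, so every object has such a rank). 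Finally, the external product $\prod_{n \geq 0}$ is over a countable set, and a countable product commutes with $\beta$-filtered colimits as soon as $\beta > \aleph_0$.

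Putting this together, I would pick $\beta$ to be a regular uncountable cardinal larger than the presentability rank of every $M(n)$; such a $\beta$ exists because we only need to accommodate countably many bounds and we may pass to a successor regular cardinal above their supremum. With this choice, each factor in the formula commutes with $\beta$-filtered colimits, and so does $(-)^M$. The main obstacle is precisely this simultaneous choice of $\beta$: one must juggle the compactness of each $M(n)$ against the size of the external product, which is why the statement produces only \emph{some} such $\beta$ rather than a uniform bound independent of $M$.
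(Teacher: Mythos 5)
Your proposal is correct and follows essentially the same route as the paper: for $M \comp -$ it decomposes the formula into coproducts, the colimit-preserving functors $M(n)\otimes_{\mathbb S_n}-$, and the tensor powers (which preserve sifted colimits), and for $(-)^M$ it chooses a regular $\beta>\omega$ with every $M(n)$ $\beta$-small and checks factor by factor (tensor powers, $[M(n),-]$, $\mathbb S_n$-invariants via finite limits commuting with filtered colimits, and the countable product). The only difference is that you spell out slightly more explicitly why such a $\beta$ exists, which is harmless.
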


\begin{proof}
    The assertion about $M \comp -$ can be deduced from the following facts

    \medskip
    
    \begin{enumerate}
        \item the tensor product $X \mapsto X^{\otimes n}$ commutes with sifted colimits;

    \medskip
    
        \item the functor $Y \mapsto M(n)\otimes_{\mathbb S}  Y$
        commutes with colimits for every natural integer $n$;

    \medskip
    
        \item coproducts commute with colimits.
    \end{enumerate}

    \medskip
    
    Now, let $\beta$ be a regular cardinal such that $\beta > \omega$ and such that every $M(n)$, for every $n$ in $\mathbb N$, is $\beta$-small. Then the endofunctor $(-)^M$ commutes with $\beta$-filtered colimits since
    \begin{enumerate}

    \medskip
    
        \item the tensor product $X \mapsto X^{\otimes n}$ commutes with sifted colimits, in particular $\beta$-filtered colimits;

    \medskip
    
        \item the construction $Y \mapsto [M(n), Y]$ commutes with
        $\beta$-filtered colimits;

    \medskip
    
        \item the construction $Z \mapsto Z^{\mathbb S_n}$ commutes with filtered colimits;

    \medskip
    
        \item countable products commute with $\beta$-filtered colimits since $\beta > \omega$.
    \end{enumerate}
\end{proof}

\begin{lemma}
    Let $M$ be a dg $\mathbb S$-module. The endofunctor $(-)^M$ commutes with \textbf{finite} cosifted limits.
\end{lemma}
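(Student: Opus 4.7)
The plan is to decompose the functor $(-)^M$ according to its very definition,
$$
X^M \;=\; \prod_{n \geq 0}\, [M(n), X^{\otimes n}]^{\mathbb S_n},
$$
and to observe that each of the constituent operations preserves finite cosifted limits. Concretely, given a small finite cosifted indexing category $I$ and a diagram $F : I \longrightarrow \catdgmod{\kk}$, I would exhibit the canonical comparison morphism $(\lim_I F)^M \longrightarrow \lim_I F^M$ as an isomorphism via the chain of natural identifications
$$
(\lim_I F)^M
\;\cong\; \prod_{n \geq 0} [M(n), (\lim_I F)^{\otimes n}]^{\mathbb S_n}
\;\cong\; \prod_{n \geq 0} [M(n), \lim_I F^{\otimes n}]^{\mathbb S_n}
\;\cong\; \lim_I \prod_{n \geq 0} [M(n), F^{\otimes n}]^{\mathbb S_n}
\;=\; \lim_I F^M.
$$

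The only nontrivial step is the second isomorphism, which exchanges $(\lim_I F)^{\otimes n}$ with $\lim_I F^{\otimes n}$: this is precisely the fact, already recorded in the present subsection, that the endofunctor $X \mapsto X^{\otimes n}$ preserves finite cosifted limits, and it is the sole place where the cosifted hypothesis on $I$ intervenes. The remaining identifications rely on purely formal facts. The internal hom $[M(n), -]$ preserves every limit, as it is right adjoint to $- \otimes M(n)$. The invariants functor $(-)^{\mathbb S_n}$ preserves every limit, since it is right adjoint to the trivial-representation inclusion $\catdgmod{\kk} \longrightarrow \catdgmod{\kk[\mathbb S_n]}$; equivalently, it can be written as an equalizer of the permutation actions against the identity, hence as a limit. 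Finally, products commute with arbitrary limits.

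The main obstacle --- or rather, the substantive content --- is thus entirely concentrated in the tensor-power step. If one tried to drop the finiteness assumption, this would be the single place where the argument would fail, reflecting the failure of $X \mapsto X^{\otimes n}$ to commute with general cofiltered limits, already exemplified by the concrete counterexample given at the beginning of the section. Since however the required fact about tensor powers is granted, the remainder of the argument is a straightforward verification that the four canonical comparison maps above are each induced by the relevant universal property, so that their composition coincides with the canonical comparison $(\lim_I F)^M \longrightarrow \lim_I F^M$.
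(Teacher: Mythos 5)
Your argument is correct and follows the same route as the paper: decompose $(-)^M$ as $\prod_{n}[M(n),(-)^{\otimes n}]^{\mathbb S_n}$, use that tensor powers preserve finite cosifted limits, and that $[M(n),-]$, $(-)^{\mathbb S_n}$ and products preserve all limits. The paper's proof records exactly these three facts, so no further comment is needed.
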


\begin{proof}
    This can be deduced from the following facts: 

    \medskip
    
    \begin{enumerate}
        \item the tensor product $X \mapsto X^{\otimes n}$ commutes with finite cosifted limits,

    \medskip
    
        \item the functor $Y \mapsto [M(n), Y]^{\mathbb S_n}$ commutes, limits for every natural integer $n$;

    \medskip
        \item products commute with limits.
    \end{enumerate}
\end{proof}

One can perform the same constructions with graded modules or pdg modules instead of dg modules. Moreover, these constructions commute strictly with the forgetful functors $\catdgmod{\kk} \longrightarrow \catpdgmod{\kk} \longrightarrow \catgrmod{\kk}$.

\medskip

Finally, let us notice that when a dg $\mathbb S$-module $M$ is planar as a graded $\mathbb S$-module, one has extra properties on these endofunctors.

\begin{lemma}\label{lemma: quasi-planar functors preserve more stuff}
Let $M$ be a dg $\mathbb S$-module. Let us suppose that the underlying graded $\mathbb S$-module of $M$ has the form $M_\pl \otimes \mathbb S$. Equivalently, $M(n)$ is a quasi-free dg $\kk[\mathbb S_n]$-module for all $n \geq 0$. Then

\medskip

\begin{enumerate}
\item the endofunctor $(-)^M$ commutes with \textbf{finite} sifted colimits;

\medskip

\item the endofunctor $M \comp (-)$ commutes with \textbf{finite} cosifted limits.

\end{enumerate}
\end{lemma}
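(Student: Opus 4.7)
The plan is to exploit the simplification enabled by the hypothesis $M \cong M_\pl \otimes \mathbb S$. Because the $\mathbb S_n$-action on $M(n) = M_\pl(n) \otimes \kk[\mathbb S_n]$ is free, the coinvariants defining $M \circ X$ and the invariants defining $X^M$ collapse, yielding the natural isomorphisms
\[
X^M \cong \prod_{n \geq 0} \bigl[M_\pl(n), X^{\otimes n}\bigr]
\qquad \text{and} \qquad
M \circ X \cong \bigoplus_{n \geq 0} M_\pl(n) \otimes X^{\otimes n}
\]
already noted just above the lemma (the first from the universal property of the free right $\mathbb S_n$-module, the second from $\kk[\mathbb S_n] \otimes_{\mathbb S_n} X^{\otimes n} = X^{\otimes n}$). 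Each endofunctor is now a composite of three basic operations and the lemma reduces to checking that each step preserves the relevant (co)limits.

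For (2), finite cosifted limits in $\catdgmod{\kk}$ are generated by coreflexive equalizers together with finite products, and the latter are biproducts in this additive setting, hence preserved automatically by any additive functor. Given a coreflexive pair $f, g \colon A \rightrightarrows B$ with equalizer $E$, I would chain three preservations: (a) $E^{\otimes n}$ is the equalizer of $f^{\otimes n}, g^{\otimes n}$ by the already-established preservation of finite cosifted limits by $(-)^{\otimes n}$; (b) $M_\pl(n) \otimes -$ preserves all finite limits since every $\kk$-module is flat over a field; (c) $\bigoplus_n$, being a filtered colimit of finite biproducts, commutes with equalizers because $\catdgmod{\kk}$ satisfies AB5.

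For (1), dually, finite sifted colimits reduce to reflexive coequalizers. Given such a reflexive pair with coequalizer $Q$: (a) $Q^{\otimes n}$ is the coequalizer of $f^{\otimes n}, g^{\otimes n}$ by the stated preservation of sifted colimits by $(-)^{\otimes n}$; (b) since $\kk$ is a field, each graded piece of $M_\pl(n)$ is a free $\kk$-module, so $[M_\pl(n), -]$ is degreewise a (possibly large) product of shifts of the identity, and products of $\kk$-modules are exact (AB4$^*$), hence $[M_\pl(n), -]$ preserves reflexive coequalizers; (c) the outer $\prod_n$ is again preserved by AB4$^*$ exactness.

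The conceptual role of the quasi-planarity hypothesis is clear from this decomposition: without it, the final step would require the invariants functor $(-)^{\mathbb S_n}$ to preserve (co)reflexive (co)equalizers, which generally fails in positive characteristic. The freeness of the $\mathbb S_n$-action trivialises this invariants functor, and the lemma reduces to the formal exactness of (co)products over a field combined with the already-stated behaviour of $(-)^{\otimes n}$. I expect the only real work to be bookkeeping, rather than any conceptual obstacle, namely tracking which AB-axiom is used at each step.
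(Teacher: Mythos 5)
Your reduction of the problem is the same as the paper's: use the isomorphisms $X^{M}\cong\prod_{n}[M_\pl(n),X^{\otimes n}]$ and $M\comp X\cong\bigoplus_{n}M_\pl(n)\otimes X^{\otimes n}$ to eliminate the (co)invariants, and then check the three constituent operations. Your verifications for the individual operations (tensor powers, $M_\pl(n)\otimes-$ and $[M_\pl(n),-]$ over a field, and the exactness of products and of direct sums) are fine and agree in substance with the facts the paper invokes.

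The gap is in the bridging step that converts these checks into the statement of the lemma. You claim that finite cosifted limits are generated by coreflexive equalizers together with finite products, and that the finite products are handled ``automatically by any additive functor''. But the endofunctors $M\comp(-)$ and $(-)^{M}$ are not additive and do not preserve finite products (biproducts): already $X\mapsto X^{\otimes 2}$ produces cross terms on $X\oplus Y$. So if finite products really were needed in your generating class, the argument would fail outright; and if you drop them, you are left with the unproved (and non-obvious) assertion that preserving coreflexive equalizers alone suffices to preserve \emph{all} finite cosifted limits --- note that coreflexive equalizers plus finite products generate all finite limits, so your stated generation claim is only ``true'' in a way that forces the finite-product case you cannot have. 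The same issue affects your dual reduction of finite sifted colimits to reflexive coequalizers. The paper avoids this entirely: it composes the previously recorded facts that $(-)^{\otimes n}$ preserves sifted colimits and finite cosifted limits (which one proves directly from the finality/initiality of the diagonal together with the tensor product commuting with finite (co)limits), that $M_\pl(n)\otimes-$ and $[M_\pl(n),-]$ preserve finite limits, respectively finite colimits, over a field, and that coproducts commute with finite limits and products with finite colimits. Replacing your equalizer/coequalizer reduction by this direct composition repairs the argument; as written, your proof only establishes the lemma for (co)reflexive (co)equalizers, not for arbitrary finite (co)sifted (co)limits.
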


\begin{proof}\leavevmode
  
    \begin{enumerate}
        \item For the endofunctor $(-)^M$, it is a consequence of the fact that the tensor product, the functors $[M_\pl(n), -]$ and products commute with finite sifted colimits.

    \medskip
    
        \item For the endofunctir $M \circ (-)$, it is a consequence of the fact that the tensor product, the functors $M_\pl(n) \otimes -$ and coproducts commute with finite cosifted limits.
    \end{enumerate}
\end{proof}

\begin{notation}
Let $f: X \longrightarrow Y$ be a map of degree $0$ and $g: X \longrightarrow Y$ be a map of degree $p$ between graded modules $X,Y$. Let us consider 
\[
\shuffle_n(f,g) \coloneqq \sum_{i=0}^n f^{\otimes i-1} \otimes g \otimes f^{\otimes n-i} : X^{\otimes n} \longrightarrow Y^{\otimes n}~,
\]
which is an $\mathbb{S}_n$-equivariant map of degree $p$. Let $M$ be an graded $\mathbb{S}$-module. 

\medskip

\begin{enumerate}
\item It induces a map of degree $p$

\[
\bigoplus_{n \geq 0} \id_{M(n)} \otimes_{\mathbb S_n} \shuffle_n(f,g): \bigoplus_{n \geq 0} M(n) \otimes_{\mathbb S_n} X^{\otimes n} \longrightarrow \bigoplus_{n \geq 0} M(n) \otimes_{\mathbb S_n} Y^{\otimes n}~. 
\]
By a slight abuse of notation, we denote this morphism by $M \comp \shuffle(f,g)$.

\medskip

\item It induces a map of degree $p$

\[
\prod_{n \geq 0} [\id_{M(n)},\shuffle_n(f,g)]^{\mathbb{S}_n}: \prod_{n \geq 0} [M(n),X^{\otimes n}]^{\mathbb{S}_n} \longrightarrow \prod_{n \geq 0} [M(n),Y^{\otimes n}]^{\mathbb{S}_n}
\]
By a slight abuse of notation, this map will be denoted by $\shuffle(f,g)^M$.
\end{enumerate}
\end{notation}

\subsection{Algebras and coalgebras over (co)operads}
Let $\operad P$ be a dg operad and let $\operad C$ be a dg cooperad. We recall the definitions of a dg $\operad P$-algebra and dg $\operad P$-coalgebra, as well as the definitions of a dg $\C$-coalgebra and of a dg $\C$-algebra. See \cite[Section 3]{linearcoalgebras}.

\begin{definition}[dg $\PP$-algebra]
A dg $\operad P$-\textit{algebra} $A$ amounts to the data of a dg module $(A,d_A)$ equipped with a morphism of dg operads $\Gamma_A: \operad P \longrightarrow \mathrm{End}(A)$. 

\medskip

This data is equivalent to a structural map $\gamma_A: \operad P \comp A \longrightarrow A$ that satisfies unitality and associativity conditions. In other words, it is equivalent to the data of an algebra over the monad $\operad P \comp -$.
\end{definition}

\begin{remark}
   More precisely, the data of a dg $\mathcal{P}$-algebra structure on $(A,d_A)$ amounts to a collection of maps 

    \[
    \gamma_A^n: \operad P \otimes_{\mathbb S_n} A^{\otimes n} \longrightarrow A~,
    \]
    \vspace{0.1pc}

    for all $n \geq 0$, which satisfy compatibility conditions. Since we consider on the left-hand side \textit{coinvariants} with respect to the action of $\mathbb S_n$, no divided power operations appear. The definition of a dg $\operad P$-algebra encodes classical types of algebraic structures over a positive characteristic field. For instance, dg Lie-algebras, dg commutative algebras, dg Poisson algebras, etc.
\end{remark}

\begin{definition}[dg $\mathcal{P}$-coalgebra]
A dg $\mathcal{P}$-coalgebra $V$ amounts to the data of a dg module $(V,d_V)$ equipped with a morphism of dg operads $\delta_V: \operad P \longrightarrow \mathrm{coEnd}(V)$. 

\medskip

This data is equivalent to a structural map $\Delta_V: V \longrightarrow \displaystyle V^{\operad P}$ such that the following diagrams commute
$$
\begin{tikzcd}[column sep=4.5pc,row sep=3pc]
V \arrow[r,"\Delta_V"] \arrow[d,"\Delta_V",swap] 
&V^{\operad P} \arrow[r,"(\Delta_V)^{\id}"]
&(V^{\operad P})^{\operad P} \arrow[d,"\varphi_{\operad P, \operad P}"] \\
V^{\operad P} \arrow[rr,"(\mathrm{id})^{\gamma}"]
&
&V^{\operad P ~ \comp ~\operad P}~,
\end{tikzcd}
\quad \quad 
\begin{tikzcd}
    V
    \ar[r, "\Delta_V "] \ar[rd, "\id",swap]
    & V^{\operad P}
    \ar[d,"(\mathrm{id})^{\eta}"]
    \\
    & V~,
\end{tikzcd}
$$
where $\gamma: \operad P \circ \operad P \longrightarrow \operad P$ and $\eta: \operad I \longrightarrow \operad P$ are, respectively, the composition morphism and the unit of the dg operad $\operad P$.
\end{definition}

\begin{remark}
    The data of a dg $\mathcal{P}$-coalgebra structure on $(V,d_V)$ also amounts to a collection of maps 

    \[
    \Delta_V^n: V \longrightarrow [\operad P(n), V^{\otimes n}]^{\mathbb S_n}~,
    \]
    \vspace{0.1pc}

    for all $n \geq 0$, which satisfy compatibility conditions. Since we consider on the right-hand side \textit{invariants} with respect to the action of $\mathbb S_n$, no divided power operations appear. The definition of a dg $\operad P$-coalgebra encodes classical types of coalgebraic structures, \textit{without any conilpotency condition}, over a positive characteristic field. For instance, dg Lie-coalgebras, dg cocommutative coalgebras, dg Poisson coalgebras, etc. 
\end{remark}

Even though $(-)^{\operad{P}}$ fails to be a comonad, the forgetful functor from dg $\operad P$-coalgebras to dg modules is comonadic.

\begin{theorem}[{\cite[Theorem 2.7.11]{anelcofree2014}}]
The forgetful functor from dg $\PP$-coalgebras to dg modules is comonadic. The related comonad $L^{\operad P}(-)$ is given, for a dg module $X$, by the following pullback square
$$
\begin{tikzcd}[column sep=3.5pc,row sep=3.5pc]
    L^{\operad P} X
    \ar[r] \ar[d] \arrow[dr, phantom, "\lrcorner", very near start]
    & (X^{\operad P})^{\operad P}
    \ar[d,"\varphi_{\operad P, \operad P}"]
    \\
    X^{\operad P}
    \ar[r,"(\mathrm{id})^{\operad P}"]
    & X^{\operad P ~\comp ~ \operad P}.
\end{tikzcd}
$$
in the category of dg modules, where $\gamma: \operad P \circ \operad P \longrightarrow \operad P$ is the composition map of $\operad P$.

\medskip

The structure of a comonad on $L^{\operad P}$ is given by the decomposition map $L^{\operad P} X
    \longrightarrow
     (X^{\operad P})^{\operad P}$, which factors through
     $L^{\operad P} L^{\operad P} X$, and by the following counit map $L^{\operad P} X \longrightarrow X^{\operad P} \longrightarrow X$.
\end{theorem}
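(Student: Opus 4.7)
The plan is threefold: construct the right adjoint $L^{\operad P}$ via the stated pullback, verify the universal property of adjunction, and invoke the dual of Beck's monadicity theorem to conclude comonadicity.

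First I would define $L^{\operad P} X$ by the displayed pullback in $\catdgmod{\kk}$ and equip it with a $\operad P$-coalgebra structure. The natural candidate for the structural map $\Delta : L^{\operad P} X \to (L^{\operad P} X)^{\operad P}$ is induced by the pullback projection $L^{\operad P} X \to (X^{\operad P})^{\operad P}$; the key step is showing that this map factors through $(L^{\operad P} X)^{\operad P}$. Applying $(-)^{\operad P}$ to the defining pullback square and using the fact that $(-)^{\operad P}$ commutes with finite cosifted limits (recalled in the preceding lemmas), $(L^{\operad P} X)^{\operad P}$ is itself a pullback into which the map from $L^{\operad P} X$ can be lifted by a diagram chase involving $\varphi_{\operad P, \operad P \comp \operad P}$, the naturality of $\varphi_{-,-}$ and the coassociativity of $\gamma$. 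Coassociativity of the resulting $\Delta$ follows from the same type of argument applied one dimension further.

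Second I would verify the adjunction. For a $\operad P$-coalgebra $V$ and a dg module $X$, a coalgebra morphism $V \to L^{\operad P} X$ is sent to its composition with the counit $L^{\operad P} X \to X^{\operad P} \to X$. Conversely, a dg module map $g : V \to X$ induces the pair of maps $g^{\operad P} \circ \Delta_V : V \to X^{\operad P}$ and $(g^{\operad P})^{\id} \circ (\Delta_V)^{\id} \circ \Delta_V : V \to (X^{\operad P})^{\operad P}$, which agree on the common target $X^{\operad P \comp \operad P}$ by the coassociativity axiom of $\Delta_V$, hence factor uniquely through $L^{\operad P} X$. That the resulting map is itself a coalgebra morphism follows from the construction of the coalgebra structure on $L^{\operad P} X$ performed above, and checking that the two maps in the bijection are mutually inverse is then a routine verification.

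Finally I would apply the dual of Beck's theorem to the adjunction $U \dashv L^{\operad P}$. The forgetful functor $U$ is conservative, since an underlying isomorphism of coalgebras has an inverse automatically compatible with the structural maps by naturality. Moreover, $U$ creates equalizers of $U$-split pairs: given such a pair $f, g : V \rightrightarrows W$ of coalgebra morphisms, their equalizer $E$ in $\catdgmod{\kk}$ admits a unique coalgebra structure making it an equalizer in coalgebras, because the splitting allows one to transport $\Delta_V$ along $E \to V$ compatibly with $f$ and $g$. The main technical obstacle is the very first point: establishing the factorization of the decomposition map $L^{\operad P} X \to (X^{\operad P})^{\operad P}$ through $(L^{\operad P} X)^{\operad P}$, since it rests on a delicate interplay between iterated applications of $(-)^{\operad P}$, its commutation with finite cosifted limits, and the associativity of the composition map $\gamma$.
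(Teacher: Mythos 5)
There is a genuine gap, and it sits exactly at the step you yourself flag as the key one. The factorization of the decomposition map $L^{\operad P} X \to (X^{\operad P})^{\operad P}$ through the subobject $(L^{\operad P} X)^{\operad P}$ cannot be obtained by the diagram chase you describe. Your argument rests on applying $(-)^{\operad P}$ to the defining pullback and invoking the lemma that $(-)^{\operad P}$ commutes with finite cosifted limits; but a pullback over a cospan is not a cosifted limit (the finite cosifted limits used in this paper are coreflexive equalizers), and in fact $(-)^{\operad P}$ does \emph{not} preserve this pullback. The functor is built from tensor powers, and tensor powers fail to preserve pullbacks along non-monomorphisms (preimages): for instance, for a surjection $f\colon \kk^2 \to \kk$ one has $(\ker f)^{\otimes 2} \subsetneq \ker(f^{\otimes 2})$; the leg $(\mathrm{id})^{\gamma}\colon X^{\operad P} \to X^{\operad P \comp \operad P}$ is not a monomorphism, so the same failure occurs here. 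Hence $(L^{\operad P}X)^{\operad P}$ is not the pullback of the image diagram, and there is no formal lift to chase.

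This is not a fixable presentation issue but the mathematical heart of the theorem: it is the operadic analogue of Sweedler's theorem that the finite dual $A^{\circ}$ is closed under comultiplication, i.e.\ that a ``representative'' element has representative components. Already in the classical associative case that statement is proved by a genuine argument over the ground field (writing $\Delta f = \sum_i g_i \otimes h_i$ with the $h_i$ linearly independent and deducing each $g_i$ is representative), not by coassociativity and naturality alone. The paper under review does not reprove the statement; it quotes it from Anel's work, where precisely this closure property is established via the machinery of representative functions. Your remaining steps are essentially sound --- the transpose construction for the adjunction uses the coassociativity axiom correctly, and the dual Beck argument goes through because split equalizers are absolute limits, so the forgetful functor creates them --- but both depend on the coalgebra/comonad structure on $L^{\operad P}X$, whose existence is the unproved core of your proposal.
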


\begin{definition}[dg $\mathcal{C}$-coalgebra]
A dg $\operad{C}$-coalgebra $W$ amounts to the data of a dg module $(W,d_W)$ equipped with structural map $\Delta_W: W \longrightarrow \operad C \comp W$ such that the following diagrams commute
\[
\begin{tikzcd}[column sep=4.5pc,row sep=3pc]
W \ar[r,"\Delta_W"] \ar[d,"\Delta_W",swap]
& \operad C \comp W
\ar[d, "\Delta~ \comp ~ \id"]
\\
\operad C \comp W
\ar[r, "\id~ \comp ~\Delta_W"]
& \operad C \comp \operad C \comp W
\end{tikzcd}
\quad \quad
\begin{tikzcd}
    W \arrow[r," \Delta_W "] \arrow[rd, "\id",swap]
    & \operad C \comp W
    \arrow[d, "\epsilon ~ \comp ~ \id "]
    \\
    & W.
\end{tikzcd}
\]
where $\Delta: \C \longrightarrow \C \comp \C$ and $\epsilon: \C \longrightarrow \operad I$ are, respectively, the decompositon map and the counit map of the cooperad $\C$. In other words, it is a coalgebra over the comonad $\operad C \comp -$.
\end{definition}

\begin{remark}
    The structural map of a dg $\C$-coalgebra $W$ 
    \[
    \Delta_W: W \longrightarrow \bigoplus_{n \geq 0} \C(n) \otimes_{\mathbb S_n} W^{\otimes n}~,
    \]
    \vspace{0.1pc}

    lands on the direct sum. Therefore any element $w$ in $W$ can only be decomposed into a \textit{finite sum}. This implies that a dg $\C$-coalgebras always satisfies some type \textit{conilpotency} condition. Furthermore, the fact that $\Delta_W$ lands on the \textit{coinvariants} on the right-hand side implies that \textit{divided power operations} will appear for this type of structures. Thus the definition of a dg $\C$-coalgebra encodes divided powers conilpotent types of coalgebraic structures.
\end{remark}

\renewcommand{\L}{\Lambda}

\begin{definition}[dg $\operad C$-algebra]\label{def pdg C algebra}
A dg $\operad{C}$\textit{-algebra} $\L$ amounts to the data of a dg module  $(\Lambda,d_\L)$ equipped with a structural morphism $\gamma_\L: \L^{\operad C} \longrightarrow \L$ such that the following diagrams commute
$$
\begin{tikzcd}[column sep=3.5pc,row sep=3pc]
    (\L^{\operad C})^{\operad C}
    \ar[r, hook, "\varphi_{\C,\C}"] \ar[d,"(\id)^{\gamma_\L}",swap]
    & \L^{\operad C ~\comp ~\operad C}
    \ar[r,"(\id)^{\Delta}"]
    & \L^{{\operad C}}
    \ar[d,"\gamma_\L"]
    \\
    \L^{\operad C}
    \ar[rr,"\gamma_B"]
    && \L
\end{tikzcd}
\quad \quad
\begin{tikzcd}
    \L
    \ar[r,"(\id)^{\epsilon}"] \ar[rd, "\id",swap]
    & \L^{\operad C}
    \ar[d,"\gamma_\L"]
    \\
    &\L
\end{tikzcd}
$$
where $\Delta: \C \longrightarrow \C \comp \C$ and $\epsilon: \C \longrightarrow \operad I$ are, respectively, the decompositon morphism and the counit morphism of the cooperad $\C$. In other words, it is an algebra over the monad $(-)^{\operad C}$. 
\end{definition}

\begin{remark}
    The structural map of a dg $\C$-algebra $\L$

    \[
    \gamma_\L: \prod_{n \geq 0} [\C(n), \L^{\otimes n}]^{\mathbb S_n} \longrightarrow \L~,
    \]
    \vspace{0.1pc}

    comes from an infinite product. Therefore any formal infinite sum of operations admits a well-defined image in $\L$ \textit{by definition}, without presupposing any topology. We refer to these phenomena as \textit{absolute} algebraic structures. They generalize the notion of a \textit{contramodule}, see \cite{positselski2021contramodules} for a precise definition of them. Furthermore, the presence of \textit{invariants} on the left-hand side implies that these algebraic structures are also endowed with \textit{divided powers} operations. Therefore the definition of a dg $\C$-dalgebra encodes divided powers absolute types of algebraic structures.
\end{remark}

\begin{remark}[Planar definitions]
One can also define the dg (co)algebras over a planar dg operad $\operad P$ as the (co)algebras over the dg operad $\operad P \otimes \mathbb S$. And the dg (co)algebras over a dg planar cooperad $\operad C$ as the (co)algebras over the dg cooperad $\operad C \otimes \mathbb S$. In this case, no \textit{divided powers} operations appear in dg $\C$-coalgebras nor in dg $\C$-algebras.
\end{remark}

\medskip

\begin{proposition}
    The category of dg $\operad P$-algebras is $\omega$-presentable.
    The categories of dg $\operad P$-coalgebras, dg $\operad C$-coalgebras and dg $\operad C$-algebras are presentable.
\end{proposition}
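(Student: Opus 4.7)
The plan is to treat each of the four categories separately, exhibiting them as categories of (co)algebras over a suitable (co)monad on the locally presentable category $\catdgmod{\kk}$, and then invoking standard accessibility/presentability theorems.

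First, for dg $\operad P$-algebras: this category is monadic over $\catdgmod{\kk}$ with monad $\operad P \comp -$. By the lemma that $M \comp -$ preserves sifted colimits for any dg $\mathbb S$-module $M$, this monad preserves filtered colimits, i.e.\ it is an $\omega$-accessible monad on the $\omega$-presentable category $\catdgmod{\kk}$. The standard theorem (see Adámek--Rosický, \emph{Locally Presentable and Accessible Categories}, Theorem 2.78) then gives that the category of dg $\operad P$-algebras is $\omega$-presentable.

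Second, for dg $\operad C$-algebras: this is the category of algebras over the monad $(-)^{\operad C}$. By the lemma above, there is a regular cardinal $\beta$ such that $(-)^{\operad C}$ preserves $\beta$-filtered colimits, so this is a $\beta$-accessible monad on a locally presentable category. By the same theorem, the category of dg $\operad C$-algebras is locally $\beta$-presentable.

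Third, for the coalgebra categories, I would use the dual statement, valid for comonads: if $T$ is a $\lambda$-accessible comonad on a locally presentable category $\mathcal{E}$, then the category of $T$-coalgebras is locally presentable (this follows, for instance, by exhibiting $T$-coalgebras as an accessible equaliser/inserter construction inside $\mathcal{E}$, or by the coalgebra variant in Ching--Riehl). For dg $\operad C$-coalgebras, the relevant comonad is $\operad C \comp -$, which preserves sifted colimits, so the category of dg $\operad C$-coalgebras is locally presentable. For dg $\operad P$-coalgebras, the comonad $L^{\operad P}$ is defined by the pullback
\[
\begin{tikzcd}[column sep=3.5pc,row sep=3pc]
L^{\operad P} X \arrow[dr, phantom, "\lrcorner", very near start] \ar[r] \ar[d]
& (X^{\operad P})^{\operad P} \ar[d,"\varphi_{\operad P,\operad P}"] \\
X^{\operad P} \ar[r,"(\mathrm{id})^{\operad P}"]
& X^{\operad P ~\comp ~\operad P}~.
\end{tikzcd}
\]
Each of $(-)^{\operad P}$, $(-^{\operad P})^{\operad P}$ and $(-)^{\operad P \comp \operad P}$ is $\gamma$-accessible for some regular $\gamma$ (again by the lemma), and finite limits of $\gamma$-accessible functors are $\gamma$-accessible, so $L^{\operad P}$ is accessible. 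The theorem above then yields that dg $\operad P$-coalgebras form a locally presentable category.

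The one non-routine step is the accessibility result for coalgebras over an accessible comonad; the monadic/algebraic analogue is the textbook theorem, but the comonadic statement usually requires an explicit argument (describing $T$-coalgebras as an accessibly-embedded reflective or limit subcategory of $\mathcal{E}^{\downarrow}$). All remaining verifications reduce to stringing together the accessibility lemmas already established in this section.
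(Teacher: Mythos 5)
Your proposal is correct and follows essentially the same route as the paper: identify each category as (co)algebras over an accessible (co)monad on dg modules (using that $\operad P \comp -$ and $\operad C \comp -$ preserve sifted colimits while $(-)^{\operad C}$ and $L^{\operad P}$ are accessible for a larger cardinal), then invoke the Ad\'amek--Rosick\'y theorem for algebras and the Ching--Riehl result for coalgebras over an accessible comonad. The paper's proof is exactly this, only stated more briefly.
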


\begin{proof}
    The monad $\operad P \comp -$ and the comonad $\operad C \comp -$ are $\omega$-accessible. The monad $(-)^{\operad C}$ and the comonad $L^{\operad P}$ are also accessible but for a larger small cardinal ($\aleph_1$). We conclude by the fact that for a regular small cardinal $\alpha$, the category of algebras over an $\alpha$-accessible monad in an $\alpha$-presentable category is $\alpha$-presentable. See \cite{AdamekRosicky} for more details. Moreover, the category of coalgebras over an accessible comonad in a presentable category is presentable, see \cite{ChingRiehl}.
\end{proof}

\medskip

One can consider the same definitions and perform the same constructions with graded modules or pdg modules instead of dg module. These constructions commute strictly with the forgetful functors $\catdgmod{\kk} \longrightarrow \catpdgmod{\kk} \longrightarrow \catgrmod{\kk}$.

\medskip

\textbf{Curved algebras and coalgebras.} Let $\C$ be a curved cooperad. It is in particular a pdg cooperad, and one can define the categories of pdg $\C$-coalgebras and of pdg $\C$-algebras. Among these, the full subcategories of \textit{curved} objects will be of particular interest. 

\begin{definition}[Curved $\C$-coalgebra]
Let $\operad C$ be a curved cooperad. A pdg coalgebra $(W,\Delta_W,d_W)$ is \textit{curved} if the following diagram commutes

\[
\begin{tikzcd}[column sep=3pc,row sep=3pc]
W  \arrow[r,"\Delta_W"] \arrow[rd,"-d_W^2",swap]
&\operad C \comp W \arrow[d,"\theta ~\comp~ \id"]\\
& W~,
\end{tikzcd}
\]

where $\theta$ denotes the curvature of the cooperad $\C$. 

\medskip

The category $\catcurvcog{\operad C}$ of curved $\operad C$-coalgebras is the full subcategory of pdg $\operad C$-coalgebras spanned by those which are curved.
\end{definition}

\begin{remark}
If the curvature of $\operad C$ is zero, then the category of curved $\C$-coalgebras in pdg modules is isomorphic to the category of dg $\C$-coalgebras.
\end{remark}

\begin{definition}[Curved $\operad C$-algebra]\label{def curved alg over a coop}
Let $\operad C$ be a curved cooperad. A pdg $\operad C$-algebra $(\L,\gamma_\L,d_\L)$ is \textit{curved} if the following diagram commutes: 
\[
\begin{tikzcd}[column sep=4pc,row sep=3pc]
\L
\arrow[r,"\L^{\theta}"] \arrow[rd,"d_\L^2",swap]
&\L^{\operad C}
\arrow[d, "\gamma_\L"]
\\
&\L~.
\end{tikzcd}
\]
The category $\catcurvalg{\operad C}$ of curved $\operad C$-algebras is the full subcategory of pdg $\operad C$-algebras spanned by those which are curved.
\end{definition}

\begin{remark}
If the curvature of $\operad C$ is zero, then the category of curved $\C$-algebras in pdg modules is isomorphic to the category of dg $\C$-algebras.
\end{remark}

\subsection{Categorical properties of the category of (co)algebras over a operad}
Let $\operad P$ be a dg operad. Let us consider the following commutative diagram of forgetful functors
$$
\begin{tikzcd}
    \catdgalg{\operad P}
    \ar[r] \ar[d]
    & \catpdgalg{\operad P}
    \ar[r] \ar[d]
    & \catgralg{\operad P}
    \ar[d]
    \\
    \catdgmod{\kk}
    \ar[r]
    & \catpdgmod{\kk}
    \ar[r]
    & \catgrmod{\kk}.
\end{tikzcd}
$$
All these categories are presentable. Moreover, all the functors are right adjoints because they preserve limits and filtered colimits. They are conservative and they also preserve sifted colimits. Therefore, all these forgetful functors are monadic. 

\medskip

Let us consider the following commutative diagram of forgetful functors 
$$
\begin{tikzcd}
    \catdgcog{\operad P}
    \ar[r] \ar[d]
    & \catpdgcog{\operad P}
    \ar[r] \ar[d]
    & \catgrcog{\operad P}
    \ar[d]
    \\
    \catdgmod{\kk}
    \ar[r]
    & \catpdgmod{\kk}
    \ar[r]
    & \catgrmod{\kk}
\end{tikzcd}
$$
All these categories are presentable. Moreover, all the functors are left adjoints since they preserve colimits. They are also conservative and preserve finite cosifted limits. Therefore, all the functors are comonadic.

\subsection{Categorical properties of coalgebras over a conilpotent curved cooperad}
\label{sectioncommuteslimitscolimitscog}
Let $\operad C$ be a conilpotent curved cooperad whose underlying graded conilpotent cooperad is the image through $- \otimes \mathbb S$ of a graded planar conilpotent cooperad $\operad C_\pl$.

\begin{proposition}
The forgetful functor from curved $\C$-coalgebras to pdg $\C$-coalgebras admits a right adjoint denoted by $\mathrm{Curv}$. This gives an adjunction 
\[
\begin{tikzcd}[column sep=5pc,row sep=3pc]
          \mathsf{curv}~\C\text{-}\mathsf{cog} \arrow[r, shift left=1.1ex, "\mathrm{U}"{name=F}] & \mathsf{pdg}~\C\text{-}\mathsf{cog}~. \arrow[l, shift left=.75ex, "\mathrm{Curv}"{name=U}]
            \arrow[phantom, from=F, to=U, , "\dashv" rotate=-90]
\end{tikzcd}
\]
Hence the category of curved $\operad C$-coalgebras forms a coreflexive full subcategory of the category of pdg $\operad C$-coalgebras. In other words, they are coalgebras over an idempotent comonad in the category of pdg $\operad C$-coalgebras.
\end{proposition}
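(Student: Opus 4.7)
The plan is to apply the adjoint functor theorem for presentable categories, supplemented by an explicit description of the right adjoint as the maximal curved sub-coalgebra. First, I would package the curvature condition into a natural endomorphism of the forgetful functor. For a pdg $\C$-coalgebra $W$, set
\[
\kappa_W := d_W^2 + (\theta \comp \id_W) \circ \Delta_W \colon W \longrightarrow W~,
\]
regarded as an endomorphism of the underlying pdg module. Any morphism $f \colon W \to V$ of pdg $\C$-coalgebras commutes with both pre-differentials and coactions, so $f \circ \kappa_W = \kappa_V \circ f$; thus $\kappa$ is a natural endomorphism of the forgetful functor $\mathrm{U}$ from $\mathsf{pdg}~\C\text{-}\mathsf{cog}$ to pdg modules. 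By definition, $\mathsf{curv}~\C\text{-}\mathsf{cog}$ is the full subcategory of objects on which $\kappa$ vanishes.

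Second, I would check that this subcategory is closed under arbitrary colimits in $\mathsf{pdg}~\C\text{-}\mathsf{cog}$. Since the forgetful functor $\mathrm{U}$ preserves colimits --- this was established just above the statement by comonadicity --- any colimit $W = \colim_i W_i$ of curved coalgebras satisfies $\kappa_W = \colim_i \kappa_{W_i} = 0$. Combined with the fact that the inverter of a natural transformation between accessible functors is accessible, this exhibits $\mathsf{curv}~\C\text{-}\mathsf{cog}$ as a presentable subcategory closed under colimits in the presentable category $\mathsf{pdg}~\C\text{-}\mathsf{cog}$.

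At this point the adjoint functor theorem for presentable categories \cite{AdamekRosicky} provides the right adjoint $\mathrm{Curv}$. An explicit model is available: $\mathrm{Curv}(W)$ is the union in $\mathsf{pdg}~\C\text{-}\mathsf{cog}$ of all sub-coalgebras of $W$ on which $\kappa$ vanishes. This collection is non-empty (it contains the zero sub-coalgebra), directed under pairwise sums (such sums being colimits in $\mathsf{pdg}~\C\text{-}\mathsf{cog}$, hence curved by the previous step), and essentially small by wellpoweredness of the presentable category $\mathsf{pdg}~\C\text{-}\mathsf{cog}$. The resulting morphism $\mathrm{Curv}(W) \hookrightarrow W$ serves as the counit of the adjunction, and full faithfulness of the inclusion makes it an isomorphism whenever $W$ is already curved. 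This exhibits $\mathsf{curv}~\C\text{-}\mathsf{cog}$ as a coreflective subcategory, equivalently as coalgebras over the idempotent comonad $\mathrm{U} \circ \mathrm{Curv}$.

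The main potential obstacle is verifying accessibility cleanly: one needs that the subcategory cut out by $\kappa = 0$ remains accessible despite the apparently non-linear appearance of $d_W$ in $d_W^2$. This is however not a real issue, since $d_W$ is part of the data of the object $W$ rather than a free variable, so $\kappa$ is an honest natural transformation between accessible functors and its inverter inherits accessibility from the standard closure results recorded in \cite{AdamekRosicky}.
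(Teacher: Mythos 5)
Your argument is correct, but it takes a genuinely different route from the paper. The paper constructs $\mathrm{Curv}(W)$ explicitly: from the two degree $-2$ maps $(\theta \comp \id)\Delta_W$ and $-d_W^2$ it builds, via the cofree pdg module $Z_W = s^2 W \otimes \kk[\mathrm{u}]$ and the cofree coalgebra functor, a coreflexive pair of pdg $\C$-coalgebra maps $W \rightrightarrows (\C \comp Z_W) \times W$, takes its equaliser $H$ (computed in graded modules), shows $H \hookrightarrow W$ is a monomorphism and uses this to see that $H$ is curved, and then checks the universal property by hand. You instead observe that the curvature defect $\kappa$ is a natural transformation between accessible functors, that the curved objects form the full subcategory where $\kappa$ vanishes (strictly speaking this is an \emph{equifier} of $\kappa$ with the zero transformation, not an inverter, but the closure results of Adámek--Rosický you invoke do cover equifiers), that this subcategory is closed under colimits by naturality and joint epimorphy of colimit cocones, and then apply the adjoint functor theorem for presentable categories. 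This is a valid and in some ways cleaner existence proof, and it even yields presentability of $\mathsf{curv}~\C\text{-}\mathsf{cog}$ directly. What the paper's explicit equaliser buys, and what your abstract argument does not immediately provide, is the concrete description used in the corollary that follows: the identification of $\mathrm{Curv}$ as a finite cosifted limit computed in graded modules is what lets the authors show the resulting idempotent comonad is accessible and preserves coreflexive equalisers, which feeds into the later comonadicity statements. Also note that your supplementary "maximal curved sub-coalgebra" model silently uses that $\C \comp -$ preserves monomorphisms (to see that sums and images of curved sub-coalgebras are again sub-coalgebras); this is fine here only because of the section's standing assumption that the underlying graded cooperad of $\C$ is of the form $\C_\pl \otimes \mathbb{S}$, and the universal property of that model (every map from a curved coalgebra factors through it) deserves the one-line image argument you omitted -- though since your main adjoint-functor-theorem argument already establishes the proposition, these are cosmetic rather than genuine gaps.
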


\begin{proof}
Let $(W,\Delta_W,d_W)$ be a pdg $\operad C$-coalgebra, we consider the following two maps of graded $\kk$-modules
\[
\begin{tikzcd}[column sep=3pc,row sep=3pc]
        W \ar[rr, shift left = 0.7ex, "(\theta~\comp~\id)\Delta_W"]
        \ar[rr, shift right = 0.7ex, "-d_W^2"'] 
        && s^2 W~,
\end{tikzcd}
\]
where $\theta: \C \longrightarrow \operad \I$ is the curvature of the cooperad $\C$. Let $Z_W$ be the cofree pdg $\kk$-module obtained from the graded $\kk$ module $s^2 W$. Since a pdg $\kk$-module is a graded $\kk$-module together with a degree $-1$ endomorphism, $Z_W$ is given by $s^2 W \otimes \kk[\mathrm{u}]$, where $\mathrm{u}$ is a generator in degree $-1$. From the above two maps, one gets a natural coreflexive pair of maps of pdg $\operad C$-coalgebras
\[
\begin{tikzcd}[column sep=3pc,row sep=3pc]
        W \ar[rr, bend left, ]
        \ar[rr, bend right] 
        && (\operad C \comp Z_W) \times W
        \ar[ll]
\end{tikzcd}
\]
where the rightward maps are obtained from the previous diagram of graded $\kk$-modules, using the forgetful-cofree adjunction between graded $\kk$-modules and pdg $\kk$-modules, and where the leftward map is just the projection onto $W$. 

\medskip

This pair has an equaliser $(H,\Delta_H,d_H)$ in pdg $\C$-coalgebras, which can be computed in graded $\kk$-modules. In particular, the morphism of pdg $\operad C$-coalgebras $H \longrightarrow W$ is a degree-wise monomorphism. Furthermore, the following diagram commutes
     $$
     \begin{tikzcd}[column sep=3pc,row sep=3pc]
         H
         \ar[r,rightarrowtail] \ar[d,"(\theta~ \comp~ \id)\Delta_H + d_H^2"']
         & W \ar[d,"(\theta~ \comp ~\id)\Delta_W + d_W^2"]
         \\
         s^2H
         \ar[r,rightarrowtail]
         &s^2W~.
     \end{tikzcd}
     $$
The map from $H$ to $s^2W$ is zero by the universal property of $H$. Since the bottom map is a monomorphism, the left map is also zero. So $H$ is curved. We set $\mathrm{Curv}(W) \coloneqq H$, and it is straightforward to show that it defines a right adjoint to the forgetful functor. 
\end{proof}

\begin{corollary}
The category of curved $\operad C$-coalgebras is the category of coalgebra over an idempotent comonad over pdg $\operad C$-coalgebras which is accessible and preserves coreflexive equalisers. Hence, the category of curved $\operad C$-coalgebras is presentable.
\end{corollary}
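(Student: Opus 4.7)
The plan is to verify the three statements about the comonad $T \coloneqq \mathrm{U} \circ \mathrm{Curv}$ on $\catpdgcog{\operad C}$ that was constructed in the preceding proposition, and then invoke a standard presentability result for coalgebras over accessible comonads.

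The first property, idempotency, has already been settled: the preceding proposition shows that $\mathrm{U}$ is fully faithful and has $\mathrm{Curv}$ as right adjoint, so the counit $\mathrm{U} \circ \mathrm{Curv} \circ \mathrm{U} \to \mathrm{U}$ is an isomorphism, which is exactly idempotency of $T$.

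Next I would check that $T$ preserves coreflexive equalisers. Since $\mathrm{Curv}$ is a right adjoint, it preserves all limits, so the problem reduces to showing that the full inclusion $\mathrm{U}$ preserves coreflexive equalisers. Let $E \to W_1 \rightrightarrows W_2$ be a coreflexive equaliser in $\catcurvcog{\operad C}$. Its computation in $\catpdgcog{\operad C}$ takes place at the level of underlying graded $\kk$-modules. The curvature equation $d_W^2 + (\theta \comp \id)\Delta_W = 0$ is preserved by equalisers because it is a pointwise identity in the underlying graded module: any limit of pdg $\operad C$-coalgebras satisfying the curvature equation again satisfies it, since the two sides of the equation assemble into natural transformations of the relevant diagrams whose equaliser already contains $E$. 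Hence $\mathrm{U}$ creates coreflexive equalisers, and $T$ preserves them.

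To verify accessibility, observe that in the construction of $\mathrm{Curv}(W)$ in the preceding proposition, every ingredient is accessible: the functor $W \mapsto s^2 W$ is a suspension and hence cocontinuous; the cofree pdg $\kk$-module $Z_W = s^2 W \otimes \kk[\mathrm{u}]$ is obtained by tensoring with a fixed graded $\kk$-module; the endofunctor $\operad C \comp -$ is $\omega$-accessible since $\operad C$ is a small dg $\mathbb S$-module; the product with $W$ and the equaliser are finite limits. Since finite limits and composition of accessible functors between presentable categories preserve accessibility, $\mathrm{Curv}$ is accessible, and therefore so is $T$.

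Finally, one concludes by the theorem that the category of coalgebras over an accessible comonad preserving coreflexive equalisers on a presentable category is itself presentable (cf.\ \cite{ChingRiehl}). Since $\catpdgcog{\operad C}$ is presentable by the results of the previous subsection and $T$ satisfies both hypotheses, $\catcurvcog{\operad C}$ is presentable. No step should present a real obstacle; the only mildly delicate point is checking that the curvature equation is genuinely preserved by coreflexive equalisers, but this follows directly from the fact that equalisers in $\catpdgcog{\operad C}$ are computed in graded $\kk$-modules, where the equation is pointwise.
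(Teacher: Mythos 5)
Your proposal is correct, and its overall skeleton (idempotency from the preceding proposition, preservation of coreflexive equalisers, accessibility, then the presentability result for coalgebras over an accessible comonad) matches the paper's; the difference lies in how the coreflexive-equaliser step is justified. The paper rewrites $\mathrm{Curv}(W)$ explicitly as a coreflexive equaliser of a pair $\operad C \comp W \rightrightarrows \operad C \comp ((\operad C \comp W)\oplus Z_W)$ and then checks that each building block preserves finite cosifted limits and filtered colimits after composing with the forgetful functor to graded $\kk$-modules; this single explicit presentation does double duty, yielding both the limit-preservation and the accessibility claims at once. You instead argue abstractly: $\mathrm{Curv}$ preserves limits as a right adjoint, and the full inclusion $\mathrm{U}$ creates coreflexive equalisers because such equalisers of pdg $\operad C$-coalgebras are computed on underlying graded modules and the curvature equation passes to the (monomorphic) equaliser. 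This is a valid and somewhat slicker route for that step, but note two points you leave implicit: the fact that coreflexive equalisers in $\catpdgcog{\operad C}$ are computed in graded modules is not automatic — it uses the standing hypothesis of this subsection that the underlying graded cooperad of $\operad C$ is planar, via Lemma \ref{lemma: quasi-planar functors preserve more stuff} and the comonadicity of the forgetful functor; and your blanket principle that "finite limits of accessible functors are accessible" needs, at this index, the commutation of filtered colimits with finite limits (equivalently coreflexive equalisers) in graded $\kk$-modules, which is exactly the property the paper invokes. With those two facts made explicit your argument is complete and reaches the same conclusion via \cite{ChingRiehl}.
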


\begin{proof}
Let $W$ be a pdg $\C$-coalgebra. Its image by the functor $\mathrm{Curv}$ is also given by the coreflexive equaliser of a the pair of maps of the form
\[
\begin{tikzcd}
        \operad C \comp W \ar[rr, bend left] \ar[rr, bend right]
        && \operad C \comp ((\operad C \comp W) \oplus Z_W)~.
        \ar[ll]
\end{tikzcd}
\]
This coreflexive equaliser is constructed from the one that defines the functor $\mathrm{Curv}$ and from the one that gives $W$ as a  pdg $\C$-coalgebra. The comonad that defines curved $\operad C$-coalgebras preserves finite cosifted limits since its composition with the the forgetful functor to graded $\kk$-modules does. Indeed, we have that:

     \medskip
     
     \begin{enumerate}
        \item the forgetful functor preserves and creates finite cosifted limits;

        \medskip
        
         \item the constructions $X \mapsto \operad C \comp X$ and $W \mapsto \operad C \comp ((\operad C \comp W) \oplus Z_W)$ preserves finite cosifted limits;

         \medskip
         
         \item then the construction $W \mapsto \lim(W \rightrightarrows \operad C \comp W \oplus Z_W) $ preserves finite cosifted limits.
     \end{enumerate}

     \medskip
     
     To prove that this comonad preserves filtered colimits, it suffices to prove that its composition with the forgetful functor towards pdg $\kk$-modules preserves filtered colimits. This follows from:

     \medskip
     
     \begin{enumerate}
         \item the forgetful functor preserves and creates filtered colimits and coreflexive equalisers;

         \medskip
         
         \item the constructions $W \mapsto \operad C \comp W$
         and $W \mapsto \operad C \comp ((\operad C \comp W ) \oplus Z_W)$ preserve filtered colimits;

         \medskip
         
         \item filtered colimits and coreflexive equalisers commute in graded $\kk$-modules.
     \end{enumerate}
\end{proof}

Let us consider the following commutative diagram of forgetful functors 

\[
\begin{tikzcd}
    \catcurvcog{\operad C}
    \ar[r]
    & \catpdgcog{\operad C}
    \ar[r] \ar[d]
    & \catgrcog{\operad C}
    \ar[d]
    \\
    & \catpdgmod{\kk}
    \ar[r]
    & \catgrmod{\kk}~.
\end{tikzcd}
\]

All the categories are presentable and all the functors are left adjoint (since they preserve colimits), conservative and they also preserve coreflexive equalisers. Therefore all the functors are comonadic.

\subsection{Curved algebras over a conilpotent curved cooperad}
\label{sectioncommuteslimitscolimits}

Again, let $\operad C$ be a conilpotent curved cooperad whose underlying graded conilpotent cooperad is the image through $- \otimes \mathbb S$ of a graded planar conilpotent cooperad $\operad C_\pl$.

\begin{proposition}
The forgetful functor from curved $\C$-algebras to pdg $\C$-algebras admits a left adjoint denoted by $\mathrm{Curv}$. This gives an adjunction 
\[
\begin{tikzcd}[column sep=5pc,row sep=3pc]
          \mathsf{curv}~\C\text{-}\mathsf{alg} \arrow[r, shift left=1.1ex, "\mathrm{U}"{name=F}] & \mathsf{pdg}~\C\text{-}\mathsf{alg}~. \arrow[l, shift left=.75ex, "\mathrm{Curv}"{name=U}]
            \arrow[phantom, from=F, to=U, , "\dashv" rotate=90]
\end{tikzcd}
\]
Hence the category of curved $\operad C$-algebras forms a reflexive full subcategory of the category of pdg $\operad C$-algebras. In other words, they are algebras over an idempotent monad in the category of pdg $\operad C$-algebras.
\end{proposition}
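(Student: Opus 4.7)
The proof proceeds by direct dualization of the one given just above for curved $\operad C$-coalgebras. Let $(\L, \gamma_\L, d_\L)$ be a pdg $\operad C$-algebra. Its deviation from being curved is measured by the two degree $-2$ endomorphisms $\gamma_\L \circ \L^\theta$ and $d_\L^2$, which we view as two graded maps $s^{-2}\L \rightrightarrows \L$. The strategy is to construct $\mathrm{Curv}(\L)$ as a natural reflexive coequaliser in pdg $\operad C$-algebras that forces these two maps to coincide.

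Let $Y_\L$ denote the free pdg $\kk$-module on the graded $\kk$-module $s^{-2}\L$. By its universal property, the two graded maps lift uniquely to pdg maps $Y_\L \rightrightarrows \L$, and applying the free pdg $\operad C$-algebra functor $(-)^{\operad C}$ yields two morphisms of pdg $\operad C$-algebras $Y_\L^{\operad C} \rightrightarrows \L$. Each combined with the identity of $\L$, they assemble into a reflexive pair of pdg $\operad C$-algebras
\[
\begin{tikzcd}[column sep=3pc]
Y_\L^{\operad C} \sqcup \L \ar[rr, bend left] \ar[rr, bend right] && \L \ar[ll]
\end{tikzcd}
\]
where the common section is the coproduct inclusion $\L \hookrightarrow Y_\L^{\operad C} \sqcup \L$. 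Define $\mathrm{Curv}(\L)$ to be its coequaliser.

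By Lemma~\ref{lemma: quasi-planar functors preserve more stuff}, the monad $(-)^{\operad C}$ preserves finite sifted colimits (since the underlying graded cooperad of $\operad C$ has the form $\operad C_\pl \otimes \mathbb S$); in particular it preserves reflexive coequalisers. Hence the quotient $q : \L \twoheadrightarrow \mathrm{Curv}(\L)$ is computed degree-wise and is degree-wise surjective. By construction $q \circ \gamma_\L \circ \L^\theta = q \circ d_\L^2$; combined with the naturality of $\theta$ and the fact that $q$ is a pdg $\operad C$-algebra morphism, this rewrites as $(\gamma_H \circ H^\theta - d_H^2) \circ q = 0$ with $H \coloneqq \mathrm{Curv}(\L)$, and the surjectivity of $q$ then forces $H$ to be curved.

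For the universal property, any pdg $\operad C$-algebra morphism $f : \L \to \L'$ with $\L'$ curved satisfies $f \circ \gamma_\L \circ \L^\theta = \gamma_{\L'} \circ {\L'}^\theta \circ f = d_{\L'}^2 \circ f = f \circ d_\L^2$, so it coequalises the defining pair and factors uniquely through $q$. When $\L$ is already curved, the two parallel maps agree on the nose and the unit $\L \to \mathrm{Curv}(\L)$ is an isomorphism, identifying $\catcurvalg{\operad C}$ as a reflective full subcategory of pdg $\operad C$-algebras. The main technical point, dually to the coalgebra case, is correctly propagating the curvature equation from $\L$ to the quotient; here this is handled by surjection-cancellation of $q$ in place of the monomorphism-cancellation used above.
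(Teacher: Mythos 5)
Your proof is correct and follows essentially the same route as the paper: you coequalise the reflexive pair on $\L \sqcup Y_\L^{\operad C}$ built from the two maps $\gamma_\L \circ \L^\theta$ and $d_\L^2$, use that $(-)^{\operad C}$ preserves reflexive coequalisers (so the quotient is computed degree-wise and $q$ is surjective), and deduce curvedness of the quotient by cancelling the epimorphism $q$, exactly as in the paper's argument. The only differences are cosmetic (you make the appeal to the planarity lemma and the universal-property check explicit, which the paper leaves as "straightforward").
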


\begin{proof}
Let $(\L,\gamma_\L,d_\L)$ be a pdg $\operad C$-algebra. We consider the two maps of graded $\kk$-modules
\[
\begin{tikzcd}
        s^{-2} \L \ar[rr, shift left = 0.7ex, "\gamma_\L~ \L^\theta"]
        \ar[rr, shift right = 0.7ex, "d^2_\L"'] 
        && \L~,
\end{tikzcd}
\]
where $\theta: \C \longrightarrow \operad \I$ is the curvature of the cooperad $\C$. Let $Z_\L$ be the free pdg $\kk$-module obtained from the graded $\kk$ module $s^{-2} \L$. Since a pdg $\kk$-module is a graded $\kk$-module together with a degree $-1$ endomorphism, $Z_\L$ is given by $s^{-2} \L \otimes \kk[\mathrm{u}]$, where $\mathrm{u}$ is a generator in degree $-1$. From the above two maps, one gets a natural reflexive pair of maps of pdg $\operad C$-algebras
     $$
     \begin{tikzcd}[column sep=3pc,row sep=3pc]
        \L \sqcup (Z_\L)^{\operad C} \ar[rr, bend left]
        \ar[rr, bend right] 
        &&\L
        \ar[ll]
    \end{tikzcd}
     $$
     where the leftward maps are obtained from the previous diagram of graded $\kk$-modules, using the free-forgetful adjunction between graded $\kk$-modules and pdg $\kk$-modules, and where the rightward map is just the inclusion of $\L$. 
     
\medskip

     This pair has a coequaliser $(\Xi,\gamma_{\Xi},d_{\Xi})$ in the category of pdg $\C$-algebras, which can be computed in graded $\kk$-modules. In particular, the morphism of pdg $\operad C$-algebras $\L \twoheadrightarrow \Xi$ is a degree-wise epimorphism. Furthermore, the following square commutes:
     $$
     \begin{tikzcd}[column sep=3pc,row sep=3pc]
         s^{-2}\L
         \ar[r,twoheadrightarrow] \ar[d,"\gamma_\L~ \L^\theta - d^2_\L"']
         & s^{-1}\Xi \ar[d,"\gamma_\Xi ~ \Xi^\theta - d^2_{\Xi}"]
         \\
         \L
         \ar[r,twoheadrightarrow]
         &\Xi.
     \end{tikzcd}
     $$
     The map from $s^{-2}\L$ to $\Xi$ is zero by the universal property of $\Xi$. Since the bottom map is an epimorphism, the left map is also zero. So $\Xi$ is curved. We set $\mathrm{Curv}(W) \coloneqq \Xi$, and it is straightforward to show that it defines a left adjoint to the forgetful functor. 
\end{proof}

\begin{corollary}
The category of curved $\operad C$-algebras is the category of algebras over idempotent monad over pdg $\operad C$-algebras which is accessible and preserves reflexive coequalisers. Hence, the category of curved $\operad C$-algebras is presentable.
\end{corollary}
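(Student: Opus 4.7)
The plan is to mirror the argument used in the coalgebra case (Subsection~\ref{sectioncommuteslimitscolimitscog}) while replacing every preservation-of-finite-cosifted-limits statement with its dual for reflexive coequalisers, and to rely crucially on the fact that, because $\operad C \cong \operad C_\pl \otimes \mathbb S$, the endofunctor $(-)^{\operad C}$ enjoys more colimit-preservation properties than a generic right adjoint (Lemma~\ref{lemma: quasi-planar functors preserve more stuff}).

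First, I would unpack the explicit construction of the monad $\mathrm{Curv}$ given in the previous proposition. For every pdg $\operad C$-algebra $(\L, \gamma_\L, d_\L)$, the curved replacement $\mathrm{Curv}(\L)$ is the reflexive coequaliser in pdg $\operad C$-algebras of a parallel pair of the form
\[
\L \sqcup (Z_\L)^{\operad C} \rightrightarrows \L,
\]
where $Z_\L = s^{-2}\L \otimes \kk[\mathrm{u}]$ is the free pdg $\kk$-module on the graded $\kk$-module $s^{-2}\L$. I would then chase $\mathrm{Curv}(\L)$ through the forgetful functor $U : \catpdgalg{\operad C} \to \catgrmod{\kk}$: since $U$ is monadic (hence preserves and creates reflexive coequalisers and filtered colimits), it suffices to check that $U \circ \mathrm{Curv}$ preserves reflexive coequalisers and is accessible.

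The key step is the preservation of reflexive coequalisers. Write the composite $\L \mapsto U(\mathrm{Curv}(\L))$ as the colimit of the finite-cosifted diagram built from the coproduct $\L \sqcup (Z_\L)^{\operad C}$ together with the pdg $\C$-algebra structure of $\L$. The construction $\L \mapsto Z_\L$ is left adjoint (hence preserves all colimits); the free pdg $\operad C$-algebra on any pdg module is obtained by applying $(-)^{\operad C}$, which preserves \textbf{finite sifted} colimits thanks to Lemma~\ref{lemma: quasi-planar functors preserve more stuff} and the hypothesis $\operad C \cong \operad C_\pl \otimes \mathbb S$; finite coproducts commute with sifted colimits; and reflexive coequalisers are themselves finite sifted colimits that commute with reflexive coequalisers in graded $\kk$-modules. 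Assembling these facts shows that the composition preserves reflexive coequalisers, hence so does $\mathrm{Curv}$. I expect this reduction --- and in particular the verification that each ingredient preserves the specific shape of colimit needed --- to be the main obstacle; without the planar assumption on $\operad C$, the functor $(-)^{\operad C}$ would only preserve limits and the proof would collapse.

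For accessibility, I would pick a small regular cardinal $\beta$ such that $\operad C(n)$ is $\beta$-small for every $n$, so that by Lemma in the previous subsection the endofunctor $(-)^{\operad C}$ preserves $\beta$-filtered colimits. The same bookkeeping as above shows that $U \circ \mathrm{Curv}$ preserves $\beta$-filtered colimits, hence $\mathrm{Curv}$ is $\beta$-accessible. Finally, to conclude presentability, I would invoke the standard theorem (see \cite{AdamekRosicky}) that the category of algebras over an accessible monad on a presentable category that preserves reflexive coequalisers is itself presentable, applied to the idempotent monad $\mathrm{Curv}$ on the presentable category $\catpdgalg{\operad C}$. Since $\mathrm{Curv}$ is idempotent, its category of algebras is exactly the reflexive subcategory $\catcurvalg{\operad C}$, which yields the claim.
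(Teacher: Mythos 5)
Your proposal is correct and takes essentially the same approach as the paper: both present $\mathrm{Curv}$ as a reflexive coequaliser built from $(-)^{\operad C}$, $Z_{(-)}$ and coproducts, check that the composite with the forgetful functor to graded $\kk$-modules preserves finite sifted colimits via Lemma \ref{lemma: quasi-planar functors preserve more stuff} (this is exactly where the planarity hypothesis on $\operad C$ enters), and obtain accessibility from a regular cardinal $\beta$ making $(-)^{\operad C}$ $\beta$-accessible together with the commutation of reflexive coequalisers with $\beta$-filtered colimits in graded modules. The only differences are cosmetic: the paper rewrites $\mathrm{Curv}(\L)$ as a coequaliser of free algebras $(Z_\L \oplus \L^{\operad C})^{\operad C} \rightrightarrows \L^{\operad C}$ so that the whole diagram is visibly computed in graded $\kk$-modules, whereas you work directly with the defining pair $\L \sqcup (Z_\L)^{\operad C} \rightrightarrows \L$, and your ``finite-cosifted diagram'' should read ``finite sifted''.
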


\begin{proof}
Let $\L$ be a pdg $\operad C$-algebra. Its image by the functor $\mathrm{Curv}$ is also given by the reflexive coequaliser of a the pair of maps of the form
    \[
    \begin{tikzcd}
        (Z_\L \oplus \L^{\operad C})^{\operad C} \ar[rr, bend left] \ar[rr, bend right]
        && \L^{\operad C}.
        \ar[ll]
    \end{tikzcd}
    \]
This reflexive coequaliser is constructed from the one that defines the functor $\mathrm{Curv}$ and from the one that gives $\L$ as a  pdg $\C$-algebra. This monad preserves finite sifted colimits since its composition with the forgetful functor towards graded $\kk$-modules does. Indeed, we have that:

     \medskip
     
     \begin{enumerate}
         \item this forgetful functor preserves and creates finite sifted colimits;

         \medskip
         
         \item the constuction $X \mapsto X^{\operad C}$ preserves finite sifted colimits;

         \medskip
         
         \item the construction $\L \mapsto Z_\L$ preserves finite sifted colimits;

         \medskip
         
         \item then the construction $\L \mapsto \varinjlim((Z_\L \oplus \L^{\operad C})^{\operad C} \rightrightarrows \L^{\operad C}) $ preserves finite sifted colimits.
     \end{enumerate}

     \medskip
     
     Let us prove that this monad is accessible. Let $\beta$ be a small regular cardinal so that the monad $(-)^{\operad C}$ is $\beta$-accessible. To prove that the idempotent "curvature" monad  preserves $\beta$-filtered colimits, it suffices to prove that its composition with the forgetful functor towards pdg $\kk$-modules perserves these colimits. This follows from the following facts:

     \medskip
     
     \begin{enumerate}
         \item the forgetful functor preserves and creates reflexive coequalisers and $\beta$-filtered colimits;

        \medskip

         \item the constructions $\L \mapsto \L^{\operad C}$ and $\L \mapsto \operad C \comp ((\operad C \comp \L ) \oplus Z_\L)$ preserve filtered colimits;

         \medskip
         
         \item reflexive coequalisers and $\beta$-filtered colimits commute in pdg $\kk$-modules.
     \end{enumerate}
\end{proof}

Let us consider the following commutative diagram of forgetful functors:

$$
\begin{tikzcd}
    \catcurvalg{\operad C}
    \ar[r]
    & \catpdgalg{\operad C}
    \ar[r] \ar[d]
    & \catgralg{\operad C}
    \ar[d]
    \\
    & \catpdgmod{\kk}
    \ar[r]
    & \catgrmod{\kk}~.
\end{tikzcd}
$$
All the categories are presentable. Moreover all the functors are right adjoint (they are accessible and preserve limits), conservative and they also preserve finite sifted colimits. Therefore they are monadic.

\subsection{Complete algebras over the canonical ladder of a quasi-planar cooperad}
This section is the mirror of \cite[Section 4.4]{linearcoalgebras} in any characteristic for a quasi-planar cooperad ladder equipped with its canonical quasi-planar filtration. For the rest of this section, we fix a quasi-planar curved conilpotent cooperad $\C$.

\medskip

\begin{lemma}
    Let $n$ be a natural integer and let $X$ be a pdg $\kk$ module. Let us consider the inclusion $i_{n}: F_n^{\qp}  \C \rightarrowtail \C$. The natural map
    
    \[
    (\id)^{i_n}: X^{\operad C} \twoheadrightarrow X^{F_n^{\mathrm{qp}}\operad C}
    \]
    \vspace{0.1pc}
    
    is a degree-wise epimorphism. Subsequently, the forgetful functor from pdg $F_n^{\mathrm{qp}} \operad C$-algebras to pdg $\operad C$-algebras is fully faithful.
\end{lemma}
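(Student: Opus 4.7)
The plan proceeds in three short steps: first identify $(\id)^{i_n}$ as a product of restriction maps via the pseudo-planar structure, then show each such restriction is a degree-wise surjection using that $\kk$ is a field, and finally derive the full faithfulness as a formal consequence of right-cancellability.

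\textbf{Pseudo-planar identification.} Since $\C$ is quasi-planar, each $F_n^{\qp}\C$ is pseudo-planar, and by Lemma \ref{lemma: pullback pseudo-planar ccc} combined with the construction of the quasi-planar filtration, the inclusion $i_n: F_n^{\qp}\C \hookrightarrow \C$ is the image through $-\otimes\mathbb{S}$ of a degree-wise injection of graded $\mathbb{N}$-modules $j_n: F_n^{\qp}\C_\pl \hookrightarrow \C_\pl$. Using the natural isomorphism $X^{M \otimes \mathbb{S}} \cong X^M$ recalled above together with its functoriality in $M$, the map $(\id)^{i_n}$ is identified with the natural map
\[
\prod_{k \geq 0} [\C_\pl(k), X^{\otimes k}] \longrightarrow \prod_{k \geq 0} [F_n^{\qp}\C_\pl(k), X^{\otimes k}]
\]
induced by precomposition along $j_n(k): F_n^{\qp}\C_\pl(k) \hookrightarrow \C_\pl(k)$. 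Each such $j_n(k)$ is a degree-wise injection of graded $\kk$-modules; since $\kk$ is a field, it admits a graded section, so each restriction map $[\C_\pl(k), X^{\otimes k}] \twoheadrightarrow [F_n^{\qp}\C_\pl(k), X^{\otimes k}]$ is a degree-wise epimorphism of pdg $\kk$-modules. As products in pdg $\kk$-modules are computed in each degree, and as products of surjective linear maps are surjective, $(\id)^{i_n}$ is a degree-wise epimorphism.

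\textbf{Full faithfulness.} The morphism $i_n$ endows every pdg $F_n^{\qp}\C$-algebra $(\L, \gamma_\L)$ with the pdg $\C$-algebra structure $\gamma_\L \circ (\id)^{i_n}: \L^\C \to \L$. Faithfulness is immediate since the forgetful functor is the identity on underlying pdg $\kk$-modules. For fullness, let $f: \L \to \L'$ be a morphism of the induced pdg $\C$-algebras; we must check that it commutes with the pdg $F_n^{\qp}\C$-algebra structures. Consider the diagram
\[
\begin{tikzcd}[column sep=2.5pc,row sep=2.5pc]
\L^\C \ar[r, twoheadrightarrow, "(\id)^{i_n}"] \ar[d]
& \L^{F_n^{\qp}\C} \ar[r, "\gamma_\L"] \ar[d]
& \L \ar[d, "f"]
\\
{\L'}^\C \ar[r, twoheadrightarrow, "(\id)^{i_n}"']
& {\L'}^{F_n^{\qp}\C} \ar[r, "\gamma_{\L'}"']
& \L'~.
\end{tikzcd}
\]
The outer rectangle commutes by hypothesis, and the left square commutes by bifunctoriality of $(-)^{(-)}$. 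Right-cancelling the epimorphism $(\id)^{i_n}$ in the top row yields the commutativity of the right square, so $f$ is a morphism of pdg $F_n^{\qp}\C$-algebras.

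The main technical point is the pseudo-planar identification in the first step: it is what lets us avoid having to lift $\mathbb{S}_k$-invariant maps, which is the reason a direct surjectivity argument for a general cooperad would fail in positive characteristic. Once we pass through $\C_\pl$, the problem reduces to extending graded $\kk$-linear maps along a graded $\kk$-linear injection, which is automatic since $\kk$ is a field.
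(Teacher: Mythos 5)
Your proof is correct and follows essentially the same route as the paper: both reduce to the planar presentation via $X^{M\otimes\mathbb S}\cong X^M$, use that the degree-wise injection $F_n^{\qp}\C_\pl \rightarrowtail \C_\pl$ splits over the field $\kk$ to get a section of the induced map (hence a degree-wise epimorphism), and deduce full faithfulness from the resulting object-wise epimorphism of monads. The only difference is cosmetic: the paper cites the standard fact that an object-wise epimorphism of monads yields a fully faithful forgetful functor, whereas you prove it directly by the right-cancellation argument.
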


\begin{proof}
    In the category of graded $\kk$-modules, this map may be rewritten as 
    \[
    X^{\operad C_\pl} \twoheadrightarrow X^{F_n^{\mathrm{qp}}\operad C_\pl}
    \]
    Moreover, the map of graded $\mathbb N$-module $F_n^{\mathrm{qp}}\operad C_\pl \rightarrowtail \operad C_\pl$ has a left inverse. Therefore the above map has a section. It implies that it is a degree-wise epimorphism. We thus get a morphism of monads that is object-wise an epimorphism. Therefore the related right adjoint forgetful functor is fully faithful.
\end{proof}

For every natural integer $n$, we denote by 
    $$
    F^n_{\mathrm{qp}}: \catpdgalg{\operad C} \longrightarrow \catpdgalg{F_n^{\mathrm{qp}}\operad C}
    $$
    be the functor that is left adjoint to the forgetful functor. It sends every pdg $\operad C$-algebra $(B,\gamma_B,d_B)$ to the coequaliser of the following pair

\[
\begin{tikzcd}[column sep=4pc,row sep=4pc]
(\L^{\operad C})^{F_n^{\mathrm{qp}}\operad C} \arrow[r,"(\gamma_\L)^{\id}",shift right=1.1ex,swap]  \arrow[r,"(\id)^{\Delta} ~\circ ~ (\id)^{i_n}"{name=SD},shift left=1.1ex ]
&\L^{F_n^{\mathrm{qp}} \operad C}~.
\end{tikzcd}
\]

    This coequalizer is also the pushout of the span
    $$
    \begin{tikzcd}[column sep=2.5pc,row sep=2.5pc]
        \L^{\operad C}
        \ar[r,"(\id)^{i_n}",twoheadrightarrow] \ar[d,"\gamma_\L",swap]
        &\L^{F_n^{\mathrm{qp}}\operad C}
        \\
        \L~.
    \end{tikzcd}
    $$
    Both of these colimits can be computed in the category of pdg $F_n^{\mathrm{qp}}\operad C$-algebras or in the underlying category of graded $\kk$-modules.

\medskip

This gives a natural diagram 
$$
\L \longrightarrow \cdots \longrightarrow F^n_{\mathrm{qp}} \L \longrightarrow F^{n-1}_{\mathrm{qp}} \L \longrightarrow \cdots F^1_{\mathrm{qp}} \L \longrightarrow F^0_{\mathrm{qp}} \L~, 
$$

where we set $F^{-1}_{\mathrm{qp}}(\L)=0$. For every natural integer $n$ in $\mathbb{N}$, the \textit{associated graded} is given by
$$
\gr^n_{\mathrm{qp}} \L \coloneqq \mathrm{Ker}\left(F^n_{\mathrm{qp}}(\L) \longrightarrow F^{n-1}_{\mathrm{qp}}(\L)\right)~.
$$

\begin{definition}[qp-completion functor]
The \textit{qp-completion functor}, denoted by $\widehat{(-)}$, is the endofunctor of pdg $\operad C$-algebras that sends an algebra $\L$ to the limit
    $$
    \widehat \L \coloneqq \lim_{n\in \omega^\op} F^n_{\mathrm{qp}}(\L)~.
    $$
\end{definition}

\begin{definition}[qp-complete pdg $\C$-algebra]
A pdg $\operad C$-algebra $\L$ is called \textit{qp-complete} if the canonical morphism $\L \longrightarrow \widehat \L$ is an isomorphism. 

    \medskip
    
    We denote $\catpdgcompalg{\operad C}$ the full subcategory of that of pdg $\operad C$-algebras spanned by the qp-complete ones.
\end{definition}

\begin{lemma}[{After \cite[Lemma 4.22]{linearcoalgebras}}]\label{lemmaepicomplete0}
Let $f: \Lambda \longrightarrow \Gamma$ be a morphism of pdg $\operad C$-algebras. Let us suppose that for every natural integer $n$, the induced map
$$
\gr^n(f): \gr^n_{\qp} \Lambda \twoheadrightarrow \gr^n_{\qp} \Gamma
$$
is a degree-wise epimorphism. 

\medskip

\begin{enumerate}
    \item For every $n$, the map
    $$
    F^{n+1}_{\mathrm{qp}} \L \twoheadrightarrow F^{n}_{\mathrm{qp}} \L \times_{F^{n}_{\mathrm{qp}} \Gamma} F^{n+1}_{\mathrm{qp}} \Gamma
    $$
    is a degree-wise epimorphism.
    
    \medskip
    
    \item The map $\widehat f : \widehat \L \twoheadrightarrow \widehat \Gamma$ is a degree-wise epimorphism.
\end{enumerate}
\end{lemma}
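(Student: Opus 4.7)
The plan is to first verify that each stage of the tower fits into a short exact sequence of pdg $\kk$-modules, then deduce (1) by a snake lemma argument, and finally deduce (2) by an inverse-limit lifting argument of Mittag--Leffler type.

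\textbf{Step 1: the tower is a tower of epimorphisms with prescribed kernels.} From the pushout description of $F^n_{\qp}\L$ recalled just above the lemma, the span $\L \leftarrow \L^{\C} \twoheadrightarrow \L^{F_n^{\qp}\C}$ has a degree-wise epimorphism on the right (by the preceding lemma), and pushouts in graded $\kk$-modules preserve epimorphisms, so $\L \twoheadrightarrow F^n_{\qp}\L$ is a degree-wise epimorphism. The commutative triangle of cooperad inclusions $F_n^{\qp}\C \hookrightarrow F_{n+1}^{\qp}\C \hookrightarrow \C$ yields, after pushing out, the factorisation $\L \twoheadrightarrow F^{n+1}_{\qp}\L \twoheadrightarrow F^{n}_{\qp}\L$. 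Hence, by the very definition of $\gr^{n+1}_{\qp}\L$, we obtain a short exact sequence of pdg $\kk$-modules
\[
0 \longrightarrow \gr^{n+1}_{\qp}\L \longrightarrow F^{n+1}_{\qp}\L \longrightarrow F^{n}_{\qp}\L \longrightarrow 0~,
\]
and analogously for $\Gamma$. This is the technical backbone of the rest of the argument.

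\textbf{Step 2: proof of (1).} The pullback $P \coloneqq F^{n}_{\qp}\L \times_{F^{n}_{\qp}\Gamma} F^{n+1}_{\qp}\Gamma$ sits in the short exact sequence $0 \to \gr^{n+1}_{\qp}\Gamma \to P \to F^{n}_{\qp}\L \to 0$, because pulling back the short exact sequence for $\Gamma$ along $F^{n}_{\qp}\L \to F^{n}_{\qp}\Gamma$ preserves the kernel. The morphism $f$ induces a map between the short exact sequence of $\L$ from Step 1 and this one, which is $\gr^{n+1}(f)$ on the left and the identity on the right. Since $\gr^{n+1}(f)$ is a degree-wise epimorphism by hypothesis and the right-hand map is an isomorphism, the snake lemma (applied degree-wise in the ground $\kk$-vector spaces) forces the middle vertical map $F^{n+1}_{\qp}\L \to P$ to be a degree-wise epimorphism, which is precisely assertion (1).

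\textbf{Step 3: proof of (2).} The qp-completion is the inverse limit of the tower $(F^n_{\qp}\L)_{n \in \omega^{\op}}$, computed degree-wise in $\kk$-vector spaces. Fix a degree and let $(\gamma_n)_{n \geq 0} \in \widehat{\Gamma}$. We construct a compatible lift $(\lambda_n)_{n\geq 0} \in \widehat{\L}$ by induction: the base case $n=0$ uses that $F^0_{\qp}\L \to F^0_{\qp}\Gamma$ is a degree-wise epimorphism (apply (1) with the convention $F^{-1}_{\qp} = 0$, or directly since $\gr^0(f)$ is epi and $F^0_{\qp} = \gr^0_{\qp}$), and the inductive step is exactly the content of (1): given $\lambda_n$ with image $\gamma_n$, the pair $(\lambda_n, \gamma_{n+1})$ defines an element of $P$ which lifts to some $\lambda_{n+1} \in F^{n+1}_{\qp}\L$ by (1). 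This produces the desired lift of $(\gamma_n)_n$ in $\widehat{\L}$, so $\widehat{f}$ is a degree-wise epimorphism.

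The only subtle point is Step 1, namely that the transition maps of the tower really are degree-wise epimorphisms with kernel $\gr^{n+1}_{\qp}$; once this is in hand, Steps 2 and 3 are a clean snake lemma plus Mittag--Leffler-type lifting argument carried out degree-wise in $\kk$-vector spaces.
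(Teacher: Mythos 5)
Your proof is correct and follows essentially the same route as the paper: part (1) is the paper's element chase through the pullback, repackaged as a snake-lemma argument on the short exact sequences $0 \to \gr^{n+1}_{\qp} \to F^{n+1}_{\qp} \to F^{n}_{\qp} \to 0$, and part (2) is the same inductive, axiom-of-choice (Mittag--Leffler-type) lifting of compatible sequences. Your Step 1, which justifies the surjectivity of the transition maps via the pushout description, makes explicit something the paper uses silently (the two-headed arrows $p_n$, $q_n$), so spelling it out is harmless and welcome.
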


\begin{proof}
Let us prove the first assertion. Let us consider the square diagram
$$
\begin{tikzcd}[column sep=2.5pc,row sep=2.5pc]
    F^{n+1}_{\mathrm{qp}} \L
    \ar[r, "p_n", twoheadrightarrow] \ar[d, "F^{n+1}_{\mathrm{qp}}(f)"']
    & F^{n}_{\mathrm{qp}} \L
    \ar[d, "F^{n}_{\mathrm{qp}}(f)"]
    \\
    F^{n+1}_{\mathrm{qp}} \Gamma
    \ar[r, "q_n"',twoheadrightarrow]
    & F^{n}_{\mathrm{qp}} \Gamma
\end{tikzcd}
$$
Given an element $(y , z) \in F^{n}_{\mathrm{qp}} \L \times_{F^{n}_{\mathrm{qp}} \Gamma} F^{n+1}_{\mathrm{qp}} \Gamma$
one can find $x'$ so that $p_n(x') = y$. Then $q_n(z - F^{n+1}_{rad}(f)(x'))=0$. In other words
$$
z - F^{n+1}_{\qp}(f)(x') \in \gr^{n+1}_{\qp}\Gamma~.
$$
Let $x''$ be one of its antecedent in $\gr^{n+1}_{\qp}\L$. Then $x = x'+x''$ is an antecedent of $(y , z)$. This proves the first point.

\medskip

Let us prove the second point. Let $z$ be a degree $k$ element of $\widehat \L$. It is a sequence
$(z_n)_{n \in \mathbb N} \in \prod_n  F^{n}_{\mathrm{qp}} \widehat \L_k$, where each $z_n$ is the image of $z_{n+1}$ via the projection $F^{n+1}_{\mathrm{qp}}(\widehat \L) \twoheadrightarrow F^{n}_{\mathrm{qp}}(\widehat \L)$.
Using 
\begin{enumerate}
    \item the fact that the map $\gr^0(f)$ 
is a degree-wise epimorphism;
 \item the first assertion of the lemma;
 \item the axiom of choice;
\end{enumerate}
one can build a sequence $x = (x_n)_{n \in \mathbb N} \in \prod_n F_{\qp}^n \L_k$, where each $x_n$ is the image of $x_{n+1}$ via the projection $F^{n+1}_{\mathrm{qp}}(\L) \twoheadrightarrow F^{n}_{\mathrm{qp}}(\L)$ and such that $F^{n}_{\qp}(f)(x_n) = z_n$. Thus $x$ is an antecedent of $z$ in $\widehat \L$.
\end{proof}

\begin{lemma}[{After \cite[Proposition 4.23]{linearcoalgebras}}]\label{lemmaepicomplete}
The completion endofunctor preserves degree-wise epimorphisms.
\end{lemma}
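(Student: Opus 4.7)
The plan is to reduce the statement, via Lemma~\ref{lemmaepicomplete0}, to showing that for every degree-wise epimorphism $f : \L \twoheadrightarrow \Gamma$ of pdg $\operad C$-algebras the induced maps $\gr^n_{\qp}(f) : \gr^n_{\qp} \L \to \gr^n_{\qp} \Gamma$ are themselves degree-wise epimorphisms for every $n \geq 0$. The crucial input will be the quasi-planar structure of $\operad C$, which permits an explicit computation of the objects involved in graded $\kk$-modules and bypasses the obstruction that appears in a purely formal snake-lemma treatment.

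First, using the natural isomorphism $X^{M \otimes \mathbb S} \cong X^M$, I would identify $\L^{\operad C} \cong \L^{\operad C_\pl}$ and $\L^{F^n_{\qp} \operad C} \cong \L^{F^n_{\qp} \operad C_\pl}$, and observe that the inclusion $F^n_{\qp}\operad C_\pl \hookrightarrow \operad C_\pl$ splits as a map of graded $\mathbb N$-modules, since we work over a field. Writing $Q_n$ for the complementary quotient, this yields a natural direct sum decomposition $\L^{\operad C} \cong \L^{F^n_{\qp} \operad C} \oplus \L^{Q_n}$ in which $\pi^{\L}_n$ is the projection onto the first summand and $f^{\operad C}$ splits as $f^{F^n_{\qp}\operad C} \oplus f^{Q_n}$; since each $\operad C_\pl(k)$ is a free $\kk$-module and $f^{\otimes k}$ is a surjection, both components of $f^{\operad C}$ are degree-wise epimorphisms. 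In particular, $f^{\operad C}(\ker \pi^{\L}_n) = \ker \pi^{\Gamma}_n$.

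Next, I would compute the defining pushout of $F^n_{\qp} \L$ in graded $\kk$-modules, which realises it as the quotient $\L / \mathcal{F}_n \L$ with $\mathcal{F}_n \L \coloneqq \gamma_{\L}(\ker \pi^{\L}_n)$, and hence $\gr^n_{\qp} \L \cong \mathcal{F}_{n-1}\L / \mathcal{F}_n \L$. The naturality relation $f \circ \gamma_{\L} = \gamma_{\Gamma} \circ f^{\operad C}$ together with the equality of kernels above gives $f(\mathcal{F}_n \L) = \mathcal{F}_n \Gamma$, which in turn yields the surjectivity of each $\gr^n_{\qp}(f)$. A direct appeal to Lemma~\ref{lemmaepicomplete0} then concludes the proof.

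The main obstacle I anticipate is conceptual rather than computational: a purely formal snake-lemma argument applied to the short exact sequences $0 \to \gr^n_{\qp}\L \to F^n_{\qp}\L \to F^{n-1}_{\qp}\L \to 0$ only controls the cokernel of $\gr^n_{\qp}(f)$ as a subquotient of $\ker F^{n-1}_{\qp}(f)$, which is nonzero in general. One therefore cannot deduce the surjectivity of $\gr^n_{\qp}(f)$ from that of $F^n_{\qp}(f)$ alone, and the explicit planar splitting afforded by the quasi-planar hypothesis is precisely what allows one to bypass this issue.
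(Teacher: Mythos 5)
Your proof is correct and follows essentially the same route as the paper: both reduce, via Lemma \ref{lemmaepicomplete0}, to showing that $\gr^n_{\qp}(f)$ is a degree-wise epimorphism, and both exploit the quasi-planar (hence split, since $\kk$ is a field) filtration of $\operad C_\pl$ to transport surjectivity of $f$ through the duals $(-)^{F_n^{\qp}\operad C_\pl/F_{n-1}^{\qp}\operad C_\pl}$. The only cosmetic difference is that you realise $F^n_{\qp}\L$ as the explicit quotient $\L/\gamma_\L(\ker \pi^\L_n)$ and $\gr^n_{\qp}\L$ as a subquotient of $\L$, whereas the paper identifies $\gr^n_{\qp}\L$ as the image of $\L^{F_n^{\qp}\operad C/F_{n-1}^{\qp}\operad C}$ in $F^n_{\qp}\L$; these are the same computation seen from two sides.
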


\begin{proof}
Let $f: \Lambda \longrightarrow \Gamma$ be a degree-wise epimorphism.
In order to show that $\widehat f$ is a degree-wise epimorphism, it suffices using Lemma \ref{lemmaepicomplete0} to prove that $\gr^n_{\qp}(f)$ is a degree-wise epimorphism for every natural integer $n$.

\medskip

Infinite products in graded $\kk$-modules are exact, the contravariant functor $[-, X]$ for every $X$ is also exact, therefore the sequence
$$
0 \longrightarrow \Lambda^{F_n^{\qp}\operad C_\pl/F_{n-1}^{\qp}\operad C_\pl}
\longrightarrow \Lambda^{F_n^{\qp}\operad C_\pl}
\longrightarrow \Lambda^{F_{n-1}^{\qp}\operad C_\pl} \longrightarrow 0
$$
is exact. Thus, the following commutative diagram of graded $\kk$-modules is a pushout
$$
\begin{tikzcd}[column sep=2.5pc,row sep=2.5pc]
    \Lambda^{F_n^{\qp}\operad C/F_{n-1}^{\qp}\operad C}
    \ar[r] \ar[d, hook]
    & 0
    \ar[d]
    \\
    \Lambda^{F_{n}^{\qp}\operad C}
    \ar[r] \ar[d]
    & \Lambda^{F_{n-1}^{\qp}\operad C}
    \ar[d]
    \\
    F^n_{\qp}\L
    \ar[r]
    & F^{n-1}_{\qp}\L.
\end{tikzcd}
$$
Therefore the kernel $\gr^n_{\qp}\L$ is the image in $F^n_{\qp}\L$ of the map $\Lambda^{F_n^{\qp}\operad C/F_{n-1}^{\qp}\operad C} \longrightarrow F^n_{\qp}\L$. Let us consider the following commutative diagram of graded $\kk$-modules 
$$
\begin{tikzcd}[column sep=2.5pc,row sep=2.5pc]
    \Lambda^{F_n^{\qp}\operad C/F_{n-1}^{\qp}\operad C}
    \ar[rr, "f^{F_n^{\qp}\operad C/F_{n-1}^{\qp}\operad C}"]
    \ar[d, two heads]
    && \Gamma^{F_n^{\qp}\operad C/F_{n-1}^{\qp}\operad C}
    \ar[d, two heads]
    \\
    \gr^n_{\qp}\L
    \ar[rr, "\gr^n_{\qp}(f)"']
    && \gr^n_{\qp}\Gamma.
\end{tikzcd}
$$
Since the top map and the right map are degree-wise epimorphisms, so is bottom map, which proves the result.
\end{proof}

\begin{lemma}[{After \cite[Proposition 4.21 and 4.24]{linearcoalgebras}}]\leavevmode 
\begin{enumerate}
\item The $X$ be a pdg module $X$, the free pdg $\operad C$-algebra $X^{\operad C}$ is qp-complete. 

\medskip

\item The natural map
    $$
    \L \twoheadrightarrow \widehat \L 
    $$
    is an degree-wise epimorphism for every dg $\operad C$-algebra $\L$.
    
\end{enumerate}
\end{lemma}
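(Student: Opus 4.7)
The plan is to establish both assertions by first computing the completion of a free algebra explicitly, and then deducing the general statement from the epimorphism-preserving property of completion (Lemma \ref{lemmaepicomplete}) combined with the fact that every $\operad C$-algebra is a split quotient of a free one.

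For Part (1), the key step is to identify $F^n_{\qp}(X^{\operad C})$ with $X^{F_n^{\qp}\operad C}$. Since $F^n_{\qp}$ is the left adjoint to the fully faithful forgetful functor from pdg $F_n^{\qp}\operad C$-algebras to pdg $\operad C$-algebras, and since the forgetful functors to pdg modules agree through this embedding, a Yoneda argument on $\mathrm{Hom}$ sets gives, for every pdg $F^n_{\qp} \operad C$-algebra $N$,
\[
\mathrm{Hom}(F^n_{\qp}(X^{\operad C}), N) \cong \mathrm{Hom}(X^{\operad C}, N) \cong \mathrm{Hom}(X, N) \cong \mathrm{Hom}(X^{F_n^{\qp}\operad C}, N),
\]
hence the required isomorphism. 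The completion $\widehat{X^{\operad C}} = \lim_{n} X^{F_n^{\qp}\operad C}$ is then induced by the ladder of inclusions $F_n^{\qp}\operad C \rightarrowtail \operad C$. Using the planar presentation $\operad C \cong \operad C_\pl \otimes \mathbb S$ and the isomorphism $X^{M \otimes \mathbb S} \cong X^M$, one rewrites the cotensor as $\prod_k [\operad C_\pl(k), X^{\otimes k}]$. Since $\operad C_\pl$ identifies with the colimit $\colim_n F_n^{\qp}\operad C_\pl$ in graded $\mathbb N$-modules (this being how pseudo-planar colimits are computed, see Lemma \ref{lemma: pullback pseudo-planar ccc} and its cooperad-ladder analogue), since $[-, X^{\otimes k}]$ turns colimits into limits, and since products commute with limits, one obtains
\[
X^{\operad C} \cong \prod_k \lim_n [F_n^{\qp}\operad C_\pl(k), X^{\otimes k}] \cong \lim_n X^{F_n^{\qp}\operad C} = \widehat{X^{\operad C}},
\]
and one checks that this isomorphism is the canonical completion map, proving Part (1).

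For Part (2), the idea is to lift surjectivity from the free case. For any (p)dg $\operad C$-algebra $\L$, the structural map $\gamma_\L : \L^{\operad C} \twoheadrightarrow \L$ is split by $(\mathrm{id})^\epsilon : \L \to \L^{\operad C}$ coming from the counit $\epsilon : \operad C \to \operad I$ of the cooperad, and is in particular a degree-wise epimorphism. Applying the completion functor, which preserves degree-wise epimorphisms by Lemma \ref{lemmaepicomplete}, and using Part (1) to identify $\widehat{\L^{\operad C}} \cong \L^{\operad C}$, one obtains a commutative diagram
\[
\begin{tikzcd}[column sep=3pc]
\L^{\operad C} \arrow[r, "\gamma_\L", twoheadrightarrow] \arrow[d, "\cong"'] & \L \arrow[d] \\
\widehat{\L^{\operad C}} \arrow[r, "\widehat{\gamma_\L}"', twoheadrightarrow] & \widehat \L.
\end{tikzcd}
\]
The composite from the top-left to the bottom-right is a degree-wise epimorphism and factors through $\L \to \widehat \L$, which is therefore also a degree-wise epimorphism.

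The main obstacle I anticipate is tracking $F^n_{\qp}$ on free objects in Part (1); once the Yoneda identification is established and the planar presentation of $\operad C$ is used to convert the cotensor into a genuine limit over the quasi-planar ladder, the rest is formal. Part (2) then follows automatically from Lemma \ref{lemmaepicomplete} and the splitting of $\gamma_\L$.
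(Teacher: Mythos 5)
Your proposal is correct and follows essentially the same route as the paper: part (1) rests on the identification $F^n_{\qp}(X^{\operad C}) \cong X^{F_n^{\qp}\operad C}$ and the isomorphism $\lim_n X^{F_n^{\qp}\operad C} \cong X^{\operad C}$ (which the paper asserts and you justify via the planar presentation and the colimit-to-limit behaviour of the cotensor), and part (2) is the same commutative square combining part (1), the splitting of $\gamma_\L$ by the counit, and Lemma \ref{lemmaepicomplete}.
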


\begin{proof}
For every free pdg $\operad C$-algebra $X^{\operad C}$ on a pdg module $X$, one has
 $F^n_{\mathrm{qp}} X^{\operad C} = X^{F_n^{\mathrm{qp}} \operad C}$ and that it is complete:
    $$
    \widehat{X^{\operad C}} =  \lim_{n \in \omega^\op} X^{F_n^{\mathrm{qp}} \operad C} \cong X^{\operad C}.
    $$
    Now, let $\L$ be any pdg $\operad C$-algebra. We consider the following commutative square
    $$
    \begin{tikzcd}
        \L^{\operad C}
        \ar[r] \ar[d]
        & \widehat{\L^{\operad C}}
        \ar[d]
        \\
        \L
        \ar[r]
        & \widehat \L.
    \end{tikzcd}
    $$
The top horizontal map is an isomorphism and the right vertical map is a degree-wise epimorphism by Lemma \ref{lemmaepicomplete}. The map from $\L^{\operad C}$ to $\L$ is also epimorphism. Therefore so is the map $\L \longrightarrow \widehat \L$.
\end{proof}

\begin{lemma}
    The canonical map 
    \[
    \widehat \L \longrightarrow \widehat{\widehat{(\L)}}
    \]
    is an isomorphism. Therefore the completion functor equipped with the natural map $\L \longrightarrow \widehat \L$ is an idempotent monad in the category of pdg $\C$-algebras.
\end{lemma}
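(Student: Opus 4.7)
The plan is to reduce the statement to showing that, for every natural integer $n$, the canonical morphism $F^n_{\qp}(\L) \to F^n_{\qp}(\widehat{\L})$ induced by the natural map $\iota: \L \to \widehat{\L}$ is an isomorphism. Once this is established, passing to the limit of the tower yields $\widehat{\widehat{\L}} = \lim_n F^n_{\qp}(\widehat{\L}) \cong \lim_n F^n_{\qp}(\L) = \widehat{\L}$, which together with the preceding lemmas realizes $\widehat{(-)}$ as an idempotent monad on pdg $\C$-algebras.

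The first step is to check that $F^n_{\qp}(\iota)$ is a split monomorphism. Let $p: \widehat{\L} \to F^n_{\qp}(\L)$ denote the canonical projection out of the limit. The composite $\L \xrightarrow{\iota} \widehat{\L} \xrightarrow{p} F^n_{\qp}(\L)$ recovers the unit $\eta^n_\L$ of the reflection $F^n_{\qp}$. Applying $F^n_{\qp}$ and using idempotence of the reflection (namely $F^n_{\qp}(\eta^n_\L) = \id$), one obtains $F^n_{\qp}(p) \circ F^n_{\qp}(\iota) = \id$, so that $F^n_{\qp}(\iota)$ is a split monomorphism and $F^n_{\qp}(p)$ is a split epimorphism.

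The second step is to show that $F^n_{\qp}(\iota)$ is a degree-wise epimorphism on underlying pdg modules. Since $\iota$ is a degree-wise epimorphism by the preceding lemma, it suffices to verify that $F^n_{\qp}$ preserves degree-wise epimorphisms. The underlying pdg module of $F^n_{\qp}(\L)$ is the pushout $\L \sqcup_{\L^{\C}} \L^{F_n^{\qp} \C}$, and pushouts of degree-wise epimorphisms in pdg modules are themselves degree-wise epimorphisms. Using the quasi-planar presentation $F_n^{\qp} \C \cong F_n^{\qp} \C_\pl \otimes \mathbb S$, the tensor-hom adjunction yields an identification $\L^{F_n^{\qp} \C} \cong \prod_k [F_n^{\qp} \C_\pl(k), \L^{\otimes k}]$, and analogously for $(-)^{\C}$. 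Over a field, $[M, -]$ preserves degree-wise epimorphisms, and arbitrary products of surjections of $\kk$-vector spaces remain surjections; hence both $(-)^{\C}$ and $(-)^{F_n^{\qp} \C}$ preserve degree-wise epimorphisms, and so does $F^n_{\qp}$.

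Combining both steps, $F^n_{\qp}(\iota)$ is both a split monomorphism and a degree-wise epimorphism on underlying pdg modules, hence an isomorphism of pdg modules; since the forgetful functor from pdg $F_n^{\qp} \C$-algebras to pdg modules is conservative, it is an isomorphism of algebras. The main obstacle, as is typical in positive characteristic, is ensuring that $(-)^{F_n^{\qp} \C}$ preserves degree-wise epimorphisms, and this is precisely where the quasi-planar structure of $\C$ is essential, since it reduces the problem to functors of the form $[M, -]$ out of graded $\kk$-modules $M$, bypassing the pathological behavior of $\mathbb S_n$-invariants in positive characteristic.
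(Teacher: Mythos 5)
Your proof is correct and takes essentially the same route as the paper: in both arguments one shows that $F^n_{\qp}(\L) \to F^n_{\qp}(\widehat{\L})$ is simultaneously a split monomorphism (because its composite with $F^n_{\qp}$ of the projection is the identity, by idempotence of the reflection) and a degree-wise epimorphism (because $\L \to \widehat{\L}$ is one and $F^n_{\qp}$ preserves them), hence an isomorphism, and one then passes to the limit over $n$. The only difference is that you spell out why $F^n_{\qp}$ preserves degree-wise epimorphisms via the pushout description and the planar identification $\L^{F_n^{\qp}\C} \cong \L^{F_n^{\qp}\C_\pl}$, a point the paper asserts without detail.
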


\begin{proof}
    For every natural integer $n$, both maps 
    $\L \longrightarrow \widehat \L \longrightarrow F_n^{\mathrm{qp}} \L$ are degree-wise epimorphisms. Applying the left adjoint functor $F^n_{\mathrm{qp}}$ which preserves degree-wise epimorphisms, one gets a diagram 
    $$
    \begin{tikzcd}
        F_n^{\mathrm{qp}} \L
        \ar[r] \ar[rr, bend left, "\id"]
        & F_n^{\mathrm{qp}} \widehat \L
        \ar[r]
        & F_n^{\mathrm{qp}} \L.
    \end{tikzcd}
    $$
    The first map is both a degree-wise epimorphism and a degree-wise monomorphism. Hence it is an isomorphism and so is the second map. Taking the limit over $n \in \omega^\op$, one gets the required isomorphism. 
\end{proof}

\begin{proposition}
    The category of qp-complete pdg $\operad C$-algebra is presentable.    
\end{proposition}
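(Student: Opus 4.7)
The plan is to exhibit the category of qp-complete pdg $\operad C$-algebras as an accessible reflective subcategory of the presentable category of pdg $\operad C$-algebras, and then invoke the standard fact that accessible reflective subcategories of presentable categories are themselves presentable (see \cite{AdamekRosicky}).

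First, recall that by the previous lemma the completion endofunctor $\widehat{(-)}$ together with the natural map $\L \to \widehat{\L}$ forms an idempotent monad on the category of pdg $\operad C$-algebras, and that a pdg $\operad C$-algebra is qp-complete exactly when it belongs to the essential image of $\widehat{(-)}$. Hence the category of qp-complete pdg $\operad C$-algebras is the reflective subcategory cut out by this idempotent monad, with left adjoint $\widehat{(-)}$ to the inclusion. By a standard characterisation, it therefore suffices to show that $\widehat{(-)}$ is an accessible functor.

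To establish accessibility, I would proceed in two steps. Step one: for every natural integer $n$, the functor $F^n_{\mathrm{qp}}$ is a left adjoint to the fully faithful forgetful functor from pdg $F_n^{\mathrm{qp}} \operad C$-algebras to pdg $\operad C$-algebras; consequently it preserves all colimits, and is in particular accessible. Step two: the completion is defined as the countable cofiltered limit
\[
\widehat{\L} \;=\; \lim_{n \in \omega^{\op}} F^n_{\mathrm{qp}}(\L)~.
\]
Countable limits commute with $\alpha$-filtered colimits in presentable categories for every uncountable regular cardinal $\alpha$; hence, picking such an $\alpha$ for which each $F^n_{\mathrm{qp}}$ is $\alpha$-accessible (which is possible by the small object argument, as each $F^n_{\mathrm{qp}}$ is a left adjoint between presentable categories), we conclude that $\widehat{(-)}$ preserves $\alpha$-filtered colimits and is therefore accessible.

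The main subtlety, and the only non-formal step, is the interchange between the countable limit defining $\widehat{(-)}$ and $\alpha$-filtered colimits. This is where the restriction to uncountable regular $\alpha$ is essential: countable filtered colimits do not in general commute with countable limits, even in the category of dg modules, as illustrated by the counterexample provided in the section on graded modules. Once accessibility is secured, the conclusion is immediate from the aforementioned general result on reflective subcategories.
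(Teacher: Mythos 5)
Your proposal is correct and follows essentially the same route as the paper: qp-complete algebras form a reflective subcategory of the presentable category of pdg $\operad C$-algebras with reflector the completion, and the completion is accessible because each $F^n_{\mathrm{qp}}$ is accessible and, for a suitably large regular cardinal, filtered colimits of that size commute with $\omega^{\op}$-indexed limits. The only small imprecisions are that $F^n_{\mathrm{qp}}$, viewed as an endofunctor (i.e.\ composed with the fully faithful forgetful functor), is only accessible rather than cocontinuous, and that the commutation of large filtered colimits with countable limits should be justified by computing both in graded $\kk$-modules rather than invoked for arbitrary presentable categories; neither affects the argument.
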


\begin{proof}
It is a reflexive full subcategory of the category of pdg $\operad C$-algebras which is presentable. The reflector is the completion $\L \mapsto \widehat \L$. One can find a small regular cardinal $\beta$ so that every functor $F^n_{\mathrm{qp}}$ is $\beta$-accessible and so that $\beta$-filtered colimits commute with limits of diagrams indexed by $\omega^{op}$. Then, the completion functor is $\beta$-accessible. Hence the category of qp-complete pdg $\operad C$-algebras is presentable. 
\end{proof}

\begin{remark}
    The category of qp-complete pdg $\operad C$-algebra is the homotopy limit of the (pseudo) diagram of categories 
    \[
    \cdots \longrightarrow \catpdgalg{F_n^{\mathrm{qp}} \operad C} \longrightarrow \catpdgalg{F_{n-1}^{\mathrm{qp}} \operad C} \longrightarrow \cdots \longrightarrow \catpdgalg{F_1^{\mathrm{qp}} \operad C} \longrightarrow \catpdgalg{F_0^{\mathrm{qp}} \operad C}.
    \]
    It is presentable as a small limit of presentable categories and left adjoint functors. 
\end{remark}

\begin{remark}
The analogue definitions and results are valid for dg $\C$-algebras or graded $\C$-algebras as well. 
\end{remark}


\subsection{Categorical properties of qp-complete curved algebras over a quasi-planar cooperad}

\begin{proposition}
    The qp-completion idempotent monad on pdg $\operad C$-algebras preserves curved algebras. Moreover, the restricted monad (to curved algebras) is accessible. Hence, the category of qp-complete curved $\operad C$-algebras is presentable and the forgetful functor towards curved $\operad C$-algebras is accessible.
\end{proposition}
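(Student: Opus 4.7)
The plan is to establish the three assertions in sequence: first, that the qp-completion endofunctor on pdg $\C$-algebras sends curved objects to curved objects; second, that its restriction to curved $\C$-algebras is accessible; and third, that the presentability of qp-complete curved $\C$-algebras and accessibility of the forgetful functor follow formally.

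\medskip

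For the first assertion, given a curved pdg $\C$-algebra $\L$, I would show that each $F^n_{\qp}(\L)$ is curved and then deduce the same for the limit $\widehat{\L}$. The algebra $F^n_{\qp}(\L)$ is constructed as a pushout in pdg $F_n^{\qp}\C$-algebras of the span $\L \leftarrow \L^\C \twoheadrightarrow \L^{F_n^{\qp}\C}$, where the right leg is a degree-wise epimorphism thanks to the splitting induced by pseudo-planarity of $F_n^{\qp}\C$. Consequently, the parallel map $\L \twoheadrightarrow F^n_{\qp}(\L)$ is itself a degree-wise epimorphism of pdg modules. Since $F_n^{\qp}\C$ is a pseudo-planar curved conilpotent sub-cooperad of $\C$ by Lemma \ref{lemma: pullback pseudo-planar ccc}, its curvature is precisely the restriction $\theta_n$ of $\theta$ to $F_n^{\qp}\C$, so $\L$ is already curved as an $F_n^{\qp}\C$-algebra. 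The curvature equation $d^2 = \gamma \circ (-)^{\theta_n}$ is an equation of morphisms of pdg modules; since it holds for $\L$ and $\L \twoheadrightarrow F^n_{\qp}(\L)$ is an epimorphism, it transfers to $F^n_{\qp}(\L)$ as an $F_n^{\qp}\C$-algebra. Using the factorizations of the structure map and of the curvature through $(-)^{F_n^{\qp}\C}$, this yields curvedness as a $\C$-algebra. Finally, $\widehat{\L} = \lim_n F^n_{\qp}(\L)$ is curved because the inclusion of curved $\C$-algebras into pdg $\C$-algebras is a right adjoint (by Subsection \ref{sectioncommuteslimitscolimits}) and hence preserves limits.

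\medskip

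For the accessibility claim, I would combine two facts already established: the qp-completion on pdg $\C$-algebras is $\beta$-accessible for some small regular cardinal $\beta$, and the curving reflection on pdg $\C$-algebras is accessible, so curved $\C$-algebras are closed under $\beta'$-filtered colimits in pdg $\C$-algebras for $\beta'$ large enough; in particular the inclusion preserves such colimits. Choosing $\beta''$ larger than both, the restricted endofunctor on curved $\C$-algebras preserves $\beta''$-filtered colimits. The final conclusions then follow formally: the category of qp-complete curved $\C$-algebras is the category of fixed points of an accessible idempotent monad on the presentable category of curved $\C$-algebras, and is therefore itself presentable, with the forgetful functor to curved $\C$-algebras an accessible right adjoint.

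\medskip

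The main obstacle I anticipate lies in the first step, namely transferring the curvature equation to $F^n_{\qp}(\L)$ while keeping straight the two a priori different curvature equations (as $\C$-algebra and as $F_n^{\qp}\C$-algebra). The pivotal ingredient is that $F_n^{\qp}\C$ genuinely inherits a curvature from $\C$ via its pseudo-planar structure, which is what makes the restriction of the algebra structure compatible with the relevant curvature equation; the rest of the argument is then a diagram chase exploiting the degree-wise epimorphism $\L \twoheadrightarrow F^n_{\qp}(\L)$.
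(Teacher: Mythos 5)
Your proof is correct, and its second half (choosing a common regular cardinal for which both the forgetful functor from curved to pdg $\C$-algebras and the completion monad are accessible, then deducing presentability formally) is exactly the paper's argument. For the first half you take a slightly longer route than the paper: the paper simply invokes the already-established lemma that the unit $\L \twoheadrightarrow \widehat{\L}$ is a degree-wise epimorphism of pdg $\C$-algebras and transfers the curvature equation along it in one step, whereas you re-prove epimorphy stage by stage via the pushout description of $F^n_{\qp}\L$ and then pass to the limit using that curved $\C$-algebras form a reflective subcategory of pdg $\C$-algebras, hence are closed under limits. Both arguments rest on the same mechanism (curvature is an equation of maps of pdg modules, so it descends along degree-wise epimorphisms of pdg $\C$-algebras), so your detour costs a little extra work but nothing is wrong with it. One wording slip worth fixing: $\L$ itself is not an $F_n^{\qp}\C$-algebra, so the phrase ``$\L$ is already curved as an $F_n^{\qp}\C$-algebra'' does not parse; what you actually use is that the $\C$-curvature equation descends along $\L \twoheadrightarrow F^n_{\qp}\L$ (a map of pdg $\C$-algebras via the fully faithful forgetful functor), and that for $F^n_{\qp}\L$ this equation is equivalent to the $F_n^{\qp}\C$-curvature equation because its structure map and the curvature factor through $(-)^{F_n^{\qp}\C}$.
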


\begin{proof}
For every curved $\operad C$-algebra $\L$, $\widehat \L$ is also curved since the map $\L \longrightarrow \widehat \L$ is a degree-wise epimorphism. Therefore, the qp-completion monad restricts to curved $\C$-algebras. Let $\beta$ be regular small cardinal such that
    \begin{enumerate}
        \item the forgetful functor $\catcurvalg{\C} \longrightarrow \catpdgalg{\C}$ is a $\beta$-accessible functor between $\beta$-accessible categories;
        \item the completion monad on $\catpdgalg{\C}$ is $\beta$-accessible.
    \end{enumerate}
    Then, the restriction of this monad to curved algebras is also $\beta$-accessible.
\end{proof}

\begin{remark}
Let us consider the following
commutative diagram of functors.
$$
\begin{tikzcd}
    \catcurvcompalg{\operad C}
    \ar[r] \ar[d]
    & \catpdgcompalg{\operad C}
    \ar[r] \ar[d]
    & \catgrcompalg{\operad C}
    \ar[d]
    \\
    \catcurvalg{\operad C}
    \ar[r]
    & \catpdgalg{\operad C}
    \ar[r]
    & \catgralg{\operad C}
\end{tikzcd}
$$
The vertical arrows are monadic and accessible but they do not preserve reflexive coequalisers in general; otherwise all $\C$-algebras would be qp-complete.    
\end{remark}


\subsection{Naturality of (co)algebras of a dg operad.}
The definitions of algebras and coalgebras are compatible with morphisms of operads. This will mean that any such morphism induces an adjunction between the appropriate categories. Let $f: \operad P \longrightarrow \operad Q$ be a morphism of dg operads.

\medskip

\textbf{Naturality of algebras.} The morphism $f$ induces a morphism of monads $\operad P \comp - \longrightarrow \operad Q \comp -$, which yields an adjunction 
\[
\begin{tikzcd}[column sep=5pc,row sep=3pc]
          \mathsf{dg}~\operad P\text{-}\mathsf{alg} \arrow[r, shift left=1.1ex, "f_{!}"{name=F}] & \mathsf{dg}~ \mathcal{Q}\text{-}\mathsf{alg}~, \arrow[l, shift left=.75ex, "f^*"{name=U}]
            \arrow[phantom, from=F, to=U, , "\dashv" rotate=-90]
\end{tikzcd}
\]
since the category of dg $\operad Q$-algebras is cocomplete, by the adjoint lifting theorem. See Appendix \ref{Appendix: Adjoint Lifting Theorem} for more details. Since the monad $\operad Q \comp -$ commutes with sifted colimits, the left adjoint functor $f_!$ sends a dg $\operad P$-algebra $A$ to the reflexive coequaliser of the pair

\[
\begin{tikzcd}[column sep=3.5pc,row sep=0pc]
        \operad Q \comp (\operad P \comp A) \ar[rr, shift left = 0.7ex, "\id ~ \circ ~ \gamma_A"]
        \ar[rr, shift right = 0.7ex, "\gamma_{\operad Q}~ (f~  \circ ~ \id)"'] 
        && \operad Q \comp A~,
\end{tikzcd}
\]
\vspace{0.1pc}

between the structural map $\gamma_A$ of $A$ and the map induced by the morphism $f$ composed with the monad structure $\gamma_{\operad Q} \comp -$ of $\operad Q \comp -$. This is a coequalizer both in dg $\operad Q$-algebras and in graded $\kk$-modules.

\medskip

\textbf{Naturality of coalgebras.} The morphism $f$ also induces a morphism of comonads $L^{\operad P} \longrightarrow L^{\operad Q}$, which yields an adjunction
\[
\begin{tikzcd}[column sep=5pc,row sep=3pc]
          \mathsf{dg}~\operad Q\text{-}\mathsf{cog} \arrow[r, shift left=1.1ex, "f^*"{name=F}] & \mathsf{dg}~ \mathcal{P}\text{-}\mathsf{cog}~,
          \arrow[l, shift left=.75ex, "f_!"{name=U}]
            \arrow[phantom, from=F, to=U, , "\dashv" rotate=-90]
\end{tikzcd}
\]
since the category of dg $\operad Q$-coalgebras is complete, again using the adjoint lifting theorem. Since the comonad $L^{\operad Q}$ commutes with \textit{finite} cosifted limits, the functor $f_!$ sends a dg $\operad P$-coalgebra $V$ to the coreflexive equaliser of the pair

\[
\begin{tikzcd}[column sep=3.5pc,row sep=0pc]
        L^{\operad Q} V  \ar[rr, shift left = 0.7ex, "L^{f}(\id)~ \omega_{\operad Q}(V) "]
        \ar[rr, shift right = 0.7ex, "\gamma_{\operad Q}~ (f~  \circ ~ \id)"'] 
        && L^{\operad Q}L^{\operad P}V~,
\end{tikzcd}
\]
\vspace{0.1pc}

between the structure map $\Delta_V$ of $V$ and the map given by the comonad structure $\omega_{\operad Q}$ of $L^{\operad Q}$ composed with the morphism induced by $f$. This is an equalizer both in dg $\operad Q$-coalgebra and in graded $\kk$-modules.

\medskip

\subsection{Naturality of (co)algebras over a cooperad.} 
\label{section : Naturality of (co)algebras over a cooperad.}
The definitions of algebras and coalgebras are compatible with morphisms of of cooperads. This will mean that any such morphism induces an adjunction between the appropriate categories. Let $\operad C, \operad D$ be two conilpotent curved cooperads whose underlying graded conilpotent cooperads are the image through $- \otimes \mathbb S$ of a graded planar conilpotent cooperads respectively $\operad C_\pl, \operad D_\pl$. Let $g: \operad C \longrightarrow \operad D$ be a morphisms of curved cooperads.

\medskip

\textbf{Naturality of coalgebras.} The map $g$ induces a morphism of comonads on the category of pdg $\kk$-modules $\operad C \comp - \longrightarrow \operad D \comp -$, which yields an adjunction 
\[
\begin{tikzcd}[column sep=5pc,row sep=3pc]
          \mathsf{pdg}~\operad D\text{-}\mathsf{cog} \arrow[r, shift left=1.1ex, "g^!"{name=F}] & \mathsf{pdg}~ \mathcal{C}\text{-}\mathsf{cog}~,
          \arrow[l, shift left=.75ex, "g_*"{name=U}]
            \arrow[phantom, from=F, to=U, , "\dashv" rotate=90]
\end{tikzcd}
\]
by the adjoint lifting theorem. Since the comonad $\operad D \comp -$ preserves finite sifted limits, the functor $g^!$ sends a pdg $\operad D$-coalgebra $W$ to the coreflexive equaliser of the pair

\[
\begin{tikzcd}[column sep=3.5pc,row sep=0pc]
        \operad C \comp W \ar[rr, shift left = 0.7ex, "\id ~ \circ ~ \Delta_W "]
        \ar[rr, shift right = 0.7ex, "(g~  \circ ~ \id) ~ \Delta_\C(W)"'] 
        && \operad C \comp \operad D \comp W~,
\end{tikzcd}
\]
between the structure map $\Delta_W$ of $W$ and the map given by the comonad structure $\Delta_\C(-)$ of $\C \comp -$ composed with the morphism induced by $g$. The existence of this adjunction and the fact that this equaliser can be computed in graded $\kk$-modules follows from Lemma \ref{lemma: quasi-planar functors preserve more stuff}.

\begin{proposition}\label{proposition : naturality of curved coalgebras}
The adjunction $g_* \dashv g^!$ restrict to an adjunction 
    \[
\begin{tikzcd}[column sep=5pc,row sep=3pc]
          \mathsf{curv}~\operad D\text{-}\mathsf{cog} \arrow[r, shift left=1.1ex, "g^!"{name=F}] & \mathsf{curv}~ \mathcal{C}\text{-}\mathsf{cog}~,
          \arrow[l, shift left=.75ex, "g_*"{name=U}]
            \arrow[phantom, from=F, to=U, , "\dashv" rotate=90]
\end{tikzcd}
\]
relating curved $\operad D$-coalgebras and curved $\operad D$-coalgebras.
\end{proposition}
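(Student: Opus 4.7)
\emph{Proof proposal.} The strategy is to show separately that each of $g_*$ and $g^!$ sends curved coalgebras to curved coalgebras.

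\emph{Preservation by $g_*$ (easy).} Let $(V, \Delta_V, d_V)$ be a curved $\C$-coalgebra. The pdg $\D$-coalgebra $g_*(V)$ has the same underlying pdg module with structure $(g \circ \id_V)\Delta_V$. Since $g$ is a morphism of curved cooperads, $\theta_\D \circ g = \theta_\C$, so
\[
(\theta_\D \circ \id_V)(g \circ \id_V)\Delta_V = (\theta_\C \circ \id_V)\Delta_V = -d_V^2~,
\]
which is the curvedness condition for $g_*(V)$.

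\emph{Preservation by $g^!$.} Let $W$ be a curved $\D$-coalgebra and write $\iota : g^!(W) \hookrightarrow \C \circ W$ for the inclusion of the equaliser, which is a morphism of pdg $\C$-coalgebras and a degree-wise monomorphism. Denoting $\kappa_V := (\theta_\C \circ \id_V)\Delta_V + d_V^2$ the curvature defect of a pdg $\C$-coalgebra $V$, one checks, using the coalgebra morphism property of $\iota$, the naturality of $\theta_\C \circ \id$, and the fact that $\iota$ commutes with pre-differentials, that $\iota \circ \kappa_{g^!(W)} = \kappa_{\C \circ W} \circ \iota$. Since $\iota$ is injective, it suffices to show that $\kappa_{\C \circ W}$ vanishes on the image of $\iota$.

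Using the cooperad curvature equation $d_\C^2 = (\id \otimes \theta_\C - \theta_\C \otimes \id)\Delta_{(1)}$, the defect on the cofree coalgebra $\C \circ W$ simplifies to
\[
\kappa_{\C \circ W}(c \otimes w_\bullet) = (\id \otimes \theta_\C)\Delta_{(1)}(c) \otimes w_\bullet + c \otimes \shuffle_n(\id_W, d_W^2)(w_\bullet)~,
\]
for $c \in \C(n)$ and $w_\bullet \in W^{\otimes n}$. For $x = c \otimes w_\bullet \in g^!(W)$, we apply the operator $\id_\C \circ \theta_\D \circ \id_W$ at one position (via Leibniz), together with the counits $\id_\C \circ \epsilon_\D \circ \id_W$ at the remaining positions, to both sides of the equaliser identity $\psi_1(x) = \psi_2(x)$. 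Using $\epsilon_\D \circ g = \epsilon_\C$ and $\theta_\D \circ g = \theta_\C$, the right-hand side collapses to exactly $(\id \otimes \theta_\C)\Delta_{(1)}(c) \otimes w_\bullet$, while the left-hand side, via the curvedness identity $(\theta_\D \circ \id_W)\Delta_W = -d_W^2$ of $W$, collapses to $-c \otimes \shuffle_n(\id_W, d_W^2)(w_\bullet)$. Summing these identities gives $\kappa_{\C \circ W}(\iota(x)) = 0$, whence $g^!(W)$ is curved.

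The main obstacle is the sign bookkeeping in this final cancellation: one must carefully match the 2-node infinitesimal decompositions of $c$ (arising from the cooperad curvature equation of $\C$) with the leaf-wise curvature contributions coming from $W$ (arising from the $\D$-coalgebra curvedness of $W$), with the equaliser condition providing the crucial bridge through the morphism $g$.
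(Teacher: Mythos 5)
Your proposal is correct and follows essentially the same route as the paper's own proof: you compute the curvature defect of the quasi-cofree pdg $\C$-coalgebra $\C \comp W$, isolate the root-curvature term, and cancel the remaining two terms on the equaliser using the equaliser relation together with $\theta_{\operad D}\, g = \theta_\C$, $\epsilon_{\operad D}\, g = \epsilon_\C$ and the curvedness of $W$, concluding by injectivity of the inclusion $g^!W \hookrightarrow \C \comp W$. The only differences are presentational (the paper states the cancellation directly as the vanishing of the pre-composition of $j$ with the sum of the two non-root terms, and dismisses the $g_*$ direction as clear), so no changes are needed.
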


\begin{proof}
The fact that $g_*$ sends curved objects to curved objects is clear. Now, given a curved $\operad D$-coalgebra $W$, the squared coderivation $d^2$ on $\operad C \comp W$ is 
   \begin{align*}
d^2 &= d_{\operad C}^2 \comp \id_W + \id_{\operad C} \comp\shuffle(\id_W ,d_W^2)
\\
&= -((\theta_{\operad C} \comp \id_{\operad C} )\Delta_{\operad C}) \comp \id_W 
+
((\id_{\operad C} \comp \shuffle(\epsilon_\C,\theta_\C))\Delta_{\operad C}) \comp \id_W 
-\id_{\operad C} \comp \shuffle(\id_W , (\theta_{\operad D} \comp \id_W)\Delta_W )~,
   \end{align*}
   where $\upsilon_\C$ denotes the counit of the cooperad $\C$. If we denote $j$ the inclusion $g^!W \hookrightarrow \operad C \comp W$, then 
   the pre-composition of $j$ with the sum
   $$
   ((\id_{\operad C} \comp \shuffle(\epsilon_\C,\theta_\C))\Delta_{\operad C}) \comp \id_W 
-\id_{\operad C} \comp\shuffle(\id_W , (\theta_{\operad D} \comp \id_W)\Delta_W )
   $$
   is zero. Thus,
   $$
   j d^2 = d^2 j = -((\theta_{\operad C} \comp \id_{\operad C} )\Delta_{\operad C} \comp \id_W)j
   = - j ((\theta_{\operad C} \comp \id_{g^!_W} )\Delta_{g^!_W})~.
   $$
   Since $j$ is injective, $g^!_W$ is curved.
\end{proof}

\begin{remark}
Actually, when dealing with coalgebras, we do not need $\operad C$ to proceed from a planar cooperad. We will need it in the context of algebras, in order to be able to talk about complete $\C$-algebras.
\end{remark}

\medskip

\textbf{Naturality of algebras.} The morphism $g$ induces a morphism of monads on pdg $\kk$-modules $(-)^{\operad D} \longrightarrow (-)^{\operad C}$, which yields an adjunction 
\[
\begin{tikzcd}[column sep=5pc,row sep=3pc]
          \mathsf{pdg}~\operad C\text{-}\mathsf{alg}
          \arrow[r, shift left=1.1ex, "g_{*}"{name=F}]
          & \mathsf{pdg}~ \mathcal{D}\text{-}\mathsf{alg}~, \arrow[l, shift left=.75ex, "g^!"{name=U}]
            \arrow[phantom, from=F, to=U, , "\dashv" rotate=90]
\end{tikzcd}
\]
by the adjoint lifting theorem. The functor $g^!$ sends a curved $\operad D$-algebra $\L$ to the reflexive coequaliser of the pair

\[
\begin{tikzcd}[column sep=3.5pc,row sep=0pc]
        (\L^{\operad D})^{\operad C}   \ar[rr, shift left = 0.7ex, "(\gamma_\L)^{(\id)} "]
        \ar[rr, shift right = 0.7ex, "(\L)^{\Delta_\C}~ (\L)^{g}"'] 
        && \L^{\operad C}~,
\end{tikzcd}
\]
\vspace{0.1pc}

between the structure map $\gamma_\L$ of $\L$ and the map induced by the morphism $g$ composed with the monad structure $(-)^{\Delta_\C}$ of $(-)^{\C}$. The existence of this adjunction and the fact that this coequaliser can be computed in graded $\kk$-modules follows from Lemma \ref{lemma: quasi-planar functors preserve more stuff}.

\begin{proposition}\label{proposition : naturality of curved algebras}
The adjunction $g^! \dashv g^{*}$ restrict to an adjunction 
\[
\begin{tikzcd}[column sep=5pc,row sep=3pc]
          \mathsf{curv}~\operad C\text{-}\mathsf{alg}
          \arrow[r, shift left=1.1ex, "g^{*}"{name=F}]
          & \mathsf{curv}~ \mathcal{D}\text{-}\mathsf{alg}~, \arrow[l, shift left=.75ex, "g^!"{name=U}]
            \arrow[phantom, from=F, to=U, , "\dashv" rotate=90]
\end{tikzcd}
\]
relating curved $\operad D$-algebras and curved $\operad D$-algebras.
\end{proposition}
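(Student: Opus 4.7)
My plan is to mirror the proof of Proposition \ref{proposition : naturality of curved coalgebras}, dualized from coalgebras to algebras. The statement splits into two verifications: first that the right adjoint $g^*: \catpdgalg{\operad C} \to \catpdgalg{\operad D}$ sends curved $\operad C$-algebras to curved $\operad D$-algebras, and second that the left adjoint $g^!: \catpdgalg{\operad D} \to \catpdgalg{\operad C}$ sends curved $\operad D$-algebras to curved $\operad C$-algebras.

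The first verification is direct. Since $g$ is a morphism of curved cooperads, the curvatures satisfy $\theta_{\operad D} \circ g = \theta_{\operad C}$, so by contravariance of $\Lambda^{(-)}$ we have $\Lambda^{g} \circ \Lambda^{\theta_{\operad D}} = \Lambda^{\theta_{\operad C}}$. For a curved $\operad C$-algebra $(\Lambda, \gamma_\Lambda, d_\Lambda)$, the induced $\operad D$-algebra structure on $g^* \Lambda$ is $\gamma_\Lambda \circ \Lambda^{g}$, and composing with $\Lambda^{\theta_{\operad D}}$ yields $\gamma_\Lambda \circ \Lambda^{g} \circ \Lambda^{\theta_{\operad D}} = \gamma_\Lambda \circ \Lambda^{\theta_{\operad C}} = d_\Lambda^2$, which is the curvature equation for $g^* \Lambda$.

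For the second verification, let $(\Lambda, \gamma_\Lambda, d_\Lambda)$ be a curved $\operad D$-algebra and let $\pi : \Lambda^{\operad C} \twoheadrightarrow g^! \Lambda$ denote the canonical surjection from the reflexive coequalizer describing $g^! \Lambda$; this $\pi$ is a degree-wise epimorphism, since the coequalizer is computed in graded $\kk$-modules. The squared pre-differential on the pdg module $\Lambda^{\operad C}$ decomposes, after cancellation of the anti-commuting cross terms, as $d_{\Lambda^{\operad C}}^2 = \Lambda^{d_{\operad C}^2} + \bigl(\shuffle(\mathrm{id}_\Lambda, d_\Lambda^2)\bigr)^{\operad C}$. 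Inserting the curvature equation $d_{\operad C}^2 = \bigl((\mathrm{id} \comp \theta_{\operad C}) - (\theta_{\operad C} \comp \mathrm{id})\bigr)\Delta_{(1)}$ and the curved $\operad D$-algebra condition $d_\Lambda^2 = \gamma_\Lambda \circ \Lambda^{\theta_{\operad D}}$, this expands into three summands: the expected curvature contribution $\gamma_{\Lambda^{\operad C}} \circ (\Lambda^{\operad C})^{\theta_{\operad C}}$ (coming from the $(\theta_{\operad C} \comp \mathrm{id})\Delta_{(1)}$ piece), a defect from applying $\theta_{\operad C}$ at the \emph{top} of $\Delta_{(1)}$, and a defect of the form $(\shuffle(\mathrm{id}_\Lambda, \gamma_\Lambda \circ \Lambda^{\theta_{\operad D}}))^{\operad C}$.

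The heart of the argument is then to show that the two defect terms cancel after post-composition with $\pi$. Both terms factor through the source $(\Lambda^{\operad D})^{\operad C}$ of the defining coequalizer: using $\theta_{\operad C} = \theta_{\operad D} \circ g$, the top defect $\Lambda^{(\mathrm{id} \comp \theta_{\operad C})\Delta_{(1)}}$ is the composite of $(\Lambda^{\theta_{\operad D}})^{\operad C}$ with the monad-multiplication map $\Lambda^{\Delta_{\operad C}} \circ \Lambda^{g}$, while the algebra-curvature defect is the composite of the very same $(\Lambda^{\theta_{\operad D}})^{\operad C}$ with $(\gamma_\Lambda)^{\mathrm{id}}$. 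These are exactly the two parallel maps whose coequalizer defines $g^! \Lambda$, so their difference lies in the kernel of $\pi$. Hence $\pi \circ d_{\Lambda^{\operad C}}^2 = \pi \circ \gamma_{\Lambda^{\operad C}} \circ (\Lambda^{\operad C})^{\theta_{\operad C}}$, and since $\pi$ is a surjection of pdg $\operad C$-algebras this descends to $d_{g^! \Lambda}^2 = \gamma_{g^! \Lambda} \circ (g^! \Lambda)^{\theta_{\operad C}}$, proving that $g^! \Lambda$ is curved. The main obstacle is precisely this last identification: writing the two defect terms as images of a common element of $(\Lambda^{\operad D})^{\operad C}$ under the two coequalizer maps, with signs compatible. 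Structurally this computation is dual to the injective one carried out in the proof of Proposition \ref{proposition : naturality of curved coalgebras}, and no harder.
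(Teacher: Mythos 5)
Your proof is correct and follows essentially the same route as the paper's: the same three-term decomposition of $d^2$ on $\L^{\operad C}$ into the expected curvature term plus two defects, the same key observation that the two defects are the two parallel maps of the coequalizer defining $g^!\L$ applied to a single common map into $(\L^{\operad D})^{\operad C}$ (hence cancel after the projection), and the same conclusion via surjectivity of the projection. The only slip is notational: the common factoring map is the shuffle-type map $\shuffle(\L^{\upsilon_{\operad D}},\L^{\theta})^{\operad C}$, which applies the curvature in one slot at a time, rather than $(\L^{\theta_{\operad D}})^{\operad C}$; this does not affect the argument.
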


\begin{proof}
This follows from dual arguments as those used to prove Proposition \ref{proposition : naturality of curved algebras}. The fact that $g_{*}$ sends curved algebras to curved algebras is clear. Now, given a curved $\operad D$-algebra $\L$, the squared coderivation $d^2$ on $\L^\C \simeq \L^{\C_\pl}$ is
   \begin{align*}
    d^2 &= -\L^{d_{\operad C}^2} +  \shuffle(\id_\L ,d_\L^2)^{\C}~.      
   \end{align*}
   It is thus the sum of the three maps
   \begin{align*}
       \delta^{(1)}:\L^{\C_\pl}& \xrightarrow{\L^{\theta_\C \comp_\pl \id}}
       \L^{\C_\pl \comp_\pl \C_\pl} \longrightarrow \L^{\C_\pl};
       \\
       \delta^{(2)}:\L^{\C_\pl}& \xrightarrow{-\L^{\id_{\operad C} \comp_\pl \shuffle (\upsilon_\C, \theta_\C)}}
       \L^{\C_\pl \comp_\pl \C_\pl} \longrightarrow \L^{\C_\pl};
       \\
       \delta^{(3)}:\L^{\C_\pl}& \xrightarrow{\shuffle(\L^{\upsilon_{\operad D}},\L^{\theta_\C})^{\C}}
       (\L^{\operad D_\pl})^{\C_\pl} \xrightarrow{\gamma_\L^\C} \L^{\C_\pl},
   \end{align*}
where $\upsilon_\C, \upsilon_{\operad D}$ are the counits of the cooperads $\C, \operad D$. Since the curvature on cooperads in zero on arities different from $1$ and since the morphism of cooperads $g$ commutes with the curvatures and the counits, the map
$$
\L^{\id_{\operad C} \comp_\pl \shuffle (\upsilon_\C, \theta_\C)} : \L^{\C_\pl } \longrightarrow \L^{\C_\pl \comp_\pl \C_\pl}
$$
is equal to the composition
$$
\L^{\C_\pl}
\xrightarrow{\shuffle(\L^{\upsilon},\L^{\theta})^{\C}}
(\L^{\operad D_\pl})^{\C_\pl}
\xrightarrow{(\L^{g})^{\C_\pl}}
(\L^{\operad C_\pl})^{\C_\pl}
\hookrightarrow \L^{\C_\pl \comp_\pl \C_\pl}.
$$
As a consequence, if we denote $p$ the projection $p : \L^{\C_\pl} \twoheadrightarrow g^!(\L)$, then
$$
p (\delta^{(2)} + \delta^{(3)}) = 0~.
$$
The map 
$$
\L^{\theta_\C \comp_\pl \id} : \L^{\C_\pl} \longrightarrow \L^{\C_\pl \comp_\pl \C_\pl}
$$
is equal to the composition
$$
\L^{\C_\pl} \xrightarrow{(\L^{\C_\pl})^{\theta_\C}} (\L^{\operad C_\pl})^{\C_\pl}
\hookrightarrow \L^{\C_\pl \comp_\pl \C_\pl}~.
$$
Thus $\delta^{(1)} = \gamma_{\L^{\C_\pl}} (\L^{\C_\pl})^{\theta_\C}$
and therefore
$$
d^2 p = p d^2 = p \delta^{(1)}
= p\gamma_{\L^{\C_\pl}} (\L^{\C_\pl})^{\theta_\C}
= \gamma_{g^!(\L)} (g^!(\L))^{\theta_\C} p ~.
$$
Since $p$ is surjective, this entails that
$$
d^2 = \gamma_{g^!(\L)} (g^!(\L))^{\theta_\C}~,
$$
which proves the result.
\end{proof}

\begin{proposition}\label{proposition : restriction ra complete}
Let $g: \operad C \longrightarrow \operad D$ be a quasi-planar morphism of quasi-planar curved conilpotent cooperads. Then the functor $g_{*}$ sends qp-complete $\operad C$-algebras to qp-complete $\operad D$-algebras.
\end{proposition}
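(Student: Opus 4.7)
The plan is to show that the canonical unit map $\alpha \colon g_*(\L) \to \widehat{g_*(\L)}$ of the qp-completion is an isomorphism whenever $\L$ is a qp-complete $\operad C$-algebra. Since the lemma preceding this proposition already ensures that $\alpha$ is a degree-wise epimorphism, it will suffice to exhibit a left inverse $\beta \colon \widehat{g_*(\L)} \to g_*(\L)$: any epimorphism that is split mono is an isomorphism.

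The first step is to verify that, because $g$ is quasi-planar, it induces for every $n \geq 0$ a morphism of pseudo-planar conilpotent curved cooperads $\bar{g}_n \colon F_n^{\qp} \operad C \to F_n^{\qp} \operad D$ compatible with the inclusions $F_n^{\qp} \operad C \hookrightarrow \operad C$ and $F_n^{\qp} \operad D \hookrightarrow \operad D$. This should follow from the pullback definition of the canonical quasi-planar filtration combined with the naturality of the tree-degree filtration on the bar--Barratt--Eccles construction applied to the morphism of operads $\Omega g$. The morphisms $\bar{g}_n$ then yield, via direct inspection of the structure maps, a commutative equality of restriction functors
\[
g_* \circ \mathrm{U}_n^{\operad C} = \mathrm{U}_n^{\operad D} \circ \bar{g}_{n, *}~,
\]
where $\mathrm{U}_n^{\operad C}$ denotes the forgetful functor from $F_n^{\qp}\operad C$-algebras to $\operad C$-algebras.

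Using the universal property of $F^n_{\qp}$ as left adjoint to $\mathrm{U}_n^{\operad D}$, and applying $g_*$ to the unit $\L \to \mathrm{U}_n^{\operad C}(F^n_{\qp}(\L))$, I construct a canonical morphism
\[
\phi_n \colon F^n_{\qp}(g_*(\L)) \to \bar{g}_{n, *}(F^n_{\qp}(\L))
\]
of $F_n^{\qp} \operad D$-algebras, natural in $n$, whose adjoint transpose is precisely $g_*$ applied to the unit $\L \to F^n_{\qp}(\L)$. The maps $\mathrm{U}_n^{\operad D}(\phi_n)$ assemble into a morphism of towers of $\operad D$-algebras, and passing to the inverse limit --- combined with the fact that $g_*$, as a right adjoint, commutes with limits, and that qp-completeness of $\L$ gives $\lim_n g_*(F^n_{\qp}(\L)) = g_*(\widehat{\L}) = g_*(\L)$ --- yields the desired $\beta$. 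The equality $\beta \circ \alpha = \mathrm{id}_{g_*(\L)}$ is then checked on each canonical projection to $g_*(F^n_{\qp}(\L))$: both sides yield $g_*$ applied to the unit $\L \to F^n_{\qp}(\L)$, and the universal property of the limit concludes.

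The main obstacle is the first step, namely establishing the naturality of the canonical quasi-planar filtration under quasi-planar cooperad morphisms. Once this naturality is secured, the remainder is a formal consequence of adjoint-functor manipulations and qp-completeness of $\L$.
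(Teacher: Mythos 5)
Your proposal is correct, and its engine is the same as the paper's: both arguments rest on the naturality of the canonical quasi-planar filtration under $g$ (the commuting square relating $F_n^{\qp}\operad C \to F_n^{\qp}\operad D$ to the inclusions into $\operad C$ and $\operad D$), on the resulting identity exhibiting $g_{*}(F^n_{\qp}\Lambda)$ as the restriction of a pdg $F_n^{\qp}\operad D$-algebra, and on the identification $g_{*}(\Lambda) \cong \lim_n g_{*}(F^n_{\qp}\Lambda)$ coming from qp-completeness of $\Lambda$ and the fact that $g_{*}$, being a right adjoint, preserves limits. Where you diverge is only the concluding step: the paper observes that each $g_{*}(F^n_{\qp}\Lambda)$ is qp-complete (being restricted from an $F_n^{\qp}\operad D$-algebra) and that qp-complete algebras are stable under limits, so the limit is complete; you instead use the adjunction $F^n_{\qp} \dashv \mathrm{U}_n$ on the $\operad D$-side to build the comparison maps $\phi_n \colon F^n_{\qp}(g_{*}\Lambda) \to g_{*}(F^n_{\qp}\Lambda)$, assemble them into a retraction $\beta$ of the completion unit, and invoke the fact that the unit $\Gamma \to \widehat{\Gamma}$ is always a degree-wise epimorphism (this lemma is indeed in the paper, in the subsection on complete algebras over the canonical ladder, though not literally just before the proposition), so that split mono plus epi gives an isomorphism. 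Both finishes are valid; the paper's is shorter given the reflective-subcategory formalism, yours is more hands-on but avoids having to argue that limits of complete algebras are complete. Finally, the naturality of the quasi-planar filtration that you flag as the main obstacle is precisely the step the paper takes for granted when it writes down the square for $F_n^{\qp}g$; your sketched justification via naturality of the comodule maps (equivalently of the adjoint maps into $\mathrm{B}(\operad E \otimes \Omega(-))$) and of the filtration $\tilde F_n$ is the right one, so on this point you are, if anything, more explicit than the paper.
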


\begin{proof}
    Let us consider the following commutative square diagram of quasi-planar conilpotent curved cooperads 
    $$
    \begin{tikzcd}[column sep=2.5pc,row sep=2.5pc]
        F_n^{\qp}\C
        \ar[r, "F_n^{\qp}g"] \ar[d, "i"]
        & F_n^{\qp}\operad D
        \ar[d, "j"]
        \\
        \C
        \ar[r, "g"']
        & \operad D.
    \end{tikzcd}
    $$
    for every $n \geq 0$. Let $\L$ be a qp-complete pdg $\operad C$-algebra. The
    $\operad D$-algebra $g_{*}(F^n_{\qp}\L)$ rewrites as
    $$
    g_{*}(F^n_{\qp}\L) = g_{*} i^* i_! (\L)
    \cong j^* (F_n^{\qp}g)^* i_! (\L)~.
    $$
    Thus $g_{*}(F^n_{\qp}\L)$ is qp-complete as it is the image of a pdg $F_n^{\qp}\operad D$-algebra. Since $g_{*}$ preserves limits and since $\L$ is complete, the map
    $$
    g_{*}(\L) \longrightarrow \lim_{n \in \omega^\op} g_{*}(F^n_{\qp}\L)
    $$
    is an isomorphism. So $g_{*}(\L)$ is complete as it is the limit of a diagram of complete algebras.
\end{proof}

\begin{proposition}
Let $g: \operad C \longrightarrow \operad D$ be a quasi-planar morphism of curved cooperads. There is an adjunction 
\[
\begin{tikzcd}[column sep=5pc,row sep=3pc]
          \catcurvcompalg{\operad C}
          \arrow[r, shift left=1.1ex, "g^{*}"{name=F}]
          & \catcurvcompalg{\operad D}~.
          \arrow[l, shift left=.75ex, "\widehat{g}^!"{name=U}]
            \arrow[phantom, from=F, to=U, , "\dashv" rotate=90]
\end{tikzcd}
\]
between qp-complete curved $\C$-algebras and qp-complete curved $\D$-algebras, where the left adjoint $\widehat{g}^!$ is given by $g^!$ post-composed with the completion functor of $\C$-algebras.
\end{proposition}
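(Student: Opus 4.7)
The plan is to assemble the adjunction directly from ingredients that have already been established. By Proposition \ref{proposition : naturality of curved algebras}, one has an adjunction $g^! \dashv g^{*}$ between $\catcurvalg{\operad D}$ and $\catcurvalg{\operad C}$. By Proposition \ref{proposition : restriction ra complete}, the right adjoint $g^{*}$ preserves qp-completeness, so it restricts to a well-defined functor $g^{*} : \catcurvcompalg{\operad C} \longrightarrow \catcurvcompalg{\operad D}$, which is exactly the right-hand functor in the statement.

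Next, I would define $\widehat{g}^! \coloneqq \widehat{(-)} \circ g^!$ as prescribed: for a qp-complete curved $\operad D$-algebra $\L$, first form the curved $\operad C$-algebra $g^!(\L)$, then apply the qp-completion functor (which was shown in the preceding subsection to be left adjoint to the inclusion $\catcurvcompalg{\operad C} \hookrightarrow \catcurvalg{\operad C}$, and to restrict to curved algebras by idempotency of the curvature reflector) to obtain a qp-complete curved $\operad C$-algebra $\widehat{g^!(\L)}$. This is manifestly functorial and lands in $\catcurvcompalg{\operad C}$ by construction.

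The required adjunction $\widehat{g}^! \dashv g^{*}$ then follows from the chain of natural bijections
\[
\Hom{\catcurvcompalg{\operad C}}{\widehat{g^!(\L)}}{\Gamma}
\;\cong\; \Hom{\catcurvalg{\operad C}}{g^!(\L)}{\Gamma}
\;\cong\; \Hom{\catcurvalg{\operad D}}{\L}{g^{*}\Gamma}
\;\cong\; \Hom{\catcurvcompalg{\operad D}}{\L}{g^{*}\Gamma},
\]
for $\L$ in $\catcurvcompalg{\operad D}$ and $\Gamma$ in $\catcurvcompalg{\operad C}$, where the first bijection is the universal property of qp-completion applied to the already complete object $\Gamma$, the second is the adjunction of Proposition \ref{proposition : naturality of curved algebras}, and the third uses the full faithfulness of the inclusion $\catcurvcompalg{\operad D} \hookrightarrow \catcurvalg{\operad D}$ together with the fact, ensured by Proposition \ref{proposition : restriction ra complete}, that $g^{*}\Gamma$ is itself qp-complete.

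No genuine obstacle stands in the way: every substantive piece of work has been done in the two immediately preceding propositions, and the only remaining verification, that each of the three bijections above is natural in both variables, is routine from the naturality of the three underlying adjunctions. The construction is precisely the standard pattern for composing a left adjoint with the reflector of a reflective subcategory, specialised to the fact that here the reflector is qp-completion and the composition with the left adjoint $g^!$ still lands in the reflective subcategory after completion.
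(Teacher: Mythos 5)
Your proposal is correct and follows essentially the same route as the paper: the paper's (very terse) proof likewise assembles the statement from the adjunction $g^!\dashv g^{*}$ on curved algebras, the reflective completion adjunction, and the fact that $g^{*}$ preserves qp-complete (and completion preserves curved) objects, which is exactly the content of your hom-set chain. The only quibble is your parenthetical justification that completion restricts to curved algebras "by idempotency of the curvature reflector" — the paper's reason is that the map $\L \to \widehat{\L}$ is a degree-wise epimorphism — but the fact itself is established and correctly invoked.
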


\begin{proof}
Follows from Proposition \ref{proposition : naturality of curved algebras}, using the fact that the completion functor of $\C$-algebras preserves curved objects.
\end{proof}

\medskip

\textbf{Embedding of cooperads.} Let us assume that $g: \operad C \longrightarrow \operad D$ is a arity-wise degree-wise injection. Since the underlying graded $\mathbb S$-module of $\C$ is cofree and thus injective, we get a section of graded $\mathbb S$-modules $p: \operad D \longrightarrow \C$ that is left inverse to $g$.

\medskip

As a consequence, the morphism of comonads on pdg $\kk$-modules $\operad C \comp - \longrightarrow \operad D \comp -$ is object-wise a monomorphism and the morphism of monads on pdg $\kk$-modules $(-)^{\operad D} \longrightarrow (-)^{\operad C}$ is object-wise an epimorphism. This implies that the functors

\begin{align*}
    g_* :& \catpdgcog{\operad C} \longrightarrow \catpdgcog{\operad D}
    \\
    \\
    g_{*} :& \catpdgalg{\operad C} \longrightarrow \catpdgalg{\operad D}
\end{align*}

are fully faithful. Therefore the first $g_*$ is the forgetful functor related to an idempotent comonad and the second $g_{*}$ is the forgetful functor related to an idempotent monad. In this case, their respective adjoint functors admit an easy description in terms of pullbacks and pushouts.

\begin{lemma}
For every pdg $\operad D$-coalgebra $(W,\Delta_W,d_W)$, $g^!(W)$ is given by the following pullback 
$$
\begin{tikzcd}[column sep=3pc,row sep=3pc]
    g^!(W) \ar[r] \ar[d, hook] \arrow[dr, phantom, "\lrcorner", very near start]
    & \operad C \comp W
    \ar[d, "g~ \comp~ \id_W", hook]
    \\
    W \ar[r,"\Delta_W"]
    & \operad D \comp W.
\end{tikzcd}
$$
in the category of pdg $\kk$-modules.
\end{lemma}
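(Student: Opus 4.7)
The plan is to match the given pullback $P := W \times_{\operad D \comp W} (\operad C \comp W)$ against the coreflexive equalizer description of $g^!(W)$ recalled in Section \ref{section : Naturality of (co)algebras over a cooperad.}. Write $E$ for this equalizer, so
\[
E \hookrightarrow \operad C \comp W \rightrightarrows \operad C \comp \operad D \comp W~,
\]
where the parallel arrows are $\id \comp \Delta_W$ and $(\id \comp g \comp \id)(\Delta_{\operad C} \comp \id)$. I will construct mutually inverse maps $E \rightleftarrows P$ of pdg $\kk$-modules using the relevant universal properties.

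For the direction $E \to P$, I form the cone whose first leg is $E \hookrightarrow \operad C \comp W \xrightarrow{\epsilon_{\operad C} \comp \id_W} W$ and whose second leg is the equalizer inclusion $E \hookrightarrow \operad C \comp W$. To verify this is a cone over the pullback span, I post-compose the equalizer equation $(\id \comp \Delta_W)(x) = (\id \comp g \comp \id)(\Delta_{\operad C} \comp \id)(x)$ with $\epsilon_{\operad C}$ on the outermost factor; the counital axiom collapses the $\Delta_{\operad C}$ on the right-hand side and yields $\Delta_W(\epsilon_{\operad C} \comp \id)(x) = (g \comp \id)(x)$, which is precisely the commutation required.

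For the direction $P \to E$, I use the second-coordinate projection $P \to \operad C \comp W$ and verify that its image lands in the equalizer. Starting from $\Delta_W(w) = (g \comp \id)(x)$ for $(w,x) \in P$, I apply $\id_{\operad D} \comp \Delta_W$ to both sides. The left side, by coassociativity of $\Delta_W$, becomes $(\Delta_{\operad D} \comp \id)\Delta_W(w) = (\Delta_{\operad D} \comp \id)(g \comp \id)(x) = (g \comp g \comp \id)(\Delta_{\operad C} \comp \id)(x)$ using that $g$ is a morphism of cooperads, i.e., $\Delta_{\operad D} g = (g \comp g) \Delta_{\operad C}$. The right side becomes $(g \comp \Delta_W)(x)$. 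Both sides factor through the inclusion $g \comp \id \comp \id : \operad C \comp \operad D \comp W \hookrightarrow \operad D \comp \operad D \comp W$, which is a monomorphism because each $\operad C(n)$ is free (hence injective) over $\kk[\mathbb S_n]$, so the inclusion $g(n) : \operad C(n) \hookrightarrow \operad D(n)$ splits as an $\mathbb S_n$-equivariant map and remains split after tensoring. Cancelling this monomorphism then delivers exactly the equalizer equation for $x$.

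Mutual invertibility is a short verification: one composite is clearly the identity, while for the other, applying $\epsilon_{\operad D} \comp \id$ to $\Delta_W(w) = (g \comp \id)(x)$ and using the counit axiom of $W$ on the left together with $\epsilon_{\operad D} g = \epsilon_{\operad C}$ on the right produces $w = (\epsilon_{\operad C} \comp \id)(x)$. The main subtle step is the $P \to E$ direction, where both the cooperad compatibility of $g$ and the injectivity/splitness of the induced maps are genuinely needed in order to cancel $g \comp \id \comp \id$ on $\operad D \comp \operad D \comp W$.
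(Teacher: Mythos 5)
Your proof is correct, but it takes a more hands-on route than the paper. The paper disposes of this lemma in one line by invoking the dual of Proposition \ref{propadjointliftingepi}: since $g$ splits as a map of graded $\mathbb S$-modules (the underlying graded $\mathbb S$-module of $\operad C$ is quasi-free, hence injective), the morphism of comonads $\operad C \comp - \to \operad D \comp -$ is objectwise a split monomorphism, and the abstract adjoint-lifting statement then identifies the right adjoint $g^!$ with the stated pullback. What you do instead is re-prove that identification concretely: you take the coreflexive-equalizer description of $g^!(W)$ from Section \ref{section : Naturality of (co)algebras over a cooperad.} and build explicit mutually inverse pdg maps between the equalizer and the pullback, using the counit axioms, the compatibility $\Delta_{\operad D}\, g = (g \comp g)\,\Delta_{\operad C}$ and $\epsilon_{\operad D}\, g = \epsilon_{\operad C}$, and the cancellation of the split monomorphism $g \comp \id_{\operad D \comp W}$. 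Your cancellation step is exactly the ingredient (objectwise split mono) that makes the dual of Proposition \ref{propadjointliftingepi} applicable, so the mathematical content coincides; the paper's argument is shorter and reusable for the dual statement about algebras, while yours makes the role of the cooperad axioms and of the splitting of $g$ fully explicit, at the cost of redoing by hand what the appendix proposition packages abstractly. The only point worth a passing remark in your write-up is that the maps you construct are indeed morphisms of pdg modules (they are composites and restrictions of pdg maps, and both the equalizer and the pullback are computed degree-wise), but this is immediate.
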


\begin{proof}
Follows directly from the dual of Proposition \ref{propadjointliftingepi}.
\end{proof}

\begin{lemma}
For every pdg $\operad D$-algebra $(\L,\gamma_\L,d_\L)$, $g^!(\L)$ is given by the following pushout 
$$
\begin{tikzcd}[column sep=3pc,row sep=3pc]
    \L^{\operad D} \ar[d,"\gamma_\L",swap] \ar[r, "\L^g",twoheadrightarrow] \arrow[dr, phantom, "\ulcorner", very near end]
    & \L^{\operad C}
    \ar[d]
    \\
    \L \ar[r,twoheadrightarrow]
    & g^!(\L).
\end{tikzcd}
$$
in the category of pdg $\kk$-modules.
\end{lemma}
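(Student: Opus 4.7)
The plan is to apply Proposition \ref{propadjointliftingepi} to the morphism of monads $(-)^g: (-)^{\operad D} \longrightarrow (-)^{\operad C}$ on pdg $\kk$-modules induced by $g$. Before invoking the proposition, I would first verify that this morphism of monads is object-wise an epimorphism. This is exactly the input needed: since $g: \operad C \longrightarrow \operad D$ is a degree-wise injection and the underlying graded $\mathbb{S}$-modules of both $\operad C$ and $\operad D$ are cofree (coming via $-\otimes \mathbb S$ from planar cooperads), $g$ admits a left inverse $p: \operad D \longrightarrow \operad C$ as a morphism of graded $\mathbb{S}$-modules. For any pdg $\kk$-module $\L$, applying $(\L)^-$ then produces a right inverse $\L^p: \L^{\operad C} \longrightarrow \L^{\operad D}$ to $\L^g$, so $\L^g$ is in particular a degree-wise epimorphism.

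With this hypothesis satisfied, Proposition \ref{propadjointliftingepi} describes the left adjoint $g^!$ to the fully faithful inclusion $g_{*}: \catpdgalg{\operad C} \hookrightarrow \catpdgalg{\operad D}$ in terms of a pushout in the underlying category of pdg $\kk$-modules, precisely of the shape asserted in the lemma. Concretely, for a pdg $\operad D$-algebra $\L$, the left adjoint identifies two potential $\operad D$-algebra structures on the free $\operad C$-algebra generated by $\L$: one obtained from the original structure map $\gamma_\L: \L^{\operad D} \longrightarrow \L$, the other from the forgetful embedding $\L^g: \L^{\operad D} \twoheadrightarrow \L^{\operad C}$ together with the free $\operad C$-algebra structure. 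The pushout $\L \coprod_{\L^{\operad D}} \L^{\operad C}$ identifies these, giving the universal $\operad C$-algebra receiving a map from $\L$ of pdg $\operad D$-algebras.

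The only real work is to check the universal property: a morphism out of the pushout to a pdg $\operad C$-algebra $\Gamma$ is the same data as a morphism $f: \L \longrightarrow \Gamma$ of pdg $\kk$-modules such that the two compositions $\L^{\operad D} \rightrightarrows \Gamma$ agree. One composition expresses $\operad D$-linearity of $f$ after restriction through $g$, while the other factors via the $\operad C$-algebra structure on $\Gamma$; together they say that $f$ underlies a morphism of pdg $\operad D$-algebras $\L \longrightarrow g_{*}\Gamma$, which by the adjunction $g^! \dashv g_{*}$ corresponds bijectively to a morphism $g^!(\L) \longrightarrow \Gamma$ of pdg $\operad C$-algebras. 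The main obstacle, and the reason this is not completely automatic, is ensuring that the pushout computed in pdg $\kk$-modules automatically carries a $\operad C$-algebra structure making the left vertical map of the square into a morphism of $\operad C$-algebras; this is precisely what the quoted proposition provides, once $\L^g$ is shown to be epimorphic.
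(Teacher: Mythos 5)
Your proposal is correct and follows the paper's own route: the paper proves this lemma by directly invoking Proposition \ref{propadjointliftingepi}, with the object-wise epimorphy of the monad morphism $(-)^{\operad D} \to (-)^{\operad C}$ established exactly as you do, via the graded splitting $p$ of the injection $g$ coming from the (co)freeness, hence injectivity, of the underlying graded $\mathbb S$-module of $\operad C$. The only hypothesis you leave tacit is that $(-)^{\operad C}$ preserves reflexive coequalisers of pdg $\kk$-modules, which holds here by Lemma \ref{lemma: quasi-planar functors preserve more stuff} thanks to the standing planarity assumption, and is likewise left implicit in the paper's proof.
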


\begin{proof}
Follows directly from Proposition \ref{propadjointliftingepi}.
\end{proof}


\subsection{The Bar-Cobar adjunction}\label{subsection: bar-cobar adjunction}
Koszul duality between dg operads and conilpotent curved cooperads is encoded by the notion of a curved twisting morphism, see \cite{grignou2021algebraic} or \cite{curvedcalculus}. Any curved twisting morphism $\alpha: \C \longrightarrow \operad P$ between a conilpotent curved cooperad and a dg operad induces a bar-cobar adjunction between the categories of dg $\operad P$-algebras and of curved $\C$-coalgebras.

\medskip

\textbf{The bar construction relative to $\alpha$.} Given a dg $\operad P$-algebra $(A,\gamma_A,d_A)$, one can construct a curved $\operad C$-coalgebra $\mathrm{B}_{\alpha} A$ called the bar construction. The underlying graded $\C$-coalgebra of $\mathrm{B}_{\operad C} A$ is given by $\operad C \comp A$. It is equipped with the unique coderivation whose projection onto the generators is the sum of the following maps
\[
\begin{tikzcd}[column sep=3pc,row sep=0pc]
d_1: \operad C \comp A   \arrow[r,twoheadrightarrow]
&A \arrow[r,"d_A"]
&A~,
\end{tikzcd}
\]

\[
\begin{tikzcd}[column sep=3pc,row sep=0pc]
d_2: \operad C \comp A   \arrow[r,"\alpha~\comp~\id"]
&\operad P \comp  A \arrow[r,"\gamma_A"]
&A~.
\end{tikzcd}
\]

One can check that the pdg $\C$-coalgebra $\mathrm{B}_{\alpha} A$ is in fact a curved $\C$-coalgebra.

\medskip

\textbf{The cobar construction relative to $\alpha$.} Given a curved $\operad C$-coalgebra $(W,\Delta_W,d_W)$, one can construct a dg $\operad P$-algebra $\Omega_{\alpha} W$ called de cobar construction. The underlying graded $\operad P$-algebra is given by $\operad P\comp W$. It is equipped with the unique derivation whose restriction to the generators is the sum of the maps
\[
\begin{tikzcd}[column sep=3pc,row sep=0pc]
d_1: W   \arrow[r,"d_W"]
&W \arrow[r,rightarrowtail]
&\operad P \comp W~,
\end{tikzcd}
\]

\[
\begin{tikzcd}[column sep=3pc,row sep=0pc]
d_2: W  \arrow[r,"(-1).(-)"]
&W \arrow[r,"\Delta_W"]
&\operad C \comp W \arrow[r,"\alpha~ \comp~ \id"]
& \operad P \comp W~.
\end{tikzcd}
\]

One can check that the pdg $\operad P$-algebra $\Omega_{\alpha} W$ is in fact a dg $\operad P$-algebra.

\begin{definition}[Twisting morphism]
Let $(W,\Delta_W,d_W)$ be a curved $\C$-coalgebra and let $(A,\gamma_A,d_A)$ be a dg $\operad P$-algebra. A \textit{twisting morphism relative to $\alpha$} is the data of a graded morphism $\lambda: W \longrightarrow A$ which satisfies the following equation
\[
\gamma_A~(\alpha \circ \lambda)~\Delta_W + d_A~\lambda - \lambda~d_W = 0~.
\]

We denote the set of twisting morphisms between $W$ and $A$ by $\mathrm{Tw}^{\alpha}(W,A)$. 
\end{definition}

\begin{proposition}
Let $\alpha: \C \longrightarrow \operad P$ be a curved twisting morphism between a conilpotent curved cooperad $\C$ and a dg operad $\operad P$. There are natural isomorphisms

\[
\mathrm{Hom}_{\mathsf{dg}~ \mathcal{P}\text{-}\mathsf{alg}}(\Omega_\alpha W,A) \cong \mathrm{Tw}^{\alpha}(W,A) \cong \mathrm{Hom}_{\mathsf{curv}~\C\text{-}\mathsf{cog}}(W,\mathrm{B}_{\alpha}A)~,
\]
\vspace{0.1pc}

for any curved $\C$-coalgebra $W$ and any dg $\operad P$-algebra $A$. This gives the bar-cobar adjunction relative to $\alpha$
\[
\begin{tikzcd}[column sep=5pc,row sep=3pc]
          \mathsf{curv}~\C\text{-}\mathsf{cog} \arrow[r, shift left=1.1ex, "\Omega_{\alpha}"{name=F}] & \mathsf{dg}~ \mathcal{P}\text{-}\mathsf{alg}, \arrow[l, shift left=.75ex, "\mathrm{B}_{\alpha}"{name=U}]
            \arrow[phantom, from=F, to=U, , "\dashv" rotate=-90]
\end{tikzcd}
\]

between the categories of dg $\operad P$-algebras and the category of curved $\C$-coalgebras. Furthermore, 
\end{proposition}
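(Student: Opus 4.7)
The plan is to establish both bijections by passing through the underlying graded data, using the universal properties of the free graded $\operad P$-algebra $\operad P \comp W$ and of the cofree graded $\C$-coalgebra $\C \comp A$ (the latter being cofree because the comonad associated to the forgetful functor from graded $\C$-coalgebras to graded modules is precisely $\C \comp (-)$). Both sides will be identified with the set of degree zero graded module maps $\lambda: W \to A$, and the condition that the resulting morphism respects the (co)differential will translate in each case into the twisting morphism equation defining $\mathrm{Tw}^\alpha(W,A)$.

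First I would prove the bijection $\mathrm{Hom}_{\mathsf{dg}~\mathcal{P}\text{-}\mathsf{alg}}(\Omega_\alpha W, A) \cong \mathrm{Tw}^\alpha(W, A)$. Any morphism of graded $\operad P$-algebras $f: \operad P \comp W \to A$ is uniquely determined by the graded module morphism $\lambda = f|_W: W \to A$. Since $f$ is a morphism of $\operad P$-algebras and both differentials are derivations, the condition $f \circ d_{\Omega_\alpha W} = d_A \circ f$ holds everywhere if and only if it holds on the generators $W$. On $W$, the restriction of the derivation of $\Omega_\alpha W$ is $d_W - (\alpha \comp \id)\Delta_W$; applying $f$ gives $\lambda d_W - \gamma_A (\alpha \comp \lambda)\Delta_W$, which equals $d_A \lambda$ precisely when
\[
\gamma_A(\alpha \circ \lambda) \Delta_W + d_A \lambda - \lambda d_W = 0~,
\]
that is, exactly the twisting morphism equation.

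Dually, I would prove the bijection $\mathrm{Tw}^\alpha(W, A) \cong \mathrm{Hom}_{\mathsf{curv}~\C\text{-}\mathsf{cog}}(W, \mathrm{B}_\alpha A)$. Any morphism of graded $\C$-coalgebras $g: W \to \C \comp A$ is uniquely determined by $\lambda = \pi \circ g: W \to A$, where $\pi: \C \comp A \to A$ is the counit projection; explicitly $g = (\id_\C \comp \lambda)\Delta_W$. Compatibility of $g$ with the pre-differentials only needs to be checked after projection to the cogenerators $A$, since the coderivation on $\mathrm{B}_\alpha A$ is determined by its projection to $A$. That projection equals $d_A \circ \pi + \gamma_A \circ (\alpha \comp \id)$ (the contribution from $d_\C$ vanishes after projection to $A$ because the counit of $\C$ commutes with its pre-differential), so applied to $g$ it gives $d_A \lambda + \gamma_A(\alpha \comp \lambda)\Delta_W$ and must equal $\pi g d_W = \lambda d_W$, which is once again the twisting morphism equation. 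Since both $W$ and $\mathrm{B}_\alpha A$ are curved $\C$-coalgebras, any morphism of underlying pdg $\C$-coalgebras between them is automatically a morphism of curved $\C$-coalgebras, so the curvature condition imposes no additional constraint on $\lambda$.

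The main obstacle is really just careful sign bookkeeping to ensure both translations produce exactly the same equation, as the sign conventions in the definitions of the bar and cobar constructions need to be tracked through the universal properties. Naturality of both bijections in $W$ and $A$ is immediate from the naturality of the free and cofree adjunctions at the graded level. Composing the two bijections yields a natural isomorphism $\mathrm{Hom}_{\mathsf{dg}~\mathcal{P}\text{-}\mathsf{alg}}(\Omega_\alpha W, A) \cong \mathrm{Hom}_{\mathsf{curv}~\C\text{-}\mathsf{cog}}(W, \mathrm{B}_\alpha A)$, which gives precisely the adjunction $\Omega_\alpha \dashv \mathrm{B}_\alpha$ claimed in the statement.
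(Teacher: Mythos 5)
Your argument is correct and is essentially the standard proof: the paper itself does not spell out an argument but simply refers to \cite[Section 4.3]{unitalalgebras} and \cite[Section 11]{LodayVallette}, where exactly this identification of both hom-sets with twisting morphisms via the quasi-free algebra $\operad P \comp W$ and the quasi-cofree coalgebra $\operad C \comp A$ is carried out. Your two reductions to the generators/cogenerators, the sign check against the paper's definitions of $d_2$, and the observation that the curvature condition is a property (fullness of curved $\C$-coalgebras in pdg $\C$-coalgebras) rather than extra data all match that standard treatment.
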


\begin{proof}
These constructions can be found in \cite[Section 4.3]{unitalalgebras}. The case where $\C$ is a dg cooperad can be found in \cite[Section 11]{LodayVallette}. 
\end{proof}

\subsection{The complete Bar-Cobar adjunction}
Any curved twisting morphism $\alpha: \C \longrightarrow \operad P$ between a conilpotent curved cooperad and a dg operad also induces a complete bar-cobar adjunction between the categories of dg $\operad P$-coalgebras and of curved $\C$-algebras.

\medskip

\textbf{The complete cobar construction.} Given a dg $\operad P$-coalgebra $(V,\Delta_V,d_V)$, one can construct a curved $\operad C$-algebra $\widehat{\Omega}_{\alpha} V$ called the complete cobar construction. The underlying graded $\C$-algebra is given by $V^{\operad C}$. It is equipped with the unique derivation whose restriction to the generators is the sum of the maps
\[
\begin{tikzcd}[column sep=3pc,row sep=0pc]
d_1: V   \arrow[r,"d_V"]
&V \arrow[r,rightarrowtail]
&V^{\operad C}~,
\end{tikzcd}
\]

\[
\begin{tikzcd}[column sep=3pc,row sep=0pc]
d_2: V  \arrow[r,"(-1).(-)"]
&V \arrow[r,"\Delta_V"]
&V^{\operad P} \arrow[r,"(\id)^{\alpha}"]
&V^{\operad C}~.
\end{tikzcd}
\]

One can check that the pdg $\operad C$-algebra $\widehat{\Omega}_{\alpha} V$ is in fact a qp-complete curved $\operad C$-algebra.

\medskip

\textbf{The complete bar construction.} Given a curved $\operad C$-algebra $(\L,\gamma_\L,d_\L)$, one can construct a dg $\operad P$-coalgebra $\widehat{\mathrm{B}}_\alpha \L$ called the complete bar construction. The underlying graded $\operad P$-coalgebra is given by $L^{\operad P} \L$. It is equipped with the unique coderivation whose projection onto the generators is the sum of the maps
\[
\begin{tikzcd}[column sep=3pc,row sep=0pc]
d_1: L^{\operad P}\L   \arrow[r,twoheadrightarrow]
&\L \arrow[r,"d_\L"]
&\L~,
\end{tikzcd}
\]

\[
\begin{tikzcd}[column sep=3pc,row sep=0pc]
d_2: L^{\operad P} \L  \arrow[r,rightarrowtail]
& \L^{\operad P} \arrow[r,"(\id)^{\alpha}"]
&\L^{\operad C} \arrow[r,"\gamma_B"]
&\L~.
\end{tikzcd}
\]

\begin{remark}
The cofree $\operad P$-coalgebra functor $L^{\operad P}$ behaves well with respect to coderivations, which can be induced simply by a graded map onto the cogenerators. We refer to \cite[Section 6.5]{linearcoalgebras} for these type of results. 

\end{remark}

\begin{definition}[Twisting morphism]
Let $(\L,\gamma_\L,d_\L)$ be a curved $\C$-algebra and let $(C,\Delta_C,d_C)$ be a dg $\operad P$-coalgebra. A \textit{twisting morphism relative to $\alpha$} is the data of a graded morphism $\lambda: C \longrightarrow \L$ which satisfies the following equation
\[
\gamma_\L~(\lambda)^{\alpha}~\Delta_C + d_\L~\lambda - \lambda~d_C = 0~.
\]

We denote the set of twisting morphisms between $C$ and $\L$ by $\mathrm{Tw}^{\alpha}(C,\L)$. 
\end{definition}

\begin{proposition}
Let $\alpha: \C \longrightarrow \operad P$ be a curved twisting morphism between a conilpotent curved cooperad $\C$ and a dg operad $\operad P$. There are natural isomorphisms

\[
\mathrm{Hom}_{\mathsf{dg}~ \mathcal{P}\text{-}\mathsf{coalg}}\left(C,\widehat{\mathrm{B}}_{\alpha}\L \right) \cong \mathrm{Tw}^{\alpha}(C,\L) \cong \mathrm{Hom}_{\mathsf{curv}~\C\text{-}\mathsf{alg}}\left(\widehat{\Omega}_{\alpha}C,\L\right)~,
\]
\vspace{0.1pc}

for any curved $\C$-algebra $\L$ and any dg $\operad P$-coalgebra $C$. This gives the complete bar-cobar adjunction relative to $\alpha$
\[
\begin{tikzcd}[column sep=5pc,row sep=3pc]
          \mathsf{dg}~\operad P\text{-}\mathsf{cog} \arrow[r, shift left=1.1ex, "\widehat{\Omega}_{\alpha}"{name=F}] & \catcurvalg{\operad C} \arrow[l, shift left=.75ex, "\widehat{\mathrm{B}}_{\alpha}"{name=U}]
            \arrow[phantom, from=F, to=U, , "\dashv" rotate=-90]
\end{tikzcd}
\]

between the categories of dg $\operad P$-coalgebras and the category of curved $\C$-algebras.
\end{proposition}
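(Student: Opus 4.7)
The plan is to follow the classical strategy for bar-cobar adjunctions, adapted to this curved and complete context. The two isomorphisms will arise from the universal properties of free algebras over the monad $(-)^{\operad C}$ and of cofree coalgebras over the comonad $L^{\operad P}$, while the twisting morphism equation will encode compatibility with the (co)derivations.

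First I would verify that both constructions are well-defined. The graded $\C$-algebra $V^{\operad C}$ together with the derivation whose restriction to the generators $V$ is $d_1 + d_2$ is a \emph{curved} $\C$-algebra precisely when the defining Maurer-Cartan-type equation $\partial(\alpha) + \alpha \star \alpha = \theta$ (curvature of $\alpha$) holds. A direct computation of $d^2$ on the generating summand $V$ produces four terms (using $d_V^2 = 0$, the cooperad decomposition of $V$, and the definition of $\alpha$); regrouping gives exactly the action of the curvature $\theta_\C$ on $V^{\operad C}$ via its structural map. Completeness follows from the fact that $V^{\operad C} \cong V^{\operad C_\pl}$ is free and free $\operad C$-algebras are qp-complete. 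Dually, one checks that the coderivation $d_1 + d_2$ on $L^{\operad P}\L$ squares to zero using the curved Maurer-Cartan equation for $\alpha$, the dg structure of $\L$, and the curvature condition on $\L$ as a curved $\C$-algebra; the coderivation is well-defined by the universal property of the cofree $\operad P$-coalgebra $L^{\operad P}\L$ (its projection to the cogenerators $\L$ suffices).

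Next I would establish the bijections. Given a graded map $\lambda : C \to \L$, by the universal property of the free $\C$-algebra functor $(-)^{\operad C}$, extending $\lambda$ to a morphism of graded $\C$-algebras $\widetilde\lambda : \widehat{\Omega}_\alpha C \to \L$ is automatic and bijective. Dually, by the universal property of the cofree $\operad P$-coalgebra $L^{\operad P}\L$, $\lambda$ extends uniquely to a morphism of graded $\operad P$-coalgebras $\widehat\lambda : C \to \widehat{\mathrm{B}}_\alpha \L$. It then remains to check that $\widetilde\lambda$ commutes with the curved derivations (resp. $\widehat\lambda$ commutes with the dg coderivations) if and only if the equation
\[
\gamma_\L \, (\lambda)^\alpha \, \Delta_C + d_\L \, \lambda - \lambda \, d_C = 0
\]
is satisfied. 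For $\widetilde\lambda$, it is enough to check the commutation on the generators $C \subset \widehat{\Omega}_\alpha C$, where it reduces exactly to the twisting morphism equation above. For $\widehat\lambda$, by the universal property it is enough to check the projection to the cogenerators $\L$, which again produces precisely the twisting morphism equation.

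The main obstacle, and the place where the quasi-planar/complete framework developed in Sections~\ref{sectioncommuteslimitscolimits} and in the previous subsections is used, is the verification that $\widehat{\Omega}_\alpha V$ is genuinely curved (and qp-complete) and not merely pdg: the squared derivation on $V^{\operad C}$ must produce the curvature action $\gamma_{\widehat{\Omega}_\alpha V}\,(\widehat{\Omega}_\alpha V)^{\theta_\C}$ and not only vanish modulo curvature. This requires the careful bookkeeping of signs arising from the suspension convention in the cobar-like construction together with Definition~\ref{def curved cooperad} and a diagram chase analogous to the one in the proof of Proposition~\ref{proposition : naturality of curved algebras}. Once this is in place, the three sets are in natural bijection, functoriality of each construction is immediate, and the adjunction is established.
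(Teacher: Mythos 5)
Your proposal is correct and takes essentially the same route as the paper, which simply defers the verifications to \cite[Section 8]{linearcoalgebras}: since the underlying graded objects of $\widehat{\Omega}_{\alpha}C$ and $\widehat{\mathrm{B}}_{\alpha}\L$ are the free $\C$-algebra $C^{\operad C}$ and the cofree $\operad P$-coalgebra $L^{\operad P}\L$, both hom-sets reduce by the universal properties to graded maps $C \to \L$, and commutation with the (co)derivations, checked on generators resp.\ cogenerators, is exactly the twisting-morphism equation. One small caveat: the proposition only assumes $\C$ is a conilpotent curved cooperad, so your appeal to qp-completeness and to the identification $V^{\operad C}\cong V^{\operad C_\pl}$ is neither available nor needed here --- the stated adjunction lands in all curved $\C$-algebras, completeness being addressed only in the subsequent remark under the quasi-planar hypothesis.
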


\begin{proof}
These constructions can be found in \cite[Section 8]{linearcoalgebras}.
\end{proof}

\begin{remark}
It is also an adjunction 
\[
\begin{tikzcd}[column sep=5pc,row sep=3pc]
          \mathsf{dg}~\operad P\text{-}\mathsf{cog} \arrow[r, shift left=1.1ex, "\widehat{\Omega}_{\alpha}"{name=F}] & \catcurvcompalg{\operad C}~, \arrow[l, shift left=.75ex, "\widehat{\mathrm{B}}_{\alpha}"{name=U}]
            \arrow[phantom, from=F, to=U, , "\dashv" rotate=-90]
\end{tikzcd}
\]
since $\widehat{\Omega}_{\alpha}$ naturally lands in qp-complete curved $\C$-algebras.
\end{remark}

\subsection{Compatibility with the morphisms of (co)operads.} Let $f: \operad P \longrightarrow \operad Q$ be a morphims of dg operads, let $g: \operad C \longrightarrow \operad D$ be a morphism of conilpotent curved cooperads, and let $\alpha: \C \longrightarrow \operad P$ and $\beta: \operad D \longrightarrow \operad Q$ be two curved twisting morphisms, such that the following diagram commutes:

\[
\begin{tikzcd}[column sep=3pc,row sep=3pc]
\C \arrow[r,"\alpha"] \arrow[d,"g",swap] 
&\mathcal{P} \arrow[d,"f"]\\
\mathcal{D} \arrow[r,"\beta"]
&\mathcal{Q}~.
\end{tikzcd}
\]

Then the bar-cobar adjunctions are compatible with the natural adjunctions induced by the morphisms, meaning that the following square of adjunctions 

\[
\begin{tikzcd}[column sep=5pc,row sep=5pc]
\mathsf{curv}~\mathcal{C}\text{-}\mathsf{cog} \arrow[r,"\Omega_\alpha"{name=B},shift left=1.1ex] \arrow[d,"g^! "{name=SD},shift left=1.1ex ]
&\mathsf{dg}~\mathcal{P}\text{-}\mathsf{alg} \arrow[d,"f^*"{name=LDC},shift left=1.1ex ] \arrow[l,"\mathrm{B}_\alpha"{name=C},,shift left=1.1ex]  \\
\mathsf{curv}~\mathcal{D}\text{-}\mathsf{cog} \arrow[r,"\Omega_\beta "{name=CC},shift left=1.1ex]  \arrow[u,"g_*"{name=LD},shift left=1.1ex ]
&\mathsf{dg}~\mathcal{Q}\text{-}\mathsf{alg} \arrow[l,"\mathrm{B}_\beta"{name=CB},shift left=1.1ex] \arrow[u,"f_!"{name=TD},shift left=1.1ex] \arrow[phantom, from=SD, to=LD, , "\dashv" rotate=0] \arrow[phantom, from=C, to=B, , "\dashv" rotate=-90]\arrow[phantom, from=TD, to=LDC, , "\dashv" rotate=0] \arrow[phantom, from=CC, to=CB, , "\dashv" rotate=-90]
\end{tikzcd}
\] 

is commutative. The complete bar-cobar adjunctions are also compatible with the natural adjunctions induced by the morphisms, meaning that the following square of adjunctions 

\[
\begin{tikzcd}[column sep=5pc,row sep=5pc]
\mathsf{dg}~\mathcal{P}\text{-}\mathsf{cog} \arrow[r,"\widehat{\Omega}_\alpha"{name=B},shift left=1.1ex] \arrow[d,"f_!"{name=SD},shift left=1.1ex ]
&\catcurvalg{\operad C} \arrow[d,"g_{*}"{name=LDC},shift left=1.1ex ] \arrow[l,"\widehat{\mathrm{B}}_\alpha"{name=C},,shift left=1.1ex]  \\
\mathsf{dg}~\mathcal{Q}\text{-}\mathsf{cog} \arrow[r,"\widehat{\Omega}_\beta "{name=CC},shift left=1.1ex]  \arrow[u,"f^*"{name=LD},shift left=1.1ex ]
&\catcurvalg{\operad D} \arrow[l,"\widehat{\mathrm{B}}_\beta"{name=CB},shift left=1.1ex] \arrow[u,"g^!"{name=TD},shift left=1.1ex] \arrow[phantom, from=SD, to=LD, , "\dashv" rotate=0] \arrow[phantom, from=C, to=B, , "\dashv" rotate=-90]\arrow[phantom, from=TD, to=LDC, , "\dashv" rotate=0] \arrow[phantom, from=CC, to=CB, , "\dashv" rotate=-90]
\end{tikzcd}
\] 

is commutative. 

\begin{remark}
Under the extra assumption that the morphism $g: \operad C \longrightarrow \operad D$ is quasi-planar, one can consider qp-complete algebras in the right hand side of the last commutative square of adjunctions, where $g^!$ is replaced by $\widehat{g}^!$.
\end{remark}

\subsection{(Co)admissible operads}
In this subsection, we review various constructions that allow one the give a meaning to the homotopy theory of dg $\operad P$-(co)algebras, where $\operad P$ is a dg operad.

\begin{definition}[Admissible dg operad]
    A dg operad $\operad P$ is called \textit{admissible} if the category of dg $\operad P$-algebras admits a combinatorial model structure on right-transferred along the free-forgetful adjunction
    \[
\begin{tikzcd}[column sep=5pc,row sep=3pc]
          \catdgmod{\kk} \arrow[r, shift left=1.1ex, "\operad P~ \comp ~ -"{name=F}] & \mathsf{dg}~ \mathcal{P}\text{-}\mathsf{alg}~, \arrow[l, shift left=.75ex, "\mathrm{U}"{name=U}]
            \arrow[phantom, from=F, to=U, , "\dashv" rotate=-90]
\end{tikzcd}
\]
    determined by the following sets of maps

    \medskip
    
    \begin{enumerate}
        \item the set of weak-equivalences is given by quasi-isomorphisms;

        \medskip
        
        \item the set of fibrations is given by degree-wise epimorphisms;

        \medskip
        
        \item the generating cofibrations are the maps $\operad P \comp S^n \longrightarrow \operad P \comp D^{n+1}$ for $n \in \mathbb Z$;

        \medskip
        
        \item the generating acyclic cofibrations are the maps $\operad P \comp 0 \longrightarrow \operad P \comp D^{n+1}$ for $n \in \mathbb Z$.
    \end{enumerate}
\end{definition}

\begin{definition}[Coadmissible dg operad]
    A dg operad $\operad P$ is called \textit{coadmissible} if the category of dg $\operad P$-coalgebras admits a combinatorial model structure left-transferred along the forgetful-cofree adjunction
        \[
\begin{tikzcd}[column sep=5pc,row sep=3pc]
          \catdgmod{\kk} \arrow[r, shift left=1.1ex, "L^{\operad P}"{name=F}] & \mathsf{dg}~ \mathcal{P}\text{-}\mathsf{cog}~, \arrow[l, shift left=.75ex, "\mathrm{U}"{name=U}]
            \arrow[phantom, from=F, to=U, , "\dashv" rotate=90]
\end{tikzcd}
\]
    determined by the following sets of maps

    \medskip
    
    \begin{enumerate}
        \item the set of weak-equivalences is given by quasi-isomorphisms,

        \medskip
        
        \item the set of cofibrations is given by degree-wise injections,

        \medskip
        
        \item the generating (acyclic) cofibrations are the maps between small objects so whose image through the forgetful functor is an (acyclic) cofibration.
    \end{enumerate}
\end{definition}

\begin{remark}
    Let $\kk$ be a characteristic zero field. Then every dg operad is admissible. Even in characteristic zero, not every dg operad is coadmissible. For instance, the operad $\mathrm{u}\mathcal{C}\mathrm{om}$ is not coadmissible, see \cite[Section 8.3]{linearcoalgebras}.
\end{remark}

For a dg operad $\operad P$ to be admissible (resp. coadmissible) it suffices to provide a natural path object (resp. a natural cylinder object).

\begin{proposition}[\cite{BergerMoerdijk}]\label{propadmissible}
    A dg operad $\operad P$ that is $\operad E$-split is admissible and coadmissible. In particular, a cofibrant operad is admissible and coadmissible.
\end{proposition}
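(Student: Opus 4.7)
The plan is to follow the strategy of Berger--Moerdijk \cite{BergerMoerdijk}, establishing admissibility by constructing a functorial path object in dg $\operad P$-algebras and coadmissibility by constructing a functorial cylinder object in dg $\operad P$-coalgebras. The key observation is that the Hopf operad structure on $\operad E$ (the comonoid $\Delta_{\operad E}$ of Subsection 3.3) together with the $\operad E$-split section $s : \operad P \to \operad E \otimes \operad P$ allow one to tensor dg $\operad P$-(co)algebras with dg $\operad E$-(co)algebras while preserving the $\operad P$-structure.

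More precisely, if $X$ is a dg $\operad E$-algebra and $A$ a dg $\operad P$-algebra, then $A \otimes X$ inherits a canonical dg $(\operad E \otimes \operad P)$-algebra structure via $\Delta_{\operad E}$, which via $s$ descends to a natural dg $\operad P$-algebra structure. The normalized cochain complex $N^{\bullet}(\Delta^1)$ carries a standard dg $\operad E$-algebra structure, and provides an interval object $\kk \qi N^{\bullet}(\Delta^1) \twoheadrightarrow \kk \oplus \kk$ in dg modules. Tensoring yields a functorial path object $A \qi A \otimes N^{\bullet}(\Delta^1) \twoheadrightarrow A \oplus A$ in dg $\operad P$-algebras. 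Dually, the normalized chain complex $N(\Delta^1)$ is a dg $\operad E$-coalgebra, and tensoring with a dg $\operad P$-coalgebra $C$ produces a natural cylinder object $C \oplus C \rightarrowtail C \otimes N(\Delta^1) \qi C$.

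Given these functorial factorisations, admissibility follows from a standard right-transfer argument, and coadmissibility from a left-transfer argument (cf.~\cite{BergerMoerdijk}). For the ``in particular'' statement, the projection $\operad E \otimes \operad P \twoheadrightarrow \operad P$ is always a surjective quasi-isomorphism of dg operads (since $\operad E$ is arity-wise acyclic), i.e.~an acyclic fibration; any cofibrant $\operad P$ therefore admits a section to it by lifting the identity along this acyclic fibration, hence is $\operad E$-split. I expect the main obstacle to be the coadmissibility direction, where one must verify that the left-transfer criterion applies---in particular, that pushouts of acyclic cofibrations in dg $\operad P$-coalgebras are weak equivalences, which requires controlling the interaction of the cofree coalgebra comonad $L^{\operad P}$ with the cylinder construction above. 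The admissibility direction is by contrast a routine application of Quillen's path object argument once the tensoring construction of Step 1 is in place.
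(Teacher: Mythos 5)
Your proof is correct and takes essentially the same route as the paper: the Berger--Fresse $\operad E$-(co)algebra structure on the (co)chains of the interval, combined with the $\operad E$-splitting, produces a natural path object $[I,A]\cong A\otimes N^{\bullet}(\Delta^1)$ for dg $\operad P$-algebras and a natural cylinder $V\otimes I$ for dg $\operad P$-coalgebras, after which the right/left transfer criteria and the lifting argument for the cofibrant case conclude. Your worry about pushouts of acyclic cofibrations of coalgebras is unnecessary: every dg $\operad P$-coalgebra is cofibrant, so the cylinder-object version of the left-transfer criterion (the dual Quillen path-object argument) already yields the acyclicity condition.
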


\begin{proof}
    As described in \cite{BergerFresse}, the cellular model of the interval $I$ has the canonical structure of a $\operad E$-coalgebra. 

    \medskip
    
    For every dg $\operad P$-algebra $A$, the dg module $[I, A]$ inherits the structure of a dg $\operad E \otimes \operad P$-algebra and therefore the structure of a dg $\operad P$-algebra. This provides a natural path object, and proves the existence of the transferred model structure on dg $\operad P$-algebras.

    \medskip

    For every dg $\operad P$-coalgebra $V$, the dg module $V \otimes I$ inherits the structure of a dg $\operad E \otimes \operad P$-coalgebra and therefore the structure of a dg $\operad P$-coalgebra. This provides a natural cylinder object, and proves the existence of the transferred model structure on dg $\operad P$-coalgebras.
\end{proof}

\begin{example}
For any dg operad $\operad P$, the dg operad $\operad E \otimes \operad P$ is $\operad E$-split. Indeed, the Barratt-Eccles operad is a dg Hopf operad: in particular, there exists a morphism of dg operad $\operad E \longrightarrow \operad E \otimes \operad E$, which induces an $\operad E$-splitting on $\operad E \otimes \operad P$. Thus, $\operad E \otimes \operad P$ is always admissible and coadmissible. 
\end{example}

\begin{proposition}
    Let $f: \operad P \longrightarrow \operad Q$ be a morphism of dg operads. 

    \medskip

    \begin{enumerate}
        \item If $\operad P$ and $\operad Q$ are both admissible, the adjunction 

        \[
\begin{tikzcd}[column sep=5pc,row sep=3pc]
          \mathsf{dg}~\operad P\text{-}\mathsf{alg} \arrow[r, shift left=1.1ex, "f_{!}"{name=F}] & \mathsf{dg}~ \mathcal{Q}\text{-}\mathsf{alg} \arrow[l, shift left=.75ex, "f^*"{name=U}]
            \arrow[phantom, from=F, to=U, , "\dashv" rotate=-90]
\end{tikzcd}
\]

        is a Quillen adjunction.

        \medskip

        \item If $\operad P$ and $\operad Q$ are both coadmissible, the adjunction 
        
\[
\begin{tikzcd}[column sep=5pc,row sep=3pc]
          \mathsf{dg}~\operad Q\text{-}\mathsf{cog} \arrow[r, shift left=1.1ex, "f^*"{name=F}] & \mathsf{dg}~ \mathcal{P}\text{-}\mathsf{cog}
          \arrow[l, shift left=.75ex, "f_!"{name=U}]
            \arrow[phantom, from=F, to=U, , "\dashv" rotate=-90]
\end{tikzcd}
\]

        is a Quillen adjunction.
    \end{enumerate}
\end{proposition}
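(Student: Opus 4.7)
The strategy in both parts is to show that the functor $f^{*}$ commutes strictly with the forgetful functor to $\catdgmod{\kk}$, and then invoke the fact that the transferred model structures in play have the right classes of maps detected on underlying dg modules.

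In part (1), the right adjoint $f^{*}$ sends a dg $\operad Q$-algebra $(A,\gamma_A)$ to $A$ equipped with the structural map $\operad P \comp A \xrightarrow{f ~\comp~ \id} \operad Q \comp A \xrightarrow{\gamma_A} A$, so the underlying dg module and its differential are unchanged. Since $\operad P$ and $\operad Q$ are admissible, the fibrations and the weak equivalences of $\catdgalg{\operad P}$ and $\catdgalg{\operad Q}$ are, by definition of the right-transferred model structures, precisely the degree-wise epimorphisms and the quasi-isomorphisms of the underlying dg modules. Thus $f^{*}$ preserves both fibrations and acyclic fibrations, and the adjunction $f_{!} \dashv f^{*}$ is a Quillen adjunction.

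In part (2), the left adjoint $f^{*}$ sends a dg $\operad Q$-coalgebra $(V,\Delta_V)$ to $V$ equipped with the composite $\operad P$-coalgebra structure obtained by post-composing $\Delta_V$ with the natural morphism $V^{\operad Q} \longrightarrow V^{\operad P}$ induced by $f$ (equivalently, obtained by precomposing the map $\operad Q \longrightarrow \coEnd(V)$ with $f$). In particular, $f^{*}$ preserves underlying dg modules. Since $\operad P$ and $\operad Q$ are coadmissible, the cofibrations and the weak equivalences of $\catdgcog{\operad P}$ and $\catdgcog{\operad Q}$ are, by definition of the left-transferred model structures, precisely the degree-wise injections and the quasi-isomorphisms of the underlying dg modules. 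Thus $f^{*}$ preserves both cofibrations and acyclic cofibrations, so $f^{*} \dashv f_{!}$ is a Quillen adjunction.

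There is no substantial obstacle; the only point that requires a moment of care is verifying that the constructions of $f^{*}$ described in Section \ref{section : Naturality of (co)algebras over a cooperad.} (as a reflexive coequaliser in the algebra case and a coreflexive equaliser in the coalgebra case) specialise, on objects that are already $\operad Q$-algebras or $\operad Q$-coalgebras, to simply changing the structure map while keeping the underlying dg module intact. This is immediate from the explicit formulas for the monad and comonad morphisms involved, so the argument above goes through.
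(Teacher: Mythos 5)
Your argument is correct and is essentially the paper's own proof: since both model structures are transferred from dg modules, the restriction functor $f^{*}$ (which leaves underlying dg modules untouched) preserves fibrations and weak equivalences of dg $\operad Q$-algebras, and cofibrations and weak equivalences of dg $\operad Q$-coalgebras. The only slip is in your closing caveat: the (co)reflexive (co)equaliser formulas describe the adjoint $f_{!}$, not $f^{*}$, which is by construction just (co)restriction along $f$, so no verification is needed there.
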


\begin{proof}
This is a consequence of the fact that these model structures are transferred from dg modules. It is immediate to see that $f^*$ preserves quasi-isomorphisms and fibrations of dg $\mathcal{Q}$-algebras, and that $f^*$ preserves quasi-isomorphisms and cofibrations of dg $\mathcal{Q}$-coalgebras.
    \end{proof}

\begin{lemma}[After \cite{linearcoalgebras}]\label{lemmaacycliccofyieldqe}
    Let $\operad P$ be a dg operad and let $K$ be an arity-wise acylic dg $\mathbb N$-module. We denote $\operad Q = \operad P \sqcup \treemod_\pl \left( K \otimes \mathbb S \right)$, $i$ the canonical inclusion of $\operad P$ into $\operad Q$ and $p$ the canonical projection from $\operad Q$ onto $\operad P$. This gives the following diagram
    \[
    \begin{tikzcd}
    \operad P \arrow[r,"i"]
    &\operad Q \arrow[r,"p"]
    &\operad P~,
    \end{tikzcd}  
    \]
    where the inclusion $i$ is an acyclic cofibration and $p$ is an acyclic fibration. If $\operad P$ is $\operad E$-split, so is $\operad Q$. Furthemore, the induced Quillen adjunctions
\[
\begin{tikzcd}[column sep=5pc,row sep=3pc]
          \mathsf{dg}~\operad P\text{-}\mathsf{alg} \arrow[r, shift left=1.1ex, "i_{!}"{name=F}]           
         &\mathsf{dg}~ \mathcal{Q}\text{-}\mathsf{alg} \arrow[l, shift left=.75ex, "i^*"{name=U}] \arrow[r, shift left=1.1ex, "p_{!}"{name=A}]
         &\mathsf{dg}~\operad P\text{-}\mathsf{alg} \arrow[l, shift left=.75ex, "p^*"{name=B}] \arrow[phantom, from=F, to=U, , "\dashv" rotate=-90]\arrow[phantom, from=A, to=B, , "\dashv" rotate=-90]
\end{tikzcd}
\]

\[
\begin{tikzcd}[column sep=5pc,row sep=3pc]
          \mathsf{dg}~\operad P\text{-}\mathsf{cog} \arrow[r, shift left=1.1ex, "i_{!}"{name=F}]           
         &\mathsf{dg}~ \mathcal{Q}\text{-}\mathsf{cog} \arrow[l, shift left=.75ex, "i^*"{name=U}] \arrow[r, shift left=1.1ex, "p_{!}"{name=A}]
         &\mathsf{dg}~\operad P\text{-}\mathsf{cog} \arrow[l, shift left=.75ex, "p^*"{name=B}] \arrow[phantom, from=F, to=U, , "\dashv" rotate=90]\arrow[phantom, from=A, to=B, , "\dashv" rotate=90]
\end{tikzcd}
\]
    are Quillen equivalences.
\end{lemma}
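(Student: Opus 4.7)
The plan is to prove the three assertions of the lemma in turn: the cofibration/fibration structure, the preservation of $\operad E$-splitting, and the Quillen equivalences.

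First, for the statement that $i$ is an acyclic cofibration and $p$ is an acyclic fibration, I note that $\operad Q = \operad P \sqcup \treemod_\pl(K \otimes \mathbb S)$ is the pushout along $\operad I \to \operad P$ of the map $\operad I \to \treemod_\pl(K \otimes \mathbb S)$. The latter is a cofibration by Lemma~\ref{lemma:operadelementarycof} applied with $X^{(0)} = 0$ and $X^{(1)} = K$ (the factoring condition on the differential is automatic because $d_K$ stays inside $K$), so $i$ is a cofibration as its pushout. The map $p$ admits $i$ as a section, hence is arity-wise degree-wise surjective. To show $p$ is an arity-wise quasi-isomorphism I would filter $\operad Q$ by the number of $K$-vertices appearing in the tree representatives arising from the operadic coproduct, with $F_0 \operad Q = \operad P$; each successive quotient $F_n\operad Q / F_{n-1}\operad Q$ decomposes as a direct sum of tensor products involving exactly $n$ factors from $K$, interspersed with $\operad P$-pieces. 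Since $K$ is arity-wise acyclic and we are over a field, a Künneth argument makes each such quotient arity-wise acyclic for $n \geq 1$. Hence $p$ is an arity-wise quasi-isomorphism, and by 2-out-of-3 so is $i$.

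Next, to extend the $\operad E$-splitting from $\operad P$ to $\operad Q$, let $s_{\operad P} : \operad P \to \operad E \otimes \operad P$ be the given section of the projection $\pi_{\operad P}$. I construct $\tilde s : \operad Q \to \operad E \otimes \operad Q$ via the universal property of the coproduct. On the $\operad P$-factor, set $\tilde s|_{\operad P} := (\operad E \otimes i) \circ s_{\operad P}$, which satisfies $\pi_{\operad Q} \circ \tilde s|_{\operad P} = i$ by naturality. On the free factor, I need a dg $\mathbb S$-module map $K \otimes \mathbb S \to \operad E \otimes \operad Q$ lifting the canonical inclusion. Since $K$ carries no symmetric group action and the projection $\pi_{\operad Q} : \operad E \otimes \operad Q \to \operad Q$ is an arity-wise surjective quasi-isomorphism of dg $\kk$-modules (an acyclic fibration, with lifting automatic because every chain complex over a field is cofibrant), the inclusion $K \to \operad Q$ lifts arity-wise to a dg $\mathbb N$-module map $K \to \operad E \otimes \operad Q$. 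Extending by right $\mathbb S$-equivariance yields a dg $\mathbb S$-module lift $K \otimes \mathbb S \to \operad E \otimes \operad Q$, and freeness of the tree monad then extends it to a map of dg operads out of $\treemod_\pl(K \otimes \mathbb S)$. Combining both pieces via the coproduct universal property gives $\tilde s$, and $\pi_{\operad Q} \circ \tilde s = \id_{\operad Q}$ follows by checking on generators.

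Finally, for the Quillen equivalences: both $\operad P$ and $\operad Q$ are now $\operad E$-split, hence admissible and coadmissible by Proposition~\ref{propadmissible}, so the transferred model structures exist on all four categories. The restriction functors $i^*$ and $p^*$, in both the algebra and the coalgebra setting, preserve the underlying chain complex, so they preserve and reflect all weak equivalences and preserve the respective fibrations/cofibrations. In particular these are all Quillen adjunctions. The identity $p \circ i = \id_{\operad P}$ gives $i^* p^* = \id$ and $p_! i_! = \id$, so Quillen equivalence for one pair is equivalent to Quillen equivalence for the other. Because $i^*$ (respectively $p^*$) reflects all weak equivalences, it suffices to verify that for cofibrant $A$ the derived unit $A \to i^* i_! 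A$ is a quasi-isomorphism. By cellular induction along the generating (acyclic) cofibrations $\operad P \comp S^n \to \operad P \comp D^{n+1}$, this reduces to the claim that $\operad P \comp V \to \operad Q \comp V$ is a quasi-isomorphism for every dg $\kk$-module $V$, which follows from the same filtration-by-$K$-vertices argument as in the first step, now with additional factors of $V^{\otimes n}$ that behave well precisely because the $\operad E$-splitting makes the requisite $\mathbb S_n$-coinvariants homotopically invariant. The coalgebra version is entirely dual, using cofree coalgebras and $\mathbb S_n$-invariants on the side of the cotensorisation.

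The principal obstacle I anticipate is precisely this last cell-induction step: in positive characteristic, controlling $\mathbb S_n$-coinvariants (and, dually, $\mathbb S_n$-invariants) during the filtration argument is delicate, and the whole point of invoking the $\operad E$-splitting hypothesis is to allow Künneth-type acyclicity to descend through these symmetric-group quotients. The coalgebra side, which requires compatibility of the cofree coalgebra comonad $L^{\operad P} \to L^{\operad Q}$ with the same filtration, is the technically most subtle piece and parallels the difficulties that motivated the quasi-planar machinery developed earlier in the paper.
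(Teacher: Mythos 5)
Your first two steps are fine and the splitting construction is essentially the paper's (the paper chooses a section $K \to \operad E \otimes K$ and includes into $\operad E \otimes \operad Q$; your lift of $K \to \operad Q$ through the acyclic fibration $\operad E \otimes \operad Q \to \operad Q$ amounts to the same thing). The genuine gap is in the last step. Your whole argument for the Quillen equivalences rests on a cellular induction whose key input is the claim that $\operad P \comp V \to \operad Q \comp V$ is a quasi-isomorphism for every dg module $V$, justified by the assertion that ``the $\operad E$-splitting makes the requisite $\mathbb S_n$-coinvariants homotopically invariant.'' As stated this is not a correct principle in positive characteristic: coinvariants of an acyclic complex of $\kk[\mathbb S_n]$-modules need not be acyclic (already $(W^{\otimes 2})_{\mathbb S_2}$ for $W = D^1$ over $\mathbb F_2$ has nontrivial homology), and quasi-freeness of $\operad E \otimes \operad P$ alone does not suffice for unbounded complexes. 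There is a workable germ here -- $\operad E$-splitness exhibits $\operad P(m)$ as a dg retract of the quasi-free module $\operad E(m)\otimes\operad P(m)$, and tensoring against a graded-free factor produces genuinely dg-induced modules whose coinvariants do preserve acyclicity -- but none of this is in your proposal, and even granting it you would still have to run the full cell-attachment analysis for arbitrary cofibrant $\operad P$-algebras (pushouts along free maps, enveloping-operad filtrations), which in characteristic $p$ without $\mathbb S$-projectivity is exactly the hard part. Worse, your ``entirely dual'' coalgebra argument has no analogue of cellular objects: in the left-transferred structure the generating cofibrations are not explicit, fibrant coalgebras have no co-cell description, and the cofree comonad $L^{\operad P}$ is not the naive cotensorisation, so the dual induction cannot even be set up as written.

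The paper avoids all of this by a different mechanism. Using that a dg $\operad Q$-algebra is just a dg $\operad P$-algebra together with an unconstrained action $K \comp_\pl A \to A$, it builds an explicit endofunctor $\tilde{(-)}$ on dg $\operad Q$-algebras, $\tilde A = [I,A]$ with $I$ the cellular interval carrying its $\operad E$-coalgebra structure from Berger--Fresse: the $\operad P$-action on $[I,A]$ is twisted through the splitting $e\colon \operad P \to \operad E\otimes\operad P$, and the $K$-action is pulled through evaluation at an endpoint. This yields natural transformations $p^\ast i^\ast \lqi \tilde{(-)} \qi \id$ that are object-wise quasi-isomorphisms on \emph{all} algebras (no cofibrancy, no derived unit computation, no coinvariants to control), whence $\mathrm{Ho}(p^\ast)\mathrm{Ho}(i^\ast)\cong\id$; combined with $i^\ast p^\ast \cong \id$ this gives the Quillen equivalences, and the coalgebra case is handled symmetrically with the cylinder $I \otimes V$. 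You should either adopt this path/cylinder construction or supply, in detail, the retract-and-untwisting argument for the weight-graded pieces together with the full cellular induction -- and find a genuinely different argument on the coalgebra side.
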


\begin{proof}
    The fact that the inclusion $\operad P \longrightarrow \operad Q$ is an acyclic cofibration and that its left inverse $\operad Q \longrightarrow \operad P$ is a acyclic fibration is clear.
    
    \medskip
    
    Let $e: \operad P \longrightarrow \operad E \otimes \operad P$ be the $\operad E$-splitting of the dg operad $\operad P$. An $\operad E$-splitting of the dg operad $\operad Q$ is given by the morphism 
    \[
    \begin{tikzcd}[column sep=3.5pc,row sep=3pc]
    \operad P
    \ar[r, "e"]
    &\operad E \otimes \operad P
    \ar[r,"\id~\otimes~i"]
    &\operad E \otimes \operad Q
    \end{tikzcd}
    \]
    together with the choice of a section $K \longrightarrow \operad E \otimes K$ of the projection $\operad E \otimes K \longrightarrow K$ in the category of dg $\mathbb N$-module. 
    
    \medskip
    
    Let us prove that the Quillen adjunctions relating dg $\operad Q$-algebras to dg $\operad P$-algebras are Quillen equivalences. This amounts to prove that the two right Quillen functors
    \begin{align*}
        \mathrm{Ho}(p^\ast):\mathrm{Ho}(\catdgalg{\operad P}) \longrightarrow \mathrm{Ho}(\catdgalg{\operad Q})
        \\
        \mathrm{Ho}(i^\ast):\mathrm{Ho}(\catdgalg{\operad Q}) \longrightarrow \mathrm{Ho}(\catdgalg{\operad P})
    \end{align*}
    are equivalences. Let us show that these are inverse equivalences. On the one hand, we have a canonical isomorphism $i^\ast p^\ast \simeq \id$, thus we get an isomorphism $\mathrm{Ho}(i^\ast) \mathrm{Ho}(p^\ast) \simeq \id$. 
    
    \medskip
    
    Notice that the data of a dg $\operad Q$-algebra structure amounts to the data of a dg $\operad P$-algebra structure and of a morphism of dg module $K \comp_\pl A \longrightarrow A$. We define an endofunctor $\tilde{(-)}$ of the category of dg $\operad Q$-algebras as follows. Let $A$ be a dg $\operad Q$-algebra. Its image $\tilde{A}$ is the dg $\operad Q$-algebra whose underlying dg module is the path object $[I, A] = I^\ast \otimes A$ (where $I$ is the cellular model of the interval). Its dg $\operad P$-algebra structure is given by the following maps
    $$
    \begin{tikzcd}
    \operad P(n) \otimes ( I^\ast \otimes A)^{\otimes n}
    \ar[d,"e(n)~\otimes ~\id"]
    \\
    \operad E(n) \otimes \operad P(n) \otimes (I^\ast \otimes A)^{\otimes n}
    \ar[d, "\simeq"]
    \\
    \operad E(n) \otimes (I^\ast)^{\otimes n} \otimes \operad P(n) \otimes A^{\otimes n}
    \ar[d, "\gamma_I^n~\otimes~\gamma_A^n|_{\operad P} "]
    \\
    I^\ast \otimes A,
    \end{tikzcd}    
    $$
    where $\gamma_A^n|_{\operad P}$ is given by the dg $\operad P$-algebra structure on $A$ and where $\gamma_I^n$ is given by the dg $\operad E$-algebra structure on $I$ constructed in \cite{BergerFresse}. The action of $K$ is given as follows
    $$
    \begin{tikzcd}
    K(n) \otimes (I^\ast \otimes A)^{\otimes n}
    \ar[r]
    &K(n) \otimes A^{\otimes n}
    \ar[r,"\gamma_A^n|_K"]
    &A
    \ar[r]
    &I^\ast \otimes A
    \end{tikzcd}
    $$
    where the first map results from the inclusion of the first point into the interval $\kk \longrightarrow I$ which gives a map $I^\ast \longrightarrow \kk$, and where $\gamma_A^n|_K$ is the action of $K$ on $A$ given by its dg $\operad Q$-algebra structure.
    
    \medskip
    
    We have canonical natural transformations
    $$
    p^\ast i^\ast \lqi \tilde{(-)} \qi \id
    $$
    that are object-wise quasi-isomorphisms. Hence we get an isomorphism $\mathrm{Ho}(p^\ast)\mathrm{Ho}(i^\ast) \simeq \id$.
    Therefore, $\mathrm{Ho}(i^\ast)$ and $\mathrm{Ho}(p^\ast)$ are inverse to each other and their related Quillen adjunctions are Quillen equivalences.
    
    \medskip

    Finally, let us prove that the Quillen adjunctions relating dg $\operad Q$-coalgebras to dg $\operad P$-coalgebras are also Quillen equivalences. Again, this amounts to prove that the two right adjoint functors
    \begin{align*}
        \mathrm{Ho}(p^\ast):\mathrm{Ho}(\catdgcog{\operad P}) \longrightarrow \mathrm{Ho}(\catdgcog{\operad Q})
        \\
        \mathrm{Ho}(i^\ast):\mathrm{Ho}(\catdgcog{\operad Q}) \longrightarrow \mathrm{Ho}(\catdgcog{\operad P})
    \end{align*}
    are equivalences. This follows from similar arguments as those used in the algebra case. There is a canonical isomorphism $i^\ast p^\ast \simeq \id$. 
    
    \medskip
    
    Again, a dg $\operad Q$-coalgebra structure is completely determined by a dg $\operad P$-coalgebra and a morphism of dg modules $V \longrightarrow V^{K \otimes \mathbb{S}}$. We define an endofunctor $\tilde{(-)}$ category of dg $\operad Q$-coalgebras as follows. Let $V$ be a dg $\operad Q$-coalgebra. The underlying dg module of $\tilde V$ is the cylinder object $I \otimes V$. Its dg $\operad P$-coalgebra is given by 
    $$
    \begin{tikzcd}
    I \otimes V \otimes \operad P(n)
    \ar[d,"\id ~ \otimes ~e"]
    \\
    I \otimes V \otimes \operad E(n) \otimes \operad P(n)
    \ar[d, "\simeq"]
    \\
    \operad E(n) \otimes I \otimes  V \otimes \operad P(n)  \ar[d,"\Delta_I^n~\otimes~\Delta_V^n|_{\operad P}"]
    \\
    I^{\otimes n} \otimes V^{\otimes n}
    \ar[d, "\simeq"]
    \\
    (I \otimes V)^{\otimes n}~,
    \end{tikzcd}    
    $$
    where $\Delta_V^n|_{\operad P}$ is the given by the dg $\operad P$-coalgebra structure on $A$ and where $\Delta_I^n$ is the dg $\operad E$-algebra on $I$ constructed in \cite{BergerFresse}. The coaction of $K$ is given by 
    $$
    \begin{tikzcd}
    I \otimes V \otimes K(n)
    \ar[r]
    &V \otimes K(n)
    \ar[r,"\Delta_V^n|_K"]
    &V^{\otimes n}
    \ar[r]
    &(I \otimes V)^{\otimes n}~,
    \end{tikzcd}
    $$
    where $\Delta_V^n|_K$ is the coaction of $K$ on $V$ and where the last map proceed from the inclusion of the first point into the interval $\kk \longrightarrow I$.
    We have canonical natural transformations
    $$
    p^\ast i^\ast \lqi \tilde{(-)} \qi \id
    $$
    that are object-wise quasi-isomorphisms and yield an isomorphism $\mathrm{Ho}(p^\ast)\mathrm{Ho}(i^\ast) \simeq \id$.
    Therefore, $\mathrm{Ho}(i^\ast)$ and $\mathrm{Ho}(p^\ast)$ are inverse equivalences and their related Quillen adjunctions are Quillen equivalences.
\end{proof}

\begin{proposition}[\cite{BergerMoerdijk,linearcoalgebras}]\label{propqeqisoperads}
    Let $f: \operad P \qi \operad Q$ be a quasi-isomorphism of cofibrant dg operads. The induced Quillen adjunctions
    
     \[
\begin{tikzcd}[column sep=5pc,row sep=3pc]
          \mathsf{dg}~\operad P\text{-}\mathsf{alg} \arrow[r, shift left=1.1ex, "f_{!}"{name=F}] & \mathsf{dg}~ \mathcal{Q}\text{-}\mathsf{alg} \arrow[l, shift left=.75ex, "f^*"{name=U}]
            \arrow[phantom, from=F, to=U, , "\dashv" rotate=-90]
\end{tikzcd}
\]

\[
\begin{tikzcd}[column sep=5pc,row sep=3pc]
          \mathsf{dg}~\operad Q\text{-}\mathsf{cog} \arrow[r, shift left=1.1ex, "f^*"{name=F}] & \mathsf{dg}~ \mathcal{P}\text{-}\mathsf{cog}
          \arrow[l, shift left=.75ex, "f_!"{name=U}]
            \arrow[phantom, from=F, to=U, , "\dashv" rotate=-90]
\end{tikzcd}
\]
   
are both Quillen equivalences.
\end{proposition}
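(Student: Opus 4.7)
The plan is to factor $f$ in the semi-model category of dg operads and handle each factor separately, using Lemma \ref{lemmaacycliccofyieldqe} as the main engine and exploiting that cofibrant dg operads are $\operad E$-split (Corollary \ref{cor: cofibrant implies E-split}), hence admissible and coadmissible (Proposition \ref{propadmissible}). By the factorization axiom, I would write $f = p \circ i$ with $i: \operad P \rightarrowtail \operad R$ an acyclic cofibration and $p: \operad R \twoheadrightarrow \operad Q$ an acyclic fibration. Since $\operad P$ is cofibrant, so is $\operad R$, and the cofibrancy of $\operad Q$ produces a section $s: \operad Q \to \operad R$ of $p$, which is then a quasi-isomorphism between cofibrant operads by two-out-of-three.

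For the acyclic cofibration $i$, the small object argument presents it as a retract of a transfinite composition of pushouts of generating acyclic cofibrations $\operad I \rightarrowtail \operad I \sqcup \treemod(D^{n+1} \otimes \mathbb{S})$. Each elementary pushout takes the form $\operad P' \rightarrowtail \operad P' \sqcup \treemod_\pl(K \otimes \mathbb{S})$ with $K$ an acyclic dg $\mathbb{N}$-module, which is exactly the situation covered by Lemma \ref{lemmaacycliccofyieldqe}: it directly furnishes Quillen equivalences on both algebras and coalgebras while preserving the $\operad E$-splitting (so admissibility/coadmissibility propagate to the successive pushouts). Stability of Quillen equivalences of this type under transfinite composition and retracts then extends the conclusion to all of $i$; this stability may be verified on the derived unit and counit maps, using that the forgetful functors to dg modules create filtered colimits in the category of algebras and are compatible with filtered colimits in coalgebras, together with the $\mathbb{S}$-projectivity of cofibrant operads (Proposition \ref{prop: quasi-planaire implique S-projectif dans le cas dg} applied along the ladder) which ensures that $\operad P' \comp X$ and $L^{\operad P'} X$ behave homotopically well.

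For the acyclic fibration $p$, the section $s$ is itself a quasi-isomorphism between cofibrant operads, so applying the previous paragraph to a factorization of $s$ shows that $s$ induces Quillen equivalences on algebras and on coalgebras. Since $p \circ s = \id_{\operad Q}$, we get $s^* p^* = \id$ on algebras (and similarly $s_* p_* = \id$, with analogous identities on coalgebras). At the level of homotopy categories, this exhibits the derived restriction along $p$ as the inverse of the already-equivalence derived restriction along $s$, so the Quillen adjunction associated to $p$ is itself a Quillen equivalence. Composing the Quillen equivalences attached to $i$ and $p$ yields the conclusion for $f$.

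The main obstacle I anticipate is rigorously verifying the stability of Quillen equivalences under transfinite composition in the coalgebra case, where the model structure is left-transferred and filtered colimits interact subtly with the cofree functor $L^{\operad P'}$. The key input that makes this tractable is again the $\operad E$-splitting, which ensures that the path and cylinder objects constructed in the proof of Proposition \ref{propadmissible} are compatible along the transfinite tower, so that the derived counit maps can be controlled termwise and then assembled at the colimit.
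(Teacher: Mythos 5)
There is a genuine gap at the heart of your treatment of the acyclic cofibration $i$. You present $i$ as a retract of a transfinite composition of pushouts of generating acyclic cofibrations and then invoke ``stability of Quillen equivalences of this type under transfinite composition,'' which you only sketch via derived unit/counit maps; this is exactly the delicate point (especially on the coalgebra side, where the model structure is left-transferred and the cofree functor $L^{\operad P}$ does not interact well with colimits of operads), and you never actually prove it. The point is that this difficulty is unnecessary: the generating acyclic cofibrations of dg operads are of the form $\operad I = \treemod(0) \rightarrowtail \treemod(D^{n+1}\otimes \kk[\mathbb S_m])$, i.e.\ they have the \emph{initial} operad as source, so every pushout of one of them along the unique map $\operad I \to \operad P'$ is just the coproduct $\operad P' \sqcup \treemod_\pl(D^{n+1}\otimes \mathbb S)$, and a transfinite composition of such steps collapses to a single map $\operad P \to \operad P \sqcup \treemod_\pl(K \otimes \mathbb S)$ with $K$ an acyclic (direct sum of disks) dg $\mathbb N$-module. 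Hence any acyclic cofibration with cofibrant source is a retract of \emph{one} map covered by Lemma \ref{lemmaacycliccofyieldqe}, and only closure of the class of derived-equivalence-inducing maps under retracts is needed; this is precisely how the paper argues. (Also, your appeal to Proposition \ref{prop: quasi-planaire implique S-projectif dans le cas dg} is misplaced: that statement concerns quasi-planar cooperads, not cofibrant operads.)

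The second gap is that your handling of the acyclic fibration $p$ is circular as written. To know that the section $s$ of $p$ induces Quillen equivalences you propose ``applying the previous paragraph to a factorization of $s$,'' but factoring $s$ as an acyclic cofibration followed by a fibration re-introduces an acyclic fibration between cofibrant operads, which is exactly the case you are trying to establish; the previous paragraph only treats acyclic cofibrations, so the regress never terminates. The paper avoids factoring $f$ (or $s$) altogether: it factors the map $\operad P \sqcup \operad Q \to \operad Q$ induced by $f$ and the identity as a cofibration followed by an acyclic fibration $\operad R \to \operad Q$; then both legs $\operad P \to \operad R$ and $\operad Q \to \operad R$ are acyclic cofibrations of cofibrant operads, hence induce Quillen equivalences by the first part, and the two-out-of-three property of the class of maps inducing equivalences of homotopy categories gives the claim for $\operad R \to \operad Q$ and then for $f$. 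Your one-sided-inverse argument ($p\circ s = \id$) would indeed finish the job once $s$ is known to be in that class, so your paragraph can be repaired by applying this mapping-cylinder trick to $s$ (or directly to $f$), but as stated it does not close.
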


\begin{proof}
    Let us denote $QE$ the (large) set of morphisms of cofibrant operads $f: \operad P \longrightarrow \operad Q$ whose induced Quillen adjunctions relating their algebras and relating their coalgebras are Quillen equivalence. One can notice that $QE$ is stable through composition and retracts and satisfies the 2-out-of-3 law. By Lemma \ref{lemmaacycliccofyieldqe} is contains morphisms of the form $\operad P \longrightarrow \operad P \sqcup \treemod_\pl (K \otimes \mathbb S)$, where $K$ is an acyclic dg $\mathbb N$-module. Since every acyclic cofibration of cofibrant dg operads is a retract of a map having this from, $QE$ contains all the acyclic cofibration of cofibrant dg operads. Now, let $f: \operad P \longrightarrow \operad Q$ be an weak-equivalence of cofibrant dg operad. We can factorise the map $\operad P \sqcup \operad Q \longrightarrow \operad Q$ by a cofibration followed by an acyclic cofibration
    $$
    \begin{tikzcd}
        \operad P \sqcup \operad Q 
        \ar[r, hook]
        &\operad R
        \ar[r, two heads, "\sim"] 
        &\operad Q.
    \end{tikzcd}
    $$
    The maps $\operad P \longrightarrow \operad R$ and $\operad Q \longrightarrow \operad R$ belong to $QE$ since they are acyclic cofibrations. Then, the map $\operad R \longrightarrow \operad P$ belongs to $QE$ by the 2-out-of-3 law. So $f$ belongs to $QE$ as composition of two maps in $QE$.
\end{proof}

\textbf{Semi-model category structures and reduction the admissible case.} Under weaker assumptions, M. Spitzweck constructed in \cite{Spitzweck} a \textit{semi-model category structure} on the category of dg $\operad P$-algebras. Nevertheless, up to a convenient replacement of the initial dg operad $\operad P$, one can always assume the existence of a model category structure without losing any homotopical information.

\begin{theorem}[{\cite{Spitzweck}}]
Let $\PP$ be a $\mathbb S$-projective dg operad. The category of dg $\PP$-algebras admits a \textit{semi-model category structure}, determined by the following sets of maps

\medskip

\begin{enumerate}
    \item the set of weak-equivalences is given by quasi-isomorphisms,
    
\medskip

    \item the set of fibrations is given by degree-wise epimorphisms,
    
\medskip

    \item the set of cofibrations is given by morphisms with a left-lifting property against acyclic fibrations.
\end{enumerate}
\end{theorem}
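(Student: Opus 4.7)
The plan is to apply a transfer theorem for semi-model structures along the free-forgetful adjunction
\[
\begin{tikzcd}[column sep=5pc,row sep=3pc]
\catdgmod{\kk} \arrow[r, shift left=1.1ex, "\operad P~ \comp ~ -"{name=F}] & \mathsf{dg}~ \mathcal{P}\text{-}\mathsf{alg}~, \arrow[l, shift left=.75ex, "\mathrm{U}"{name=U}] \arrow[phantom, from=F, to=U, "\dashv" rotate=-90]
\end{tikzcd}
\]
starting from the (projective) model structure on $\catdgmod{\kk}$. First I would declare the generating cofibrations to be $I \coloneqq \{\operad P \comp (S^n \hookrightarrow D^{n+1})\}_{n \in \mathbb Z}$ and the generating acyclic cofibrations to be $J \coloneqq \{\operad P \comp (0 \hookrightarrow D^{n+1})\}_{n \in \mathbb Z}$. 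Since the category of dg $\mathcal{P}$-algebras is $\omega$-presentable and the forgetful functor $\mathrm{U}$ preserves filtered colimits, the small object argument supplies functorial factorisations of maps as ($I$-cell)$\circ$($I$-inj) and ($J$-cell)$\circ$($J$-inj), and $J$-inj coincides with the class of degree-wise epimorphisms by adjunction. It then suffices, for a genuine semi-model structure, to show that every relative $J$-cell complex \emph{with cofibrant source} is a weak equivalence; all other axioms (2-out-of-3, retract closure, lifting of cofibrations against trivial fibrations) follow formally from the small object argument.

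The homotopical heart of the proof is then the analysis of a single pushout
\[
\begin{tikzcd}[column sep=3pc,row sep=2.5pc]
\operad P \comp K \ar[r] \ar[d] \arrow[dr, phantom, "\ulcorner", very near end] & A \ar[d,"\iota"] \\
\operad P \comp L \ar[r] & A'~,
\end{tikzcd}
\]
where $K \rightarrowtail L$ is an acyclic cofibration in $\catdgmod{\kk}$ and $A$ is a cofibrant dg $\operad P$-algebra. The standard move is to endow $A'$ with its canonical arity filtration $\{A'_{(n)}\}_{n \geq 0}$ with $A'_{(0)} = A$, whose associated graded in arity $n$ is computed, using that $A$ is $\operad P$-cofibrant, as a colimit built out of summands of the shape
\[
\operad P(k) \otimes_{\mathbb S_k} \bigl( (L/K)^{\otimes i} \otimes (\text{symmetric polynomial expression in }A) \bigr)~,
\]
with a natural $\mathbb S_i$-action on $(L/K)^{\otimes i}$. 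Because $L/K$ is an acyclic dg $\kk$-module and $\operad P$ is $\mathbb S$-projective, the key point is that the functor
\[
X \longmapsto \operad P(k) \otimes_{\mathbb S_k} \bigl( X \otimes (\ldots) \bigr)
\]
sends arity-wise acyclic dg $\mathbb S$-modules to acyclic dg modules; this is precisely where the projective cofibrancy of each $\operad P(k)$ as a dg $\kk[\mathbb S_k]$-module is used (it ensures the coinvariants compute the derived functor). I would package this using the pushout-product axiom between the projective model structure on dg $\kk[\mathbb S_k]$-modules and the injective model structure (see Proposition \ref{lemmatensored}), applied inductively to the filtration.

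Concretely, I would proceed in four steps. \textbf{(1)} Observe, by a direct calculation with the free $\operad P$-algebra construction, that $A'_{(n)}$ is obtained from $A'_{(n-1)}$ by pushout along a map whose underlying dg module is of the above form with $i \geq 1$. \textbf{(2)} Show, using $\mathbb S$-projectivity of $\operad P(k)$ together with the fact that the subcategory of $\mathbb S$-modules $M$ for which $\operad P(k) \otimes_{\mathbb S_k}(M \otimes - )$ preserves quasi-isomorphisms contains acyclic $\mathbb S$-projective modules and is closed under the relevant operations, that each transition $A'_{(n-1)} \to A'_{(n)}$ is a quasi-isomorphism. \textbf{(3)} Pass to the colimit $A' = \operatorname{colim}_n A'_{(n)}$: since filtered colimits of quasi-isomorphisms of dg modules are quasi-isomorphisms, $\iota$ is a quasi-isomorphism. \textbf{(4)} Deduce by transfinite composition and retract stability that every relative $J$-cell complex with cofibrant source is a weak equivalence, which finishes the verification of the semi-model category axioms.

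The main obstacle is step \textbf{(2)}. In characteristic zero all $\kk[\mathbb S_k]$-modules are projective and the argument is trivial; in positive characteristic one really needs to exploit the $\mathbb S$-projectivity hypothesis on $\operad P(k)$ in order to control the interaction between the coinvariants $(-)_{\mathbb S_k}$ and the tensor product $(L/K)^{\otimes i}$ carrying its induced $\mathbb S_i$-action. The cleanest route is to combine Proposition \ref{lemmatensored} (the Pirashvili-type compatibility of the projective and injective model structures on $\kk[G]$-modules) with cofibrancy of $A$ as an $\operad P$-algebra to reduce the problem to verifying that, for every $k$, tensoring a cofibrant dg $\kk[\mathbb S_k]$-module with an acyclic dg $\kk[\mathbb S_k]$-module yields an acyclic dg $\kk$-module after taking coinvariants, which holds because one factor is projective and the tensor product over $\kk$ of a projective and an acyclic module is acyclic.
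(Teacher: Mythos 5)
The paper does not actually prove this statement: it is quoted verbatim from Spitzweck and used as a black box, so the only meaningful comparison is with the standard argument in the cited literature (Spitzweck, and Fresse's reworking). Your outline reproduces exactly that route: transfer along the free--forgetful adjunction, identification of the fibrations as degree-wise epimorphisms, and reduction of the semi-model axioms to showing that a pushout of a free map on an acyclic cofibration along a map out of a cofibrant algebra is a quasi-isomorphism, via the arity filtration and $\mathbb S$-projectivity. So in spirit your proposal is the proof the citation points to.

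The one place where your sketch is looser than the actual argument is the description of the associated graded in steps (1)--(2). For a general cofibrant $\operad P$-algebra $A$ the graded pieces of the filtration of $A'$ are not literally of the form $\operad P(k)\otimes_{\mathbb S_k}\bigl((L/K)^{\otimes i}\otimes(\cdots)\bigr)$; they are $\mathrm{U}_A(i)\otimes_{\mathbb S_i}(L/K)^{\otimes i}$, where $\mathrm{U}_A$ is the enveloping operad of $A$. Since $(L/K)^{\otimes i}$ is merely acyclic and carries no cofibrancy as a $\kk[\mathbb S_i]$-module, the lemma you invoke (projective $\otimes$ acyclic has acyclic coinvariants) requires the \emph{other} factor, $\mathrm{U}_A(i)$, to be projectively cofibrant over $\kk[\mathbb S_i]$. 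This is true, but it is a separate statement that needs its own induction: one proves it first for free algebras, where $\mathrm{U}_{\operad P\comp V}(i)\cong\bigoplus_k\operad P(i+k)\otimes_{\mathbb S_k}V^{\otimes k}$ and the $\mathbb S$-projectivity of $\operad P$ does the work after restricting along $\mathbb S_i\times\mathbb S_k\subset\mathbb S_{i+k}$, and then propagates it along the cell attachments building an arbitrary cofibrant $A$. Your phrase ``using that $A$ is $\operad P$-cofibrant'' gestures at this, but the enveloping-operad cofibrancy lemma is the genuine technical heart of Spitzweck's proof and should be stated and proved explicitly; with it in place, the rest of your steps (filtration, colimit, transfinite composition, retracts) go through as written.
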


These semi-model category structures are stable under quasi-isomorphisms of dg operads.

\begin{theorem}[\cite{Fresse}]\label{thm: semi-model stable by quasi-iso}
Let $f: \PP \qi \QQ$ be a quasi-isomorphism of $\mathbb S$-projective dg operads. The induced adjunction 
  \[
\begin{tikzcd}[column sep=5pc,row sep=3pc]
          \mathsf{dg}~\operad P\text{-}\mathsf{alg} \arrow[r, shift left=1.1ex, "f_{!}"{name=F}] & \mathsf{dg}~ \mathcal{Q}\text{-}\mathsf{alg} \arrow[l, shift left=.75ex, "f^*"{name=U}]
            \arrow[phantom, from=F, to=U, , "\dashv" rotate=-90]
\end{tikzcd}
\]

is an equivalence of semi-model categories.
\end{theorem}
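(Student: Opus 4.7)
The plan is to verify the two standard ingredients of a Quillen equivalence of semi-model categories: that $(f_!, f^*)$ is a Quillen adjunction, and that the derived unit and counit are weak equivalences. The first point is immediate: fibrations and weak equivalences of dg $\operad P$-algebras and of dg $\operad Q$-algebras are both detected by the forgetful functor to $\catdgmod{\kk}$, so $f^*$ preserves them tautologically. The heart of the argument is therefore to show that the derived unit $A \longrightarrow f^* f_! A$ is a quasi-isomorphism for every cofibrant dg $\operad P$-algebra $A$; the counit statement then follows by a 2-out-of-3 argument, since for any dg $\operad Q$-algebra $B$ the composite $Q_{\operad P} f^* B \qi f^* B$ is a quasi-isomorphism by cofibrant replacement and $f^*$ reflects quasi-isomorphisms.

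The strategy for the derived unit is to exploit the cell structure of cofibrant dg $\operad P$-algebras in the semi-model category, following the framework of \cite{Fresse}. Any cofibrant $A$ is a retract of a transfinite composition of pushouts of generating cofibrations $\operad P \comp S^n \rightarrowtail \operad P \comp D^{n+1}$. First I would reduce to the cellular case, and then proceed by transfinite induction on the cellular filtration $A^{(0)} = \operad P \longrightarrow A^{(1)} \longrightarrow \cdots$. At each step, the pushout attaching a cell admits a filtration whose associated graded pieces have the form
\[
\operad P(k) \otimes_{\mathbb S_k} (X^{(k-j)} \otimes Y^{\otimes j}),
\]
where $X$ is the "old" algebra and $Y$ is the attached cell module. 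The commuting square relating the cellular decompositions of $A$ and $f_!A$ reduces, at the level of associated graded pieces, to the morphism
\[
\operad P(k) \otimes_{\mathbb S_k} (-) \longrightarrow \operad Q(k) \otimes_{\mathbb S_k} (-).
\]

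The key technical step, and the main obstacle, is showing that this last map is a quasi-isomorphism on the relevant inputs. This is precisely where $\mathbb S$-projectivity of both operads enters: because $\operad P(k)$ and $\operad Q(k)$ are projective (hence flat) dg $\kk[\mathbb S_k]$-modules and $f(k): \operad P(k) \qi \operad Q(k)$ is a quasi-isomorphism between such modules, the induced map $\operad P(k) \otimes_{\mathbb S_k} Z \longrightarrow \operad Q(k) \otimes_{\mathbb S_k} Z$ is a quasi-isomorphism for any cofibrant dg $\mathbb S_k$-module $Z$. One then needs to verify that the inputs $X^{(k-j)} \otimes Y^{\otimes j}$ that appear are indeed $\mathbb S_k$-cofibrant, using an auxiliary induction on the cellular filtration of $A$ that tracks simultaneously the cofibrancy of all symmetric powers. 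This is the delicate bookkeeping argument of \cite[Chapter 12]{Fresse}; once in place, passing to colimits over the transfinite filtration and taking retracts yields the desired conclusion for every cofibrant $A$, completing the proof.
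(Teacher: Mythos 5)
The paper gives no proof of this statement at all: it is imported verbatim from \cite{Fresse} (the semi-model comparison theorem), so there is no internal argument to compare against. Your outline — the Quillen adjunction being immediate since $f^*$ is the identity on underlying complexes, reduction of the counit to the derived unit by 2-out-of-3, and the cellular induction in which $\mathbb S$-projectivity guarantees that $\operad P(k)\otimes_{\mathbb S_k}(-) \to \operad Q(k)\otimes_{\mathbb S_k}(-)$ is a quasi-isomorphism on the cofibrant pieces of the pushout filtration — is precisely the argument of the cited source, with the remaining bookkeeping (correctly) deferred to \cite[Chapter 12]{Fresse}, so it matches the intended proof.
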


In particular, for any $\mathbb S$-projective dg operad $\operad P$, the canonical quasi-isomorphism $\varphi_{\operad P}: \operad E \otimes \operad P \qi \operad P$ always induces an equivalence of semi-model categories

\[
\begin{tikzcd}[column sep=5pc,row sep=3pc]
          \mathsf{dg}~\operad E \otimes \operad P \text{-}\mathsf{alg} \arrow[r, shift left=1.1ex, "(\varphi_{\operad P})_{!}"{name=F}] & \mathsf{dg}~ \mathcal{P}\text{-}\mathsf{alg}~, \arrow[l, shift left=.75ex, "\varphi_{\operad P}^*"{name=U}]
            \arrow[phantom, from=F, to=U, , "\dashv" rotate=-90]
\end{tikzcd}
\]

where on the left-hand side there is a model category structure on the category of dg $\operad E \otimes \operad P$-algebras. Recall that $\operad E \otimes \operad P$ is admissible since it is $\operad E$-split. Therefore, by replacing $\operad P$ with $\operad E \otimes \operad P$ we can always "rectify" the semi-model structure into a model structure, and both present the same underlying homotopy category.

\subsection{Quasi-planar Bar-Cobar adjunctions}\label{subsection: extended bar-cobar}
In the subsequent sections, we will manly work with dg operads of the form $\Omega \C$, where $\C$ is a quasi-planar conilpotent curved cooperad. Nevertheless, if one starts with a dg operad $\operad P$, there is a canonical quasi-planar conilpotent curved cooperad given by $\mathrm{B}(\operad E \otimes \operad P)$. One can then construct \textit{quasi-planar} bar-cobar adjunctions between their respective (co)algebras categories. It will follow from the results in the subsequent sections that these adjunctions are Quillen adjunctions when $\operad P$ is (co)admissible and induce Quillen equivalences when $\operad P$ is cofibrant. 

\medskip

Let $\operad P$ be a dg operad. Let $\varphi_{\operad P}: \operad E \otimes \operad P \qi \operad P$ be the canonical quasi-isomorphism of dg operads, where $\operad E$ is the Barratt-Eccles operad.

\medskip

\textbf{Quasi-planar bar-cobar adjunction.} The morphism $\varphi_{\operad P}$ induces an adjunction $(\varphi_{\operad P})_! \dashv \varphi_{\operad P}^*$ between dg $\mathcal{P}$-algebras and dg $\operad E \otimes \operad P$-algebras. We define the \textit{quasi-planar bar-cobar adjunction} 

\[
\begin{tikzcd}[column sep=5pc,row sep=3pc]
          \mathsf{dg}~\operad P\text{-}\mathsf{alg} \arrow[r, shift left=1.1ex, "\mathrm{B}^{\mathrm{q.p}}_{\pi}"{name=A}]
         &\mathsf{curv}~\mathrm{B}(\operad E \otimes \operad P) \text{-}\mathsf{cog}~. \arrow[l, shift left=.75ex, "\Omega^{\mathrm{q.p}}_{\pi}"{name=B}] \arrow[phantom, from=A, to=B, , "\dashv" rotate=90]
\end{tikzcd}
\]

as the following composition of adjunctions

\[
\begin{tikzcd}[column sep=5pc,row sep=3pc]
          \mathsf{dg}~\operad P\text{-}\mathsf{alg} \arrow[r, shift left=1.1ex, "\varphi_{\operad P}^*"{name=F}]           
         &\mathsf{dg}~ \operad E \otimes \operad P\text{-}\mathsf{alg} \arrow[l, shift left=.75ex, "(\varphi_{\operad P})_!"{name=U}] \arrow[r, shift left=1.1ex, "\mathrm{B}_{\pi}"{name=A}]
         &\mathsf{curv}~\mathrm{B}(\operad E \otimes \operad P)\text{-}\mathsf{cog}~. \arrow[l, shift left=.75ex, "\Omega_{\pi}"{name=B}] \arrow[phantom, from=F, to=U, , "\dashv" rotate=90]\arrow[phantom, from=A, to=B, , "\dashv" rotate=90]
\end{tikzcd}
\]
\vspace{0.1pc}

\textbf{Quasi-planar complete bar-cobar adjunction.} The morphism $\varphi_{\operad P}$ induces an adjunction $(\varphi_{\operad P})_! \dashv \varphi_{\operad P}^*$ between dg $\mathcal{P}$-coalgebras and dg $\operad E \otimes \operad P$-coalgebras. We define the \textit{quasi-planar complete bar-cobar adjunction} 
\[
\begin{tikzcd}[column sep=5pc,row sep=3pc]
          \mathsf{dg}~\operad P\text{-}\mathsf{cog} \arrow[r, shift left=1.1ex, "\widehat{\Omega}^{\mathrm{q.p}}_{\pi}"{name=A}]
         &\mathsf{curv}~\mathrm{B}(\operad E \otimes \operad P) \text{-}\mathsf{alg}~, \arrow[l, shift left=.75ex, "\widehat{\mathrm{B}}^{\mathrm{q.p}}_{\pi}"{name=B}] \arrow[phantom, from=A, to=B, , "\dashv" rotate=-90]
\end{tikzcd}
\]

as the following composition of adjunctions
\[
\begin{tikzcd}[column sep=5pc,row sep=3pc]
          \mathsf{dg}~\operad P\text{-}\mathsf{cog} \arrow[r, shift left=1.1ex, "\varphi_{\operad P}^*"{name=F}]           
         &\mathsf{dg}~ \operad E \otimes \operad P\text{-}\mathsf{cog} \arrow[l, shift left=.75ex, "(\varphi_{\operad P})_!"{name=U}] \arrow[r, shift left=1.1ex, "\widehat{\Omega}_{\pi}"{name=A}]
         &\mathsf{curv}~\mathrm{B}(\operad E \otimes \operad P)\text{-}\mathsf{alg}~. \arrow[l, shift left=.75ex, "\widehat{\mathrm{B}}_{\pi}"{name=B}] \arrow[phantom, from=F, to=U, , "\dashv" rotate=-90]\arrow[phantom, from=A, to=B, , "\dashv" rotate=-90]
\end{tikzcd}
\]

\newpage


\section{Model structure on coalgebras over a cooperad}

\vspace{2pc}
The goal of this section is to study the homotopical properties of the bar-cobar adjunction between dg $\Omega\C$-algebras and curved $\C$-coalgebras, in the case where $\C$ is a quasi-planar conilpotent curved cooperad. 

\medskip

The dg operad $\Omega \C$ is cofibrant, and therefore admissible by Proposition \ref{propadmissible}. This means that the category of dg $\Omega\C$-algebras admits a model category structure where weak-equivalences are given by quasi-isomorphisms and fibrations by degree-wise epimorphisms. Let us consider the bar-cobar adjunction 

\[
\begin{tikzcd}[column sep=5pc,row sep=3pc]
          \mathsf{dg}~\C\text{-}\mathsf{cog} \arrow[r, shift left=1.1ex, "\Omega_{\C}"{name=F}] & \mathsf{dg}~ \Omega \C\text{-}\mathsf{alg}, \arrow[l, shift left=.75ex, "\mathrm{B}_{\C}"{name=U}]
            \arrow[phantom, from=F, to=U, , "\dashv" rotate=-90]
\end{tikzcd}
\]
\vspace{0.1pc}

relative to $\iota: \C \longrightarrow \Omega \C$, which will be denoted by $\Omega_\C,\mathrm{B}_\C$ from now on, since we will not consider any other curved twisting morphism. Our first goal is going to be to transfer this model category structure along the bar-cobar adjunction to the category of dg $\C$-coalgebras.

\begin{theorem}\label{thm: existence de structure de modèles}
Let $\C$ be a  quasi-planar conilpotent curved cooperad. The exists a combinatorial model category structure on the category of curved $\C$-coalgebras given by the following sets of maps:

\medskip

\begin{enumerate}
\item the set of weak-equivalences is given by morphisms $f$ such that $\Omega_\C(f)$ is a quasi-isomorphism,

\medskip

\item the set of cofibrations is given by morphisms $f$ such that $\Omega_\C(f)$ is a cofibration; they correspond to degree-wise injections,

\medskip 

\item the set of fibrations is given by morphisms with the right lifting property with respect to acyclic cofibrations.
\end{enumerate}
\end{theorem}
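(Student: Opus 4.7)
My approach is to left-transfer the combinatorial model structure on dg $\Omega\C$-algebras along the bar-cobar adjunction $\Omega_\C \dashv \mathrm{B}_\C$ by applying Smith's recognition theorem for combinatorial model categories. The category of curved $\C$-coalgebras is locally presentable (Section~4), the functor $\Omega_\C$ preserves all colimits as a left adjoint, and the class of weak equivalences $W \coloneqq \Omega_\C^{-1}(\text{quasi-isomorphisms})$ is accessible, satisfies 2-out-of-3, and is closed under retracts. Using presentability, a small set $I$ of generating cofibrations can be extracted by taking representatives of isomorphism classes of degree-wise injections between $\lambda$-presentable curved $\C$-coalgebras for a suitable regular cardinal $\lambda$; the cofibrant closure of this set exhausts all degree-wise injections. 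The equivalence between the two descriptions of cofibrations in the statement comes from the observation that the underlying graded map of $\Omega_\C(f)$ is $\mathrm{id}_{\Omega\C} \comp f$, which is a cofibration of dg $\Omega\C$-algebras (a quasi-free extension) precisely when $f$ is a degree-wise injection.

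To conclude via Smith's theorem, two conditions remain: (a) every morphism with the right lifting property against $I$ is a weak equivalence; (b) the class $\mathrm{cof}(I) \cap W$ is closed under pushouts, transfinite compositions, and retracts. Condition (b) follows immediately from the preservation of colimits by $\Omega_\C$, together with the fact that acyclic cofibrations in the (right-transferred) model structure on dg $\Omega\C$-algebras are themselves closed under these operations. Retract closure is similarly automatic.

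The main obstacle is condition (a). The plan is to construct, for every curved $\C$-coalgebra $V$, a functorial cylinder object on $V$ using the cellular interval $I$ together with its structure of a coalgebra over the Barratt-Eccles operad $\operad E$ coming from \cite{BergerFresse}, combined with the $\operad E$-comodule structure on $\Omega\C$ established in Theorem~\ref{thm:becomodule}. A morphism $f: V \to W$ with the right lifting property against all degree-wise injections then extends along the inclusion of the endpoints into the cylinder, producing, after applying the left adjoint $\Omega_\C$, an explicit chain homotopy witnessing that $\Omega_\C(f)$ is a quasi-isomorphism. The quasi-planar hypothesis on $\C$ is essential in this final step: it is what ensures that $\Omega\C$ is cofibrant (Proposition~\ref{proposition:cooperadqpcofibrant}) and carries the $\operad E$-comodule structure that allows the cylinder construction to be performed directly at the coalgebra level, bypassing the representation-theoretic obstructions that plague characteristic-zero arguments in positive characteristic.
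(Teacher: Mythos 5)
Your overall skeleton (left transfer along $\Omega_{\operad C} \dashv \mathrm{B}_{\operad C}$, with the acyclicity condition to be settled by exhibiting a cylinder, all objects being cofibrant) is the same as the paper's, but two of your key steps have genuine gaps. First, the identification of cofibrations: you justify ``$\Omega_{\operad C}(f)$ is a cofibration of dg $\Omega\operad C$-algebras precisely when $f$ is a degree-wise injection'' by saying the underlying graded map is a quasi-free extension. Quasi-freeness of the underlying graded algebra map does \emph{not} imply cofibrancy: the twisted differential of the cobar construction sends the new generators into the whole target, and a quasi-free dg algebra map is a cofibration only when this differential is triangular with respect to a suitable exhaustive filtration of the generators. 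That filtration is exactly what conilpotency provides: the paper writes a degree-wise injection as a transfinite composition of \emph{elementary} cofibrations along the coradical filtration, and each elementary step is presented as a pushout of $\Omega\operad C \comp (S^{-1}\otimes U) \rightarrowtail \Omega\operad C \comp (D^{0}\otimes U)$ precisely because the decomposition map of the quotient factors through $\overline{\operad C}\comp W'$ (Propositions \ref{prop:elemcof} and \ref{propcofibrations}). Note that your Smith-theorem condition (b) also secretly relies on this fact (you need your generating set to be sent to cofibrations of algebras before pushout/transfinite-composition closure buys you anything), and the claim that injections between $\lambda$-presentable coalgebras cofibrantly generate \emph{all} degree-wise injections is asserted without argument.

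The more serious gap is the cylinder. The $\operad E$-comodule structure $\Omega\operad C \to \operad E \otimes \Omega\operad C$ of Theorem \ref{thm:becomodule} produces path and cylinder objects for algebras and coalgebras over the \emph{operad} $\Omega\operad C$ (as in Proposition \ref{propadmissible}), but it does not equip $V \otimes I$ with a curved $\operad C$-coalgebra structure: the structure map of a $\operad C$-coalgebra lands in the coinvariants $\bigoplus_n \operad C(n)\otimes_{\mathbb S_n} V^{\otimes n}$, and distributing the non-cocommutative diagonal of the cellular interval across these coinvariants is precisely the equivariance problem that has no direct solution in positive characteristic; you would have to explain how quasi-planarity resolves it, and you do not. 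Moreover, even granting a candidate cylinder, the acyclicity condition requires the projection $\mathrm{Cyl}(V) \to V$ to be a weak equivalence in the \emph{transferred} sense, i.e.\ a quasi-isomorphism after applying $\Omega_{\operad C}$; a chain homotopy on underlying complexes only gives a quasi-isomorphism of coalgebras, which is strictly weaker, since $\Omega_{\operad C}$ does not preserve quasi-isomorphisms. The paper instead takes $\mathrm{Cyl}(W) = \mathrm{B}_{\operad C}A \times_{\mathrm{B}_{\operad C}\Omega_{\operad C}W} W$ for a cylinder $A$ of $\Omega_{\operad C}W$ in algebras, and proves that the projection is a filtered quasi-isomorphism along the canonical quasi-planar ladder of $\operad C$ (Propositions \ref{prop: quasi-iso filtré entre les bar}, \ref{corollarycylinder} and \ref{propcylinder}); this filtration argument is where quasi-planarity actually enters the proof, not merely through the cofibrancy of $\Omega\operad C$.
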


\begin{remark}
    Using the standard transfer theorem for model category structures only gives that cofibrations are morphism which are sent by $\Omega_{\C}$ to cofibrations. The theorem contains an additional characterization of cofibrations of curved $\C$-coalgebras as degree-wise injective maps. 
\end{remark}


\subsection{Outline of the transfer of model structures}
Let us first start by defining the sets of morphisms of curved $\C$-coalgebras that we intend to study.

\begin{definition}[Cofibrations]\label{def: cof de C-cogebres}
A morphism $f$ of curved $\C$-coalgebras is a \textit{cofibration} if $\Omega_\C(f)$ is a cofibration of dg $\Omega \C$-algebras.
\end{definition}

\begin{definition}[Weak-equivalences]
A morphism $f$ of curved $\C$-coalgebras is a \textit{weak-equivalence} if $\Omega_\C(f)$ is a quasi-isomorphism of dg $\Omega \C$-algebras.
\end{definition}

\begin{definition}[Fibrations]
A morphism of curved $\C$-coalgebras is a \textit{fibration} if it has the right-lifting property against all acyclic cofibrations. 
\end{definition}

Both categories $\catcurvcog{\operad C}$ and $\catdgalg{\Omega \operad C}$ are presentable, therefore by Appendix \ref{appendixtransfer} it suffices to exhibit a natural cofibrant resolution and a natural cylinder for coalgebras to prove the existence of the transferred model structure. We will show that cofibrations of Definition \ref{def: cof de C-cogebres} are given by degree-wise injective maps in Proposition \ref{propcofibrations} and we will construct a natural cylinder in Proposition \ref{propcylinder}. 

\medskip

For the rest of this section, let us fix a quasi-planar conilpotent curved cooperad $\C$ whose quasi-planar ladder is indexed by some small ordinal $\alpha$.

\subsection{Elementary cofibrations}
Elementary cofibrations are a particularly well-behaved set of cofibrations of curved $\C$-coalgebras. For any such elementary cofibration, its cokernel is a dg $\kk$-module.

\begin{definition}[Elementary cofibrations]
A morphism $f : W' \rightarrowtail W$ of curved $\operad C$-coalgebras is an \textit{elementary cofibration} if it is degree-wise injective and if the map $\overline{\Delta}_{W} : W \longrightarrow \overline{\C} \comp W$ factors through the sub-object $\overline{\C} \comp W' \subseteq \overline{\C} \comp W$, that is, if there exists a dashed map 
\[
\begin{tikzcd}[column sep=3pc,row sep=3pc]
W \ar[r, "\Delta_W"] \arrow[d,dashed]
& \C \comp W
\arrow[d,twoheadrightarrow]
\\
\overline{\C} \comp W'
\arrow[r,"\overline{\C} ~ \comp ~ f"',rightarrowtail]
& \overline{\C} \comp W
\end{tikzcd}
\]
that makes the diagram commute. Here $\Delta_W$ denotes the structural map of $W$. 
\end{definition}

\begin{remark}
The induced map $\overline{\operad C} \comp W' \longrightarrow \overline{\operad C} \comp W$ is also monomorphism since it is has a left inverse in the category of graded $\kk$-modules.
\end{remark}

\begin{lemma}
Let $f: W' \rightarrowtail W$ be an elementary cofibration curved $\operad C$-coalgebras. Then $\mathrm{Coker}(f)$ is a dg module.
\end{lemma}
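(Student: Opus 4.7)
The plan is to compute the cokernel $Q \coloneqq \operatorname{Coker}(f)$ at the level of underlying pdg modules and then verify that the induced pre-differential squares to zero. Since the forgetful functor from pdg $\operad C$-coalgebras to pdg modules is comonadic (hence a left adjoint, see Subsection on categorical properties), it preserves colimits; the curved $\operad C$-coalgebras form a coreflective subcategory of pdg $\operad C$-coalgebras, so any colimit in curved $\operad C$-coalgebras may be taken in pdg $\operad C$-coalgebras and then curvified. Thus the underlying pdg module of $Q$ is simply the ordinary quotient $W/\mathrm{Im}(f)$ endowed with the induced pre-differential $d_Q$ and the pushforward coaction $\Delta_Q$.

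Next, I would exploit the curvature equation for the curved $\operad C$-coalgebra $W$, namely
\[
d_W^2 \;=\; -(\theta \comp \id_W)\,\Delta_W,
\]
together with the fact that the curvature $\theta : \operad C \to \operad I$ vanishes on the coaugmentation $\operad I \hookrightarrow \operad C$. The latter forces
\[
(\theta \comp \id_W)\,\Delta_W \;=\; (\theta|_{\overline{\operad C}} \comp \id_W)\,\overline{\Delta}_W.
\]
The elementary cofibration hypothesis states precisely that $\overline{\Delta}_W$ factors as $W \longrightarrow \overline{\operad C} \comp W' \hookrightarrow \overline{\operad C} \comp W$. Post-composing this factorization with $\theta|_{\overline{\operad C}} \comp \id_W$ yields a map landing in $\operad I \comp W' \cong W' = \mathrm{Im}(f)$. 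Combining with the curvature equation gives $d_W^2(W) \subseteq \mathrm{Im}(f)$.

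Finally, passing to the quotient $Q = W/\mathrm{Im}(f)$, the induced pre-differential $d_Q$ satisfies $d_Q^2 = 0$, so $Q$ is a dg module (and its curvature as a curved $\operad C$-coalgebra automatically vanishes, making it in fact a dg $\operad C$-coalgebra). The only delicate point in the argument is the identification of the cokernel with $W/\mathrm{Im}(f)$ at the pdg module level, which is handled cleanly by the comonadicity of the forgetful functor; the remaining steps are a direct manipulation of the curvature equation against the factorization hypothesis and present no real obstacle.
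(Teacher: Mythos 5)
Your proof is correct and takes essentially the same route as the paper: the curvature identity $d_W^2 = -(\theta \comp \id_W)\Delta_W$, combined with the factorization of $\overline{\Delta}_W$ through $\overline{\C} \comp W'$, shows that $d_W^2$ lands in the image of $f$, so the induced pre-differential on the cokernel squares to zero. Your categorical preamble identifying $\mathrm{Coker}(f)$ with the quotient of underlying modules is just a harmless elaboration of what the paper's proof (which works with the projection $p_f$ and the equality $(\C \comp p_f)\overline{\Delta}_W = 0$) leaves implicit.
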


\begin{proof}
Let us denote $p_f:W \twoheadrightarrow \mathrm{Coker}(f)$ the canonical projection. We have
$$
d^2 p_f = p_f d^2 = -p_f (\theta \comp \id_W) \overline{\Delta}_W
= - (\theta \comp p_f)\overline{\Delta}_W
= - (\theta \comp \id_{\mathrm{Coker}(f)}) (\C \comp p_f) \overline{\Delta}_W.
$$
The definition of an elementary cofibration tells us that $(\C \comp p_f) \overline{\Delta}_W=0$. We conclude by noticing that $p_f$ is an epimorphism.
\end{proof}

\begin{proposition}\label{prop:elemcof}
Let $f: W' \rightarrowtail W$ be an elementary cofibration. Then $f$ is a cofibration.
\end{proposition}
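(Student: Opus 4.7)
The plan is to exhibit $\Omega_\C(f)$ as a pushout of a cofibration obtained by freely extending a cofibration of dg modules, in the spirit of Lemma \ref{lemma:operadelementarycof}. Since $\C$ is quasi-planar, Proposition \ref{proposition:cooperadqpcofibrant} gives that $\Omega \C$ is cofibrant, hence $\operad E$-split by Corollary \ref{cor: cofibrant implies E-split}, and therefore admissible by Proposition \ref{propadmissible}. Thus the category of dg $\Omega\C$-algebras carries the transferred model structure, and the free functor $\Omega\C \comp - : \catdgmod{\kk} \longrightarrow \catdgalg{\Omega\C}$ is a left Quillen functor; in particular it sends (degree-wise injective) cofibrations of dg modules to cofibrations of dg $\Omega\C$-algebras.

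First, I would choose a graded-module section $\sigma: K \hookrightarrow W$ of the projection $W \twoheadrightarrow K$, where $K := \mathrm{Coker}(f)$, so that $W \cong W' \oplus \sigma(K)$ as graded $\kk$-modules. By the preceding lemma $K$ is a genuine dg module, with induced differential $d_K$. The elementary cofibration hypothesis says that the twisting part $W \xrightarrow{\overline{\Delta}_W} \overline{\C} \comp W \xrightarrow{\iota \comp \id} s^{-1}\overline{\C} \comp W$ of the derivation of $\Omega_\C(W)$ factors through $s^{-1}\overline{\C} \comp W' \subseteq \Omega_\C(W')$. Consequently, the restriction of the derivation of $\Omega_\C(W)$ to the new generators $\sigma(K) \subset W$ decomposes as $d_K + \phi$, where $\phi: \sigma(K) \longrightarrow \Omega_\C(W')$. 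The identity $d^2 = 0$ on $\Omega_\C(W)$ restricted to $\sigma(K)$ unpacks (after the usual suspension sign convention) into the equation that $\psi : s^{-1}K \longrightarrow \Omega_\C(W')$ defined by $\psi(s^{-1}k) = \phi(\sigma(k))$ is a morphism of dg modules.

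The second step is to form the mapping cone $I_K := \mathrm{Cone}(\id_K)$, so that the inclusion $s^{-1}K \hookrightarrow I_K$ is a degree-wise injective map of dg modules with cokernel $K$, and to consider the pushout square of dg $\Omega\C$-algebras
\[
\begin{tikzcd}[column sep=3pc,row sep=3pc]
\Omega\C \comp s^{-1}K \ar[r, "\psi"] \ar[d,hook] \arrow[dr, phantom, "\ulcorner", very near end]
& \Omega_\C(W') \ar[d]
\\
\Omega\C \comp I_K \ar[r]
& B~.
\end{tikzcd}
\]
I would then produce a canonical isomorphism $B \cong \Omega_\C(W)$ compatible with $\Omega_\C(f)$. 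On underlying graded $\Omega\C$-algebras this is automatic: $B \cong \Omega\C \comp \bigl(W' \oplus (I_K/s^{-1}K)\bigr) \cong \Omega\C \comp (W'\oplus K) \cong \Omega\C \comp W$. On differentials, the relation $d_B|_{K} = d_K + \psi(s^{-1}(-))$ coming from the pushout coincides term-by-term with the decomposition $d|_{\sigma(K)} = d_K + \phi$ obtained above.

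Once this identification is in place, $\Omega_\C(f)$ is a pushout of the map $\Omega\C \comp s^{-1}K \hookrightarrow \Omega\C \comp I_K$, which is a cofibration of dg $\Omega\C$-algebras as the image under $\Omega\C \comp -$ of the cofibration $s^{-1}K \hookrightarrow I_K$. Hence $\Omega_\C(f)$ is a cofibration of dg $\Omega\C$-algebras, and $f$ is a cofibration of curved $\C$-coalgebras by Definition \ref{def: cof de C-cogebres}. The only real obstacle is the careful bookkeeping of signs and identifications needed to verify that the pushout $B$ truly recovers $\Omega_\C(W)$ as a dg $\Omega\C$-algebra (and not merely as a graded one); everything else is formal, following the operadic template of Lemma \ref{lemma:operadelementarycof}.
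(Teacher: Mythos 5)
Your proposal is correct and follows essentially the same route as the paper: split off a graded complement of $W'$ in $W$, use the elementary-cofibration condition to turn the derivation's component into a dg-module map $s^{-1}K \to \Omega_\C(W')$, and exhibit $\Omega_\C(f)$ as a pushout of the free $\Omega\C$-algebra map on the inclusion $S^{-1}\otimes K \hookrightarrow D^0\otimes K$, which is a cofibration since $\Omega\C$ is admissible. The only cosmetic slip is calling the acyclic object $\mathrm{Cone}(\id_K)$ — with your required sub/quotient it is really $D^0\otimes K$ (the cone on $\id_{s^{-1}K}$), exactly the paper's $D^0\otimes U$.
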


\begin{proof}
Identifying $W'$ with its image in $W$, we can decompose the underlying graded $\kk$-module of $W$ as the a direct $W \cong W' \oplus U$. The pre-differential of $W$ decomposes as the pre-differential $d_{W'}$ on $V$, the differential $d_U$ on $U$ and a degree $-1$ map $\zeta : U \longrightarrow W'$. The map of graded $\kk$-modules
$$
g : U \hookrightarrow W \hookrightarrow \operad P \comp W
$$
induces a morphism of dg modules
$$
 D^0 \otimes U \longrightarrow \Omega_{\operad C} W~,
$$
whose restriction to $S^0 \otimes U$ is $g$ and whose restriction to $S^{-1} \otimes U$ is $- \id \otimes \partial(g)$. One can notice that this restriction to $S^{-1} \otimes U$ factors through the sub-dg module $\Omega_{\operad C} W' \subset \Omega_{\operad C} W$. We thus get a commutative diagram of dg modules
$$
\begin{tikzcd}[column sep=3pc,row sep=3pc]
    S^{-1} \otimes U
    \ar[r] \ar[d]
    & \Omega_{\operad C} W'
    \ar[d, "f"]
    \\
    D^{0} \otimes U
    \ar[r]
    & \Omega_{\operad C} W~,
\end{tikzcd}
$$
which gives a commutative diagram of dg $\Omega \operad C$-algebras
$$
\begin{tikzcd}[column sep=3pc,row sep=3pc]
    \Omega \operad C \circ(S^{-1} \otimes U) \arrow[dr, phantom, "\ulcorner", very near end]
    \ar[r] \ar[d,rightarrowtail]
    & \Omega_{\operad C} W'
    \ar[d, "f"]
    \\
    \Omega \operad C \circ(D^{0} \otimes U)
    \ar[r]
    & \Omega_{\operad C} W~.
\end{tikzcd}
$$
This diagram is a pushout square since the underlying diagram of graded $\Omega \operad C$-algebras is a pushout square. Since the left vertical map is a cofibration, so is the right vertical map $f$.
\end{proof}

\begin{lemma}\label{lemelemcofibequiv}
    Let us consider a commutative diagram of curved $\operad C$-coalgebras
    $$
    \begin{tikzcd}[column sep=3pc,row sep=3pc]
        U
        \ar[r, "\id"] \ar[d, "i"']
        & U
        \ar[d, "j"]
        \\
        W'
        \ar[r, "g"']
        & W
    \end{tikzcd}
    $$
    where
    \begin{enumerate}
    \medskip
        \item $i$ and $j$ are elementary cofibrations,
        
    \medskip
    
        \item the map of dg modules induced by $g$
        $$
        \bar{g} : W'/U \longrightarrow W/U
        $$
        is a quasi-isomorphism.
    \end{enumerate}
    
    \medskip 
    
    Then $g$ is a weak-equivalence.
\end{lemma}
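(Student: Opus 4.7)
It suffices to show that $\Omega_{\operad C} g : \Omega_{\operad C} W' \to \Omega_{\operad C} W$ is a quasi-isomorphism of dg $\Omega \operad C$-algebras. The plan is to equip both cobar constructions with a compatible exhaustive, bounded below increasing filtration whose induced map on associated graded objects is a quasi-isomorphism, then conclude by the spectral sequence comparison theorem.

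\textbf{Filtration by $V$-weight.} Choose splittings $W \cong U \oplus V$ and $W' \cong U \oplus V'$ of graded $\kk$-modules, where $V \coloneqq W/U$ and $V' \coloneqq W'/U$ are dg modules (since $i$ and $j$ are elementary cofibrations), and by hypothesis the induced map $\bar g : V' \to V$ is a quasi-isomorphism. For $k \geq 0$, let $F_k \Omega_{\operad C} W$ be spanned by those summands of $\Omega \operad C(n) \otimes_{\mathbb S_n} W^{\otimes n}$ whose tensor entries (via the chosen splitting) contain at most $k$ factors lying in $V$; define $F_k \Omega_{\operad C} W'$ analogously. Stability under the total derivation is checked component by component: the internal derivations of $\Omega \operad C$ act only on the $\Omega \operad C$-factor; the differential $d_W$ preserves $U$ and sends $V$ into $U \oplus V$; and the cobar derivation $d_2 = (\iota \comp \id_W) \Delta_W$ applies $\overline{\Delta}_W$ to a single $W$-factor, which by the elementary-cofibration hypothesis on $j$ factors through $\overline{\operad C} \comp U$, hence replaces that factor by entries entirely in $U$. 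Consequently $d_2$ preserves $F_\bullet$ when acting on a $U$-factor and strictly decreases it by one when acting on a $V$-factor. The map $\Omega_{\operad C} g$ is filtered because it is the identity on $U$-factors and is given by $\bar g$ on $V$-factors.

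\textbf{Associated graded computation.} On $\mathrm{gr}_k$, the derivation components that strictly decrease $F_\bullet$ vanish, and grouping summands by the number and positions of $V$-factors yields a canonical identification
\[
\mathrm{gr}_k \Omega_{\operad C} W \cong \bigoplus_{m \geq 0} \Omega \operad C(m+k) \otimes_{\mathbb S_m \times \mathbb S_k} U^{\otimes m} \otimes V^{\otimes k}
\]
with residual differential assembled from the full cobar differential of $\Omega_{\operad C} U$ on the $(\Omega \operad C, U)$-part (using that $U$ is a sub-curved-$\operad C$-coalgebra of $W$) together with $d_V$ on $V^{\otimes k}$; and analogously for $W'$ with $V$ replaced by $V'$. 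Under this identification, $\mathrm{gr}_k \Omega_{\operad C} g$ is the identity on the $(\Omega \operad C, U)$-part and is $\bar g^{\otimes k}$ on $V^{\otimes k}$. The quasi-planarity of $\operad C$, via the natural isomorphism $\treemod(X \otimes \mathbb S) \cong \treemod_\pl(X) \otimes \mathbb S$, gives $\Omega \operad C \cong \treemod_\pl(s^{-1}\overline{\operad C}_\pl) \otimes \mathbb S$ as graded $\mathbb S$-modules, so each $\Omega \operad C(m+k)$ is free as a right $\kk[\mathbb S_{m+k}]$-module and in particular free over the subgroup $\mathbb S_m \times \mathbb S_k$. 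Hence $\Omega \operad C(m+k) \otimes_{\mathbb S_m \times \mathbb S_k} (-)$ is exact, and since $\bar g^{\otimes k}$ is a quasi-isomorphism of dg modules (tensor products over the field $\kk$ preserving quasi-isomorphisms), $\mathrm{gr}_k \Omega_{\operad C} g$ is a quasi-isomorphism for every $k$.

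\textbf{Conclusion and main obstacle.} Since $F_\bullet$ is bounded below ($F_{-1}=0$) and exhaustive, the associated spectral sequences converge to the homologies of $\Omega_{\operad C} W'$ and $\Omega_{\operad C} W$, and the comparison theorem yields that $\Omega_{\operad C} g$ is a quasi-isomorphism, so that $g$ is a weak-equivalence. The main technical point is verifying the stability of the $V$-weight filtration under the total derivation: this rests on the factorization of $\overline{\Delta}_W$ through $\overline{\operad C} \comp U$ guaranteed by the elementary-cofibration hypothesis on $j$, without which $d_2$ would produce new $V$-factors and destroy the filtration. A secondary delicate point, specific to positive characteristic, is that $\Omega \operad C(m+k)$ must be free (or at least projective) over $\mathbb S_m \times \mathbb S_k$ in order for the associated graded computation to descend through the coinvariants — and this is precisely what the quasi-planarity of $\operad C$ delivers.
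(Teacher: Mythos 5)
Your route is genuinely different from the paper's: the paper never filters the cobar construction, but instead uses the cell-attachment squares from the proof that elementary cofibrations are cofibrations to exhibit both $\Omega_{\operad C}W'$ and $\Omega_{\operad C}W$ as (homotopy) pushouts of free $\Omega\operad C$-algebras over $\Omega_{\operad C}U$, and then compares the two pushout squares in a cube, using that $\Omega\operad C \comp(-)$ applied to the quasi-isomorphism $W'/U \to W/U$ gives quasi-isomorphisms. Your filtration by the number of $V$-factors is itself sound: stability under the total derivation does follow from the factorization of $\overline{\Delta}_W$ through $\overline{\operad C}\comp U$, the filtration descends to the coinvariants, it is exhaustive and bounded below, and the identification of $\mathrm{gr}_k$ as $\bigoplus_m \Omega\operad C(m+k)\otimes_{\mathbb S_m\times\mathbb S_k}U^{\otimes m}\otimes V^{\otimes k}$ is correct.

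The gap is in the step you yourself flag as the delicate one. You argue that since $\Omega\operad C(m+k)$ is graded-free over $\kk[\mathbb S_m\times\mathbb S_k]$, the functor $\Omega\operad C(m+k)\otimes_{\mathbb S_m\times\mathbb S_k}(-)$ is exact, and hence $\mathrm{gr}_k\Omega_{\operad C}g$ is a quasi-isomorphism. Exactness on underlying graded modules is not enough: the differential of $\Omega\operad C$ (the non-planar part of $d_{\operad C}$) does not respect a free basis, so the graded piece is only a \emph{quasi-free} dg $\kk[\mathbb S_k]$-module, and for unbounded complexes coinvariants of quasi-free dg modules do not preserve acyclicity. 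For instance, over $\kk=\mathbb F_2$ with $G=\mathbb Z/2$, the two-periodic complex $\cdots\to\kk[G]\xrightarrow{\,1+\sigma\,}\kk[G]\xrightarrow{\,1+\sigma\,}\kk[G]\to\cdots$ is acyclic and quasi-free, yet its coinvariants carry the zero differential and are not acyclic; since $\Omega\operad C$, $V$ and $V'$ are all unbounded here, you cannot appeal to the bounded-below case where quasi-free implies cofibrant. To close the argument you need that $\bigoplus_m\Omega\operad C(m+k)\otimes_{\mathbb S_m}U^{\otimes m}$ is \emph{cofibrant} (projective) as a dg $\kk[\mathbb S_k]$-module, not merely quasi-free; this is true, but it requires either invoking that $\Omega\operad C$ is a cofibrant dg operad and hence $\mathbb S$-projective, together with the observation that restriction to $\mathbb S_m\times\mathbb S_k$ and the coinvariants $(-)\otimes_{\mathbb S_m}U^{\otimes m}$ preserve projective cofibrancy, or re-running the quasi-planar cell-attachment filtration of $\operad C$ so that on its associated graded the differential takes the form $d_\pl\otimes\id_{\mathbb S}$ and the coinvariants split off the group algebra. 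As written, the justification via graded freeness alone is exactly the kind of inference that fails in positive characteristic, which is why the paper's cube argument, which delegates this point to the operadic cofibrancy of $\Omega\operad C$ and to invariance of homotopy pushouts, avoids the issue entirely.
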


\begin{proof}
Let us decompose the underlying graded $\kk$-module of $W$ as a direct sum $W \cong U \oplus Y$. We decompose the underlying graded $\kk$-module of $W'$ as a direct sum $W' \cong U\oplus X$ in such a way so that the restriction of $g$ to $X$ lands in $Y$. The diagrams built in the proof of Proposition \ref{prop:elemcof} fit in the following commutative cube 
    $$
    \begin{tikzcd}
        \Omega \operad C \comp (S^{-1} \otimes X)
        \ar[rr] \ar[rd] \ar[dd]
        && \Omega \operad C \comp (S^{-1} \otimes Y)
        \ar[rd] \ar[dd]
        \\
        & \Omega_{\operad C} U
        \ar[rr] \ar[dd]
        && \Omega_{\operad C} U
        \ar[dd]
        \\
        \Omega \operad C \comp (D^{0} \otimes X)
        \ar[rr] \ar[rd]
        && \Omega \operad C \comp (D^0 \otimes Y)
       \ar[rd] 
        \\
        & \Omega_{\operad C} W'
        \ar[rr]
        &&  \Omega_{\operad C} W~.
    \end{tikzcd}
    $$
    The left and the right face are pushouts squares which are also homotopy pushouts. The two back horizontal maps and the top front horizontal map are quasi-isomorphisms. Thus the homotopy pushout map $\Omega_{\operad C}g$ is also a quasi-isomorphism, which implies that $g$ is a weak-equivalence.
\end{proof}

\begin{proposition}\label{propelemcofibequiv}
    Let us consider a commutative diagram of curved $\operad C$-coalgebras
    $$
    \begin{tikzcd}[column sep=3pc,row sep=3pc]
        U
        \ar[r, "f"] \ar[d, "i"']
        & W
        \ar[d, "j"]
        \\
        U'
        \ar[r, "g"']
        & W' 
    \end{tikzcd}
    $$
    such that
    \begin{enumerate}
    
    \medskip
    
        \item $f$ is a weak-equivalence,
    
    \medskip
    
        \item $i$ and $j$ are elementary cofibrations,
        
    \medskip
    
        \item the map of dg modules induced by $g$
        $$
        \bar{g} : U'/U \longrightarrow W'/W
        $$
        is a quasi-isomorphism.
    \end{enumerate}
    
    \medskip
    
    Then $g$ is a weak-equivalence.
\end{proposition}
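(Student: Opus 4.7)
The plan is to imitate closely the argument of Lemma \ref{lemelemcofibequiv}, the only novelty being that the top-front comparison map, which was the identity there, is replaced by $\Omega_\C(f)$, which is a quasi-isomorphism by hypothesis. The goal is to exhibit a commutative cube in $\catdgalg{\Omega\C}$ whose left and right faces are the pushout presentations of $\Omega_\C(i)$ and $\Omega_\C(j)$ coming from the elementary cofibration structure (as in Proposition \ref{prop:elemcof}), and whose three "comparison" maps other than the one we want are quasi-isomorphisms. The map we care about, $\Omega_\C(g)$, is then forced to be a quasi-isomorphism by homotopy invariance of homotopy pushouts, which gives the conclusion that $g$ is a weak-equivalence by the definition of weak-equivalences in $\catcurvcog{\C}$.

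Concretely, set $X \coloneqq U'/U$ and $Y \coloneqq W'/W$; both are dg modules because $i$ and $j$ are elementary cofibrations, and $\bar g:X \qi Y$ by hypothesis. As in the proof of Lemma \ref{lemelemcofibequiv}, choose splittings $U'\cong U\oplus X$ and $W'\cong W\oplus Y$ of graded $\kk$-modules so that $g|_X$ lands in $Y$; such a choice exists by first picking an arbitrary splitting and then correcting the $Y$-factor by the image of $X$ under $g$. The elementary cofibration data for $i$ (resp.\ $j$) then produces a pushout square in $\catdgalg{\Omega\C}$ presenting $\Omega_\C(U')$ (resp.\ $\Omega_\C(W')$) as a cell attachment along an attaching map $\alpha_i: \Omega\C\circ(S^{-1}\otimes X)\to\Omega_\C(U)$ (resp.\ $\alpha_j$). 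The commutativity $j\circ f = g\circ i$, combined with the compatibility of splittings, makes the cube
\[
\begin{tikzcd}[column sep=1.6pc,row sep=1.6pc]
\Omega\C\circ(S^{-1}\otimes X) \ar[rr] \ar[rd,"\alpha_i"] \ar[dd, hook]
&& \Omega\C\circ(S^{-1}\otimes Y) \ar[rd,"\alpha_j"] \ar[dd, hook]
\\
& \Omega_\C U \ar[rr,crossing over,"\Omega_\C f",near end] \ar[dd]
&& \Omega_\C W \ar[dd]
\\
\Omega\C\circ(D^{0}\otimes X) \ar[rr] \ar[rd]
&& \Omega\C\circ(D^{0}\otimes Y) \ar[rd]
\\
& \Omega_\C U' \ar[rr,"\Omega_\C g"']
&& \Omega_\C W'
\end{tikzcd}
\]
commute, where the back horizontal maps are induced by the chain map $\bar g$.

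Now the left and right faces are pushouts in $\catdgalg{\Omega\C}$ along generating cofibrations, hence are homotopy pushouts. The three comparison maps between these two homotopy pushout diagrams, at the three non-terminal vertices, are all quasi-isomorphisms: the two back-face horizontal maps are quasi-isomorphisms because $\bar g$ is, and because the free algebra functor $\Omega\C\circ(-)$ is left Quillen and hence preserves quasi-isomorphisms between (automatically cofibrant) dg modules; and the top-front horizontal map $\Omega_\C(f)$ is a quasi-isomorphism by the hypothesis that $f$ is a weak-equivalence. By homotopy invariance of homotopy pushouts, the induced map on the pushouts, which is precisely $\Omega_\C(g)$, is also a quasi-isomorphism. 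Hence $g$ is a weak-equivalence.

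The main technical obstacle is, as already in Lemma \ref{lemelemcofibequiv}, the coherent construction of the cube: one must verify that after the splittings have been chosen so that $g|_X \subseteq Y$, the back face literally commutes (not merely up to homotopy). This reduces to the claim that the attaching maps $\alpha_i$ and $\alpha_j$, which are essentially the internal differentials $\zeta_X:X\to U$ and $\zeta_Y:Y\to W$ coming from the pre-differential of $U'$ and $W'$, intertwine $\bar g$ with $f$; this is a direct consequence of the fact that $g$ commutes with the pre-differentials together with the splitting convention $g(X)\subseteq Y$.
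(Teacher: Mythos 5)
Your overall strategy (compare the two cell-attachment presentations of $\Omega_{\operad C} U'$ and $\Omega_{\operad C} W'$ and invoke homotopy invariance of homotopy pushouts) is reasonable, but the step you yourself flag as the main technical obstacle contains a genuine gap: it is in general \emph{impossible} to choose graded splittings $U' \cong U \oplus X$ and $W' \cong W \oplus Y$ such that $g(X) \subseteq Y$. Writing $g(x) = w(x) + y(x)$ with $w(x) \in W$, $y(x) \in Y$, the only freedom you have is to replace $x$ by $x + u(x)$ with $u(x) \in U$, which changes $w(x)$ to $w(x) + f(u(x))$; killing the $W$-component therefore requires $w(x) \in \mathrm{im}(f)$, which has no reason to hold (and if $\bar g(x)=0$ but $g(x)\neq 0$, then $g(X)\cap W \neq 0$ and no complement $Y$ containing $g(X)$ can exist at all). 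This is exactly where your situation differs from Lemma \ref{lemelemcofibequiv}: there the top map is the identity, so the correction $u(x) = -\pi_U g(x)$ lies in $U$ and the trick works. Without the compatible splitting, a direct computation with the attaching maps shows that your back/top face only commutes up to the homotopy $\partial(\pi_W \circ g|_X)$ (one gets $f\,\zeta_X = \zeta_Y\,\bar g + \partial(w)$ plus an analogous discrepancy in the coproduct terms), so the strict cube you need does not exist and the homotopy-pushout comparison cannot be applied as stated.

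The paper's proof circumvents precisely this issue by a two-step reduction: first form the pushout $Z \coloneqq U' \amalg_U W$ in curved $\operad C$-coalgebras; since $\Omega_{\operad C}$ is a left adjoint and $\Omega_{\operad C}(i)$ is a cofibration (Proposition \ref{prop:elemcof}), the image square is a (homotopy) pushout of dg $\Omega\operad C$-algebras, so $\Omega_{\operad C} U' \qi \Omega_{\operad C} Z$ because it is the pushout of the quasi-isomorphism $\Omega_{\operad C}(f)$ along a cofibration. Then the induced map $Z \longrightarrow W'$ sits under $W$ with identity on top, $W \rightarrowtail Z$ and $j$ are elementary cofibrations, and $Z/W \cong U'/U \xrightarrow{\bar g} W'/W$ is a quasi-isomorphism, so Lemma \ref{lemelemcofibequiv} applies; composing gives that $g$ is a weak-equivalence. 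If you want to keep your cube, you must either perform this preliminary pushout (after which your argument is literally the proof of Lemma \ref{lemelemcofibequiv}), or genuinely deal with the homotopy-commutativity of the comparison of spans, which requires more than what you wrote.
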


\begin{proof}
 Let us consider the following pushout square in the category of curved $\operad C$-coalgebras
    $$
    \begin{tikzcd}[column sep=3pc,row sep=3pc]
        U
        \ar[r] \ar[d] \arrow[dr, phantom, "\ulcorner", very near end]
        & W
        \ar[d]
        \\
        U'
        \ar[r]
        & Z.
    \end{tikzcd}
    $$
    It yields a pushout square of dg $\Omega \operad C$-algebras
    $$
    \begin{tikzcd}[column sep=3pc,row sep=3pc]
        \Omega_{\operad C} U \arrow[dr, phantom, "\ulcorner", very near end]
        \ar[r] \ar[d]
        & \Omega_{\operad C} W
        \ar[d]
        \\
        \Omega_{\operad C} U'
        \ar[r]
        & \Omega_{\operad C} Z
    \end{tikzcd}
    $$
    which is also an homotopy pushout. Thus, the map
    $\Omega_{\operad C} U'  \qi \Omega_{\operad C} Z$
    is a quasi-isomorphism. To conclude, the map $Z \qi W'$ is a weak-equivalence by Lemma \ref{lemelemcofibequiv}.
\end{proof}


\subsection{Ladders and filtered quasi-isomorphisms}
A sub-set of weak-equivalences of curved $\operad C$-coalgebras is given by filtered quasi-isomorphisms of $\beta$-indexed ladders, for any small ordinal $\beta$. The key example of such ladders is the quasi-planar ladder induced by the canonical quasi-planar filtration of $\C$.

\begin{definition}[$\beta$-ladder of curved $\operad C$-coalgebras]
Let $\beta$ be a small ordinal. A $\beta$-\textit{ladder of curved} $\operad C$\textit{-coalgebras} is a functor 
\[
W: \beta \longrightarrow \catcurvcog{\operad C}~,
\]
that sends every limit ordinal $k\in\beta$ to
$$
W(k) = \colim{i < k}~W(i)~,
$$
with $W(-1)=0$, and such that every map
$$
W(i-1) \rightarrowtail W(i)~, i \in \beta~,
$$
is an \textit{elementary cofibration}.
\end{definition}

\begin{notation}
We denote by
$$
W(\beta) \coloneqq \colim{i \in \beta}~W(i)
$$
the value of the colimit of this $\beta$-ladder.
\end{notation}

\begin{remark}
The first property about limit ordinal is equivalent to the fact that the functor 
\begin{align*}
    (1 + \beta) &\longrightarrow \catcurvcog{\operad C}
    \\
    i &\mapsto W(i+1)
\end{align*}
is cocontinuous.
\end{remark}

\begin{definition}[Associated graded of a ladder]
Given a $\beta$-ladder $W$, we define its \textit{associated graded} as 
\[
\gr_i W \coloneqq W(i)/~\colim{j < i}~W(j)~.
\]
The coderivation squares to zero on this quotient $\gr_i W$, therefore it is a dg module. 
\end{definition}

The following general proposition will allow us to construct $\beta$-ladders of coalgebras in a general setting.

\begin{proposition}\label{prop: cooperad ladder induces ladders}
Let $\beta$ be a small ordinal, and let  
\[
\operad D^{(0)} \longrightarrow \operad D^{(1)} \longrightarrow \cdots \longrightarrow \operad D^{(i)} \longrightarrow \cdots 
\]
be a $\beta$-indexed cooperad ladder, where we denote $\operad D \coloneqq \operad D^{(\beta)}$. For every $i \in \beta$, let $F_i^{\operad D} (-)$ be the idempotent comonad on curved $\operad D$-coalgebras related to coreflexive full subcategory of curved $\operad D^{(i)}$-coalgebras. 

\medskip

   \begin{enumerate}
        \item For every curved $\operad D$-coalgebra $W$ the diagram
        $$
    F_0^{\operad D}W \longrightarrow F_i^{\operad D}W \longrightarrow \cdots \longrightarrow F_i^{\operad D}W \longrightarrow \cdots
        $$
is a $\beta$-ladder of curved $\operad D$-coalgebras.
        
\medskip
   
        \item The canonical map
        \[
        \colim{i \in \beta}~ F_{i}^{\operad D} W \longrightarrow W
        \]
is an isomorphism, therefore the colimit of the ladder is $W$. 
    \end{enumerate}
\end{proposition}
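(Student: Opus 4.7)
The approach is to give an explicit description of $F_i^{\operad D} W$ using the "Embedding of cooperads" formalism of Section \ref{section : Naturality of (co)algebras over a cooperad.} and then verify the three ladder axioms directly. Since each map $\operad D^{(i)} \hookrightarrow \operad D$ is a degree-wise injective morphism of conilpotent curved cooperads (by definition of the cooperad ladder), we are in the embedding setting, and the functor $F_i^{\operad D}$ is computed by the pullback
\[
F_i^{\operad D} W \;\cong\; W \times_{\operad D \comp W} \big(\operad D^{(i)} \comp W\big),
\]
concretely the largest subcoalgebra of $W$ whose decomposition lands in $\operad D^{(i)} \comp W$. Because $F_i^{\operad D} W$ is then naturally a curved $\operad D^{(i)}$-coalgebra, its decomposition in fact lands in $\operad D^{(i)} \comp F_i^{\operad D} W$, a stronger property we will use repeatedly.

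For assertion (1), the map $F_i^{\operad D} W \hookrightarrow F_{i+1}^{\operad D} W$ is a degree-wise injection, being an inclusion of subobjects of $W$. The colimit condition at limit ordinals $k$ follows from the cocontinuity of the cooperad ladder at $k$, combined with the facts that $- \comp W$ preserves filtered colimits and that pullbacks commute with filtered colimits of monomorphisms in graded $\kk$-modules. The main technical step is verifying the elementary cofibration condition: for $v \in F_{i+1}^{\operad D} W$ with $\overline{\Delta}_W(v) = \sum c_j \otimes v_{j,*}$ (where $c_j \in \overline{\operad D}^{(i+1)}$), we must show each leaf $v_{j,k}$ actually belongs to $F_i^{\operad D} W$, that is, $\Delta_W(v_{j,k}) \in \operad D^{(i)} \comp W$. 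I plan to establish this by comparing both sides of the coassociativity equation
\[
(\id \comp \Delta_W)\,\Delta_W(v) \;=\; (\Delta \comp \id)\,\Delta_W(v).
\]
The cooperad ladder axiom forces the iterated reduced decomposition of $c_j \in \operad D^{(i+1)}$ to factor through $\overline{\treemod}~\operad D^{(i)}$, so every internal vertex appearing on the right-hand side lies in $\operad D^{(i)}$. Matching contributions tree-shape by tree-shape then forces the non-trivial bottom components on the left-hand side, namely the $\overline{\Delta}_W(v_{j,k})$, to lie in $\overline{\operad D}^{(i)} \comp W$, whence $v_{j,k} \in F_i^{\operad D} W$.

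For assertion (2), any element $w \in W$ has $\Delta_W(w)$ equal to a \emph{finite} sum of pure tensors in $\operad D \comp W$ involving only finitely many elements of $\operad D$. Since the cooperad ladder presents $\operad D$ as a filtered colimit of monomorphisms, this finite set is contained in some $\operad D^{(i_0)}$, hence $w \in F_{i_0}^{\operad D} W$. Therefore the canonical map $\colim_{i \in \beta}~ F_i^{\operad D} W \to W$ is surjective, and injectivity is immediate from each $F_i^{\operad D} W \hookrightarrow W$ being a monomorphism.

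The main obstacle is the careful tree-shape bookkeeping in the elementary cofibration verification, which requires reading the cooperad ladder axiom precisely through coassociativity; all other points are formal consequences of the pullback description and the behavior of the composition product with respect to filtered colimits.
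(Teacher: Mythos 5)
Your proposal is correct and follows essentially the same route as the paper: the pullback description of $F_i^{\operad D}W$, degree-wise injectivity and the limit-ordinal/colimit statement via filtered colimits commuting with pullbacks of graded $\kk$-modules (the paper obtains assertion (2) as the case $k=\beta$ of that verification), and the elementary-cofibration condition via coassociativity combined with the cooperad-ladder axiom that decompositions of $\operad D^{(i+1)}$ factor through $\operad D^{(i)}$. The only (cosmetic) difference is that the paper phrases the key step as a factorization of the composite partial-decomposition maps $\Delta_j^{p,q}$ through the subobject with $\overline{\operad D}^{(i)}$ in the second slot, rather than your element-wise ``each leaf $v_{j,k}$'' bookkeeping, which sidesteps the non-uniqueness of tensor representations.
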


\begin{proof}
    For every $i \in \beta$ and for every pdg $\operad D$-coalgebra $W$, $F_i^{\operad D}W$ is given as the following pullback square 
    $$
    \begin{tikzcd}[column sep=3pc,row sep=3pc]
        F_i^{\operad D}W \arrow[dr, phantom, "\lrcorner", very near start]
        \ar[r] \ar[d]
        & \operad D^{(i)} \comp W
        \ar[d]
        \\
        W
        \ar[r]
        & \operad D \comp W~,
    \end{tikzcd}
    $$
    in the category of graded $\kk$-modules. Combined with the fact that directed colimits commute with pullbacks in graded $\kk$-modules, we get the fact that the map
    $$
    \colim{i<k}~ F_i^{\operad D} W \longrightarrow F_i^{\operad D}W
    $$
     is an isomorphism for every limit ordinal $k \in \beta+1$.
     
     \medskip

     It remains to show that for every $i \in \beta$, the map $F_i^{\operad D}W \longrightarrow F_{i+1}^{\operad D}W$ is an elementary cofibration. It fits in the following pullback diagram of graded $\kk$-modules
     $$
    \begin{tikzcd}[column sep=3pc,row sep=3pc]
        F_i^{\operad D}W \arrow[dr, phantom, "\lrcorner", very near start]
        \ar[r] \ar[d]
        & \operad D^{(i)} \comp F_{i+1}^{\operad D}W
        \ar[d]
        \\
        F_{i+1}^{\operad D}W
        \ar[r]
        & \operad D^{(i+1)}\comp F_{i+1}^{\operad D}W~.
    \end{tikzcd}
    $$
     Since degree-wise injections are preserved by the tensor product and by pullbacks, the map $F_i^{\operad D}W \longrightarrow F_{i+1}^{\operad D}W$ is a degree-wise injection. It remains to show that for every $p,q, j$ (with $p \geq 1$ and $1\leq j \leq p$) the decomposition map
     \[
     \begin{tikzcd}[column sep=3pc,row sep=2pc]
      F_{i+1}^{\operad D}W \arrow[d] \\
      \overline{\operad D}_\pl^{(i+1)}(p) \otimes  (F_{i+1}^{\operad D}W )^{\otimes p} \arrow[d] \\
      \overline{\operad D}_\pl^{(i+1)}(p) \otimes  (F_{i+1}^{\operad D}W )^{\otimes j-1 }
     \otimes \overline{\operad D}_\pl^{(i+1)}(q) \otimes (F_{i+1}^{\operad D}W)^{\otimes q} \otimes (F_{i+1}^{\operad D}W)^{\otimes p-j}
    \end{tikzcd}
     \]
     factors through the sub-object
     $$
     \overline{\operad D}_\pl^{(i+1)}(p) \otimes  (F_{i+1}^{\operad D}W )^{\otimes j-1 }
     \otimes \overline{\operad D}^{(i)}(q) \otimes (F_{i+1}^{\operad D}W)^{\otimes q} \otimes (F_{i+1}^{\operad D}W)^{\otimes p-j}~.
     $$
     Using coassociativity, one can rewrite the map as 
     \[
     \begin{tikzcd}[column sep=3pc,row sep=2pc]
      F_{i+1}^{\operad D}W \arrow[d] \\
      \overline{\operad D}_\pl^{(i+1)}(p+q-1) \otimes  (F_{i+1}^{\operad D}W )^{\otimes p+q-1} \arrow[d] \\
      \left(\overline{\operad D}_\pl^{(i+1)}(p)  \otimes \overline{\operad D}_\pl^{(i+1)}(q)\right) \otimes  (F_{i+1}^{\operad D}W )^{\otimes p+q-1}~.
    \end{tikzcd}
     \]
     Since the sequence $(\operad D^{(i)})_{i \in \beta}$ is a cooperad ladder, the map 
     \[
     \Delta_j: \overline{\operad D}_\pl^{(i+1)}(p+q-1) \longrightarrow \overline{\operad D}_\pl^{(i+1)}(p) \otimes \overline{\operad D}_\pl^{(i+1)}(q)
     \]
     factors through $\overline{\operad D}_\pl^{(i)}(p) \otimes \overline{\operad D}_\pl^{(i)}(q)$, which proves the result.
\end{proof}

\begin{corollary}[Coradical ladder]
Let $\C$ be a quasi-planar conilpotent curved cooperad. For every $n \in \omega$, let $F_n^{\mathrm{rad}} (-)$ be the idempotent comonad of curved $\operad C$-coalgebras that coreflects onto the full subcategory of curved $F_n^{\mathrm{rad}}\operad C$-coalgebras, where $F_n^{\mathrm{rad}}\operad C$ is the $n$-stage of the coradical ladder.

\medskip

Let $W$ be a curved $\C$-coalgebra. The diagram  
\[
F_0^{\mathrm{rad}} W \hookrightarrow \cdots \hookrightarrow F_i^{\mathrm{rad}} W \hookrightarrow \cdots
\]
is an $\omega$-ladder of curved $\C$-coalgebras, called the \textit{coradical ladder}. The colimit of this diagram is again $W$.
\end{corollary}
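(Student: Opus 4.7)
The plan is to apply Proposition \ref{prop: cooperad ladder induces ladders} with $\beta = \omega$ and with the cooperad ladder given by the coradical filtration of $\C$ itself. Concretely, I would first verify that the diagram
\[
F_0^{\mathrm{rad}} \C \rightarrowtail F_1^{\mathrm{rad}} \C \rightarrowtail \cdots \rightarrowtail F_n^{\mathrm{rad}} \C \rightarrowtail \cdots
\]
fits the three conditions in the definition of an $\omega$-indexed cooperad ladder with colimit $\C$. Cocontinuity at the limit ordinal $\omega$ amounts to the canonical isomorphism $\C \cong \colim_{n \in \omega} F_n^{\mathrm{rad}} \C$, which was recalled in the discussion of the coradical filtration of conilpotent (curved) cooperads. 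The transition maps are arity-wise degree-wise injections by construction of $F_n^{\mathrm{rad}}$. Finally, the third condition (the decomposition map $F_{n+1}^{\mathrm{rad}} \C \to \overline{\treemod}\,F_{n+1}^{\mathrm{rad}} \C$ factors through $\overline{\treemod}\,F_n^{\mathrm{rad}} \C$) is exactly read off from the pullback presentation
\[
\begin{tikzcd}[column sep=2.5pc,row sep=2.5pc]
           F^{\mathrm{rad}}_{n+1} \C \arrow[dr, phantom, "\lrcorner", very near start]
            \ar[r] \ar[d]
            & \overline{\treemod}^{(\leq n+1)} \C
            \ar[d,rightarrowtail]
            \\
            \C
            \ar[r,"\delta_\C"]
            & \overline{\treemod} \C
\end{tikzcd}
\]
that was established for quasi-planar conilpotent cooperads in the previous section, combined with the coassociativity of the decomposition map.

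Once the coradical filtration is identified as a cooperad ladder, the corollary becomes a direct instantiation of Proposition \ref{prop: cooperad ladder induces ladders}: for any curved $\C$-coalgebra $W$, the diagram $\{F_n^{\mathrm{rad}} W\}_{n \in \omega}$ obtained by applying the coreflectors $F_n^{\operad D}$ (with $\operad D = \C$ and $\operad D^{(n)} = F_n^{\mathrm{rad}}\C$) forms an $\omega$-ladder of curved $\C$-coalgebras whose transition maps are elementary cofibrations, and whose colimit is $W$.

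The only mildly delicate point that is not entirely formal is checking that the idempotent comonad $F_n^{\mathrm{rad}}(-)$ on $\catcurvcog{\operad C}$ that coreflects onto curved $F_n^{\mathrm{rad}}\operad C$-coalgebras coincides with the functor $F_n^{\operad D}$ produced by Proposition \ref{prop: cooperad ladder induces ladders}; but both are characterized by the same universal property (coreflection onto the full subcategory of curved coalgebras over the $n$-th stage of the ladder), so they agree. I do not expect any real obstacle here: the whole statement is essentially bookkeeping, and the substantive content—that the transition maps are elementary cofibrations and that directed colimits of such pullbacks compute $W$—has already been carried out in the proof of Proposition \ref{prop: cooperad ladder induces ladders}.
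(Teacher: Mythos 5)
Your proposal is correct and follows the same route as the paper: the paper's proof is a one-line application of Proposition \ref{prop: cooperad ladder induces ladders} to the coradical filtration of $\C$, which was already recorded as a cooperad ladder in the earlier Example on the coradical ladder. Your extra verification that the coradical filtration satisfies the cooperad-ladder axioms (and that the two coreflectors agree) is sound but merely re-derives facts the paper takes as already established.
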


\begin{proof}
Follows directly from Proposition \ref{prop: cooperad ladder induces ladders}.
\end{proof}

\begin{corollary}[Quasi-planar ladder]\label{cor: quasi-planar ladder}
Let $\C$ be a quasi-planar conilpotent curved cooperad. Recall from Subsection \ref{subsection: quasi-planar canonical filtration} that since $\C$ is quasi-planar, it admits a canonical quasi-planar $\omega$-ladder
$$
F_0^{\mathrm{qp}} \C \longrightarrow F_1^{\mathrm{qp}} \C \longrightarrow \cdots \longrightarrow F_n^{\mathrm{qp}} \C \longrightarrow \cdots
$$
whose colimit is $\C$. For every $i \in \omega$, let $F_i^{\mathrm{qp}} (-)$ be the idempotent comonad of curved $\operad C$-coalgebras that coreflects onto the full subcategory of curved $F_i^{\mathrm{qp}} \C$-coalgebras.

\medskip

Let $W$ be a curved $\C$-coalgebra. The diagram 
$$
F_0^{\mathrm{qp}} \C \longrightarrow F_1^{\mathrm{qp}} \C \longrightarrow \cdots \longrightarrow F_n^{\mathrm{qp}} \C \longrightarrow \cdots
$$
is an $\omega$-ladder of curved $\C$-coalgebras, called the \textit{quasi-planar ladder}. The colimit of this diagram is again $W$. 
\end{corollary}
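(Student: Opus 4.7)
The plan is to invoke Proposition \ref{prop: cooperad ladder induces ladders} directly, with $\beta = \omega$ and the cooperad ladder given by the canonical quasi-planar filtration
\[
F_0^{\mathrm{qp}} \C \longrightarrow F_1^{\mathrm{qp}} \C \longrightarrow \cdots \longrightarrow F_n^{\mathrm{qp}} \C \longrightarrow \cdots
\]
constructed in Subsection \ref{subsection: quasi-planar canonical filtration}. The first step is to check that this sequence is indeed a cooperad ladder in the sense required by the proposition: cocontinuity at the limit ordinal $\omega$ holds by construction (the colimit is $\C$ itself), the transition maps are arity-wise degree-wise injective since each $F_n^{\mathrm{qp}}\C \rightarrowtail \C$ is obtained as a pullback of the injection $\tilde F_n \mathrm{B}(\operad E \otimes \Omega \C) \rightarrowtail \mathrm{B}(\operad E \otimes \Omega \C)$ along the structural map of $\C$, and the factorization of the decomposition through $\overline{\treemod}\, F_n^{\mathrm{qp}}\C$ is inherited from the analogous factorization property of the quasi-planar ladder $\tilde F_n \mathrm{B}(\operad E \otimes \Omega \C)$ via the same pullback.

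Next, I would match the coreflector $F_n^{\mathrm{qp}}(-)$ defined in the corollary with the idempotent comonad $F_n^{\operad D}(-)$ appearing in Proposition \ref{prop: cooperad ladder induces ladders}. This is immediate from Subsection \ref{section : Naturality of (co)algebras over a cooperad.}: the inclusion $F_n^{\mathrm{qp}} \C \hookrightarrow \C$ is an arity-wise degree-wise injection of quasi-planar conilpotent curved cooperads, so by Proposition \ref{proposition : naturality of curved coalgebras} and the discussion of embeddings of cooperads, the induced pushforward on curved coalgebras is fully faithful and its right adjoint is described by exactly the pullback square used to define $F_n^{\operad D}(-)$ in the proof of Proposition \ref{prop: cooperad ladder induces ladders}.

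With these two identifications in place, Proposition \ref{prop: cooperad ladder induces ladders} applies verbatim and yields both assertions at once: the diagram $F_0^{\mathrm{qp}} W \rightarrowtail F_1^{\mathrm{qp}} W \rightarrowtail \cdots$ is an $\omega$-ladder of curved $\C$-coalgebras (each transition being an elementary cofibration with dg-module cokernel), and the canonical comparison map $\colim_{n \in \omega}~ F_n^{\mathrm{qp}} W \longrightarrow W$ is an isomorphism. There is no substantive obstacle here; the only work is bookkeeping to keep straight the double use of the symbol $F_n^{\mathrm{qp}}$ (for the cooperad filtration on one side, and for its associated coreflective comonad on curved coalgebras on the other) and to confirm that the pullback formula for $F_n^{\operad D}W$ computed in graded $\kk$-modules indeed produces a curved subcoalgebra, which is ensured by the quasi-planarity assumption through Proposition \ref{proposition : naturality of curved coalgebras}.
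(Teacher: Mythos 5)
Your proposal is correct and takes essentially the same route as the paper, whose entire proof is to apply Proposition \ref{prop: cooperad ladder induces ladders} to the canonical quasi-planar $\omega$-ladder of $\C$; the verifications you spell out (that the quasi-planar filtration is a cooperad ladder and that the coreflectors coincide) are facts already recorded in Subsection \ref{subsection: quasi-planar canonical filtration} and Subsection \ref{section : Naturality of (co)algebras over a cooperad.}, so they amount to the same one-line argument made explicit.
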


\begin{proof}
Follows directly from Proposition \ref{prop: cooperad ladder induces ladders}.
\end{proof}

Using ladders, we can define the notion of a filtered quasi-isomorphism of ladders.

\begin{definition}[Filtered quasi-isomorphism of ladders] 
A morphism of $\beta$-ladders of curved $\operad C$-coalgebras $f: W \longrightarrow W'$ is a \textit{filtered quasi-isomorphism} if 

\[
\gr_i (f): \gr_i W \qi \gr_i W'
\]
\vspace{0.1pc}

is a quasi-isomorphism for all $i \in \beta$. 
\end{definition}

\begin{proposition}\label{prop: quasi-iso filtré entre les bar}
    Let $f: A \qi A'$ be a quasi-morphism of dg $\Omega \operad C$-algebras. The morphism of quasi-planar ladders
    
    \[
    F_i^{\mathrm{qp}}\mathrm{B}_{\operad C}(f): F_i^{\mathrm{qp}}\mathrm{B}_{\operad C} A \qi F_i^{\mathrm{qp}}\mathrm{B}_{\operad C} A'
    \]
    \vspace{0.1pc}
    
    is a filtered quasi-isomorphism.
\end{proposition}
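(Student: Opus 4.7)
My plan is to identify the associated graded of the quasi-planar filtration on the bar construction as a direct sum of ordinary tensor products of dg $\kk$-modules, and then exploit the fact that tensoring with a quasi-isomorphism over a field preserves quasi-isomorphisms.

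First, I will identify the filtration itself. Using the pullback characterization of the idempotent comonad $F_i^{\qp}(-)$ on curved $\operad C$-coalgebras from Subsection \ref{section : Naturality of (co)algebras over a cooperad.}, combined with the fact that the underlying graded $\operad C$-coalgebra of $\mathrm{B}_{\operad C} A$ is the cofree coalgebra $\operad C \comp A$, one obtains a natural identification $F_i^{\qp} \mathrm{B}_{\operad C} A \cong F_i^{\qp}\operad C \comp A$ as a sub-curved-$\operad C$-coalgebra. Taking the quotient and using that $\operad C$ is quasi-planar, the associated graded is
\[
\gr_i F_\bullet^{\qp}\mathrm{B}_{\operad C} A \cong \big(F_i^{\qp}\operad C / F_{i-1}^{\qp}\operad C\big) \comp A \cong \gr_i \operad C_\pl \comp_\pl A
\]
as graded $\kk$-modules, via the canonical isomorphism $(M \otimes \mathbb S) \comp X \simeq M \comp_\pl X$.

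Second, I will analyze the induced coderivation on the associated graded. The coderivation on $\mathrm{B}_{\operad C} A = \operad C \comp A$ is the unique one whose projection to the cogenerators $A$ is $d_A + \gamma_A \circ (\iota \comp \id)$. On the underlying graded $\kk$-module $\operad C \comp A$, it decomposes into three pieces: an internal differential $d_\operad C \comp \id$, an internal differential $\id \comp d_A$, and a twisted part coming from the infinitesimal decomposition $\Delta_{(1)}: \operad C \to \treemod^{(2)} \operad C$, the twisting morphism $\iota$, and the $\Omega \operad C$-algebra structure on $A$. By the defining quasi-planar condition, $d_\operad C$ on $\gr_i \operad C$ takes the planar form $d_\pl \otimes \id_{\mathbb S}$. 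By the cooperad ladder condition, the decomposition map $\operad C^{(i+1)} \to \overline\treemod \operad C^{(i+1)}$ factors through $\overline\treemod \operad C^{(i)}$, so the induced decomposition on $\gr_i \operad C$ is zero for $i \geq 1$; in particular $\Delta_{(1)}$ vanishes on $\gr_i \operad C$, and hence so does the twisted part of the coderivation. Consequently, the induced differential on $\gr_i \operad C_\pl \comp_\pl A = \bigoplus_n \gr_i \operad C_\pl(n) \otimes A^{\otimes n}$ is the tensor product differential $d_\pl \otimes \id + \id \otimes d_{A^{\otimes n}}$ in each arity.

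Finally, I conclude. For $i=0$ we have $F_0^{\qp}\operad C = \operad I$, so $\gr_0 F^{\qp}_\bullet \mathrm{B}_{\operad C} A \cong A$ with differential $d_A$ and $\gr_0 F^{\qp}_\bullet \mathrm{B}_{\operad C}(f) = f$, which is a quasi-isomorphism by assumption. For $i \geq 1$, each summand $\gr_i \operad C_\pl(n) \otimes A^{\otimes n}$ is an ordinary tensor product of dg $\kk$-modules; since $\kk$ is a field, the functor $X \mapsto \gr_i \operad C_\pl(n) \otimes X^{\otimes n}$ preserves quasi-isomorphisms, so $\gr_i F_\bullet^{\qp} \mathrm{B}_{\operad C}(f)$ is a direct sum of quasi-isomorphisms and hence a quasi-isomorphism. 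I expect the main technical difficulty to lie in the careful bookkeeping of the second step: one must verify precisely how the twisting morphism $\iota$ interacts with the quasi-planar ladder, and that the full lifting of the twisted part of the coderivation on $F_i^{\qp}\operad C \comp A$ genuinely lands inside $F_{i-1}^{\qp}\operad C \comp A$. Once this compatibility is established, everything else follows from standard manipulations.
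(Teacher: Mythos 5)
Your proof is correct and follows essentially the same route as the paper: the paper's (much terser) argument likewise identifies $\gr_i^{\mathrm{qp}}(\mathrm{B}_{\operad C} A)$ with $\gr_i^{\mathrm{qp}}(\operad C_\pl) \comp_\pl A$ carrying the tensor-product differential and concludes by the K\"unneth-type argument over a field. The details you supply (the pullback identification $F_i^{\qp}\mathrm{B}_{\operad C} A \cong F_i^{\qp}\operad C \comp A$ and the vanishing of the twisted part of the coderivation on the associated graded via the ladder and quasi-planarity conditions) are exactly what the paper leaves implicit.
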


\begin{proof}
    For every $i \in \omega$, the map
    $$
    \gr_i^{\mathrm{qp}}(\mathrm{B}_{\operad C} A) \longrightarrow \gr_i^{\mathrm{qp}}(\mathrm{B}_{\operad C} A')
    $$
    can be rewritten as the morphism of dg modules
    $$
    \gr_i^{\mathrm{qp}}(\operad C_\pl) \comp_\pl A \longrightarrow \gr_i^{\mathrm{qp}}(\operad C_\pl) \comp_\pl A'
    $$
    which is a quasi-isomorphism.
\end{proof}


\subsection{Cofibrations}
We characterize cofibrations as degree-wise injections and subsequently prove that any filtered quasi-isomorphism is a weak-equivalence.

\begin{lemma}\label{lemmacofibalgdegree-wiseinj}
A cofibration of dg $\Omega \operad C$-algebras is in particular a degree-wise injection.
\end{lemma}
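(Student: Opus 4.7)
The plan is to use the small object argument. Since the model structure on dg $\Omega\C$-algebras is right-transferred from dg $\kk$-modules along the free-forgetful adjunction, any cofibration is a retract of a transfinite composition of pushouts of generating cofibrations $i_n : \Omega\C \comp S^n \rightarrowtail \Omega\C \comp D^{n+1}$. Since degree-wise injections are stable under retracts and transfinite composition in graded $\kk$-modules, it will suffice to show that each $i_n$ is a degree-wise injection and that pushouts of $i_n$ along arbitrary morphisms of dg $\Omega\C$-algebras remain degree-wise injective.

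Here the quasi-planarity of $\C$ will be essential. The underlying graded $\mathbb S$-module of $\Omega\C = \treemod(s^{-1}\overline{\C})$ is isomorphic to $\Omega\C_\pl \otimes \mathbb S$, where $\Omega\C_\pl \coloneqq \treemod_\pl(s^{-1}\overline{\C}_\pl)$. Consequently, for any graded $\kk$-module $X$ there is a natural identification
\[
\Omega\C \comp X \;\cong\; \bigoplus_{n \geq 0} \Omega\C_\pl(n) \otimes X^{\otimes n}~,
\]
and since $\kk$ is a field, this functor preserves degree-wise monomorphisms. Applied to the injection $S^n \hookrightarrow D^{n+1}$, this proves that $i_n$ is a degree-wise injection.

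For the pushout step, I will consider a pushout square
\[
\begin{tikzcd}[column sep=3pc,row sep=3pc]
\Omega\C \comp S^n \ar[r, "g"] \ar[d, hook, "i_n"'] & A \ar[d, "j"] \\
\Omega\C \comp D^{n+1} \ar[r] & A'~.
\end{tikzcd}
\]
As a graded $\Omega\C$-algebra, $A'$ identifies with the free product $A \sqcup (\Omega\C \comp S^{n+1})$, since $D^{n+1}/S^n \cong S^{n+1}$ and the image of $S^n$ is absorbed into the differential of the fresh generators in $S^{n+1}$. I then filter $A'$ by the number of occurrences of these fresh generators, and use the planar identification above to rewrite each associated graded piece as a direct sum of tensor products of components of $\Omega\C_\pl$ with iterated tensor powers of $A$ and of $S^{n+1}$. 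Since tensor products over a field preserve monomorphisms, each stage of the filtration is a degree-wise injection, and so is the limit map $j : A \longrightarrow A'$.

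The main technical obstacle will be to write down the explicit planar description of the free product $A \sqcup (\Omega\C \comp S^{n+1})$ together with its filtration by the number of fresh generators. The quasi-free graded structure on $\Omega\C$ provided by the quasi-planarity of $\C$ is precisely what allows this description to avoid any coinvariants under the symmetric group action on the "old" $A$-slots, which is crucial for the degree-wise injectivity to hold in positive characteristic.
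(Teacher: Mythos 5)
Your strategy (reduce to the generating cofibrations via the small object argument, then show cell attachments are graded injections) is viable and genuinely different from the paper's argument, but note that the paper dispatches the lemma with a two-line retract trick that avoids all cellular analysis: $D^1 \otimes A$ is an acyclic dg $\Omega\C$-algebra (using the unital commutative algebra structure on $D^1$), so $D^1 \otimes A \longrightarrow 0$ is an acyclic fibration; lifting against it, the cofibration $f\colon A \to A'$ factors the degree-wise injection $A \rightarrowtail D^1 \otimes A$, and hence $f$ is itself a degree-wise injection. Your route, by contrast, has to re-prove the structure theory of cell attachments, and the step you defer as ``the main technical obstacle'' is exactly where all the content lies: you assert, but do not establish, that the graded coproduct $A \sqcup (\Omega\C \comp S^{n+1})$ carries a filtration by the number of fresh generators whose stages are injective, and also that the underlying graded algebra of the dg pushout is the graded pushout (true, since the twisted differential is a derivation determined on $A$ and on the new generators, but it needs a word).

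If you do want to complete your approach, the cleanest way is the weight decomposition of such coproducts via the enveloping operad, $A \sqcup (\operad P \comp X) \cong \bigoplus_{k \geq 0} \mathcal{U}_{\operad P}(A)(k) \otimes_{\mathbb S_k} X^{\otimes k}$, whose weight-zero summand is $A$ on the nose; this exhibits $A \to A'$ as a split graded monomorphism. This also shows that your appeal to quasi-planarity is misplaced: over a field, any injection $X \hookrightarrow Y$ induces a \emph{split} injection $X^{\otimes n} \hookrightarrow Y^{\otimes n}$ of $\kk[\mathbb S_n]$-modules (the complement is spanned by tensor words containing a factor from a complement of $X$, which is $\mathbb S_n$-stable), so the coinvariants appearing in $\operad P \comp (-)$ and in the enveloping-operad formula never destroy injectivity, even in positive characteristic. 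Consistently with this, the lemma holds for any admissible dg operad, and the paper's proof makes no use of the quasi-planar structure of $\C$.
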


\begin{proof}
Let us consider the acyclic dg module $D^1$ equipped with
its canonical structure of a unital associative commutative algebra.
Let $f:A \longrightarrow A'$ be a cofibration of dg $\Omega\operad C$-algebras. Its lifting property and the fact that the map $D^1 \otimes A \longrightarrow 0$ is an acyclic fibration imply that the inclusion $A \rightarrowtail D^1 \otimes A$ which is a degree-wise injection factors through $f$. Therefore $f$ is also a degree-wise injection.
\end{proof}

\begin{proposition}\label{propcofibrations}
A morphism of curved $\operad C$-coalgebras is a cofibration if and only if it is a degree-wise injection.
\end{proposition}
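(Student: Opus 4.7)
The plan is to prove both implications separately. For the forward direction, I would invoke Lemma~\ref{lemmacofibalgdegree-wiseinj}: if $f \colon W' \to W$ is a cofibration of curved $\C$-coalgebras, then by definition $\Omega_\C(f) \colon \Omega\C \comp W' \to \Omega\C \comp W$ is a cofibration of dg $\Omega\C$-algebras and hence a degree-wise injection. The operadic unit $\operad I \to \Omega\C$ realises $W$ as a direct summand of $\Omega\C \comp W$ at the level of graded $\kk$-modules (the arity one, trivial-tree component), and $f$ is recovered as the restriction of $\Omega_\C(f)$ to this summand, so it too is degree-wise injective.

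For the reverse direction, the strategy is to exhibit any degree-wise injection $f \colon W' \rightarrowtail W$ as a transfinite composition of elementary cofibrations, then apply $\Omega_\C$ together with Proposition~\ref{prop:elemcof}. Identifying $W'$ with a sub-graded $\kk$-module of $W$, I would use the canonical quasi-planar $\omega$-ladder $F_n^{\mathrm{qp}} W$ from Corollary~\ref{cor: quasi-planar ladder} and set
\[
W_n \coloneqq W' + F_n^{\mathrm{qp}} W
\]
as a sub-graded $\kk$-module of $W$, with the convention $W_{-1} \coloneqq W'$. Each $W_n$ inherits a curved sub-$\C$-coalgebra structure from $W$ because $\Delta_W$ restricts both to $W'$ and to $F_n^{\mathrm{qp}} W$; this yields an $\omega$-indexed diagram of sub-coalgebras of $W$ starting from $W_{-1} = W'$ whose colimit is all of $W$, since $\colim_n F_n^{\mathrm{qp}} W = W$.

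The main technical step is verifying that each $W_n \rightarrowtail W_{n+1}$ is an elementary cofibration, that is, that $\overline{\Delta}_{W_{n+1}}$ factors through $\overline{\C} \comp W_n$. For elements coming from $W'$ this is immediate, while for elements coming from $F_{n+1}^{\mathrm{qp}} W$ it reduces to the analogous factorisation of $\overline{\Delta}_{F_{n+1}^{\mathrm{qp}} W}$ through $\overline{\C} \comp F_n^{\mathrm{qp}} W$, which was established in the proof of Proposition~\ref{prop: cooperad ladder induces ladders} using the cooperad ladder condition on the partial decompositions. Once this is verified, applying the colimit-preserving functor $\Omega_\C$ exhibits $\Omega_\C(f)$ as a transfinite composition of cofibrations of dg $\Omega\C$-algebras, and therefore as a cofibration, so that $f$ is a cofibration of curved $\C$-coalgebras by definition.

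The delicate point, which I expect to be the main obstacle, is handling the sum $W_n = W' + F_n^{\mathrm{qp}} W$ inside $W$ carefully: $W'$ and $F_n^{\mathrm{qp}} W$ are not disjoint in general, so one has to phrase the argument in terms of the joint image under $\overline{\Delta}$ rather than decomposing $W_{n+1}$ as a coproduct. Once the elementary cofibration criterion is stated purely as a factorisation property of $\overline{\Delta}_{W_{n+1}}$, both contributions (from $W'$ and from $F_{n+1}^{\mathrm{qp}} W$) land in $\overline{\C} \comp W_n$, and the argument closes.
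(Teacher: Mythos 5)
Your proof is correct and follows essentially the same strategy as the paper: the forward direction via the degree-wise injectivity of $\Omega_{\C}(f)$ (Lemma \ref{lemmacofibalgdegree-wiseinj}) together with the unit inclusion $W \rightarrowtail \Omega\C \comp W$, and the reverse direction by writing the inclusion as a transfinite composition of elementary cofibrations coming from a cooperad ladder on $\C$ and invoking Proposition \ref{prop:elemcof}. The only deviation is that you filter by the quasi-planar ladder $F_n^{\mathrm{qp}}$, whereas the paper uses the coradical filtration, setting $W_n = W' + \mathrm{F}^{\mathrm{rad}}_n W$; both ladders satisfy the hypotheses of Proposition \ref{prop: cooperad ladder induces ladders}, so your variant works just as well (the coradical choice is merely slightly more economical, as it needs no quasi-planarity), and your handling of the non-disjoint sum via the factorisation of $\overline{\Delta}$ is exactly the right way to close the argument.
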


\begin{proof}
Let $f: W \longrightarrow W'$ be a morphism of curved $\operad C$-coalgebras. On the one hand, let us suppose that $f$ is a degree-wise injection. Then it can be recovered from the transfinite composition of the sequence
$$
W \longrightarrow W + F_1^{\mathrm{rad}}W' \longrightarrow \cdots \longrightarrow W + F_n^{\mathrm{rad}}W' \longrightarrow \cdots~.
$$
Every morphism in this sequence is an cofibration since it is an elementary cofibration. Cofibrations are stable by transfinite compositions, therefore $f$ is also a cofibration.

\medskip

On the other hand, let us suppose that $f$ is a cofibration. We consider the following diagram of graded $\kk$-modules
$$
\begin{tikzcd}[column sep=3pc,row sep=3pc]
    W
    \ar[r, rightarrowtail] \ar[d,"f",swap]
    & \Omega \operad C \comp W
    \ar[d,"\Omega_\C(f)"]
    \\
    W'
    \ar[r,rightarrowtail]
    & \Omega \operad C \comp W'
\end{tikzcd}
$$
The top horizontal maps are clearly degree-wise injections. The map $\Omega_\C(f)$ is by definition a cofibration of dg $\Omega \C$-algebras, therefore it is in particular a degree-wise injection by Lemma \ref{lemmacofibalgdegree-wiseinj}. This implies that $f$ is also a degree-wise injection.
\end{proof}

\begin{proposition}
Let $f: W \longrightarrow W'$ be a filtered quasi-isomorphism of $\beta$-ladders of curved $\C$-coalgebras. The map $f(\beta): W(\beta) \qi W'(\beta)$ is a weak-equivalence.
\end{proposition}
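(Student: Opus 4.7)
The plan is to prove by transfinite induction on $i \leq \beta$ that each map $f(i): W(i) \to W'(i)$ is a weak-equivalence; the proposition is then the case $i = \beta$.

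First I would treat the base case $i = 0$ and the successor step from $i$ to $i+1$ uniformly, by applying Proposition \ref{propelemcofibequiv} to the commutative square
\[
\begin{tikzcd}[column sep=3pc, row sep=3pc]
W(i-1) \ar[r, "f(i-1)"] \ar[d, rightarrowtail] & W'(i-1) \ar[d, rightarrowtail] \\
W(i) \ar[r, "f(i)"'] & W'(i)
\end{tikzcd}
\]
(using the convention $W(-1) = W'(-1) = 0$ in the base case). The vertical maps are elementary cofibrations by definition of a $\beta$-ladder; the top horizontal map is a weak-equivalence, either trivially (for $i = 0$) or by the inductive hypothesis; and the induced map on the cokernels coincides with $\gr_i f$, which is a quasi-isomorphism by assumption. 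Proposition \ref{propelemcofibequiv} then yields that $f(i)$ is a weak-equivalence.

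For a limit ordinal $k \leq \beta$, the ladder structure gives
\[
W(k) = \colim{j < k}~ W(j), \quad W'(k) = \colim{j < k}~ W'(j),
\]
and hence $f(k) = \colim{j<k}~f(j)$. Since $\Omega_\C$ is a left adjoint, one has $\Omega_\C f(k) = \colim{j<k}~\Omega_\C f(j)$ as a filtered colimit in dg $\Omega\C$-algebras. The forgetful functor to dg modules preserves filtered (in fact sifted) colimits, since the monad $\Omega\C \circ -$ does, so this colimit is computed at the level of underlying dg modules, where filtered colimits preserve quasi-isomorphisms. As each $\Omega_\C f(j)$ is a quasi-isomorphism by the inductive hypothesis, so is $\Omega_\C f(k)$, making $f(k)$ a weak-equivalence and closing the induction.

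The main obstacle will be handling limit ordinals: Proposition \ref{propelemcofibequiv} is tailored to a single elementary cofibration and does not directly cover transfinite compositions. The argument above sidesteps this by reducing to the exactness of filtered colimits in dg modules, which is guaranteed because $\Omega_\C$ preserves them (as a left adjoint) and because they are transported to dg modules by the sifted-colimit-preserving forgetful functor from dg $\Omega\C$-algebras.
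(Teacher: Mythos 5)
Your proof is correct and follows essentially the same argument as the paper: an ordinal induction whose successor (and base) step is exactly the application of Proposition \ref{propelemcofibequiv} to the square of elementary cofibrations, with $\gr_i f$ as the quasi-isomorphism on cokernels. The only (harmless) variation is at limit ordinals, where you invoke the exactness of filtered colimits of dg modules (via the sifted-colimit-preserving forgetful functor), while the paper phrases the same point by saying that the colimits $\Omega_{\C}W(i) = \colim{j<i}\Omega_{\C}W(j)$ are homotopy colimits; the two justifications amount to the same thing here.
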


\begin{proof}
Notice that the following holds.

\medskip

\begin{enumerate}
    \item The map $f(0)$ is a weak-equivalence, since it is the identity of the zero object $0$.
    
\medskip

    \item If $i \in \beta+1$ is a limit ordinal so that $f(j)$ is an equivalence
    for every $j < i$, then the colimits
    $$
    \Omega_{\operad C} W(i) = \colim{j<i}~ \Omega_{\operad C} W(j), \quad \Omega_{\operad C} W'(i) = \colim{j<i}~ \Omega_{\operad C} W'(j)
    $$
    are homotopy colimits, and therefore the map $\Omega_{\operad C}f(i)$ is a quasi-isomorphism. Thus $f(i)$ is a weak equivalence. 
    
\medskip

    \item By Proposition \ref{propelemcofibequiv}, $f(i+1)$ is a weak-equivalence whenever $f(i)$ is a weak-equivalence.
    
\medskip
\end{enumerate}
We conclude by an ordinal induction.
\end{proof}


\subsection{The cylinder object}

Let $W$ be a curved $\operad C$-coalgebra. Let $A$ be a cylinder object of $\Omega_{\operad C} W$ in the category of dg $\Omega \C$-algebras, that is, a dg $\Omega\C$-algebra together with a factorisation 

\[
\begin{tikzcd}[column sep=3pc,row sep=3pc]
\Omega_{\operad C} (W \oplus W)
\ar[r,"i",rightarrowtail]
&A \ar[r,"p",twoheadrightarrow]
&\Omega_{\operad C} W
\end{tikzcd}
\]

where $i$ is a cofibration and $p$ is an acyclic fibration of dg $\Omega \C$-algebras. Let $\mathrm{Cyl}(W)$ be the following pullback in in the category of curved $\operad C$-coalgebras

\[
\mathrm{Cyl}(W) \coloneqq \mathrm{B}_{\operad C}A \times_{\mathrm{B}_{\operad C}\Omega_{\operad C}(W)} W.
\]
\vspace{0.1pc}

Our goal is to show that $\mathrm{Cyl}(W)$ is a natural cylinder object for $W$.

\medskip

Notice that $\mathrm{Cyl}(W)$ fits in the following diagram
$$
\begin{tikzcd}[column sep=3pc,row sep=3pc]
\mathrm{B}_{\operad C}\Omega_{\operad C} (W \oplus W)
\ar[r,"\mathrm{B}_{\operad C}(i)",rightarrowtail]
& \mathrm{B}_{\operad C}A \ar[r,"\mathrm{B}_{\operad C}(p)"]
& \mathrm{B}_{\operad C}\Omega_{\operad C} W 
\\
W \oplus W \ar[r, "i^{\dagger} \times \nabla"] \ar[u,rightarrowtail,"\eta_{W \oplus W}"]
&\mathrm{Cyl}(W) \ar[r,"\mathrm{proj}_{W}"] \ar[u,"j",rightarrowtail]
& W~. \ar[u,"\eta_W",rightarrowtail,swap]
\end{tikzcd}
$$

of curved $\C$-coalgebras. The morphism $i^{\dagger}: W \oplus W \longrightarrow \mathrm{B}_{\operad C}A$ is the transpose of $i$ and the morphism $\nabla: W \oplus W \longrightarrow W$ is the universal codiagonal morphism. The morphism $\eta$ is the unit of the bar-cobar adjunction $\Omega_{\operad C} \dashv \mathrm{B}_{\operad C}$, which is a degree-wise injection, and thus a cofibration by Proposition \ref{propcofibrations}. Notice that $\mathrm{B}_{\operad C}(p)$ is a filtered quasi-isomorphism of quasi-planar ladders by Proposition \ref{prop: quasi-iso filtré entre les bar}.

\medskip

Let us choose a particular summand $W$ in the direct sum $W \oplus W$. We choose one of the two sections $s:W \longrightarrow \mathrm{Cyl}(W)$ of the map $\mathrm{proj}_{W}: \mathrm{Cyl}(W) \longrightarrow W$ and one of the two sections $d: \Omega_{\operad C}W \longrightarrow A$ of the map  $p: A \longrightarrow \Omega_{\operad C}W$, in such a way that the following diagram of curved $\operad C$-coalgebras
$$
\begin{tikzcd}[column sep=3pc,row sep=3pc]
\mathrm{B}_{\operad C}\Omega_{\operad C}W
\ar[r,"\mathrm{B}_{\operad C}(d)"]
& \mathrm{B}_{\operad C}A \ar[r,"\mathrm{B}_{\operad C}(p)"]
& \mathrm{B}_{\operad C}\Omega_{\operad C}W
\\
W \ar[r,"s"] \ar[u,rightarrowtail]
& \mathrm{Cyl}(W) \ar[r,"\mathrm{proj}_{W}"] \ar[u,"j",rightarrowtail]
&W \ar[u,rightarrowtail]
\end{tikzcd}
$$
commutes. Moreover, the dg module $A$ decomposes into $A = \Omega_{\operad C} W \oplus K$ where $K$ is the kernel of the map $p: A \longrightarrow \Omega_{\operad C}W$. The dg module $K$ is acyclic, since $p$ is a quasi-isomorphism. Let us take a contracting homotopy $h$ of $K$, that is, a degree $1$ endomorphism of $K$ such that 
$$
\partial(h) = d_K h + h d_K = \id_K.
$$
We can extend $h$ to $A$ by zero on $\Omega_{\operad C} W$. Then $\partial(h) = \pi_K$. Now let $H$ be the degree $1$
endormophism of the graded complex $\mathrm{B}_{\operad C}A = \operad C_\pl \comp_\pl A$ defined as follows
$$
H \coloneqq \sum_{k=0}^n \id_{\operad C_\pl(n)} \otimes (\pi_{\Omega_{\operad C} W}^{\otimes k} \otimes h \otimes \id_A^{\otimes n-k-1}) 
\quad \text{ on } \quad \operad C_\pl(n) \otimes A^{\otimes n} \quad \text{for} \quad n \geq 1,
$$
and 
\[
H=0 \quad \text{on} \quad \operad C_\pl(0)~.
\]
\vspace{0.1pc}

In particular, for every $0 \leq k \leq n-1$, the restriction of $H$ to $\operad C_\pl(n) \otimes ((\Omega_{\operad C} W)^{\otimes k} \otimes K \otimes A^{\otimes n-k-1}) $ is given by 
$$
\begin{tikzcd}
\operad C_\pl(n) \otimes ((\Omega_{\operad C} W)^{\otimes k} \otimes K \otimes A^{\otimes n-k-1}) 
\ar[dd,"{\id_{\operad C_\pl(n)} \otimes \id_{\Omega_{\operad C} W}^{\otimes k} \otimes\  h \ \otimes \id_A^{\otimes n-k-1}}"]
\\ 
\\
\operad C_\pl(n) \otimes ((\Omega_{\operad C} W)^{\otimes k} \otimes K \otimes A^{\otimes n-k-1}) 
\ar[d, hook]
\\
\operad C_\pl \comp_\pl A~.
\end{tikzcd}
$$
One can extend $H$ to $\operad C_\pl \comp_\pl \mathrm{B}_{\operad C} A = \operad C_\pl \comp_\pl \operad C_\pl \comp_\pl A$ through the same formula

\[
H \coloneqq
 \begin{cases}
 \sum_{k=0}^n \id_{\operad C_\pl(n)} \otimes (\pi_{\mathrm{B}_{\operad C}\Omega_{\operad C} W}^{\otimes k} \otimes H \otimes \id_A^{\otimes n-k-1}) 
\text{ on } \operad C_\pl(n) \otimes (\mathrm{B}_{\operad C} A)^{\otimes n} \quad \text{for} \quad n \geq 1;
\\
0 \quad \text{on} \quad \operad C_\pl(0)~.
 \end{cases}
\]
\vspace{0.1pc}

The same formula mutatis mutandis allows us to extend $H$
to $\operad C_\pl \comp_\pl \operad C_\pl \comp_\pl \mathrm{B}_{\operad C}A = \operad C_\pl \comp_\pl \operad C_\pl \comp_\pl \operad C_\pl \comp_\pl A$. One can notice then that $H$ commutes with the maps
$$
\mathrm{B}_{\operad C}A \longrightarrow
 \operad C_\pl \comp_\pl \mathrm{B}_{\operad C}A
\rightrightarrows \operad C_\pl \comp_\pl \operad C_\pl \comp_\pl \mathrm{B}_{\operad C}A.
$$

\begin{lemma}
The subobject $\mathrm{Cyl}(W) \rightarrowtail \mathrm{B}_{\operad C} A$ is stable by $H$.
\end{lemma}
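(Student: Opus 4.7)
The plan is to identify $\mathrm{Cyl}(W)$ with the sub-curved-$\operad C$-coalgebra of $\mathrm{B}_{\operad C} A$ cut out by the condition $\mathrm{B}_{\operad C}(p)(x) \in \eta_W(W)$, and then to reduce the lemma to the single identity $\mathrm{B}_{\operad C}(p) \circ H = 0$. Since $\eta_W$ is a degree-wise injection, $\eta_W(W)$ is a sub-curved-$\operad C$-coalgebra of $\mathrm{B}_{\operad C}\Omega_{\operad C} W$; its preimage $T \coloneqq \mathrm{B}_{\operad C}(p)^{-1}(\eta_W(W))$ in $\mathrm{B}_{\operad C} A$ is then also a sub-coalgebra, and the canonical projections of $T$ onto $\mathrm{B}_{\operad C} A$ and onto $W$ are compatible with the cospan defining $\mathrm{Cyl}(W)$. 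The universal property of the pullback yields a canonical map $T \to \mathrm{Cyl}(W)$, while the defining map $\mathrm{Cyl}(W) \to \mathrm{B}_{\operad C} A$ factors through $T$ by construction; this identifies $\mathrm{Cyl}(W) \cong T$ as subobjects of $\mathrm{B}_{\operad C} A$.

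Granted this description, stability of $\mathrm{Cyl}(W)$ under $H$ amounts to the assertion that, for every $x \in \mathrm{Cyl}(W)$, the image $\mathrm{B}_{\operad C}(p)(H(x))$ lies in $\eta_W(W)$. The key observation is that $p \circ h = 0$: by construction the contracting homotopy $h$ vanishes on the summand $\Omega_{\operad C} W \subseteq A$ and takes values in $K = \ker p$ on the summand $K$, so $h$ factors through $K$ and is killed by $p$. On $\operad C_\pl(n) \otimes A^{\otimes n}$ the map $\mathrm{B}_{\operad C}(p)$ acts as $\id \otimes p^{\otimes n}$, while each of the $n$ summands of $H$ inserts $h$ in exactly one tensor factor of $A^{\otimes n}$, with $\pi_{\Omega_{\operad C} W}$'s to its left and $\id_A$'s to its right. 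Post-composing with $p^{\otimes n}$ inserts $p \circ h = 0$ in that slot, so each summand vanishes and we obtain $\mathrm{B}_{\operad C}(p) \circ H = 0$. Consequently $\mathrm{B}_{\operad C}(p)(H(x)) = 0 = \eta_W(0)$ for every $x$, so $H(x) \in T = \mathrm{Cyl}(W)$.

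The only genuine subtlety lies in the preliminary identification of $\mathrm{Cyl}(W)$ with the preimage $T$, since the pullback is by definition computed in curved $\operad C$-coalgebras rather than in pdg modules; this is nevertheless immediate from the universal property once one observes that $\eta_W(W)$ is a sub-coalgebra and that $\mathrm{B}_{\operad C}(p)$ is a morphism of curved $\operad C$-coalgebras. After this identification, everything collapses to the one-line computation $p \circ h = 0$ combined with the slot-wise description of $H$.
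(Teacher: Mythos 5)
Your reduction stands or falls with the identification of $\mathrm{Cyl}(W)$ with the module-theoretic preimage $T = \mathrm{B}_{\operad C}(p)^{-1}(\eta_W(W))$, and this is exactly where the argument has a genuine gap. The pullback defining $\mathrm{Cyl}(W)$ is taken in curved $\operad C$-coalgebras, and the forgetful functor to pdg modules does not preserve pullbacks (it is a left adjoint; in this paper it is only shown to preserve finite cosifted limits such as coreflexive equalizers). In particular, the preimage of a sub-$\operad C$-coalgebra under a morphism of $\operad C$-coalgebras is in general \emph{not} a sub-$\operad C$-coalgebra: the comonad $\operad C \comp (-)$ involves tensor powers, and the kernel of $p$ destroys the compatibility. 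For instance, an element of the form $c \otimes \kappa \otimes a \in \operad C_\pl(2)\otimes A^{\otimes 2}$ with $\kappa \in \ker(p)$ lies in $T$ because $\mathrm{B}_{\operad C}(p)$ kills it, but its coproduct in $\mathrm{B}_{\operad C}A$ has components in which a factor of the form $c''\otimes a$ (or $a$ itself) sits alone in a tensor slot, and such a factor has no reason to lie in $T$ for a generic $a$. So $T$ need not be stable under the structure map, and the actual $\mathrm{Cyl}(W)$ is in general a strictly smaller subobject of $\mathrm{B}_{\operad C}A$: it is the largest sub-$\operad C$-coalgebra contained in $T$, presented in the paper as the equalizer of a pair of maps $\operad C \comp X \rightrightarrows \operad C \comp Y$ between quasi-cofree objects (with $X$, $Y$ the module-level pullbacks), an equalizer which can be computed in pdg modules. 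Declaring the identification ``immediate from the universal property'' skips precisely the non-trivial content.

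Your computation $p \circ h = 0$, hence $\mathrm{B}_{\operad C}(p)\circ H = 0$, is correct, but by itself it only shows that $H$ carries all of $\mathrm{B}_{\operad C}A$ into $T$. Membership in $\mathrm{Cyl}(W)$ is not a condition on an element alone but on all of its iterated coproducts (the element must lie in a subcoalgebra contained in $T$), so $\mathrm{B}_{\operad C}(p)(H(x)) = 0$ does not yield $H(x)\in\mathrm{Cyl}(W)$ without also controlling $\delta(H(x))$ and its iterates. This is what the paper's proof supplies: it uses the equalizer presentation of $\mathrm{Cyl}(W)$, the stability of the subobjects $\operad C\comp X \subset \operad C_\pl\comp_\pl \mathrm{B}_{\operad C}A$ and $\operad C\comp Y \subset \operad C_\pl\comp_\pl\operad C_\pl\comp_\pl \mathrm{B}_{\operad C}A$ under the extended $H$, and the fact that $H$ commutes with the maps $\mathrm{B}_{\operad C}A \to \operad C_\pl\comp_\pl \mathrm{B}_{\operad C}A \rightrightarrows \operad C_\pl\comp_\pl\operad C_\pl\comp_\pl \mathrm{B}_{\operad C}A$. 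To repair your proof you would either have to show that in this specific situation $T$ is a sub-$\operad C$-coalgebra satisfying the universal property of the pullback (false in general, and not argued), or carry out the stability check at the level of the equalizer presentation, which is essentially the paper's argument.
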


\begin{proof}
Let $\mathrm{Cyl}(W)'$ be the pullback 
\[
\begin{tikzcd}[column sep=3pc,row sep=3pc]
\mathrm{Cyl}(W)' \arrow[dr, phantom, "\lrcorner", very near start]
\ar[r,rightarrowtail] \ar[d]
& \mathrm{B}_{\operad C}A 
\ar[d]
\\
W
\ar[r,rightarrowtail]
& \mathrm{B}_{\operad C}\Omega_{\operad C} W;
\end{tikzcd}
\]

in the category of pdg $\operad C$-coalgebras. By universal property, one has a morphism of pdg $\operad C$-coalgebras $\mathrm{Cyl}(W) \longrightarrow \mathrm{Cyl}(W)'$. Let us show that $\mathrm{Cyl}(W)'$ is curved and therefore isomorphic to $\mathrm{Cyl}(W)$.

\medskip

Let $X$ be the pullback of the span $\mathrm{B}_{\operad C}A \longrightarrow \mathrm{B}_{\operad C}\Omega_{\operad C} W \longleftarrow W$ in the category of pdg $\kk$-modules and let $Y$ be the pullback of the span $\operad C_\pl \comp_\pl \mathrm{B}_{\operad C}A \longrightarrow \operad C_\pl \comp_\pl \mathrm{B}_{\operad C}\Omega_{\operad C} W \longleftarrow \operad C_\pl \comp_\pl W$ in the category of pdg $\kk$-modules. The pdg $\operad C$-coalgebra $\mathrm{Cyl}(W)'$ can be computed as the following equalizer
$$
\begin{tikzcd}
    \mathrm{Cyl}(W)' \ar[r,dashed]
    &\operad C \comp X
    \ar[rr, bend left] \ar[rr, bend right]
    &&\operad C \comp Y~,
    \ar[ll]
\end{tikzcd}
$$

both in the category pdg $\operad C$-coalgebras and in the category of pdg $\kk$-modules. This follows from the results of Subsection \ref{sectioncommuteslimitscolimitscog}, as the forgetful functor preserves finite cosifted limits. The following square of pdg $\operad C$-coalgebras
\[
\begin{tikzcd}[column sep=3pc,row sep=3pc]
\mathrm{Cyl}(W)' 
\ar[r] \ar[d,rightarrowtail]
& \mathrm{B}_{\operad C}A 
\ar[d]
\\
\C \comp X
\ar[r,rightarrowtail]
& \C \comp \mathrm{B}_{\operad C}A;
\end{tikzcd}
\]
commutes. The bottom horizontal and the left vertical maps are monomorphisms, which implies that the map $\mathrm{Cyl}(W)' \rightarrowtail \mathrm{B}_{\operad C}A$ is also a monomorphism. Therefore $\mathrm{Cyl}(W)'$ is also curved and the canonical morphism $\mathrm{Cyl}(W) \longrightarrow \mathrm{Cyl}(W)'$ is an isomorphism of curved $\C$-coalgebras.

\medskip

It is clear that the sub-objects $\operad C \comp X \subset \operad C_\pl \comp_\pl  \mathrm{B}_{\operad C}A$ and  $\operad C \comp Y \subset \operad C_\pl \comp_\pl \operad C_\pl \comp_\pl  \mathrm{B}_{\operad C}A$ are stable through $H$. Hence, so is the sub-object $\mathrm{Cyl}(W) \subset \mathrm{B}_{\operad C}A$.
\end{proof}

\begin{lemma}
The subobjects $F_{i}^{\mathrm{qp}} \mathrm{B}_{\operad C}A$ and $F_{i}^{\mathrm{qp}} \mathrm{Cyl}(W)$ are stable by $H$ for every $i \in \alpha$.
\end{lemma}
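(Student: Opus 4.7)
The plan is to identify the subobjects $F_i^{\mathrm{qp}} \mathrm{B}_{\operad C} A$ and $F_i^{\mathrm{qp}} \mathrm{Cyl}(W)$ concretely as subobjects of $\mathrm{B}_{\operad C} A = \operad C_\pl \comp_\pl A$, and then observe that $H$ never modifies the cooperad factor but only acts on the $A$-tensor slots, so stability is essentially tautological.

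First I would describe $F_i^{\mathrm{qp}} \mathrm{B}_{\operad C} A$ explicitly. Since $\mathrm{B}_{\operad C}A$ is cofree on $A$ as a graded $\operad C$-coalgebra, the coreflector $F_i^{\mathrm{qp}}$ (which on graded objects is computed as the pullback defining the full subcategory of curved $F_i^{\mathrm{qp}} \C$-coalgebras) identifies $F_i^{\mathrm{qp}} \mathrm{B}_{\operad C} A$ with the cofree $F_i^{\mathrm{qp}} \operad C$-coalgebra on $A$, namely the sub-pdg $\kk$-module $F_i^{\mathrm{qp}} \operad C_\pl \comp_\pl A$ inside $\operad C_\pl \comp_\pl A$. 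This identification is natural and compatible with the monomorphism $F_i^{\mathrm{qp}} \mathrm{B}_{\operad C} A \hookrightarrow \mathrm{B}_{\operad C} A$.

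Next I would observe the key tautological fact about $H$: by its very definition, $H$ is built by inserting $h$ (together with projectors $\pi_{\Omega_{\operad C} W}$) into the $A^{\otimes n}$ factors of $\operad C_\pl(n) \otimes A^{\otimes n}$, and it acts as the identity on the cooperad factor $\operad C_\pl(n)$. Consequently, for any sub-graded $\mathbb{N}$-module $M_\pl \subseteq \operad C_\pl$, the subobject $M_\pl \comp_\pl A$ of $\operad C_\pl \comp_\pl A$ is stable under $H$. Specialising to $M_\pl = F_i^{\mathrm{qp}} \operad C_\pl$ gives the stability of $F_i^{\mathrm{qp}} \mathrm{B}_{\operad C} A$.

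Finally, for $F_i^{\mathrm{qp}} \mathrm{Cyl}(W)$, I would show the identity
\[
F_i^{\mathrm{qp}} \mathrm{Cyl}(W) \;=\; \mathrm{Cyl}(W) \cap F_i^{\mathrm{qp}} \mathrm{B}_{\operad C} A
\]
as subobjects of $\mathrm{B}_{\operad C} A$. One inclusion is immediate since the right-hand side is a sub-curved $\operad C$-coalgebra of $\mathrm{Cyl}(W)$ whose coaction factors through $F_i^{\mathrm{qp}} \operad C \comp -$, hence lies in $F_i^{\mathrm{qp}}\mathrm{Cyl}(W)$ by the universal property of the coreflection. The reverse inclusion follows because $F_i^{\mathrm{qp}}$ preserves monomorphisms (being computed as a pullback of graded $\kk$-modules). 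Since $\mathrm{Cyl}(W)$ is stable under $H$ by the preceding lemma and $F_i^{\mathrm{qp}} \mathrm{B}_{\operad C} A$ is stable under $H$ by the step above, their intersection is also stable under $H$. No step here seems like a genuine obstacle; the only point requiring a little care is the identification of $F_i^{\mathrm{qp}}\mathrm{B}_{\operad C}A$ as $F_i^{\mathrm{qp}}\operad C_\pl \comp_\pl A$, which can be done by the same pullback-in-graded-$\kk$-modules argument used earlier when analysing $\mathrm{Cyl}(W)'$.
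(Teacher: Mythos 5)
Your proposal is correct and follows essentially the same route as the paper: identify $F_i^{\mathrm{qp}}\mathrm{B}_{\operad C}A$ with $F_i^{\mathrm{qp}}\operad C \comp A$ (stable under $H$ since $H$ only touches the $A$-slots), then realise $F_i^{\mathrm{qp}}\mathrm{Cyl}(W)$ as the pullback (intersection) of $\mathrm{Cyl}(W)$ and $F_i^{\mathrm{qp}}\mathrm{B}_{\operad C}A$ inside $\mathrm{B}_{\operad C}A$, so stability follows from the two previous cases. The extra care you take in justifying the intersection identity via the coreflection's universal property is a fine elaboration of what the paper simply records as a pullback square in graded $\kk$-modules.
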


\begin{proof}
It is clear by definition of $H$ that $F_{i}^{\mathrm{qp}} \mathrm{B}_{\operad C} A = F_{i}^{\mathrm{qp}}\operad C \comp A$ is stable by it. Recall that $F_{i}^{\mathrm{qp}} \mathrm{Cyl}(W)$ is given by the following pullback square
    $$
    \begin{tikzcd}[column sep=3pc,row sep=3pc]
        F_{i}^{\mathrm{qp}} \mathrm{Cyl}(W) \arrow[dr, phantom, "\lrcorner", very near start]
        \ar[r,rightarrowtail] \ar[d,rightarrowtail]
        & \mathrm{Cyl}(W)
        \ar[d,rightarrowtail,"j"]
        \\
        F_{i}^{\mathrm{qp}} \mathrm{B}_{\operad C} A
        \ar[r,rightarrowtail]
        & \mathrm{B}_{\operad C} A
    \end{tikzcd}
    $$
    in the category of graded $\kk$-modules. We already know that $\mathrm{Cyl}(W)$ and $F_{i}^{\mathrm{qp}} \mathrm{B}_{\operad C} A$ are stable by $H$. Therefore $F_{i}^{\mathrm{qp}} \mathrm{Cyl}(W)$ is also stable by $H$.
\end{proof}

\begin{lemma}\label{lemma: endo partial(h) vaut id - proj sur le gros truc (cogèbres)}
For every $i +1 \in \omega$, the endomorphism $\partial(H)$ of $\gr_{i+1}^{\mathrm{qp}} \mathrm{B}_{\operad C}A$ is equal to 
the identity minus the projection onto $\gr_{i+1}^{\mathrm{qp}} \mathrm{B}_{\operad C}\Omega_{\operad C} W$.
\end{lemma}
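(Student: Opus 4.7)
The plan is to make $\gr_{i+1}^{\mathrm{qp}} \mathrm{B}_{\operad C} A$ explicit and then compute $\partial(H)$ summand by summand. By Proposition \ref{prop: cooperad ladder induces ladders} applied to the cofree $\operad C$-coalgebra $\mathrm{B}_{\operad C} A = \operad C \comp A$, one has $F_n^{\mathrm{qp}} \mathrm{B}_{\operad C} A \cong F_n^{\mathrm{qp}} \operad C \comp A$, so that via the pseudo-planar structure
\[
\gr_{i+1}^{\mathrm{qp}} \mathrm{B}_{\operad C} A \;\cong\; \gr_{i+1}^{\mathrm{qp}}(\operad C_\pl) \comp_\pl A .
\]
On the arity-$n$ summand $\gr_{i+1}^{\mathrm{qp}}(\operad C_\pl)(n) \otimes A^{\otimes n}$, the operator $H$ restricts to $\id_{\operad C_\pl(n)} \otimes H_n^A$ where $H_n^A = \sum_{k=0}^{n-1} \pi^{\otimes k} \otimes h \otimes \id^{\otimes n-k-1}$ and $\pi = \pi_{\Omega_{\operad C} W}$.

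The induced differential on the associated graded splits into three pieces, and I would handle the commutator with $H$ separately for each. The internal cooperad differential, by the quasi-planar axiom, takes the form $d_\pl \otimes \id_{\mathbb{S}}$ on $\gr_{i+1}^{\mathrm{qp}}(\operad C_\pl) \otimes \mathbb{S}$; since $H$ acts as the identity on the cooperad factor its commutator with this piece vanishes. For the Leibniz extension of $d_A$ to $A^{\otimes n}$, using that $\pi$ commutes with $d_A$ (because $\Omega_{\operad C} W \hookrightarrow A$ is a sub-dg module) together with $\partial(h) = \id - \pi$, a telescoping computation yields
\[
[d^{(A)}, H_n^A] \;=\; \sum_{k=0}^{n-1} \pi^{\otimes k} \otimes (\id - \pi) \otimes \id^{\otimes n-k-1} \;=\; \id^{\otimes n} - \pi^{\otimes n} .
\]

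The main obstacle will be the bar differential $d_2$ coming from the cooperad decomposition of $\operad C$ combined with the $\Omega\operad C$-action on $A$; the task is to show $[d_2, H]$ vanishes on the associated graded. I would exploit the commutativity of $H$ with the coproduct maps noted during the construction of $H$: this makes $\partial(H)$ a morphism of $\operad C_\pl$-comodules, as is $\id - \mathrm{B}_{\operad C}(\pi)$. Since $\mathrm{B}_{\operad C} A$ is cofree as an $\operad C$-coalgebra on $A$, two such comodule maps agree as soon as they agree after projection onto the cogenerators $A$. The required projection of $[d_2, H]$ onto $A$ would then be computed by hand, using the crucial facts that the chosen section $d : \Omega_{\operad C} W \to A$ is a morphism of dg $\Omega\operad C$-algebras (so that $\pi = d \circ p$ is an algebra endomorphism of $A$ and commutes with $\gamma_A$) and that $h$ is identically zero on $\Omega_{\operad C} W$; together these should force the residual terms to land in the lower filtration piece $F_i^{\mathrm{qp}}$ and hence to vanish on $\gr_{i+1}^{\mathrm{qp}}$.

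Combining the three contributions will give $\partial(H) = \id^{\otimes n} - \pi^{\otimes n}$ on each summand. Since $\pi^{\otimes n}$ on $\gr_{i+1}^{\mathrm{qp}}(\operad C_\pl)(n) \otimes A^{\otimes n}$ is exactly the projection onto $\gr_{i+1}^{\mathrm{qp}}(\operad C_\pl)(n) \otimes (\Omega_{\operad C} W)^{\otimes n}$, which is the arity-$n$ piece of $\gr_{i+1}^{\mathrm{qp}} \mathrm{B}_{\operad C} \Omega_{\operad C} W \cong \gr_{i+1}^{\mathrm{qp}}(\operad C_\pl) \comp_\pl \Omega_{\operad C} W$, summing over $n$ yields $\partial(H) = \id - \mathrm{proj}_{\gr_{i+1}^{\mathrm{qp}} \mathrm{B}_{\operad C} \Omega_{\operad C} W}$ as claimed.
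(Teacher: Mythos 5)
Your skeleton is essentially the paper's: identify $\gr_{i+1}^{\qp}\mathrm{B}_{\operad C}A$ with $\gr_{i+1}^{\qp}(\C_\pl)\comp_\pl A$, observe that the cooperad-differential part contributes nothing to $\partial(H)$, and reduce to a tensor-trick computation; your telescoping identity $\partial(H)=\id^{\otimes n}-\pi^{\otimes n}$ is just a repackaging of the paper's summand-by-summand inspection of $\gr_{i+1}^{\qp}\C_\pl(n)\otimes(\Omega_{\operad C}W)^{\otimes k}\otimes K\otimes A^{\otimes n-k-1}$. The gap is precisely at the step you flag as the main obstacle, the term $[d_2,H]$. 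The comodule/cofreeness mechanism you propose does not work: the commutation of $H$ with the structure maps recorded in the construction is of the form $\Delta\,H=H^{\mathrm{ext}}\,\Delta$, where $H^{\mathrm{ext}}$ is the analogous ``first non-$\pi$ slot'' operator on $\C_\pl\comp_\pl\C_\pl\comp_\pl A$, not $\id\comp H$; so $\partial(H)$ is not a morphism of comodules, and it cannot be one with the same corestriction as $\id-\mathrm{B}_{\operad C}(\pi)$, because cofreeness would then force $\partial(H)=\id-\mathrm{B}_{\operad C}(\pi)$ on the nose. That equality is false before passing to the associated graded: for instance the corestriction of $\partial(H)$ on the arity-two component contains terms such as $h\,\gamma_A(\iota(c);a_1,a_2)$ and $\gamma_A(\iota(c);h(a_1),a_2)$, which have no reason to cancel. (Minor point in the same vein: $\pi$ is a chain map because $\pi=d\circ p$ with $d$ and $p$ morphisms of dg algebras, not merely because $\Omega_{\operad C}W$ is a subcomplex of $A$.)

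Moreover, the facts you invoke to make the residual terms drop filtration ($\pi$ commuting with $\gamma_A$, $h$ vanishing on $\Omega_{\operad C}W$) are not the actual mechanism; the filtration drop is a property of $\C$ alone. Since the quasi-planar filtration is a cooperad ladder, every nontrivial decomposition of $F_{i+1}^{\qp}\C$ lands in trees labelled by $F_i^{\qp}\C$, and the corestriction component of the bar coderivation lands in $\operad I\comp A\subseteq F_0^{\qp}\mathrm{B}_{\operad C}A$. Hence the whole part of the coderivation of $\mathrm{B}_{\operad C}A$ built from the decomposition of $\C$ and the $\Omega\C$-algebra structure of $A$ sends $F_{i+1}^{\qp}\mathrm{B}_{\operad C}A$ into $F_i^{\qp}\mathrm{B}_{\operad C}A$, so it induces the zero operator on $\gr_{i+1}^{\qp}$; as $H$ preserves the filtration, $[d_2,H]$ induces zero there with no further computation, and no appeal to cofreeness is needed. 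With that substitution your argument closes and coincides, up to reorganization, with the paper's direct inspection.
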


\begin{proof}
    For every natural integer $n$ and every $0 \leq k \leq n-1$, the summands
    
    \begin{align*}
\gr_{i+1}^{\mathrm{qp}} \operad C_\pl(n) \otimes (\Omega_{\operad C} W)^{\otimes n}
\subset \gr_{i+1}^{\mathrm{qp}}\operad C_\pl \comp_\pl A \cong \gr_{i+1}^{\mathrm{qp}} \mathrm{B}_{\operad C} A~,      \\
 \\
\gr_{i+1}^{\mathrm{qp}}\operad C_\pl(n) \otimes (\Omega_{\operad C} W)^{\otimes k}
\otimes K \otimes A^{n-k-1}
\subset \gr_{i+1}^{\mathrm{qp}}\operad C_\pl \comp_\pl A \cong \gr_{i+1}^{\mathrm{qp}} \mathrm{B}_{\operad C} A~,
    \end{align*}
    \vspace{0.1pc}
    
    are both stable by the differential and by $H$. Then, a direct inspection shows that $\partial(H)$ is zero on the first summand and the the identity on the second one, which concludes the proof.
\end{proof}

\begin{proposition}\label{corollarycylinder}
For every $i +1 \in \alpha$, the endomorphism $\partial(H)$ of $\gr_{i+1} \mathrm{Cyl}(W)$ is equal to the identity minus the projection onto $\gr_{i+1}^{\mathrm{qp}} \C$. Therefore the maps

\[
\begin{tikzcd}[column sep=3.5pc,row sep=3pc]
\gr_{i+1}^{\mathrm{qp}} W  \arrow[r,"\gr_{i+1}^{\mathrm{qp}}(s)"]
&\gr_{i+1}^{\mathrm{qp}} \mathrm{Cyl}(W) \arrow[r,"\gr_{i+1}^{\mathrm{qp}}(\mathrm{proj}_W)"]
&\gr_{i+1}^{\mathrm{qp}} W
\end{tikzcd}
\]
\vspace{0.1pc}

are quasi-isomorphisms.
\end{proposition}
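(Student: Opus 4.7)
The plan is to bootstrap the statement from the previous Lemma~\ref{lemma: endo partial(h) vaut id - proj sur le gros truc (cogèbres)} by restricting along the monomorphism $\mathrm{Cyl}(W) \hookrightarrow \mathrm{B}_{\operad C} A$, and then to read off the second assertion as a formal consequence of the chain homotopy. So first I would recall that the preceding lemma identifies, on $\gr_{i+1}^{\mathrm{qp}} \mathrm{B}_{\operad C} A$, the endomorphism $\partial(H)$ with $\id - \pi$, where $\pi$ is the projection onto $\gr_{i+1}^{\mathrm{qp}} \mathrm{B}_{\operad C}\Omega_{\operad C} W$ induced by the splitting $A = \Omega_{\operad C} W \oplus K$ and the fact that $h$ vanishes on $\Omega_{\operad C} W$.

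Next, I would pass to the sub-object $\gr_{i+1}^{\mathrm{qp}} \mathrm{Cyl}(W) \subseteq \gr_{i+1}^{\mathrm{qp}} \mathrm{B}_{\operad C} A$, which is well-defined by the stability of both $F_{i}^{\mathrm{qp}} \mathrm{Cyl}(W)$ and $F_{i+1}^{\mathrm{qp}} \mathrm{Cyl}(W)$ under $H$ established just above, together with the fact that these are obtained as pullbacks and hence produce a degree-wise monomorphism on associated gradeds. The key point is that by the defining pullback
\[
\mathrm{Cyl}(W) = \mathrm{B}_{\operad C} A \times_{\mathrm{B}_{\operad C}\Omega_{\operad C} W} W,
\]
the restriction of $\pi$ to $\mathrm{Cyl}(W)$ factors through the intersection $\mathrm{Cyl}(W) \cap \mathrm{B}_{\operad C}\Omega_{\operad C} W$, which is precisely the image of the section $s: W \rightarrowtail \mathrm{Cyl}(W)$. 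Passing to the associated graded, this gives the identity
\[
\partial(H) = \id - \gr_{i+1}^{\mathrm{qp}}(s) \circ \gr_{i+1}^{\mathrm{qp}}(\mathrm{proj}_W)
\]
on $\gr_{i+1}^{\mathrm{qp}} \mathrm{Cyl}(W)$, which is the first assertion (with the slight correction that the projection lands on $\gr_{i+1}^{\mathrm{qp}} W$).

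Finally, since by construction $\mathrm{proj}_W \circ s = \id_W$, we also have $\gr_{i+1}^{\mathrm{qp}}(\mathrm{proj}_W) \circ \gr_{i+1}^{\mathrm{qp}}(s) = \id$. Combined with the chain homotopy above, this exhibits $\gr_{i+1}^{\mathrm{qp}}(s)$ and $\gr_{i+1}^{\mathrm{qp}}(\mathrm{proj}_W)$ as mutually inverse chain homotopy equivalences, and hence both are quasi-isomorphisms. The computation is essentially bookkeeping; the only mild subtlety is to verify that the associated graded of the pullback defining $\mathrm{Cyl}(W)$ interacts with the intersection $\mathrm{Cyl}(W) \cap \mathrm{B}_{\operad C}\Omega_{\operad C} W$ as expected, which I expect to be the main (still routine) obstacle and which follows from the exactness properties of $\gr_{i+1}^{\mathrm{qp}}(-)$ on monomorphisms that we have been using throughout.
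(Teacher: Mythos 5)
Your proposal is correct and follows essentially the same route as the paper's proof: restrict the identity $\partial(H)=\id-\pi_{\mathrm{B}_{\operad C}\Omega_{\operad C}W}$ of Lemma~\ref{lemma: endo partial(h) vaut id - proj sur le gros truc (cogèbres)} along the monomorphism $\gr_{i+1}^{\mathrm{qp}}(j)$, use the commutativity of the ladder diagram (equivalently your identification $\mathrm{Cyl}(W)\cap \mathrm{B}_{\operad C}\Omega_{\operad C}W = s(W)$, which amounts to $\pi_{\mathrm{B}_{\operad C}\Omega_{\operad C}W}\,\gr_{i+1}^{\mathrm{qp}}(j)=\gr_{i+1}^{\mathrm{qp}}(j)\,\gr_{i+1}^{\mathrm{qp}}(s)\,\gr_{i+1}^{\mathrm{qp}}(\mathrm{proj}_W)$) to get $\partial(H)=\id-\gr_{i+1}^{\mathrm{qp}}(s)\,\gr_{i+1}^{\mathrm{qp}}(\mathrm{proj}_W)$, and conclude from $\mathrm{proj}_W\, s=\id$ that both maps are quasi-isomorphisms. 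Your reading of the statement's $\gr_{i+1}^{\mathrm{qp}}\operad C$ as a misprint for the projection onto $\gr_{i+1}^{\mathrm{qp}}W$ likewise agrees with the paper's own proof.
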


\begin{proof}
Let us consider the following commutative diagram of dg modules
$$
\begin{tikzcd}[column sep=3.5pc,row sep=3.5pc]
\gr_{i+1}^{\mathrm{qp}}\mathrm{B}_{\operad C}\Omega_{\operad C}W
\ar[r, "\gr_{i+1}^{\mathrm{qp}}(\mathrm{B}_{\operad C}(d))"]
& \gr_{i+1}^{\mathrm{qp}}\mathrm{B}_{\operad C}A \ar[r, "\gr_{i+1}^{\mathrm{qp}}(\mathrm{B}_{\operad C}(p))"]
& \gr_{i+1}^{\mathrm{qp}} \mathrm{B}_{\operad C}\Omega_{\operad C}W 
\\
\gr_{i+1}^{\mathrm{qp}} W  \ar[r, "\gr_{i+1}^{\mathrm{qp}}(s)"] \ar[u, "\gr_{i+1}^{\mathrm{qp}}(\eta_C)"]
& \gr_{i+1}^{\mathrm{qp}} \mathrm{Cyl}(W) \ar[r, "\gr_{i+1}^{\mathrm{qp}}(\mathrm{proj}_W)"] \ar[u, "\gr_{i+1}^{\mathrm{qp}}(j)"]
& \gr_{i+1}^{\mathrm{qp}} W \ar[u, "\gr_{i+1}^{\mathrm{qp}}(\eta_W)"]
\end{tikzcd}
$$

Let us denote by $\pi_W$ the composition $~\gr_{i+1}^{\mathrm{qp}}(s)~\gr_{i+1}^{\mathrm{qp}}(\mathrm{proj}_W)~$ and by $\pi_{\mathrm{B}_{\operad C}\Omega_{\operad C}W}$ the composition $~\gr_{i+1}^{\mathrm{qp}}(\mathrm{B}_{\operad C}(d))
~\gr_{i+1}^{\mathrm{qp}}(\mathrm{B}_{\operad C}(p))~$.
By Lemma \ref{lemma: endo partial(h) vaut id - proj sur le gros truc (cogèbres)}, we know that:

\[
\partial(h) = (\id - \pi_{\mathrm{B}_{\operad C}\Omega_{\operad C}W})~.
\]
\vspace{0.1pc}

Since $\gr_{i+1}^{\mathrm{qp}}(j)$ commutes with the differential $d$ and $H$, we have that $\gr_{i+1}^{\mathrm{qp}}(j) \partial(h) = \partial(h) \gr_{i+1}^{\mathrm{qp}}(j)$ and we compute that

\[
\gr_{i+1}^{\mathrm{qp}}(j) \partial(h) = \gr_{i+1}^{\mathrm{qp}}(j)(\id - \pi_W)~.
\]
\vspace{0.1pc}

Since $\gr_{i+1}^{\mathrm{qp}}(j)$ is a monomorphism, it implies that $\partial(h) = \id - \pi_W$.
\end{proof}

\begin{proposition}\label{propcylinder}
The factorisation 
\[
\begin{tikzcd}[column sep=3.5pc,row sep=3pc]
W \oplus W \arrow[r,"i^{\dagger} \times \nabla"]
&\mathrm{Cyl}(W) \arrow[r,"\mathrm{proj}_W"]
&W
\end{tikzcd}
\]
\vspace{0.1pc}

makes $\mathrm{Cyl}(W)$ a good cylinder object of $W$, in the sense that

\medskip

\begin{enumerate}
\item the map $i^{\dagger} \times \nabla: W \oplus W \rightarrowtail \mathrm{Cyl}(W)$ is a cofibration,

\medskip

\item the map $\mathrm{proj}_W: \mathrm{Cyl}(W) \qi W$ is a weak-equivalence.

\medskip
\end{enumerate}
\end{proposition}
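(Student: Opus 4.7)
The plan is to establish the two defining properties of a good cylinder object separately, using two pieces of machinery already established in this section: Proposition \ref{propcofibrations}, which characterises cofibrations of curved $\C$-coalgebras as degree-wise injections, and the principle that a filtered quasi-isomorphism of $\omega$-ladders is a weak-equivalence.

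For the cofibration property, I would start from the commutative square
\[
\begin{tikzcd}[column sep=3.5pc, row sep=2.5pc]
W \oplus W \ar[r, "i^\dagger \times \nabla"] \ar[d, "\eta_{W \oplus W}"', rightarrowtail] & \mathrm{Cyl}(W) \ar[d, "j", rightarrowtail] \\
\mathrm{B}_\C \Omega_\C (W \oplus W) \ar[r, "\mathrm{B}_\C(i)"', rightarrowtail] & \mathrm{B}_\C A
\end{tikzcd}
\]
which already appears in the construction of $\mathrm{Cyl}(W)$. The unit $\eta_{W \oplus W}$ is degree-wise injective; the map $i$ is a cofibration of dg $\Omega \C$-algebras and hence degree-wise injective by Lemma \ref{lemmacofibalgdegree-wiseinj}; applying $\mathrm{B}_\C = \C \comp -$ preserves degree-wise injections since we work over a field $\kk$; and $j$ is itself degree-wise injective, since $\mathrm{Cyl}(W)$ is realised as a pullback of pdg $\kk$-modules (via the identification $\mathrm{Cyl}(W) \cong \mathrm{Cyl}(W)'$). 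The commutativity of the square then forces $i^\dagger \times \nabla$ to be a degree-wise injection, and Proposition \ref{propcofibrations} concludes that it is a cofibration.

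For the weak-equivalence property, I would use the functoriality of the quasi-planar coreflectors $F_n^{\mathrm{qp}}(-)$ supplied by Proposition \ref{prop: cooperad ladder induces ladders} to produce a morphism of quasi-planar $\omega$-ladders of curved $\C$-coalgebras
\[
F_\bullet^{\mathrm{qp}} \mathrm{Cyl}(W) \longrightarrow F_\bullet^{\mathrm{qp}} W~,
\]
whose colimit recovers $\mathrm{proj}_W$ by Corollary \ref{cor: quasi-planar ladder}. Proposition \ref{corollarycylinder} identifies the associated graded map $\gr_n^{\mathrm{qp}}(\mathrm{proj}_W)$ with a quasi-isomorphism for every $n \geq 1$, so that $\mathrm{proj}_W$ is a filtered quasi-isomorphism of $\omega$-ladders. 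The earlier proposition asserting that filtered quasi-isomorphisms of such ladders are weak-equivalences then yields the desired conclusion.

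The main technical point to verify will be the compatibility of $\mathrm{proj}_W$ with the two quasi-planar filtrations, which is a naturality statement packaged into Proposition \ref{prop: cooperad ladder induces ladders}; and the harmless base case $n = 0$ of the filtration, which can be dispatched using the existence of the canonical section $s \colon W \to \mathrm{Cyl}(W)$. Once these are in place, the argument reduces to assembling the named results in the correct order.
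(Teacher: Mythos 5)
Your proof follows the paper's argument essentially verbatim: for (1) you observe that $j\circ(i^{\dagger}\times\nabla)=\mathrm{B}_{\C}(i)\circ\eta_{W\oplus W}$ is degree-wise injective and invoke Proposition \ref{propcofibrations}, and for (2) you use Proposition \ref{corollarycylinder} to see that $\mathrm{proj}_W$ is a filtered quasi-isomorphism of quasi-planar $\omega$-ladders and conclude by the proposition that such maps are weak-equivalences, which is exactly the paper's proof. The one small wrinkle is your $n=0$ remark: the existence of the section $s$ only gives that $\gr_0^{\qp}(\mathrm{proj}_W)$ is split surjective on homology, not that it is a quasi-isomorphism, but this base case is also left implicit in the paper and is easily settled either by running the same $\partial(H)$ computation in filtration degree $0$ or by identifying $\gr_0^{\qp}\mathrm{Cyl}(W)$ with the pullback of the acyclic fibration $p$ along $F_0^{\qp}W\to\Omega_{\C}W$.
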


\begin{proof}
The morphism $i^{\dagger} \times \nabla: W \oplus W \longrightarrow \mathrm{Cyl}(W)$ is a degree-wise monomorphism since both $\eta_W: W \oplus W \longrightarrow \mathrm{B}_{\operad C} \Omega_{\operad C}(W \oplus W)$ and $\mathrm{B}_{\operad C}(i): \mathrm{B}_{\operad C} \Omega_{\operad C}(W \oplus W) \longrightarrow \mathrm{B}_{\operad C} A$ are. Thus, it is a cofibration. To conclude, Proposition \ref{corollarycylinder} tells us that the map $\mathrm{proj}_W: \mathrm{Cyl}(W) \longrightarrow W$ is a filtered quasi-isomorphism. Thus it is a weak-equivalence.
\end{proof}

\begin{remark}
Actually, the map $\mathrm{proj}_W: \mathrm{Cyl}(W) \twoheadrightarrow W$ is also a fibration as the pullback of a fibration.
\end{remark}


\newpage

\section{A Quillen equivalence, $\infty$-morphisms and homotopy transfer theorems for algebras}

\vspace{2pc}

Let $\C$ be a quasi-planar curved conilpotent cooperads. We show that the bar-cobar adjunction relating $\Omega \C$-algebras to $\C$-coalgebras is a Quillen equivalence. This allows us to give another presentation of the homotopy category of dg $\Omega \C$-algebras in terms of curved $\C$-coalgebras together with their transferred model category structure. We introduce $\infty$-morphisms of dg $\Omega \C$-algebras and show that they are (up to homotopy) invertible. We prove a homotopy transfer theorem for dg $\Omega \C$-algebras. Finally, we show how another model categories structures on the category of curved $\C$-coalgebras can be obtained by a left Bousfield localization. 


\subsection{The Quillen equivalence}
Our goal is to prove that the bar-cobar adjunction can be promoted into a Quillen equivalence. Similar theorems in characteristic zero were proven in \cite{Hinich,LefevreHasegawa03,Brunohomotopy,unitalalgebras}. For that, we leverage the quasi-planar context to build a contracting homotopy that works in positive characteristic, as we explain in Subsection \ref{section: The contracting homotopy on trees}.

\begin{theorem}\label{thm: the bar-cobar is a Quillen equivalence}
The adjunction 

\[
\begin{tikzcd}[column sep=5pc,row sep=3pc]
          \mathsf{dg}~\C\text{-}\mathsf{cog} \arrow[r, shift left=1.1ex, "\Omega_{\C}"{name=F}] & \mathsf{dg}~ \Omega \C\text{-}\mathsf{alg}, \arrow[l, shift left=.75ex, "\mathrm{B}_{\C}"{name=U}]
            \arrow[phantom, from=F, to=U, , "\dashv" rotate=-90]
\end{tikzcd}
\]
\vspace{0.1pc}

is a Quillen equivalence, when one considers the transferred model category structure on the category of curved $\C$-coalgebras.
\end{theorem}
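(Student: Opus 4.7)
The plan is to first observe that the adjunction $\Omega_\C \dashv \mathrm{B}_\C$ is a Quillen adjunction by construction of the transferred model structure: cofibrations and weak-equivalences of curved $\C$-coalgebras are defined as maps sent by $\Omega_\C$ to cofibrations and quasi-isomorphisms respectively, so $\Omega_\C$ preserves them tautologically. Since every curved $\C$-coalgebra is cofibrant by Proposition~\ref{propcofibrations} and every dg $\Omega\C$-algebra is fibrant (fibrations being degree-wise epimorphisms), the Quillen equivalence reduces to showing that for every dg $\Omega\C$-algebra $A$, the counit
\[
\epsilon_A : \Omega_\C \mathrm{B}_\C A \longrightarrow A
\]
is a quasi-isomorphism. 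The unit statement then follows formally: for every $W$, the triangle identity makes $\Omega_\C(\eta_W)$ a section of $\epsilon_{\Omega_\C W}$, so 2-out-of-3 promotes $\Omega_\C(\eta_W)$ to a quasi-isomorphism, which is precisely the statement that $\eta_W$ is a weak-equivalence.

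To prove that $\epsilon_A$ is a quasi-isomorphism, I plan to filter $\Omega_\C \mathrm{B}_\C A = \Omega\C \comp \C \comp A$ by the canonical quasi-planar ladder of $\C$, keeping on the $i$-th stage only decorations whose $\C$-labels belong to $F^{\mathrm{qp}}_i\C$. The counit is compatible with this filtration, and the filtration is exhaustive by Corollary~\ref{cor: quasi-planar ladder}. On each associated graded piece, the quasi-planar condition ensures that the differential acts trivially on the symmetric component, so each stratum splits as the free $\mathbb{S}$-extension of a purely planar complex. That planar component is the classical operadic bar-cobar complex $\Omega \mathrm{B} A_{\mathrm{pl}} \longrightarrow A$, whose counit admits a well-known contracting homotopy defined tree-wise. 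Extending this homotopy $\mathbb{S}$-equivariantly through the isomorphism $\C_{\mathrm{pl}}^{(i)}/\C_{\mathrm{pl}}^{(i-1)} \otimes \mathbb{S} \cong \gr_{i}\C$ produces a contracting homotopy on each stratum. The explicit construction of this tree-wise homotopy is the content of the subsequent subsection ``The contracting homotopy on trees''.

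The main obstacle is precisely the equivariance of this homotopy. In characteristic zero, the standard formula contracting $\Omega\mathrm{B}A$ onto $A$ interacts well with symmetric group actions because of the semisimplicity of $\kk[\mathbb{S}_n]$-modules; in positive characteristic, such an argument is unavailable. The quasi-planar structure bypasses this issue by design: on an associated graded of a quasi-planar ladder the differential has the form $d_{\mathrm{pl}} \otimes \id_{\mathbb{S}}$, so the homotopy can be defined entirely on the planar direction and extended freely to the symmetric direction without any averaging.

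Once the associated graded quasi-isomorphism is established, an ordinal induction in the spirit of Proposition~\ref{propelemcofibequiv}, combined with the fact that transfinite composites of quasi-isomorphic inclusions of dg modules are quasi-isomorphisms, upgrades the stratum-wise statement to the global quasi-isomorphism $\epsilon_A : \Omega_\C \mathrm{B}_\C A \qi A$, completing the proof.
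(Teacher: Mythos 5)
Your proposal is correct and takes essentially the same route as the paper: it reduces the Quillen equivalence to the counit $\epsilon_A\colon \Omega_{\C}\mathrm{B}_{\C}A \to A$ being a quasi-isomorphism (all coalgebras being cofibrant and all algebras fibrant), filters $\Omega_{\C}\mathrm{B}_{\C}A$ by the canonical quasi-planar weight, and contracts the positive-weight strata by the planar tree-wise homotopy, which quasi-planarity lets one extend along $\mathbb{S}$ without any averaging. The paper packages the same argument as ``$\partial(H)+\pi_A$ is an automorphism'' and, on the associated graded, needs one further filtration by the number of nodes to neutralize the decomposition term $d_\Delta$ before $\partial(H)$ becomes the identity --- a detail your sketch leaves implicit but which does not change the approach.
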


\begin{proof}
The theorem follows directly from Lemma \ref{lemmaquillenequivalence}.
\end{proof}

Before going into the proofs, let us spell out a direct consequence of Theorem \ref{thm: the bar-cobar is a Quillen equivalence}.

\begin{corollary}\label{cor: quasi-planar bar-cobar is an equivalence}
Let $\operad P$ be an cofibrant dg operad. The quasi-planar bar-cobar adjunction

\[
\begin{tikzcd}[column sep=5pc,row sep=3pc]
          \mathsf{dg}~\operad P\text{-}\mathsf{alg} \arrow[r, shift left=1.1ex, "\mathrm{B}^{\mathrm{q.p}}_{\pi}"{name=A}]
         &\mathsf{curv}~\mathrm{B}(\operad E \otimes \operad P) \text{-}\mathsf{cog}, \arrow[l, shift left=.75ex, "\Omega^{\mathrm{q.p}}_{\pi}"{name=B}] \arrow[phantom, from=A, to=B, , "\dashv" rotate=90]
\end{tikzcd}
\]

is a Quillen equivalence, when curved $\mathrm{B}(\operad E \otimes \operad P)$-coalgebras are endowed with the transferred structure from dg $\Omega \mathrm{B}(\operad E \otimes \operad P)$-algebras. 
\end{corollary}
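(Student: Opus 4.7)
The plan is to factor the quasi-planar bar-cobar adjunction as a composition of two known Quillen equivalences, going through the intermediate cofibrant operad $\Omega\mathrm{B}(\operad{E} \otimes \operad{P})$, which serves both as a cofibrant replacement of $\operad{P}$ and as the cobar construction of a quasi-planar cooperad.

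First, $\mathrm{B}(\operad{E} \otimes \operad{P})$ is a quasi-planar conilpotent curved cooperad by Proposition~\ref{prop: B(E otimes P) quasi-planar avec la bonne filtration}. Applying Theorem~\ref{thm: the bar-cobar is a Quillen equivalence} with $\operad{C} = \mathrm{B}(\operad{E} \otimes \operad{P})$ yields a Quillen equivalence $\Omega_\iota \dashv \mathrm{B}_\iota$ between curved $\mathrm{B}(\operad{E} \otimes \operad{P})$-coalgebras, equipped with the transferred model structure of Theorem~\ref{thm: existence de structure de modèles}, and dg $\Omega\mathrm{B}(\operad{E} \otimes \operad{P})$-algebras.

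Second, let $\chi \coloneqq \varphi_{\operad{P}} \circ \psi : \Omega\mathrm{B}(\operad{E} \otimes \operad{P}) \qi \operad{P}$ denote the composition of the counit $\psi$ of the operadic bar-cobar adjunction with the canonical quasi-isomorphism $\varphi_{\operad{P}} : \operad{E} \otimes \operad{P} \qi \operad{P}$. Both operads are cofibrant: $\operad{P}$ by hypothesis, and $\Omega\mathrm{B}(\operad{E} \otimes \operad{P})$ by Proposition~\ref{proposition:cooperadqpcofibrant} combined with Proposition~\ref{prop: B(E otimes P) quasi-planar avec la bonne filtration}. Proposition~\ref{propqeqisoperads} then provides a Quillen equivalence $\chi_! \dashv \chi^\ast$ between their respective categories of dg algebras.

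Finally, the compatibility of bar-cobar adjunctions with morphisms of dg operads applied to the factorisation $\pi = \psi \circ \iota$ gives $\Omega_\pi = \psi_! \circ \Omega_\iota$ and $\mathrm{B}_\pi = \mathrm{B}_\iota \circ \psi^\ast$. Post-composing with the adjunction induced by $\varphi_{\operad{P}}$ then yields
\[
\Omega^{\mathrm{q.p}}_\pi \;=\; \chi_! \circ \Omega_\iota, \qquad \mathrm{B}^{\mathrm{q.p}}_\pi \;=\; \mathrm{B}_\iota \circ \chi^\ast,
\]
exhibiting the quasi-planar bar-cobar adjunction as the composition of the two Quillen equivalences constructed above, hence itself a Quillen equivalence. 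The only genuine point that requires care is unwinding this compatibility square; once the identification $\chi = \varphi_{\operad{P}} \circ \psi$ is made, the remaining verification is purely formal.
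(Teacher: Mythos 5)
Your proposal is correct and follows essentially the same route as the paper's proof: both factor the quasi-planar bar-cobar adjunction through dg $\Omega\mathrm{B}(\operad E \otimes \operad P)$-algebras via the compatibility of bar-cobar constructions with operad morphisms (using $\pi = \psi \circ \iota$), and then invoke Theorem \ref{thm: the bar-cobar is a Quillen equivalence} for the quasi-planar cooperad $\mathrm{B}(\operad E \otimes \operad P)$ together with Proposition \ref{propqeqisoperads} for the quasi-isomorphism of cofibrant operads $\Omega\mathrm{B}(\operad E \otimes \operad P) \qi \operad P$. The only (harmless) difference is that you conclude by closure of Quillen equivalences under composition, whereas the paper verifies the unit and counit of the composite adjunction directly.
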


\begin{proof}
Let us denote $\psi_{\operad P} = \Omega \mathrm{B}(\varphi_{\operad P}): \Omega \mathrm{B}(\operad E \otimes \operad P) \qi \operad P$ the canonical quasi-isomorphism, and $\epsilon: \Omega \mathrm{B}(\operad E \otimes \operad P) \qi \operad E \otimes \operad P$ the quasi-isomorphism given by the counit map. 

\medskip

First notice that the adjunction $(\psi_{\operad P})_! \dashv \psi_{\operad P}^*$ is a Quillen equivalence since both $\Omega \mathrm{B}(\operad E \otimes \operad P)$ and $\operad P$ are cofibrant dg operads. The adjunctions $(\varphi_{\operad P})_! \dashv \varphi_{\operad P}^*$ and $(\epsilon)_! \dashv \epsilon^*$ are in general just Quillen adjunctions. 

\medskip

Let $A$ be a dg $\operad P$-algebra. Our goal is to show that the counit 

\[
\epsilon_A: \Omega^{\mathrm{q.p}}_{\pi}\mathrm{B}^{\mathrm{q.p}}_{\pi} A \longrightarrow A
\]

is a quasi-isomorphism. A direct computation shows that 

\[
\Omega^{\mathrm{q.p}}_{\pi}\mathrm{B}^{\mathrm{q.p}}_{\pi} \cong (\varphi_{\operad P})_! \Omega_{\pi}\mathrm{B}_{\pi} A \varphi_{\operad P}^* \cong (\psi_{\operad P})_!(\epsilon)_!\Omega_{\pi}\mathrm{B}_{\pi} \epsilon^* \psi_{\operad P}^* \cong (\psi_{\operad P})_!\Omega_{\iota}\mathrm{B}_{\iota}\psi_{\operad P}^*~.
\]
\vspace{0.1pc}

Therefore since the counit of $(\psi_{\operad P})_! \dashv \psi_{\operad P}^*$ and of $\Omega_{\iota}\dashv \mathrm{B}_{\iota}$ are quasi-isomorphisms, so is $\epsilon_A$. Showing that for a curved $\mathrm{B}(\operad E \otimes \operad P)$-coalgebra, the unit of adjunction is a weak-equivalence is completely analogous.
\end{proof}

\begin{remark}
If the dg operad $\operad P$ is just $\mathbb{S}$-projective, the above result can be adapted, and homotopy category of curved $\mathrm{B}(\operad E \otimes \operad P)$-coalgebras given by localizing at transferred weak-equivalences is still equivalent to the homotopy category of dg $\operad P$-algebras given by localizing at quasi-isomorphisms.
\end{remark}

\begin{lemma}\label{lemmaquillenequivalence}
For every dg $\Omega \C$-algebra $A$, the counit map $\epsilon_A: \Omega_{\operad C}\mathrm{B}_{\operad C}A \qi A$ is a quasi-isomorphism.
\end{lemma}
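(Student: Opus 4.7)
The plan is to build a filtration on $\Omega_\C \mathrm{B}_\C A$ together with a contracting homotopy on its associated graded, both adapted to the quasi-planar structure of $\C$. The strategy mirrors the classical characteristic-zero argument (as in \cite{LodayVallette,unitalalgebras}), but using the planar decomposition of $\C$ to avoid any appeal to the semisimplicity of $\kk[\mathbb S_n]$-modules.

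First, I would use the canonical quasi-planar $\omega$-ladder $\{F_n^{\mathrm{qp}}\C\}_{n \geq 0}$ of $\C$ from Subsection \ref{subsection: quasi-planar canonical filtration} to define a natural filtration on $\mathrm{B}_\C A = \C \comp A$, hence on $\Omega_\C \mathrm{B}_\C A = \Omega\C \comp (\C \comp A)$. Concretely, I would combine the quasi-planar filtration on the inner copy of $\C$ with the analogous filtration on $\Omega\C = \treemod(s^{-1}\overline\C)$ induced by total planar weight, so that the resulting filtration is exhaustive, bounded below in each homological degree, and preserved by the counit $\epsilon_A$. Because the quasi-planar condition forces the differential on $\gr_{n+1}^{\mathrm{qp}}\C = (\C_\pl^{(n+1)}/\C_\pl^{(n)}) \otimes \mathbb S$ to have the form $d_\pl \otimes \id_{\mathbb S}$, the whole associated graded is of the form $(\text{planar complex}) \otimes \mathbb S$.

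Second, on each graded piece, the problem reduces to the analogous statement for a \emph{planar} conilpotent graded cooperad together with an operadic algebra structure. Here one can use the explicit tree-level contracting homotopy (the one to be developed in Subsection~\ref{section: The contracting homotopy on trees}): it moves a planar generator across the $\C$-to-$\Omega\C$ barrier using the universal twisting morphism $\iota$, and its boundary computes the identity minus the projection onto $A$. Since this homotopy is built only from planar operadic data, it is automatically $\mathbb S$-equivariant after tensoring with $\mathbb S$, which is the whole point of reducing to the associated graded of the quasi-planar ladder first.

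Third, I would conclude by assembling the layer-wise results. The filtration on $\Omega_\C\mathrm{B}_\C A$ is exhaustive and indexed by $\omega$, and the filtration on $A$ (trivial, concentrated in filtration level $0$) is compatible; so a standard filtered quasi-isomorphism argument, analogous to Proposition \ref{prop: quasi-iso filtré entre les bar} combined with the behaviour of filtered colimits of chain complexes, upgrades the layer-wise quasi-isomorphisms to a global quasi-isomorphism $\epsilon_A: \Omega_\C \mathrm{B}_\C A \qi A$. The main obstacle is the second step: one must write down the tree-level contracting homotopy in a form that genuinely lives on planar trees and yet is compatible with the partial decomposition maps of $\C_\pl^{(n+1)}/\C_\pl^{(n)}$; this is exactly what the quasi-planar hypothesis is designed to make possible, since on the associated graded all non-planar contributions to the differential are killed.
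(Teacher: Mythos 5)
Your proposal follows essentially the same route as the paper: it also filters $\Omega_{\operad C}\mathrm{B}_{\operad C}A$ by total quasi-planar weight (on both the $\Omega\C$-factor and the inner $\C$-factor), uses the planar tree-level homotopy of Subsection \ref{section: The contracting homotopy on trees}, and concludes by showing that the perturbed map $\partial(H)+\pi_A$ is an automorphism (Proposition \ref{keypropquilleneqcoalg}), which is just a packaged form of your layer-wise acyclicity plus assembly. The only point you gloss is that $\partial(H)$ is not literally $\id-\pi_A$ on the qp-associated graded --- the decomposition part $d_\Delta$ of the cobar differential survives there --- and the paper disposes of it by a further finite filtration by the (opposite of the) number of nodes, on whose associated graded $\partial(H)=d_{CP}H+Hd_{CP}=\id$.
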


In the following paragraphs, our strategy to prove Lemma \ref{lemmaquillenequivalence} is the following. First, we construct a section $\zeta_A$ of the counit map $\epsilon_A : \Omega_{\operad C}\mathrm{B}_{\operad C}A \longrightarrow A$ in the category of dg modules. The goal is then to show that $\pi_A = \zeta_A \epsilon_A$ is homotopic to an isomorphism on $\Omega_{\operad C}\mathrm{B}_{\operad C}A$. This is done by constructing an explicit degree $1$ endomorphism $H$ and by showing in Proposition \ref{keypropquilleneqcoalg} that $\partial(H) + \pi_A$ is an isomorphism. Therefore $\epsilon_A$ has a left inverse, $\zeta$, and a right inverse, $\zeta_A (\partial(H) + \pi_A)^{-1}$, in the homotopy category of dg modules. This implies that $\epsilon_A$ is a isomorphism in the homotopy category, and therefore a quasi-isomorphism.

\subsection{The contracting homotopy on trees}
\label{section: The contracting homotopy on trees}
The key ingredient to showing Lemma \ref{lemmaquillenequivalence} is to construct a degree $1$ endomorphism of $\Omega \C \comp \C$ that will yield contracting homotopy for algebras. The same construction will also work for coalgebras, and is used in the proof of Lemma \ref{lemma: key lemma of Quillen equiv for coalg and absolute alg}.

\medskip

We abbreviate the dg operad $\Omega\C$ by $\operad P$ . Let us denote:

\medskip

\begin{enumerate}
    \item $\operad P_\pl = \mathbb{T}_\pl s^{-1}\overline{\operad C_\pl}$~,
    \medskip
    \item $\overline{\operad P_\pl} = \overline{\mathbb{T}_\pl} s^{-1}\overline{\operad C_\pl}$.
\end{enumerate}
\medskip

\textbf{The map h.} The graded $\mathbb N$-module $\operad P_\pl \comp_\pl \operad C_\pl = (\treemod_\pl (s^{-1}\overline{\operad C}_\pl)) \comp_\pl \operad C_\pl$ is made of planar trees such that \textit{some} top nodes are labelled by $\C_\pl$ and the other nodes are labelled by $s^{-1} \overline{\C}_\pl$. Let $h$ be its degree $1$ endomorphism that consists in
taking the leftest top node of a labelled planar tree and
\begin{enumerate}
    \item if this node is labelled by an element of $s^{-1}\overline{\operad C_\pl}$, $h$ suspends this element;
    \item otherwise $h$ sends this labelled tree to zero.
\end{enumerate}
Pictorially $h$ it is given as follows
\usetikzlibrary{calc}
    \begin{center}
\begin{tikzpicture}
		\tikzstyle{vertex}=[draw, circle, thick, inner sep=0,
		minimum size=4]
		\tikzstyle{every path}=[thick]
		\node [vertex] (a) at (0,0) {};
		\node [vertex] (c) at (0,1) {};
		\node [vertex] (d) at ($(a)+(45:1cm)$) {};
		\node [left of=c, node distance=1cm] {$s^{-1}\overline{\operad C_\pl}(3)$};
		\node [left of=a, node distance=1cm] {$s^{-1}\overline{\operad C_\pl}(2)$};
		\node [right of=d, node distance=0.8cm] {$\overline{\operad C_\pl}(1)$};
		\node at (3,0.8) {$h$};
		\draw (a) -- (c);
		\draw (a) -- (0,-1);
		\draw (c) -- ($(c)+(60:1cm)$);
		\draw (c) -- ($(c)+(120:1cm)$);
		\draw (c) -- (0,2);
		\draw (a) -- (d);
		\draw (d) -- ($(d)+(45:1cm)$);
		\draw[|->] (2.5,0.5) -- (3.5,0.5);
		\node [vertex] (a2) at (6,0) {};
		\node [vertex] (c2) at (6,1) {};
		\node [vertex] (d2) at ($(a2)+(45:1cm)$) {};
		\node [left of=c2, node distance=0.7cm] {$\overline{\operad C_\pl}(3)$};
		\node [left of=a2, node distance=1cm] {$s^{-1}\overline{\operad C_\pl}(2)$};
		\node [right of=d2, node distance=0.8cm] {$\overline{\operad C_\pl}(1)$};
		\draw (a2) -- (c2);
		\draw (a2) -- (6,-1);
		\draw (c2) -- ($(c2)+(60:1cm)$);
		\draw (c2) -- ($(c2)+(120:1cm)$);
		\draw (c2) -- (6,2);
		\draw (a2) -- (d2);
		\draw (d2) -- ($(d2)+(45:1cm)$);
\end{tikzpicture}
\end{center}
In more precise terms, $h$ is defined as follows by induction on the height of the planar trees that make $\operad P_\pl$:

\medskip

\begin{enumerate}
    \item On $\operad I \comp_\pl \operad C_\pl$, $h$ is zero.
    
\medskip

    \item On $s^{-1}\overline{\operad C_\pl} \comp_\pl \operad C $, $h$ is given by
    \begin{align*}
    s^{-1}\overline{\operad C_\pl} \comp_\pl \operad C 
    \twoheadrightarrow
    s^{-1}\overline{\operad C}_\pl \comp_\pl \operad I
    &\longrightarrow \operad I \comp_\pl \overline{\operad C}_\pl
    \hookrightarrow \operad P_\pl \comp_\pl \operad C_\pl
    \\
    s^{-1}x & \mapsto x~.
    \end{align*}

\medskip

    \item Then on 
    $$
    (\overline{\treemod}_{\pl, \leq n+1}(s^{-1}\overline{\operad C}_\pl) ) \comp_\pl \operad C_\pl
    \simeq 
    s^{-1} \overline{\C}_\pl \comp_\pl ({\treemod}_{\pl, \leq n}(s^{-1}\overline{\operad C}_\pl)) \comp_\pl  \comp_\pl \operad C_\pl
    $$
    $h$ is defined inductively by the sum of the map
    \begin{align*}
    (\overline{\treemod}_{\pl, \leq n+1}(s^{-1}\overline{\operad C}_\pl) ) \comp_\pl \operad C_\pl
    \twoheadrightarrow
    s^{-1}\overline{\operad C}_\pl \comp_\pl \operad I
    &\longrightarrow \operad I \comp_\pl \overline{\operad C}_\pl
    \hookrightarrow \operad P_\pl \comp_\pl \operad C_\pl
    \\
    s^{-1}x & \mapsto x 
    \end{align*}
    with the map
    
    $$
    \id_{s^{-1}\overline{\operad C_\pl}(n)} \otimes \left(
    \sum_{k=0}^{n-1} \pi_{\operad I \comp_\pl \operad I}^{\otimes k}
    \otimes h \otimes 
    \id_{\operad P_\pl \comp_\pl \operad C_\pl}^{\otimes n-k-1}
    \right) .
    $$
    \vspace{0.1pc}
\end{enumerate}

\medskip

\textbf{The qp filtration on trees.} We consider the filtration on $\operad P_\pl \comp_\pl \operad C_\pl = (\treemod_\pl (s^{-1}\overline{\operad C}_\pl)) \comp_\pl \operad C_\pl$ induced by the qp filtration of $\operad C_\pl$. The $n$-th term $F_n^{\qp} (\operad P_\pl \comp_\pl \C_\pl)$ is the subobject of $\operad P_\pl \comp_\pl \C_\pl$ made of planar trees labelled by elements of $\C_\pl$ and $s^{-1}\overline{\C}_\pl$ such that the sum of the qp weights of these elements does not exceed $n$. Remember that the non top nodes are only labelled by $s^{-1}\overline{\C}_\pl$. More precisely
\begin{align*}
    F_0^{\qp} (\operad P_\pl \comp_\pl \C_\pl) &\coloneqq \operad I \comp_\pl \operad I~,
    \\
    F_n^{\qp}  (\operad P_\pl \comp_\pl \C_\pl)
    &\coloneqq \operad I \comp_\pl (F_n^\qp \operad C_\pl)~\oplus
    \\
    &\bigoplus_{k \geq 0} \sum_{j_0 + j_1 + \cdots + j_k=n, j_0>0} (s^{-1}F_{j_0}^{\qp} \overline{\operad C_\pl}(k)) \otimes \left(F_{j_1}^{\qp} (\operad P_\pl \comp_\pl \C_\pl) \otimes \cdots \otimes 
    F_{j_1}^{\qp}  (\operad P_\pl \comp_\pl \C_\pl)\right)~.
\end{align*}
This induces a filtration of the graded $\mathbb S$-module $\operad P \comp \C \cong (\operad P_\pl \comp_\pl \C_\pl) \otimes \mathbb S$:
$$
F_n^\qp \operad P \comp \C = F_n^\qp(\operad P_\pl \comp_\pl \C_\pl) \otimes \mathbb S.
$$

Equipped with the contracting homotopy and the qp-filtration, let us prove Lemma \ref{lemmaquillenequivalence}.

\begin{notation}
Remember that we abbreviate the dg operad $\Omega\C$ by $\operad P$ and that $\operad P_\pl = \mathbb{T}_\pl s^{-1}\overline{\operad C_\pl}$. Let us denote:

\medskip

\begin{enumerate}    

    \item $QA = \operad P_\pl \comp_\pl \operad C_\pl \comp_\pl A$, the underlying graded $\kk$-module of $\Omega_{\operad C} \mathrm{B}_{\operad C} A$.
    
\medskip

    \item $RA = \overline{\operad P_\pl} \comp_\pl \operad C_\pl \comp_\pl A \oplus \overline{\operad C_\pl}  \comp_\pl A \subset QA$, where we have a a canonical isomorphism of graded $\kk$-modules $QA \cong A \oplus RA$.
    
\medskip

    \item $p_A: QA \twoheadrightarrow A$ the projection onto $A$ with respect to $RA$. It results from the projection $\operad P \comp \C \to \operad I \comp \operad I \simeq \operad I$. 
\end{enumerate}

\medskip
\end{notation}

\textbf{The section.} The counit map $\epsilon_A: QA \longrightarrow A$ has a canonical section in the category of dg modules $\zeta_A: A \longrightarrow QA$ whose underlying graded map is the inclusion
$$
A \cong \kk \comp_\pl \kk \comp_\pl A \rightarrowtail \operad P_\pl \comp_\pl \operad C_\pl \comp_\pl A.
$$
There is a canonical isomorphism of dg modules
$$
\Omega_{\operad C}\mathrm{B}_{\operad C}A \cong A \oplus K
$$
where $K$ is the kernel of the counit. Our goal is to prove that $\pi_A = \zeta_A \epsilon_A$ is a quasi-isomorphism.

\medskip

\textbf{The contracting homotopy on algebras.}

Let $H$ be the degree $1$ endomorphism of the underlying graded $\kk$-module $\operad P_\pl \comp_\pl \comp_\pl A$ of $Q A$ defined as
$$
H = h \comp_\pl A.
$$

\medskip

\textbf{The differential of $QA$.}
The differential on the graded $\kk$-module $Q A = \operad P_\pl \comp_\pl \operad C_\pl \comp_\pl A$ may be
decomposed into several terms.

\medskip

\begin{enumerate}
    \item The pre-differential $d_\C$ that comes from the pre-differential of $\operad C$ which is non-zero on elements in $s^{-1}\overline{\operad C}$ and in $\operad C$.
    
\medskip

    \item The differential $d_A$ on $A$.
    
\medskip

    \item The pre-differential $d_{CP}$ induced the operadic curved twisting morphism $\iota: \operad C \longrightarrow \operad P$, which sends elements in $\operad C$ to elements in $\operad P$.
    
\medskip

    \item The pre-differential $d_\Delta$ induced by the decomposition morphism $\Delta$ of the cooperad $\operad C$ which is non-zero on elements in $\operad P$.

\medskip

    \item The pre-differential $d_\theta$ induced by the curvature of $\operad C$ which is non-zero on elements in $\operad P$.
    
\medskip
    \item The pre-differential $d_{CA}$ induced by the dg $\Omega \C$-algebra structure of $A$, which sends elements in $\operad C \comp A$ to elements in $A$.
\end{enumerate}

\medskip

\textbf{The qp filtration on algebras.} First we consider the filtration on $Q A$ induced by the qp filtration on $\operad P_\pl \comp_\pl \operad C_\pl$:
$$
F_n^\qp (QA) = (F_n^\qp (\operad P_\pl \comp_\pl \operad C_\pl)) \comp_\pl A, \quad n \in \mathbb N.
$$
In other words, we have
\begin{align*}
    F_0^{\qp} (QA) &\coloneqq A~,
    \\
    F_n^{\qp}  (Q A)
    &\coloneqq (F_n^\qp \operad C_\pl \comp_\pl A) \oplus 
    \bigoplus_{k \geq 0} \sum_{j_0 + j_1 + \cdots + j_k=n, j_0>0} (s^{-1}F_{j_0}^{\qp} \overline{\operad C_\pl}(k)) \otimes \left(F_{j_1}^{\qp}  Q A \otimes \cdots \otimes 
    F_{j_1}^{\qp}  Q A\right)~,
\end{align*}
where the sum symbol $\sum$ stands for the union within $\operad P_\pl \comp_\pl \operad C_\pl \comp _\pl A$. This filtration is exhaustive, that is,
$$
QA \cong \colim{n \in \omega} ~ F_n^{\qp} QA~. 
$$
Moreover, the qp filtration of $QA$ is stable by the pre-differentials $d_A, d_{CA}, d_C, d_\theta, d_{CP}, d_\Delta$ and by $H$. It is also stable through $\pi_A = \zeta_A \epsilon_A$. Furthermore, one has a canonical isomorphism of graded $\kk$-modules 
$$
\gr_n^\qp (QP) = \gr_n^\qp (\operad P_\pl \comp_\pl \C_\pl) \comp_\pl A
$$

One can notice that $d_\theta$ and $d_{CA}$ vanish on $\gr^{\qp}(QA)$. 

\begin{proposition}\label{keypropquilleneqcoalg}
For every $n \geq 0$, the map $\partial(H) + \pi_A$ on $\gr^{\qp}_n(QA)$ is an isomorphism.
\end{proposition}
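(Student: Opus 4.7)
The plan is to introduce a secondary decreasing filtration $G_\bullet$ on $\gr_n^{\qp}(QA)$ and prove that $\partial(H) + \pi_A$ preserves it and restricts to the identity on each subquotient $G_k/G_{k+1}$. Concretely, I would let $G_k$ be the submodule spanned by the labelled planar trees in $\operad P_\pl \comp_\pl \C_\pl \comp_\pl A$ containing at least $k$ non-trivial ``top pieces'', where a top piece is either a top node of $\operad P_\pl$ labelled by a non-trivial element of $s^{-1}\overline{\C}_\pl$ or a non-trivial $\C_\pl$-top-label. Since each such piece contributes at least $1$ to the qp-weight, the filtration is bounded on $\gr_n^{\qp}(QA)$ by $G_{n+1}=0$, and the proposition will follow by descending induction in $k$, from $n$ down to $0$, once we have $(\partial(H) + \pi_A)|_{G_k} \equiv \id \pmod{G_{k+1}}$ for every $k$.

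First I would check that both $H$ and $\pi_A$ preserve $G_\bullet$: the contracting homotopy $H$ either swaps a leftmost $\operad P_\pl$-top-node for a $\C_\pl$-top-label or vanishes, and $\pi_A$ is the projection onto $A \cong G_0/G_1$. Among the components of the differential of $QA$, the term $d_\Delta$ strictly increases the top-piece count and thus lands in $G_{k+1}$, while $d_{CA}$ and $d_\theta$ vanish on $\gr_n^{\qp}$ as recalled before the statement; only $d_C$, $d_A$, and $d_{CP}$ remain relevant modulo $G_{k+1}$, and each preserves the top-piece count. On $G_n/G_{n+1}\cong A$ both $H$ and $Hd$ vanish and $\pi_A=\id$, so the claim is immediate there. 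For $k<n$ and a tree $T\in G_k\setminus G_{k+1}$ I would split by the type of the leftmost top piece. If it is an $\operad P_\pl$-top-node $s^{-1}x$, then $H(T)$ replaces it by the $\C_\pl$-top-label $x$, and the $d_{CP}$-component of $dH(T)$ at position $1$ sends $x$ back to $s^{-1}x$, recovering $\pm T$; the remaining contributions, from $d_C$, $d_A$, or $d_{CP}$ at non-leftmost positions, cancel with the corresponding contributions of $Hd(T)$. If instead the leftmost top piece is a $\C_\pl$-top-label $y$, then $H(T)=0$, while $d_{CP}$ at position $1$ in $d(T)$ converts $y$ to $s^{-1}y$, which $H$ then returns to $y$, producing $\pm T$ in $Hd(T)$; the other terms of $Hd(T)$ vanish modulo $G_{k+1}$ because $H$ requires the leftmost top piece to be an $\operad P_\pl$-top-node. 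With the standard sign conventions of the bar-cobar construction, these contributions combine to give $\partial(H)(T)\equiv T$ in both non-trivial cases, so $\partial(H) + \pi_A \equiv \id \pmod{G_{k+1}}$.

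The hard part will be the sign bookkeeping and the precise verification that the off-diagonal contributions cancel on the associated graded; the required commutativity, up to sign, of $H$ with $d_C$ and with $d_{CP}$ at non-leftmost positions rests on the inductive definition of $h$ and on the twisting-morphism equation satisfied by $\iota:\C\to\Omega\C$, which encodes the coassociativity of $\C$ together with the compatibility with its curvature. Crucially, on each $\gr_{i+1}^{\qp}\C$ the differential has the form $d_\pl \otimes \id_{\mathbb S}$ by quasi-planarity, so the whole verification on $\gr_n^{\qp}(QA)$ is pulled back from a computation on the planar cooperad $\C_\pl$, where it reduces to the classical contracting-homotopy identity underlying Koszul duality over a field of characteristic zero.
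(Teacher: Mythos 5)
Your overall architecture---handle $\gr_0^{\qp}$ separately, then on each $\gr_n^{\qp}(QA)$ introduce a bounded secondary filtration on which $\partial(H)+\pi_A$ becomes the identity on the associated graded---is the paper's, but your choice of secondary filtration breaks at exactly the two points your reduction relies on. First, $d_\Delta$ does \emph{not} strictly increase the top-piece count: when it splits a top node $s^{-1}x$ into $s^{-1}x_{(1)}\comp_i s^{-1}x_{(2)}$, the upper node is again a top piece and the lower one is not, so the count is unchanged; hence $d_\Delta$ does not land in $G_{k+1}$ and survives on your associated graded. Concretely, for $T$ a single corolla labelled by an element of qp-weight $\geq 2$ with a non-trivial weight-preserving decomposition, $H d_\Delta(T)$ contains trees with one $s^{-1}\overline{\C}_\pl$-node and one non-trivial $\C_\pl$-label; under the natural reading of ``top'' (maximal pieces of the labelled tree, which is the reading built into $h$) these have the same top-piece count as $T$, are not multiples of $T$, and nothing in your case analysis cancels them (note $d_\Delta H(T)=0$ since $d_\Delta$ does not act on $\C_\pl$-labels). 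Second, $d_{CP}$ does \emph{not} preserve the count: converting a non-trivial $\C_\pl$-top-label sitting above a top $\operad P_\pl$-node into an $s^{-1}\overline{\C}_\pl$-node creates one top node but demotes the node underneath, a net change of $-1$; so $G_\bullet$ is not even preserved by the operators you want to commute past $H$, and the congruences modulo $G_{k+1}$ are not well-posed. (There is also a slip in the base case: inside $\gr_n^{\qp}(QA)$ with $n\geq 1$ the layer of maximal top-piece count is not $A$; the summand $A$ has qp-weight $0$ and only appears in $\gr_0^{\qp}$, where indeed $\pi_A=\id$ and $\partial(H)=0$, while $\pi_A=0$ on all of $\gr_n^{\qp}$ for $n>0$.)

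The repair is to count \emph{all} pieces, not just the maximal ones: this is what the paper does, filtering $\gr_n^{\qp}(QA)$ by (the opposite of) the total number of nodes, i.e.\ every $s^{-1}\overline{\C}_\pl$-node together with every non-trivial $\C_\pl$-label. That count is exactly preserved by $H$ and $d_{CP}$ (they trade a label for a top node one-for-one), while $d_\C$ (which on $\gr_n^{\qp}$ is planar by quasi-planarity) and $d_A$ commute with $H$ outright, and $d_{CA}$, $d_\theta$ vanish there; $d_\Delta$ raises the count by exactly one, hence dies on the associated graded. Only then is one reduced to the leftmost-top-node cancellation $d_{CP}H+Hd_{CP}=\id$ that you sketch, and that computation is legitimate precisely because no other component of the differential interferes on the associated graded.
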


\begin{proof}
On $\gr_0^{\qp}(QA)$, $\pi_A$ is the identity while $\partial(H)$ is zero. So $\partial(H) + \pi_A$ is the identity.

\medskip

Let $n>0$. On $\gr_n^{\qp}(QA)$, $\pi_A$ is zero. It remains to show that $\partial (H)$ is an isomorphism. One can notice that on $\gr_n^{\qp}(QA)$, $d_{\C}$ is reduced to its planar part in the sense that it is given by applying to every labels in $\C_\pl$ the planar part of the coderivation of $\C$. As a consequence it commutes with $H$. So, again on  $\gr_n^{\qp}(QA)$, one has
$$
\partial H = d_{CP} H + H d_{CP} + d_\Delta H + H d_\Delta.
$$
The labelled trees that form $\gr_n^{\qp}(\operad P_\pl \comp_\pl \C_\pl)$ have at most $n$ nodes. One can filter it by the opposite of the number of nodes. The induced filtration on 
$\gr_n^{\qp}(QA)$ is preserved by $d_{CP}, d_\Delta$ and $H$. Moreover, on the associated graded object $\gr \gr_n^{\qp}(QA)$, $d_\Delta$ is zero and $\partial H$ is
$$
\partial H = d_{CP} H + H d_{CP} =  \id.
$$
Subsequently, $\partial H$ is an isomorphism of $\gr_n^{\qp}(QA)$.

\medskip

Thus for every natural integer $n$, $\partial(H) + \pi_A$ induces an isomorphism on $\gr_n^{\qp}(QA)$. So it is an automorphism of $QA$, which concludes the proof.
\end{proof}


\subsection{Fibrant objects}
Fibrant objects in the transferred model structure on curved $\C$-coalgebras admit a comparatively much simpler description than cofibrant objects in the model structure of dg $\Omega \C$-algebras. They are given by quasi-cofree $\C$-coalgebras, that is, curved $\C$-coalgebras whose underlying graded $\C$-coalgebra is cofree.

\begin{lemma}\label{lemmaBsiftedcolimits}
    The functor $\mathrm{B}_{\operad C}$ commutes with sifted colimits.
\end{lemma}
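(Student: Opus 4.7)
The plan is to identify sifted colimits of bar constructions at the level of underlying pdg modules, and then transfer the identification back to $\catcurvcog{\operad C}$ via the coreflection.

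First I would assemble three preservation facts already established in the paper: (i) the functor $\operad C \comp -$ on pdg modules preserves sifted colimits; (ii) the forgetful functor $\catdgalg{\Omega \operad C} \to \catdgmod{\kk}$ preserves sifted colimits, since the monad $\Omega \operad C \comp -$ does; (iii) the forgetful functor $\catpdgcog{\operad C} \to \catpdgmod{\kk}$ is left adjoint to the cofree $\operad C$-coalgebra functor $X \mapsto \operad C \comp X$, hence preserves all colimits.

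Given a sifted diagram $\{A_i\}_{i \in I}$ of dg $\Omega \operad C$-algebras with colimit $A$, combining (ii) and (i) identifies the underlying pdg module of $\mathrm{B}_{\operad C} A \cong \operad C \comp A$ with $\mathrm{colim}^{\mathrm{pdg}}_i (\operad C \comp A_i)$. By (iii), this is equally the underlying pdg module of the colimit $\mathrm{colim}^{\mathrm{pdg}}_i \mathrm{B}_{\operad C} A_i$ taken in $\catpdgcog{\operad C}$. Functoriality of $\mathrm{B}_{\operad C}$ then provides a canonical comparison morphism
\[
\mathrm{colim}^{\mathrm{pdg}}_i \mathrm{B}_{\operad C} A_i \longrightarrow \mathrm{B}_{\operad C} A
\]
in $\catpdgcog{\operad C}$; it is an isomorphism on underlying pdg modules, and since the forgetful functor $\catpdgcog{\operad C} \to \catpdgmod{\kk}$ is conservative (being the forgetful from coalgebras over the comonad $\operad C \comp -$), it is already an isomorphism in $\catpdgcog{\operad C}$.

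To finish I would upgrade this isomorphism to $\catcurvcog{\operad C}$. The inclusion $\catcurvcog{\operad C} \hookrightarrow \catpdgcog{\operad C}$ is fully faithful and is the \emph{left} adjoint of the coreflector $\mathrm{Curv}$ (see Section~\ref{sectioncommuteslimitscolimitscog}), so it preserves colimits; consequently the curved colimit $\mathrm{colim}^{\mathrm{curv}}_i \mathrm{B}_{\operad C} A_i$ agrees with $\mathrm{colim}^{\mathrm{pdg}}_i \mathrm{B}_{\operad C} A_i \cong \mathrm{B}_{\operad C}(\mathrm{colim}_i A_i)$. No single step is a serious obstacle; the only real task is careful bookkeeping across the four categories in play (dg modules, pdg modules, pdg $\operad C$-coalgebras, curved $\operad C$-coalgebras) and invoking the appropriate preservation fact at each stage.
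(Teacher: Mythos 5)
Your proof is correct and follows essentially the same route as the paper's: the paper's one-line argument is precisely that sifted colimits in dg $\Omega\operad C$-algebras and in curved $\operad C$-coalgebras are computed in the underlying (graded) modules, where $\operad C \comp (-)$ preserves sifted colimits. Your extra bookkeeping through pdg $\operad C$-coalgebras and the coreflection $\mathrm{Curv}$ just makes explicit why curved colimits agree with the underlying ones, which the paper packages into the statement that the forgetful functors preserve and create these colimits.
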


\begin{proof}
    This follows from the fact that sifted colimits in dg $\Omega \operad C$-algebras and in curved $\operad C$-coalgebras are computed in graded $\kk$-modules and that the endofunctor of graded $\kk$-modules $\operad C \comp (-)$ preserves sifted colimits.
\end{proof}

\begin{remark}
    The functor $\mathrm{B}_{\operad C}$ is also conservative and thus it is monadic. Moreover, $\Omega_{\operad C}$ preserves finite cosifted limits and is conservative, thus it is comonadic. Hence, the adjunction $\Omega_{\operad C}\dashv \mathrm{B}_{\operad C}$ is bimonadic; see for \cite{augmentedmonadsandbimonadicdjunctions} for more details.
\end{remark}

\begin{proposition}
Let $W$ be a curved $\operad C$-coalgebra. The following assertions are equivalent:

\medskip

\begin{enumerate}
    \item $W$ is fibrant;
    
\medskip

    \item $W$ is in the essential image of the functor $\mathrm{B}_{\operad C}$;
    
\medskip

    \item $W$ is a quasi-cofree $\operad C$-coalgebra, that is, its underlying graded $\C$-coalgebra is cofree.
\end{enumerate}    
\end{proposition}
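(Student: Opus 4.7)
The plan is to first establish the equivalence of (2) and (3), which is essentially tautological and does not rely on the model structure. The implication (2) $\Rightarrow$ (3) is immediate since $\mathrm{B}_\C A$ has underlying graded $\C$-coalgebra $\C \comp A$. Conversely, if $W$ has underlying graded $\C$-coalgebra $\C \comp X$, the coderivation of $W$ is freely determined by its projection onto the cogenerators $X$, and the curvature equation for $W$ translates, through the universal property of $\Omega \C$ with respect to the universal twisting morphism $\iota \colon \C \to \Omega \C$, into a dg $\Omega \C$-algebra structure on $X$; one then verifies that $W \cong \mathrm{B}_\C X$ canonically as curved $\C$-coalgebras.

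For (2) $\Rightarrow$ (1), I would fix $W = \mathrm{B}_\C A$ and, given any acyclic cofibration $f \colon K \rightarrowtail L$ of curved $\C$-coalgebras and a morphism $g \colon K \to \mathrm{B}_\C A$, transpose adjointly to obtain a morphism $\widetilde g \colon \Omega_\C K \to A$ of dg $\Omega \C$-algebras. By the very definition of the model structure on curved $\C$-coalgebras, $\Omega_\C f$ is an acyclic cofibration of dg $\Omega \C$-algebras; since every dg $\Omega \C$-algebra is fibrant (fibrations being degree-wise epimorphisms), a lift exists, and its adjoint solves the original lifting problem.

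The implication (1) $\Rightarrow$ (3) is the substantive one. First, I would show that the unit $\eta_W \colon W \to \mathrm{B}_\C \Omega_\C W$ is an acyclic cofibration: it is a cofibration by Proposition \ref{propcofibrations}, being a degree-wise injection (a graded section is given by the counit of $\C$ combined with the projection $\Omega \C \comp W \twoheadrightarrow W$); and it is a weak-equivalence because $\Omega_\C \eta_W$ is retracted by the counit $\epsilon_{\Omega_\C W}$ via the triangle identity, and $\epsilon_{\Omega_\C W}$ is a quasi-isomorphism by Lemma \ref{lemmaquillenequivalence}, so $\Omega_\C \eta_W$ is a quasi-isomorphism by two-out-of-three. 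Fibrancy of $W$ then provides, by lifting $\id_W$ along $\eta_W$, a retraction $r \colon \mathrm{B}_\C \Omega_\C W \to W$, exhibiting the underlying graded $\C$-coalgebra $W^\sharp$ as a retract of the cofree graded $\C$-coalgebra $\C \comp (\Omega \C \comp W)$.

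The main obstacle is then the purely algebraic claim that a retract of a cofree graded conilpotent $\C$-coalgebra is cofree. I would attack this inductively along the coradical filtration: setting $Y \coloneqq \pi \circ s(W^\sharp) \subseteq \Omega \C \comp W$, where $s \colon W^\sharp \hookrightarrow \C \comp (\Omega \C \comp W)$ is the section and $\pi \colon \C \comp (\Omega \C \comp W) \twoheadrightarrow \Omega \C \comp W$ is the cogenerator projection, the canonical map $W^\sharp \to \C \comp Y$ induced by the comultiplication of $W^\sharp$ followed by $\C \comp (\pi s)$ should restrict to an isomorphism on each stage $F_n^{\mathrm{rad}} W^\sharp$ by induction on $n$, using the explicit description of $F_n^{\mathrm{rad}}(\C \comp Y)$ and the fact that the coradical filtration is preserved by retracts. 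Passing to the colimit over $n$ would then yield $W^\sharp \cong \C \comp Y$, which combined with (3) $\Rightarrow$ (2) places $W$ in the essential image of $\mathrm{B}_\C$.
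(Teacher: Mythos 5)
Your equivalence of (2) and (3), your lifting argument for (2) $\Rightarrow$ (1), and the first half of (1) $\Rightarrow$ (3) (showing that $\eta_W$ is an acyclic cofibration and extracting a retraction $r$ from fibrancy) all match the paper. The gap is in your final step. You set $Y := \pi\circ s(W)$ with $s = \eta_W$; but the corestriction $\pi\circ\eta_W$ is, by the counit axiom, exactly the canonical inclusion $\iota_W \colon W \hookrightarrow \Omega\operad C \comp W$, so $Y \cong W$ as a graded module, and the map $(\operad C \comp (\pi s))\circ\Delta_W \colon W \to \operad C \comp Y$ is nothing but $\eta_W$ corestricted, i.e.\ essentially $\Delta_W$. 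This is injective but almost never surjective: already $F_1^{\mathrm{rad}}(\operad C\comp Y)\cong Y\cong W$, whereas $F_1^{\mathrm{rad}}W$ (the primitives) is in general a proper subobject, so the claimed stage-by-stage isomorphism fails at $n=1$. A telling symptom is that your construction of $Y$ and of the comparison map never uses the retraction $r$ at all; if the argument worked it would prove every curved $\operad C$-coalgebra quasi-cofree. The true statement "a graded retract of a cofree conilpotent $\operad C$-coalgebra is cofree" requires extracting the cogenerators from the idempotent $e = \eta_W\circ r$ (e.g.\ the image of its linear part on the cogenerators, which is idempotent), not from the section alone; with the corestriction of the section one can write down explicit counterexamples already for the coassociative cooperad, where the image of the corestriction of a split monomorphism into $\overline{T}^c(V)$ can be strictly larger than a space of cogenerators for the retract.

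For comparison, the paper avoids the structure theory of retracts of cofree coalgebras entirely. From $r\eta_W = \mathrm{id}$ it forms the reflexive pair $\Omega_{\operad C}(r),\ \epsilon_{\Omega_{\operad C}W} \colon \Omega_{\operad C}\mathrm{B}_{\operad C}\Omega_{\operad C}W \rightrightarrows \Omega_{\operad C}W$ (with common section $\Omega_{\operad C}(\eta_W)$), takes its coequaliser $A$ in dg $\Omega\operad C$-algebras, and uses that $\mathrm{B}_{\operad C}$ preserves sifted colimits (Lemma \ref{lemmaBsiftedcolimits}) together with the splitting provided by $\eta_W$ and $r$ to identify the coequaliser of the image pair with $W$, giving $W \cong \mathrm{B}_{\operad C}A$ directly. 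If you want to keep your algebraic route, you must redo the last step along the lines of "the linear part of the idempotent $\eta_W r$ is an idempotent on the cogenerators whose image $U$ satisfies $W \cong \operad C\comp U$, proved by an associated-graded argument along the coradical filtration"; as written, the proof does not go through.
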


\begin{proof}
Let $W$ be a quasi-cofree $\operad C$-coalgebra, that is, a curved $\operad C$-coalgebra whose underlying graded $\operad C$-coalgebra is of the form $W \cong \operad C \comp A$. A straightfoward check shows that the degree $0$ map
    
    \[
    s^{-1}\overline{\operad C} \comp A \longrightarrow \overline{\operad C} \comp A
    \hookrightarrow {\operad C} \comp A
    \xrightarrow{d_V} {\operad C} \comp A \twoheadrightarrow A
    \]
    \vspace{0.1pc}

    defines the structure of a dg $\Omega \operad C$-algebra on $A$. Therefore, $W \cong \mathrm{B}_{\operad C} A$. Conversely, every image of a dg $\Omega \operad C$-algebra $A$ through $\mathrm{B}_{\operad C}$ is quasi-cofree by definition.
    
    \medskip

    Now, let $W$ be a fibrant object in curved $\operad C$-coalgebras. Let us prove that it is quasi-cofree. The unit of adjunction $\eta_C: W \longrightarrow \mathrm{B}_{\operad C}\Omega_{\operad C} W$ admits a retraction $r$, since it is an acyclic cofibration and since $W$ is fibrant. Let $A$ be the colimit of the reflexive pair of maps
    $$
    \begin{tikzcd}
    \Omega_{\operad C} \mathrm{B}_{\operad C} \Omega_{\operad C} W
    \ar[rr, bend left, "\Omega_{\operad C}(r)"]
    \ar[rr, bend right, "\epsilon_{\Omega_{\operad C} W}"]
    &&\Omega_{\operad C} W \ar[r] \ar[ll]
    &A    
    \end{tikzcd}
    $$
    in the category of dg $\Omega \operad C$-algebras, where $\epsilon_{\Omega_{\operad C} W}: \Omega_{\operad C} \mathrm{B}_{\operad C} \Omega_{\operad C} W \longrightarrow \Omega_{\operad C} W$ is the counit of adjunction. By Lemma \ref{lemmaBsiftedcolimits}, the colimit of the diagram 
    
    $$
    \begin{tikzcd}
    \mathrm{B}_{\operad C}\Omega_{\operad C} \mathrm{B}_{\operad C} \Omega_{\operad C} W
    \ar[rr, bend left, "\mathrm{B}_{\operad C}\Omega_{\operad C}(r)"]
    \ar[rr, bend right, "\mathrm{B}_{\operad C}(\epsilon_{\Omega_{\operad C} W})"]
    &&\mathrm{B}_{\operad C}\Omega_{\operad C} W
    \end{tikzcd}
    $$
    
    of curved $\C$-coalgebras is $\mathrm{B}_{\operad C} A$, since $\mathrm{B}_{\operad C}$ preserves sifted colimits. It is also clear that this colimit is $W$. So there exists a canonical isomorphism $W \cong \mathrm{B}_{\operad C} A$.
\end{proof}


\subsection{Infinity morphisms}
The notion of $\infty$-morphism extends the usual notion of morphisms of dg $\Omega \C$-algebras. Their main advantage is that $\infty$-quasi-isomorphisms are invertible (up to homotopy), and therefore one can replace a zig-zag of quasi-isomorphisms of dg $\Omega \C$-algebras with two inverse $\infty$-quasi-isomorphisms. This provides a powerful tool to describe the homotopy category of dg $\Omega \C$-algebras.

\medskip

Recall that for every dg $\Omega \operad C$-algebra, the counit map $\epsilon_A: \Omega_{\operad C} \mathrm{B}_{\operad C} A \longrightarrow A$ has a canonical section $\zeta_A: A \longrightarrow \Omega_{\operad C} \mathrm{B}_{\operad C} A$ in the category of dg modules. Thus, if $K$ is the kernel of this counit map, one has a canonical decomposition of the dg module $\Omega_{\operad C} \mathrm{B}_{\operad C} A$ as
$$
\Omega_{\operad C} \mathrm{B}_{\operad C} A \cong K \oplus A.
$$

\begin{definition}[$\infty$-morphisms]
    Let $A, A'$ be two dg $\Omega \operad C$-algebras. An $\infty$-\textit{morphism} from $f:A \rightsquigarrow A'$ amounts to the data of, equivalently:
    
    \medskip
    
    \begin{enumerate}
    \item a morphism of dg $\Omega \operad C$-algebras $f : \Omega_{\operad C} \mathrm{B}_{\operad C} A \longrightarrow A'$,
    
    \medskip
    
    \item a morphism of curved $\operad C$-coalgebras $f^\dagger: \mathrm{B}_{\operad C} A \longrightarrow \mathrm{B}_{\operad C} A'$.
    \end{enumerate}
\end{definition}
    
\textbf{Linear part.} Let $f:A \rightsquigarrow A'$ be an $\infty$-morphism of dg $\Omega \operad C$-algebras. Its \textit{linear part} $f_{\mathrm{dg}}$ is the morphism of dg modules given by the composition

\[
\begin{tikzcd}[column sep=3pc,row sep=3pc]
f_{\mathrm{dg}}: A \arrow[r,"\zeta_A"]
&\Omega_{\operad C} \mathrm{B}_{\operad C} A \arrow[r,"f"]
&A'~.
\end{tikzcd}
\]

Let us denote $\epsilon: \C \longrightarrow \operad I$ and $\mu: \operad I \longrightarrow \C$ the counit and the coaugmentation of the quasi-planar conilpotent curved cooperad $\C$, respectively. The linear part of $f:A \rightsquigarrow A'$ is equivalently given by 

\[
\begin{tikzcd}[column sep=3pc,row sep=3pc]
f_{\mathrm{dg}}: A \arrow[r,"\mu ~\circ ~\id"]
&\mathrm{B}_{\operad C} A \arrow[r,"f^{\dagger}"]
&\mathrm{B}_{\operad C} A' \arrow[r,"\epsilon ~\circ ~\id"]
&A'~.
\end{tikzcd}
\]
    
\textbf{Homotopy part.} Let $f:A \rightsquigarrow A'$ be an $\infty$-morphism of dg $\Omega \operad C$-algebras. Its \textit{homotopy part} $f_{\mathrm{h}}$ is the morphism of dg modules given by the composition

\[
\begin{tikzcd}[column sep=3pc,row sep=3pc]
f_{\mathrm{h}}: K \arrow[r,rightarrowtail]
&\Omega_{\operad C} \mathrm{B}_{\operad C} A \arrow[r,"f"]
&A'~,
\end{tikzcd}
\]
where $K$ is the kernel of the counit map $\Omega_{\operad C} \mathrm{B}_{\operad C} A \longrightarrow A$. Equivalently, it is given by the composition

\[
\begin{tikzcd}[column sep=3pc,row sep=3pc]
f_{\mathrm{h}}: M \arrow[r,rightarrowtail]
&\mathrm{B}_{\operad C} A \arrow[r,"f^{\dagger}"]
&\mathrm{B}_{\operad C} A' \arrow[r,"\epsilon ~\circ ~\id"]
&A'
\end{tikzcd}
\]
where $M$ is the kernel of the map $\epsilon \circ \id : \mathrm{B}_{\operad C} A \twoheadrightarrow A$ induced by the counit of $\C$.

\begin{definition}[$\infty$-quasi-isomorphism]
An $\infty$-morphism $f:A \rightsquigarrow A'$ is called an $\infty$-\textit{quasi-isomorphism} if its linear part $f_{\mathrm{dg}}$ is a quasi-isomorphism of dg modules.
\end{definition}

\begin{definition}[$\infty$-isotopy]
An $\infty$-morphism $f:A \rightsquigarrow A$ is called an $\infty$-\textit{isotopy} if its linear part $f_{\mathrm{dg}}$ is an identity map of $A$.
\end{definition}

\begin{lemma}\label{lemmainftyfibrationavant}
Let $f:A \rightsquigarrow A'$ be an $\infty$-morphism. If $f_{\mathrm{dg}}$ is a fibration, then $f^\dag$ is the composition of an $\infty$-isotopy followed by a strict fibration of curved $\C$-coalgebras, that is, a fibration of the form $\mathrm{B}_{\operad C}(p)$, where $p$ is a fibration of dg $\Omega \C$-algebras.
\end{lemma}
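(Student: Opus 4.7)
The plan is to construct a dg $\Omega\operad C$-algebra $\tilde A$ whose underlying dg module coincides with that of $A$, a strict morphism $p : \tilde A \to A'$ of dg $\Omega\operad C$-algebras whose underlying dg module map is $f_{\mathrm{dg}}$, and an $\infty$-morphism $\phi : A \rightsquigarrow \tilde A$ with identity linear part such that $f^\dag = \mathrm{B}_{\operad C}(p) \circ \phi^\dag$. This yields the desired factorization, since $p$ will be a fibration as $f_{\mathrm{dg}}$ is a degree-wise epimorphism.

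Since $f_{\mathrm{dg}}$ is a degree-wise surjection of dg $\kk$-modules over a field, it admits a graded section $\sigma : A' \to A$. Let $F : \mathrm{B}_{\operad C} A = \operad C \comp A \to A'$ denote the cogenerator projection of $f^\dag$, so that $F|_A = f_{\mathrm{dg}}$. First I would define a graded map $\Phi : \operad C \comp A \to A$ by $\Phi|_A = \id_A$ on the cogenerator component and $\Phi|_{\overline{\operad C} \comp A} = \sigma \circ F$ on the complement. The associated morphism $\phi^\dag : \operad C \comp A \to \operad C \comp A$ of graded cofree $\operad C$-coalgebras has identity linear part and is therefore an isomorphism of graded $\operad C$-coalgebras, its inverse being constructed inductively along the quasi-planar filtration of $\operad C$.

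Next I would transfer the coderivation onto the target by setting $d_{\mathrm{B}_{\operad C} \tilde A} := \phi^\dag \circ d_{\mathrm{B}_{\operad C} A} \circ (\phi^\dag)^{-1}$. This is automatically a coderivation on $\operad C \comp A$, and it satisfies the curvature equation because $\phi^\dag$ is a morphism of graded $\operad C$-coalgebras and hence commutes with the composite $(\theta \comp \id)\,\Delta_{(1)}$ defining the curvature. Since $\phi^\dag$ has identity linear part, the restriction of the transferred coderivation to the cogenerator component coincides with $d_A$, so the construction produces a dg $\Omega\operad C$-algebra $\tilde A$ with underlying dg module $A$. Setting $p := f_{\mathrm{dg}}$ viewed as a map $\tilde A \to A'$, the identity $\mathrm{B}_{\operad C}(p) \circ \phi^\dag = f^\dag$ follows by comparing projections to cogenerators: one computes $f_{\mathrm{dg}} \circ \Phi$ to equal $f_{\mathrm{dg}}$ on $A$ and $f_{\mathrm{dg}} \circ \sigma \circ F = F$ on $\overline{\operad C} \comp A$ (using $f_{\mathrm{dg}} \circ \sigma = \id_{A'}$), thus recovering $F$. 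Consequently, $p$ intertwines the dg $\Omega\operad C$-structures, so is a strict morphism, and is a fibration because $f_{\mathrm{dg}}$ is a degree-wise epimorphism.

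The main obstacle will be justifying that the transferred coderivation $d_{\mathrm{B}_{\operad C} \tilde A}$ decomposes as the sum of the canonical pre-differential from $\operad C$, the differential of $A$, and a genuine $\Omega\operad C$-action map $\Omega\operad C \comp A \to A$; equivalently, that the transfer preserves the quasi-cofree form required to identify this curved $\operad C$-coalgebra as $\mathrm{B}_{\operad C} \tilde A$. This rests on the fact that $\phi^\dag$ is $\operad C$-equivariant (preserving the $d_{\operad C}$-component of the coderivation) and has identity linear part (preserving the $d_A$-component). The quasi-planar structure of $\operad C$ is essential to make the inversion of $\phi^\dag$ and the associated term-by-term bookkeeping well-defined in positive characteristic.
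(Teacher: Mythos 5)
Your proposal is correct and follows essentially the same route as the paper: choose a graded section of $f_{\mathrm{dg}}$, form the graded $\operad C$-coalgebra endomorphism of $\operad C \comp A$ with identity linear part and homotopy part $\sigma \circ f_{\mathrm{h}}$, invert it, conjugate the coderivation to obtain a new dg $\Omega\operad C$-algebra structure on $A$, and check $\mathrm{B}_{\operad C}(f_{\mathrm{dg}}) \circ \phi^{\dag} = f^{\dag}$. The only cosmetic difference is that the paper inverts the automorphism using the coradical filtration ($\gr^{\mathrm{rad}}(t)=\id$) rather than the quasi-planar one, which changes nothing essential.
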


\begin{proof}
Let $s: A' \longrightarrow A$ be a section of $f_{\mathrm{dg}}$ in the category of graded $\kk$-modules that is, a map such that $f_{\mathrm{dg}} s = \id$. Let us consider the degree $0$ map $\tau$ from $\operad C \comp A$ to $A$ defined by $\tau_{\mathrm{dg}} \coloneqq \id_A$ and by 

\[
\begin{tikzcd}[column sep=3pc,row sep=3pc]
\tau_{\mathrm{h}} \coloneqq: \overline{\operad C} \comp A \arrow[r,"f_{\mathrm{h}}"]
&A' \arrow[r,"s"]
&A~.
\end{tikzcd}
\]
It induces a graded endomorphism $t$ of the graded $\operad C$-coalgebra $\operad C \comp A$ given by 
\[
\begin{tikzcd}[column sep=3pc,row sep=3pc]
t: \operad C \comp A \arrow[r,"\Delta ~ \comp ~ \id"]
&\operad C \comp \operad C \comp A \arrow[r,"\id ~ \comp ~ \tau"]
&\operad C \comp A~,
\end{tikzcd}
\]
that is an isomorphism since $\gr^{\mathrm{rad}}(t) = \id$. Let us denote $D$ the coderivation of $\mathrm{B}_{\operad C} A$ and let
$$
\tilde D \coloneqq t D t^{-1}.
$$
This is again a coderivation of the graded $\C$-coalgebras $\operad C \comp A$ such that $\tilde D^2 = (\theta \comp \id)\Delta$. Therefore it defines a dg $\Omega \C$-algebra structure on $A$, and $t: A \rightsquigarrow A$ becomes an $\infty$-isotopy from $(\C \comp A, D)$ to $(\C \comp A, \tilde D)$. Moreover, we have
    $$
    (\operad C \comp f_{\mathrm{dg}}) t = f^\dagger~.
    $$
\end{proof}

\begin{proposition}\label{propinfinitmorphalgebra}
Let $f:A \rightsquigarrow A'$ be an $\infty$-morphism of dg $\Omega \operad C$-algebras. The morphism of curved $\operad C$-coalgebras $f^\dagger: \mathrm{B}_{\operad C} A \longrightarrow \mathrm{B}_{\operad C} A'$ is

\medskip

    \begin{enumerate}
        \item a weak-equivalence if and only if the linear part $f_{\mathrm{dg}}$ is a quasi-isomorphism;
        
\medskip

        \item an isomorphism if and only if the dg part $f_{\mathrm{dg}}$ is an isomorphism;
        
\medskip

        \item a fibration if and only if the dg part $f_{\mathrm{dg}}$ is a degree-wise epimorphisms;
        
\medskip

        \item a cofibration if and only if the dg part $f_{\mathrm{dg}}$ is a degree-wise injection.
    \end{enumerate}
\end{proposition}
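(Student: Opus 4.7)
I handle the four equivalences in the order (4), (2), (1), (3), exploiting a shared filtration argument. As a morphism of graded $\operad C$-coalgebras, $f^\dagger$ is the cofree extension of the composite $g = (\epsilon \comp \id) \circ f^\dagger : \operad C \comp A \to A'$ via the formula $f^\dagger = (\id_{\operad C} \comp g) \circ (\Delta \comp \id)$, and the restriction of $g$ to $\operad I \comp A = A$ is exactly $f_{\mathrm{dg}}$. Such a morphism preserves the coradical filtration $F_n^{\mathrm{rad}}$, and a direct computation shows that on the $n$-th associated graded piece it recovers $\id_{\gr_n^{\mathrm{rad}} \operad C} \comp f_{\mathrm{dg}}$.

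For (4), cofibrations are precisely degree-wise injections by Proposition \ref{propcofibrations}. One direction is trivial: if $f_{\mathrm{dg}}(a) = 0$ with $a \neq 0$, then $f^\dagger(\mu \otimes a) = 0$, so $f^\dagger$ fails to be injective. Conversely, an ordinal induction on the coradical filtration using the $5$-lemma applied to the short exact sequences $0 \to F_{n-1}^{\mathrm{rad}} \to F_n^{\mathrm{rad}} \to \gr_n^{\mathrm{rad}} \to 0$, together with passage to the directed colimit $\mathrm{B}_{\operad C} A = \colim_n F_n^{\mathrm{rad}}$, shows that injectivity of $f_{\mathrm{dg}}$ propagates to injectivity of $f^\dagger$. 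The same filtration argument with ``isomorphism'' replacing ``injection'' yields (2).

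For (1), by definition $f^\dagger$ is a weak-equivalence iff $\Omega_{\operad C}(f^\dagger)$ is a quasi-isomorphism; using the adjunction identity $f = \epsilon_{A'} \circ \Omega_{\operad C}(f^\dagger)$ and the fact that $\epsilon_{A'}$ is a quasi-isomorphism (Lemma \ref{lemmaquillenequivalence}), this reduces to $f: \Omega_{\operad C} \mathrm{B}_{\operad C} A \to A'$ being a quasi-isomorphism. But as a dg module $\Omega_{\operad C} \mathrm{B}_{\operad C} A \cong A \oplus K$, where $K = \ker \epsilon_A$ is acyclic (since $\epsilon_A$ is a quasi-isomorphism admitting $\zeta_A$ as a section) and $f|_A = f_{\mathrm{dg}}$; hence $f$ is a quasi-isomorphism iff $f_{\mathrm{dg}}$ is.

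The ``if'' direction of (3) is immediate from Lemma \ref{lemmainftyfibrationavant}: given that $f_{\mathrm{dg}}$ is a degree-wise epi, that lemma factors $f^\dagger$ as $t \circ \mathrm{B}_{\operad C}(p)$ with $t$ an $\infty$-isotopy (hence an isomorphism by (2)) and $p$ a fibration of dg $\Omega \operad C$-algebras; since $\mathrm{B}_{\operad C}$ is right Quillen, $\mathrm{B}_{\operad C}(p)$ is a fibration and so is $f^\dagger$. The converse is the main obstacle. My plan is to argue by contrapositive: given $a' \in A'_n$ not in the image of $f_{\mathrm{dg}}$, test $f^\dagger$ against the acyclic cofibration $\mathrm{B}_{\operad C}(A) \hookrightarrow \mathrm{B}_{\operad C}(A \sqcup \Omega_{\operad C} D^{n+1})$, which is a cofibration because the underlying map of dg modules is a degree-wise injection and a weak-equivalence because $\mathrm{B}_{\operad C}$ preserves weak equivalences between the (necessarily fibrant) dg $\Omega \operad C$-algebras. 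The delicate step, which I anticipate as the hardest technical point, is to construct the attaching square so that the bottom arrow encodes a choice hitting $a'$ and such that the existence of a lift would produce an honest preimage of $a'$ under $f_{\mathrm{dg}}$, contradicting the hypothesis.
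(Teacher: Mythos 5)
Parts (1), (2) and (4) of your argument are correct and essentially coincide with the paper's proof: the paper also reduces (1) to the fact that the counits $\epsilon_A,\epsilon_{A'}$ are quasi-isomorphisms, proves (2) via $\gr_n^{\mathrm{rad}}(f^\dagger)=\id_{\gr_n \operad C}\comp f_{\mathrm{dg}}$, and characterises cofibrations as degree-wise injections; your coradical-filtration/$5$-lemma induction in (4) is a mild variant of the paper's trick (which instead builds the graded automorphism $(\id\comp g)\,f^\dagger$ from a retraction $g$ of $f_{\mathrm{dg}}$), and both are fine, granting the splitting argument needed to see that $\id_{\gr_n\operad C}\comp(-)$ preserves injections over a field. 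The ``if'' direction of (3) via Lemma \ref{lemmainftyfibrationavant} is also the paper's argument.

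The genuine gap is the ``only if'' direction of (3): you do not prove it, you only announce a plan and explicitly defer its key step. Concretely, your strategy requires a commutative square whose bottom arrow is a morphism of curved $\operad C$-coalgebras $\Phi:\mathrm{B}_{\operad C}(A\sqcup\Omega_{\operad C}D^{n+1})\longrightarrow \mathrm{B}_{\operad C}A'$ that simultaneously (i) restricts to $f^\dagger$ on $\mathrm{B}_{\operad C}A$ and (ii) has prescribed linear part $x\mapsto a'$ on the disk generator. Neither requirement comes for free: there is no strict algebra morphism $A\to A'$ to which one could apply $\mathrm{B}_{\operad C}$, and while an abstract lifting argument (the inclusion $\Omega_{\operad C}\mathrm{B}_{\operad C}A\to\Omega_{\operad C}\mathrm{B}_{\operad C}(A\sqcup\Omega_{\operad C}D^{n+1})$ is an acyclic cofibration and every algebra is fibrant) does produce \emph{some} extension of $f$, it gives no control whatsoever over where the disk generator is sent, which is exactly what your contrapositive needs; a by-hand construction of $\Phi$ amounts to extending the twisting-morphism datum of $f$ over the mixed trees of the coproduct $A\sqcup\Omega_{\operad C}D^{n+1}$, a nontrivial compatibility that you do not address. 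The paper avoids this entirely with a much smaller test object: equip a dg module $X$ with the \emph{trivial} curved $\operad C$-coalgebra structure. Then morphisms of curved $\operad C$-coalgebras $\mathrm{Triv}(X)\to\mathrm{B}_{\operad C}A$ are in natural bijection with morphisms of dg modules $X\to A$ (the twisting term of the bar differential vanishes on the coaugmentation), and $\mathrm{Triv}$ sends acyclic cofibrations of dg modules to degree-wise injective filtered quasi-isomorphisms, hence to acyclic cofibrations of curved $\operad C$-coalgebras. The right lifting property of $f^\dagger$ against these therefore translates verbatim into the right lifting property of $f_{\mathrm{dg}}$ against acyclic cofibrations of dg modules, i.e.\ into $f_{\mathrm{dg}}$ being a degree-wise epimorphism. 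You should either adopt this device or actually carry out the construction of $\Phi$; as written, part (3) is incomplete.
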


\begin{proof}\leavevmode

    \begin{enumerate}
        \item Let us consider the following commutative diagram
        $$
        \begin{tikzcd}[column sep=3pc,row sep=3pc]
            A \ar[d, "f_{\mathrm{dg}}",swap]
            \ar[r,"\simeq"]
            & \Omega_{\operad C} \mathrm{B}_{\operad C} A
            \ar[d, "\Omega_{\operad C}(f^{\dagger})"]
            \\
            A'
            \ar[r,"\simeq"]
            &\Omega_{\operad C} \mathrm{B}_{\operad C} A'.
        \end{tikzcd}
        $$
        in the category of dg modules. The two horizontal maps, $\zeta_A$ and $\zeta_{A'}$, are quasi-isomorphisms. Thus, the left vertical map is a quasi-isomorphism if and only if the right vertical map is a quasi-isomorphism.
        
        \medskip

        \item If $f^{\dagger}$ is an isomorphism, then $f_{\mathrm{dg}} = \gr_0^{\mathrm{rad}}(f^{\dagger})$ is an isomorphism. Conversely, if $f_{\mathrm{dg}}$ is an isomorphism, then 
        $$
        \gr_n^{\mathrm{rad}}(f^{\dagger}) = \id_{\gr_n(\operad C)} \comp f_{\mathrm{dg}}
        $$
        is an isomorphism. Hence, $f^{\dagger}$ is an isomorphism.
        
        \medskip
        
        \item Since $\C$ is coaugmented, one has a functor from dg modules to curved $\operad C$-coalgebras that sends a dg module $X$ to $X$ endowed with the trivial curved $\operad C$-coalgebra structure. This structure is given by the zero structural map $0: X \longrightarrow \overline{\operad C}\comp X$.
        
        \medskip
        
       	This functor sends acyclic cofibrations of dg modules to filtered quasi-isomorphisms that are also degree-wise inclusions. They are in particular acyclic cofibrations of curved $\operad C$-coalgebras. If $f^\dagger$ is a fibration, then it has the right lifting property with respect to all acyclic cofibration, and in particular with respect to such acyclic cofibrations of dg modules. Subsequently, $f_{\mathrm{dg}}$ has the right lifting property with respect to every acyclic cofibration of dg modules. So it is a fibration of dg modules, hence a degree-wise epimorphism. Conversely, if $f_{\mathrm{dg}}$ is a fibration, then $f^\dagger$ is a fibration as a direct consequence of Lemma \ref{lemmainftyfibrationavant}.
       	
       	\medskip
       	
        \item Let us consider the same commutative square diagram shown in point (1). If $f^\dagger$ is a cofibration, then the right vertical map $\Omega_{\operad C}(f^{\dagger})$ and the top horizontal map of this square are degree-wise injection. Thus the left vertical map $f_{\mathrm{dg}}$ is also a degree-wise injection. Conversely, let us suppose that $f_{\mathrm{dg}}$ is a degree-wise injection. Then, it has a left inverse $g$ in the category of graded $\kk$-modules. Let us consider the endomorphism $h$ of graded $\operad C$-coalgebra
        
\[
\begin{tikzcd}[column sep=3pc,row sep=3pc]
\operad C \comp A \arrow[r,"f^\dagger"]
&\operad C \comp A' \arrow[r,"\id ~ \comp ~ g"]
&\operad C \comp A~.
\end{tikzcd}
\]

Its linear part is the identity of $A$. The same arguments as those used to prove point (2) show that $h$ is a graded isomorphism. In particular, it is a degree-wise injection. So $f^\dagger$ is a degree-wise injection, hence a cofibration.
    \end{enumerate}
\end{proof}

\begin{proposition}
Let $A$ and $A'$ be two dg $\Omega \C$-algebras. There exists a zig-zag of quasi-isomorphisms of dg $\Omega \C$-algebras
\[
A \lqi \cdot \qi \cdot \lqi \cdots \qi \cdot \lqi A'~,
\]

if and only if there exist $\infty$-quasi-isomorphisms $A \rightsquigarrow A'$ and $A' \rightsquigarrow A$ which are inverse to each other in homology. 
\end{proposition}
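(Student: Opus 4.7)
The argument rests on three ingredients from earlier in the paper: the Quillen equivalence of Theorem \ref{thm: the bar-cobar is a Quillen equivalence}; the characterisation of cofibrations as degree-wise monomorphisms in Proposition \ref{propcofibrations}, so that every curved $\C$-coalgebra is cofibrant; and Proposition \ref{propinfinitmorphalgebra}, which identifies $\infty$-morphisms $A \rightsquigarrow A'$ with morphisms $\mathrm{B}_\C A \to \mathrm{B}_\C A'$ of curved $\C$-coalgebras and, moreover, identifies $\infty$-quasi-isomorphisms with weak-equivalences of curved $\C$-coalgebras. Since every bar construction $\mathrm{B}_\C A$ is quasi-cofree and hence fibrant, both $\mathrm{B}_\C A$ and $\mathrm{B}_\C A'$ are fibrant--cofibrant objects in the transferred model structure on $\catcurvcog{\C}$.

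For the implication ``$\infty$-quasi-isomorphism implies zig-zag'', Proposition \ref{propinfinitmorphalgebra} turns an $\infty$-quasi-isomorphism $f : A \rightsquigarrow A'$ into a weak-equivalence $f^\dagger : \mathrm{B}_\C A \qi \mathrm{B}_\C A'$. Applying the left Quillen functor $\Omega_\C$ to this map between cofibrant objects produces a quasi-isomorphism of dg $\Omega\C$-algebras, which combined with the counit quasi-isomorphisms of Lemma \ref{lemmaquillenequivalence} yields the zig-zag
\[
A \lqi \Omega_\C \mathrm{B}_\C A \qi \Omega_\C \mathrm{B}_\C A' \qi A'~.
\]
Note that only a single $\infty$-quasi-isomorphism in either direction is needed for this implication.

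Conversely, applying $\mathrm{B}_\C$ to a zig-zag of quasi-isomorphisms between $A$ and $A'$ produces a zig-zag of weak-equivalences of curved $\C$-coalgebras between $\mathrm{B}_\C A$ and $\mathrm{B}_\C A'$, both of which are fibrant--cofibrant. Hence they are isomorphic in the homotopy category of $\catcurvcog{\C}$; between fibrant--cofibrant objects, this produces direct morphisms $g : \mathrm{B}_\C A \to \mathrm{B}_\C A'$ and $g' : \mathrm{B}_\C A' \to \mathrm{B}_\C A$ whose two compositions are homotopic, via the cylinder of Proposition \ref{propcylinder}, to the respective identities. By Proposition \ref{propinfinitmorphalgebra}, $g$ and $g'$ correspond to $\infty$-quasi-isomorphisms $A \rightsquigarrow A'$ and $A' \rightsquigarrow A$.

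The main technical point to settle is that these $\infty$-quasi-isomorphisms are inverse to each other \emph{in homology}, not merely up to homotopy of their bar constructions. The plan is to check that the homotopy $g \circ g' \sim \id_{\mathrm{B}_\C A'}$ implemented through the cylinder $\mathrm{Cyl}(\mathrm{B}_\C A')$ of Proposition \ref{propcylinder} restricts, on cogenerators, to a chain homotopy between $(g \circ g')_{\mathrm{dg}}$ and $\id_{A'}$. Combined with the fact that passing to homology of the linear part is functorial in $\infty$-morphisms---the discrepancy $(g \circ g')_{\mathrm{dg}} - g_{\mathrm{dg}} \circ g'_{\mathrm{dg}}$ factors through $\overline{\C} \comp A'$ and is controlled by the coradical filtration on $\mathrm{B}_\C A'$---this yields $H_\bullet(g_{\mathrm{dg}}) \circ H_\bullet(g'_{\mathrm{dg}}) = \id$, and symmetrically for the other composition.
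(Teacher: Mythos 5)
Your proof follows essentially the same route as the paper's: apply $\mathrm{B}_{\C}$ to the zig-zag and use that all bar constructions are fibrant--cofibrant to extract two homotopy-inverse weak-equivalences, and conversely turn an $\infty$-quasi-isomorphism $f^\dagger$ into the zig-zag $A \lqi \Omega_{\C}\mathrm{B}_{\C}A \xrightarrow{\Omega_{\C}(f^\dagger)} \Omega_{\C}\mathrm{B}_{\C}A' \qi A'$ via the counit quasi-isomorphisms of Lemma \ref{lemmaquillenequivalence}. The ``technical point'' you flag at the end is in fact simpler than your sketch suggests (the paper leaves it implicit): morphisms of graded $\C$-coalgebras preserve the primitives $A' = F_0^{\mathrm{rad}}(\mathrm{B}_{\C}A')$, so the linear part of a composite of $\infty$-morphisms is \emph{strictly} the composite of the linear parts (there is no discrepancy term to control), and homotopic coalgebra maps become equal in the homotopy category, hence induce equal maps on homology after transporting along the natural quasi-isomorphism $\zeta$ identifying $H(f_{\mathrm{dg}})$ with $H(\Omega_{\C}(f^\dagger))$.
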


\begin{proof}
Given any zig-zag of quasi-isomorphisms between $A$ and $A'$ we get a zig-zag of weak-equivalences between $\mathrm{B}_{\operad C} A$ and $\mathrm{B}_{\operad C} A'$. All the objects in the zig-zag are fibrant and cofibrant, therefore they admit homotopy inverses, and we have two homotopy inverse arrows between $\mathrm{B}_{\operad C} A$ and $\mathrm{B}_{\operad C} A'$. Conversely, if there is a $\infty$-quasi-isomorphism $f^{\dagger}: \mathrm{B}_{\operad C} A \qi \mathrm{B}_{\operad C} A'$, we get the following zig-zag
\[
\begin{tikzcd}[column sep=3pc,row sep=3pc]
A 
&\Omega_{\operad C} \mathrm{B}_{\operad C} A \arrow[r,"\Omega_{\operad C}(f^\dagger)"] \arrow[l,"\simeq",swap]
&\Omega_{\operad C} \mathrm{B}_{\operad C} A' \arrow[r,"\simeq"]
&A'~.
\end{tikzcd}
\]
\end{proof}


\subsection{Homotopy transfer theorem for algebras}
\label{sectionhomotopytransfertheoremalgebra}
The goal of this subsection is to prove a homotopy transfer theorem for dg algebras over a cofibrant dg operad. First, we show that given a dg operad $\operad P$ and dg $\operad P$-algebra $A$ together with a decomposition $A \cong K \oplus X$, where $K$ is acyclic, we can "perturb" the dg $\Omega \mathrm{B}\operad P$-algebra structure on $A$ so that this new structure "restricts" to $X$. Then, we give an homotopical meaning to this construction in cases of interest.

\subsubsection{Technicalities}
Let $\operad P$ be a dg operad.

\begin{notation}
Let us denote $M$ the graded $\mathbb S$-module $\operad P \oplus s \operad I$:
$$
M = \operad P \oplus s \operad I.
$$
We have a canonical isomorphism of graded conilpotent cooperads $\mathrm{B} \operad P \cong \treemod(sM)$, since $sM$ is precisely the graded $\mathbb{S}$-module of generators of the operadic bar construction of $\operad P$.
\end{notation}

Let us also consider a sum of dg modules
$$
A = X \oplus K
$$
where $K$ is acyclic. Let us choose a contracting homotopy $h$ of $K$; we can extend $h$ to $A$, by $0$ on $X$. Let us denote $\pi_X, \pi_K$ the projections endomorphisms of $A$ onto respectively $X$ and $K$. All these arrows satisfy the following equations:

\[
\left\{
\begin{tikzcd}[column sep=0pc,row sep=0.25pc]
&\pi_X \pi_K = \pi_K \pi_X =0;\\
&\partial(h) = \pi_K = \id_A - \pi_X;\\
&h \pi_K = \pi_K h = h;\\
&h \pi_X = \pi_X h = 0.
\end{tikzcd}
\right.
\]

\begin{example}
Let $X$ be a dg module. Since we are working over a field $\kk$, there exists a splitting $X \cong \mathrm{H}(X) \oplus K$, where $\mathrm{H}(X)$ is the homology of $X$ and $K$ is an acyclic dg module.
\end{example}

Let us suppose that the dg module $(A,d_A)$ is endowed a dg $\operad P$-algebra structure
$$
\gamma_A: \operad P \comp A \longrightarrow A~.
$$
Notice that it induces a dg $\Omega \mathrm{B}\operad P$-algebra structure on $A$ by pulling back along the map $\Omega B \operad P \longrightarrow \operad P$.

\begin{definition}
Let $D_\beta$ be the degree $-1$ coderivation on the graded $\mathrm{B}\operad P$-coalgebra $\mathrm{B}\operad P \comp A$ that proceeds from the structure of a $\Omega \mathrm{B} \operad P$-algebra on the dg module $A$, in the sense that
$$
(\mathrm{B}\operad P \comp A, D_\beta) = \mathrm{B}_{\mathrm{B}\operad P} A.
$$
Moreover, let $\beta$ be the composition of $D_\beta$ with the projection onto $A$. One can notice that the restriction of $\beta$ to $A \cong \operad I \comp A$ is the differential $d_A$ and that its restriction to $\overline{\mathrm{B}\operad P} \comp A$ is
    $$
    \overline{\mathrm{B}\operad P} \comp A \twoheadrightarrow sM \comp A \twoheadrightarrow s\operad P \comp A \xrightarrow{s \gamma} s A \longrightarrow A.
    $$
    where the last map just sends $sa$ to $a$.
\end{definition}

\medskip

The degree $0$ map
    $$
    sM\comp A 
     \xrightarrow{\beta} A  \xrightarrow{-h} A
    $$
    yields a morphism of graded $\mathbb S$-modules $sM \longrightarrow \mathrm{End}(A)$. Therefore there is a morphism of graded operads $
    \treemod(sM) \longrightarrow \mathrm{End}(A)$, 
    which in turn yields a degree $0$ map of graded $\kk$-modules
    $$
    \phi : \mathrm{B}\operad P \comp A = \treemod(sM) \comp A \longrightarrow A.
    $$
    Let $f$ be the related morphism of graded $\mathrm{B}\operad P$-coalgebras
    $$
    f : \mathrm{B}\operad P \comp A \longrightarrow \mathrm{B}\operad P \comp \mathrm{B}\operad P \comp A \xrightarrow{\id \comp \phi} \mathrm{B}\operad P \comp A.
    $$

\begin{definition}
    Let $\chi$ be the degree $-1$ morphism from $\treemod(sM) \comp A$ to $A$ whose restriction to $A$ is zero and whose restriction to $\overline{\treemod} \comp A$ is
    $$
    \overline{\treemod}(sM) \comp A \cong sM \comp \treemod(sM) \comp A
    \xrightarrow{\id \comp \phi} sM \comp A \xrightarrow{\beta} A.
    $$
    Notice that $\phi$ is given by the sum of $-h ~ \chi$ and of the projection onto $A$.
\end{definition}

Recall that the conilpotent curved cooperad $\mathrm{B}\operad P$ is given by the conilpotent graded cooperad $\treemod(sM)$ endowed with the following pre-differentials:

\begin{enumerate}
\item the pre-differential $d_\gamma$ which is induced by the operad structure of $\operad P$;
\item the pre-differential $d_{\operad P}$ which is induced by the differential of $\operad P$;
\item the pre-differential $d_{u}$, which maps $s^2\operad I$ to the unit of $\operad P$.
\end{enumerate}

We will denote the later two pre-differentials by $sd_{M}$, as they are defined on the generators of $\mathrm{B}\operad P$. We refer to Subsection \ref{subsection: operadic bar-cobar} for more details.

\begin{lemma}\label{lemmahttalgebratwoleveltrees}
    The following diagram in graded $\kk$-modules
    $$
    \begin{tikzcd}[column sep=3pc,row sep=3pc]
        \overline{\treemod}_{\leq 2} (sM) \comp A
        \ar[r, "\id"] \ar[d, "d_{\gamma} \comp A"']
        & sM \comp \treemod_{\leq 1} (sM) \comp A
        \ar[r, "{\id{} \comp \shuffle (\phi, \chi)}"]
        & sM \comp A
        \ar[d, "\phi"]
        \\
        \overline{\treemod}_{\leq 2}(sM) \comp A
        \ar[rr, "-\phi"']
        && A
    \end{tikzcd}
    $$
    commutes.
\end{lemma}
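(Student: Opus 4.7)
The plan is to verify the identity by a direct computation on basis elements of $\overline{\treemod}_{\leq 2}(sM)\comp A$. First I would invoke the canonical identification $\overline{\treemod}_{\leq 2}(sM)\cong sM\comp\treemod_{\leq 1}(sM)\cong sM\oplus(sM\comp sM)$, which is precisely what makes the top-left horizontal arrow well-defined, and split the verification into the one-node and two-node cases.

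The one-node case will be immediate: the coderivation $d_\gamma$ vanishes on trees with a single node, so the bottom composite is zero; along the top composite each summand of $\shuffle_n(\phi,\chi)$ inserts exactly one $\chi$ at a position whose input lies in $A$, and $\chi|_A=0$ by construction, so every summand vanishes. The summand $s^2\operad I\subset sM$ is handled identically since $\beta$, $\phi$, and $\chi$ all vanish there.

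For the two-node case I would pick a basis element $s\nu\otimes(\xi_1,\dots,\xi_n)$ in which some of the $\xi_j=s\nu_j\otimes(a_{j,1},\dots,a_{j,m_j})$ are top nodes and the remaining $\xi_j=a_j$ are leaves, and trace both routes. Along the vertical-then-bottom path, $d_\gamma$ produces a sum over the top-node positions $j$ of the operadic compositions $(-1)^{|\nu|}s(\nu\circ_j\nu_j)$ carrying the original leaf inputs, and applying $-\phi$ evaluates each summand as $-(-1)^{|\nu|}h\gamma(\nu\circ_j\nu_j;a_1,\dots,a_{j,\bullet},\dots,a_n)$. Along the top route, $\shuffle(\phi,\chi)$ picks out, for each top-node position $j$, a nonzero summand with $\chi(\xi_j)=\beta(s\nu_j\otimes\cdots)=\gamma(\nu_j;a_{j,\bullet})$ at the $j$-th slot, $\phi(\xi_i)=-h\gamma(\nu_i;\dots)$ at the other top-node slots, and $\phi(a_i)=a_i$ at the leaves; applying the final $\phi=-h\beta=-h\gamma$ at the root then returns $-h\gamma(\nu;a_1,\dots,\gamma(\nu_j;a_{j,\bullet}),\dots,a_n)$.

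The identification of the two sides will then reduce, summand by summand, to the associativity of the operadic composition in $\operad P$, namely $\gamma(\nu\circ_j\nu_j;a_1,\dots,a_{j,\bullet},\dots,a_n)=\gamma(\nu;a_1,\dots,\gamma(\nu_j;a_{j,\bullet}),\dots,a_n)$. The main obstacle will be the bookkeeping of Koszul signs: the factor $(-1)^{|\nu|}$ appearing in the definition of $d_\gamma$ must be reconciled with the signs acquired when commuting the degree-one homotopy $h$ past the suspension $s\nu$ before desuspending via $\beta$, together with the overall minus on the bottom arrow. Beyond this tedious sign check, no conceptual input other than the associativity of $\operad P$ is required.
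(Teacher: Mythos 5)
Your proposal is correct and takes essentially the same route as the paper, whose proof is literally "this follows from a straightforward checking": a direct evaluation on two-level trees, where the terms with $\chi$ landing on a leaf die because $\chi|_A=0$ and each surviving summand (one per top node) matches across the two paths by the associativity of the $\operad P$-algebra structure $\gamma_A$. When writing it up, do include the cases you only gesture at — two-level trees with several top nodes (the extra top nodes contribute the same $-h\gamma(\nu_i;\cdots)$ insertions on both sides) and trees containing an $s^2\operad I$-label (both composites vanish) — and carry out the Koszul sign bookkeeping you flagged, since the signs displayed in your sketch are not yet consistent.
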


\begin{proof}
This follows from a straightforward checking.
\end{proof}

\begin{lemma}\label{lemmahttbfdecomposition}
The map $\beta ~ f: \treemod(sM) \comp A \longrightarrow A$
is equal to $d_A ~\phi + \chi$.
\end{lemma}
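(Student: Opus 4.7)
The plan is to compute $\beta f$ directly from its definition by unpacking the cooperadic decomposition inside $f = (\id \comp \phi) \Delta$ and exploiting the fact that $\beta$ is supported on very simple trees. Both $\beta f$ and $d_A \phi + \chi$ are graded maps $\treemod(sM) \comp A \to A$, so it suffices to check the equality on the two summands of the decomposition $\treemod(sM) \comp A \cong A \oplus \big(\overline{\treemod}(sM) \comp A\big)$.

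On the summand $A \cong \operad I \comp A$, the cooperadic decomposition $\Delta$ is trivial, so $f$ restricts to the identity; combined with $\beta|_A = d_A$ this gives $\beta f|_A = d_A$. On the right-hand side, $\phi|_A = \pi_A = \id_A$ by construction and $\chi|_A = 0$ by definition, so $(d_A \phi + \chi)|_A = d_A$ as well.

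On the summand $\overline{\treemod}(sM) \comp A$, I would write an element as $m(\tau_1,\ldots,\tau_k) \otimes \vec a$ with root label $m \in sM$ and subtree-with-leaves elements $\tau_i \in \treemod(sM) \comp A$. Applying $f = (\id \comp \phi) \Delta$ expresses $f$ as a sum over all ways to cut the tree into a top piece $t_1$ and bottom pieces $(t_2^{(1)},\ldots,t_2^{(n)})$, with $\phi$ applied to each bottom piece. Composing with $\beta$, the essential observation is that $\beta$ factors through the projection $\overline{\mathrm{B}\operad P} \comp A \twoheadrightarrow sM \comp A$, hence vanishes on any tree $t_1$ with two or more nodes. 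Thus only two families of cuts survive in $\beta f$: the ``trivial top'' cut where $t_1 = \operad I$ and $t_2^{(1)}$ equals the entire element, contributing $\beta(\operad I \otimes \phi(t)) = d_A\,\phi(t)$; and the ``root-only'' cut where $t_1$ is the one-node tree $m \in sM$ and the bottom pieces are the $\tau_i$, contributing $\beta\bigl(m \otimes (\phi(\tau_1),\ldots,\phi(\tau_k))\bigr)$, which is exactly $\chi(t)$ by the definition of $\chi$ as $\beta \circ (\id \comp \phi)$ on $sM \comp \treemod(sM) \comp A$.

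Summing these two contributions yields $\beta f = d_A \phi + \chi$ on the non-trivial summand, completing the identification. The only real bookkeeping difficulty is checking that the signs match: the Koszul sign appearing when rearranging $(\id \comp \phi)$ over the cooperadic decomposition must agree with those implicit in the definitions of $\phi$ and $\chi$ inherited from the operad-morphism $\treemod(sM) \to \mathrm{End}(A)$; this is a routine verification using Lemma~\ref{lemmahttalgebratwoleveltrees} which records exactly the two-level sign compatibility required, so no further difficulty arises.
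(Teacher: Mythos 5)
Your computation is correct and is exactly the "straightforward computation" the paper leaves implicit: writing $f=(\id\comp\phi)\Delta$ and using that $\beta$ vanishes on trees with two or more nodes, only the trivial-bottom cut and the root-corolla cut survive, contributing $d_A\,\phi$ and $\chi$ respectively, with the identity case $A\cong\operad I\comp A$ checked separately. One minor remark: the appeal to Lemma~\ref{lemmahttalgebratwoleveltrees} for sign bookkeeping is unnecessary here—since $\phi$ has degree $0$, no Koszul signs arise at all in this identity; that lemma concerns the compatibility of $\phi$ with $d_\gamma$ on two-level trees and is only needed later in the proof of the main proposition.
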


\begin{proof}
    This follows from a straightforward computation.
\end{proof}

\begin{definition}
    Let $\mu$ be the degree $-1$ map from $\mathrm{B}\operad P \comp A$ to $A$
    whose restriction to $A \cong \operad I \comp A$ is $d_A$ and whose restriction to 
    $\overline{\mathrm{B}\operad P} \comp A$ is the sum of the two maps
    \begin{align*}
    &\overline{\mathrm{B}\operad P} \comp A
        \xrightarrow{\chi} A \xrightarrow{\pi_X} A
        \\
    &\overline{\mathrm{B}\operad P} \comp A
    \xrightarrow{\theta \comp A} A
        \xrightarrow{-h} A.
    \end{align*}
    Moreover, let $D_\mu$ be the unique degree $-1$ coderivation on 
    the graded $\mathrm{B}\operad P$-coalgebra $\mathrm{B}\operad P \comp A$ whose projection onto $A$ is $\mu$.
\end{definition}

\begin{lemma}\label{lemmahttsumdecomposition}
    The degree $-1$ composite map $\beta f - \mu: \treemod(sM) \comp A \longrightarrow A$
    is equal to the sum of the two maps
    \begin{align*}
    &\overline{\treemod}(sM) \comp A \cong sM \comp \treemod(sM) \comp A \xrightarrow{\id \comp \phi} sM \comp A \xrightarrow{sd_M \comp A} sM \comp A \xrightarrow{\phi} A
    \\
    &\overline{\treemod}(sM) \comp A \cong sM \comp \treemod(sM) \comp A \xrightarrow{\id \comp \phi} sM \comp A \xrightarrow{\id \comp \shuffle(\id, d_A)} sM \comp A \xrightarrow{\phi} A.
\end{align*}
\end{lemma}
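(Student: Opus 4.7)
My plan is to verify the identity separately on the summands $A = \operad I \comp A$ and $\overline{\treemod}(sM) \comp A$ of the source. On $A$ everything is transparent: $\phi|_A = \id_A$, $\chi|_A = 0$, and $\mu|_A = d_A$ by definition, so Lemma~\ref{lemmahttbfdecomposition} gives $(\beta f - \mu)|_A = d_A - d_A = 0$, while the two maps in the statement vanish there since they factor through $\overline{\treemod}(sM)$. For the non-trivial summand, I would first establish the identity $\phi = -h\chi$ on $\overline{\treemod}(sM) \comp A$ by induction on the number of nodes: by construction $\phi$ is the operadic extension of the map $x \mapsto -h\beta(x \otimes -)$, while $\chi$ satisfies the recursion $\chi = \beta \circ (\id \comp \phi)$, so both obey the same recursion up to composition with $-h$. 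Combining this with Lemma~\ref{lemmahttbfdecomposition} yields $\beta f = -d_A h \chi + \chi$ on this summand. Subtracting the definition of $\mu$ and using the standard contraction identity $\pi_K = \id - \pi_X = \partial(h) = d_A h + h d_A$, the terms $-d_A h\chi$ and $d_A h\chi$ cancel, leaving $\beta f - \mu = h\, d_A \chi + h (\theta \comp A) = h (d_A \chi + \theta \comp A)$.

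To analyse the sum of the two maps in the statement, I would first factor $\phi|_{sM \comp A} = -h\beta|_{sM \comp A}$, which follows from $\chi|_{sM \comp A} = \beta|_{sM \comp A}$ together with $\phi = \pi_A - h\chi$ (since $\pi_A$ vanishes on $sM \comp A$). This rewrites the sum as
\[
-h \circ \bigl[\beta \circ (sd_M \comp A) + \beta \circ (\id \comp \shuffle(\id, d_A))\bigr] \circ (\id \comp \phi).
\]
The crucial input is then the curved coalgebra equation $D_\beta^2 = (\Theta \comp \id)(\Delta \comp \id_A)$ satisfied by the bar coalgebra $\mathrm{B}_{\mathrm{B}\operad P} A = \mathrm{B}\operad P \comp A$. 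Projecting this identity to the cogenerators $A$ and restricting to $sM \comp A$ yields, with the sign conventions of the bar construction, the equality $\beta \circ (sd_M \comp A) + \beta \circ (\id \comp \shuffle(\id, d_A)) + d_A \beta = -(\theta \comp A)$ on $sM \comp A$. Composing with $(\id \comp \phi)$ and recognising $\beta \circ (\id \comp \phi) = \chi$ then transforms the sum of the two maps into $h\, d_A \chi + h (\theta \comp A) \circ (\id \comp \phi)$.

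Matching this with $\beta f - \mu = h\, d_A \chi + h (\theta \comp A)$ reduces the lemma to the identity $h \circ (\theta \comp A) \circ (\id \comp \phi) = h \circ (\theta \comp A)$ on $\overline{\treemod}(sM) \comp A$. The curvature $(\theta \comp A)$ is supported on the one-node arity-$1$ subsummand $s^2 \operad I(1) \otimes A$, where both sides evidently agree; the only possible discrepancy arises on multi-node trees with root label $s^2 1$, where $(\theta \comp A)$ vanishes while $(\theta \comp A) \circ (\id \comp \phi)$ returns $\phi$ of the non-trivial subtree, which by the previously established identity $\phi = -h\chi$ equals $-h\chi$ of that subtree. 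Applying the outer $h$ then produces $h^2$, and this vanishes by the standard side condition $h^2 = 0$ on the contracting homotopy. The main obstacle is the careful sign bookkeeping coming from the suspensions and the curvature equation on $\mathrm{B}\operad P$; conceptually, the proof reduces to Lemma~\ref{lemmahttbfdecomposition}, the homotopy identities for $h$, and the curved coalgebra equation on $\mathrm{B}_{\mathrm{B}\operad P} A$.
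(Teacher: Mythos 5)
Your argument follows essentially the same route as the paper's proof: both rest on Lemma~\ref{lemmahttbfdecomposition}, the contraction identity $\partial(h)=\id_A-\pi_X$, and the projected curvature identity $d_A\beta+\beta(sd_M\comp A)+\beta(\id\comp\shuffle(\id,d_A))=-(\theta\comp\id)$ on $sM\comp A$. The difference is organisational. The paper asserts that $\beta f-\mu$ and the sum of the two stated maps each factor as their restriction to $sM\comp A$ precomposed with $\id\comp\phi$, and then verifies the identity on $sM\comp A$ only; you instead compute both sides globally on $\overline{\treemod}(sM)\comp A$, obtaining $h\,d_A\chi+h(\theta\comp A)$ versus $h\,d_A\chi+h(\theta\comp A)(\id\comp\phi)$, and compare the residual curvature terms. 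These are the same computation: your residual identity $h(\theta\comp A)(\id\comp\phi)=h(\theta\comp A)$ is exactly the content of the paper's factorisation claim on the summand of trees whose root is labelled by $s^2\operad I$.

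The one place where you genuinely diverge is how you dispose of that residual term: you invoke the side condition $h^2=0$, which is \emph{not} among the relations the paper imposes on $h$ (only $\partial(h)=\pi_K$, $h\pi_X=\pi_X h=0$, $h\pi_K=\pi_K h=h$). Your analysis of where the discrepancy lives is correct: on a tree whose root is labelled by $s^2 1$ with a non-trivial subtree $t$, the two sides differ by $\pm h^2\chi(t)$ (one checks this already for a two-node ladder with root $s^2 1$, top node $s 1_{\operad P}$ and leaf $a$, where the left-hand side vanishes while the first stated map returns $\pm h^2(a)$), so killing it does require $h^2=0$ or an equivalent normalisation. As written, then, your proof establishes the lemma only for homotopies satisfying this extra side condition --- which can always be arranged by modifying $h$, but is an added hypothesis relative to the text --- whereas the paper's proof claims the general case by asserting the factorisation of $\beta f-\mu$ through $\id\comp\phi$ without comment; that assertion is precisely equivalent to the vanishing you are forced to assume, so the two arguments stand or fall together at this step, and you have in fact isolated the only delicate point. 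A minor slip aside: with the paper's convention for curved coalgebras the equation is $D_\beta^2=-(\Theta\comp\id)\Delta$, though the projected identity you actually use carries the correct sign.
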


\begin{proof}
    Let us denote $g = d_A ~ \phi + \chi - \mu$ and $g'$ the sum of the two maps described in the lemma. We want to show that $g=g'$. Since $g$ and $g'$ are respectively equal to the compositions
    \begin{align*}
    &sM \comp \treemod(sM) \comp A \xrightarrow{\id \comp \phi} sM \comp A \xrightarrow{g} A
    \\
    &sM \comp \treemod(sM) \comp A \xrightarrow{\id \comp \phi} sM \comp A \xrightarrow{g'} A
    \end{align*}
    it suffices to prove the result on $sM \comp A$. We can notice that, on $sM \comp A$ we have
    $$
    g = d_A ~ \phi + \chi - \mu = - d_A h \beta + \beta - \pi_X ~ \beta  + h(\theta \comp \id)
    =  hd_A \beta + h(\theta \comp \id)~.
    $$
    Still on $sM \comp A$, this gives
    $$
    d_A \beta = \beta D_\beta - \beta (d_{sM} \comp A) - \beta (sM \comp \shuffle (\id, d_A)) 
    = - \theta \comp \id_A - \beta (d_{sM} \comp A) - \beta (sM \comp \shuffle (\id, d_A)) .
    $$
    Therefore
    $$
    g =  -h \beta (d_{sM} \comp A) - h \beta (sM \comp \shuffle (\id, d_A)) = g'~.
    $$
\end{proof}

\begin{proposition}
    The following equality between degree $-1$ maps from $B\operad P \comp A$ to $A$
    $$
    \phi D_{\mu} = \beta f
    $$
    holds. Thus $f D_{\mu} = D_{\beta} f$.
\end{proposition}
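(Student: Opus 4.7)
The plan is to first establish the identity $\phi D_\mu = \beta f$, from which $fD_\mu = D_\beta f$ will follow by a universal property argument for cofree coalgebras.

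By Lemma \ref{lemmahttbfdecomposition}, $\beta f = d_A \phi + \chi$, so it suffices to prove $\phi D_\mu = d_A \phi + \chi$. We verify this identity on each summand of the decomposition $\mathrm{B}\operad P \comp A \cong A \oplus (\overline{\treemod}(sM) \comp A)$. On $A = \operad I \comp A$, the coderivation $D_\mu$ is just $d_A$ lifted back into $\operad I \comp A$ (since $\mu|_A = d_A$), so $\phi D_\mu = d_A$; meanwhile $\phi|_A = \id_A$ and $\chi|_A = 0$, so the right-hand side equals $d_A$ as well. On $\overline{\treemod}(sM) \comp A \cong sM \comp \treemod(sM) \comp A$, we use the coderivation property to split $D_\mu$ into the two natural summands: one in which $\mu$ is applied at or through the root node (accounting for the bar pre-differential $d_\gamma + d_{sM} + d_u$ and the curvature term $-h(\theta \comp A)$ inside $\mu$), and one in which $\mu$ is applied below the root inside the subtrees (accounting for $\pi_X \chi$ in $\mu$ propagated through the cofree structure). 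Since $\phi$ is induced by the operadic representation $\rho : \treemod(sM) \to \mathrm{End}(A)$ with $\rho|_{sM} = -h\beta$, one unfolds $\phi D_\mu$; Lemma \ref{lemmahttalgebratwoleveltrees} is used to handle the cross-term coming from the $d_\gamma$ piece, which recombines the compositions of two-node trees with the evaluations of $\phi$ and $\chi$. Matching the result termwise with $\mu$ plus the two summands of Lemma \ref{lemmahttsumdecomposition} gives exactly $\beta f$.

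For the second equality, both $fD_\mu$ and $D_\beta f$ are $f$-coderivations of graded $\mathrm{B}\operad P$-coalgebras from $\mathrm{B}\operad P \comp A$ to itself: indeed, $fD_\mu$ is the composition of a coderivation with a morphism of cofree coalgebras, while $D_\beta f$ is the composition of a morphism of cofree coalgebras with a coderivation. By the universal property of the cofree $\mathrm{B}\operad P$-coalgebra, any such $f$-coderivation into $\mathrm{B}\operad P \comp A$ is determined by its projection onto the cogenerators $A$. These projections are $\phi D_\mu$ (since $f$ projects to $\phi$) and $\beta f$ (since $D_\beta$ projects to $\beta$), and they agree by the first part. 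Hence $fD_\mu = D_\beta f$.

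The main obstacle is the careful combinatorial bookkeeping on $sM \comp \treemod(sM) \comp A$: one has to separate the contribution of $D_\mu$ according to the location where $\mu$ is applied, then identify the resulting expression with the sum $\mu + (\text{two terms from Lemma \ref{lemmahttsumdecomposition}})$. The signs, the compatibility between the cofree coalgebra structure and the operadic representation $\rho$, and the contribution of the curvature term $-h(\theta \comp A)$ in $\mu$ all require precise tracking, but these are routine checks that follow directly from the preceding lemmas and the very definitions of $\phi$, $\chi$, $\mu$, and $\beta$.
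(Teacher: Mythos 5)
Your overall route is the paper's: reduce via Lemma \ref{lemmahttbfdecomposition} to $\phi D_\mu = d_A\phi + \chi$, split $D_\mu$ according to where it acts, invoke Lemma \ref{lemmahttalgebratwoleveltrees} for the root-level $d_\gamma$ contribution and Lemma \ref{lemmahttsumdecomposition} to reassemble; and your explicit universal-property argument deducing $fD_\mu = D_\beta f$ from $\phi D_\mu = \beta f$ is correct (the paper leaves that step implicit). The gap is in the summand $\overline{\treemod}(sM)\comp A \cong sM\comp\treemod(sM)\comp A$: the contribution of $D_\mu$ acting strictly inside the subtrees, composed with the outer evaluation, is
$$
sM\comp\treemod(sM)\comp A \xrightarrow{\ \id\,\comp\,\shuffle(\phi,\ \phi D_\mu)\ } sM\comp A \xrightarrow{\ \phi\ } A~,
$$
so it involves $\phi D_\mu$ evaluated on smaller trees --- exactly the identity you are trying to prove. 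Your ``matching termwise'' at this point is therefore circular as written; it is not a routine check, and your parenthetical attributing this term to ``$\pi_X\chi$ in $\mu$ propagated through the cofree structure'' misplaces it, since the $\pi_X\chi$ part of $\mu$ already enters through the root-level application of $\mu$.

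The missing organizing device is an induction on the height filtration $\treemod_{\leq n}(sM)\comp A$. The base case on $\operad I\comp A$ is your computation $\phi D_\mu = d_A = \beta f$. On $\overline{\treemod}_{\leq n+1}(sM)\comp A \cong sM\comp\treemod_{\leq n}(sM)\comp A$ one writes $\phi D_\mu$ as the sum of the root-level application of $\mu$, the $d_{sM}$ term, the inner-subtree term displayed above, and the root-level $d_\gamma$ term, which Lemma \ref{lemmahttalgebratwoleveltrees} converts into $\phi\circ\bigl(\id\comp\shuffle(\phi,-\chi)\bigr)$. The induction hypothesis together with Lemma \ref{lemmahttbfdecomposition} lets you replace $\phi D_\mu$ by $d_A\phi+\chi$ in the inner-subtree term, after which the $\chi$'s cancel against the $d_\gamma$ contribution and the remainder is identified with $\beta f-\mu$ by Lemma \ref{lemmahttsumdecomposition}. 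With this induction in place your argument closes; without it, the verification on nontrivial trees does not go through.
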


\begin{proof}
Let us prove the result on the height of the trees that make $\mathrm{B}\operad P = \treemod(sM)$.
First, on $A \cong \operad I \comp A$, one has
$$
\phi D_\mu =  d_A  = \beta f.
$$
Let us assume that the result the restrictions of $\phi D_\mu$ and $\beta f$ are equal on $\treemod_{\leq n} (sM) \comp A$ for some natural integer $n$.
On larger trees $\overline{\treemod}_{\leq n+1} (sM) \comp A \simeq sM \comp \treemod_{\leq n} (sM) \comp A$,
$\phi D_\mu$ is the sum of the maps
\begin{align}
&sM \comp \treemod_{\leq n} (sM) \comp A
\xrightarrow{\mu}
 A~,
\\ 
& sM \comp \treemod_{\leq n} (sM) \comp X
\xrightarrow{sd_M \comp \id}
sM \comp \treemod_{\leq n} (sM) \comp X
\xrightarrow{\psi} A~,
 \\
 &sM \comp \treemod_{\leq n} (sM) \comp X
\xrightarrow{\id \comp \shuffle(\psi, \psi D_\mu)}
sM \comp A 
\xrightarrow{\phi} A~,
\end{align}
together with the contribution (4) to the coderivation of $\mathrm{B}\operad P$ given by the composition of $\operad P$ at the root level
$$
\begin{tikzcd}
    \overline{\treemod}_{\leq n+1} (sM) \comp A
    \ar[d, "\id"]
    \\
    sM \comp (A \oplus sM \comp \treemod_{\leq n-1} (sM) \comp A)
    \ar[d, "\id \comp (\id \oplus sM \comp \phi)"]
    \\
    sM \comp (A \oplus sM \comp A)
    \ar[d, "\id"]
    \\
    \overline{\treemod}_{\leq 2} (sM) \comp A
    \ar[d, "d_{\gamma} \comp \id_A"]
    \\
    \overline{\treemod}_{\leq 2} (sM) \comp A
    \ar[d, "\phi"]
    \\
    A~.
\end{tikzcd}
$$
As a consequence of Lemma \ref{lemmahttalgebratwoleveltrees}, this last map (4) is equal to
$$
sM \comp \treemod_{\leq n} (sM) \comp X
\xrightarrow{\id \comp \shuffle(\psi, -\chi)} sM \comp A \xrightarrow{\phi} A~.
$$
By the induction hypothesis ($\beta f = \phi D_\mu$ on $\treemod_{\leq n} (sM) \comp A$) and by Lemma \ref{lemmahttbfdecomposition},
the sum of the contributions (3) and (4) is
$$
sM \comp \treemod_{\leq n} (sM) \comp A
\xrightarrow{\id \comp \shuffle(\phi, d_A ~ \phi)} sM \comp A \xrightarrow{\phi} A~,
$$
which rewrites as
$$
sM \comp \treemod_{\leq n} (sM) \comp A \xrightarrow{\id \comp \phi}
sM \comp A
\xrightarrow{\id \comp \shuffle(\id, d_A)} sM \comp A \xrightarrow{\phi} A~.
$$
We conclude by Lemma \ref{lemmahttsumdecomposition}.
\end{proof}

\begin{proposition}
The coderivation $D_\mu$ endows $\mathrm{B}\operad P \comp A$  with a curved $B\operad P$-coalgebra structure.
\end{proposition}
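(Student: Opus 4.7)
The plan is to transfer the curvature equation satisfied by $D_\beta$ to $D_\mu$ along the graded $\mathrm{B}\operad P$-coalgebra morphism $f$, using the intertwining relation $f D_\mu = D_\beta f$ proved just above. Since $D_\mu$ is already a coderivation of the graded $\mathrm{B}\operad P$-coalgebra $\mathrm{B}\operad P \comp A$ by construction, the only thing left to check is the curvature equation
\[
D_\mu^2 \;=\; (\theta \comp \id_{\mathrm{B}\operad P \comp A})\,\Delta,
\]
where $\theta: \mathrm{B}\operad P \to \operad I$ is the curvature of the conilpotent curved cooperad $\mathrm{B}\operad P$ and $\Delta$ is the coaction. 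Recall that $D_\beta$ does satisfy this equation, essentially by definition, since $(\mathrm{B}\operad P \comp A, D_\beta) = \mathrm{B}_{\mathrm{B}\operad P} A$ is a curved $\mathrm{B}\operad P$-coalgebra.

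First I would check that $f$ is a graded automorphism of $\mathrm{B}\operad P \comp A$. By construction $f = (\id \comp \phi)\Delta$, so its induced map on cogenerators---the composite
\[
A \cong \operad I \comp A \hookrightarrow \mathrm{B}\operad P \comp A \xrightarrow{\ f \ } \mathrm{B}\operad P \comp A \xrightarrow{\epsilon_{\mathrm{B}\operad P} \comp \id} A
\]
---unwinds to $\phi|_{\operad I \comp A}$. Since $\phi$ is induced by a morphism of graded operads $\treemod(sM) \to \mathrm{End}(A)$ that sends the operadic unit to $\id_A$, this restriction is the identity. The same induction on the coradical filtration used in Proposition \ref{propinfinitmorphalgebra}(2) then shows that $f$ itself is a graded isomorphism.

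Next, post-composing $f D_\mu = D_\beta f$ on the right with $D_\mu$ and invoking the curvature equation for $D_\beta$, I would compute
\[
f D_\mu^2 \;=\; D_\beta f D_\mu \;=\; D_\beta^2 f \;=\; (\theta \comp \id)\,\Delta f.
\]
Because $f$ is a morphism of graded $\mathrm{B}\operad P$-coalgebras, $\Delta f = (\id_{\mathrm{B}\operad P} \comp f)\Delta$; and because $\theta$ lands in $\operad I$, the family $(\theta \comp \id_X): \mathrm{B}\operad P \comp X \to X$ is a natural transformation in $X$, so $(\theta \comp \id)(\id \comp f) = f(\theta \comp \id)$. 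Combining these identities gives $f D_\mu^2 = f(\theta \comp \id)\Delta$, and invertibility of $f$ yields the desired equation $D_\mu^2 = (\theta \comp \id)\Delta$.

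The only genuine subtlety is the graded invertibility of $f$, which is handled exactly as in Proposition \ref{propinfinitmorphalgebra}(2); the remainder of the argument is a formal manipulation of the intertwining relation combined with naturality of the curvature.
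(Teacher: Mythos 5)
Your proof is correct and follows essentially the same route as the paper: the paper also deduces from $fD_\mu = D_\beta f$ and the graded invertibility of $f$ (by the same coradical-filtration argument) that $D_\mu = f^{-1}D_\beta f$, and hence that $D_\mu$ inherits the curved $\mathrm{B}\operad P$-coalgebra structure from $D_\beta$. You merely spell out the conjugation step (coalgebra-morphism property of $f$ plus naturality of $\theta \comp \id$) that the paper leaves implicit; note only that the paper's curvature convention reads $(\theta\comp\id)\Delta_W = -d_W^2$, a sign that is immaterial to your transfer argument.
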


\begin{proof}
    Since $f D_{\mu} = D_{\beta} f$ and since $f$ is an isomorphism (by a standard filtration argument):
    $$
    D_\mu = f^{-1}D_{\beta} f.
    $$
    Thus, the coderivation $D_\mu$ makes $\mathrm{B}\operad P \comp A$ a curved $\mathrm{B}\operad P$-coalgebra because so does the coderivation $D_\beta$.
\end{proof}

\begin{proposition}
The sub-graded $B\operad P$-coalgebra $\mathrm{B}\operad P \comp X$ of $\mathrm{B}\operad P \comp A$ is stable through $D_\mu$.
\end{proposition}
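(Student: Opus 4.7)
The plan is to reduce the claim to a statement about the defining map $\mu$. Since $\mathrm{B}\operad P \comp A$ is the cofree $\mathrm{B}\operad P$-coalgebra on $A$ and $D_\mu$ is, by construction, the unique coderivation on it whose projection onto the cogenerators $A$ equals $\mu$, the universal property of cofree coalgebras (already invoked earlier in the section for the constructions of $\Omega_\alpha$ and $\widehat{\mathrm{B}}_\alpha$) shows that $D_\mu$ preserves a sub-$\mathrm{B}\operad P$-coalgebra of the form $\mathrm{B}\operad P \comp Y$, for a graded sub-module $Y \subseteq A$, if and only if the restriction $\mu|_{\mathrm{B}\operad P \comp Y}$ lands in $Y$. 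Thus the whole statement reduces to verifying the inclusion $\mu(\mathrm{B}\operad P \comp X) \subseteq X$.

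On the summand $\operad I \comp X \cong X$ of $\mathrm{B}\operad P \comp X$, the map $\mu$ coincides with the differential $d_A$ by definition, and this preserves $X$ because $X$ is a dg sub-module of $A$. On the remaining summand $\overline{\mathrm{B}\operad P} \comp X$, the defining formula gives $\mu = \pi_X \chi - h(\theta \comp \id_A)$. The first contribution visibly lands in $X$ thanks to the projection $\pi_X$. For the second contribution, the curvature $\theta$ of $\mathrm{B}\operad P$ vanishes outside the $s^2 \operad I$ summand, so $\theta \comp \id_A$ applied to any element of $\overline{\mathrm{B}\operad P} \comp X$ lands in $\operad I \comp X \cong X$; then $-h$ applied to an element of $X$ is zero, since $h$ was defined by extending the contracting homotopy of $K$ to $A$ by zero on $X$. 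Hence this second term vanishes identically on $\overline{\mathrm{B}\operad P} \comp X$, and we conclude that $\mu(\mathrm{B}\operad P \comp X) \subseteq X$.

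No genuine obstacle is expected: once $\mu|_{\mathrm{B}\operad P \comp X}$ is shown to land in $X$, the stability of $D_\mu$ on $\mathrm{B}\operad P \comp X$ follows formally from the description of a coderivation on a cofree coalgebra as an iterated cooperad decomposition followed by $\mu$ applied to a sub-expression, both of which preserve $\mathrm{B}\operad P \comp X$ by the inclusion $X \hookrightarrow A$ and the verification just carried out. The only subtlety is being precise about the curved/pdg setting in which this universal property is applied, but this is exactly the setting in which all the coderivations appearing in Subsection~\ref{sectionhomotopytransfertheoremalgebra} are constructed.
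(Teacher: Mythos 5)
Your proof is correct and takes essentially the same route as the paper: both reduce the stability of $D_\mu$ on $\mathrm{B}\operad P \comp X$ to the single fact that the corestriction $\mu$ sends $\mathrm{B}\operad P \comp X$ into $X$ (the paper phrases this via the idempotent $\id \comp \pi_X$ and cofreeness of $\mathrm{B}\operad P \comp A$, you via the universal description of coderivations on a cofree coalgebra). Your case-by-case check of the three constituents of $\mu$ — $d_A$ preserving the dg summand $X$, the term $\pi_X\chi$, and the curvature term $-h(\theta \comp \id)$ vanishing on $\overline{\mathrm{B}\operad P}\comp X$ because $h|_X=0$ — supplies the detail the paper leaves as ``one can notice''.
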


\begin{proof}
The sub-graded $\mathrm{B}\operad P$-coalgebra $\mathrm{B}\operad P \comp X$ is actually the quotient/kernel of the idempotent endomorphism $(\id \comp \pi_X)$ of $\mathrm{B}\operad P \comp A$. One can notice that the projection onto $A$ of $(\id \comp \pi_X)~D_\mu ~ (\id \comp \pi_X)$ and $ D_\mu ~ (\id \comp \pi_X)$ are equal:
$$
(\id \comp \pi_X) ~ \mu ~\pi_X = (\id \comp \pi_X) ~ \mu.
$$
Thus 
$$
(\id \comp \pi_X)~D_\mu ~ (\id \comp \pi_X)
= D_\mu ~ (\id \comp \pi_X)
$$
which proves the result.
\end{proof}

To conclude, we have a composition of morphisms of curved $\mathrm{B}\operad P$-coalgebras
$$
(\mathrm{B} \operad P \comp X, D_\mu) \hookrightarrow (\mathrm{B} \operad P \comp A, D_\mu)
\xrightarrow{f} (\mathrm{B} \operad P \comp A, D_\beta).
$$

\subsubsection{The cooperad version of the homotopy transfer theorem for algebras}

\begin{theorem}\label{theoremhttcooperadalgebras}
Let $i : X \longrightarrow A$ be an acyclic cofibration of dg modules and let $\gamma_A : \Omega \operad C \comp A \longrightarrow A$ be a dg $\Omega \operad C$-algebra structure on $A$. There exists another dg $\Omega \operad C$-algebra structure 
\[
\mu_A: \Omega \operad C \comp A \longrightarrow A
\]
which restricts on $X$, together with an $\infty$-isotopy
$$
(A, \mu_A) \rightsquigarrow (A, \gamma_A)
$$

of dg $\Omega \operad C$-algebras.
\end{theorem}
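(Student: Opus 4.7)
The plan is to reduce the statement to the technical constructions just established, applied with $\operad P = \Omega \operad C$, and then descend from curved $\mathrm{B}\Omega\operad C$-coalgebras to curved $\operad C$-coalgebras by means of the unit of the operadic bar--cobar adjunction.

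Setting $\operad P = \Omega \operad C$, the dg $\Omega \operad C$-algebra structure $\gamma_A$ and the splitting $A \cong X \oplus K$ afforded by the acyclic cofibration $i$ (together with a choice of contracting homotopy of $K$) provide exactly the input required by the preceding constructions. Their output consists of a curved $\mathrm{B}\Omega \operad C$-coalgebra structure $D_\mu$ on the graded cofree $\mathrm{B}\Omega \operad C$-coalgebra $\mathrm{B}\Omega \operad C \comp A$ which restricts to the sub-coalgebra $\mathrm{B}\Omega \operad C \comp X$, together with an isomorphism of graded $\mathrm{B}\Omega \operad C$-coalgebras
\[
f:\ (\mathrm{B}\Omega \operad C \comp A,\, D_\mu)\ \xrightarrow{\cong}\ (\mathrm{B}\Omega \operad C \comp A,\, D_\beta)~,
\]
whose restriction to $\operad I \comp A \cong A$ is the identity. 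Equivalently, $A$ acquires a new dg $\Omega\mathrm{B}\Omega \operad C$-algebra structure $\mu_A'$ restricting to $X$, together with an $\infty$-isotopy $(A, \mu_A') \rightsquigarrow (A, \gamma_A')$ of dg $\Omega\mathrm{B}\Omega \operad C$-algebras, where $\gamma_A' \coloneqq \epsilon^*\gamma_A$ is the pullback of $\gamma_A$ along the counit $\epsilon: \Omega\mathrm{B}\Omega \operad C \to \Omega \operad C$.

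To descend to $\Omega \operad C$-algebras, I invoke the unit $\eta: \operad C \to \mathrm{B}\Omega \operad C$ of the operadic bar--cobar adjunction. Its cobar $\Omega\eta: \Omega \operad C \to \Omega\mathrm{B}\Omega \operad C$ is a section of $\epsilon$, and the square
\[
\begin{tikzcd}[column sep=3pc,row sep=2.5pc]
\operad C \ar[r,"\iota_\C"] \ar[d,"\eta"'] & \Omega \operad C \ar[d,"\Omega\eta"] \\
\mathrm{B}\Omega \operad C \ar[r,"\iota_{\mathrm{B}\Omega\operad C}"'] & \Omega\mathrm{B}\Omega \operad C
\end{tikzcd}
\]
intertwines the two universal curved twisting morphisms. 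The compatibility of the bar construction with morphisms of (co)operads then yields a natural identification $\mathrm{B}_{\iota_\C} \circ (\Omega\eta)^* \cong \eta^! \circ \mathrm{B}_{\iota_{\mathrm{B}\Omega\operad C}}$. Setting $\mu_A \coloneqq (\Omega\eta)^* \mu_A'$, I obtain a dg $\Omega \operad C$-algebra structure on $A$ for which $X$ remains a sub-$\Omega\operad C$-algebra, and such that $(\Omega\eta)^*\gamma_A' = \gamma_A$ since $(\Omega\eta)^* \circ \epsilon^* = \id$. Applying $\eta^!$ to $f$ then produces the desired $\infty$-isotopy
\[
\eta^! f:\ \mathrm{B}_\C(A, \mu_A)\ \longrightarrow\ \mathrm{B}_\C(A, \gamma_A)~,
\]
whose linear part is read off from the $\operad I \comp A \cong A$ component of the underlying cofree coalgebras, where it agrees with the corresponding restriction of $f$, namely $\id_A$.

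The main technical point is to verify the compatibility formula $\mathrm{B}_{\iota_\C} \circ (\Omega\eta)^* \cong \eta^! \circ \mathrm{B}_{\iota_{\mathrm{B}\Omega\operad C}}$ at the level of curved $\operad C$-coalgebra structures, so that $\eta^!(\mathrm{B}\Omega\operad C \comp A, D_\mu)$ and $\eta^!(\mathrm{B}\Omega\operad C \comp A, D_\beta)$ really are identified with $\mathrm{B}_\C(A, \mu_A)$ and $\mathrm{B}_\C(A, \gamma_A)$ respectively, with the linear part faithfully preserved. Since $\eta^!$ is computed as a pullback in graded $\kk$-modules and both bar constructions are built from cofree coalgebras whose derivations are determined by the respective twisting morphisms, this identification is natural; the preservation of the linear part then follows from inspecting the $\operad I \comp A$ summand on both sides.
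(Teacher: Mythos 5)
Your proposal is correct and follows essentially the same route as the paper: apply the preceding technical construction with $\operad P = \Omega\operad C$ and then descend to curved $\operad C$-coalgebras along the unit $\eta\colon \operad C \to \mathrm{B}\Omega\operad C$ via the right adjoint $\eta^{!}$, which is exactly how the paper identifies $(\operad C \comp A, \tilde D_\mu)$ and $(\operad C\comp A,\tilde D_\beta)=\mathrm{B}_{\operad C}(A,\gamma_A)$. Your extra step of phrasing the descent as restriction of structure along $\Omega\eta$ together with the compatibility $\mathrm{B}_{\iota_{\operad C}}\circ(\Omega\eta)^{*}\cong \eta^{!}\circ \mathrm{B}_{\iota_{\mathrm{B}\Omega\operad C}}$ simply makes explicit the identification the paper uses implicitly, so it is the same argument in slightly more detail.
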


\begin{proof}
Since $i$ is an acylic cofibration of dg modules, it has a left inverse $p$ and one can decompose $A$ as $X \oplus K$ where $K$ is the kernel of $p$. The paragraph just above gives us a diagram of curved $\mathrm{B}\Omega \operad C$-coalgebras
$$
(\mathrm{B}\Omega \operad C \comp X, D_\mu) \hookrightarrow (\mathrm{B}\Omega \operad C \comp A, D_\mu)
\xrightarrow{f} (\mathrm{B}\Omega \operad C, D_\beta)~.
$$
Applying the right adjoint functor from curved $B\Omega\operad C$-coalgebras to curved $\operad C$-coalgebra that results from the unit map $\operad C \longrightarrow \mathrm{B}\Omega\operad C$, we get diagram of curved $\operad C$-coalgebras
$$
(\operad C \comp X, \tilde{D}_\mu) \hookrightarrow (\operad C \comp A, \tilde{D}_\mu)
\xrightarrow{\tilde{f}} (\operad C \comp A, \tilde{D}_\beta) = \mathrm{B}_{\operad C} (A, \gamma_A)~.
$$
In that context, $D_\mu$ is the coderivation on $\operad C \comp A$ that induces the expected dg $\Omega \operad C$-algebra $\mu_A$ structure on $A$ and $\tilde f$ is the expected $\infty$-isotopy. 
\end{proof}

\subsubsection{The homotopy transfer theorem for algebras}

\begin{theorem}
Let $\operad Q$ be a cofibrant dg operad, let $i : X \longrightarrow A$ be an acyclic cofibration of dg modules and let $\gamma_A : \operad Q \comp A \longrightarrow A$ be a dg $\operad Q$-algebra structure on $A$. There exists a dg $\operad Q$-algebra structure $\mu_X$ on $X$, together with a zig-zag of quasi-isomorphisms
\[
(A,\gamma_A) \lqi \cdots \qi (X,\mu_X)
\]
of dg $\operad Q$-algebras. Furthermore, the maps in this zig-zag are homotopic to $i$ in the model category of dg modules.
\end{theorem}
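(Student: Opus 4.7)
My plan is to reduce to Theorem \ref{theoremhttcooperadalgebras} by replacing $\operad Q$ with a cofibrant resolution of the form $\Omega \C$ for $\C$ quasi-planar, so that pulling back the transferred structure along a carefully chosen section produces a genuine $\operad Q$-algebra on $X$.

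I would first set $\C := \mathrm{B}(\operad E \otimes \operad Q)$, which is quasi-planar by Proposition \ref{prop: B(E otimes P) quasi-planar avec la bonne filtration}, and consider the canonical quasi-isomorphism $\pi : \Omega \C \qi \operad Q$. The key observation is that $\pi$ factors through the counit $\Omega \mathrm{B}(\operad E \otimes \operad Q) \twoheadrightarrow \operad E \otimes \operad Q$ and through the canonical surjection $\operad E \otimes \operad Q \twoheadrightarrow \operad Q$ induced in degree zero by the augmentation $\operad E \to \uCom$, so $\pi$ is arity-wise degree-wise surjective and hence an acyclic fibration in the semi-model category of dg operads of \cite{Fresse}. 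Since $\operad Q$ is cofibrant, lifting against $\pi$ produces a section $\sigma: \operad Q \longrightarrow \Omega \C$ of dg operads with $\pi \sigma = \id_{\operad Q}$.

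The next step is to apply Theorem \ref{theoremhttcooperadalgebras} to the pulled-back dg $\Omega \C$-algebra $\pi^*(A,\gamma_A)$. This yields a dg $\Omega \C$-algebra structure $\mu'_A$ on $A$ that restricts to a dg $\Omega \C$-algebra structure $\mu'_X$ on $X$, together with an $\infty$-isotopy $\phi: (A,\mu'_A) \rightsquigarrow \pi^*(A,\gamma_A)$. By Proposition \ref{propinfinitmorphalgebra}, the adjoint $\phi^\dagger$ is a weak-equivalence of curved $\C$-coalgebras; combining $\Omega_\C(\phi^\dagger)$ with the bar-cobar counits (which are quasi-isomorphisms by Lemma \ref{lemmaquillenequivalence}) assembles into a zig-zag
\[
(X,\mu'_X) \hookrightarrow (A,\mu'_A) \xleftarrow{\sim} \Omega_\C \mathrm{B}_\C (A,\mu'_A) \xrightarrow{\sim} \Omega_\C \mathrm{B}_\C \pi^*(A,\gamma_A) \xrightarrow{\sim} \pi^*(A,\gamma_A)
\]
of dg $\Omega \C$-algebras, whose composite in the homotopy category of dg modules is homotopic to $i$ because $\phi$ has identity linear part.

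Finally, I set $\mu_X := \sigma^* \mu'_X$ and pull the whole zig-zag back along $\sigma$. Restriction along a morphism of dg operads preserves underlying dg modules and quasi-isomorphisms, and because $\pi \sigma = \id_{\operad Q}$ the rightmost object becomes $\sigma^* \pi^*(A,\gamma_A) = (A,\gamma_A)$. The result is the desired zig-zag of quasi-isomorphisms of dg $\operad Q$-algebras between $(X,\mu_X)$ and $(A,\gamma_A)$, homotopic to $i$ on underlying dg modules. The one step I anticipate requiring genuine verification is the surjectivity of $\pi$ in every arity and degree, which is what guarantees the existence of the section $\sigma$; once that is established, the rest is a formal consequence of Theorem \ref{theoremhttcooperadalgebras} and the retraction identity $\pi \sigma = \id$.
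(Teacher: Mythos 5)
Your proposal is correct and follows essentially the same route as the paper: take $\C=\mathrm{B}(\operad E\otimes\operad Q)$, apply the cooperad-level homotopy transfer theorem to the pulled-back $\Omega\C$-algebra to get a structure restricting to $X$ together with an $\infty$-isotopy, and then restrict the resulting zig-zag along a section of the acyclic fibration $\Omega\C \qi \operad Q$ furnished by cofibrancy of $\operad Q$. Your explicit verification that $\pi$ is arity-wise degree-wise surjective, and your slightly longer zig-zag through the bar-cobar counits, are just more detailed renderings of the paper's argument.
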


\begin{proof}
    Taking $\operad C$ to be the quasi-planar conilpotent curved cooperad $\mathrm{B}(\operad Q \otimes \operad E)$, Theorem \ref{theoremhttcooperadalgebras} yields the dg $\Omega \operad C$-algebra structure $\mu_X$ on $X$ together with a zig-zag of quasi-isomorphisms of dg $\Omega \operad C$-algebras
    $$
    (X, \mu_X) \xrightarrow{i} (A, \mu_A) \leftarrow \Omega_{\operad C} \mathrm{B}_{\operad C}(A, \mu_A) \xrightarrow{\tilde f} (A, \gamma_A)~.
    $$
Moreover, the acyclic fibration of dg operads $\Omega \operad C \longrightarrow \operad Q$ has a section since $\operad Q$ is cofibrant. Applying the induced right adjoint functor from dg $\Omega \operad C$-algebras to dg $\operad Q$-algebras yields the expected zig-zag in the category of dg $\operad Q$-algebras.
\end{proof}

\begin{remark}
This last result also follows from model-categorical arguments, as developed in \cite{FressePROP}. 
\end{remark}


\subsection{Further localisations and divided powers operations}
Let $\operad Q$ be an admissible dg operad and let $\C$ be a quasi-planar conilpotent curved cooperad. Let us consider a morphism of dg operads $f: \Omega \operad C \longrightarrow \operad Q$. We have two Quillen adjunctions

\[
\begin{tikzcd}[column sep=5pc,row sep=3pc]
          \catcurvcog{\operad C} \arrow[r, shift left=1.1ex, "\Omega_{\C}"{name=F}]           
         &\mathsf{dg}~ \Omega \operad C \text{-}\mathsf{alg} \arrow[l, shift left=.75ex, "\mathrm{B}_{\C}"{name=U}] \arrow[r, shift left=1.1ex, "f_!"{name=A}]
         &\mathsf{dg}~\operad Q\text{-}\mathsf{alg}~. \arrow[l, shift left=.75ex, "f^*"{name=B}] \arrow[phantom, from=F, to=U, , "\dashv" rotate=-90]\arrow[phantom, from=A, to=B, , "\dashv" rotate=-90]
\end{tikzcd}
\]

Let us denote $\Omega_f$ the composite left adjoint and $\mathrm{B}_f$ the composite right adjoint. 

\begin{proposition}\label{propleftbousfield}
There exists a combinatorial model structure on curved $\operad C$-coalgebras called the $f$-model structure, transferred from that of dg $\operad Q$-algebras, determined by the following sets of morphisms

		\medskip
		
    \begin{enumerate}
        \item the set of $f$-cofibrations is given by morphisms $g$ such that $\Omega_f(g)$ is a cofibration,
        
        \medskip
        
        \item the set of $f$-weak-equivalences is given by morphisms $g$ such that $\Omega_f(g)$ is a weak equivalence.
        
        \medskip
        
        \item the set of $f$-fibrations is determined by right-lifting property against all acyclic cofibrations.
    \end{enumerate}
    
    		\medskip
    Moreover, this is a left Bousfield localisation of the canonical model structure transferred from dg $\Omega \C$-algebras. Meaning that the identity functor of curved $\C$-coalgebra, where at the source they are endowed with the canonical model structure, and at the target with the $f$-model structure, is a left Quillen functor. 
\end{proposition}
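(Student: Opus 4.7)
The strategy is to apply the transfer/localisation technology from the Appendix to the composite adjunction $\Omega_f \dashv \mathrm{B}_f$, and then identify the resulting model structure as a left Bousfield localisation of the canonical one on curved $\operad C$-coalgebras. The first step, which is mostly bookkeeping, is to identify the $f$-cofibrations. Since the underlying graded map of $\Omega_f(g) = f_!\Omega_{\C}(g)$ is $\operad Q \comp g$, and cofibrations in dg $\operad Q$-algebras are degree-wise injections by (the analogue of) Lemma \ref{lemmacofibalgdegree-wiseinj}, while $\operad Q \comp -$ preserves and reflects degree-wise injections, the set of $f$-cofibrations coincides with the set of degree-wise injective maps of curved $\C$-coalgebras. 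By Proposition \ref{propcofibrations}, this is exactly the class of cofibrations of the canonical model structure.

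The second step, existence of the model structure, is by transfer. One reuses the cylinder construction of Proposition \ref{propcylinder} verbatim, replacing $\Omega \operad C$-algebras with dg $\operad Q$-algebras: given a curved $\C$-coalgebra $W$, take a cylinder object
\[
\Omega_f(W \oplus W) \;\rightarrowtail\; A \;\stackrel{\sim}{\twoheadrightarrow}\; \Omega_f(W)
\]
in dg $\operad Q$-algebras (which exists since $\operad Q$ is admissible), and define $\mathrm{Cyl}_f(W) := \mathrm{B}_f(A) \times_{\mathrm{B}_f\Omega_f(W)} W$. The contracting homotopy produced from a section of the acyclic fibration $A \twoheadrightarrow \Omega_f(W)$ restricts to $\mathrm{Cyl}_f(W)$ via the same argument as in the proof of Proposition \ref{corollarycylinder}, because that argument only depends on the quasi-planar filtration of $\C$ and on the identity $\partial(H) = \id - \pi$ holding on each graded piece. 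Combined with the fact that $\mathrm{B}_f$ sends quasi-isomorphisms to filtered quasi-isomorphisms of quasi-planar ladders (by Proposition \ref{prop: quasi-iso filtré entre les bar} applied after $f^{\ast}$, which preserves underlying dg modules), this provides the natural cylinder required by the Appendix transfer theorem.

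The third step is to show that this transferred structure is a left Bousfield localisation of the canonical one. We have already shown that the two classes of cofibrations coincide; it therefore suffices to show that every acyclic cofibration of the canonical structure is an $f$-acyclic cofibration, i.e.\ that canonical weak-equivalences are $f$-weak-equivalences. If $g$ is a canonical weak-equivalence then $\Omega_{\C}(g)$ is a quasi-isomorphism of dg $\Omega\C$-algebras between cofibrant objects (both $\Omega_{\C}(W)$ and $\Omega_{\C}(W')$ are cofibrant as $\Omega_{\C}$ is left Quillen and every $\C$-coalgebra is canonically cofibrant). By Ken Brown's lemma applied to the left Quillen functor $f_!$, the map $\Omega_f(g) = f_!\Omega_{\C}(g)$ is a weak-equivalence in dg $\operad Q$-algebras, so $g$ is an $f$-weak-equivalence.

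The main obstacle will be verifying that the cylinder construction actually produces a \emph{good} cylinder in the new structure, i.e.\ that $\mathrm{Cyl}_f(W) \to W$ is an $f$-weak-equivalence. The trick is that the filtered quasi-isomorphism statement of Proposition \ref{prop: quasi-iso filtré entre les bar} only refers to the bar construction $\mathrm{B}_{\C}$ and the quasi-planar filtration of $\C$, so applying $\mathrm{B}_{\C}$ to an acyclic fibration of dg $\Omega\C$-algebras (obtained by pulling back the $\operad Q$-algebra acyclic fibration $A \twoheadrightarrow \Omega_f(W)$ along $f$) still produces a filtered quasi-isomorphism. The pullback defining $\mathrm{Cyl}_f(W)$ then sits inside a ladder whose associated graded is quasi-isomorphic to that of $W$, so $\mathrm{Cyl}_f(W) \to W$ is a canonical weak-equivalence, hence in particular an $f$-weak-equivalence by the Bousfield comparison just proved.
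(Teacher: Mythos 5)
Your overall strategy (transfer via a natural cylinder, then compare cofibrations and weak-equivalences with the canonical structure) is the same as the paper's, and your Ken Brown argument for ``canonical weak-equivalences are $f$-weak-equivalences'' is a correct filling-in of a step the paper leaves implicit. But there is a gap in your first step. You claim that the $f$-cofibrations coincide with the degree-wise injections, and for the direction ``degree-wise injection $\Rightarrow$ $f$-cofibration'' your justification is that $\operad Q \comp -$ preserves and reflects degree-wise injections. That only shows $\Omega_f(g)$ is a degree-wise injection of dg $\operad Q$-algebras, which is \emph{not} the same as being a cofibration of dg $\operad Q$-algebras: cofibrations there are retracts of relative cell maps (free-type attachments), and for a general admissible $\operad Q$ most degree-wise injective morphisms of $\operad Q$-algebras are not cofibrations (the analogue of Lemma \ref{lemmacofibalgdegree-wiseinj} gives only the one-way implication). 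Since your third step relies on ``canonical cofibrations $\subseteq$ $f$-cofibrations'' to get the left Quillen identity functor, this direction needs a real proof. The fix is the one the paper uses: a canonical cofibration $g$ has $\Omega_{\C}(g)$ a cofibration of dg $\Omega\C$-algebras by definition, and $f_!$ is left Quillen (both operads being admissible), so $\Omega_f(g)=f_!\Omega_{\C}(g)$ is a cofibration; alternatively, factor a degree-wise injection as a transfinite composition of elementary cofibrations as in Proposition \ref{propcofibrations} and apply $\Omega_f$ levelwise. Your step (a), that $f$-cofibrations are degree-wise injections, is fine and is exactly what the paper checks for the localisation statement.

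Once the containments ``canonical cofibrations $\subseteq$ $f$-cofibrations'' and ``canonical weak-equivalences $\subseteq$ $f$-weak-equivalences'' are in place, your second step is unnecessary work: every object is $f$-cofibrant and the canonical cylinder $W\oplus W \rightarrowtail \mathrm{Cyl}(W) \to W$ of Proposition \ref{propcylinder} is automatically a cylinder for the $f$-structure, since its two maps are a canonical cofibration and a canonical weak-equivalence. This is precisely the paper's shortcut. Your rebuilt $\mathrm{Cyl}_f(W)$ (with a $\operad Q$-algebra cylinder and $\mathrm{B}_f = \mathrm{B}_{\C} f^{*}$) does appear to work, and your observation that Proposition \ref{prop: quasi-iso filtré entre les bar} applies after $f^{*}$ is correct, but you then verify it by producing a \emph{canonical} weak-equivalence and invoking the same containment --- so the extra construction buys nothing, while the claim that ``the same homotopy argument restricts'' is asserted rather than checked.
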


\begin{proof}
The $f$-cofibrations and the $f$-weak equivalences respectively contain cofibrations and weak-equivalences of the canonical model structure, transfer along the bar-cobar adjunction. Hence, every object is cofibrant and a natural cylinder object is provided by Proposition \ref{propcylinder}. This proves the existence of the transferred $f$-model structure (see Appendix \ref{appendixtransfer}). To prove that this is a left Bousfield localisation of that transferred from dg $\Omega \operad C$-algebras, it suffices to show that $f$-cofibrations are in particular degree-wise injections which results from the same arguments as those used to prove Propostion \ref{propcofibrations}.
\end{proof}

\textbf{Localizing at quasi-isomorphisms.} Let $\C$ be a quasi-planar conilpotent \textit{differential graded} cooperad. The cobar construction $\Omega \C$ is augmented since $\C$ has zero curvature. Let us denote $\nu: \Omega \C \longrightarrow \operad I$ the canonical morphism of dg operads given by the augmentation. We have the following adjunctions 

\[
\begin{tikzcd}[column sep=5pc,row sep=3pc]
          \catdgcog{\operad C} \arrow[r, shift left=1.1ex, "\Omega_{\C}"{name=F}]           
         &\mathsf{dg}~ \Omega \operad C \text{-}\mathsf{alg} \arrow[l, shift left=.75ex, "\mathrm{B}_{\C}"{name=U}] \arrow[r, shift left=1.1ex, "\mathrm{Indec}"{name=A}]
         &\mathsf{dg}~\text{-}\mathsf{mod}~, \arrow[l, shift left=.75ex, "\mathrm{Triv}"{name=B}] \arrow[phantom, from=F, to=U, , "\dashv" rotate=-90]\arrow[phantom, from=A, to=B, , "\dashv" rotate=-90]
\end{tikzcd}
\]

where the adjunction $\nu_! \dashv \nu^*$ is in fact given by the indecomposables functor $\mathrm{Indec}$ (which is $\nu_!$) and by the trivial structure functor $\mathrm{Triv}$ (which is $\nu^*$). Notice that since $\C$ has zero curvature, curved $\C$-coalgebras in pdg modules are precisely given by dg $\C$-coalgebras.

\begin{proposition}\label{prop: triv-indec weak equivalences are quasi-isos}
The set of $\nu$-weak-equivalences is precisely the set of quasi-isomorphisms of dg $\C$-coalgebras.
\end{proposition}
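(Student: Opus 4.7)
The plan is to show that the composite left adjoint $\Omega_\nu = \nu_! \circ \Omega_\C$ is naturally isomorphic to the forgetful functor from dg $\C$-coalgebras to dg modules. Since quasi-isomorphisms of dg $\C$-coalgebras are by definition detected by this forgetful functor, this will identify $\nu$-weak-equivalences with quasi-isomorphisms. The functor $\nu_! = \mathrm{Indec}$ is the indecomposables functor: for a dg $\Omega\C$-algebra $A$ with structural map $\gamma_A$, $\mathrm{Indec}(A)$ is the coequaliser in dg modules of $\gamma_A, \eta_A: \overline{\Omega\C} \comp A \rightrightarrows A$, i.e.\ the quotient of $A$ by the sub-dg-module generated by the image of positive-arity operations.

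Apply this to $A = \Omega_\C W = \Omega\C \comp W$ for a dg $\C$-coalgebra $W$. As a graded module one has the decomposition $\Omega\C \comp W \cong W \oplus (\overline{\Omega\C} \comp W)$, where $W \cong \operad I \comp W$, and passing to indecomposables identifies the underlying graded module of $\mathrm{Indec}(\Omega_\C W)$ with $W$. It remains to check that the induced differential on this quotient is $d_W$. From the description of $\Omega_\C$ in Subsection \ref{subsection: bar-cobar adjunction}, the derivation of $\Omega_\C W$ restricted to the generators $W$ is the sum of $d_1$, which is simply $d_W: W \to W$, and $d_2$, which factors as $W \xrightarrow{\Delta_W} \C \comp W \xrightarrow{\iota \comp \id} \Omega\C \comp W$. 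The latter factors through $s^{-1}\overline\C \comp W \subset \overline{\Omega\C} \comp W$, so it lands in the decomposable part and vanishes under the indecomposables projection. Hence $\mathrm{Indec}(\Omega_\C W) \cong (W, d_W)$ naturally in $W$.

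For a morphism $g: W \to W'$ of dg $\C$-coalgebras, the above natural isomorphism identifies $\Omega_\nu(g) = \mathrm{Indec}(\Omega_\C g)$ with $g$ viewed as a map of underlying dg modules. Therefore $g$ is a $\nu$-weak-equivalence if and only if $g$ is a quasi-isomorphism of dg $\C$-coalgebras. I do not anticipate any real obstacle: once one has the graded decomposition and the observation that $d_2$ lands in the decomposable part (which is immediate from the formula defining $\Omega_\C$), the identification is purely formal.
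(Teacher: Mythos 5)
Your proposal is correct and follows exactly the paper's argument: the proof in the text simply asserts that $\mathrm{Indec}\circ\Omega_{\C}$ is isomorphic to the forgetful functor, and your computation (graded splitting $\Omega\C \comp W \cong W \oplus \overline{\Omega\C}\comp W$ plus the observation that $d_2$ lands in the decomposable part) is precisely the verification of that natural isomorphism. No gaps; you have just spelled out the one-line proof.
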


\begin{proof}
The composition $\mathrm{Indec}~\Omega_{\C}$ is isomorphic to the forgetful functor from dg $\C$-coalgebras to dg modules.
\end{proof}

\begin{corollary}
Let $\C$ be a quasi-planar conilpotent dg cooperad. The set of weak-equivalences in the canonical model structure on dg $\C$-coalgebras is contained in the set of quasi-isomorphims.
\end{corollary}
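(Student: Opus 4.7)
The plan is to apply the left Bousfield localization machinery provided by Proposition \ref{propleftbousfield}. Since $\C$ is a dg cooperad (zero curvature), the cobar construction $\Omega\C$ is an augmented dg operad, and in particular there is a canonical morphism of dg operads $\nu : \Omega\C \longrightarrow \operad I$. The target dg operad $\operad I$ is trivially admissible (dg $\operad I$-algebras are just dg $\kk$-modules with their standard projective model structure). Moreover, under $\nu$, the induced adjunction $\nu_! \dashv \nu^*$ is the indecomposables--trivial structure adjunction $\mathrm{Indec} \dashv \mathrm{Triv}$.

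First, I would apply Proposition \ref{propleftbousfield} to the data $(\operad Q, f) = (\operad I, \nu)$. This yields a combinatorial model structure on dg $\C$-coalgebras, the $\nu$-model structure, whose weak-equivalences are exactly those morphisms $g$ of dg $\C$-coalgebras such that $\Omega_\nu(g) = \mathrm{Indec}\, \Omega_\C(g)$ is a weak-equivalence of dg $\kk$-modules. By Proposition \ref{prop: triv-indec weak equivalences are quasi-isos}, the composite $\mathrm{Indec}\, \Omega_\C$ is naturally isomorphic to the forgetful functor from dg $\C$-coalgebras to dg modules, so the set of $\nu$-weak-equivalences coincides precisely with the set of quasi-isomorphisms of dg $\C$-coalgebras.

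Next, I would invoke the second assertion of Proposition \ref{propleftbousfield}, which states that the $\nu$-model structure is a left Bousfield localization of the canonical model structure of Theorem \ref{thm: existence de structure de modèles}. By the very definition of a left Bousfield localization, the two model structures share the same class of cofibrations, and the class of $\nu$-weak-equivalences contains the class of weak-equivalences of the canonical model structure. Combining this inclusion with the identification of $\nu$-weak-equivalences as quasi-isomorphisms established in the previous paragraph yields the claim: every weak-equivalence in the canonical model structure on dg $\C$-coalgebras is a quasi-isomorphism.

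There is essentially no obstacle here, since both ingredients (the existence of the Bousfield-localized model structure and the identification of its weak-equivalences) are already established in the preceding propositions. The only point to check carefully is that $\Omega\C$ is genuinely augmented when $\C$ has zero curvature, so that $\nu : \Omega\C \to \operad I$ exists as a morphism of dg operads; this is immediate from the construction of $\Omega\C$ since the differential term $d_\theta$ coming from the curvature vanishes in this case.
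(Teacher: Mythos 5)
Your proposal is correct and follows essentially the same route as the paper: the paper's proof consists precisely of applying Proposition \ref{propleftbousfield} to the augmentation morphism $\nu: \Omega\C \longrightarrow \operad I$ and combining it with Proposition \ref{prop: triv-indec weak equivalences are quasi-isos}. Your additional remarks (admissibility of $\operad I$, identification of $\nu_! \dashv \nu^*$ with $\mathrm{Indec} \dashv \mathrm{Triv}$, and the vanishing of $d_\theta$) just make explicit what the paper leaves implicit.
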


\begin{proof}
It suffices to apply Proposition \ref{propleftbousfield} to the morphism of dg operads $\nu: \Omega \operad C \longrightarrow \operad I$, combining it with Proposition \ref{prop: triv-indec weak equivalences are quasi-isos}.
\end{proof}

\textbf{Divided power operations in the homotopical setting.}
Let $\C$ be a quasi-planar conilpotent dg cooperad. By Proposition \ref{propleftbousfield}, the category of dg $\C$-coalgebras admits a model category structure where 

\medskip
\begin{enumerate}
\item the set of cofibrations is given by degree-wise injections;

\medskip

\item the set of weak-equivalences is given by quasi-isomorphisms;

\medskip

\item the set of fibrations is given by maps with right lifting property with respect to acyclic cofibrations.
\end{enumerate}

Let $(W,\Delta_W,d_W)$ be a dg $\C$-coalgebra. The structural map 

 \[
    \Delta_W: W \longrightarrow \bigoplus_{n \geq 0} \C(n) \otimes_{\mathbb S_n} W^{\otimes n}~,
    \]
    \vspace{0.1pc}
    
lands on the coinvariants on the right-hand side, therefore divided power operations should appear. Nevertheless, since $\C$ is quasi-planar, there is a natural isomorphism

\[
\bigoplus_{n \geq 0} \C(n) \otimes_{\mathbb S_n} W^{\otimes n} \cong \bigoplus_{n \geq 0} \left(\C(n) \otimes W^{\otimes n}\right)^{\mathbb S_n}~,
\]
\vspace{0.1pc}

of dg modules induced by the norm map (Proposition \ref{prop: iso avec la norme}). Therefore no divided power operations appear at the algebraic level.

\medskip

These divided power operations do not disappear at the $\infty$-categorical level. Indeed, $\C(n)$ is a quasi-free dg $\kk[\mathbb{S}_n]$-module which is furthermore \textit{projective} by Proposition \ref{prop: quasi-planaire implique S-projectif dans le cas dg}. Therefore we have equivalences 

\[
\bigoplus_{n \geq 0} \C(n) \otimes_{h\mathbb S_n} W^{\otimes n} \simeq \bigoplus_{n \geq 0} \C(n) \otimes_{\mathbb S_n} W^{\otimes n} \cong \bigoplus_{n \geq 0} \left(\C(n) \otimes W^{\otimes n}\right)^{\mathbb S_n} \not\simeq \bigoplus_{n \geq 0} \left(\C(n) \otimes W^{\otimes n}\right)^{h\mathbb S_n}~,
\]
\vspace{0.1pc}

where on the upmost left-hand side we consider \textit{homotopy coinvariants} and on the upmost right-hand side we consider \textit{homotopy invariants}. This means that the $\infty$-category of dg $\C$-coalgebras localized at quasi-isomorphisms behaves like an $\infty$-category of \textit{divided power conilpotent coalgebras}.


\newpage

\section{Model structure on complete algebras over a cooperad}

\vspace{2pc}

The goal of this section is to study the homotopical properties of the complete bar-cobar adjunction between dg $\Omega\C$-coalgebras and complete curved $\C$-algebras, in the case where $\C$ is a quasi-planar conilpotent curved cooperad. 

\medskip

The dg operad $\Omega \C$ is cofibrant, and therefore also coadmissible by Proposition \ref{propadmissible}. This means that the category of dg $\Omega\C$-coalgebras admits a model category structure where weak-equivalences are given by quasi-isomorphisms and cofibrations by degree-wise injections. Let us
consider the complete bar-cobar adjunction 
\[
\begin{tikzcd}[column sep=5pc,row sep=3pc]
          \catcurvcompalg{\operad C}
          \arrow[r, shift left=1.1ex, "\widehat{\Omega}_{\C}"{name=F}] & \mathsf{dg}~ \Omega \C\text{-}\mathsf{cog}~, \arrow[l, shift left=.75ex, "\widehat{\mathrm{B}}_{\C}"{name=U}]
            \arrow[phantom, from=F, to=U, , "\dashv" rotate=-90]
\end{tikzcd}
\]
\vspace{0.1pc}
relative to $\iota: \C \longrightarrow \Omega \C$, which will be denoted by $\widehat{\Omega}_{\C},\widehat{\mathrm{B}}_{\C}$ from now on. Our first goal is going to be to transfer the model structure on dg $\Omega \C$-coalgebras along this adjunction to the category of qp-complete curved $\C$-algebras.

\begin{theorem}\label{thm: existence model structure for absolute}
Let $\operad C$ be a quasi-planar curved conilpotent cooperad. The category of qp-complete curved $\operad C$-algebras has the structure of a combinatorial model category given by the following sets of maps:

\medskip

\begin{enumerate}
\item the set of weak-equivalences is given by morphisms $f$ such that $\widehat{\Omega}_{\C}$ is a quasi-isomorphism,

\medskip

\item the set of fibrations is given by morphisms $f$ such that $\widehat{\Omega}_{\C}$ is a fibration; these are degree-wise epimorphisms,

\medskip 

\item the set of cofibrations is given by morphisms with the left-lifting property with respect to acyclic fibrations.
\end{enumerate}
\end{theorem}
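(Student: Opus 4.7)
The plan is to apply right-transfer of model structures along the complete bar-cobar adjunction $\widehat{\Omega}_\C \dashv \widehat{\mathrm{B}}_\C$, dualizing the approach used for Theorem \ref{thm: existence de structure de modèles}. Both categories are presentable (Subsections \ref{sectioncommuteslimitscolimits} and \ref{section : Naturality of (co)algebras over a cooperad.}), and the target $\mathsf{dg}~\Omega\C\text{-}\mathsf{cog}$ carries the model structure supplied by Proposition \ref{propadmissible}. To apply the transfer theorem of the Appendix, I need to exhibit a natural fibrant replacement and a natural \emph{path object} for qp-complete curved $\C$-algebras, dualizing the natural cylinder object constructed in Section 5.

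I would first introduce \emph{elementary fibrations} of qp-complete curved $\operad C$-algebras as degree-wise surjections $p: \Lambda' \twoheadrightarrow \Lambda$ whose kernel is a dg module receiving no non-trivial $\overline{\operad C}$-operation from $\Lambda'$, dually to the elementary cofibrations of Section 5. Applying Proposition \ref{prop: cooperad ladder induces ladders} to the canonical quasi-planar filtration of $\operad C$ of Subsection \ref{subsection: quasi-planar canonical filtration}, every qp-complete curved $\operad C$-algebra $\Lambda$ sits canonically at the limit of an $\omega$-coladder
\[
\cdots \twoheadrightarrow F^n_{\mathrm{qp}} \Lambda \twoheadrightarrow \cdots \twoheadrightarrow F^1_{\mathrm{qp}} \Lambda \twoheadrightarrow F^0_{\mathrm{qp}} \Lambda = 0,
\]
whose transition maps are elementary fibrations. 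Mirroring Propositions \ref{prop:elemcof} and \ref{propcofibrations}, every degree-wise epimorphism of qp-complete $\operad C$-algebras is then a cofiltered limit of pullbacks along elementary fibrations, hence a fibration in the transferred sense. For the converse, a dual to Lemma \ref{lemmacofibalgdegree-wiseinj} shows that fibrations of dg $\Omega\C$-coalgebras are degree-wise surjective, using the acyclic cofibration $0 \rightarrowtail V \otimes I$ built from the cellular interval $I$ as in Proposition \ref{propadmissible}, and this property is reflected by $\widehat{\mathrm{B}}_\C$ through the canonical section of the projection from the cofree $\Omega\C$-coalgebra onto its cogenerators.

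The core technical step is the construction of a natural path object. Given $\Lambda$, I would factor the image $\widehat{\mathrm{B}}_\C \Lambda \longrightarrow \widehat{\mathrm{B}}_\C(\Lambda \times \Lambda)$ of the diagonal of $\Lambda$ as an acyclic cofibration followed by a fibration
\[
\widehat{\mathrm{B}}_\C \Lambda \rightarrowtail V \twoheadrightarrow \widehat{\mathrm{B}}_\C(\Lambda \times \Lambda)
\]
in dg $\Omega\C$-coalgebras, and then define
\[
\mathrm{Path}(\Lambda) \coloneqq \widehat{\Omega}_\C V \coprod_{\widehat{\Omega}_\C \widehat{\mathrm{B}}_\C \Lambda} \Lambda
\]
as a pushout in qp-complete curved $\operad C$-algebras, where the right leg is the counit $\widehat{\Omega}_\C \widehat{\mathrm{B}}_\C \Lambda \longrightarrow \Lambda$. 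The universal property of this pushout, combined with the transpose of $V \twoheadrightarrow \widehat{\mathrm{B}}_\C(\Lambda \times \Lambda)$ under the adjunction, produces maps $\Lambda \longrightarrow \mathrm{Path}(\Lambda) \twoheadrightarrow \Lambda \times \Lambda$ factoring the diagonal. The hard part is proving that $\Lambda \longrightarrow \mathrm{Path}(\Lambda)$ is a weak equivalence: directly, this requires constructing a degree $1$ endomorphism on the relevant cobar complex, dual to the contracting homotopy $H$ of Proposition \ref{propcylinder}, compatible with the quasi-planar coladder so that on each graded piece $\gr^n_{\mathrm{qp}}$ the computation decouples from the $\mathbb{S}$-action and reduces to a planar one in the spirit of Lemma \ref{lemma: endo partial(h) vaut id - proj sur le gros truc (cogèbres)}. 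Once this natural path object is in place, the transfer theorem (Appendix \ref{appendixtransfer}) delivers the combinatorial model structure with the stated characterization.
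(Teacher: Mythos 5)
Your proposal follows essentially the same route as the paper's proof: a right transfer along the complete bar-cobar adjunction, the identification of fibrations with degree-wise epimorphisms via elementary fibrations and the quasi-planar coladder of a qp-complete algebra, and a natural path object obtained as the (completed) pushout of $\widehat{\Omega}_{\C}$ applied to a coalgebra-level path object of $\widehat{\mathrm{B}}_{\C}\Lambda$, shown to be a weak-equivalence by a contracting homotopy whose effect is computed on the quasi-planar associated graded. The only deviations are cosmetic (for instance you invoke the coalgebra-side ladder proposition where the paper proves its algebra-side dual), so your outline matches the paper's argument.
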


\begin{remark}
Using the standard transfer theorem for model category structures only gives that fibrations are morphism which are sent by $\widehat{\Omega}_{\C}$ to fibrations. The theorem contains an additional characterization of fibrations of complete curved $\C$-algebras as degree-wise surjective maps. 
\end{remark}


\subsection{Outline of the transfer of model structures}
The proof is somewhat dual to the proof of Theorem \ref{thm: existence de structure de modèles}, except we deal with not all curved $\operad C$-algebras but those that are qp-complete. However, we extend our definitions of fibrations and weak equivalences to morphisms between any pair of curved $\operad C$-algebras.

\begin{definition}[Fibrations]\label{def: fibration de curved absolute}
A morphism $f$ of curved $\C$-algebras is a \textit{fibration} if $\widehat{\Omega}_{\C}(f)$ is a fibration of dg $\Omega \C$-coalgebras.
\end{definition}

\begin{definition}[Weak-equivalences]
A morphism $f$ of curved $\C$-coalgebras is a \textit{weak-equivalence} if $\widehat{\Omega}_{\C}(f)$ is a quasi-isomorphism of dg $\Omega \C$-coalgebras.
\end{definition}

\begin{definition}[Cofibrations]
A morphism of qp-complete curved $\C$-algebras is a \textit{cofibration} if it has the left-lifting property against all acyclic fibrations between qp-complete curved $\C$-algebras. 
\end{definition}

Both categories $\catcurvcompalg{\operad C}$ and $\catdgcog{\operad P}$ are presentable, therefore by Appendix \ref{appendixtransfer} it suffices to exhibit a natural fibrant resolution and a natural path object for qp-complete curved algebras to prove the existence of the transferred model structure. We will show that fibrations of Definition \ref{def: fibration de curved absolute} are given by degree-wise surjective maps in Proposition \ref{propfibration} and we will construct a natural path object in Proposition \ref{proppath}. 

\medskip

For the rest of this section, let us fix a quasi-planar conilpotent curved cooperad $\C$ whose quasi-planar ladder is indexed by some small ordinal $\alpha$.

\subsection{Elementary fibrations}
Elementary fibrations are a particularly well-behaved set of fibrations of (qp-complete) curved $\C$-algebras, such that the kernel of any such elementary fibration is a dg $\kk$-module. 

\begin{definition}[Elementary fibrations]
A morphism $f : \L \twoheadrightarrow \L'$ of curved $\operad C$-algebras is an \textit{elementary fibration} if it is degree-wise surjective and if the map $\overline{\gamma}_\L: \L^{\overline{\C}} \longrightarrow \L$ factors through $(\L')^{\overline{\C}}$, that is, if there exists a dotted arrow
\[
\begin{tikzcd}[column sep=3pc,row sep=3pc]
\L^{\overline{\operad C}} \arrow[d,rightarrowtail] \ar[r,"(f)^{\overline{\C}}", two heads]
&(\L')^{\overline{\operad C}}  \arrow[d,dashed]
\\
\L^{\operad C} \arrow[r,"\gamma_\L",swap]  
&\L
\end{tikzcd}
\]
such that the diagram commutes, where $\gamma_\L$ denotes the structural map of $\L$.
\end{definition}

\begin{remark}
The map $\L^{\overline{\operad C}} \longrightarrow (\L')^{\overline{\operad C}}$ is a degree-wise epimorphism since it is has a retract in the category of graded $\kk$-modules.
\end{remark}

\begin{lemma}
Let $f: \L \twoheadrightarrow \L'$ be an elementary fibration of curved $\operad C$-algebras. Then $\mathrm{Ker}(f)$ is a dg module. 
\end{lemma}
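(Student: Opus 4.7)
The plan is to dualize the argument given just above for elementary cofibrations of curved $\operad C$-coalgebras. Let $\iota_f : \mathrm{Ker}(f) \hookrightarrow \L$ denote the kernel of $f$ computed in pdg $\kk$-modules; since the forgetful functor from pdg $\operad C$-algebras to pdg $\kk$-modules creates limits, this is also the underlying object of the kernel in pdg $\operad C$-algebras. The map $\iota_f$ is a monomorphism that commutes with the pre-differentials, so it will be enough to check that $d_\L^2 \iota_f = 0$: the identity $\iota_f \, d^2_{\mathrm{Ker}(f)} = d_\L^2 \iota_f$ combined with injectivity of $\iota_f$ will then force $d^2_{\mathrm{Ker}(f)} = 0$.

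Using the curvature equation of Definition \ref{def curved alg over a coop} rewrites this requirement as $\gamma_\L \circ \L^\theta \circ \iota_f = 0$. The first step is to observe that the natural map $\L^\theta : \L \to \L^{\operad C}$ factors through the canonical inclusion $\L^{\overline{\operad C}} \hookrightarrow \L^{\operad C}$. This comes down to showing that the curvature $\theta : \operad C \to \operad I$ vanishes on the coaugmentation $\mu : \operad I \to \operad C$, which is automatic: $\operad I$ is concentrated in arity $1$ and degree $0$ while $\theta$ has degree $-2$, so the only possibly nonzero component $\theta_1 \mu_1 : \kk \to \kk$ lands in $\kk$ placed in degree $-2$ and must be zero. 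Thus $\theta$ factors through the projection $\operad C \twoheadrightarrow \overline{\operad C}$, and by contravariant functoriality of $(-)^{(-)}$ in the second variable the map $\L^\theta$ factors through $\L^{\overline{\operad C}}$.

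The elementary fibration hypothesis then supplies a map $\tilde{\gamma} : (\L')^{\overline{\operad C}} \to \L$ with $\overline{\gamma}_\L = \tilde{\gamma} \circ f^{\overline{\operad C}}$, so that $d_\L^2 \iota_f = \tilde{\gamma} \circ f^{\overline{\operad C}} \circ \L^\theta \circ \iota_f$. Applying naturality of the transformation $(-)^\theta$ along the morphism $f$ of pdg modules yields the identity $f^{\overline{\operad C}} \circ \L^\theta = (\L')^\theta \circ f$, and the relation $f \circ \iota_f = 0$ then collapses the entire composite. No substantial obstacle is anticipated; the one point to which attention should be paid is the verification that $\L^\theta$ lands in $\L^{\overline{\operad C}}$, which is settled by the degree argument described above.
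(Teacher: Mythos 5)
Your proof is correct and follows essentially the same route as the paper's: the curvature equation $d_\L^2=\gamma_\L\,\L^\theta$, the factorization of $\theta$ through $\overline{\operad C}$, the elementary-fibration factorization of $\overline{\gamma}_\L$ through $(\L')^{\overline{\operad C}}$, naturality of the $\theta$-induced map, and monicity of the kernel inclusion. The only cosmetic difference is that you apply naturality along $f$ and conclude from $f\circ\iota_f=0$, whereas the paper applies it along $\iota_f$ and then kills $\overline{\gamma}_\L\,(\iota_f)^{\mathrm{id}}$ using the same factorization; your explicit degree argument for $\theta\circ\mu=0$ is a detail the paper leaves implicit, and it is correct.
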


\begin{proof}
Let $i_f: \mathrm{Ker}(f) \longrightarrow \L$ denote the canonical inclusion. We have
\[
i_f d^2 = d^2 i_f = \overline{\gamma}_\L~(\mathrm{id})^{\theta} i_f = \overline{\gamma}_\L~(i_f)^{\id}~(\mathrm{id})^{\theta}~,
\]
and $\overline{\gamma}_\L~(i_f)^{\id} = 0$ since $f$ is an elementary fibration. We conclude using that $i_f$ is a monomorphism.
\end{proof}

\begin{proposition}
Let $f: \L \twoheadrightarrow \L'$ be an elementary fibration of curved $\operad C$-algebras. It is in particular a fibration.
\end{proposition}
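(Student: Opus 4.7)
The plan is to dualize the proof of Proposition \ref{prop:elemcof}. Since $f$ is degree-wise surjective and $\kk$ is a field, we can choose a splitting of graded $\kk$-modules $\L \cong \L' \oplus K$, where $K = \ker(f)$. By the preceding lemma, $K$ inherits the structure of an actual dg module, so the pre-differential of $\L$ decomposes as $d_{\L'}$ on $\L'$, $d_K$ on $K$, and a degree $-1$ coupling map $\zeta : K \to \L'$. The elementary fibration hypothesis says precisely that the reduced structural map $\overline{\gamma}_\L : \L^{\overline{\C}} \to \L$ factors through $(\L')^{\overline{\C}} \to \L' \hookrightarrow \L$; equivalently, $\L$ is a ``trivial extension'' of $\L'$ by the dg module $K$ (twisted only by $\zeta$ at the level of the differential).

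The strategy is to exhibit $\widehat{\mathrm{B}}_\C(f)$ as the pullback of a morphism of the form $L^{\Omega\C}(p)$, where $p$ is a degree-wise surjection of dg $\kk$-modules. Since the forgetful functor from dg $\Omega\C$-coalgebras to dg $\kk$-modules is a left Quillen functor (by construction of the transferred model structure on dg $\Omega\C$-coalgebras), its right adjoint $L^{\Omega\C}$ sends fibrations of dg modules to fibrations of dg $\Omega\C$-coalgebras; since fibrations are stable under pullback, this would conclude. The relevant fibration is the canonical projection $p: D^0 \otimes K \twoheadrightarrow K$ (onto the $e_0$-component, which is a genuine morphism of dg modules and degree-wise epi), and one considers the square
\[
\begin{tikzcd}[column sep=3pc,row sep=3pc]
\widehat{\mathrm{B}}_\C \L \arrow[dr, phantom, "\lrcorner", very near start]
\arrow[r] \arrow[d, twoheadrightarrow, "\widehat{\mathrm{B}}_\C(f)"']
& L^{\Omega\C}(D^0 \otimes K)
\arrow[d, twoheadrightarrow, "L^{\Omega\C}(p)"]
\\
\widehat{\mathrm{B}}_\C \L' \arrow[r]
& L^{\Omega\C}(K).
\end{tikzcd}
\]
To build the top map, consider the graded projection $\pi_K \circ \epsilon_\L : \widehat{\mathrm{B}}_\C \L \to \L \to K$. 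A direct inspection of the coderivation of $\widehat{\mathrm{B}}_\C \L$ shows that its ``twisted part'' (coming from $\overline{\gamma}_\L$) has image in $\L'$ by the elementary fibration condition, and is therefore killed by $\pi_K$; combined with the block-upper-triangular form $\pi_K \circ d_\L = d_K \circ \pi_K$, this proves that $\pi_K \circ \epsilon_\L$ is in fact a morphism of dg modules. Adjointly it induces a morphism of dg $\Omega\C$-coalgebras $\widehat{\mathrm{B}}_\C \L \to L^{\Omega\C}(K)$, and this lifts through $p$ using the additional data of $\zeta$; the bottom map is then determined by commutativity.

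The main obstacle is verifying the pullback property. On the level of underlying graded $\Omega\C$-coalgebras this is immediate, since $L^{\Omega\C}$ preserves limits and $\L \cong \L' \oplus K$. The delicate step is matching the coderivations: one must check that the coderivation on $\widehat{\mathrm{B}}_\C \L$, with its twist by $\zeta$ and by the $\C$-algebra structure of $\L$, coincides with the coderivation on the pullback of the two factors. Here the elementary fibration condition is used once more, ensuring that the only non-trivial interaction between the $K$-part and the $\L'$-part comes through $\zeta$, exactly as in the pullback. Once the square is established as a pullback in dg $\Omega\C$-coalgebras, the result follows.
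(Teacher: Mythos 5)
Your overall strategy is the right one and is in fact the paper's: exhibit $\widehat{\mathrm{B}}_{\C}(f)$ as a pullback of the image under the (right Quillen) cofree functor $L^{\Omega\C}$ of a fibration of dg modules. But the execution contains genuine errors. First, the coupling map goes the wrong way: since $f$ commutes with the pre-differentials, the kernel $K$ is stable under $d_\L$, so in a graded splitting $\L \cong \L' \oplus K$ the off-diagonal term of $d_\L$ is a degree $-1$ map $\xi\colon \L' \to K$, not $\zeta\colon K \to \L'$ (a component $K \to \L'$ would contradict $f\,d_\L = d_{\L'}f$). In particular your ``block-upper-triangular'' identity $\pi_K \circ d_\L = d_K \circ \pi_K$ is exactly what fails, by the term $\xi\,\pi_{\L'}$. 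Second, you misread the elementary-fibration condition: it says that $\overline{\gamma}_\L\colon \L^{\overline{\C}} \to \L$ factors through the quotient $(\L')^{\overline{\C}}$ of its \emph{source} (it depends only on $\L'$), not that its \emph{image} lies in $\L'\subset\L$. Consequently $\pi_K\circ\epsilon_\L$ is not a morphism of dg modules in general, and the top map of your square does not exist as described.

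Third, even granting a map to the cogenerators, $D^0\otimes K \twoheadrightarrow K$ is the wrong classifying fibration on the coalgebra side: a morphism of dg modules $X \to D^0\otimes K$ composed with the projection onto the $e_0$-component is automatically of the form $\partial(\beta)$, hence null-homotopic, and the kernel of that projection is a \emph{shifted} copy of $K$, so the pullback cannot have $L^{\Omega\C}(\L)\cong L^{\Omega\C}(\L'\oplus K)$ as its underlying graded coalgebra. The correct dual of the disk-attachment used for elementary cofibrations is the hom-object fibration $[D^0,K]\twoheadrightarrow[S^{-1},K]$: a chain map into $[D^0,K]$ is the same thing as an \emph{arbitrary graded} map into $K$ (paired with its boundary), which is precisely what lets one use the merely graded projection $\pi_K\colon \widehat{\mathrm{B}}_{\C}\L \to K$, while $\partial(\pi_K)$ factors through $\widehat{\mathrm{B}}_{\C}\L'$ thanks to the elementary-fibration condition and the correct direction of $\xi$; its kernel is $[S^0,K]\cong K$, so the pullback recovers $\widehat{\mathrm{B}}_{\C}\L$ on the nose. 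With these three corrections your argument becomes the paper's proof; as written, the square neither exists nor could be a pullback.
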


\begin{proof}
This amounts to prove that $\widehat{\mathrm{B}}_{\operad C}(f)$ is a fibration. Let $U$ be the kernel of $f$. Let us decompose the underlying graded $\kk$-module of $A$ as $\L \cong \L' \oplus U$. The pre-differential of $\L$ rewrites as the sum of the differential $d_U$ on $U$, the pre-differential $d_{\L'}$ on $\L'$, and a degree $-1$ map $\xi: \L' \longrightarrow U$. Let us consider the morphism of graded $\kk$-modules $\pi_U :\widehat{\mathrm{B}}_{\operad C}\L \longrightarrow \L \longrightarrow U$. It induces the morphism of dg modules 
    \[
    p : D^{0} \otimes \widehat{\mathrm{B}}_{\operad C}\L \longrightarrow U
   	\]
    
    whose restriction to $S^0 \otimes \widehat{\mathrm{B}}_{\operad C}\L$ is $\id \otimes \pi_U$ and whose restriction to $S^{-1} \otimes \widehat{\mathrm{B}}_{\operad C}\L$ is $-\id \otimes \partial(\pi_U)$. The fact that $f$ is an elementary fibration implies that the restriction of $p$ to $S^{-1} \otimes \widehat{\mathrm{B}}_{\operad C}\L$ factors through the projection $S^{-1} \otimes \widehat{\mathrm{B}}_{\operad C}\L \longrightarrow S^{-1} \otimes \widehat{\mathrm{B}}_{\operad C}\L'$. One thus gets a commutative square of dg modules
    $$
\begin{tikzcd}[column sep=3pc,row sep=3pc]
    \widehat{\mathrm{B}}_{\operad C}\L
    \ar[r] \ar[d]
    & {[D^0 , U]}
    \ar[d]
    \\
    \widehat{\mathrm{B}}_{\operad C}\L'
    \ar[r]
    & {[S^{-1},U]}~,
    \end{tikzcd}
    $$
    and thus a commutative square of dg $\Omega \operad C$-coalgebras
    $$
\begin{tikzcd}[column sep=3pc,row sep=3pc]
    \widehat{\mathrm{B}}_{\operad C}\L \arrow[dr, phantom, "\lrcorner", very near start]
    \ar[r] \ar[d]
    & L^P{[D^0 , U]}
    \ar[d]
    \\
    \widehat{\mathrm{B}}_{\operad C}\L'
    \ar[r]
    & L^P{[S^{-1},U]}~.
    \end{tikzcd}
    $$
    This square is a pullback square since the underlying square of graded $\Omega \operad C$-coalgebras
    is a pullback square. Moreover, the map $[D^0 , U] \longrightarrow [S^{-1} , U]$
    is a fibration of dg modules, thus $L^P[D^0 , U] \longrightarrow L^P[S^{-1} , U]$
    is a fibration of dg $\Omega \C$-coalgebras. This implies that the pullback map $\widehat{\mathrm{B}}_{\operad C}\L \longrightarrow \widehat{\mathrm{B}}_{\operad C}\L'$ is a fibration.
\end{proof}

\begin{lemma}\label{lemelemfibequiv}
    Let us consider a commutative diagram of curved $\operad C$-algebras
    \[
    \begin{tikzcd}[column sep=3pc,row sep=3pc]
        \L
        \ar[r, "g"] \ar[d, "p"']
        &\L'
        \ar[d, "q"]
        \\
        Z
        \ar[r, "\id"]
        & Z~,
    \end{tikzcd}
    \]
    where 
    \begin{enumerate}
    
    \medskip
    
        \item $p$ and $q$ are elementary fibrations;
        
    \medskip
    
        \item the map of dg modules induced by $g$
        \[
        \mathrm{Ker}(g): \mathrm{Ker}(p) \longrightarrow \mathrm{Ker}(q)
        \]
        is a quasi-isomorphism.
    \end{enumerate}
    
    \medskip
    
    Then $g$ is a weak-equivalence.
\end{lemma}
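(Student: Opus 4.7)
The approach is to dualize the argument given for Lemma \ref{lemelemcofibequiv}, replacing pushout squares of cobar constructions by pullback squares of complete bar constructions. The key ingredient is the explicit description, obtained in the proof that elementary fibrations are fibrations, of $\widehat{\mathrm{B}}_{\operad C}$ applied to an elementary fibration $p:\L \twoheadrightarrow Z$ with kernel $U$ as a pullback square
\[
\begin{tikzcd}[column sep=2.5pc,row sep=2.5pc]
\widehat{\mathrm{B}}_{\operad C}\L \arrow[dr, phantom, "\lrcorner", very near start] \ar[r] \ar[d, "\widehat{\mathrm{B}}_{\operad C}(p)"']
& L^{\Omega\operad C}[D^0,U] \ar[d] \\
\widehat{\mathrm{B}}_{\operad C}Z \ar[r]
& L^{\Omega\operad C}[S^{-1},U]
\end{tikzcd}
\]
of dg $\Omega\operad C$-coalgebras, where the right vertical map is a fibration.

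My plan is to apply this description twice, to $p$ and to $q$ (with kernels $U$ and $V$ respectively), and to fit the two pullback squares into a commutative cube whose front-to-back horizontal comparison maps come from the map on kernels $\mathrm{Ker}(g):U \to V$ via the functoriality of $L^{\Omega\operad C}[D^0,-]$ and $L^{\Omega\operad C}[S^{-1},-]$. The face of this cube corresponding to $\widehat{\mathrm{B}}_{\operad C}(g)$ is then the induced map between the two pullback squares, and the rest of the argument reduces to checking that three of the four comparison maps are quasi-isomorphisms.

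Concretely: the bottom comparison map is the identity of $\widehat{\mathrm{B}}_{\operad C}Z$, and the two comparison maps on the right-hand column arise by applying $L^{\Omega\operad C}$ to the maps $[D^0,U] \to [D^0,V]$ and $[S^{-1},U] \to [S^{-1},V]$, both of which are quasi-isomorphisms since $\mathrm{Ker}(g)$ is one. Since $L^{\Omega\operad C}$ is the right adjoint of the forgetful functor to dg modules in which every object is fibrant, it preserves these quasi-isomorphisms. Because the front and back pullback squares are homotopy pullbacks (their right legs are fibrations), the induced map on pullbacks $\widehat{\mathrm{B}}_{\operad C}(g):\widehat{\mathrm{B}}_{\operad C}\L \to \widehat{\mathrm{B}}_{\operad C}\L'$ is a quasi-isomorphism, which by definition means that $g$ is a weak-equivalence.

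The part requiring most care is verifying that the cube assembled in the cooperad/coalgebra framework is actually commutative and that the identification of kernels with $U$ and $V$ is natural in $g$; this boils down to the explicit degree-wise splittings used in constructing the pullback squares being compatible with $g$, which I expect to be straightforward once the correct splittings are fixed via the graded decompositions $\L \cong Z \oplus U$ and $\L' \cong Z \oplus V$.
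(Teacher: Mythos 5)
Your proposal is correct and follows essentially the same route as the paper: the paper's proof also decomposes $\L \cong Z \oplus U$ and $\L' \cong Z \oplus U'$ compatibly with $g$, assembles the two pullback squares from the elementary-fibration argument into a commutative cube whose left and right faces are (homotopy) pullbacks, and concludes that $\widehat{\mathrm{B}}_{\operad C}(g)$ is a quasi-isomorphism because the comparison maps $L^{\operad P}[D^0,U]\to L^{\operad P}[D^0,U']$, $L^{\operad P}[S^{-1},U]\to L^{\operad P}[S^{-1},U']$ and $\mathrm{id}_{\widehat{\mathrm{B}}_{\operad C}Z}$ are quasi-isomorphisms. Your extra care about choosing the graded splittings so that the cube commutes (take a section $s$ of $p$ and use $g\circ s$ as the section of $q$) is exactly what the paper's phrase ``in such a way so that the restriction of $g$ to $U$ targets $U'$'' glosses over.
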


\begin{proof}
Let us decompose the underlying graded $\kk$-module of $\L$ as
$\L \cong U \oplus Z$. We can then decompose the underlying graded $\kk$-module of $\L'$ as $\L' \cong U' \oplus Z$ in such a way so that the restriction of $g$ to $U$ targets $U'$. The square diagrams built in the proof of Proposition \ref{prop:elemcof}, fit in the following commutative cube diagram
    $$
    \begin{tikzcd}
        \widehat{\mathrm{B}}_{\operad C}\L
        \ar[rr] \ar[rd] \ar[dd]
        && \widehat{\mathrm{B}}_{\operad C}\L'
        \ar[rd] \ar[dd]
        \\
        & L^P{[D^{0},U]}
        \ar[rr] \ar[dd]
        && L^P{[D^{0},U']}
        \ar[dd]
        \\
        \widehat{\mathrm{B}}_{\operad C}Z
        \ar[rr] \ar[rd]
        && \widehat{\mathrm{B}}_{\operad C}Z
       \ar[rd] 
        \\
        & L^P{[S^{-1},U]}
        \ar[rr]
        &&  L^P{[S^{-1},U']}
    \end{tikzcd}
    $$
    The left and the right faces are pullback (homotopy) squares. The two front horizontal maps and the bottom back horizontal map are quasi-isomorphisms. Thus the homotopy pullback map $\widehat{\mathrm{B}}_{\operad C}g$ is also a quasi-isomorphism.
\end{proof}

\begin{proposition}\label{propelemfibequiv}
    Let us consider a commutative diagram of curved $\operad C$-algebras
    $$
    \begin{tikzcd}[column sep=3pc,row sep=3pc]
        \L
        \ar[r, "g"] \ar[d, "p"']
        &\L'
        \ar[d, "q"]
        \\
        Z
        \ar[r, "f"']
        & Z'
    \end{tikzcd}
    $$
    where 
    
    \medskip
    \begin{enumerate}
        \item $f$ is an equivalence,
    
    \medskip
    
        \item $i$ and $j$ are elementary fibrations,
        
    \medskip
    
        \item the map of dg modules induced by $g$
        $$
        \mathrm{Ker}(g): \mathrm{Ker}(p) \longrightarrow \mathrm{Ker}(q)
        $$
        is a quasi-isomorphism.
    \end{enumerate}
    
   	\medskip
    Then $g$ is a weak-equivalence.
\end{proposition}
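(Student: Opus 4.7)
The plan is to factor $g$ through a pullback and conclude by 2-out-of-3. I form the pullback $T \coloneqq \L' \times_{Z'} Z$ in the category of qp-complete curved $\operad C$-algebras; this exists and has as underlying graded $\kk$-module the naive pullback, since qp-complete curved $\operad C$-algebras are reflective inside curved $\operad C$-algebras, which are in turn reflective inside pdg $\operad C$-algebras, and the latter inherit limits from graded modules. The universal property yields a canonical morphism $\tilde g: \L \to T$ factoring $g$ as $\L \xrightarrow{\tilde g} T \xrightarrow{\mathrm{pr}_{\L'}} \L'$, so it suffices to prove that each factor is a weak equivalence.

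For $\mathrm{pr}_{\L'}: T \to \L'$, I would apply the right adjoint $\widehat{\mathrm{B}}_{\operad C}$, which preserves the pullback. The resulting pullback square of dg $\Omega \operad C$-coalgebras has right vertical map $\widehat{\mathrm{B}}_{\operad C}(q)$, a fibration since $q$ is an elementary fibration, and bottom horizontal map $\widehat{\mathrm{B}}_{\operad C}(f)$, a quasi-isomorphism by assumption. Exactly as in the proof of Lemma \ref{lemelemfibequiv}, this pullback is a homotopy pullback, so $\widehat{\mathrm{B}}_{\operad C}(\mathrm{pr}_{\L'})$ is a quasi-isomorphism and thus $\mathrm{pr}_{\L'}$ is a weak equivalence.

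For $\tilde g: \L \to T$, I would invoke Lemma \ref{lemelemfibequiv} applied to the square with vertical maps $p: \L \to Z$ and $\mathrm{pr}_Z: T \to Z$ over the identity of $Z$. Both are elementary fibrations: degree-wise surjectivity of $\mathrm{pr}_Z$ is inherited from $q$, and the required lift $Z^{\overline{\operad C}} \to T$ is assembled by combining $\gamma_Z|_{Z^{\overline{\operad C}}}: Z^{\overline{\operad C}} \to Z$ with the composite $Z^{\overline{\operad C}} \xrightarrow{f^{\overline{\operad C}}} (Z')^{\overline{\operad C}} \to \L'$, whose second arrow is the lift provided by the elementary fibration structure of $q$; compatibility of the two components in $Z'$ is automatic because $f$ is a morphism of curved $\operad C$-algebras. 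Under the natural identification $\mathrm{Ker}(\mathrm{pr}_Z) \cong \mathrm{Ker}(q)$, the induced map $\mathrm{Ker}(p) \to \mathrm{Ker}(\mathrm{pr}_Z)$ coincides with $\mathrm{Ker}(g)$, a quasi-isomorphism by hypothesis, so Lemma \ref{lemelemfibequiv} yields that $\tilde g$ is a weak equivalence, whence so is $g$. The main subtlety I anticipate is the homotopy-pullback step, as the left-transferred model structure on dg $\Omega \operad C$-coalgebras is not obviously right proper; this is resolved by noting that $\widehat{\mathrm{B}}_{\operad C}(q)$ is a degree-wise surjection of the underlying graded modules, which reduces the claim to a standard long-exact-sequence argument on underlying dg modules.
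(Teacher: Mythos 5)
Your proof follows essentially the same route as the paper: form the pullback $Z \times_{Z'} \L'$, note that applying the right adjoint $\widehat{\mathrm{B}}_{\operad C}$ yields a pullback square of dg $\Omega\operad C$-coalgebras which is a homotopy pullback, so the projection to $\L'$ is a weak equivalence, and then deduce that $\L \to Z \times_{Z'} \L'$ is a weak equivalence from Lemma \ref{lemelemfibequiv}; you in fact spell out the hypotheses of that lemma (the projection to $Z$ being an elementary fibration, the kernel identification) which the paper leaves implicit. The only nitpick is that the pullback should be taken in curved $\operad C$-algebras rather than in qp-complete ones, since the statement concerns arbitrary curved $\operad C$-algebras — this does not affect the argument, as the underlying graded module is the naive pullback in either case.
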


\begin{proof}
Let us consider the following pullback square 
    $$
    \begin{tikzcd}[column sep=3pc,row sep=3pc]
        U
        \ar[r] \ar[d] \arrow[dr, phantom, "\lrcorner", very near start]
        & \L'
        \ar[d]
        \\
        Z
        \ar[r]
        & Z'
    \end{tikzcd}
    $$
    in the category of curved $\operad C$-algebras. It yields a pullback square of dg $\Omega \operad C$-coalgebras
    $$
    \begin{tikzcd}[column sep=3pc,row sep=3pc]
        \widehat{\mathrm{B}}_{\operad C}U \arrow[dr, phantom, "\lrcorner", very near start]
        \ar[r] \ar[d]
        & \widehat{\mathrm{B}}_{\operad C} \L'
        \ar[d]
        \\
        \widehat{\mathrm{B}}_{\operad C} Z
        \ar[r]
        & \widehat{\mathrm{B}}_{\operad C} Z'
    \end{tikzcd}
    $$
    which is also an homotopy pullback square. Thus, the map
    $\widehat{\mathrm{B}}_{\operad C} U  \longrightarrow\widehat{\mathrm{B}}_{\operad C} \L'$
    is a quasi-isomorphism. Moreover, the map $\L \longrightarrow U$ is a weak-equivalence by Lemma \ref{lemelemfibequiv}.
\end{proof}


\subsection{Coladders}
We introduce coladders of qp-complete curved $\operad C$-algebras, which lead to the notion of a cofiltered quasi-isomorphism. We will show that these cofiltered quasi-isomorphism are a subset of weak-equivalences of qp-complete curved $\operad C$-algebras. The key example of such coladders is the quasi-planar ladder induced by the canonical quasi-planar filtration of $\C$.

\begin{definition}[$\beta$-coladder]
Let $\beta$ be a small ordinal. A $\beta$-\textit{indexed curved }$\operad C$\textit{-coladder} is a functor 
\[
\Lambda:\beta^\op \longrightarrow \catcurvalg{\operad C}
\]
that sends every limit ordinal $k\in\beta$ to
$$
\Lambda(k) = \lim_{i < k} \Lambda(i)~,
$$
with $\L(-1)=0$, and such that every map
$$
\L(i) \longrightarrow \L(i-1), \quad i \in \beta
$$
is an \textit{elementary fibration}.
\end{definition}

A $\operad C$-coladder $\L$ is called \textit{qp-complete} if $\L(i)$ is qp-complete for all $i < \beta$. We denote by
$$
\Lambda(\beta) \coloneqq \lim_{i \in \beta}~\Lambda(i)~,
$$
the value of the limit of this $\beta$-coladder. It is qp-complete whenever the coladder is.

\begin{remark}
    The property about limit ordinal is equivalent to the fact that
    the functor
\begin{align*}
    (1 + \beta)^\op &\longrightarrow \catcurvalg{\operad C}
    \\
    0 &\mapsto 0
    \\
    i+1 < \omega &\mapsto \L(i)
    \\
    \omega \leq i < \beta & \mapsto \L(i)
\end{align*}
is continuous.
\end{remark}

\begin{definition}[Associated graded of a coladder]
Given a $\beta$-coladder $\Lambda$, we define its \textit{associated graded} as 
\[
\gr^i \Lambda \coloneqq \mathrm{Ker}(\Lambda(i) \longrightarrow \lim{j < i} \Lambda(j))~.
\]
Notice that the derivation squares to zero on this kernel $\gr_i \Lambda$, therefore it is a dg module.    
\end{definition}

\medskip

The following proposition will allow us to construct $\beta$-coladders in a general setting. 

\begin{proposition}\label{prop: celle qui induit des coladders}
Let $\beta$ be a small ordinal, and let  
\[
\operad D^{(0)} \longrightarrow \operad D^{(1)} \longrightarrow \cdots \longrightarrow \operad D^{(i)} \longrightarrow \cdots 
\]
be a $\beta$-indexed cooperad ladder, where we denote $\operad D \coloneqq \operad D^{(\beta)}$. For every $i \in \beta$, let $F^i_{\operad D} (-)$ be the idempotent monad on curved $\operad D$-algebras related to the reflexive full subcategory made up of curved $\operad D^{(i)}$-algebras.

\medskip

Let $\L$ be a curved $\operad D$-algebra which is complete with respect to the cooperad ladder, that is, such 
\[
\L \cong \lim_{i \in \beta} F^i_{\operad D} \L~.
\]
Then the diagram
        $$
        \cdots
        \longrightarrow
        F^i_{\operad D}\L
        \longrightarrow
        \cdots
    \longrightarrow
    F^1_{\operad D}\L
    \longrightarrow  F^0_{\operad D}\L
        $$
is a $\beta$-indexed coladder of curved $\operad D$-algebras, which are all complete with respect to the cooperad ladder.
\end{proposition}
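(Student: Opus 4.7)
My plan is to prove the proposition in three steps, following the pattern established in the dual coalgebra case (Proposition \ref{prop: cooperad ladder induces ladders}) and relying on the reflector pushout formula of Section \ref{section : Naturality of (co)algebras over a cooperad.}. The main obstacle will be showing that each successor map $F^{i+1}\L \to F^i\L$ satisfies the factorisation condition of an elementary fibration.

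\textbf{Continuity at limit ordinals.} For a limit ordinal $k < \beta$, I need $F^k\L \cong \lim_{i<k} F^i\L$. Since $\catcurvalg{\operad D^{(i)}} \subset \catcurvalg{\operad D^{(j)}}$ for $i \leq j$ (the forgetful along an inclusion of cooperads being fully faithful), each $F^i\L$ with $i < k$ already lies in $\catcurvalg{\operad D^{(k)}}$; hence so does the inverse limit (reflective subcategories are closed under limits in the ambient category). The cooperad ladder identity $\operad D^{(k)} \cong \colim{i<k} \operad D^{(i)}$ yields $M^{\operad D^{(k)}} \cong \lim_{i<k} M^{\operad D^{(i)}}$ by contravariance of $M^{(-)}$, and a direct comparison of universal properties produces the desired isomorphism $F^k\L \cong \lim_{i<k} F^i\L$.

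\textbf{Elementary fibration at successor steps.} The composition identity $F^i \cong F^i \circ F^{i+1}$ realises $p: F^{i+1}\L \to F^i\L$ as the $F^i$-reflection unit of $F^{i+1}\L$. By the embedding-of-cooperads lemma of Section \ref{section : Naturality of (co)algebras over a cooperad.}, this map fits into a graded $\kk$-module pushout
\[
\begin{tikzcd}[column sep=3pc,row sep=2.3pc]
(F^{i+1}\L)^{\operad D^{(i+1)}} \ar[r, twoheadrightarrow] \ar[d, "\gamma"'] \arrow[dr, phantom, "\ulcorner", very near end] & (F^{i+1}\L)^{\operad D^{(i)}} \ar[d] \\
F^{i+1}\L \ar[r, twoheadrightarrow, "p"'] & F^i\L.
\end{tikzcd}
\]
The top arrow is degree-wise surjective because $\operad D^{(i)} \hookrightarrow \operad D^{(i+1)}$ admits a graded $\mathbb{S}$-module retraction, and pushouts preserve degree-wise epimorphisms, so $p$ is degree-wise surjective. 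The main obstacle is then the factorisation condition: proving that $\overline{\gamma}_{F^{i+1}\L}: (F^{i+1}\L)^{\overline{\operad D}} \to F^{i+1}\L$ factors through $(F^i\L)^{\overline{\operad D}}$ along $(p)^{\overline{\operad D}}$. This is the algebra-side dual of the decomposition-factoring argument in Prop \ref{prop: cooperad ladder induces ladders}. I would first reduce to factorisation through $(F^{i+1}\L)^{\overline{\operad D^{(i+1)}}}$ using that $F^{i+1}\L$ is a $\operad D^{(i+1)}$-algebra, then combine associativity of the action with the cooperad ladder hypothesis (that the iterated decompositions of $\operad D^{(i+1)}$ factor through $\overline{\treemod}\, \operad D^{(i)}$) to conclude that the composite $p \circ \overline{\gamma}$ only depends on the $\operad D^{(i)}$-image of its input, which is precisely what the pushout defining $F^i\L$ identifies with the canonical $\operad D^{(i)}$-action on $F^i\L$. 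The equivariant arity bookkeeping here mirrors the dual coalgebra computation.

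\textbf{Completeness of each $F^i\L$.} For fixed $i \in \beta$, the system $j \mapsto F^j(F^i\L)$ stabilises at $F^i\L$ for $j \geq i$ (since $F^i\L \in \catcurvalg{\operad D^{(i)}} \subset \catcurvalg{\operad D^{(j)}}$, and the reflection is the identity on objects of the target) and equals $F^j\L$ for $j < i$ (via the identity $F^j \circ F^i = F^j$). The constant tail at $F^i\L$ forces $\lim_{j \in \beta} F^j(F^i\L) \cong F^i\L$ on compatible systems, proving completeness and closing the proof.
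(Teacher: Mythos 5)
Your overall architecture is the same as the paper's (pushout description of the transition map $p\colon F^{i+1}_{\operad D}\L \to F^{i}_{\operad D}\L$ via the embedding-of-cooperads lemma, degree-wise surjectivity from a splitting of generators, cofinality of the tail for completeness of each stage), but the step you single out as the main obstacle is exactly where the proposal breaks. The elementary-fibration condition requires that $\overline{\gamma}\colon (F^{i+1}_{\operad D}\L)^{\overline{\operad D}} \to F^{i+1}_{\operad D}\L$ — a map with target $F^{i+1}_{\operad D}\L$ — factor through $(F^{i}_{\operad D}\L)^{\overline{\operad D}}$, i.e.\ that $\overline{\gamma}$ kill every labelled input having at least one factor in $K=\ker(p)$. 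Your concluding sentence instead establishes that $p\circ\overline{\gamma}$ only depends on the images of the inputs in $F^{i}_{\operad D}\L$; that weaker statement is automatic for \emph{any} morphism of $\operad D$-algebras (it is just the compatibility $p\circ\gamma_{F^{i+1}_{\operad D}\L}=\gamma_{F^{i}_{\operad D}\L}\circ (p)^{\operad D}$, equivalently the well-definedness of the reflected $\operad D^{(i)}$-action), uses neither associativity nor the ladder hypothesis, and does not produce the dashed factorisation with target $F^{i+1}_{\operad D}\L$ — which is the whole content (it is what makes $\ker(p)$ a dg module and drives the coladder machinery). To prove the strong statement you are also missing the intermediate identifications the paper's proof runs on: the kernel of $(F^{i+1}_{\operad D}\L)^{\overline{\operad D}_\pl^{(i+1)}}\to (F^{i}_{\operad D}\L)^{\overline{\operad D}_\pl^{(i+1)}}$ is the subobject with one entry in $K$, and $K$ itself is the image of $(F^{i+1}_{\operad D}\L)^{\operad D_\pl^{(i+1)}/\operad D_\pl^{(i)}}$ under the structure map; only after substituting this description of $K$ can associativity of the action be combined with the ladder condition that each partial decomposition of $\overline{\operad D}_\pl^{(i+1)}$ lands in $\overline{\operad D}_\pl^{(i)}\otimes\overline{\operad D}_\pl^{(i)}$ to show that the relevant composite into $F^{i+1}_{\operad D}\L$ (not merely into $F^{i}_{\operad D}\L$) vanishes.

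There is a second gap at the limit-ordinal step. You claim $F^{k}_{\operad D}\L \cong \lim_{i<k} F^{i}_{\operad D}\L$ by a "comparison of universal properties" from $\operad D^{(k)}\cong\colim{i<k}\operad D^{(i)}$. This cannot work formally: $F^{k}_{\operad D}\L$ is a pushout, hence only has a mapping-out universal property, and all one gets is a canonical comparison map $F^{k}_{\operad D}\L \to \lim_{i<k} F^{i}_{\operad D}\L$ with no candidate inverse; it fails to be an isomorphism in general (for a coradical-type ladder re-indexed by $\omega+1$, at $k=\omega$ it is the map from an algebra to its coradical completion). This is precisely where the paper invokes the hypothesis that $\L$ is complete with respect to the ladder — a hypothesis your proof never uses, which is itself a warning sign. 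Two smaller points: the $\mathbb S$-equivariant retraction you invoke for degree-wise surjectivity is only available through the planar generators (this is why the paper argues with $\operad D_\pl^{(i)}$), and your completeness-of-each-$F^{i}_{\operad D}\L$ argument by stabilisation of the tail is fine and is in fact more explicit than the paper, which leaves that claim implicit.
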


\begin{proof}
Let $\L$ be a curved $\operad D$-algebra that is complete with respect to the cooperad ladder $(\operad D^{(i)})_{i \in \beta}$. Since $\L$ is complete with respect to the cooperad ladder the map
    $$
    F^k_{\operad D}\L \longrightarrow \lim_{i<k} F^i_{\operad D} \L 
    $$
    is an isomorphism for every limit ordinal $k \in \beta+1$.

\medskip

It remains to show that for every $i< i+1 \in \beta$, the transition map $F^{i+1}_{\operad D}\L \longrightarrow F^{i+1}_{\operad D}\L$ is an elementary fibration. Let us denote $K$ its kernel. This map fits in the following pushout diagram of graded $\kk$-modules
     $$
    \begin{tikzcd}[column sep=3pc,row sep=3pc]
        (F^{i+1}_{\operad D}\L)^{\operad D_\pl^{(i+1)}}
        \ar[r] \ar[d] \arrow[dr, phantom, "\ulcorner", very near end]
        & (F^{i+1}_{\operad D}\L)^{\operad D_\pl^{(i)}}
        \ar[d]
        \\
        F^{i+1}_{\operad D}\L
        \ar[r]
        & F^i_{\operad D}\L~.
    \end{tikzcd}
    $$
In particular, it is a degree-wise epimorphism since the map $(F^{i+1}_{\operad D}\L)^{\operad D_\pl^{(i+1)}} \longrightarrow (F^{i+1}_{\operad D}\L)^{\operad D_\pl^{(i)}}$ is a degree-wise epimorphism. 
        Thus proving that it is an elementary fibration amounts to prove that the composite map
        $$
        \shuffle(F^{i+1}_{\operad D}\L, K)^{\overline{\operad D}_\pl^{(i+1)}}
        \longrightarrow
        (F^{i+1}_{\operad D}\L)^{\overline{\operad D}_\pl^{(i+1)}}
        \longrightarrow F^{i+1}_{\operad D}\L
        $$
        is zero, since the kernel of the first map is 
        $(F^{i}_{\operad D}\L)^{\overline{\operad D}_\pl^{(i+1)}}$. Moreover, the kernel $K$ is the image of the map 
        $$
        (F^{i+1}_{\operad D}\L)^{\operad D_\pl^{(i+1)}/\operad D_\pl^{(i)}}
        \hookrightarrow
        (F^{i+1}_{\operad D}\L)^{\operad D_\pl^{(i+1)}}
        \longrightarrow F^{i+1}_{\operad D}\L.
        $$
        Thus, it suffices to
        prove that the composite map
        $$
        \shuffle(F^{i+1}_{\operad D}\L, (F^{i+1}_{\operad D}\L)^{\operad D_\pl^{(i+1)}/\operad D_\pl^{(i)}})^{\overline{\operad D}_\pl^{(i+1)}} \longrightarrow 
        ((F^{i+1}_{\operad D}\L)^{{\operad D}_\pl^{(i+1)}})^{\overline{\operad D}_\pl^{(i+1)}}
        \longrightarrow F^{i+1}_{\operad D}\L
        $$
        is zero. Since degree-wise injections are preserved by the tensor products and by pullbacks, this map is a degree-wise injection. Thus it remains to show that for every $p,q, j$, with $p \geq 1$ and $1\leq j \leq p$, the map
      $$
    \begin{tikzcd}[column sep=3pc,row sep=2pc]
    F_{i+1}^{\operad D}W \arrow[d] \\
    	\overline{\operad D}_\pl^{(i+1)}(p) \otimes  (F_{i+1}^{\operad D}W )^{\otimes p}\arrow[d] \\
    	\overline{\operad D}_\pl^{(i+1)}(p) \otimes  (F_{i+1}^{\operad D}W )^{\otimes j-1 }
     \otimes \overline{\operad D}_\pl^{(i+1)}(q) \otimes (F_{i+1}^{\operad D}W)^{\otimes q} \otimes (F_{i+1}^{\operad D}W)^{\otimes p-j}
    \end{tikzcd}
    $$
     factors through the sub-object
     $$
     \overline{\operad D}_\pl^{(i+1)}(p) \otimes  (F_{i+1}^{\operad D}W )^{\otimes j-1 }
     \otimes \overline{\operad D}^{(i)}(q) \otimes (F_{i+1}^{\operad D}W)^{\otimes q} \otimes (F_{i+1}^{\operad D}W)^{\otimes p-j}~.
     $$
     Using coassociativity, such a map rewrites as follows
     $$
    \begin{tikzcd}[column sep=3pc,row sep=2pc]
    F_{i+1}^{\operad D}W \arrow[d] \\
    	\overline{\operad D}_\pl^{(i+1)}(p+q-1) \otimes  (F_{i+1}^{\operad D}W )^{\otimes p+q-1} \arrow[d] \\
    	(\overline{\operad D}_\pl^{(i+1)}(p)  \otimes \overline{\operad D}_\pl^{(i+1)}(q)) \otimes  (F_{i+1}^{\operad D}W )^{\otimes p+q-1}~.
    \end{tikzcd}
    $$
     Since the sequence $(\operad D^{(i)})_{i \in \beta}$ is a cooperad ladder, the map 
     \[
     \Delta_j: \overline{\operad D}_\pl^{(i+1)}(p+q-1) \longrightarrow \overline{\operad D}_\pl^{(i+1)}(p) \otimes \overline{\operad D}_\pl^{(i+1)}(q)
     \]
     factors through $\overline{\operad D}_\pl^{(i)}(p) \otimes \overline{\operad D}_\pl^{(i)}(q)$, which proves the result.
\end{proof}

\begin{corollary}[Quasi-planar coladder]
Let $\C$ be a quasi-planar conilpotent curved cooperad. Recall from Subsection \ref{subsection: quasi-planar canonical filtration} that since $\C$ is quasi-planar, it admits a canonical quasi-planar $\omega$-ladder
$$
F_0^{\mathrm{qp}} \C \longrightarrow F_1^{\mathrm{qp}} \C \longrightarrow \cdots \longrightarrow F_n^{\mathrm{qp}} \C \longrightarrow \cdots
$$
whose colimit is $\C$. For every $i \in \omega$, let $F_i^{\mathrm{qp}} (-)$ be the idempotent monad of curved $\operad C$-algebras that reflects onto the full subcategory of curved $F_i^{\mathrm{qp}} \C$-algebras.

\medskip

Let $\L$ be a curved $\C$-coalgebra. The diagram 
$$
    \cdots \twoheadrightarrow F^n_{\mathrm{qp}}\L \longrightarrow \cdots \twoheadrightarrow F^n_{\mathrm{qp}} \L 
    $$
is an $\omega$-coladder of curved $\C$-algebras, whose limit is the qp-completion of $\L$, called the \textit{quasi-planar coladder} of $\L$.
\end{corollary}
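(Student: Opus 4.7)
The plan is to apply Proposition \ref{prop: celle qui induit des coladders} to the canonical quasi-planar $\omega$-ladder
\[
F_0^{\mathrm{qp}}\C \longrightarrow F_1^{\mathrm{qp}}\C \longrightarrow \cdots \longrightarrow F_n^{\mathrm{qp}}\C \longrightarrow \cdots
\]
of Subsection \ref{subsection: quasi-planar canonical filtration}. Strictly speaking Proposition \ref{prop: celle qui induit des coladders} is stated under the hypothesis that the algebra is complete with respect to the cooperad ladder, but an inspection of its proof shows that this hypothesis is only used to establish the continuity condition at limit ordinals $k<\beta$; the argument that each transition map $F^{i+1}_{\operad D}\L \to F^i_{\operad D}\L$ is an elementary fibration (via the pushout square computing its kernel and the factorisation of partial decompositions through $\overline{\operad D}^{(i)}$) is valid for any curved $\operad D$-algebra. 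Since the indexing ordinal here is $\omega$, which has no limit ordinals strictly below it, the continuity condition is vacuous and no completeness hypothesis is needed.

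So the first step is to invoke the relevant part of the proof of Proposition \ref{prop: celle qui induit des coladders}: for every $n \in \omega$, the transition map
\[
F^{n+1}_{\mathrm{qp}}\L \longrightarrow F^n_{\mathrm{qp}}\L
\]
is an elementary fibration of curved $\operad C$-algebras. Together with the (trivial) continuity condition at non-existing limit ordinals, this shows that the diagram $\{F^n_{\mathrm{qp}}\L\}_{n \in \omega}$ is an $\omega$-coladder of curved $\operad C$-algebras.

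The second step is to identify the limit. By the very definition of the qp-completion functor, one has
\[
\widehat\L \;=\; \lim_{n\in \omega^{\op}} F^n_{\mathrm{qp}}\L,
\]
so the limit of the coladder is $\widehat\L$, as claimed. No serious obstacle is expected: the work is entirely carried by Proposition \ref{prop: celle qui induit des coladders} and by the definition of the qp-completion, and the only subtlety is the mild observation above that completeness of $\L$ is not required when the indexing ordinal is $\omega$.
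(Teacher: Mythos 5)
Your proposal is correct and takes essentially the same route as the paper, whose proof simply invokes Proposition \ref{prop: celle qui induit des coladders} together with the definition of the qp-completion as $\widehat{\L} = \lim_{n \in \omega^{\op}} F^n_{\mathrm{qp}}\L$. Your extra observation that the completeness hypothesis in that proposition is only used for continuity at limit ordinals, hence vacuous for $\omega$, is a careful way of filling in what the paper leaves implicit (alternatively one can apply the proposition to $\widehat{\L}$ and use $F^n_{\mathrm{qp}}\widehat{\L} \cong F^n_{\mathrm{qp}}\L$).
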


\begin{proof}
Follows directly from Proposition \ref{prop: celle qui induit des coladders}.
\end{proof}


\subsection{Fibrations}

\begin{lemma}
A fibration of dg $\Omega \C$-coalgebras is in particular a degree-wise epimorphism.
\end{lemma}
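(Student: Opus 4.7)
The plan is to dualize Lemma~\ref{lemmacofibalgdegree-wiseinj}: there, one exploited the acyclic unital commutative dg algebra $D^1$ and the acyclic fibration $D^1 \otimes A \twoheadrightarrow 0$, together with the LLP of a cofibration $f : A \rightarrowtail A'$, to factor the degree-wise injection $A \rightarrowtail D^1 \otimes A$ through $f$ and thereby conclude that $f$ is itself a degree-wise injection. Dually, given a fibration $f : V \twoheadrightarrow V'$ of dg $\Omega\C$-coalgebras, the goal is to produce an acyclic dg $\Omega\C$-coalgebra $W$ together with a degree-wise surjective morphism $p : W \twoheadrightarrow V'$ of dg $\Omega\C$-coalgebras. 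The RLP of $f$ against the acyclic cofibration $0 \rightarrowtail W$ will then furnish a lift $g : W \longrightarrow V$ satisfying $f g = p$, forcing $f$ to be a degree-wise epimorphism.

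\medskip

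To construct $W$, I will use the cellular interval $I$ of Berger--Fresse, an $\mathcal E$-coalgebra model of the point with two grouplike endpoints $v_0, v_1$ and a $1$-cell $e$ satisfying $d(e) = v_1 - v_0$. Since $\Omega\C$ is cofibrant by Proposition~\ref{proposition:cooperadqpcofibrant} and hence $\mathcal E$-split by Corollary~\ref{cor: cofibrant implies E-split}, the tensor product $V' \otimes I$ inherits a canonical dg $\Omega\C$-coalgebra structure. The grouplike element $v_0$ generates an $\mathcal E$-subcoalgebra $\kk v_0 \hookrightarrow I$, which makes the image $j_0(V') \cong V'$ a sub-dg-$\Omega\C$-coalgebra of $V' \otimes I$. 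I will then take $W \coloneqq (V' \otimes I)/j_0(V')$: this is a dg $\Omega\C$-coalgebra whose underlying dg module is isomorphic to the mapping cone of $\mathrm{id}_{V'}$ and is therefore acyclic, while the morphism of $\mathcal E$-coalgebras $I/\kk v_0 \twoheadrightarrow \kk$ sending $[v_1] \mapsto 1$ and $[e] \mapsto 0$ induces the required degree-wise surjection $p : W \twoheadrightarrow V'$.

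\medskip

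The main technical obstacle is verifying that the grouplike subspace $\kk v_0 \subseteq I$ is genuinely stable under all the $\mathcal E$-cooperations, not merely under the binary coproduct, so that both the quotient $I/\kk v_0$ inherits a bona fide $\mathcal E$-coalgebra structure and the projection to $\kk$ becomes a morphism of $\mathcal E$-coalgebras. This is a direct verification on the explicit $\mathcal E$-coalgebra structure of the Berger--Fresse interval, using that a grouplike vertex in the cellular model is sent to its diagonal powers under every $n$-ary cooperation.
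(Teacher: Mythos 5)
Your overall strategy—lift the fibration $f$ against an acyclic cofibration $0 \rightarrowtail W$ where $W$ is an acyclic dg $\Omega\C$-coalgebra equipped with a degree-wise surjection onto $V'$—is exactly the right one, but your construction of the pair $(W,p)$ breaks down already at the level of chain complexes. The map $I/\kk v_0 \to \kk$ sending $[v_1]\mapsto 1$ and $[e]\mapsto 0$ is not a morphism of dg modules: $d[e]=\pm[v_1]$ is sent to $\pm 1$, whereas $d$ of the image of $[e]$ is $0$. In fact $I/\kk v_0 \cong D^1$, and every chain map $D^1 \to S^0$ vanishes in degree $0$ (the degree-$0$ generator of $D^1$ is a boundary while $S^0$ has nothing in degree $1$), so there is in general no degree-wise surjective chain map from $W \cong V'\otimes D^1$ onto $V'$ at all; no refinement of the coalgebra structure can repair this. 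There is also a secondary problem you did not address: when $\Omega\C(0)\neq 0$ (the counital situation the curved formalism is built to handle), the quotient of $V'\otimes I$ by the subcoalgebra $V'\otimes \kk v_0$ does not inherit a dg $\Omega\C$-coalgebra structure, because the arity-zero co-operations do not vanish on a subcoalgebra containing grouplike elements, so the "coideal" condition fails in arity $0$.

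The paper's proof uses the correct dual object, which is $D^0$ (generators in degrees $0$ and $-1$), not a quotient of the interval. The complex $D^0$ carries a strict counital coassociative cocommutative coalgebra structure ($\Delta(e)=e\otimes e$, $\Delta(b)=e\otimes b + b\otimes e$, $\epsilon(e)=1$, $\epsilon(b)=0$), so $D^0 \otimes V'$ is canonically a dg $\Omega\C$-coalgebra via $\Omega\C \cong \uCom \otimes \Omega\C$—no Barratt--Eccles structure, $\mathcal E$-splitting, or quotient is needed—it is acyclic, and the counit of $D^0$ induces a degree-wise surjective morphism of dg $\Omega\C$-coalgebras $D^0\otimes V' \to V'$. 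Lifting this surjection against $f$, using that $0 \rightarrowtail D^0\otimes V'$ is an acyclic cofibration, gives the factorization and hence the surjectivity of $f$. The asymmetry worth remembering: $D^1$ receives an injective chain map from $S^0$ (which is why $D^1\otimes A$ works in the algebra-side Lemma you tried to dualize), but admits no surjective chain map onto $S^0$; the object that surjects onto $S^0$ while being acyclic is $D^0$.
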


\begin{proof}
Let $f:V \longrightarrow V'$ be a fibration of dg $\Omega \C$-coalgebras. We equip $D^0$ with its canonical counital coassociative cocommutative coalgebra structure. The map $D^0 \otimes V' \longrightarrow V'$ which is degree-wise surjective factors through $f$ which is thus also degree-wise surjective.
\end{proof}

\begin{proposition}\label{propfibration}
A morphism of qp-complete curved $\operad C$-algebras is a fibration if and only if it is a degree-wise epimorphism.
\end{proposition}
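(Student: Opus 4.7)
We prove the two implications separately.

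Forward direction ($\Rightarrow$): Assume $f : \Lambda \to \Lambda'$ is a fibration of qp-complete curved $\operad C$-algebras, so by definition $\widehat\Omega_{\operad C}(f)$ is a fibration of dg $\Omega\operad C$-coalgebras, and by the preceding lemma it is in particular a degree-wise epimorphism. At the level of underlying graded $\kk$-modules the functor $\widehat\Omega_{\operad C}$ yields a natural projection $\pi_\Lambda : \widehat\Omega_{\operad C}(\Lambda) \twoheadrightarrow \Lambda$ (obtained by postcomposing with the counit $\operad C \to \operad I$), which is degree-wise surjective since it admits a section given by the trivial coaction. Chasing the naturality square
\[
\begin{tikzcd}[column sep=3pc,row sep=2.5pc]
\widehat\Omega_{\operad C}(\Lambda) \ar[r, two heads] \ar[d, two heads, "\pi_\Lambda"'] & \widehat\Omega_{\operad C}(\Lambda') \ar[d, two heads, "\pi_{\Lambda'}"] \\
\Lambda \ar[r, "f"'] & \Lambda'
\end{tikzcd}
\]
forces $f$ to be degree-wise surjective.

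Backward direction ($\Leftarrow$): Assume $f : \Lambda \twoheadrightarrow \Lambda'$ is a degree-wise epimorphism. Dualising the argument of Proposition \ref{propcofibrations}, we realise $f$ as the limit of an $\omega^{\op}$-indexed tower of elementary fibrations. Concretely, define
\[
\Lambda_n := \Lambda' \times_{F^n_{\mathrm{qp}} \Lambda'} F^n_{\mathrm{qp}} \Lambda,
\]
as a pullback in qp-complete curved $\operad C$-algebras. These assemble into a coladder
\[
\Lambda \longrightarrow \cdots \longrightarrow \Lambda_n \longrightarrow \Lambda_{n-1} \longrightarrow \cdots \longrightarrow \Lambda_0 = \Lambda',
\]
whose limit recovers $\Lambda$ since both $\Lambda$ and $\Lambda'$ are qp-complete, so equal to the limits of their quasi-planar coladders. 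Each transition $\Lambda_n \to \Lambda_{n-1}$ is built from the elementary fibration $F^n_{\mathrm{qp}} \Lambda \to F^{n-1}_{\mathrm{qp}} \Lambda$, which is itself a fibration by the preceding proposition. Applying the right adjoint $\widehat\Omega_{\operad C}$, which preserves limits, presents $\widehat\Omega_{\operad C}(f)$ as the cofiltered limit of the fibrations $\widehat\Omega_{\operad C}(\Lambda_n \to \Lambda_{n-1})$ of dg $\Omega\operad C$-coalgebras. Since fibrations there are defined by the right lifting property against a set of acyclic cofibrations, and since such a right lifting property is stable under small limits (by a transfinite induction lifting through the tower), the map $\widehat\Omega_{\operad C}(f)$ is a fibration, and so $f$ is a fibration by definition.

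The main obstacle is checking carefully that each $\Lambda_n \to \Lambda_{n-1}$ is an elementary fibration, and in particular that the construction respects degree-wise surjectivity. This uses that the quasi-planar coladder transitions are elementary fibrations (established in Corollary after Proposition \ref{prop: celle qui induit des coladders}), together with the fact that $f$ being degree-wise surjective implies each $\gr^n_{\mathrm{qp}}(f)$ is degree-wise surjective — a direct dualisation of Lemmas \ref{lemmaepicomplete0} and \ref{lemmaepicomplete}, which is what makes the inductive pullback construction go through and ensures that the defining factorisation property of an elementary fibration is inherited by each transition map.
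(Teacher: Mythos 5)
Your proof is correct and follows essentially the same route as the paper's: the "fibration implies epimorphism" direction via the commutative square with the split-epi projection $L^{\Omega\operad C}\L \twoheadrightarrow \L$, and the converse by writing $f$ as the inverse (transfinite) composition of the tower of pullbacks $\L' \times_{F^{n}_{\mathrm{qp}}\L'} F^{n}_{\mathrm{qp}}\L$, whose transition maps are elementary fibrations (using the surjectivity statements of Lemmas \ref{lemmaepicomplete0} and \ref{lemmaepicomplete}, which apply directly rather than "dually"), and invoking that elementary fibrations are fibrations and that fibrations are stable under limits of towers since $\widehat{\mathrm{B}}_{\C}$ preserves limits. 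The only slips are cosmetic and do not affect the argument: the projection onto $\L$ is induced by the unit of the operad $\Omega\operad C$ (projection of the cofree coalgebra onto its cogenerators), not the counit of $\operad C$, and with the paper's convention $F_{0}^{\mathrm{qp}}\operad C = \operad I$ the bottom of the tower is one further elementary fibration $\L'\times_{F^{0}_{\mathrm{qp}}\L'}F^{0}_{\mathrm{qp}}\L \to \L'$ rather than an equality $\Lambda_0 = \L'$.
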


\begin{proof}
Let $f: \L \longrightarrow \L'$ be a morphism of qp-complete curved $\operad C$-algebras.
On the one hand, let us suppose that $f$ is a degree-wise epimorphism and let $K$ be its kernel. Since $\L$ is qp-complete, the morphism $f$ can be recovered as the transfinite backward composition of the sequence
$$
\cdots \longrightarrow F^n_{\mathrm{qp}} \L \times_{F^n_{\mathrm{qp}} \L'} \L' \longrightarrow \cdots \longrightarrow F^0_{\mathrm{qp}} \L \times_{F^0_{\mathrm{qp}} \L'} \L' \longrightarrow  \L'.
$$
Since every such morphism is an elementary fibration and since fibration are preserved by backward transfinite compositions, the morphism $f$ is therefore a fibration.

\medskip

On the other hand, let us suppose that $f$ is a fibration. Let us consider the following square diagram of graded $\kk$-modules
$$
\begin{tikzcd}[column sep=3pc,row sep=3pc]
    L^{\operad P} \L
    \ar[r] \ar[d]
    &\L
    \ar[d]
    \\
    L^{\operad P} \L'
    \ar[r]
    &\L'
\end{tikzcd}
$$
The bottom horizontal map and the left vertical map are degree-wise epimorphisms. Thus, so is the right vertical map.
\end{proof}


\subsection{Cofiltered quasi-isomorphisms}

\begin{definition}[Cofiltered quasi-isomorphism of coladders]
A morphism of $\beta$-indexed curved $\operad C$-coladders $f:\L \longrightarrow \L'$ is a \textit{cofiltered quasi-isomorphism} if

\[
\gr^i (f): \gr^i \L \qi \gr^i \L'
\]

is a quasi-isomorphism for all $i \in \beta$.
\end{definition}

\begin{proposition}\label{propbarfilteredqis}
    Let $f: V \qi V'$ be a quasi-isomorphism of dg $\Omega \operad C$-coalgebras. The morphism of coladders
    \[
    F^n_{\mathrm{qp}}\widehat{\Omega}_{\operad C} V \longrightarrow F^n_{\mathrm{qp}}\widehat{\Omega}_{\operad C} V'
    \]
    \vspace{0.1pc}
    is a cofiltered quasi-isomorphism.
\end{proposition}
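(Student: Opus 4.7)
The plan is to explicitly compute the associated graded pieces $\gr^n F^n_{\mathrm{qp}}\widehat{\Omega}_{\C}(-)$ as a functor on dg $\Omega\C$-coalgebras and verify that it sends quasi-isomorphisms to quasi-isomorphisms of dg modules. The strategy exploits the fact that $\widehat{\Omega}_{\C} V$ has underlying graded $\C$-algebra freely generated by $V$, so that the earlier identification $F^n_{\mathrm{qp}} X^{\C} \cong X^{F_n^{\mathrm{qp}}\C}$ for free $\C$-algebras applies directly.

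First, I would establish that $F^n_{\mathrm{qp}}\widehat{\Omega}_{\C} V \cong V^{F_n^{\mathrm{qp}}\C}$ as graded modules. Using the short exact sequence
\[
0 \longrightarrow F_{n-1}^{\mathrm{qp}}\C \longrightarrow F_n^{\mathrm{qp}}\C \longrightarrow \gr_n^{\mathrm{qp}}\C \longrightarrow 0~,
\]
which is split as a sequence of graded $\mathbb S$-modules (splittings existing because the quasi-planar structure allows one to choose a graded $\mathbb N$-module splitting of $F_{n-1}^{\mathrm{qp}}\C_\pl \rightarrowtail F_n^{\mathrm{qp}}\C_\pl$ and tensor with $\mathbb S$), applying the contravariant functor $V^{(-)}$ produces a short exact sequence of graded modules whose kernel term gives the identification
\[
\gr^n F^n_{\mathrm{qp}}\widehat{\Omega}_{\C} V \cong V^{\gr_n^{\mathrm{qp}}\C}~.
\]

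Second, I would analyze the induced differential on this dg module. The key observation is that, by the defining property of a quasi-planar ladder, the differential on the associated graded $\gr_n^{\mathrm{qp}}\C \cong (\C_\pl^{(n)}/\C_\pl^{(n-1)}) \otimes \mathbb S$ has the form $d_\pl \otimes \id_{\mathbb S}$ and is therefore planar. Combined with the $d_V$ contribution and with the residual $\iota$-twist coming from the cobar construction (which on the associated graded is controlled by the weight-$1$ inclusion $\iota: \overline{\C} \hookrightarrow s^{-1}\overline{\C} \subset \Omega\C$ and the dg $\Omega\C$-coalgebra structure of $V$), the differential on $\gr^n F^n_{\mathrm{qp}}\widehat{\Omega}_{\C} V$ fits into a functorial dg module assignment $V \mapsto V^{\gr_n^{\mathrm{qp}}\C}$ that is well-behaved under morphisms of dg $\Omega\C$-coalgebras.

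Finally, to conclude that this assignment preserves quasi-isomorphisms, I would use that $\gr_n^{\mathrm{qp}}\C$ is quasi-free as a graded $\mathbb S$-module, so that
\[
V^{\gr_n^{\mathrm{qp}}\C} \cong \prod_{k \geq 0} \bigl[\gr_n^{\mathrm{qp}}\C_\pl(k), V^{\otimes k}\bigr]~.
\]
Over the field $\kk$, tensor powers preserve quasi-isomorphisms, each $[\gr_n^{\mathrm{qp}}\C_\pl(k), -]$ does so as well, and infinite products are exact. The main obstacle will be the verification in the second step, namely ensuring that the induced differential on the associated graded is controlled in the way described and that no twisting contribution from $\iota$ coupled with higher coactions of $V$ obstructs the quasi-isomorphism argument. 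This is precisely the point where the quasi-planar hypothesis on $\C$ plays an essential role, since without it the differential of $\gr_n^{\mathrm{qp}}\C$ could mix planar and symmetric parts and destroy the projectivity-based argument.
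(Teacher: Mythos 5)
Your overall route is the paper's route: identify $F^n_{\mathrm{qp}}\widehat{\Omega}_{\C} V$ with $V^{F_n^{\mathrm{qp}}\C}$ using quasi-freeness, identify the associated graded with $V^{\gr_n^{\mathrm{qp}}\C}\cong V^{\gr_n^{\mathrm{qp}}\C_\pl}$, and conclude because over a field the functors $[\gr_n^{\mathrm{qp}}\C_\pl(k),(-)^{\otimes k}]$ and infinite products preserve quasi-isomorphisms. The paper's proof is exactly this one-line rewriting, so the first and third steps of your plan are fine.

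The gap is the point you yourself flag as the ``main obstacle,'' and the justification you sketch for it is not the right one. What has to be shown is that the twisting part $d_2$ of the cobar differential (the derivation generated by $V \to V^{\Omega\C} \xrightarrow{(\id)^{\iota}} V^{\C}$) contributes nothing to the differential of $\gr^n_{\mathrm{qp}}\widehat{\Omega}_{\C} V$; only then does the associated graded become the dg module $V^{\gr_n^{\mathrm{qp}}\C_\pl}$ with differential built solely from $d_V$ and the planar coderivation, so that the map induced by $f$ is literally $f^{\gr_n^{\mathrm{qp}}\C_\pl}$ and the exactness argument applies. The quasi-planarity property you invoke --- that the differential of $\gr_n^{\mathrm{qp}}\C$ has the form $d_\pl\otimes\id_{\mathbb S}$ --- only controls the coderivation term $d_\C$, not the twist. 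The vanishing of $d_2$ on the associated graded comes instead from the cooperad-ladder axiom that the (partial) decomposition maps of $F_n^{\mathrm{qp}}\C$ factor through $F_{n-1}^{\mathrm{qp}}\C$: this is precisely what makes the transition maps $F^n_{\mathrm{qp}}\widehat{\Omega}_{\C}V \twoheadrightarrow F^{n-1}_{\mathrm{qp}}\widehat{\Omega}_{\C}V$ elementary fibrations (Proposition \ref{prop: celle qui induit des coladders}), hence forces the coaction-twisted term to strictly lower the filtration and to die on $\gr^n$. The same point also disposes of the curvature: a priori $\widehat{\Omega}_{\C}V$ is only a curved $\C$-algebra, and it is the elementary-fibration property that guarantees each $\gr^n$ is an honest dg module, a fact your proposal uses implicitly throughout. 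Without this ingredient the second step does not close: if the differential on $\gr^n$ genuinely involved the $\Omega\C$-coaction of $V$, the claim would no longer reduce to exactness of $[\,\gr_n^{\mathrm{qp}}\C_\pl(k),(-)^{\otimes k}]$ and products, which is the whole reason for passing to the associated graded in the first place.
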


\begin{proof}
    For every $i \in \omega$, the map
    $$
    \gr^i_{\mathrm{qp}}(\widehat{\Omega}_{\operad C} V) \longrightarrow
    \gr^i_{\mathrm{qp}}(\widehat{\Omega}_{\operad C} V')
    $$
    rewrites as the morphism of dg modules
    $$
    V^{\gr_i \operad C_\pl} \qi V'^{\gr_i \operad C_\pl}
    $$
    which is a quasi-isomorphism.
\end{proof}

\begin{proposition}
Let $f: \L \longrightarrow \L'$ be a cofiltered quasi-isomorphism of curved $\operad C$-coladders indexed by an ordinal $\beta$. The map $f(\beta): \L(\beta) \longrightarrow \L'(\beta)$ is a weak-equivalence.
\end{proposition}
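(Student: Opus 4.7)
The plan is to mirror, dually, the argument used in the previous section to show that filtered quasi-isomorphisms of ladders are weak-equivalences. We proceed by transfinite induction on $i \in \beta + 1$, showing that $f(i): \L(i) \longrightarrow \L'(i)$ is a weak-equivalence of curved $\operad C$-algebras. The base case $i = 0$ is trivial, since $\L(0) = \L'(0) = 0$.

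For a limit ordinal $k \in \beta + 1$ such that $f(j)$ is a weak-equivalence for all $j < k$, we use that the coladders $\L$ and $\L'$ satisfy $\L(k) = \lim_{j<k} \L(j)$ and $\L'(k) = \lim_{j<k} \L'(j)$. Since each transition map is an elementary fibration, it is in particular a fibration of curved $\C$-algebras by Proposition \ref{propfibration} (once completed, but the argument applies directly as the cobar $\widehat\Omega_{\operad C}$ sends elementary fibrations to fibrations of dg $\Omega\operad C$-coalgebras), and the right adjoint $\widehat{\Omega}_{\operad C}$ preserves these limits. Consequently, $\widehat{\Omega}_{\operad C} \L(k)$ and $\widehat{\Omega}_{\operad C}\L'(k)$ are computed as limits of towers of fibrations of dg $\Omega\operad C$-coalgebras, which are homotopy limits. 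Since $\widehat{\Omega}_{\operad C} f(j)$ is a quasi-isomorphism for each $j < k$, we deduce that $\widehat{\Omega}_{\operad C} f(k)$ is also a quasi-isomorphism, so $f(k)$ is a weak-equivalence.

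For a successor ordinal $i + 1$ such that $f(i)$ is a weak-equivalence, we consider the commutative square
\[
\begin{tikzcd}[column sep=3pc,row sep=3pc]
\L(i+1) \ar[r,"f(i+1)"] \ar[d] & \L'(i+1) \ar[d] \\
\L(i) \ar[r,"f(i)"'] & \L'(i)~,
\end{tikzcd}
\]
in which the two vertical maps are elementary fibrations by assumption. Since $f$ is a cofiltered quasi-isomorphism, the induced map on associated gradeds $\gr^{i+1}(f): \gr^{i+1} \L \qi \gr^{i+1} \L'$, which identifies with the induced map of kernels of the vertical arrows, is a quasi-isomorphism. Proposition \ref{propelemfibequiv} then yields that $f(i+1)$ is a weak-equivalence.

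The transfinite induction concludes that $f(\beta): \L(\beta) \longrightarrow \L'(\beta)$ is a weak-equivalence. The main subtlety, as usual, is the limit ordinal step: it relies crucially on the fact that $\widehat{\Omega}_{\operad C}$, being a right adjoint, preserves the limits defining the coladder, and that limits of towers of fibrations of dg $\Omega\C$-coalgebras compute homotopy limits in the transferred model structure on dg $\Omega\C$-coalgebras.
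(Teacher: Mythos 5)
Your proof is correct and takes essentially the same route as the paper's: a transfinite induction with a trivial base case, the limit-ordinal step handled by the fact that the functor creating weak-equivalences is a limit-preserving right adjoint and that the resulting towers of fibrations compute homotopy limits, and the successor step handled by Proposition \ref{propelemfibequiv} applied to the kernels of the elementary fibrations. One small notational point: the functor to apply to the curved $\operad C$-algebras is the complete bar construction $\widehat{\mathrm{B}}_{\operad C}$ (the right adjoint of the complete bar-cobar adjunction), not the cobar $\widehat{\Omega}_{\operad C}$, although the mathematical content of your argument is unaffected by this relabelling.
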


\begin{proof}
Notice that the following holds.

\medskip

\begin{enumerate}
    \item The map $f(0)$ is a weak-equivalence, since it is the identity of the zero object $0$.
    
    \medskip
    
    \item If $i \in \beta+1$ is a limit ordinal so that $f(j)$ is an equivalence
    for every $j < i$, then the limits
    $$
    \widehat{\mathrm{B}}_{\operad C} \L(i) \cong \lim_{j<i} \widehat{\mathrm{B}}_{\operad C} \L(j) , \quad \widehat{\mathrm{B}}_{\operad C} \L'(i) \cong \lim_{j<i}  \widehat{\mathrm{B}}_{\operad C} \L'(j)
    $$
    are homotopy limits, and therefore the map $\widehat{\mathrm{B}}_{\operad C} f(i)$ is a quasi-isomorphism. Thus $f(i)$ is a weak-equivalence.
    \medskip
    \item By Proposition \ref{propelemfibequiv}, $f(i+1)$ is a weak-equivalence whenever $f(i)$ is a weak-equivalence.
\end{enumerate}

\medskip
We conclude by an ordinal induction.
\end{proof}


\subsection{Path object}
Let $\L$ be an qp-complete curved $\operad C$-algebra. Let $P$ be a path object of $\widehat{\mathrm{B}}_{\operad C} \L$ in the category of dg $\Omega \C$-coalgebras. This means that there exists a factorization

\[
\begin{tikzcd}
\widehat{\mathrm{B}}_{\operad C} \L \arrow[r,"\iota",rightarrowtail]
&P \arrow[r,"p",twoheadrightarrow]
&\widehat{\mathrm{B}}_{\operad C}(\L \oplus \L)
\end{tikzcd}
\]
\vspace{0.1pc}

where $\iota$ is an acyclic cofibration and $p$ a fibration in the category of dg $\Omega \C$-coalgebras. Let $D$ be the following pushout 

\[
D \coloneqq \widehat{\Omega}_{\operad C}P 
\amalg_{\widehat{\Omega}_{\operad C}\widehat{\mathrm{B}}_{\operad C} (B)} \L.
\]
\vspace{0.1pc}

in the category of curved $\operad C$-algebras. We have that, for every natural integer $n$, the canonical map
\[
F_{\mathrm{qp}}^i D \longrightarrow F_{\mathrm{qp}}^i\widehat{\Omega}_{\operad C}P
\amalg_{F_{\mathrm{qp}}^i\widehat{\Omega}_{\operad C}\widehat{\mathrm{B}}_{\operad C} (B)} F_{\mathrm{qp}}^i\L.
\]
\vspace{0.1pc}
is an isomorphism. Moreover, the qp-completion $\widehat D$ of $D$ is the limit of the diagram 
$$
\cdots \longrightarrow F_{\mathrm{qp}}^i D \longrightarrow \cdots \longrightarrow F_{\mathrm{qp}}^0D~.
$$
It can be computed as the pushout $\widehat{\Omega}_{\operad C}P\amalg_{\widehat{\Omega}_{\operad C}\widehat{\mathrm{B}}_{\operad C} (B)} \L$ in the category of qp-complete curved $\C$-algebras. 

\medskip

Our goal is to show that $\widehat D$ is a natural path object in the category of qp-complete curved $\C$-algebras. These objects fit in the following commutative diagram
$$
\begin{tikzcd}[column sep=3pc,row sep=3pc]
    \widehat{\Omega}_{\operad C}\widehat{\mathrm{B}}_{\operad C} \L
    \ar[r,"\widehat{\Omega}_{\operad C}(\iota)"] \ar[d,"\epsilon_\L",twoheadrightarrow,swap]
    &\widehat{\Omega}_{\operad C} P
    \ar[r,"\widehat{\Omega}_{\operad C}(p)"] \ar[d,"q"] 
    &\widehat{\Omega}_{\operad C} \widehat{\mathrm{B}}_{\operad C}(\L \oplus \L)
    \ar[d,"\epsilon_{\L \oplus \L}",twoheadrightarrow]
    \\
   \L
    \ar[r,"j"]
    &D
    \ar[r,"p^{\dagger} \amalg \Delta"]
    & \L \oplus \L
\end{tikzcd}
$$
of curved $\C$-algebras. Here the morphism $j$ is given by $\L$ in the pushout. The morphism $p^{\dagger} \amalg \Delta$ is induced by the transpose of $p$ and by the diagonal $\Delta: \L \longrightarrow \L \oplus \L$. The morphisms $\epsilon$ are given by the counit of the complete bar-cobar adjunction $\widehat{\Omega}_{\operad C} \dashv \widehat{\mathrm{B}}_{\operad C}$. They are degree-wise epimorphisms and thus fibrations by Proposition \ref{propfibration}. Notice that $\widehat{\Omega}_{\operad C}(i)$ is a filtered quasi-isomorphism of coherent coladders by Proposition \ref{propbarfilteredqis}. The first step is to show that $j$ is a cofiltered quasi-isomorphism.

\medskip

Let us choose a particular summand $\L$ in the product $\L \oplus \L$. This way, we choose one of the two projections $t: D \longrightarrow \L$  and one of the two projections $r: P \longrightarrow \widehat{\mathrm{B}}_{\operad C} \L$
in such a way that the following diagram 
$$
\begin{tikzcd}[column sep=3pc,row sep=3pc]
    \widehat{\Omega}_{\operad C}\widehat{\mathrm{B}}_{\operad C} \L
    \ar[r,"\widehat{\Omega}_{\operad C}(\iota)"] \ar[d,"\epsilon_\L",twoheadrightarrow,swap]
    & \widehat{\Omega}_{\operad C} P
    \ar[d,"q"]
    \ar[r,"\widehat{\Omega}_{\operad C}(r)"] \ar[d] 
    & \widehat{\Omega}_{\operad C} \widehat{\mathrm{B}}_{\operad C}\L
    \ar[d,"\epsilon_{\L \oplus \L}",twoheadrightarrow]
    \\
    \L
    \ar[r, "j"]
    & D
    \ar[r,"t"]
    & \L~.
\end{tikzcd}
$$

commutes. The dg module $P$ decomposes into $r: P \cong \widehat{\mathrm{B}}_{\operad C} \L  \oplus K$ where $K$ is the kernel of the map $P \longrightarrow \widehat{\mathrm{B}}_{\operad C} \L $. The dg module $K$ is acyclic since $r$ is a quasi-isomorphism. Let us take a contracting homotopy $h$ of $K$, that is, a degree $1$ endomorphism of $K$ such that 
$$
\partial(h) = d_K h + h d_K = \id_K.
$$
We can extend $h$ to $P$ by zero on $\widehat{\mathrm{B}}_{\operad C}\L $. Then $\partial(h) = \pi_K$, where $\pi_K$ is the projection onto $K$. Now let $H$ be the degree $1$ endomorphism of the graded module
$$
\widehat{\Omega}_{\operad C} P = P^{\operad C_\pl}
= \prod_{n \geq 0} [\operad C_\pl(n) , P^{\otimes n}]
$$
defined as
$$
H \coloneqq \prod_{n \geq 0} \left[\id_{\operad C_\pl(n)} , \sum_{k=0}^{n-1} \pi_{\widehat{\mathrm{B}}_{\operad C} \L}^{\otimes k} \otimes h \otimes \id_V^{\otimes n-k-1}\right]~.
$$
One can extend $H$ to $\left(P^{\operad C_\pl}\right)^{\operad C_\pl}$ using the same formula
$$
H \coloneqq \prod_{n \geq 0} \left[\id_{\operad C_\pl(n)} , \sum_{k=0}^{n-1} \pi_{(\widehat{\mathrm{B}}_{\operad C} \L)^{\operad C_\pl}}^{\otimes k} \otimes H \otimes \id_{V^{\operad C_\pl}}^{\otimes n-k-1}\right] .
$$
The same formula mutatis mutandis allows us to extend $H$ to $\left(\left(P^{\operad C_\pl}\right)^{\operad C_\pl}\right)^{\operad C_\pl}$. One can notice then that $H$ commutes with the maps
$$
\left(\left(P^{\operad C_\pl}\right)^{\operad C_\pl}\right)^{\operad C_\pl}
\rightrightarrows \left(P^{\operad C_\pl}\right)^{\operad C_\pl} \longrightarrow P^{\operad C_\pl}~.
$$

\begin{lemma}
The degree $1$ graded endomorphism $H$ of $\widehat{\Omega}_{\operad C} P$ projects onto $D$, in the sense that the map
\[
\widehat{\Omega}_{\operad C} P \xrightarrow{H} \widehat{\Omega}_{\operad C} P \xrightarrow{q}  D
\]
factors through the projection $q:\widehat{\Omega}_{\operad C} P \longrightarrow  D$. We also denote by $H$ the resulting unique degree $1$ graded endomorphism of $D$.
\end{lemma}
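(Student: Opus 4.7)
The crux will be to establish that $H$ vanishes on the sub-graded object $\widehat{\Omega}_{\operad C}(\iota)(\widehat{\Omega}_{\operad C}\widehat{\mathrm{B}}_{\operad C}\L) \subset \widehat{\Omega}_{\operad C}P$. Granted this vanishing, the required factorisation of $q \circ H$ through $q$ will follow by unpacking the pushout defining $D$.

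For the vanishing, I would simply trace through the defining formula of $H$. By construction, the contraction $h \colon P \to P$ was defined on the summand $K$ of the decomposition $P \cong \widehat{\mathrm{B}}_{\operad C}\L \oplus K$ and extended by zero on $\widehat{\mathrm{B}}_{\operad C}\L$. Each summand
$$
\bigl[\id_{\operad C_\pl(n)}, \pi_{\widehat{\mathrm{B}}_{\operad C}\L}^{\otimes k} \otimes h \otimes \id^{\otimes n-k-1}\bigr]
$$
of $H$ applies $h$ to exactly one of the tensor factors in $P^{\otimes n}$. An element in the image of $\widehat{\Omega}_{\operad C}(\iota)$ corresponds to a family of equivariant maps $\operad C_\pl(n) \to P^{\otimes n}$ whose image lies entirely in $\widehat{\mathrm{B}}_{\operad C}\L^{\otimes n}$, so every summand vanishes on such an element and $H \circ \widehat{\Omega}_{\operad C}(\iota) = 0$.

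Given this, the pair of graded maps $q \circ H \colon \widehat{\Omega}_{\operad C}P \to D$ and $0 \colon \L \to D$ satisfies the compatibility $(q \circ H) \circ \widehat{\Omega}_{\operad C}(\iota) = 0 = 0 \circ \epsilon_\L$ relative to the pushout span. The main subtlety, and the hardest part of the argument, is to conclude from this compatibility the existence of a graded endomorphism $\tilde H$ of $D$ with $\tilde H \circ q = q \circ H$ and $\tilde H \circ j = 0$: the pushout's universal property directly supplies such a factorisation only for morphisms of curved $\operad C$-algebras, not for arbitrary graded maps. To bridge the gap, I would exploit the isomorphism $F_{\mathrm{qp}}^i D \cong F_{\mathrm{qp}}^i\widehat{\Omega}_{\operad C}P \amalg_{F_{\mathrm{qp}}^i\widehat{\Omega}_{\operad C}\widehat{\mathrm{B}}_{\operad C}\L} F_{\mathrm{qp}}^i\L$ already observed in the text, and argue stepwise along the quasi-planar coladder: at each filtration stage the pushout admits a concrete description at the level of underlying graded modules (the transition maps being elementary fibrations whose kernels are dg modules), which makes the descent of the graded endomorphism $H$ unambiguous from the vanishing established in the first step. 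Passing to the limit over $i \in \omega$ then produces the desired $\tilde H$ on $D$.
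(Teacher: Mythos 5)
Your first step is fine and is indeed the same vanishing the paper uses: since $h$ was extended by zero on $\widehat{\mathrm{B}}_{\operad C}\L$, every summand of $H$ kills any family of maps $\operad C_\pl(n) \to P^{\otimes n}$ landing in $(\widehat{\mathrm{B}}_{\operad C}\L)^{\otimes n}$, so $H \circ \widehat{\Omega}_{\operad C}(\iota) = 0$. You also correctly identify the real difficulty: $H$ is only a graded map, so the universal property of the pushout of curved $\operad C$-algebras does not apply to it, and the underlying graded module of $D$ is not the graded-module pushout.

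The gap is in your proposed bridge. Replacing $D$ by the tower $F_{\mathrm{qp}}^i D \cong F_{\mathrm{qp}}^i\widehat{\Omega}_{\operad C}P \amalg_{F_{\mathrm{qp}}^i\widehat{\Omega}_{\operad C}\widehat{\mathrm{B}}_{\operad C}\L} F_{\mathrm{qp}}^i\L$ does not change the nature of the problem: at every stage this is still a pushout in a category of (pdg) algebras over a cooperad, not in graded $\kk$-modules, so exactly the same obstruction reappears, and the assertion that the pushout "admits a concrete description at the level of underlying graded modules" because the transition maps are elementary fibrations is not an argument — no such description is exhibited, and the elementary-fibration property of the tower says nothing about how the algebra-level identifications (relations generated by applying $\operad C$-operations to identified elements) interact with $H$. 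What is actually needed, and what the paper does, is a presentation of $D$ whose colimit \emph{is} computed in graded modules: writing $X$ and $Y$ for the graded-module pushouts of the span and of its image under $(-)^{\operad C_\pl}$, the reflexive coequalizer $Y^{\operad C_\pl} \rightrightarrows X^{\operad C_\pl} \to D$ computes $D$ both in graded $\operad C$-algebras and in graded $\kk$-modules (the forgetful functor preserves reflexive coequalizers). One then uses that $H$ had already been extended to $\left(P^{\operad C_\pl}\right)^{\operad C_\pl}$ and $\left(\left(P^{\operad C_\pl}\right)^{\operad C_\pl}\right)^{\operad C_\pl}$ compatibly with the structural maps; the vanishing of these extensions on $\widehat{\Omega}_{\operad C}\widehat{\mathrm{B}}_{\operad C}\L$ and on $(\widehat{\Omega}_{\operad C}\widehat{\mathrm{B}}_{\operad C}\L)^{\operad C_\pl}$ lets $H$ descend to $X^{\operad C_\pl}$ and $Y^{\operad C_\pl}$, and hence to their coequalizer $D$. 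Your proposal is missing this mechanism (or any substitute for it), so as written the crucial descent step is unproved.
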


\begin{proof}
We set

\begin{enumerate}
\item $X$ to be the pushout of the cospan $\L \longleftarrow \widehat{\Omega}_{\operad C}\widehat{\mathrm{B}}_{\operad C}\L \longrightarrow \widehat{\Omega}_{\operad C} P$ in the category of graded $\kk$-modules;

\medskip

\item and $Y$ to be the pushout of the cospan $\L^{\operad C_\pl} \longleftarrow (\widehat{\Omega}_{\operad C}\widehat{\mathrm{B}}_{\operad C}\L)^{\operad C_\pl} \longrightarrow (\widehat{\Omega}_{\operad C} P)^{\operad C_\pl}$ in the category of graded $\kk$-modules.

\end{enumerate}

The diagram of graded $\operad C$-algebras

$$
\begin{tikzcd}[column sep=3pc,row sep=3pc]
    Y^{\operad C_\pl}
    \ar[r, bend left] \ar[r, bend right]
    &X^{\operad C_\pl}
    \ar[l]
    \ar[r,dashed]
    & D
\end{tikzcd}
$$

is colimiting both in the category complete graded $\operad C$-algebras and in the category of graded $\kk$-modules. Indeed, the forgetful functor from graded $\C$-algebras to graded $\kk$-modules commutes with reflexive coequalisers since those are preserved by the comonad $(-)^{\operad C_\pl}$.

\medskip

Notice the following:

\medskip

\begin{enumerate}
\item the degree $1$ endomorphism $H$ of $\left(P^{\operad C_\pl}\right)^{\operad C_\pl}$ projects onto $X^{\operad C_\pl}$;

\medskip

\item the degree $1$ endomorphism $H$ of $\left(\left(P^{\operad C_\pl}\right)^{\operad C_\pl}\right)^{\operad C_\pl}$
projects onto $Y^{\operad C_\pl}$;
\end{enumerate}

\medskip

since their restriction to, respectively, $\widehat{\Omega}_{\operad C}\widehat{\mathrm{B}}_{\operad C}\L$ and $(\widehat{\Omega}_{\operad C}\widehat{\mathrm{B}}_{\operad C}\L)^{\operad C_\pl} $ is zero. Therefore, $H$ projects onto the  of the coequalizer, which is given by $D$.
\end{proof}

\begin{lemma}
The map $H$ projects onto the quotients $F^{i}_{\mathrm{qp}} \widehat{\Omega}_{\operad C} P$ and $F^{i}_{\mathrm{qp}} D$ for every $i \geq 0$.
\end{lemma}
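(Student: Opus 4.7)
The plan is to imitate the argument of the previous lemma one level further up, by constructing at each level $i$ an analogous contracting operator $H^{(i)}$ and checking naturality.

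First, since $\widehat{\Omega}_{\operad C}P$ is quasi-free as a graded $\operad C$-algebra on the generators $P$, using the identity $F^i_{\mathrm{qp}}(X^{\operad C}) = X^{F_i^{\mathrm{qp}}\operad C}$ for free algebras, the projection $q_i : \widehat{\Omega}_{\operad C}P \twoheadrightarrow F^i_{\mathrm{qp}}\widehat{\Omega}_{\operad C}P$ identifies as a map of graded $\kk$-modules with the canonical surjection $P^{\operad C_\pl} \twoheadrightarrow P^{F_i^{\mathrm{qp}}\operad C_\pl}$ dual to the inclusion $F_i^{\mathrm{qp}}\operad C_\pl \hookrightarrow \operad C_\pl$. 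I would then define $H^{(i)}$ on $P^{F_i^{\mathrm{qp}}\operad C_\pl}$ by the very same formula used for $H$, simply replacing each $\operad C_\pl(n)$ by its sub-object $F_i^{\mathrm{qp}}\operad C_\pl(n)$. This makes sense because the contracting factor $\sum_{k=0}^{n-1} \pi_{\widehat{\mathrm{B}}_{\operad C}\L}^{\otimes k} \otimes h \otimes \mathrm{id}^{\otimes n-k-1}$ acts only on the target $P^{\otimes n}$, and naturality of $[-,-]$ in its first variable then yields $q_i \circ H = H^{(i)} \circ q_i$, which establishes the first assertion. The same argument applies verbatim to the iterated cotensors $(P^{F_i^{\mathrm{qp}}\operad C_\pl})^{F_i^{\mathrm{qp}}\operad C_\pl}$ and the triple version thereof, together with the structural maps between them.

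For $F^i_{\mathrm{qp}}D$, I would exploit the fact recorded just before the statement of the lemma that $F^i_{\mathrm{qp}}$ preserves the pushout defining $D$, giving
\[
F^i_{\mathrm{qp}} D \cong F^i_{\mathrm{qp}}\widehat{\Omega}_{\operad C}P \amalg_{F^i_{\mathrm{qp}}\widehat{\Omega}_{\operad C}\widehat{\mathrm{B}}_{\operad C}\L} F^i_{\mathrm{qp}}\L.
\]
Rerunning the argument of the previous lemma verbatim at this truncated level, let $X^{(i)}$ (resp.\ $Y^{(i)}$) be the pushout in graded $\kk$-modules of the analogous cospan involving $F^i_{\mathrm{qp}}\widehat{\Omega}_{\operad C}\widehat{\mathrm{B}}_{\operad C}\L$, $F^i_{\mathrm{qp}}\L$ and $F^i_{\mathrm{qp}}\widehat{\Omega}_{\operad C}P$ (resp.\ their $F_i^{\mathrm{qp}}\operad C_\pl$-cotensors). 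The reflexive coequaliser diagram
\[
(Y^{(i)})^{F_i^{\mathrm{qp}}\operad C_\pl} \rightrightarrows (X^{(i)})^{F_i^{\mathrm{qp}}\operad C_\pl} \twoheadrightarrow F^i_{\mathrm{qp}}D
\]
is colimiting both in qp-complete graded $\operad C$-algebras and in graded $\kk$-modules, and $H^{(i)}$ vanishes on the $F^i_{\mathrm{qp}}\widehat{\Omega}_{\operad C}\widehat{\mathrm{B}}_{\operad C}\L$-factor (and on its cotensors) because $h$ vanishes on $\widehat{\mathrm{B}}_{\operad C}\L$. Hence $H^{(i)}$ descends through the coequaliser to an endomorphism of $F^i_{\mathrm{qp}}D$.

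The only remaining bookkeeping will be to verify that the projection $D \twoheadrightarrow F^i_{\mathrm{qp}}D$ intertwines $H$ with $H^{(i)}$; this reduces to comparing side by side the two reflexive coequalisers presenting $D$ and $F^i_{\mathrm{qp}}D$ and invoking naturality of the cotensor in the cooperad variable. I expect no substantive difficulty beyond this diagram chase.
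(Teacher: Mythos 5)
Your proof is correct, but for the second assertion it takes a genuinely different and longer route than the paper. For $F^i_{\mathrm{qp}}\widehat{\Omega}_{\operad C}P$ you do exactly what the paper does (and merely declares clear by definition): use $F^i_{\mathrm{qp}}(X^{\operad C}) = X^{F_i^{\mathrm{qp}}\operad C}$ and naturality of the cotensor in the cooperad variable. For $F^i_{\mathrm{qp}}D$, however, the paper never rebuilds the truncated coequaliser presentation: it observes that the square with corners $\widehat{\Omega}_{\operad C}P$, $D$, $F^i_{\mathrm{qp}}\widehat{\Omega}_{\operad C}P$ and $F^i_{\mathrm{qp}}D$ is a pushout of graded $\kk$-modules, and since $H$ is already known to descend to $D$ (previous lemma) and to $F^i_{\mathrm{qp}}\widehat{\Omega}_{\operad C}P$ (first assertion), with both descents induced by the same $H$ on $\widehat{\Omega}_{\operad C}P$, the universal property of the pushout yields the descent to $F^i_{\mathrm{qp}}D$ in one line. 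Your route -- truncate the cooperad, rerun the coequaliser argument to build $H^{(i)}$ on $F^i_{\mathrm{qp}}D$, then match -- is viable, but note that what you call ``remaining bookkeeping'' is in fact the whole content of the second assertion: producing some endomorphism $H^{(i)}$ of $F^i_{\mathrm{qp}}D$ by an analogous formula does not show that $H$ projects onto $F^i_{\mathrm{qp}}D$ until you verify that $D \twoheadrightarrow F^i_{\mathrm{qp}}D$ intertwines $H$ with $H^{(i)}$. That verification does go through with the naturality you set up, since the comparison map of coequaliser diagrams is induced by restriction along $F_i^{\mathrm{qp}}\operad C_\pl \hookrightarrow \operad C_\pl$ together with $\L \to F^i_{\mathrm{qp}}\L$, all of which commute with the contracting operators, and the induced map on colimits is the canonical projection. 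So nothing fails, but the paper's pushout argument buys you this compatibility for free and makes the auxiliary construction of $H^{(i)}$ unnecessary.
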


\begin{proof}
It is clear that, by definition, $H$ that it projects onto $F^{i}_{\mathrm{qp}} \widehat{\Omega}_{\operad C}P = P^{F_{i}^{\mathrm{qp}}\operad C}$. Thus $H$ also projects onto $F^{i}_{\mathrm{qp}} D$ since the following square diagram
    $$
    \begin{tikzcd}[column sep=3pc,row sep=3pc]
        \widehat{\Omega}_{\operad C} P
        \ar[r] \ar[d] \arrow[dr, phantom, "\ulcorner", very near end]
        & D
        \ar[d]
        \\
        F^{i}_{\mathrm{qp}} \widehat{\Omega}_{\operad C} P
        \ar[r]
        & F^{i}_{\mathrm{qp}} D
    \end{tikzcd}
    $$
    is a pushout in the category of graded $\kk$-modules and since we already know that $H$ projects onto $D$ and onto $F^{i}_{\mathrm{qp}} \widehat{\Omega}_{\operad C} P$.
\end{proof}

\begin{lemma}
For every $i \geq 0$, the endomorphism $\partial(H)$ of $\gr^{i}_{\mathrm{qp}}\widehat{\Omega}_{\operad C}(P)$ is equal to the identity minus the projection onto $\gr^{i}_{\mathrm{qp}} \widehat{\Omega}_{\operad C}\widehat{\mathrm{B}}_{\operad C} \L$.
\end{lemma}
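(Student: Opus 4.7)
The plan is to argue this as the direct dual of Lemma \ref{lemma: endo partial(h) vaut id - proj sur le gros truc (cogèbres)}, exploiting the fixed splitting $P \cong \widehat{\mathrm{B}}_{\operad C}\L \oplus K$ of dg modules together with the contracting homotopy $h$ on $K$. Since $\operad C$ is pseudo-planar, I first identify $\widehat{\Omega}_{\operad C} P \cong \prod_n [\operad C_\pl(n), P^{\otimes n}]$, so that $\gr^i_{\mathrm{qp}}\widehat{\Omega}_{\operad C} P \cong \prod_n [\gr^i_{\mathrm{qp}}\operad C_\pl(n), P^{\otimes n}]$. The splitting of $P$ induces, for each $n$, a direct sum decomposition
\[
P^{\otimes n} \cong (\widehat{\mathrm{B}}_{\operad C}\L)^{\otimes n} \oplus \bigoplus_{k=0}^{n-1} (\widehat{\mathrm{B}}_{\operad C}\L)^{\otimes k} \otimes K \otimes P^{\otimes n-k-1}~,
\]
which classifies a pure tensor by the position of its leftmost $K$-factor. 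Applying $[\gr^i_{\mathrm{qp}}\operad C_\pl(n),-]$ yields a decomposition of each factor of $\gr^i_{\mathrm{qp}}\widehat{\Omega}_{\operad C} P$ into one ``pure'' summand and $n$ ``mixed'' summands; summing over $n$, the pure part is exactly $\gr^i_{\mathrm{qp}}\widehat{\Omega}_{\operad C}\widehat{\mathrm{B}}_{\operad C}\L$.

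Next, I would verify that this decomposition is stable under both the induced derivation on $\gr^i_{\mathrm{qp}}\widehat{\Omega}_{\operad C} P$ and the endomorphism $H$. On $\gr^i_{\mathrm{qp}}$ the pre-differential of $\operad C_\pl$ reduces to its planar component $d_\pl \otimes \id_{\mathbb S}$, so the derivation splits into: a piece acting inside $P^{\otimes n}$ (which respects the dg module splitting $P = \widehat{\mathrm{B}}_{\operad C}\L \oplus K$ by the choice of $\iota$ and $r$), a piece from the planar differential of $\operad C_\pl$ (trivial on tensor factors), and the universal twisting morphism piece. All three preserve the first-$K$-position decomposition, and $H$ manifestly does too.

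Finally, a direct inspection would compute $\partial(H)$ on each summand. On the pure summand, each constituent $\pi_{\widehat{\mathrm{B}}_{\operad C}\L}^{\otimes k} \otimes h \otimes \id_P^{\otimes n-k-1}$ of $H$ vanishes since $h|_{\widehat{\mathrm{B}}_{\operad C}\L}=0$, so $H = 0$ and thus $\partial(H) = 0$ there. On the mixed summand indexed by $k$, only the $k$-th term of $H$ contributes non-trivially, and combining $\partial(h) = \pi_K$ with $\pi_{\widehat{\mathrm{B}}_{\operad C}\L}|_{\widehat{\mathrm{B}}_{\operad C}\L} = \id$ gives $\partial(H) = \id$. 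Summing yields $\partial(H) = \id - \pi$, where $\pi$ is the projection onto $\gr^i_{\mathrm{qp}}\widehat{\Omega}_{\operad C}\widehat{\mathrm{B}}_{\operad C}\L$.

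The step I expect to be most delicate is the stability verification, particularly for the twisting morphism contribution: one must ensure it cannot create a $K$-factor at a position strictly to the left of where one already occurs, nor destroy the ``leftmost'' structure on the pure summand. This ultimately reduces to the fact that $P$ was chosen as a cofibrant replacement of $\widehat{\mathrm{B}}_{\operad C}\L$ whose section $r$ is a morphism of dg $\Omega\operad C$-coalgebras, which makes the coalgebra structure on $P$ compatible with the splitting up to pieces landing in $K$—and this is precisely what is needed.
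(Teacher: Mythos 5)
Your proposal follows the paper's own proof: the paper splits $\gr^{i}_{\mathrm{qp}}\widehat{\Omega}_{\operad C}P\cong\prod_{n}[\gr_{i}^{\qp}\operad C_\pl(n),P^{\otimes n}]$ into the summand $\gr^{i}_{\qp}\widehat{\Omega}_{\operad C}\widehat{\mathrm{B}}_{\operad C}\L$ and the complementary summand built from tensors containing a factor of $K$ (indexed by the position of the first such factor), asserts that both are stable under the differential and under $H$, and then checks directly that $\partial(H)$ is zero on the first summand and the identity on the second — exactly your plan and your computation.

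One step needs tightening. You keep the twisting-morphism component of the derivation as part of the differential on the associated graded and only verify that it is compatible with the splitting; but stability of the two summands alone does not yield $\partial(H)=\id$ on the mixed summand, since the (anti)commutator of $H$ with that component is an a priori nonzero extra contribution which your final computation (using only $\partial(h)=\pi_K$ and the tensor differential) does not address. The clean resolution is that this component actually vanishes on $\gr^{i}_{\qp}$: by the cooperad-ladder condition, nontrivial partial decompositions of $F^{\qp}_{i}\operad C_\pl$ land in $F^{\qp}_{i-1}\operad C_\pl\otimes F^{\qp}_{i-1}\operad C_\pl$, so the induced map on the associated graded of the free complete algebra is zero — this is the same fact underlying Proposition \ref{propbarfilteredqis}, namely that $\gr^{i}_{\qp}\widehat{\Omega}_{\operad C}V$ is just the dg module $V^{\gr_i\operad C_\pl}$. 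Once the graded differential is reduced to the one induced by $d_{\gr_i\operad C_\pl}$ and $d_P$, your argument closes: the $\operad C_\pl$-part commutes with $H$ and contributes nothing, and the telescoping identity $\sum_{k}\pi^{\otimes k}\otimes\partial(h)\otimes\id^{\otimes n-k-1}=\id^{\otimes n}-\pi^{\otimes n}$ (with $\pi=\pi_{\widehat{\mathrm{B}}_{\operad C}\L}$) gives precisely ``identity minus the projection''. Two cosmetic remarks: the coalgebra-structure piece does not preserve the exact position of the leftmost $K$-factor (it can only push it to the right), so only the coarse pure/mixed splitting is preserved — which is all you need; and $r$ is the retraction, $\iota$ its section, both morphisms of dg $\Omega\operad C$-coalgebras, which is indeed what makes the pure summand (via $\iota$) and the mixed summand (via $r^{\otimes n}\Delta_P(\kappa)=0$) stable.
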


\begin{proof}
Both summands $\gr^{i}_{\mathrm{qp}}\widehat{\Omega}_{\operad C}\widehat{\mathrm{B}}_{\operad C}B$ and 
    $$
\prod_{n \geq 0}\left[\gr_i^{\mathrm{qp}}\operad C_\pl(n) , \bigoplus_{1\leq k < n} (\widehat{\mathrm{B}}_{\operad C}\L)^{\otimes k}
\otimes K \otimes V^{n-k-1}
\right]
    $$
of $\gr^{i}_{\mathrm{qp}}\widehat{\Omega}_{\operad C}P$ are stable through the differential and $H$. Then, a straightforward check shows that $\partial(H)$ is zero on the first summand and the identity on the second one. 
\end{proof}

\begin{proposition}\label{corollarypath}
For every $i \geq 0$, the endomorphism $\partial(H)$ of $\gr^{i}_{\mathrm{qp}} D$ is equal to the identity minus the projection onto $\gr^{n}_{\mathrm{qp}} \L$. Subsequently, the maps
\[
\begin{tikzcd}[column sep=3.5pc,row sep=3pc]
\gr^{i}_{\mathrm{qp}} \L  \arrow[r,"\gr^{i}_{\mathrm{qp}}(j)"]
&\gr^{i}_{\mathrm{qp}} D \arrow[r,"\gr^{i}_{\mathrm{qp}}(t)"]
&\gr^{i}_{\mathrm{qp}} \L
\end{tikzcd}
\]
\vspace{0.1pc}

are quasi-isomorphisms.
\end{proposition}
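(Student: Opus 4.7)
My approach will mirror that of Proposition \ref{corollarycylinder}, with monomorphisms replaced by epimorphisms and the roles of $j$ and $t$ suitably dualized. The key structural input, which I will establish first, is that the canonical map $q : \widehat{\Omega}_{\operad C} P \to D$ is degree-wise surjective (so that its image under $\gr^{i}_{\qp}$ is still surjective on each associated graded piece). This holds because $D$ is constructed as a pushout along $\epsilon_\L : \widehat{\Omega}_{\operad C}\widehat{\mathrm{B}}_{\operad C}\L \twoheadrightarrow \L$, which is a fibration by Proposition \ref{propfibration} and hence degree-wise surjective: every element of $\L$ is the image under $\epsilon_\L$ of some element of $\widehat{\Omega}_{\operad C}\widehat{\mathrm{B}}_{\operad C}\L$, and thus factors through $q$ by the pushout relation. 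The same argument works at each stage $F^{i}_{\qp}$ because of the isomorphism $F^{i}_{\qp}D \cong F^{i}_{\qp}\widehat{\Omega}_{\operad C}P \amalg_{F^{i}_{\qp}\widehat{\Omega}_{\operad C}\widehat{\mathrm{B}}_{\operad C}\L} F^{i}_{\qp}\L$ recorded before the statement.

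Next I will apply $\gr^{i}_{\qp}$ to the commutative diagram introduced just above. Writing $\pi_\L \coloneqq \gr^{i}_{\qp}(j) \circ \gr^{i}_{\qp}(t)$ for the candidate projection on $\gr^{i}_{\qp}D$ and $\pi_{\widehat{\Omega}\widehat{\mathrm{B}}\L} \coloneqq \gr^{i}_{\qp}(\widehat{\Omega}_{\operad C}\iota) \circ \gr^{i}_{\qp}(\widehat{\Omega}_{\operad C}r)$ for the corresponding projection on $\gr^{i}_{\qp}\widehat{\Omega}_{\operad C}P$, the two commuting squares of the reference diagram give $tq = \epsilon_\L\,\widehat{\Omega}_{\operad C}(r)$ and $q\,\widehat{\Omega}_{\operad C}(\iota) = j\,\epsilon_\L$, which combine into the intertwining relation $\pi_\L \circ \gr^{i}_{\qp}(q) = \gr^{i}_{\qp}(q) \circ \pi_{\widehat{\Omega}\widehat{\mathrm{B}}\L}$.

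With this in place, the first assertion will follow from the preceding lemma together with the fact that $H$ commutes with $q$ and with the pre-differential, so that $\partial(H)$ commutes with $\gr^{i}_{\qp}(q)$. Substituting $\partial(H) = \id - \pi_{\widehat{\Omega}\widehat{\mathrm{B}}\L}$ on $\gr^{i}_{\qp}\widehat{\Omega}_{\operad C}P$ and applying the intertwining relation yields $\partial(H) \circ \gr^{i}_{\qp}(q) = (\id - \pi_\L) \circ \gr^{i}_{\qp}(q)$; surjectivity of $\gr^{i}_{\qp}(q)$ then cancels it on the right, giving $\partial(H) = \id - \pi_\L$ on $\gr^{i}_{\qp}D$.

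For the second assertion, the equality $tj = \id_\L$ passes to associated gradeds, so $\gr^{i}_{\qp}(j)$ is already a section of $\gr^{i}_{\qp}(t)$ at the chain level; meanwhile the identity $\partial(-H) = \pi_\L - \id$ exhibits $-H$ as a chain homotopy between $\pi_\L = \gr^{i}_{\qp}(j) \circ \gr^{i}_{\qp}(t)$ and the identity of $\gr^{i}_{\qp}D$. Thus the pair $(\gr^{i}_{\qp}(j), \gr^{i}_{\qp}(t))$ forms a chain-homotopy equivalence, and both maps are in particular quasi-isomorphisms. The only genuinely delicate point in the argument is the surjectivity of $q$ on each $\gr^{i}_{\qp}$-piece; all remaining steps are formal consequences of the diagrammatics and the preceding lemma.
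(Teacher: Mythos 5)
Your proposal is correct and is essentially the paper's own argument: apply $\gr^{i}_{\qp}$ to the commutative diagram, use the preceding lemma together with the fact that $H$ and the pre-differential descend along $q$, cancel the degree-wise epimorphism $\gr^{i}_{\qp}(q)$ to obtain $\partial(H)=\id-\gr^{i}_{\qp}(j)\,\gr^{i}_{\qp}(t)$, and conclude via $tj=\id_{\L}$ that $\gr^{i}_{\qp}\L$ is a deformation retract of $\gr^{i}_{\qp}D$, exactly as in the paper. One small caution: surjectivity of $\gr^{i}_{\qp}(q)$ is not a formal consequence of surjectivity of $q$ or of the stages $F^{i}_{\qp}(q)$ (associated gradeds of a cofiltration do not inherit surjectivity in general), but it does hold here by the mechanism in the proof of Lemma \ref{lemmaepicomplete} --- $\gr^{i}_{\qp}$ of a pdg $\C$-algebra is the image of $(-)^{F_{i}^{\qp}\C_{\pl}/F_{i-1}^{\qp}\C_{\pl}}$, so degree-wise epimorphisms of $\C$-algebras induce epimorphisms on $\gr^{i}_{\qp}$ --- which is exactly what the paper uses tacitly at the same point.
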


\begin{proof}
Let us consider the following commutative diagram of dg modules
$$
\begin{tikzcd}[column sep=3pc,row sep=3pc]
\gr^{i}_{\mathrm{qp}}\widehat{\Omega}_{\operad C}\widehat{\mathrm{B}}_{\operad C} \L
\ar[r, "\widehat{\Omega}_{\C} (\iota)"] \ar[d, "\epsilon_\L",swap]
& \gr^{i}_{\mathrm{qp}}\widehat{\Omega}_{\operad C}P
\ar[r, "\widehat{\Omega}_\C (r)"] \ar[d, "q"]
& \gr^{i}_{\mathrm{qp}}\widehat{\Omega}_{\operad C}\widehat{\mathrm{B}}_{\operad C} \L
\ar[d, "\epsilon_{\L \oplus \L}"]
\\
\gr^{i}_{\mathrm{qp}} \L  \ar[r, "j"] 
& \gr^{i}_{\mathrm{qp}} D \ar[r, "t"]
& \gr^{i}_{\mathrm{qp}}\L~,
\end{tikzcd}
$$
where the maps are denoted by the same letter as before, omitting the functor $\gr^{i}_{\mathrm{qp}}(-)$ applied on them for simplicity. Since $q$ commutes with the differential $d$ and with $H$, and since the squares above are commutative, we have
$$
\partial(h) q = q\partial(h) 
= q(\id - \widehat{\Omega}_\C(\iota r)) = q - q\widehat{\Omega}_\C(\iota r) = q - jtq = (\id - jt)q
= (\id - \pi_{\gr^{n}_{\mathrm{qr}}\widehat{\Omega}_{\operad C}\widehat{\mathrm{B}}_{\operad C} \L})q.
$$
Since $q: \gr^{i}_{\mathrm{qp}}\widehat{\Omega}_{\operad C}P \longrightarrow
\gr^{i}_{\mathrm{qp}}D$ is a degree-wise epimorphism, $\partial(h) = \id - \pi_{\gr^{i}_{\mathrm{qp}}\L}$ on $\gr^{i}_{\mathrm{qp}}D$. Therefore, $\gr^{i}_{\mathrm{qp}}\L$ is a deformation retract of $\gr^{i}_{\mathrm{qp}}D$.
\end{proof}

\begin{proposition}\label{proppath}
Let $\L$ be a qp-complete curved $\C$-algebra. The factorisation 

\[
\begin{tikzcd}[column sep=3.5pc,row sep=0pc]
\L \arrow[r,"j",rightarrowtail]
&\widehat D \arrow[r,"p^{\dagger} \amalg \Delta", twoheadrightarrow]
&\L \oplus \L
\end{tikzcd}
\]
\vspace{0.1pc}

makes $\widehat D$ a good path object of $\L$, in the sense that

\medskip

\begin{enumerate}
\item the map $p^{\dagger} \amalg \Delta: \widehat D \longrightarrow \L \oplus \L$ is a fibration,

\medskip

\item the map $j: \L \longrightarrow \widehat D$ is a weak-equivalence.
\end{enumerate}

\medskip
\end{proposition}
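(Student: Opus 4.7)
The plan is to verify the two required properties separately, leveraging the characterization of fibrations as degree-wise epimorphisms (Proposition \ref{propfibration}) for point (1), and the cofiltered quasi-isomorphism machinery of Proposition \ref{corollarypath} together with the general principle that cofiltered quasi-isomorphisms of coladders induce weak-equivalences on their limits for point (2). No additional constructions should be needed beyond those already set up in the preceding discussion.

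For the fibration property, I will show that $p^{\dagger} \amalg \Delta: \widehat D \longrightarrow \L \oplus \L$ is a degree-wise epimorphism. Since $p: P \twoheadrightarrow \widehat{\mathrm{B}}_{\operad C}(\L \oplus \L)$ is a fibration of dg $\Omega \operad C$-coalgebras, it is in particular degree-wise surjective. The functor $\widehat{\Omega}_{\operad C}$ is $(-)^{\operad C_\pl}$ as a graded functor, and infinite products of degree-wise surjections of graded $\kk$-modules remain degree-wise surjections, so $\widehat{\Omega}_{\operad C}(p)$ is degree-wise surjective. Composing with the degree-wise epimorphism $\epsilon_{\L \oplus \L}$ yields that the transpose $p^{\dagger}: \widehat{\Omega}_{\operad C} P \twoheadrightarrow \L \oplus \L$ is degree-wise surjective. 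The canonical projection $q: \widehat{\Omega}_{\operad C} P \twoheadrightarrow D$ is itself degree-wise surjective as a pushout map, so the induced morphism $D \longrightarrow \L \oplus \L$ is degree-wise surjective. Applying Lemma \ref{lemmaepicomplete} (completion preserves degree-wise epimorphisms) and using that $\L \oplus \L$ is already qp-complete, one concludes that $p^{\dagger} \amalg \Delta: \widehat D \twoheadrightarrow \L \oplus \L$ is a degree-wise epimorphism, hence a fibration.

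For the weak-equivalence property, I will invoke Proposition \ref{corollarypath} directly. That proposition, via the contracting homotopy $H$, shows that for every $i \in \omega$ the induced map $\gr^{i}_{\mathrm{qp}}(j): \gr^{i}_{\mathrm{qp}} \L \longrightarrow \gr^{i}_{\mathrm{qp}} D$ is a quasi-isomorphism (in fact it exhibits $\gr^{i}_{\mathrm{qp}} \L$ as a deformation retract of $\gr^{i}_{\mathrm{qp}} D$). The key identification to make is that, since $\widehat D$ is defined as the limit of the quasi-planar coladder of $D$, one has canonical isomorphisms $F^{i}_{\mathrm{qp}} D \cong F^{i}_{\mathrm{qp}} \widehat D$ and therefore $\gr^{i}_{\mathrm{qp}} D \cong \gr^{i}_{\mathrm{qp}} \widehat D$, compatibly with $j$. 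Since $\L$ itself is qp-complete, its quasi-planar coladder has limit $\L$, and thus $j: \L \longrightarrow \widehat D$ is the morphism of limits associated to a cofiltered quasi-isomorphism of $\omega$-coladders. The general statement that cofiltered quasi-isomorphisms induce weak-equivalences on their limits then implies that $j$ is a weak-equivalence.

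The only mildly subtle point I anticipate is the bookkeeping that matches the quasi-planar coladder of $\widehat D$ with that of $D$, and the verification that the identifications of the graded pieces are compatible with the map $j$. These are, however, formal consequences of the definition of the completion functor $\widehat{(-)}$ and of the fact established earlier that the quasi-planar coladder recovers $\widehat D$ as its limit. No further quasi-isomorphism arguments beyond those packaged into Proposition \ref{corollarypath} should be needed.
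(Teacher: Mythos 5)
Your proposal is correct and follows essentially the same route as the paper: degree-wise surjectivity of $\epsilon_{\L\oplus\L}\circ\widehat{\Omega}_{\operad C}(p)$ passed through the (completed) pushout plus Proposition \ref{propfibration} for the fibration, and Proposition \ref{corollarypath} interpreted as a cofiltered quasi-isomorphism of quasi-planar coladders (with limits $\L$ and $\widehat D$) for the weak-equivalence. The only cosmetic remark is that your appeal to surjectivity of $q:\widehat{\Omega}_{\operad C}P\to D$ is not needed, since surjectivity of the composite already forces $D\to\L\oplus\L$ to be degree-wise surjective.
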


\begin{proof}
The map $\widehat D \longrightarrow \L \oplus \L$ is a degree-wise epimorphism since the maps $\widehat{\Omega}_{\operad C}(p): \widehat{\Omega}_{\operad C}P \twoheadrightarrow \widehat{\Omega}_{\operad C} \widehat{\mathrm{B}}_{\operad C}(\L \oplus \L)$ and $\epsilon_{\L \oplus \L}: \widehat{\Omega}_{\operad C} \widehat{\mathrm{B}}_{\operad C}(\L \oplus \L) \twoheadrightarrow \L \oplus \L$ are degree-wise epimorphisms. Therefore it is a fibration, since both $\widehat D$ and $\L \oplus \L$ are qp-complete. To conclude, Proposition \ref{corollarypath} tells us that the map $j: \L \longrightarrow \widehat D$ is a cofiltered quasi-isomorphism. Thus it is a weak-equivalence.
\end{proof}

\begin{remark}
Actually, the map $j: \L \longrightarrow \widehat D$ is also a cofibration as the pushout of a cofibration.
\end{remark}


\newpage

\section{A Quillen equivalence, $\infty$-morphisms and homotopy transfer theorems for coalgebras}

\vspace{2pc}

We show that the complete bar-cobar adjunction induces a Quillen equivalence. This allows us to give another presentation of the homotopy category of dg $\Omega \C$-coalgebras in terms of complete curved $\C$-algebras. We introduce $\infty$-morphisms and show that they are invertible. This allows us to prove a homotopy transfer theorem for dg $\Omega \C$-coalgebras. Finally, we show how another model categories structures on the category of complete curved $\C$-coalgebras can be obtained by a right Bousfield localization. 

\subsection{The Quillen equivalence}
The goal of this subsection is to show the following theorem.

\begin{theorem}[{After \cite[Section 11]{linearcoalgebras}}]\label{thm: complete bar-cobar is a Quillen equivalence}
Let $\operad C$ be a quasi-planar conilpotent curved cooperad. The complete bar-cobar adjunction 
\[
\begin{tikzcd}[column sep=5pc,row sep=3pc]
          \catcurvcog{\operad C}^{\mathsf{qp}\text{-}\mathsf{comp}} \arrow[r, shift left=1.1ex, "\widehat{\Omega}_{\C}"{name=F}] & \mathsf{dg}~ \Omega \C\text{-}\mathsf{cog}, \arrow[l, shift left=.75ex, "\widehat{\mathrm{B}}_{\C}"{name=U}]
            \arrow[phantom, from=F, to=U, , "\dashv" rotate=-90]
\end{tikzcd}
\]
\vspace{0.1pc}
    
is a Quillen equivalence.
\end{theorem}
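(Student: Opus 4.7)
The transferred model structure on $\catcurvalg{\C}^{\mathsf{qp}\text{-}\mathsf{comp}}$ is constructed so that a morphism $f$ is a weak-equivalence exactly when $\widehat{\Omega}_\C(f)$ is a quasi-isomorphism of dg $\Omega\C$-coalgebras. In the model structure on $\mathsf{dg}~\Omega\C\text{-}\mathsf{cog}$ every object is cofibrant, since cofibrations are the degree-wise monomorphisms and $0 \to V$ is always such. Hence the Quillen adjunction $\widehat{\Omega}_\C \dashv \widehat{\mathrm{B}}_\C$ is a Quillen equivalence if and only if, for every dg $\Omega\C$-coalgebra $V$, the unit of adjunction $\eta_V : V \to \widehat{\mathrm{B}}_\C \widehat{\Omega}_\C V$ is a quasi-isomorphism of dg modules. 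The plan is to prove this ``key lemma'' directly, dualizing Lemma~\ref{lemmaquillenequivalence}; as in Corollary~\ref{cor: quasi-planar bar-cobar is an equivalence}, once established it extends from operads of the form $\Omega\C$ to an arbitrary cofibrant $\operad P$ via the quasi-planar complete bar-cobar adjunction.

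The technical heart of the argument is a dualization of Subsection~\ref{section: The contracting homotopy on trees}. The canonical projection $\epsilon_V : \widehat{\mathrm{B}}_\C \widehat{\Omega}_\C V \twoheadrightarrow V$ admits $\eta_V$ as a section in dg modules. Writing the underlying graded $\kk$-module of $\widehat{\mathrm{B}}_\C \widehat{\Omega}_\C V$ as a ``tree-like'' object built from $\Omega\C_\pl$ and $V^{\C_\pl}$, I would construct a degree $+1$ graded endomorphism $H$ by taking the leftmost top vertex of each tree and, dually to the algebra case, (de)suspending its $\C_\pl$-label when it lies in $\C_\pl$ (and sending it to zero otherwise). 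The degree $-1$ differential on $\widehat{\mathrm{B}}_\C \widehat{\Omega}_\C V$ naturally decomposes into terms coming from $d_V$, from the pre-differential of $\C$, from the twisting morphism $\iota$, from partial decompositions of $\C$, from the curvature, and from the $\Omega\C$-coalgebra structure on $\widehat{\Omega}_\C V$. The goal is to show that $\partial(H) + \eta_V \epsilon_V$ is an automorphism of $\widehat{\mathrm{B}}_\C \widehat{\Omega}_\C V$, which then forces $\eta_V$ to be a quasi-isomorphism.

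To verify this I would use the quasi-planar coladder from Proposition~\ref{prop: celle qui induit des coladders} applied to $\widehat{\mathrm{B}}_\C \widehat{\Omega}_\C V$ via the canonical quasi-planar ladder of $\C$. On each associated graded layer $\gr^n_{\qp}$, the underlying graded module takes the simple form $\gr_n^{\qp}(\Omega\C_\pl \comp_\pl \C_\pl) \comp_\pl V$ (up to the symmetric action), the curvature term and the twisting coaction into $V$ vanish, and the remaining differential on this layer agrees with its planar counterpart. A secondary filtration by the (opposite of the) number of internal nodes then reduces $\partial(H)$ on $\gr\gr^n_{\qp}$ to the standard identity $d_{\iota} H + H d_{\iota} = \mathrm{id}$, so $\partial(H) + \eta_V \epsilon_V$ is a graded isomorphism on each layer. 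The exhaustivity of the quasi-planar coladder (namely that $\widehat{\mathrm{B}}_\C \widehat{\Omega}_\C V$ is qp-complete) then promotes this to an automorphism of the whole object.

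The main obstacle is bookkeeping: unlike in the algebra case, $\widehat{\mathrm{B}}_\C$ involves a cofree $\Omega\C$-coalgebra, so the ``trees'' above are products (not sums) and the homotopy $H$ must be defined termwise in a way that is compatible with the defining coreflexive equalizers of $L^{\Omega\C}$ and with the qp-coladder. Verifying that $H$ descends to each quotient $F^i_{\qp}$ and commutes with the transition maps of the coladder, as well as checking that the various differential components land in the expected pieces of the filtration in the completed setting, will require arguments analogous to those of Subsection~\ref{sectioncommuteslimitscolimits}, together with the planarity properties of Lemma~\ref{lemma: quasi-planar functors preserve more stuff}.
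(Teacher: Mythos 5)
Your overall route is the paper's: since every dg $\Omega\operad C$-coalgebra is cofibrant, every qp-complete curved $\operad C$-algebra is fibrant, and weak-equivalences of algebras are created by the right adjoint (by $\widehat{\mathrm{B}}_{\operad C}$, not by $\widehat{\Omega}_{\operad C}$ as you write -- you are echoing a typo in the text), the Quillen equivalence reduces to showing that the unit $\eta_V\colon V \to \widehat{\mathrm{B}}_{\operad C}\widehat{\Omega}_{\operad C}V$ is a quasi-isomorphism for every dg $\Omega\operad C$-coalgebra $V$; this is exactly the paper's Lemma \ref{lemma: key lemma of Quillen equiv for coalg and absolute alg}, and the paper proves it as you propose in outline, via the dg retraction $\xi_V$ of $\eta_V$, a degree $1$ homotopy induced by the contracting homotopy on trees of Subsection \ref{section: The contracting homotopy on trees} (namely $H=-V^{h}$), and the verification that $\partial(H)+\eta_V\xi_V$ is an automorphism by filtering with the quasi-planar filtration and then by the number of nodes.

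Two concrete points in your sketch, however, would fail as written. First, $\widehat{\mathrm{B}}_{\operad C}\widehat{\Omega}_{\operad C}V = L^{\Omega\operad C}(V^{\operad C})$ is not a sum over trees: your formula $\gr_n^{\qp}(\Omega\operad C_\pl\comp_\pl\operad C_\pl)\comp_\pl V$ has the wrong variance (the layers are cotensors $V^{(-)}$, i.e.\ products over trees), and Proposition \ref{prop: celle qui induit des coladders} concerns coladders of curved algebras over the cooperad, so it cannot be applied to the coalgebra $\widehat{\mathrm{B}}_{\operad C}\widehat{\Omega}_{\operad C}V$. What the paper does instead is filter $\Omega\operad C$ by the qp-filtration of $\operad C$ and work with the restriction--extension pullback square with corners $L^{\operad P}V^{\operad C_\pl}$, $(V^{\operad C_\pl})^{\operad P_\pl}$, $((V^{\operad C_\pl})^{\operad P_\pl})^{\operad P_\pl}$, $V^{\operad P_\pl\comp_\pl\operad C_\pl}$ and $V^{\operad P_\pl\comp_\pl\operad P_\pl\comp_\pl\operad C_\pl}$ (where $\operad P=\Omega\operad C$), defining $H$ on the extension corners and checking that it restricts. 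Second, and more importantly, the hard step is not merely that $H$ is compatible with the limits defining $L^{\Omega\operad C}$: invertibility of $\partial(H)+\pi_V$ has to be established on the whole diagram so that it restricts to the pullback, and on the second cofree level $((V^{\operad C_\pl})^{\operad P_\pl})^{\operad P_\pl}$ the reduction you describe (node count plus the identity $d_{CP}H+Hd_{CP}=\id$, which does work on $V^{\operad P_\pl\comp_\pl\operad C_\pl}$, in parallel with Proposition \ref{keypropquilleneqcoalg}) is no longer sufficient: the graded layers there are products over trees of maps into tensor powers of $(V^{\operad C_\pl})^{\operad P_\pl}$, and one needs an additional finite filtration of these tensor factors (by how many leading factors lie in $V$) to reduce to the automorphism already obtained one level down; this is the content of Lemma \ref{lemmaautor2} and is a genuine argument rather than bookkeeping. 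Your plan is repairable along exactly these lines, but as stated it does not yet yield a proof of the key lemma.
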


\begin{proof}
The theorem follows directly from Lemma \ref{lemma: key lemma of Quillen equiv for coalg and absolute alg}.
\end{proof}

\begin{corollary}
Let $\operad P$ be a cofibrant dg operad. The quasi-planar complete bar-cobar adjunction 
\[
\begin{tikzcd}[column sep=5pc,row sep=3pc]
          \mathsf{dg}~\operad P\text{-}\mathsf{cog} \arrow[r, shift left=1.1ex, "\widehat{\Omega}^{\mathrm{q.p}}_{\pi}"{name=A}]
         &\mathsf{curv}~\mathrm{B}(\operad E \otimes \operad P) \text{-}\mathsf{alg}^{\mathsf{qp}\text{-}\mathsf{comp}}~. \arrow[l, shift left=.75ex, "\widehat{\mathrm{B}}^{\mathrm{q.p}}_{\pi}"{name=B}] \arrow[phantom, from=A, to=B, , "\dashv" rotate=-90]
\end{tikzcd}
\]

is a Quillen equivalence.
\end{corollary}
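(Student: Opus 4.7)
The plan is to identify the quasi-planar complete bar-cobar adjunction as the composition of two Quillen equivalences, by routing through dg $\Omega \mathrm{B}(\operad E \otimes \operad P)$-coalgebras. Since $\mathrm{B}(\operad E \otimes \operad P)$ is quasi-planar by Proposition \ref{prop: B(E otimes P) quasi-planar avec la bonne filtration}, the dg operad $\Omega \mathrm{B}(\operad E \otimes \operad P)$ is cofibrant by Proposition \ref{proposition:cooperadqpcofibrant}. Setting $\psi_{\operad P} \coloneqq \varphi_{\operad P} \circ \epsilon: \Omega \mathrm{B}(\operad E \otimes \operad P) \qi \operad P$, where $\epsilon$ is the counit of the operadic bar-cobar adjunction and $\varphi_{\operad P}: \operad E \otimes \operad P \qi \operad P$ is the canonical map, we obtain a quasi-isomorphism between two cofibrant dg operads. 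By Proposition \ref{propqeqisoperads}, the induced coalgebra adjunction $\psi_{\operad P}^* \dashv (\psi_{\operad P})_!$ between dg $\operad P$-coalgebras and dg $\Omega \mathrm{B}(\operad E \otimes \operad P)$-coalgebras is therefore a Quillen equivalence.

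\medskip

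Second, I would exploit the factorisation $\pi = \epsilon \circ \iota$ of the Koszul twisting morphism through the universal twisting $\iota: \mathrm{B}(\operad E \otimes \operad P) \to \Omega \mathrm{B}(\operad E \otimes \operad P)$. Applying the compatibility of complete bar-cobar adjunctions with morphisms of dg operads (taking the cooperad morphism to be $\id_{\mathrm{B}(\operad E \otimes \operad P)}$ and the operad morphism to be $\epsilon$) yields natural isomorphisms $\widehat{\Omega}_\pi \cong \widehat{\Omega}_\iota \circ \epsilon^*$ and $\widehat{\mathrm{B}}_\pi \cong \epsilon_! \circ \widehat{\mathrm{B}}_\iota$. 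Pre-composing with the coalgebra adjunction induced by $\varphi_{\operad P}$ and using $\psi_{\operad P} = \varphi_{\operad P} \circ \epsilon$ identifies the quasi-planar complete bar-cobar as $\widehat{\Omega}^{\mathrm{q.p}}_\pi \cong \widehat{\Omega}_\iota \circ \psi_{\operad P}^*$ and $\widehat{\mathrm{B}}^{\mathrm{q.p}}_\pi \cong (\psi_{\operad P})_! \circ \widehat{\mathrm{B}}_\iota$. In other words, it is the composition of adjunctions
\[
\mathsf{dg}~\operad P\text{-}\mathsf{cog} \underset{\psi_{\operad P}^* \dashv (\psi_{\operad P})_!}{\longleftrightarrow} \mathsf{dg}~\Omega \mathrm{B}(\operad E \otimes \operad P)\text{-}\mathsf{cog} \underset{\widehat{\Omega}_\iota \dashv \widehat{\mathrm{B}}_\iota}{\longleftrightarrow} \catcurvcompalg{\mathrm{B}(\operad E \otimes \operad P)}~.
\]

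\medskip

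The first adjunction is a Quillen equivalence by the previous paragraph, and the second is one by Theorem \ref{thm: complete bar-cobar is a Quillen equivalence} applied to the quasi-planar conilpotent curved cooperad $\mathrm{B}(\operad E \otimes \operad P)$. Since Quillen equivalences compose, the conclusion follows. The main obstacle is essentially bookkeeping: verifying the natural isomorphisms above requires carefully tracking orientation conventions (note that $\epsilon^*$ is the left adjoint on the coalgebra side but the right adjoint on the algebra side), yet once set up properly this is a direct instance of the naturality square of complete bar-cobar adjunctions recorded just above the definition of the quasi-planar bar-cobar adjunctions.
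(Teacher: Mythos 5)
Your proposal is correct and follows essentially the same route as the paper: the paper's proof likewise factors the quasi-planar complete bar-cobar adjunction through dg $\Omega\mathrm{B}(\operad E \otimes \operad P)$-coalgebras, using the Quillen equivalence induced by the quasi-isomorphism $\psi_{\operad P}\colon \Omega\mathrm{B}(\operad E \otimes \operad P) \qi \operad P$ between cofibrant operads together with Theorem \ref{thm: complete bar-cobar is a Quillen equivalence} for the quasi-planar cooperad $\mathrm{B}(\operad E \otimes \operad P)$. You merely spell out the bookkeeping of the factorisation in slightly more detail than the paper does.
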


\begin{proof}
It suffices to notice that this Quillen adjunction factors into two Quillen equivalences
$$
\begin{tikzcd}[column sep=5pc,row sep=3pc]
          \catdgcog{\operad P}
          \arrow[r, shift left=1.1ex, "\psi^*"{name=F}]
          & \catdgcog{\Omega \mathrm{B} (\operad E \otimes \operad P)}
          \arrow[r, shift left=1.1ex, "\widehat{\Omega}_{\mathrm{B} (\operad E \otimes \operad P)}"{name=O}]
          \arrow[l, shift left=.75ex, "\psi_!"{name=R}]
        & \catcurvcompalg{\mathrm{B} (\operad E \otimes \operad P)}~,
          \arrow[l, shift left=.75ex, "\widehat{\mathrm{B}}_{\mathrm{B}(\operad E \otimes \operad P)}"{name=B}]
          \arrow[phantom, from=O, to=B, , "\dashv" rotate=-90] \arrow[phantom, from=F, to=R, , "\dashv" rotate=-90]
\end{tikzcd}
$$
where the first adjunction is induced by the quasi-morphism $\psi: \Omega \mathrm{B} (\operad E \otimes \operad P) \qi \operad P$. 
\end{proof}

\begin{lemma}\label{lemma: key lemma of Quillen equiv for coalg and absolute alg}
For every dg $\Omega \operad C$-coalgebra $V$, the unit map $\eta_V: V \qi \widehat{\mathrm{B}}_{\operad C}\widehat{\Omega}_{\operad C} V$ is a quasi-isomorphism.
\end{lemma}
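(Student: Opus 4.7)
The plan is to dualize the proof of Lemma \ref{lemmaquillenequivalence} step by step. First, I would construct a canonical retraction $\xi_V : \widehat{\mathrm{B}}_{\operad C}\widehat{\Omega}_{\operad C} V \twoheadrightarrow V$ of $\eta_V$ in the category of dg modules, induced by the counit $\epsilon : \operad C \longrightarrow \operad I$ and the unit $\eta : \operad I \longrightarrow \operad P = \Omega\operad C$. Setting $\pi_V \coloneqq \eta_V \xi_V$, it then suffices to produce a degree $1$ endomorphism $H$ of the underlying graded module of $\widehat{\mathrm{B}}_{\operad C}\widehat{\Omega}_{\operad C} V$ such that $\partial(H) + \pi_V$ is an automorphism. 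This will show that $\eta_V$ admits a left inverse ($\xi_V$) and a right inverse $(\partial(H) + \pi_V)^{-1} \eta_V$ in the homotopy category of dg modules, and is therefore a quasi-isomorphism.

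The endomorphism $H$ will be built by transposing the map $h$ constructed in Subsection~\ref{section: The contracting homotopy on trees}, as announced just before Proposition~\ref{keypropquilleneqcoalg}. The underlying graded module of $\widehat{\mathrm{B}}_{\operad C}\widehat{\Omega}_{\operad C} V$ is a subobject of $(V^{\operad C})^{\operad P}$, indexed by planar trees whose top nodes may carry labels from $\operad C_\pl$ and whose remaining nodes are labelled by $s^{-1}\overline{\operad C}_\pl$; the same ``leftmost top node'' rule that defines $h$ on $\operad P_\pl \comp_\pl \operad C_\pl$ makes sense on this cotensor object and produces the desired $H$. I would then verify that $H$ restricts to the subobject defining $\widehat{\mathrm{B}}_{\operad C}\widehat{\Omega}_{\operad C} V$ inside $(V^{\operad C})^{\operad P}$, using the pullback description of $L^{\operad P}(-)$.

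Next, I would filter $\widehat{\mathrm{B}}_{\operad C}\widehat{\Omega}_{\operad C} V$ by the quasi-planar coladder $F^i_{\qp}\widehat{\mathrm{B}}_{\operad C}\widehat{\Omega}_{\operad C} V$ provided by Proposition~\ref{prop: celle qui induit des coladders}. The differential decomposes into the same six pieces $d_V$, $d_{\operad C}$, $d_{CP}$, $d_\Delta$, $d_\theta$ and $d_{CV}$ as in the algebra case, and this coladder is preserved by each of them as well as by $H$ and $\pi_V$. On the associated graded piece $\gr^0_{\qp}$ the map $\pi_V$ is the identity while $\partial(H) = 0$, so $\partial(H) + \pi_V = \id$. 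For $i > 0$ the map $\pi_V$ vanishes and, just as in Proposition~\ref{keypropquilleneqcoalg}, a further filtration by the opposite of the number of internal nodes kills $d_\Delta$ on the subquotient and reduces the computation to $\partial(H) = d_{CP} H + H d_{CP} = \id$, so that $\partial(H)$ is an isomorphism on each $\gr^i_{\qp}$. Since $\widehat{\mathrm{B}}_{\operad C}\widehat{\Omega}_{\operad C} V$ is complete with respect to this coladder, $\partial(H) + \pi_V$ is an automorphism.

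The main obstacle will be technical rather than conceptual: carefully identifying the underlying graded module of $\widehat{\mathrm{B}}_{\operad C}\widehat{\Omega}_{\operad C} V$ as a suitable subobject of $(V^{\operad C})^{\operad P}$ indexed by the same labelled tree data used in Subsection~\ref{section: The contracting homotopy on trees}, and checking that the transposed $H$ is compatible both with this subobject structure (coming from the comonad $L^{\operad P}$) and with the quasi-planar coladder. Once these compatibilities are established, the filtration argument runs exactly as in the algebra case.
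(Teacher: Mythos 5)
Your global strategy coincides with the paper's: the dg-module retraction $\xi_V$ of $\eta_V$, the operator $\pi_V=\eta_V\xi_V$, a homotopy $H$ obtained by transposing the map $h$ of Subsection~\ref{section: The contracting homotopy on trees}, and the goal of showing that $\partial(H)+\pi_V$ is an automorphism. The gap lies precisely in the two points you set aside as ``technical'': identifying the underlying object and claiming that ``the filtration argument runs exactly as in the algebra case''. Unlike $\Omega_{\operad C}\mathrm{B}_{\operad C}A$, which is an honest sum over labelled trees, the object $\widehat{\mathrm{B}}_{\operad C}\widehat{\Omega}_{\operad C}V=L^{\operad P}(V^{\operad C})$ is a \emph{cofree} $\operad P$-coalgebra, i.e.\ a pullback involving $(V^{\operad C})^{\operad P}$ and the iterated cotensor $((V^{\operad C})^{\operad P})^{\operad P}$ over $V^{\operad P\comp\operad P\comp\operad C}$; it has no direct tree description, so one cannot filter it and compute an associated graded as in Proposition~\ref{keypropquilleneqcoalg}. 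Moreover the coladder you invoke from Proposition~\ref{prop: celle qui induit des coladders} applies to curved $\operad C$-algebras, hence to $\widehat{\Omega}_{\operad C}V$, but not to the dg $\Omega\operad C$-coalgebra $\widehat{\mathrm{B}}_{\operad C}\widehat{\Omega}_{\operad C}V$ itself. The paper's proof is organised around exactly this obstruction: it introduces the restriction--extension pullback diagram with corners $W,\restrictionone,\restrictiontwo,\extensionone,\extensiontwo$, extends $H$ (together with auxiliary operators $G$, $gB$, $p_V$) to the whole diagram, proves the boundary operator is an automorphism of each corner, and only then deduces the statement on $W$ via the pullback.

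Even granting those compatibilities, the graded computation does not reduce to $d_{CP}H+Hd_{CP}=\id$ as you propose. On $\extensionone,\extensiontwo,\restrictionone$ the paper filters only by the qp-weight of the $\operad P$-factor, so that $\gr H=0$ and the boundary operator is the identity on the associated graded --- a different mechanism from the algebra case. The genuinely hard step, which your plan never confronts, is the corner $\restrictiontwo=((V^{\operad C})^{\operad P})^{\operad P}$ forced on you by the pullback description of $L^{\operad P}$: there $H$ acts inside the inner cotensor, and proving the automorphism property requires the garbage operator, a secondary filtration by the opposite of the number of nodes, and a further finite filtration of the tensor powers $\restrictionone^{\otimes l(t)}$ (Lemma~\ref{lemmaautor2}). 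None of this follows formally from the algebra-case argument, so the assertion that the proof dualizes ``exactly'' leaves the core of the proof unaddressed. (A minor slip: the right inverse of $\eta_V$ in the homotopy category is $\xi_V(\partial(H)+\pi_V)^{-1}$, not $(\partial(H)+\pi_V)^{-1}\eta_V$.)
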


The result was already proven in \cite[Section 11]{linearcoalgebras}. The proof here is almost the same. The main difference lays in the fact that leveraging the qp-filtration of $\C$ in Lemma \ref{lemmaautor2} makes the cofiltration arguments simpler than in \cite[Lemma 11.28]{linearcoalgebras}.

\medskip

In the following paragraphs, our strategy to show Lemma \ref{lemma: key lemma of Quillen equiv for coalg and absolute alg} is the following. First, we notice that the unit map $\eta_V:V \longrightarrow \widehat{\mathrm{B}}_{\operad C}\widehat{\Omega}_{\operad C}V$ has a canonical left-inverse $\xi_V$ in the category of dg modules. Let $\pi_V = \eta_V\xi_V$. We define a define a degree $1$ map $H$ on $\Omega_{\operad C}\mathrm{B}_{\operad C}B$ and show that $\partial(H) + \pi_V$ is an isomorphism in Proposition \ref{keypropquilleneqcoalg}. Therefore $\pi_V$ is an quasi-isomorphism since it is homotopic to an isomorphism. This implies that $\eta_V$ has both a left-inverse $\xi_V$ and a right-inverse $\xi_V (\partial(H) + \pi_V)^{-1}$ in the homotopy category of dg modules. Therefore $\eta_V$ is a quasi-isomorphism and this concludes the proof.

\begin{notation}
We abbreviate the dg operad $\Omega \C$ by $\operad P$ for the rest of this subsection. We denote:

\medskip

\begin{enumerate}
\item by $z: \operad P \longrightarrow \operad I$ the canonical augmentation of the underlying graded operad. 

\medskip

\item the pre-differenial $d_\C$ on $\operad P$ that is induced by the pre-differential of $\C$, 

\medskip

\item the pre-differential $d_\theta$ that results from the curvature of $\operad C$,

\medskip

\item the pre-differential $d_\Delta$ that results from the cooperad structure on $\operad C$.
\end{enumerate}

\end{notation}

\begin{definition}[Operad-cooperad diagrams]
    The \textit{planar operad-cooperad diagram} is the following one arrow diagram of graded $\mathbb N$-modules
    $$
    \begin{tikzcd}
        \operad P_\pl \comp_\pl \operad C_\pl
        \ar[r]
        & \operad P_\pl \comp_\pl \operad P_\pl \comp_\pl \operad C_\pl.
    \end{tikzcd}
    $$
    The \textit{symmetric operad-cooperad diagram} is the following one arrow diagram of graded $\mathbb S$-modules
    $$
    \begin{tikzcd}
        \operad P \comp \operad C
        \ar[r]
        & \operad P \comp \operad P \comp \operad C.
    \end{tikzcd}
    $$
    This is actually the image through the free $\mathbb S$-module functor $-\otimes \mathbb S$ of the planar operad-cooperad diagram.
\end{definition}

One has the following pre-differentials on the planar operad-cooperad diagram:

\medskip

\begin{enumerate}
    \item the pre-differential $d_\theta$ that results from the eponymous derivation on $\operad P$,

\medskip

    \item the pre-differential $d_{CP}$ that is induced by the canonical twisting morphism $\iota: \C \longrightarrow \Omega \C = \operad P$,
    
\medskip

    \item the pre-differential $d_\Delta$ that result from the eponymous derivation on $\operad P$.
\end{enumerate}

\medskip

These pre-differentials induce pre-differentials the symmetric operad-cooperad diagram. There is an additional pre-differential $d_C$ that again results from the eponymous derivation on $\operad P$, which is a priori non-planar.

\medskip

\textbf{The restriction-extension diagram} The restriction-extension diagram is a tool that allows one to check whether derivations and homotopies can be restricted to cofree coalgebras, which are very hard to describe in general.

\begin{definition}[Restriction-extension diagram]
    Let $V$ be a graded $\kk$-module. The \textit{restriction-extension diagram} $\restrictionextension$ is the following diagram of graded $\kk$-modules
    $$
\begin{tikzcd}
    L^P V^{\operad C_\pl}
    \ar[r] \ar[d]
    & \left(\left(V^{\operad C_\pl}\right)^{\operad P_\pl}\right)^{\operad P_\pl}
    \ar[dd]
    \\
    \left(V^{\operad C_\pl}\right)^{\operad P_\pl}
    \ar[d]
    \\ 
    V^{\operad P_\pl \comp_\pl \operad C_\pl}
    \ar[r]
    &V^{\operad P_\pl \comp_\pl \operad P_\pl \comp_\pl \operad C_\pl}
\end{tikzcd}
\quad
\text{ shortened into }
\quad
\begin{tikzcd}
    W \ar[r] \ar[d]
    & \restrictiontwo
    \ar[dd]
    \\
    \restrictionone
    \ar[d]
    \\ 
    \extensionone
    \ar[r]
    &\extensiontwo .
\end{tikzcd}
$$
\end{definition}

\begin{lemma}[{After \cite[Lemma 11.8]{linearcoalgebras}}]
The restriction-extension diagram $\restrictionextension$ is a pullback square.
\end{lemma}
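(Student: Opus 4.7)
The plan is to deduce the lemma from the pullback characterization of $L^{\operad P}$ recalled earlier, namely
\[
L^{\operad P} V^{\operad C_\pl} = (V^{\operad C_\pl})^{\operad P} \times_{(V^{\operad C_\pl})^{\operad P \comp \operad P}} \left((V^{\operad C_\pl})^{\operad P}\right)^{\operad P},
\]
with bottom map $(\id)^{\gamma}$ and right map $\varphi_{\operad P, \operad P}$. First I would move to the planar setting: since $\operad P = \Omega \operad C$ has underlying graded $\mathbb S$-module of the form $\operad P_\pl \otimes \mathbb S$ with $\operad P_\pl = \treemod_\pl(s^{-1}\overline{\operad C}_\pl)$, a direct inspection of the composition product shows that $\operad P \comp \operad P \cong (\operad P_\pl \comp_\pl \operad P_\pl) \otimes \mathbb S$ as graded $\mathbb S$-modules. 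Combined with the natural isomorphism $X^{M \otimes \mathbb S} \cong X^M$, this rewrites $L^{\operad P}V^{\operad C_\pl}$ as the pullback $\restrictionone \times_{(V^{\operad C_\pl})^{\operad P_\pl \comp_\pl \operad P_\pl}} \restrictiontwo$, with structural maps $(\id)^{\gamma_\pl}$ and the planar comparison $\varphi$.

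Next I would stack this pullback on top of the square
\[
\begin{tikzcd}
\restrictionone \ar[r, "(\id)^{\gamma_\pl}"] \ar[d, "\varphi", hookrightarrow] & (V^{\operad C_\pl})^{\operad P_\pl \comp_\pl \operad P_\pl} \ar[d, "\varphi", hookrightarrow] \\
\extensionone \ar[r, "(\id)^{\gamma_\pl \comp_\pl \id}"] & \extensiontwo,
\end{tikzcd}
\]
which commutes by naturality of $\varphi_{M, \operad C_\pl} \colon (V^{\operad C_\pl})^M \hookrightarrow V^{M \comp_\pl \operad C_\pl}$ in the graded $\mathbb N$-module $M$, applied to $\gamma_\pl$. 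By the pullback pasting lemma, the outer rectangle of the resulting three-by-two diagram is precisely the restriction-extension diagram, and it will be a pullback as soon as this lower square is.

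The key step is therefore to verify that this lower square is a pullback in graded $\kk$-modules. The argument I have in mind uses that $\gamma_\pl \colon \operad P_\pl \comp_\pl \operad P_\pl \to \operad P_\pl$ is an arity-wise surjection (since every operation $p \in \operad P_\pl$ admits $\id \otimes p$ as an antecedent), together with the pointwise characterization of $\varphi$ as the inclusion of those maps $\operad P_\pl(m) \to \prod_{\underline i} [\operad C_\pl(i_1) \otimes \cdots \otimes \operad C_\pl(i_m), V^{\otimes |\underline i|}]$ that factor through the tensor power $(V^{\operad C_\pl})^{\otimes m}$. The pullback condition then reduces arity by arity to the elementary statement that such a map factors through $(V^{\operad C_\pl})^{\otimes m}$ as soon as its precomposition with the surjection $\gamma_\pl(m)$ does, which is immediate. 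The main obstacle I foresee is keeping the planar versus symmetric identifications clean enough that the outer rectangle above genuinely coincides with the restriction-extension diagram; once this bookkeeping is under control, the pullback verification itself is a routine pointwise check.
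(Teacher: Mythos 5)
Your proposal is correct and takes essentially the same route as the paper: the paper also factors the restriction-extension rectangle as the defining pullback square of $L^{\operad P}$ (rewritten in planar terms) pasted on top of the naturality square of the comparison maps $\varphi$ with respect to $\gamma_\pl$, and concludes by the pasting lemma. The paper merely asserts that both squares are pullbacks, whereas your arity-wise argument via the surjectivity of $\gamma_\pl$ and the injectivity of $\varphi$ supplies the detail left implicit there.
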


\begin{proof}
Let us consider the following diagram  of graded $\kk$-modules
    $$
    \begin{tikzcd}
    L^P V^{\operad C_\pl}
    \ar[r] \ar[d]
    & \left(\left(V^{\operad C_\pl}\right)^{\operad P_\pl}\right)^{\operad P_\pl}
    \ar[d]
    \\
    \left(V^{\operad C_\pl}\right)^{\operad P_\pl}
    \ar[r] \ar[d]
    & \left(V^{\operad C_\pl}\right)^{\operad P_\pl \comp_\pl \operad P_\pl}
    \ar[d]
    \\ 
    V^{\operad P_\pl \comp_\pl \operad C_\pl}
    \ar[r]
    &V^{\operad P_\pl \comp_\pl \operad P_\pl \comp_\pl \operad C_\pl}.
\end{tikzcd}
    $$
Both the top square and the bottom square are pullback squares. Thus, the restriction-extension diagram is a pullback square.
\end{proof}

\begin{definition}[Extension diagram]
We define the \textit{extension diagram} $\extension$ as the sub-diagram of the restriction-extension diagram that only contains $\extensionone$ and $\extensiontwo$. 
\end{definition}

\begin{remark}
Notice that the extension diagram is the image of the planar operad-cooperad diagram and of the symmetric operad-cooperad diagram by the functor $V^{(-)}$, for any graded $\kk$-module $V$.
\end{remark}

\medskip

Let $f$ be a degree $k$ endomorphism of the extension diagram. If $f$ extends to the whole restriction-extension diagram, this extension is necessarily unique since $\restrictionone, \restrictiontwo, W$ are sub-graded $\kk$-modules of $\extensiontwo$. Moreover, the existence of such an extension amounts to the fact that the restriction of $f(\extensiontwo)$ to $\restrictionone$, $\restrictiontwo$ and $W$ has its image in respectively $\restrictionone$, $\restrictiontwo$ and $W$.

\begin{definition}[Cocone of the restriction-extension diagram]
 The restriction-extension diagram $\restrictionextension$ has a canonical cocone towards $V$ induced by the unit of $\operad P$ and the coaugmentation of $\operad C$ 
    $$
    V^{\operad P ~\comp ~\operad P ~  \comp ~ \operad C} \longrightarrow V^{\operad I ~ \comp ~ \operad I ~ \comp ~ \operad I} \cong V.
    $$ 
\end{definition}

This cocone restricts to a morphism $\xi_V: L^{\operad P} V^{\operad C} \longrightarrow V$. Notice that the graded map $\xi_V: \widehat{\mathrm{B}}_{\operad C}\widehat{\Omega}_{\operad C} V \longrightarrow V$ is in fact a morphism of dg modules. The morphism $\xi_V$ admits two sections:

\medskip

    \begin{enumerate}
        \item the unit map $\eta_V: V \longrightarrow \widehat{\mathrm{B}}_{\operad C}\widehat{\Omega}_{\operad C} V$, which is also a morphism of dg modules, 
        
\medskip

        \item the map induced by the counit on $\operad C$ and the graded augmentation map on the underlying graded operad of $\operad P$
        $$
        \zeta_V:V \simeq L^{\operad I}V^{\operad I} \longrightarrow L^{\operad P}V^{\operad C}.
        $$
        which is a section of $\xi_V$ in graded $\kk$-modules.
    \end{enumerate}
    
\medskip
By the universal property of the pullback, such a map $\zeta_V$ extends to a cone of the restriction-extension diagram with source $V$.
Let us denote $\pi_V$ the degree $0$ endomorphism of the restriction-extension diagram defined as $\pi_V = \eta_V \circ \xi_V$. We can notice that the restriction of $\pi_V$ onto $\widehat{\mathrm{B}}_{\operad C}\widehat{\Omega}_{\operad C} V$ is a morphism of dg modules. Let us denote $p_V$ the degree $0$ endomorphism of the restriction-extension diagram defined as $\pi_V = \zeta_V \circ \xi_V$.

\medskip

\textbf{The coderivation on $W$} Let $W \coloneqq \widehat{\mathrm{B}}_{\operad C}\widehat{\Omega}_{\operad C} V$. The coderivation $D$ on $W$ may be decomposed as
$$
D = D_{in} + D_{ex}.
$$
On the one hand, $D_{ex}$ is the degree $-1$ endomorphism of the extension diagram given by the formula
$$
D_{ex} = V^{d_{CP}};
$$
it restricts to $W$ but not to $\restrictionone$ nor $\restrictiontwo$. On the other hand, $D_{in}$ is the degree $-1$ endomorphism of the restriction-extension diagram that is the sum of the following pre-differentials:

\medskip

\begin{enumerate}
    \item the pre-differential $D_V$ which results from the differential on $V$ and whose restriction to the extension diagram is $\shuffle(\id, d_V)^{\extension}$,

\medskip

    \item the pre-differential $D_{CV}$ induced by the structural map of $V$ composed with the curved twisting morphism $\iota$, given by $V \longrightarrow V^{\operad P_\pl} \longrightarrow V^{\operad C_\pl}$,
    
\medskip

    \item the pre-differential $D_C$ which results from the pre-differential on $\operad C$. Its restriction to the extension diagram is $V^{d_C}$,
    
\medskip

    \item the pre-differential $D_\theta$ which results from the curvature of $\operad C$. Its restriction to the extension diagram is $V^{d_\theta}$,
    
\medskip

    \item the pre-differential $D_\Delta$ which results from the structure of a cooperad on $\operad C$. Its restriction to the extension diagram is $V^{d_\Delta}$.
\end{enumerate}

\medskip

\textbf{The contracting homotopy $H$.} Let $h$ be the degree $1$ endomorphism of $\operad P_\pl \comp_\pl \operad C_\pl = (\treemod_\pl (s^{-1}\overline{\operad C}_\pl)) \comp_\pl \operad C_\pl$ described in Section \ref{section: The contracting homotopy on trees}. Remember that, as $\operad P_\pl \comp_\pl \operad C_\pl$ consists in planar trees with nodes labelled by $s^{-1} \overline{\C}_\pl$ and some top nodes by ${\C}_\pl$, $h$ consists in taking the leftest most top vertex of the planar tree and
\begin{enumerate}
    \item applying to it the map
\[
s^{-1} x \in s^{-1} \overline{\C}_\pl \mapsto x \in \overline{\C}_\pl
\]
if the label of this node belongs to $s^{-1} \overline{\C}_\pl$;

\medskip

\item sending this labelled tree to $0$ if the label of this node belongs to $\C_\pl$.

\medskip

\end{enumerate}
If the labelled tree is trivial (that is, it has no node), then $h$ sends such a tree to zero.
As already shown in Section \ref{section: The contracting homotopy on trees}, $h$ is pictorially described by the map

    \begin{center}
\begin{tikzpicture}
		\tikzstyle{vertex}=[draw, circle, thick, inner sep=0,
		minimum size=4]
		\tikzstyle{every path}=[thick]
		\node [vertex] (a) at (0,0) {};
		\node [vertex] (c) at (0,1) {};
		\node [vertex] (d) at ($(a)+(45:1cm)$) {};
		\node [left of=c, node distance=1cm] {$s^{-1}\overline{\operad C_\pl}(3)$};
		\node [left of=a, node distance=1cm] {$s^{-1}\overline{\operad C_\pl}(2)$};
		\node [right of=d, node distance=0.8cm] {$\overline{\operad C_\pl}(1)$};
		\node at (3,0.8) {$h$};
		\draw (a) -- (c);
		\draw (a) -- (0,-1);
		\draw (c) -- ($(c)+(60:1cm)$);
		\draw (c) -- ($(c)+(120:1cm)$);
		\draw (c) -- (0,2);
		\draw (a) -- (d);
		\draw (d) -- ($(d)+(45:1cm)$);
		\draw[|->] (2.5,0.5) -- (3.5,0.5);
		\node [vertex] (a2) at (6,0) {};
		\node [vertex] (c2) at (6,1) {};
		\node [vertex] (d2) at ($(a2)+(45:1cm)$) {};
		\node [left of=c2, node distance=0.7cm] {$\overline{\operad C_\pl}(3)$};
		\node [left of=a2, node distance=1cm] {$s^{-1}\overline{\operad C_\pl}(2)$};
		\node [right of=d2, node distance=0.8cm] {$\overline{\operad C_\pl}(1)$};
		\draw (a2) -- (c2);
		\draw (a2) -- (6,-1);
		\draw (c2) -- ($(c2)+(60:1cm)$);
		\draw (c2) -- ($(c2)+(120:1cm)$);
		\draw (c2) -- (6,2);
		\draw (a2) -- (d2);
		\draw (d2) -- ($(d2)+(45:1cm)$);
\end{tikzpicture}
\end{center}

\medskip

One can extend $h$ to $\operad P_\pl \comp_\pl \operad P_\pl \comp_\pl \operad C_\pl$ as follows in a such a way that it commutes with the maps
in the diagram
\[
\begin{tikzcd}
    \operad P_\pl \comp_\pl \operad P_\pl \comp_\pl \operad C_\pl
    \ar[r]
    & \operad P_\pl \comp_\pl \operad C_\pl.
    \ar[l, shift left] \ar[l, shift right]
\end{tikzcd}
\]
The graded $\mathbb N$-module
$\operad P_\pl \comp_\pl \operad P_\pl \comp_\pl \operad C_\pl$
consists in planar trees $t(x_1, x_2, \ldots)$ whose node are labelled by $s^{-1}\overline{\C}_\pl$ and some top nodes are labelled $\C_\pl$
together with a subtree $t'$ that contains the root but no node labelled $\C_\pl$.
In that context, the extension of $h$ to such pairs $(t(x_1, x_2, \ldots), t')$ consists in applying the former map $h$ onto $t(x_1, x_2, \ldots)$ and 
removing from $t'$ the node that is relabelled if it belonged to it.

One can define precisely such an extension by induction on the height of the subtrees.
For convenience, let us denote $M = \operad P_\pl \comp_\pl \operad C_\pl$. Then,

\medskip

\begin{enumerate}
    \item On $\operad I \comp_\pl M$ $h$ is 
    $$
    \operad I \comp_\pl M \xrightarrow{\id \comp h} \operad I \comp_\pl M.
    $$
    
    \medskip
    
    \item On $s^{-1}\overline{\operad C}_\pl \comp_\pl M$, $h$ is the sum of the map
    \begin{align*}
        s^{-1}\overline{\operad C_\pl} \comp_\pl M
    \twoheadrightarrow 
    s^{-1}\overline{\operad C}_\pl \comp_\pl (\operad I  \comp_\pl \operad I)
    &\longrightarrow \operad I  \comp_\pl (\operad I \comp_\pl \operad C_\pl) \hookrightarrow \operad P_\pl \comp_\pl M
    \\
    s^{-1}x & \mapsto x
    \end{align*}
    with the map
    $$
    \id_{s^{-1}\overline{\operad C}_\pl(n)} \otimes \sum_{k=0}^{n-1} \otimes \left(
    \pi_{\operad I \comp \operad I}^{\otimes k}
    \otimes h \otimes 
    \id_M^{\otimes n-k-1}
    \right).
    $$
    
    \medskip
    
    \item Then on
    $$
    (\overline{\treemod}_{\pl, \leq n+1}(s^{-1}\overline{\operad C}_\pl))\comp_\pl M
    \simeq 
    s^{-1}\overline{\operad C}_\pl \comp_\pl 
    ({\treemod}_{\pl, \leq n+1}(s^{-1}\overline{\operad C}_\pl)) \comp_\pl M
    $$
    $h$ is given by
    induction by
    the sum of the map
    \begin{align*}
        (\overline{\treemod}_{\pl, \leq n+1}(s^{-1}\overline{\operad C}_\pl))\comp_\pl M
    \twoheadrightarrow 
    s^{-1}\overline{\operad C}_\pl \comp_\pl (\operad I  \comp_\pl \operad I)
    &\longrightarrow \operad I  \comp_\pl (\operad I \comp_\pl \operad C_\pl) \hookrightarrow \operad P_\pl \comp_\pl M
    \\
    s^{-1}x & \mapsto x
    \end{align*}
    with the map
    $$
    \id_{s^{-1}\overline{\operad C_\pl}(n)} \otimes \left(
    \sum_{k=0}^{n-1} \pi_{\operad I \comp_\pl \operad I \comp_\pl \operad I}^{\otimes k}
    \otimes h \otimes 
    \id_{\operad P_\pl \comp_\pl \operad P_\pl \comp_\pl \operad C_\pl}^{\otimes n-k-1}
    \right).
    $$
\end{enumerate}

\medskip

\begin{definition}[{\cite[Definition 11.16]{linearcoalgebras}}]
    Let $H$ be the degree $1$ endomorphism of the extension diagram defined 
    as 
    $$
    H = -V^h.
    $$
\end{definition}

\begin{lemma}[{\cite[Proposition 11.17]{linearcoalgebras}}]
    The map $H$ extends to the whole restriction extension-diagram.
\end{lemma}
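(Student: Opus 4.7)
The plan is as follows. By the previous lemma, the restriction-extension diagram $\restrictionextension$ is a pullback square, so the three objects $\restrictionone$, $\restrictiontwo$, and $W$ all embed as subobjects of $\extensiontwo$. As noted in the text just before the lemma, the extension, if it exists, is unique, and it exists if and only if the restriction of $H = -V^h$ to each of these three subobjects has image in the same subobject. Moreover, since $W = \restrictionone \times_{\extensiontwo} \restrictiontwo$ in graded $\kk$-modules, stability on $W$ is automatic as soon as stability on $\restrictionone$ and $\restrictiontwo$ is proved. Thus the whole lemma reduces to two stability checks.

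I would then unwind the two inclusions. The inclusion $\restrictionone \hookrightarrow \extensiontwo$ factors as
\[
(V^{\operad C_\pl})^{\operad P_\pl} \;\hookrightarrow\; V^{\operad P_\pl \comp_\pl \operad C_\pl} \;\hookrightarrow\; V^{\operad P_\pl \comp_\pl \operad P_\pl \comp_\pl \operad C_\pl},
\]
where the last map is induced by the unit $\operad I \hookrightarrow \operad P_\pl$ in the outer slot, and the first by the structural comparison transformation $(X^M)^N \hookrightarrow X^{N \comp_\pl M}$. The inclusion $\restrictiontwo \hookrightarrow \extensiontwo$ arises by iterating this same comparison twice and using associativity of $\comp_\pl$. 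Applying the contravariant functor $V^{(-)}$, the two stability statements translate into checking that $h$ commutes with the corresponding structural surjections out of $\operad P_\pl \comp_\pl \operad P_\pl \comp_\pl \operad C_\pl$.

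The first compatibility (giving stability on $\restrictionone$) is essentially built in: the extension of $h$ to $\operad P_\pl \comp_\pl \operad P_\pl \comp_\pl \operad C_\pl$ was constructed in the paragraph preceding the definition of $H$ precisely so that it commutes with both projections $\operad P_\pl \comp_\pl \operad P_\pl \comp_\pl \operad C_\pl \rightrightarrows \operad P_\pl \comp_\pl \operad C_\pl$. For the second compatibility (giving stability on $\restrictiontwo$), I would argue by induction on the height of the planar trees making up the outermost $\operad P_\pl$, exactly mirroring the inductive definition of $h$ itself. The geometric content is that $h$ only acts on the \emph{leftmost top node} of a labelled planar tree, which is an intrinsic feature of the tree and therefore insensitive to how that tree is further decomposed into several nested composition factors. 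Plugging this observation into the inductive clauses of $h$ (the base case, the trivial-tree case, and the recursion on $\overline{\treemod}_{\pl, \leq n+1}(s^{-1}\overline{\operad C}_\pl)$) gives the required commutation on each level.

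The main obstacle is purely combinatorial: one has to identify, on each step of the induction, the precise image of $\restrictiontwo$ under the structural map into $\extensiontwo$, and track that the shuffle appearing in the inductive formula for $h$ is compatible with that image. Once this bookkeeping is in place, the argument is formal. With the two stability statements verified, applying the universal property of the pullback yields the unique extension of $H$ to the whole restriction-extension diagram $\restrictionextension$.
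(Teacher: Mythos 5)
First, note that the paper itself gives no proof of this lemma: it is quoted directly from \cite[Proposition 11.17]{linearcoalgebras}, so your argument has to stand on its own. Your top-level skeleton is reasonable and surely close to the cited proof in spirit: by the preceding pullback lemma and the remark before the lemma, it suffices to show that $H=-V^h$ preserves the subobjects $\restrictionone$ and $\restrictiontwo$ of $\extensiontwo$, and stability of $W$ then comes for free, since $W=\extensionone\times_{\extensiontwo}\restrictiontwo$ is the intersection $\restrictionone\cap\restrictiontwo$ inside $\extensiontwo$.

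The gap is in your central reduction. You assert that, after applying $V^{(-)}$, the two stability statements "translate into checking that $h$ commutes with the corresponding structural surjections out of $\operad P_\pl \comp_\pl \operad P_\pl \comp_\pl \operad C_\pl$", and that stability of $\restrictionone$ is therefore "built in" because the extension of $h$ commutes with the maps relating $\operad P_\pl \comp_\pl \operad P_\pl \comp_\pl \operad C_\pl$ and $\operad P_\pl \comp_\pl \operad C_\pl$. This conflates two different things: that commutation only says that $H$ is a well-defined endomorphism of the \emph{extension} diagram (compatible with $\extensionone\to\extensiontwo$ and with the maps induced by unit insertions); it says nothing about the subobjects $\restrictionone$ and $\restrictiontwo$. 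Those subobjects are the images of the lax comparison maps, e.g.\ $\restrictionone=\mathrm{Im}\bigl((V^{\operad C_\pl})^{\operad P_\pl}\hookrightarrow V^{\operad P_\pl\comp_\pl\operad C_\pl}\bigr)$, which interchange infinite products, invariants and tensor powers and are \emph{not} of the form $V^{q}$ for any map $q$ of graded $\mathbb N$-modules; hence their preservation by $V^h$ cannot be reduced to a commutation of $h$ with structural maps between the exponents. (Incidentally, the map $\extensionone\to\extensiontwo$ is induced contravariantly by the operadic composition $\gamma\comp_\pl\id$, not by a unit insertion as in your factorization.) What actually has to be proved --- and what is the content of \cite[Proposition 11.17]{linearcoalgebras} --- is that precomposition with $h$ carries an element of the image of the comparison map back into that image; this uses concretely that $h$ modifies only a single top node all of whose inputs carry the coaugmentation, so that after splitting $\operad C_\pl=\operad I\oplus\overline{\operad C}_\pl$ and redistributing finitely many tensor factors one can exhibit the result as a new element of $(V^{\operad C_\pl})^{\operad P_\pl}$, and similarly, one level up, for $\restrictiontwo$. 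You gesture at exactly this point ("$h$ only acts on the leftmost top node"), but you defer the verification as "bookkeeping"; that bookkeeping is the proof, and since your reduction aims it at the wrong target, the proposal as written does not establish the lemma.
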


\begin{definition}[The garbage map]
    Let $G$ be the degree $0$ endomorphism of the extension diagram given by the formula
    $$
    G = D_{ex}H + H D_{ex}-\id + p_V.
    $$
\end{definition}

\begin{lemma}[{\cite[Proposition 11.17]{linearcoalgebras}}]
    The maps $\partial_{ex}(H) = D_{ex}H + H D_{ex}$ and $G$ extend to the whole restriction-extension diagram.
\end{lemma}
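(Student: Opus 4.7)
The plan is to reduce the extension of $G$ to that of $\partial_{ex}(H)$, then to use the explicit form $H = -V^h$ to rewrite $\partial_{ex}(H)$ as the image under the contravariant functor $V^{(-)}$ of a commutator at the level of the planar operad-cooperad diagram, and finally to verify combinatorially that this commutator is compatible with the sub-diagram structure in a way that forces $\partial_{ex}(H)$ to stabilise each of the three subobjects $W$, $\restrictionone$, $\restrictiontwo$. Since the restriction-extension diagram is a pullback, this stability is exactly what is needed to guarantee a (unique) extension.

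\textbf{Execution.} First I would observe that $G - \partial_{ex}(H) = p_V - \id$, and that both $p_V = \zeta_V \xi_V$ and the identity of $\extension$ are restrictions of endomorphisms of the whole restriction-extension diagram (for $p_V$ this follows from the universal property of the pullback together with the fact that $\zeta_V$ and $\xi_V$ already live on every corner). Hence the extendability of $G$ is equivalent to that of $\partial_{ex}(H)$, and I can focus on the latter. Next, using contravariance of $V^{(-)}$, I would identify
\[
\partial_{ex}(H) = \pm V^{\partial_{CP}(h)},
\]
where $\partial_{CP}(h) \coloneqq d_{CP}\,h + h\,d_{CP}$ is a degree $0$ endomorphism of the planar operad-cooperad diagram. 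The task is then to show that $V^{\partial_{CP}(h)}$ stabilises $\restrictionone$, $\restrictiontwo$ and $W$ inside $\extension$.

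\textbf{The combinatorial core.} At this point I would unpack the definitions: $h$ acts on a labelled planar tree by relabelling its leftmost top node from $s^{-1}\overline{\operad C}_\pl$ to $\overline{\operad C}_\pl$ (and sends it to $0$ if the leftmost top node is already in $\overline{\operad C}_\pl$ or if there is no node), whereas $d_{CP}$ plants, above each label $c \in \overline{\operad C}_\pl$, a new node carrying $\iota(c) = s^{-1} c \in s^{-1} \overline{\operad C}_\pl$ provided by the universal twisting morphism. A case analysis on the leftmost top node shows that the contributions of $d_{CP} h$ and $h d_{CP}$ which change the shape of the tree cancel pairwise, so that $\partial_{CP}(h)$ is a sum of strictly local terms. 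These local terms can be described in purely planar fashion in the symmetric operad-cooperad diagram and thus factor through the structure maps $\operad P \comp \operad C \to \operad P \comp \operad P \comp \operad C$ in the manner needed for the dualised map $V^{\partial_{CP}(h)}$ to respect the pullback decomposition $\restrictionextension$. Applying this observation corner by corner then yields the desired extension of $\partial_{ex}(H)$, and hence of $G$.

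\textbf{Main obstacle.} The delicate part is the combinatorial bookkeeping in the last step: one has to track how the "relabel the leftmost top node'' operation interacts with the insertion of nodes coming from $d_{CP}$, and verify that the remaining, non-cancelling terms are genuinely compatible both with the symmetric group actions and with the cooperad decomposition that witness $\restrictionone$, $\restrictiontwo$, and $W$ as subobjects of $\extension$. The quasi-planar hypothesis on $\operad C$ is essential here, since it guarantees that the infinite sums defining $V^{\partial_{CP}(h)}$ on each corner of the diagram converge in the relevant products, so that the extension argument is well-posed. The overall strategy parallels that of \cite[Proposition 11.17]{linearcoalgebras}, with the quasi-planar filtration providing the organising principle for the computation in arbitrary characteristic.
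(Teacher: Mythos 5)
Note first that the paper does not actually prove this lemma: it is quoted from \cite[Proposition 11.17]{linearcoalgebras}, so your proposal can only be judged on its own merits. Your reduction of $G$ to $\partial_{ex}(H)$ via $G-\partial_{ex}(H)=p_V-\id$ is fine (both $p_V=\zeta_V\xi_V$ and $\id$ are defined on the whole of $\restrictionextension$), as is the identification $\partial_{ex}(H)=V^{d_{CP}h+hd_{CP}}$; moreover the restriction to $W$ is immediate, since the paper records that $D_{ex}$ restricts to $W$ and the preceding lemma already extends $H$. The genuine gap is in your combinatorial core, and it starts with your description of $d_{CP}$. This pre-differential is the twisted term coming from the free $\C$-algebra structure of $V^{\C}$, so on a $\C$-label $c$ it acts by $(\iota\comp\id)\,\Delta(c)$: it splits off a bottom factor of the decomposition, turns it into a new node of the $\operad P$-part, and leaves the remaining factors as $\C$-labels above; the wholesale conversion of $c$ into $s^{-1}c$ that you describe is only one summand. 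With your version of $d_{CP}$, the leftmost-top-node cancellation you sketch would reduce $d_{CP}h+hd_{CP}$ to (plus or minus) the identity minus the projection onto the trivial part, and then $G$ would essentially vanish on the extension diagram. That contradicts the way $G$ is used afterwards: the automorphism lemmas manipulate the restriction $G(\restrictionone)$ as a genuinely nontrivial operator and only claim that $\gr(G)=0$ after passing to the quasi-planar associated graded. The terms that actually survive the cancellation are the partial splittings $(\iota\comp\id)\overline{\Delta}$ performed at the leftmost top node, and these are exactly the content of $G$.

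Consequently the real work of the lemma is untouched: one must show that this residual operator, whose defining condition (``leftmost top node'') depends on global triviality-to-the-left data of the tree, precomposes elements of $\restrictionone=(V^{\C_\pl})^{\operad P_\pl}$ and of $\restrictiontwo$ into elements of the same subobjects of $\extensiontwo$. Your proposal asserts that the local terms ``factor through the structure maps in the manner needed'', but nothing of the sort is automatic: $D_{ex}=V^{d_{CP}}$ is itself a perfectly planar, local operator, and the paper explicitly notes that it does \emph{not} restrict to $\restrictionone$ or $\restrictiontwo$; so locality and planarity alone cannot be the reason, and an explicit verification (e.g.\ exhibiting the restriction of $\partial_{ex}(H)$ to $\restrictionone$ and $\restrictiontwo$ by formulas of the type later used for $G_t$ on $\restrictiontwo$) is required. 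Finally, the appeal to quasi-planarity to guarantee ``convergence of the infinite sums defining $V^{\partial_{CP}(h)}$'' is misplaced: $V^{(-)}$ applied to a single map raises no convergence issue, conilpotency of $\C$ already makes $\Delta$ a finite sum, and quasi-planarity is what drives the later filtration arguments, not the extension statement itself.
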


\begin{definition}[The boundary map]
    Let $B$ be the degree $0$ endomorphism on the restriction-extension diagram $\restrictionextension$
    defined as
    $$
    B = \partial_{ex}(H) + D_{in}H + H D_{in} + \pi_V = G + \id +  D_{in}H + H D_{in} + \pi_V - p_V.
    $$
\end{definition}

\begin{lemma}
The restriction of $B$ to $W = \widehat{\mathrm{B}}_{\operad C}\widehat{\Omega}_{\operad C} V$ is equal to $\partial (H) + \pi_V$.
\end{lemma}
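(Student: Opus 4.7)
The plan is to observe that this lemma is essentially an unwinding of the definitions once we recall which operators restrict to $W$. Namely, I will verify (i) that the full coderivation $D$ on $W$ splits as $D|_W = D_{\mathrm{in}}|_W + D_{\mathrm{ex}}|_W$, (ii) that $H$, $\partial_{\mathrm{ex}}(H)$, $D_{\mathrm{in}}H + HD_{\mathrm{in}}$ and $\pi_V$ all restrict to $W$, and (iii) that $p_V|_W$ drops out of the formula for $B$ when restricted to $W$. Granting these points, the conclusion follows from a single direct calculation.

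First I would recall from the setup that $D = D_{\mathrm{in}} + D_{\mathrm{ex}}$ as a degree $-1$ endomorphism of $W$, that $H$ extends from the extension diagram to the whole restriction-extension diagram (and hence in particular restricts to the sub-object $W \subseteq \restrictiontwo$), and that $\pi_V = \eta_V \circ \xi_V$ preserves $W$ since both $\eta_V$ and $\xi_V$ are morphisms of dg modules with source or target $W$. The preceding lemmas guarantee that both $\partial_{\mathrm{ex}}(H)$ and the ``garbage'' map $G = \partial_{\mathrm{ex}}(H) + H D_{\mathrm{ex}} + D_{\mathrm{ex}} H - \id + p_V$ descend from the extension diagram to the full restriction-extension diagram, and in particular to $W$.

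Next, I would compute $\partial(H)|_W$ directly. Since $D|_W = D_{\mathrm{in}}|_W + D_{\mathrm{ex}}|_W$ and $H$ restricts to $W$, bilinearity of the graded commutator gives
\[
\partial(H)|_W \;=\; D H|_W + H D|_W \;=\; \bigl(D_{\mathrm{in}}H + H D_{\mathrm{in}}\bigr)\bigr|_W + \bigl(D_{\mathrm{ex}}H + H D_{\mathrm{ex}}\bigr)\bigr|_W \;=\; \bigl(D_{\mathrm{in}}H + H D_{\mathrm{in}} + \partial_{\mathrm{ex}}(H)\bigr)\bigr|_W.
\]
Adding $\pi_V|_W$ to both sides and comparing with the definition
\[
B \;=\; \partial_{\mathrm{ex}}(H) + D_{\mathrm{in}}H + H D_{\mathrm{in}} + \pi_V
\]
yields $B|_W = \partial(H)|_W + \pi_V|_W$, which is the desired identity.

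The only potential obstacle is checking that each operator appearing on the right of the definition of $B$ genuinely restricts to $W$; this is the content of the preceding two lemmas (that $H$ and $\partial_{\mathrm{ex}}(H)$ extend to the full restriction-extension diagram) together with the observation that $D_{\mathrm{in}}$ is defined on the whole diagram by construction, and that $\pi_V$ preserves $W$. With these restriction statements in hand, the lemma reduces to the display above. I do not expect any computational difficulty, since the work is already done in the construction of $H$, $D_{\mathrm{in}}$ and $D_{\mathrm{ex}}$.
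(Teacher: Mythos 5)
Your argument is correct and coincides with the paper's (implicit) justification: the lemma is left without proof there precisely because, on $W$, the coderivation splits as $D = D_{\mathrm{in}} + D_{\mathrm{ex}}$ and $H$, $\pi_V$ and all terms of $B$ are defined on the whole restriction-extension diagram, so $B|_W = \partial_{\mathrm{ex}}(H) + D_{\mathrm{in}}H + HD_{\mathrm{in}} + \pi_V = \partial(H) + \pi_V$ exactly as in your display. One minor slip: your restatement of the garbage map double-counts the external commutator — it should be $G = \partial_{\mathrm{ex}}(H) - \id + p_V$, i.e.\ $D_{\mathrm{ex}}H + HD_{\mathrm{ex}} - \id + p_V$ — but since $G$ is never used in your computation this does not affect the argument.
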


    Let $gB$ be the degree $0$ endomorphism on the restriction-extension diagram $\restrictionextension$
    defined as
    $$
    gB = B + p_V - \pi_V = \id + G + D_{in}H + H D_{in}.
    $$
    Moreover, let $\overline{B}$ be the degree $0$ endomorphism on the restriction-extension diagram $\restrictionextension$
    defined as
    $$
    \overline B = B - \pi_V = G + D_{in}H + H D_{in} - p_V.
    $$
    
\medskip

\textbf{The map $H$ is a contracting homotopy.} Finally, we show that $H$ is a contracting homotopy, which will allow us to conclude the proof of Lemma \ref{lemma: key lemma of Quillen equiv for coalg and absolute alg}.

\begin{proposition}
    The maps $B$ and $gB$ are degree $0$ automorphisms of the restriction-extension diagram $\restrictionextension$.
\end{proposition}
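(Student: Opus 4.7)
The plan is to exhibit both $B$ and $gB$ as ``identity plus a cofiltration-lowering perturbation,'' so that they become automorphisms by the standard geometric series argument. The cofiltration in question is the qp-cofiltration on the whole restriction-extension diagram induced by the canonical quasi-planar $\omega$-ladder of $\operad C$ from Corollary \ref{cor: quasi-planar ladder}: concretely, set $\tilde F^n \extensionone \coloneqq V^{F_n^{\qp}\operad C_\pl}$, $\tilde F^n \extensiontwo \coloneqq V^{F_n^{\qp}(\operad P_\pl \comp_\pl \operad C_\pl)}$, and define the corresponding layers of $\restrictionone$, $\restrictiontwo$ and $W$ by taking pullbacks in the restriction-extension diagram. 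Since $W \cong V^{\operad C_\pl}$ is qp-complete (the underlying object of a complete bar construction), the cofiltration is complete, so ``$\id + N$'' with $N$ strictly lowering the cofiltration is invertible via its Neumann series.

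First, I would check compatibility: each of $D_V$, $D_C$, $D_\theta$, $D_\Delta$, $D_{CV}$, $D_{ex}$, $H$, $p_V$, $\pi_V$ preserves the qp-cofiltration, and moreover, using the very definition of a quasi-planar cooperad ladder, the coderivation pieces $D_C$ (the non-planar piece of the cooperad differential) and $D_\theta$ strictly lower one layer of the qp-cofiltration on the associated graded. Consequently, when passing to $\gr^n$, the only pieces of $D_{in}$ that remain are $D_V$, $D_\Delta$ and $D_{CV}$, the operad--cooperad decomposition becomes planar, and on each $\gr^n$ we are essentially working inside $V^{(\gr^n \operad C_\pl)}$ with a planar contracting apparatus.

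Second, I would analyze the associated graded. On $\gr^0 W \cong V$ the maps $\pi_V$, $p_V$ both restrict to the identity and $H$ vanishes, so $B = \id$ there. For $n \geq 1$, $\pi_V$ and $p_V$ vanish on $\gr^n$, so on this layer $B$ and $gB$ both reduce to $\id + \overline B$ where $\overline B = G + D_{in}H + HD_{in} - p_V$ involves only the planar ingredients $D_{ex}$, $D_V$, $D_\Delta$, $D_{CV}$, and $H$. At this point I would introduce a secondary filtration by the number of nodes in $\operad P_\pl \comp_\pl \operad C_\pl$ and apply the dual of the calculation from Proposition \ref{keypropquilleneqcoalg}: by construction of $h$, the composite $D_{ex}H + HD_{ex}$ equals the identity minus a top-degree projection on each bi-graded layer, while $D_V$, $D_\Delta$, $D_{CV}$ strictly raise the secondary filtration on the associated bi-graded. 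Finiteness of the relevant arities (trees with fixed qp-weight have bounded node count) makes $\overline B$ nilpotent on each $\gr^n$.

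The main obstacle will be checking, carefully and diagrammatically, that every term (in particular $G = D_{ex}H + HD_{ex} - \id + p_V$ and the cross-terms $D_{in}H$, $HD_{in}$) actually extends from the extension diagram to the whole restriction-extension diagram---that is, that the above decompositions remain compatible with the pullback defining $W$. This is already asserted in the preceding lemmas for $H$ and $G$, so the task will be to verify it for the combinations appearing in $B$ and $gB$; once this is done, the Neumann series computed layer-by-layer on the qp-cofiltration converges and yields explicit inverses for both $B$ and $gB$.
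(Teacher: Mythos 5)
Your plan works for $\extensionone$, $\extensiontwo$ and $\restrictionone$ (and there it is even slightly more complicated than necessary: the paper filters only $\operad P$ by the qp-filtration of $\operad C$, so that on the associated graded one has $\gr H = \gr G = 0$ and $\gr(\pi_V)=\gr(p_V)$, whence $\gr(B)=\gr(gB)=\id$ with no secondary filtration needed). The genuine gap is $\restrictiontwo$, and hence $W$. On $\restrictiontwo = \restrictionone^{\operad P_\pl}$ the associated graded of the relevant cofiltration is not of the form "$V$ raised to a graded exponent": it is $\restrictionone^{\gr \operad P_\pl}$, a product over planar trees $t$ of terms $\left[t(s^{-1}\gr^{\qp}\overline{\operad C}_\pl), \restrictionone^{\otimes l(t)}\right]$, and the operators $H$, $G$, $D_{in}$ act there slot-wise through $H(\restrictionone)$, $G(\restrictionone)$, $D_{in}(\restrictionone)$. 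These slot operators preserve your cofiltration and are \emph{not} nilpotent, and no secondary filtration by node count makes them so — bounding the number of nodes of the outer trees says nothing about the operators acting inside a tensor factor $\restrictionone$. So the claim "finiteness of arities makes $\overline B$ nilpotent on each $\gr^n$" fails precisely on the corner where the real work lies; this is why the paper devotes a separate, page-long argument (Lemma \ref{lemmaautor2}) to $\restrictiontwo$, introducing a further \emph{finite} filtration of $\restrictionone^{\otimes l(t)}$ by the number of leading $V$-factors, whose graded pieces reduce the operator to $gB(\restrictionone)$ applied in a single slot — i.e., it feeds the already-established automorphism on $\restrictionone$ back in as an input, rather than arguing by nilpotence.

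Two smaller points. First, your identification $W \cong V^{\operad C_\pl}$ is incorrect: $W = L^{\operad P} V^{\operad C_\pl}$ is the cofree $\operad P$-coalgebra on $V^{\operad C_\pl}$ (the underlying object of $\widehat{\mathrm{B}}_{\operad C}\widehat{\Omega}_{\operad C}V$, not of $\widehat{\Omega}_{\operad C}V$); the clean way to handle $W$ — and the way the proposition follows in the paper — is to note that $W$ is the pullback of $\restrictionone \to \extensiontwo \leftarrow \restrictiontwo$ and that $B$, $gB$ commute with the diagram maps, so automorphy on the other corners implies it on $W$. Second, defining the cofiltration on $\restrictionone$, $\restrictiontwo$, $W$ "by taking pullbacks" and then passing to associated gradeds is itself a step requiring justification (exactness of $\gr$ against these pullbacks); choosing, as the paper does, cofiltrations for which the corners have explicit descriptions ($\gr\restrictiontwo = \restrictionone^{\gr\operad P}$, etc.) avoids this issue.
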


\begin{proof}
    This is a direct consequence of Lemma \ref{lemmaautoeer} and Lemma \ref{lemmaautor2}.
\end{proof}

\begin{lemma}\label{lemmaautoeer}
    The maps $gB$ and $B$ are automorphisms of $\extensionone$, $\extensiontwo$ and $\restrictionone$.
\end{lemma}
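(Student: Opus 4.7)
The strategy is to equip each of $\extensionone$, $\extensiontwo$ and $\restrictionone$ with a decreasing cofiltration with respect to which these objects are complete, and to show that the endomorphisms $\overline{B} = gB - \id$ and $B - \pi_V = \overline{B} + p_V - \pi_V$ strictly lower this cofiltration. The automorphism property then follows from a geometric-series argument: $(gB)^{-1} = \sum_{k \geq 0}(-\overline{B})^k$ converges, and similarly for $B$ once one notes that $\pi_V$ and $p_V$ are both idempotents that factor through the filtration-zero piece.

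The relevant cofiltration is built from the canonical quasi-planar filtration $F_n^{\qp}\operad C_\pl$ of $\operad C_\pl$. The tree-weight construction of Subsection~\ref{section: The contracting homotopy on trees} propagates this filtration to an exhaustive increasing filtration $F_n^{\qp}(\operad P_\pl \comp_\pl \operad C_\pl)$ and, analogously, $F_n^{\qp}(\operad P_\pl \comp_\pl \operad P_\pl \comp_\pl \operad C_\pl)$. Applying $V^{(-)}$ reverses these into decreasing cofiltrations on $\extensionone$ and $\extensiontwo$ by setting $F^n := \ker(V^{(-)} \to V^{F_{n-1}^{\qp}(-)})$; the target $V^{(-)}$ is the inverse limit of $V^{F_n^{\qp}(-)}$, so the cofiltration is automatically complete. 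For $\restrictionone = (V^{\operad C_\pl})^{\operad P_\pl}$, one first uses the qp-cofiltration of $V^{\operad C_\pl}$ and then applies $(-)^{\operad P_\pl}$; the transition maps in the restriction-extension diagram respect these cofiltrations by construction.

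With these cofiltrations in place, the verification of strict filtration-lowering is modelled on Proposition~\ref{keypropquilleneqcoalg}. First, the computation that $d_{CP}h + hd_{CP}$ equals the identity modulo strictly qp-filtration-lowering terms (on the planar side $\operad P_\pl \comp_\pl \operad C_\pl$ and its two-level variant) dualises under $V^{(-)}$ to the statement that $D_{ex}H + HD_{ex} = \id - p_V$ modulo a strictly cofiltration-lowering endomorphism; hence $G = D_{ex}H + HD_{ex} - \id + p_V$ is strictly cofiltration-lowering on both $\extensionone$ and $\extensiontwo$. Second, each summand of $D_{in}$ preserves the cofiltration: $D_V$ and $D_C$ act internally on a single component, $D_\Delta$ lowers the cofiltration on the $V^{(-)}$ side precisely because the cooperad decomposition of $\operad C$ respects the qp-filtration in the quasi-planar ladder (condition (2) in the definition), and $D_\theta$ and $D_{CV}$ admit parallel analyses. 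Post- or pre-composing with $H$, which strictly lowers the cofiltration by removing a leftmost top node, yields that both $HD_{in}$ and $D_{in}H$ are strictly cofiltration-lowering.

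Combining these two points, $\overline{B} = G + D_{in}H + HD_{in} - p_V$ is strictly cofiltration-lowering, so $gB = \id + (\overline{B} + p_V)$ and $B = \id + (\overline{B} + \pi_V)$ are both of the form identity plus a completion-convergent correction, hence automorphisms of $\extensionone$, $\extensiontwo$ and $\restrictionone$. The main technical obstacle is Step 2: ensuring that the extension $h$ of the contracting homotopy to two-level trees interacts correctly with the qp-filtration on $\operad C_\pl$, and that all five pieces of $D_{in}$ respect (or strictly lower) the cofiltration in a uniform way across the three objects and the maps of the restriction-extension diagram. This relies on the quasi-planar hypothesis in an essential way, since it is precisely the quasi-planar condition that guarantees $d_\Delta$ and $d_\theta$ behave well with respect to the qp-weight.
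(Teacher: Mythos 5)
Your overall strategy (a complete cofiltration plus an ``identity $+$ lower order'' argument) is the same as the paper's, but the cofiltration you chose does not make the key step true. You take the \emph{total} qp-weight filtration $F_n^{\qp}(\operad P_\pl \comp_\pl \operad C_\pl)$ from Subsection \ref{section: The contracting homotopy on trees}, which counts the qp-weights of \emph{all} labels, those in $s^{-1}\overline{\operad C}_\pl$ and those in $\operad C_\pl$ alike. With respect to this cofiltration $H=-V^h$ is \emph{not} strictly lowering: $h$ does not remove the leftmost top node, it merely desuspends it, i.e.\ relabels it from $s^{-1}\overline{\operad C}_\pl$ (the $\Omega\operad C$-part) to $\overline{\operad C}_\pl$ (the $\operad C$-part), so the total qp-weight is preserved and $\gr(H)\neq 0$. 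Consequently your claim that $D_{in}H+HD_{in}$ is strictly cofiltration-lowering fails (for instance the $D_\Delta$- and $D_V$-components interact with $H$ within a fixed total weight), and the claim that $G$ is strictly lowering cannot be salvaged ``modulo strictly lowering terms'' either, since $d_{CP}$ and $h$ both preserve the total weight, so any correction terms live in the same graded piece. In short, the endomorphism $gB-\id$ does not strictly lower your cofiltration, and the geometric-series inversion breaks down at exactly the point the lemma needs it.

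The paper's proof uses a different, coarser filtration: one filters \emph{only} $\operad P=\Omega\operad C$ by the qp-weight carried by its $s^{-1}\overline{\operad C}_\pl$-labels, leaving the $\operad C_\pl$-labels unfiltered (and for $\extensiontwo$ one filters $\operad P\comp\operad P$ in the same way). With respect to the induced cofiltrations on $\extensionone$, $\restrictionone$, $\extensiontwo$, the homotopy $h$ strictly drops the weight, because it moves a node \emph{out of} the $\Omega\operad C$-part; hence $\gr(H)=0$, and one checks $\gr(G)=0$ and $\gr(\pi_V)=\gr(p_V)$, so that $\gr(gB)=\gr(B)=\id$ and completeness gives the automorphism property. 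Note also that your final remark that ``$\pi_V$ and $p_V$ are both idempotents that factor through the filtration-zero piece'' is not enough to pass from $gB$ to $B$: identity plus such an idempotent need not be invertible. What is actually needed (and what the paper uses) is the graded statement $\gr(\pi_V)=\gr(p_V)$, i.e.\ that $\eta_V$ and $\zeta_V$ agree modulo components supported on trees with at least one $\Omega\operad C$-node, which holds for the $\operad P$-only filtration.
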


\begin{proof}
Let us filter $\operad P$ by the qp-filtration on $\operad C$. This induces a cofiltration on $\extensionone$ and on $\restrictionone$. On the associated graded object of this cofiltration, $\gr H = 0$ and $\gr(G)=0$ and $\gr(\pi_V)=\gr(p_V)$. Thus $\gr(gB)=\gr(B)= \id$. Subsequently, $gB$ and $B$  are automorphisms. For $\extensiontwo$, one can filter $\operad P \comp \operad P$ in a similar fashion and obtain the same result.
\end{proof}

\begin{lemma}\label{lemmaautor2}
    The map $B$ is an automorphism of $R_2$.
\end{lemma}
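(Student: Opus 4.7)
The plan is to replicate the argument of Lemma \ref{lemmaautoeer} in the slightly more delicate setting of $R_2 = \bigl((V^{\C_\pl})^{\operad P_\pl}\bigr)^{\operad P_\pl}$. We filter $\operad P = \Omega \C$ by the qp-filtration of $\C$ via the sub-operads $\Omega(F_n^{\qp} \C) \hookrightarrow \operad P$. This induces compatible cofiltrations on $E_1$, $E_2$, $R_1$, and — crucially — on $R_2$. The cofiltration on $R_2$ is obtained by restricting that of $E_2$ along the monomorphism $R_2 \hookrightarrow E_2$; this restriction is well-defined because the two parallel maps $R_1 \rightrightarrows E_2$ whose equaliser is $R_2$ arise from the operadic composition of $\operad P$, which respects the qp-ladder by construction.

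I then verify that all the operators $H$, $D_{in}$, $D_{ex}$, $G$, $\pi_V$, and $p_V$ preserve this cofiltration of $R_2$; this follows directly from the corresponding preservation statement on $E_2$, which in turn is available thanks to the explicit extension of $h$ to $\operad P_\pl \comp_\pl \operad P_\pl \comp_\pl \C_\pl$ carried out in Section \ref{section: The contracting homotopy on trees}. On the associated graded, the same local computation as in Lemma \ref{lemmaautoeer} gives $\gr H = 0$, $\gr G = 0$, and $\gr \pi_V = \gr p_V$: the first because $h$ consumes one label of $s^{-1}\overline{\C}_\pl$ from the planar tree (so it strictly drops filtration degree on the associated graded), the second because $G = \partial_{ex}(H) - \id + p_V$ and the corresponding cancellations propagate to $E_2$, and the third because $\pi_V - p_V$ factors through the units and counits whose associated graded contribution vanishes.

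Writing $B = \id + G + D_{in} H + H D_{in} + (\pi_V - p_V)$, the three identities above yield $\gr B = \id$. Since the cofiltration is exhaustive and complete on $R_2$ — this is inherited from the fact that $\C \cong \colim{n} F_n^{\qp}\C$, so the cofree cotensors $V^{\C_\pl}$ and their iterated $\operad P_\pl$-cotensors decompose as limits of their qp-strata — a standard transfinite induction (or equivalently the convergence of the associated spectral sequence) promotes the identity $\gr B = \id$ to the statement that $B$ is an automorphism of $R_2$.

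The main technical obstacle — and the only point where the argument is genuinely different from that of Lemma \ref{lemmaautoeer} — is checking that the cofiltration on $E_2$ actually descends to a cofiltration of the equaliser $R_2 \subset E_2$. This reduces to verifying that the two structure maps $(V^{\C_\pl})^{\operad P_\pl} \rightrightarrows V^{\operad P_\pl \comp_\pl \operad P_\pl \comp_\pl \C_\pl}$ defining $R_2$ are strictly filtered, which is immediate from the compatibility of the qp-ladder of $\C$ with the operadic structure of $\operad P = \Omega \C$.
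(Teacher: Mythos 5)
Your strategy is to rerun the one-step filtration argument of Lemma \ref{lemmaautoeer} on $\restrictiontwo$, and the gap sits exactly in your final ``exhaustive and complete'' step. To get $\gr H=0$ you must use a filtration fine enough to see the labels lying in the inner copy of $\operad P$, i.e.\ inside the tensor factors $\restrictionone$; with the coarser filtration that only filters the outer copy of $\operad P$, $H$ acts on the associated graded through $H(\restrictionone)$ on the tensor factors and is \emph{not} zero. But for any such finer, mixed filtration, $\restrictiontwo$ fails to be complete: $\restrictiontwo = \restrictionone^{\operad P_\pl} = \prod_t \left[ t(s^{-1}\overline{\C}_\pl), \restrictionone^{\otimes l(t)} \right]$ is built from honest tensor powers of the infinite product $\restrictionone$, and a tensor power of a complete tower is not complete for the total cofiltration (already $(\prod_i \kk)\otimes(\prod_j \kk)\subsetneq \prod_{i,j}\kk$: the ``diagonal'' family of truncations has no preimage). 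So your assertion that the iterated $\operad P_\pl$-cotensors ``decompose as limits of their qp-strata'' is false once the strata mix the tensor factors; the limit of those strata is a completed object strictly containing $\restrictiontwo$, and $\gr B=\id$ only yields that $B$ is an automorphism of that completion --- essentially an $\extensiontwo$-type object --- not of $\restrictiontwo$. This is precisely why $\extensiontwo$ (a product over composite trees, hence complete) is dispatched inside Lemma \ref{lemmaautoeer}, while $\restrictiontwo$ requires a separate lemma. Two smaller inaccuracies: $\restrictiontwo$ is not the equaliser of a pair $\restrictionone \rightrightarrows \extensiontwo$, it is the iterated cotensor embedded in $\extensiontwo$ by the injective structure maps; and if you instead also count the $\C_\pl$-labels so that $D_{ex}$ becomes filtered, then $h$ preserves rather than drops the degree, so $\gr H=0$ and ``$\gr G=0$'' cannot both be had for free.

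The paper's proof is structurally different and cannot be shortcut this way: it filters only the outer $\operad P$ (for which $\restrictiontwo = \restrictionone^{\operad P}$ \emph{is} complete), accepts that on the associated graded $\prod_t [ t(s^{-1}\gr^{\qp}\overline{\C}_\pl), \restrictionone^{\otimes l(t)} ]$ the operators act nontrivially through $H(\restrictionone)$, $G(\restrictionone)$ and $D_{in}(\restrictionone)$, and then introduces two further filtrations (by the number of nodes, which kills $d_\Delta$, and a finite filtration of $\restrictionone^{\otimes l(t)}$ isolating the leading factors) so that the graded pieces of $B$ become $[\id, \id^{\otimes k}\otimes gB(\restrictionone)\otimes \id^{\otimes l(t)-k-1}]$ or the identity; invertibility is then imported from Lemma \ref{lemmaautoeer} applied to $\restrictionone$. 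Your argument never uses Lemma \ref{lemmaautoeer} in this essential way, which is the tell-tale sign that the actual content of the statement --- invertibility of $B$ on the non-complete subobject $\restrictiontwo$ --- has been bypassed rather than proved.
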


\begin{proof}
    The proof is similar to that of \cite[Lemma 11.28]{linearcoalgebras}. The main difference
    is the use of the "qp filtration".
    One can filter $\operad P$ from the qp filtration on $\operad C$. This induces a cofiltration
    on $\restrictiontwo  = \restrictionone^{\operad P}$ that is preserved by $D_{in}$, $H$, $G$, and $\pi_V$. 
    Let us prove that on the associated graded object of this cofiltration, the map $\gr(B)$ is an isomorphism.
    The associated graded has the form
    $$
    \gr(\restrictiontwo) = \restrictionone^{\gr \operad P}
    = \restrictionone^{\gr \operad P_\pl}
    =
    \prod_t  \left[t(s^{-1} \gr^{\qp} \overline{\operad C_\pl}), \restrictionone^{\otimes l(t)} \right]~,
    $$
    where the product is taken over planar trees $t$ and where $l(t)$ is the number of leaves of $t$.
    On this graded object
    
    \medskip
    
    \begin{enumerate}
        \item $p_V = \pi_V$, thus $gB = B$;
        
    \medskip
    
        \item $G$ acts independently on each part of the product over planar trees $t$
        as
        $$
        G_t =\left[\sum_{k=0}^{a-1} p_V^{\otimes k} 
        \otimes G(\restrictionone)\otimes
        \id_{\restrictionone}^{\otimes n-1-k}\right]~,
        $$
        where $a$ is the number of the last leaf of the leftest top vertex of $t$ ($1$ if it is the first leaf);
        
  	\medskip
  	
        \item similarly $H$ acts independently on each part of the product over planar trees $t$
        as
        $$
        H_t = \left[\id, \sum_{k=0}^{a-1} p_V^{\otimes k} 
        \otimes H(\restrictionone)\otimes
        \id_{\restrictionone}^{\otimes n-1-k}\right]~,
        $$
        where again $a$ is the number of the last leaf of the leftest top vertex of $t$;
        
   	\medskip
   	
        \item $D_{in}$ is given by the formula
        $$
        \shuffle(\id, D_{in}(\restrictionone))^{\gr \operad P}
        -
        \left[d_C + d_\Delta , \id \right]~.
        $$
    \end{enumerate}
    
    \medskip
    
    For every natural integer $n$,
    $\gr_n \operad P_\pl$ is made up planar trees labelled 
    by $s^{-1}\gr^{\qp}\overline{\operad C_\pl}$ such that the sum
    of the grading degree over the whole tree is $n$; in particular, such a tree
    has at most $n$ nodes. One can furthermore filter $\gr_n \operad P_\pl$
    by the opposite of the number of nodes in trees. On the associated graded object
    $\gr \gr_n \operad P$, the pre-differential $d_\Delta$ disappears.
    
    \medskip

    This filtration of $\gr_n \operad P$ induces a cofiltration of $\gr^n \restrictiontwo$ 
    whose associated graded object is given by
    \begin{align*}
    \gr^{-m} \gr^n \restrictiontwo 
    &= \restrictionone^{\gr_{-m} \gr_n \operad P}    \\
    &\cong \prod_t 
    \bigoplus_{i_1 + \cdots + i_m=n} \left[ s^{-1} \gr^{\mathrm{\qp}}_{i_1}\overline{\operad C}_\pl(n_1(t))\otimes  \cdots \otimes s^{-1} \gr^{\qp}_{i_m}\overline{\operad C}_\pl(n_m(t)) , \restrictionone^{\otimes l(t)}\right]~,
    \end{align*}
where $0 \leq m \leq n$ and where the product is taken over the planar trees $t$ with $m$-nodes whose number of inputs are $n_1(t), \ldots, n_m(t)$. Let us denote by $l(t)$ the number of leaves of $t$.

\medskip
    
    On this a multi-graded object, $D_{in}$, $H$ and $G$ are still given by the same formula on each tree as above;
    but now, $[d_\Delta, \id]$ is zero. Besides, $[d_C, \id]$ acts independently on each tree and thus commutes with $H$.
    For every planar tree with $m$-nodes $t$, let us denote $D_{in,t}$ be the degree $-1$ endomorphism of the $t$ part of the above product
    that is given by the formula
    $$
    D_{in,t} = \left[\id, \sum_{k=0}^{l(t)-1}\id^{k} \otimes D_{in}(\restrictionone) \otimes \id^{\otimes l(t)-k-1} \right]~.
    $$
    Since $[d_{\C}, \id]$ commutes with $H$ on $\gr^n \extensiontwo$, one has on $\gr^{-m} \gr^n \restrictiontwo $ that
    $$
    D_{in} H + H D_{in}=\prod_t D_{in, t} H_t + H_t D_{in,t}~.
    $$
    
    Let us prove that for every planar tree $t$, the map $\id + G_t + D_{in, t} H_t + H_t D_{in,t}$
    is an isomorphism of the $t$ part of $\gr^m \gr^n \restrictiontwo $.
    Again we denote $l(t)$ the number of leaves of $t$ and $a$
    the number of leaves of $t$ that are before the leftest top node or on top of its node; thus $l(t)=a+b$ where $b$ is the
    number of leaves after those of the leftest top node. The case $a=0$ is clear as then $\id + G_t + D_{in, t} H_t + H_t D_{in,t}= \id$.
    Let us tackle the case $a > 0$.
    We can filter $\restrictionone^{\otimes l(t)}$ in a finite way as follows
    \begin{align*}
        F_0 \restrictionone^{\otimes l(t)} &= \overline{\restrictionone} \otimes \restrictionone^{l(t)- 1}
        \\
        F_1 \restrictionone^{\otimes l(t)} &= (F_0 \restrictionone^{\otimes l(t)}) \oplus 
        V \otimes \overline{\restrictionone} \otimes \restrictionone^{l(t)- 2}
        \\
        &\cdots
        \\
        F_a \restrictionone^{\otimes l(t)} &=
        F_{a-1} \restrictionone^{\otimes l(t)}  \oplus V^{\otimes a} \otimes \restrictionone^{l(t)- a}.
    \end{align*}
    The associated graded object is
    \begin{align*}
        \gr_k \restrictionone^{\otimes l(t)} &=V^{\otimes k} \otimes \overline{\restrictionone} \otimes \restrictionone^{l(t)-k -1},
        \quad 0 \leq k \leq a-1~,
        \\
        \gr_a& = V^{\otimes a} \otimes \restrictionone^{l(t)- a}.
    \end{align*}
    Such a filtration induces a filtration on the $t$ part of 
    of $\gr^{-m} \gr^n \restrictiontwo $ that is preserved 
    by $D_{in,t}, G_t, H_t$. On the $k$-th layer of the associated graded object
    $$
    \gr_k(\id + G_t + D_{in, t} H_t + H_t D_{in,t})
    = \left[\id, \id_{\restrictionone}^{\otimes k} \otimes gB(\restrictionone) \otimes \id_{\restrictionone}^{\otimes l(t)-k-1}\right]~,
    $$
    if $k<a$ and 
    $$
    \gr_a(\id + G_t + D_{in, t} H_t + H_t D_{in,t}) = \id~,
    $$
    otherwise. In both case, it is an isomorphism. Thus $\gr(\id + G_t + D_{in, t} H_t + H_t D_{in,t})$ is an isomorphism
    and so is $\id + G_t + D_{in, t} H_t + H_t D_{in,t}$. Subsequently
    $B$ is an isomorphism on $\gr^{-m} \gr^n \restrictiontwo $
    for every $n \in \mathbb N, m \leq n$. Therefore $B$ is an automorphism of $\restrictiontwo$.
\end{proof}


\subsection{Cofibrant objects} Cofibrant complete curved $\C$-algebras admit a simple description: they are exactly those whose underlying graded $\C$-algebra is free.

\begin{lemma}\label{lemmacosiftedlimits}
The functor $\widehat{\Omega}_{\operad C}$ from dg $\Omega \operad C$-coalgebras to curved $\operad C$-algebras commutes with finite cosifted limits.
\end{lemma}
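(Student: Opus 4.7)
My plan is to reduce the statement to the earlier lemma that for any dg $\mathbb S$-module $M$, the endofunctor $(-)^{M}$ of graded $\kk$-modules preserves finite cosifted limits. The reduction consists in verifying that both sides of the expected isomorphism can be detected on underlying graded $\kk$-modules, at which point the functor $\widehat{\Omega}_{\operad C}$ is essentially the power functor $(-)^{\operad C_{\pl}}$.

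I will need two preliminary observations. First, by Subsection \ref{sectioncommuteslimitscolimits}, the forgetful functor $\catcurvalg{\operad C} \to \catgrmod{\kk}$ is monadic, so it preserves and reflects all limits; in particular, a cone in $\catcurvalg{\operad C}$ is limiting precisely when the underlying cone of graded $\kk$-modules is. Second, the forgetful functor from $\catdgcog{\Omega \operad C}$ to $\catdgmod{\kk}$ is comonadic and preserves finite cosifted limits, as recorded in the discussion of categorical properties of coalgebras over an operad; composing with the forgetful $\catdgmod{\kk} \to \catgrmod{\kk}$, which is a right adjoint and hence preserves all limits, this gives that the composite $\catdgcog{\Omega \operad C} \to \catgrmod{\kk}$ preserves finite cosifted limits.

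Fix now a finite cosifted diagram $V_{\bullet}$ in $\catdgcog{\Omega \operad C}$ with limit $V$. The underlying graded $\kk$-module of $\widehat{\Omega}_{\operad C} W$ is $W^{\operad C_{\pl}}$ for any $W$, so by the two observations above the canonical comparison morphism in $\catcurvalg{\operad C}$ underlies the map of graded $\kk$-modules
\[
\bigl(\lim_{i} V_{i}\bigr)^{\operad C_{\pl}} \longrightarrow \lim_{i} \bigl(V_{i}^{\operad C_{\pl}}\bigr)~,
\]
and this map is an isomorphism by the preservation property of $(-)^{\operad C_{\pl}}$. Conservativity of the forgetful to graded $\kk$-modules then upgrades it to an isomorphism of curved $\operad C$-algebras. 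The argument is essentially a careful bookkeeping of limit-preservation properties for the forgetful functors in play, and I do not anticipate any substantive obstacle.
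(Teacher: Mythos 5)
Your proof is correct and follows essentially the same route as the paper's: the paper's own (one-line) argument is exactly that finite cosifted limits on both sides are computed in graded $\kk$-modules and that the endofunctor $(-)^{\operad C}$ of graded $\kk$-modules preserves them. Your version just spells out the bookkeeping (monadicity/comonadicity of the forgetful functors and the planar identification $W^{\operad C}\cong W^{\operad C_\pl}$), which is fine but not a different method.
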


\begin{proof}
This follows from the fact that cosifted limits in dg $\Omega\operad C$-coalgebras and in curved $\operad C$-algebras are computed in graded $\kk$-modules and that the endofunctor of graded $\kk$-modules $(-)^{\operad C}$ preserves cosifted limits.
\end{proof}

\begin{remark}
From the above fact one can deduce that the adjunction $\widehat{\Omega}_{\operad C} \dashv \widehat{\mathrm{B}}_{\operad C}$ is bimonadic. See the upcoming article \cite{augmentedmonadsandbimonadicdjunctions} for more details.
\end{remark}

\begin{proposition}
Let $\L$ be a qp-complete curved $\operad C$-algebra. The following assertions are equivalent.

\medskip

\begin{enumerate}
    \item $\L$ is cofibrant;\label{assertioncofibrant}
    
\medskip

    \item $\L$ is in the essential image of the functor $\widehat{\Omega}_{\operad C}$; \label{assertionimage}
    
\medskip

    \item $\L$ is a quasi-free $\operad C$-algebra that is, its underlying graded algebra is free.\label{assertionqf}
\end{enumerate}    
\end{proposition}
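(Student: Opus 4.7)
The plan is to prove a cycle of implications $(3) \Leftrightarrow (2) \Rightarrow (1) \Rightarrow (2)$, dualizing the argument used just before for fibrant curved $\operad C$-coalgebras. The equivalence $(2) \Leftrightarrow (3)$ is by direct construction: any object of the form $\widehat{\Omega}_{\operad C} V = V^{\operad C}$ is cofree as a graded $\operad C$-algebra by definition; conversely, given $\L \cong V^{\operad C}$ as graded $\operad C$-algebras, restrict the derivation of $\L$ to the cogenerators $V$ to obtain a differential $d_V$ together with a degree $-1$ map $V \longrightarrow V^{\overline{\operad C}}$. Composing the latter with the desuspension of the universal twisting morphism $\iota: \operad C \longrightarrow \Omega \operad C$ endows $V$ with a dg $\Omega \operad C$-coalgebra structure whose complete cobar construction recovers $\L$.

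The implication $(2) \Rightarrow (1)$ is formal: cofibrations in $\mathsf{dg}~\Omega \operad C\text{-}\mathsf{cog}$ are degree-wise injections, so every dg $\Omega \operad C$-coalgebra is cofibrant, and $\widehat{\Omega}_{\operad C}$ preserves cofibrancy as a left Quillen functor by Theorem \ref{thm: complete bar-cobar is a Quillen equivalence}.

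For $(1) \Rightarrow (2)$, I first observe that every object of $\catcurvcompalg{\operad C}$ is fibrant (the map to the terminal object is sent by $\widehat{\Omega}_{\operad C}$ to a degree-wise epimorphism of dg $\Omega \operad C$-coalgebras) and that the counit $\epsilon_\L: \widehat{\Omega}_{\operad C} \widehat{\mathrm{B}}_{\operad C} \L \longrightarrow \L$ is always a degree-wise epimorphism, since on underlying graded modules it factors through the structural map $\gamma_\L: \L^{\operad C} \twoheadrightarrow \L$, which admits a section induced by the counit of $\operad C$. Consequently $\epsilon_\L$ is a fibration by Proposition \ref{propfibration}, and it is a weak equivalence by the Quillen equivalence. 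When $\L$ is cofibrant, the lifting property supplies a section $s: \L \longrightarrow \widehat{\Omega}_{\operad C} \widehat{\mathrm{B}}_{\operad C} \L$ in $\catcurvcompalg{\operad C}$. Now consider the coreflexive pair in dg $\Omega \operad C$-coalgebras
\[
\widehat{\mathrm{B}}_{\operad C} \L \rightrightarrows \widehat{\mathrm{B}}_{\operad C} \widehat{\Omega}_{\operad C} \widehat{\mathrm{B}}_{\operad C} \L
\]
with maps $\widehat{\mathrm{B}}_{\operad C}(s)$ and $\eta_{\widehat{\mathrm{B}}_{\operad C}\L}$ (both split by $\widehat{\mathrm{B}}_{\operad C}(\epsilon_\L)$), and let $V$ denote its equalizer. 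By Lemma \ref{lemmacosiftedlimits}, the functor $\widehat{\Omega}_{\operad C}$ preserves finite cosifted limits—in particular, coreflexive equalizers—so $\widehat{\Omega}_{\operad C} V$ coincides with the equalizer in $\catcurvcompalg{\operad C}$ of the image pair. Dualizing the reasoning of the preceding proposition, one identifies $\L$ with this same equalizer, producing the required isomorphism $\L \cong \widehat{\Omega}_{\operad C} V$.

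The main obstacle is the last identification, namely checking that $\L$ is the equalizer of the image pair in $\catcurvcompalg{\operad C}$: this is a formal monadic computation based on the triangle identities together with the relation $\epsilon_\L \circ s = \id_\L$, but it is the step that critically uses the section $s$ provided by cofibrancy of $\L$.
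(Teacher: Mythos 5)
Your proposal is correct and takes essentially the same route as the paper: (2)$\Leftrightarrow$(3) by restricting the derivation of a quasi-free graded $\operad C$-algebra to its generators and extending to a morphism of (dg) operads $\Omega \operad C \longrightarrow \mathrm{coEnd}(V)$, (2)$\Rightarrow$(1) since $\widehat{\Omega}_{\operad C}$ is left Quillen and all dg $\Omega\operad C$-coalgebras are cofibrant, and (1)$\Rightarrow$(2) via a section of the acyclic fibration $\epsilon_{\L}$ and the coreflexive equalizer, using Lemma \ref{lemmacosiftedlimits} exactly as the paper does. The only point you leave implicit, namely that the curvature condition on the derivation corresponds precisely to the transported structure being a dg (not merely graded) $\Omega\operad C$-coalgebra structure, is dispatched by the paper itself as a straightforward computation, so the level of detail matches.
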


\begin{proof}
For every free graded $\operad C$-algebra $\L \cong X^{\operad C}$, the data of a degree $-1$ derivation is equivalent to the data of a degree $-1$ map $X \longrightarrow X^{\operad C}$ by restricting it to the generators $X$. In turn, any map $X \longrightarrow X^{\operad C}$ is equivalent to a  degree $-1$ map $\operad C \longrightarrow \mathrm{coEnd}(X)$. Therefore the following data are equivalent.

\medskip

\begin{enumerate}
    \item A degree $-1$ derivation $d$ of $\L$.
    
\medskip
      
    \item A morphism of graded operads $\varphi_d: \Omega \C \longrightarrow \mathrm{coEnd}(X)$, which induces a graded $\Omega \C$-coalgebra structure on $X$. 
\end{enumerate}

\medskip

A straightforward computation then shows that the derivation $d$ is \textit{curved}, that is, $d^2 = -\gamma_\L(\L^{\theta})$, if and only if $\varphi_d$ is a morphism of dg operads. This proves the equivalence between assertion \ref{assertionimage} and assertion \ref{assertionqf}.

\medskip

Furthermore, we already know that a qp-complete curved $\operad C$-algebra that is in the essential image of the functor $\widehat{\Omega}_{\operad C}$ is cofibrant.

\medskip

Now, let $\L$ be a cofibrant object in qp-complete curved $\operad C$-algebras. Let us prove that it is quasi-free. Since $\L$ is cofibrant, the counit map $\epsilon_B: \widehat{\Omega}_{\operad C} \widehat{\mathrm{B}}_{\operad C}\L \longrightarrow \L$ which is an acyclic fibration has a section $s$. Let $V$ be the equaliser of the coreflexive pair of maps
    $$
    \begin{tikzcd}
    V \ar[r] 
    &\widehat{\mathrm{B}}_{\operad C}\L
    \ar[rr, bend left, "\widehat{\mathrm{B}}_{\operad C}(s)"]
    \ar[rr, bend right, "\eta_{\widehat{\mathrm{B}}_{\operad C}\L}"]
    &&\widehat{\mathrm{B}}_{\operad C}\widehat{\Omega}_{\operad C}\widehat{\mathrm{B}}_{\operad C}\L
    \ar[ll]
    \end{tikzcd}
    $$
    in the category of dg $\Omega \C$-coalgebras. By Lemma \ref{lemmacosiftedlimits}, the limit of the diagram of curved $\operad C$-algebras
    \[
    \begin{tikzcd}
   	\widehat{\Omega}_{\operad C}\widehat{\mathrm{B}}_{\operad C}\L
    \ar[rr, bend left, "\widehat{\Omega}_{\operad C}\widehat{\mathrm{B}}_{\operad C}(s)"]
    \ar[rr, bend right, "\widehat{\Omega}_{\operad C}(\eta_{\widehat{\mathrm{B}}_{\operad C}\L})"]
    &&\widehat{\Omega}_{\operad C}\widehat{\mathrm{B}}_{\operad C}\widehat{\Omega}_{\operad C}\widehat{\mathrm{B}}_{\operad C}\L
    \end{tikzcd}
    \]
    is $\widehat{\Omega}_{\operad C} V$. It is also clear that this limit is $\L$. So one has a canonical isomorphism $\L \cong \widehat{\Omega}_{\operad C} V$.
\end{proof}


\subsection{Infinity morphisms}
The notion of $\infty$-morphism extends the usual notion of morphisms of dg $\Omega \C$-coalgebras. Their main advantage is that $\infty$-quasi-isomorphisms are invertible, therefore one can replace a zig-zag of quasi-isomorphisms of dg $\Omega \C$-coalgebras with two inverse $\infty$-quasi-isomorphism. This provides a powerful tool to describe the homotopy category of dg $\Omega \C$-coalgebras.

\medskip

Recall that for every dg $\Omega \C$-coalgebra $V$, the unit $\eta_V: V \longrightarrow \widehat{\mathrm{B}}_{\operad C}\widehat{\Omega}_{\operad C} V$ admits a left-inverse $\xi_V: \widehat{\mathrm{B}}_{\operad C}\widehat{\Omega}_{\operad C} V \longrightarrow V$ in the category of dg modules. Let $K$ be the kernel of this map, we get a decomposition of dg modules
$$
\widehat{\mathrm{B}}_{\operad C}\widehat{\Omega}_{\operad C} V = V \oplus K.
$$

\begin{definition}[$\infty$-morphism]
Let $V, V'$ be two dg $\Omega \C$-coalgebras. An $\infty$\textit{-morphism} $f: V \rightsquigarrow V'$ amounts to the data of, equivalently:

\medskip

\begin{enumerate}
\item a morphism $f : V \longrightarrow \widehat{\mathrm{B}}_{\operad C}\widehat{\Omega}_{\operad C} V'$ of dg $\Omega \C$-coalgebras,

\medskip

\item a morphism $f^\dagger: \widehat{\Omega}_{\operad C} V\longrightarrow \widehat{\Omega}_{\operad C} V'$ of qp-complete curved $\C$-algebras.
\end{enumerate}
\end{definition}

\textbf{Linear part.} Let $f: V \rightsquigarrow V'$ be an $\infty$-morphism of dg $\Omega \C$-coalgebras. Its \textit{linear part} $f_{\mathrm{dg}}$ is the morphism of dg modules given by the following composition:

\[
\begin{tikzcd}[column sep=3pc,row sep=3pc]
V \arrow[r,"f"]
&\widehat{\mathrm{B}}_{\operad C}\widehat{\Omega}_{\operad C} V' \arrow[r,"\xi_V"]
&V'
\end{tikzcd}
\]

Let us denote $\epsilon: \C \longrightarrow \operad I$ and $\mu: \operad I \longrightarrow \C$ the counit and the coaugmentation of the quasi-planar conilpotent curved cooperad $\C$, respectively. The linear part of $f_{\mathrm{dg}}:V \rightsquigarrow V'$ is equivalently given by 

\[
\begin{tikzcd}[column sep=3pc,row sep=3pc]
V \arrow[r,"(V)^{\epsilon}"]
&\widehat{\Omega}_{\operad C} V \arrow[r,"f^{\dagger}"]
&\widehat{\Omega}_{\operad C} V' \arrow[r,"(V')^{\mu}"]
&V'~.
\end{tikzcd}
\]

\textbf{Homotopy part.} Let $f: V \rightsquigarrow V'$ be an $\infty$-morphism of dg $\Omega \C$-coalgebras. Its \textit{homotopy part} $f_{\mathrm{h}}$ is the morphism of dg modules given by the composition

\[
\begin{tikzcd}[column sep=3pc,row sep=3pc]
f_{\mathrm{h}}: V \arrow[r,"f"]
&\widehat{\mathrm{B}}_{\operad C}\widehat{\Omega}_{\operad C} V'\arrow[r,twoheadrightarrow]
&K~,
\end{tikzcd}
\]
where $K$ is the kernel of the map $\xi_V: \widehat{\mathrm{B}}_{\operad C}\widehat{\Omega}_{\operad C} V \longrightarrow V$. Equivalently, it is given by the composition

\[
\begin{tikzcd}[column sep=3pc,row sep=3pc]
f_{\mathrm{h}}: V \arrow[r,"(V)^{\epsilon}"]
&\widehat{\Omega}_{\operad C} V \arrow[r,"f^{\dagger}"]
&\widehat{\Omega}_{\operad C} V' \arrow[r,twoheadrightarrow]
&L
\end{tikzcd}
\]
where $L$ is the kernel of the map $(\id)^\mu: \widehat{\Omega}_{\operad C} V \twoheadrightarrow V$ induced by the coaugmentation of $\C$.

\begin{definition}[$\infty$-quasi-isomorphism]
An $\infty$-morphism $f:V \rightsquigarrow V'$ is called an $\infty$-\textit{quasi-isomorphism} if its linear part $f_{\mathrm{dg}}$ is a quasi-isomorphism of dg modules.
\end{definition}

\begin{definition}[$\infty$-isotopy]
An $\infty$-morphism $f:V \rightsquigarrow V$ is called an $\infty$-\textit{isotopy} if its linear part $f_{\mathrm{dg}}$ is an identity map of $V$.
\end{definition}

\begin{lemma}\label{lemmainftycofibration}
Let $f^\dagger : \widehat{\Omega}_{\operad C} V\longrightarrow \widehat{\Omega}_{\operad C} V'$ be an $\infty$-morphism. If $f_{\mathrm{dg}}$ is a cofibration, then $f^\dag$ is the composition of a strict cofibration, that is the image through $\widehat{\Omega}_{\operad C}$ of a cofibration between dg $\Omega \C$-coalgebras, followed by an $\infty$-isotopy.
\end{lemma}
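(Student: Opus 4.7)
The plan is to dualize the construction used in Lemma \ref{lemmainftyfibrationavant}. The $\infty$-morphism $f^\dagger$ corresponds to a morphism of graded $\operad C$-algebras $V^{\operad C} \to V'^{\operad C}$ which, by the universal property of the free graded $\operad C$-algebra, is determined by its restriction to the generators $V$; this restriction decomposes as $f_{\mathrm{dg}} + f_{\mathrm{h}} : V \to V' \oplus L$, where $L$ is the kernel of the projection $V'^{\operad C} \twoheadrightarrow V'$. Since $f_{\mathrm{dg}}$ is a cofibration of dg modules, i.e.\ a degree-wise injection, it admits a retraction $r : V' \to V$ in graded $\kk$-modules. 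I would then set $\sigma \coloneqq \eta_{V'} + f_{\mathrm{h}} \circ r : V' \to V'^{\operad C}$, where $\eta_{V'}$ denotes the inclusion of generators, so that $\sigma_{\mathrm{dg}} = \id_{V'}$.

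By the universal property, $\sigma$ extends to a morphism of graded $\operad C$-algebras $u : V'^{\operad C} \to V'^{\operad C}$; the same filtration argument used in Proposition \ref{propinfinitmorphalgebra}(2) shows that $u$ is a graded $\operad C$-algebra isomorphism, since its linear part is the identity. Conjugating the curved derivation $D_{V'}$ of $\widehat{\Omega}_{\operad C}V'$ by $u$ yields a new curved derivation $\tilde D \coloneqq u^{-1} D_{V'} u$ on the free graded $\operad C$-algebra $V'^{\operad C}$. By quasi-freeness, this corresponds to another dg $\Omega\operad C$-coalgebra structure on the underlying module $V'$, which I denote $\tilde V'$. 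Tautologically, $u : (V'^{\operad C}, \tilde D) = \widehat{\Omega}_{\operad C}\tilde V' \to \widehat{\Omega}_{\operad C}V'$ is then an isomorphism of curved $\operad C$-algebras whose linear part is $\id_{V'}$, so it is an $\infty$-isotopy.

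Finally, I would verify on the generators that $(u \circ f_{\mathrm{dg}}^{\operad C})|_V = \sigma \circ f_{\mathrm{dg}} = f_{\mathrm{dg}} + f_{\mathrm{h}} \circ r \circ f_{\mathrm{dg}} = f_{\mathrm{dg}} + f_{\mathrm{h}} = f^\dagger|_V$, so by the universal property of the free graded $\operad C$-algebra the equality $f^\dagger = u \circ f_{\mathrm{dg}}^{\operad C}$ holds. Since $u$ is a curved-$\operad C$-algebra isomorphism between the two structures and $f^\dagger$ is a morphism of curved $\operad C$-algebras, $f_{\mathrm{dg}}^{\operad C} = u^{-1} \circ f^\dagger$ is a morphism of curved $\operad C$-algebras $\widehat{\Omega}_{\operad C} V \to \widehat{\Omega}_{\operad C} \tilde V'$, which means that $f_{\mathrm{dg}} : V \to \tilde V'$ is a morphism of dg $\Omega \operad C$-coalgebras. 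Being a degree-wise injection, $f_{\mathrm{dg}}$ is a cofibration, so $\widehat{\Omega}_{\operad C}(f_{\mathrm{dg}})$ is a strict cofibration and $f^\dagger = u \circ \widehat{\Omega}_{\operad C}(f_{\mathrm{dg}})$ is the desired factorization. The only delicate point is to check that $u^{-1} D_{V'} u$ is genuinely a curved derivation (not merely pre-differential), but this is immediate since conjugation by a graded $\operad C$-algebra automorphism preserves both the derivation property and the curvature identity of Definition \ref{def curved alg over a coop}.
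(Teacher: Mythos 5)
Your proposal is correct and follows essentially the same route as the paper's proof: choose a graded retraction of $f_{\mathrm{dg}}$, form the map with linear part the identity and homotopy part $f_{\mathrm{h}}$ composed with the retraction, extend it to a graded $\operad C$-algebra automorphism of $(V')^{\operad C}$ (invertible by the filtration argument since its associated graded is the identity), conjugate the differential to obtain a new dg $\Omega\operad C$-coalgebra structure on $V'$, and read off the factorization $f^\dagger = u \circ \widehat{\Omega}_{\operad C}(f_{\mathrm{dg}})$. The only differences are cosmetic: you verify the factorization explicitly via the universal property of the free graded $\operad C$-algebra, where the paper simply asserts it, and your invertibility argument should cite the qp-filtration argument on the algebra side (as in the proposition following this lemma) rather than the coradical-filtration argument of Proposition \ref{propinfinitmorphalgebra}.
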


\begin{proof}
Let $p: V' \longrightarrow V$ be a left inverse of $f_{\mathrm{dg}}$ in the category of graded $\kk$-modules that is, a map so that $p f_{\mathrm{dg}}  = \id$. We consider the following degree $0$ map $\tau: V' \longrightarrow (V')^{\operad C}$, defined by $\tau_{\mathrm{dg}} \coloneqq \id_{V'}$ and by 

\[
\begin{tikzcd}[column sep=3pc,row sep=3pc]
\tau_{\mathrm{h}}\coloneqq V' \arrow[r,"p"]
&V \arrow[r,"f_{\mathrm{h}}"]
&(V')^{\overline{\operad C}}~.
\end{tikzcd}
\]
    
It induces a morphism $t$ of the graded $\operad C$-algebras

\[
\begin{tikzcd}[column sep=3pc,row sep=3pc]
t: (V')^{\operad C} \arrow[r,"\tau^{\C}"]
&\left((V')^{\operad C}\right)^{\C} \arrow[r,rightarrowtail]
&(V')^{\operad C \circ \C} \arrow[r,"(V)^{\Delta}"]
&(V')^{\operad C}~,
\end{tikzcd}
\]
where $\Delta: \C \longrightarrow \C \circ \C$ is the decomposition of the cooperad $\C$. Notice that $t$ is an isomorphism since $\gr^{\qp}(t) = \id$. We have the equality of morphism of graded $\operad C$-algebras
    $$
    t (\operad C \comp f_{\mathrm{dg}}) = f^\dagger.
    $$
Let us denote $D$ the derivation of $(V')^{\operad C}$ coming from the dg $\Omega\C$-coalgebra structure of $V'$ and let
    $$
    \tilde D \coloneqq t^{-1}Dt.
    $$
This is a derivation of $(V')^{\operad C}$ that makes it a complete curved $\operad C$-algebra. Thus it defines the structure of a dg $\Omega \operad C$-coalgebra on $V'$ such that $t: V' \rightsquigarrow V'$ becomes a $\infty$-isotopy between $(V',\tilde D)$ to $(V', D)$ and thus $\operad C \comp f_{\mathrm{dg}}$ becomes a strict cofibration from $V$ to $(V',\tilde D)$.
\end{proof}

\begin{proposition}
Let $f: V \rightsquigarrow V'$ be an $\infty$-morphism of dg $\Omega \C$-coalgebras. The morphism of qp-complete curved $\operad C$-algebras $f^\dagger: \widehat{\Omega}_{\operad C} V \longrightarrow \widehat{\Omega}_{\operad C} W$ is

\medskip

    \begin{enumerate}
        \item a weak-equivalence if and only if the dg part $f_{\mathrm{dg}}$ is a quasi-isomorphism;
        
\medskip

        \item an isomorphism if and only if the dg part $f_{\mathrm{dg}}$ is an isomorphism;
        
\medskip

        \item a fibration if and only if the dg part $f_{\mathrm{dg}}$ is a degree-wise epimorphism;
        
\medskip

        \item a cofibration if and only if the dg part $f_{\mathrm{dg}}$ is a degree-wise injection.
    \end{enumerate}
\end{proposition}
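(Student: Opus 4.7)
I would argue dually to Proposition \ref{propinfinitmorphalgebra}, replacing the coradical ladder by the quasi-planar coladder of Corollary \ref{cor: quasi-planar ladder}'s dual, Lemma \ref{lemmacofibalgdegree-wiseinj} by Proposition \ref{propfibration} (fibrations are degree-wise epimorphisms), and the lifting Lemma \ref{lemmainftyfibrationavant} by Lemma \ref{lemmainftycofibration}. For point (1), if $f: V \to \widehat{\mathrm{B}}_{\operad C} \widehat{\Omega}_{\operad C} V'$ is the adjoint of $f^\dagger$, then $f = \widehat{\mathrm{B}}_{\operad C}(f^\dagger) \circ \eta_V$, giving the factorisation $f_{\mathrm{dg}} = \xi_{V'} \circ \widehat{\mathrm{B}}_{\operad C}(f^\dagger) \circ \eta_V$. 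Since both $\eta_V$ and $\xi_{V'}$ are quasi-isomorphisms of dg modules (Lemma \ref{lemma: key lemma of Quillen equiv for coalg and absolute alg}, together with the fact that $\xi_{V'}$ is a left inverse of the quasi-isomorphism $\eta_{V'}$), two-out-of-three yields that $f_{\mathrm{dg}}$ is a quasi-isomorphism if and only if $\widehat{\mathrm{B}}_{\operad C}(f^\dagger)$ is, which by definition of the transferred model structure (Theorem \ref{thm: existence model structure for absolute}) is exactly the condition that $f^\dagger$ is a weak-equivalence.

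For point (2), the quasi-planar coladder of $\widehat{\Omega}_{\operad C} V$ has $F^n(\widehat{\Omega}_{\operad C} V) = V^{F_n^{\mathrm{qp}} \operad C_\pl}$, with $F^0 = V$. Any morphism of complete $\operad C$-algebras respects this functorial coladder, so $f^\dagger$ descends to $\gr^n_{\mathrm{qp}}(f^\dagger)$ for every $n$; the computations of Proposition \ref{prop: celle qui induit des coladders} show that $\gr^0_{\mathrm{qp}}(f^\dagger) = f_{\mathrm{dg}}$ and that, for $n \geq 1$, the associated graded $\operad C$-algebra structure trivialises, giving $\gr^n_{\mathrm{qp}}(f^\dagger) = f_{\mathrm{dg}}^{\gr^n_{\mathrm{qp}} \operad C_\pl}$. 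If $f^\dagger$ is an isomorphism, then so is $\gr^0_{\mathrm{qp}}(f^\dagger) = f_{\mathrm{dg}}$. Conversely, if $f_{\mathrm{dg}}$ is an isomorphism, each $\gr^n_{\mathrm{qp}}(f^\dagger)$ is, hence by transfinite induction each $F^n(f^\dagger)$ is, and qp-completeness of both sides forces $f^\dagger$ to be an isomorphism.

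For point (3), fibrations are precisely degree-wise epimorphisms by Proposition \ref{propfibration}. If $f^\dagger$ is surjective, a diagram chase using the projection $V^\mu: \widehat{\Omega}_{\operad C} V \twoheadrightarrow V$ induced by the coaugmentation $\mu: \operad I \to \operad C$ shows $f_{\mathrm{dg}}$ surjective. Conversely, a graded section $s: V' \to V$ of $f_{\mathrm{dg}}$ extends via the free graded $\operad C$-algebra adjunction to $\tilde{s}: \widehat{\Omega}_{\operad C} V' \to \widehat{\Omega}_{\operad C} V$ such that $f^\dagger \circ \tilde{s}$ has linear part $\id_{V'}$, hence is a graded isomorphism by point (2); thus $f^\dagger$ admits a section and is surjective. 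For one direction of point (4), Lemma \ref{lemmainftycofibration} factors $f^\dagger$ as a strict cofibration (the image under the left Quillen functor $\widehat{\Omega}_{\operad C}$ of a degree-wise injection of dg $\Omega \operad C$-coalgebras) followed by an $\infty$-isotopy, which by point (2) is an isomorphism; hence $f^\dagger$ is a cofibration.

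The main obstacle is the converse of point (4): showing that a cofibration $f^\dagger$ forces $f_{\mathrm{dg}}$ to be a degree-wise injection. My plan is to introduce the trivial structure functor $\mathrm{Triv}: \catdgmod{\kk} \to \catcurvcompalg{\operad C}$ sending $X$ to $X$ equipped with the trivial $\operad C$-algebra structure coming from $\mu: \operad I \to \operad C$, and to verify two properties: first, that $\mathrm{Triv}$ admits a left adjoint, the indecomposables functor $L\L = \L / \mathrm{Im}(\L^{\overline{\operad C}} \to \L)$, which satisfies $L(\widehat{\Omega}_{\operad C} V) \cong V$ and $L(f^\dagger) = f_{\mathrm{dg}}$; second, that $\mathrm{Triv}$ sends acyclic fibrations of dg modules to acyclic fibrations of complete curved $\operad C$-algebras. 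Preservation of fibrations is automatic; the weak-equivalence half reduces to showing that $\widehat{\mathrm{B}}_{\operad C}\mathrm{Triv}(g) \cong L^{\Omega \operad C}(g)$ is a weak-equivalence of dg $\Omega \operad C$-coalgebras when $g$ is, which follows from the right Quillen property of $L^{\Omega\operad C}$ (since $\Omega \operad C$ is cofibrant, hence coadmissible). Once both are established, the LLP of $f^\dagger$ against each $\mathrm{Triv}(g)$ translates by adjunction into the LLP of $f_{\mathrm{dg}}$ against every acyclic fibration of dg modules, so $f_{\mathrm{dg}}$ is a cofibration of dg modules and in particular a degree-wise injection.
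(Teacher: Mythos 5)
Your proof is correct and follows the paper's strategy almost step for step: point (1) via the square relating $\xi_V,\xi_{V'}$ (equivalently $\eta_V$) and $\widehat{\mathrm{B}}_{\operad C}(f^\dagger)$ plus two-out-of-three, point (2) via the quasi-planar associated graded, point (3) via a graded section of $f_{\mathrm{dg}}$ and the argument of (2), and point (4) via Lemma \ref{lemmainftycofibration} in one direction and the trivial-structure functor in the other. Two local deviations are worth recording. In (3), for the implication ``$f^\dagger$ degree-wise epi $\Rightarrow f_{\mathrm{dg}}$ degree-wise epi'' the paper reads this off the bar-construction square (using that $\widehat{\mathrm{B}}_{\operad C}(f^\dagger)$ is a fibration of dg $\Omega\C$-coalgebras, hence degree-wise epi), whereas you use the projection induced by the coaugmentation; this works, but the chase must rest on the identity $(V')^{\mu}\circ f^\dagger = f_{\mathrm{dg}}\circ (V)^{\mu}$ (i.e.\ $f^\dagger$ descends along the $F^0_{\qp}$-reflection to $f_{\mathrm{dg}}$), not merely on precomposing with the generator inclusion $(V)^{\epsilon}$, which is an injection and would give nothing. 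In (4), the paper shows $\mathrm{Triv}(g)$ is an acyclic fibration by noting it is a degree-wise epimorphism and a cofiltered quasi-isomorphism of (constant) coladders, and it avoids any global indecomposables adjoint by using only that morphisms out of the quasi-free algebra $\widehat{\Omega}_{\operad C}V$ into a trivial algebra are exactly chain maps out of $V$; your route via the identification $\widehat{\mathrm{B}}_{\operad C}\mathrm{Triv}\cong L^{\Omega\operad C}$ (valid because $\iota$ kills the coaugmentation, so the $d_2$-term of the complete bar construction vanishes on trivial algebras) and right Quillen-ness of the cofree coalgebra functor is equally correct and arguably cleaner, while your indecomposables functor needs the small extra check that the curvature term $\gamma_{\L}\circ \L^{\theta}$ lands in the decomposables (it does, since $\theta$ vanishes on the coaugmentation), so that the quotient is genuinely a dg module; both versions also use implicitly that $\mathrm{Triv}(X)$ is qp-complete, which holds since it is an $F_0^{\qp}\operad C$-algebra.
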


\begin{proof}
Let us prove these assertions.
   \begin{enumerate}
        \item Let us consider the following commutative diagram
        $$
        \begin{tikzcd}[column sep=3pc,row sep=3pc]
            \widehat{\mathrm{B}}_{\operad C}\widehat{\Omega}_{\operad C} V
            \ar[r,"\simeq"] \ar[d, "\widehat{\mathrm{B}}_{\operad C}(f^{\dagger})",swap]
            & V \ar[d, "f_{\mathrm{dg}}"] 
            \\
            \widehat{\mathrm{B}}_{\operad C}\widehat{\Omega}_{\operad C} V'
            \ar[r,"\simeq"]
            & V'.
        \end{tikzcd}
        $$
        in the category of dg modules. The two horizontal maps, $\xi_V$ and $\xi_{V'}$, are quasi-isomorphisms. Thus, the left vertical map is a quasi-isomorphism if and only if the right vertical map is a quasi-isomorphism.
        
        \medskip

        \item If $f^{\dagger}$ is an isomorphism, then $f_{\mathrm{dg}} = \gr_0^{\qp}(f^{\dagger})$ is an isomorphism. Conversely, if $f_{\mathrm{dg}}$ is an isomorphism, then 
        $$
        \gr_n^{\qp}(f^{\dagger}) = (f_{\mathrm{dg}})^{\id_{\gr_n^\qp(\operad C)}}
        $$
        is an isomorphism. Hence, $f^{\dagger}$ is an isomorphism.
        
        \medskip
        
        \item Let us suppose that $f_{\mathrm{dg}}$ is a degree-wise epimorphism. Then, it has a section $g$ in the category of graded $\kk$-modules. Let us consider the endomorphism $h$ of graded $\operad C$-algebras
        
\[
\begin{tikzcd}[column sep=3pc,row sep=3pc]
(V')^{\C} \arrow[r,"(g)^{\id}"]
&(V)^{\C}  \arrow[r,"f^\dagger"]
&(V')^{\C}~.
\end{tikzcd}
\]

Its linear part is the identity of $V'$. The same arguments as those used to prove point (2) show that $h$ is a graded isomorphism. In particular, it is a degree-wise epimorphism. So $f^\dagger$ is a degree-wise epimorphism, hence a fibration. 

\medskip

Conversely, suppose that $f^\dagger$ is a fibration. Let us consider the same commutative square diagram shown in point (1). The right vertical map $\widehat{\mathrm{B}}_{\operad C}(f^{\dagger})$ and the bottom horizontal map of this square are degree-wise epimorphisms. Thus the right vertical map $f_{\mathrm{dg}}$ is also a degree-wise epimorphism. 

       	\medskip
       	
        \item One has a functor from dg modules to qp-complete curved $\operad C$-algebras that sends a dg module $X$ to $X$ equipped with the trivial complete curved $\operad C$-algebra structure. This structure is given by the zero structural map $0: X^{\overline{\operad C}} \longrightarrow X$.
        
        \medskip
        
       	This functor sends acyclic fibrations of dg modules to filtered quasi-isomorphisms that are also degree-wise epimorphisms. They are in particular acyclic fibrations of complete curved $\operad C$-algebras. If $f^\dagger$ is a cofibration, then it has the left lifting property with respect to all acyclic fibrations, and in particular with respect to such acyclic fibrations of dg modules.
        As a consequence of the fact that a square of qp-complete curved $\C$-algebras
        $$
        \begin{tikzcd}[column sep=3pc,row sep=3pc]
            \widehat{\Omega}_{\operad C} V
            \ar[r] \ar[d, "f^{\dagger}",swap]
            & X \ar[d, two heads] 
            \\
            \widehat{\Omega}_{\operad C} V'
            \ar[r]
            & Y
        \end{tikzcd}
        $$
        (where $X,Y$ are chain complexes equipped with the zero structure of a qp-complete curved $\C$-algebra)
        amounts to the data of a square chain complexes 
        $$
        \begin{tikzcd}[column sep=3pc,row sep=3pc]
            V
            \ar[r] \ar[d, "f_{\mathrm{dg}}",swap]
            & X \ar[d, two heads] 
            \\
            V'
            \ar[r]
            & Y
        \end{tikzcd}
        $$
        the map $f_{\mathrm{dg}}$ has the left lifting property with respect to every acyclic fibration of dg modules. So it is a cofibration of dg modules that is, a degree-wise injection. Conversely, if $f_{\mathrm{dg}}$ is a cofibration, then $f^\dagger$ is a cofibration as a direct consequence of Lemma \ref{lemmainftycofibration}.      	
\end{enumerate}
\end{proof}

\begin{proposition}
Let $V$ and $V'$ be two dg $\Omega \C$-coalgebras. There exists a zig-zag of quasi-isomorphisms of dg $\Omega \C$-coalgebras
\[
V \lqi \cdot \qi \cdot \lqi \cdots \qi \cdot \lqi V'~,
\]

if and only if there exist $\infty$-quasi-isomorphisms $V \rightsquigarrow V'$ and $V' \rightsquigarrow V$ which are inverse to each other in homology. 
\end{proposition}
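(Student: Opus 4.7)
The plan is to mirror the analogous result proved earlier for dg $\Omega\operad C$-algebras, replacing the bar-cobar adjunction by the complete bar-cobar adjunction and its Quillen equivalence from Theorem \ref{thm: complete bar-cobar is a Quillen equivalence}, together with the characterisation of $\infty$-morphisms via their linear part established just above.

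For the ``if'' direction, a single $\infty$-quasi-isomorphism $f : V \rightsquigarrow V'$ is already enough. By the preceding proposition, $f$ being an $\infty$-quasi-isomorphism means that $f^\dagger : \widehat{\Omega}_{\operad C} V \longrightarrow \widehat{\Omega}_{\operad C} V'$ is a weak-equivalence of qp-complete curved $\operad C$-algebras. I would then write down the zig-zag
\[
V \xrightarrow{\eta_V} \widehat{\mathrm{B}}_{\operad C} \widehat{\Omega}_{\operad C} V \xrightarrow{\widehat{\mathrm{B}}_{\operad C}(f^\dagger)} \widehat{\mathrm{B}}_{\operad C} \widehat{\Omega}_{\operad C} V' \xleftarrow{\eta_{V'}} V'~,
\]
where the outer arrows are quasi-isomorphisms by Lemma \ref{lemma: key lemma of Quillen equiv for coalg and absolute alg} and the middle arrow is a quasi-isomorphism because $\widehat{\mathrm{B}}_{\operad C}$ is the right Quillen functor of a Quillen equivalence in which weak-equivalences on the dg $\Omega \operad C$-coalgebras side are precisely quasi-isomorphisms.

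For the converse, I would apply $\widehat{\Omega}_{\operad C}$ to the given zig-zag to obtain a zig-zag of weak-equivalences between qp-complete curved $\operad C$-algebras of the form $\widehat{\Omega}_{\operad C} W_i$ (valid because $\widehat{\Omega}_{\operad C}$ is left Quillen and every dg $\Omega \operad C$-coalgebra is cofibrant, cofibrations being degree-wise injections). Each such $\widehat{\Omega}_{\operad C} W_i$ is both cofibrant (being quasi-free, hence in the essential image of $\widehat{\Omega}_{\operad C}$) and fibrant (since fibrations are the degree-wise epimorphisms by Proposition \ref{propfibration} and the terminal object is $0$). Between fibrant-cofibrant objects in any model category, weak-equivalences admit two-sided homotopy inverses, so splicing the backward weak-equivalences with chosen homotopy inverses collapses the zig-zag into a direct weak-equivalence $\widehat{\Omega}_{\operad C} V \qi \widehat{\Omega}_{\operad C} V'$, and symmetrically one in the other direction, whose composites are homotopic to the respective identities. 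By the complete bar-cobar adjunction these are the data $f^\dagger, g^\dagger$ of $\infty$-morphisms $f : V \rightsquigarrow V'$ and $g : V' \rightsquigarrow V$; since they are weak-equivalences, their linear parts are quasi-isomorphisms by the preceding proposition, so $f, g$ are $\infty$-quasi-isomorphisms. The homotopy-inverse relation at the level of qp-complete curved $\operad C$-algebras descends to an inverse relation in homology.

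The main subtlety to verify is that every qp-complete curved $\operad C$-algebra is fibrant and that every quasi-free one is cofibrant; both facts follow immediately from the transferred model structure, so no genuine obstacle remains, and the argument reduces to the standard Whitehead-style procedure for inverting weak-equivalences between fibrant-cofibrant objects.
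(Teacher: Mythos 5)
Your argument is correct and follows essentially the same route as the paper's proof: for the converse you apply $\widehat{\Omega}_{\operad C}$ to the zig-zag and invert the resulting weak-equivalences between fibrant-cofibrant qp-complete curved $\operad C$-algebras, and for the other direction you use the unit quasi-isomorphisms $\eta_V$, $\eta_{V'}$ together with $\widehat{\mathrm{B}}_{\operad C}(f^\dagger)$. The extra justifications you supply (cofibrancy of all dg $\Omega\operad C$-coalgebras, fibrancy via Proposition \ref{propfibration}) are exactly the facts the paper leaves implicit, so no discrepancy remains.
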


\begin{proof}
Given any zig-zag of quasi-isomorphisms between $V$ and $V'$ we get a zig-zag of weak-equivalences between $\widehat{\Omega}_{\operad C}V$ and $\widehat{\Omega}_{\operad C}V'$. All the objects in the zig-zag are fibrant and cofibrant, therefore they admit homotopy inverses, and we have two homotopy inverse arrows between $\widehat{\Omega}_{\operad C} V$ and $\widehat{\Omega}_{\operad C} V'$. Conversely, if there is a $\infty$-quasi-isomorphism $f^{\dagger}: \widehat{\Omega}_{\operad C} V \qi \widehat{\Omega}_{\operad C} V'$, we get the following zig-zag
\[
\begin{tikzcd}[column sep=3pc,row sep=3pc]
V 
&\widehat{\mathrm{B}}_{\operad C} \widehat{\Omega}_{\operad C} V \arrow[r,"\widehat{\mathrm{B}}_{\operad C}(f^\dagger)"] \arrow[l,"\simeq",swap]
&\widehat{\mathrm{B}}_{\operad C} \widehat{\Omega}_{\operad C} V' \arrow[r,"\simeq"]
&V'~.
\end{tikzcd}
\]
\end{proof}


\subsection{Homotopy transfer theorem for coalgebras}
The goal of this subsection is to prove a homotopy transfer theorem for dg coalgebras over a cofibrant dg operad. The idea is the same as in the algebra case. First, we show that given a dg operad $\operad P$ and a dg $\operad P$-coalgebra $V$ together with a decomposition $V \cong K \oplus X$, where $K$ is acyclic, we can "perturb" the dg $\Omega \mathrm{B}\operad P$-coalgebra structure on $V$ so that this new structure "projects" onto $X$. Then, we give an homotopical meaning to this construction in cases of interest.

\subsubsection{Technicalities}
Let $\operad P$ be a dg operad. As in Section \ref{sectionhomotopytransfertheoremalgebra}, we denote $M$ the graded $\mathbb S$-module $\operad P \oplus s \operad I$:
$$
M = \operad P \oplus s \operad I.
$$
Again, we have a canonical isomorphism of conilpotent graded cooperads $\mathrm{B} \operad P = \treemod(sM)$.

\begin{notation}
Let $Y$ and $Z$ be graded modules and let 
\[
a: Y \longrightarrow Z^{{\treemod}(sM)}
\]
be a graded map. For every natural integer $n \geq 1$, we will denote by $a_{\leq n}$ the composition 
\[
a_{\leq n}: Y \xrightarrow{a} Z^{{\treemod}(sM)} \twoheadrightarrow Z^{{\treemod}_{\leq n}(sM)}~.
\]
We will also denote $\overline{a}$ the composition of $a$ with the projection $Z^{{\treemod}(sM)} \twoheadrightarrow Z^{\overline{\treemod}(sM)}$. We can combine these notations. For instance, $\overline{a}_{\leq n}$ refers to the composition of $a$ with the projection $Z^{{\treemod}(sM)} \twoheadrightarrow Z^{\overline{\treemod}_{\leq n}(sM)}$.
\end{notation}

Let us also consider a direct sum of dg modules
$$
V = X \oplus K
$$
where $K$ is acyclic. Let us choose a contracting homotopy $h$ of $K$ that we can extend to all $V$ by $0$ on $X$. Let us denote $\pi_X, \pi_K$ the projections endomorphisms of $V$ onto respectively $X$ and $K$. All these arrows satisfy the following equations:

\[
\left\{
\begin{tikzcd}[column sep=0pc,row sep=0.25pc]
&\pi_X \pi_K = \pi_K \pi_X =0;\\
&\partial(h) = \pi_K = \id_V - \pi_X;\\
&h \pi_K = \pi_K h = h;\\
&h \pi_X = \pi_X h = 0.
\end{tikzcd}
\right.
\]

Let us suppose that the dg module $(V,d_V)$ is endowed with a dg $\operad P$-coalgebra structure 
$$
\Delta_V: V \longrightarrow V^{\operad P}.
$$
Notice that it induces a dg $\Omega \mathrm{B} \operad P$-coalgebra structure on $V$ by pulling back along the map $\Omega \mathrm{B} \operad P \longrightarrow \operad P$.

\begin{definition}
Let $D_\delta$ be the degree $-1$ derivation on the graded $\mathrm{B}\operad P$-algebra $V^{\mathrm{B}\operad P}$ that induced by the $\Omega \mathrm{B} \operad P$-coalgebra structure $\Delta_V$ on the dg module $V$, in the sense that
$$
(V^{\mathrm{B}\operad P}, D_\delta) \cong \widehat{\Omega}_{\mathrm{B}\operad P} V.
$$
Moreover, let $\delta$ be the composition of $D_\delta$ with the inclusion of $V$ inside $V^{\mathrm{B}\operad P}$. One can notice that the projection of $\delta$ onto $V$ is the differential $d_V$ and that its projection onto $V^{\overline{\mathrm{B}\operad P}}$ is given by
    $$
    V \xrightarrow{\Delta} V^{\operad P} \xrightarrow{- V^{s}}
    V^{s\operad P} \hookrightarrow V^{sM} \hookrightarrow V^{\overline{\treemod}(sM)} = V^{\overline{\mathrm{B}\operad P}} ~,
    $$
    where the second map $- V^{s}$ sends a sequence to a sequence
    $$
    (\phi_n)_{n \in \mathbb N} \in \prod_n [\operad P(n), V^{\otimes n}]^{\mathbb S_n}
    \mapsto
    (\psi_n)_{n \in \mathbb N} \in \prod_n [s\operad P(n), V^{\otimes n}]^{\mathbb S_n}~,
    $$
    and where each $\psi_n$ is defined as
    $$
    s\operad P(n) \xrightarrow{sx \mapsto -x} \operad P(n) \xrightarrow{\phi_n}
    V^{\otimes n}.
    $$
\end{definition}

\medskip

The degree $0$ map
    $$
    V \xrightarrow{-h} V 
     \xrightarrow{\overline{\delta}_{\leq 1}} V^{sM}
    $$
yields a morphism of graded $\mathbb S$-modules $sM \longrightarrow \mathrm{coEnd}(V)$. Therefore there is a morphism of graded operads $\treemod(sM) \longrightarrow \mathrm{coEnd}(V)$, which in turn yields a degree $0$ map of graded $\kk$-modules
    $$
    \phi : V \longrightarrow V^{\treemod{sM}} = V^{\mathrm{B}\operad P}.
    $$
    Let $f$ be the related morphism of qp-complete graded $B \operad P$-algebras
    $$
    f : V^{\mathrm{B}\operad P} \xrightarrow{\phi^{\mathrm{B}\operad P}} (V^{\mathrm{B}\operad P})^{\mathrm{B}\operad P} \hookrightarrow V^{\mathrm{B}\operad P \comp \mathrm{B}\operad P} \longrightarrow V^{\mathrm{B}\operad P}.
    $$
    One can notice that the composition of $\phi$ with the projection onto $V$ is the identity and that
    $$
    \overline{\phi} = f \overline{\phi}_{\leq 1}.
    $$

\begin{definition}
Let $\chi$ be the degree $-1$ morphism from $V$ to $V^{\treemod(sM)}$ 
defined as $\chi \coloneqq f \overline{\delta}$. Notice that $\phi = \id_V -\chi h$.
\end{definition}

Recall that the conilpotent curved cooperad $\mathrm{B}\operad P$ is given by the conilpotent graded cooperad $\treemod(sM)$ endowed with the following pre-differentials:

\begin{enumerate}
\item the pre-differential $d_\gamma$ which is induced by the operad structure of $\operad P$;
\item the pre-differential $d_{\operad P}$ which is induced by the differential of $\operad P$;
\item the pre-differential $d_{u}$, which maps $s^2\operad I$ to the unit of $\operad P$.
\end{enumerate}

We will denote the later two pre-differentials by $d_{sM}$, as they are defined on the generators of $\mathrm{B}\operad P$. We refer to Subsection \ref{subsection: operadic bar-cobar} for more details.

\begin{notation}
We denote by $d_{\gamma}^{\mathrm{root}}$ the degree $-1$ endomorphism of $\overline{\treemod} (sM) \hookrightarrow \mathrm{B}\operad P$ given by applying $d_{\gamma}$ only to inner edges of the trees of $\overline{\treemod}(sM)$ that touch the root node. We denote by $d_{\gamma}^{\neq\mathrm{root}}$ the other component of $d_\gamma$, given by applying $d_{\gamma}$ to all the other edges that do not touch the root.

\medskip

Similarly, we denote by $d_{sM}^{\mathrm{root}}$ and $d_{sM}^{\neq\mathrm{root}}$ the degree $-1$ endomorphisms given by applying $d_{sM}$ to the root node of a tree (resp. all the other nodes). 
\end{notation}

\begin{lemma}\label{lemmahttcoalgebratwoleveltrees}
    The following diagram in the category of graded $\kk$-modules
    $$
    \begin{tikzcd}[column sep=3pc,row sep=3pc]
        V 
        \ar[r, "\overline{\phi}_{\leq 1}"] \ar[d,"\overline{\phi}"']
        & V^{sM}
        \ar[rr, "{{\shuffle (\phi, \chi)^{sM}}}"]
        &&  \left(V^{\treemod_{\leq 1} (sM)}\right)^{sM}
        \ar[d, hook]
        \\
        V^{\overline{\treemod}(sM)}
     \ar[rrr, "V^{d_{\gamma}^{\mathrm{root}}}"']
        &&& V^{\overline{\treemod} (sM)}.
    \end{tikzcd}
    $$
    is commutative.   
\end{lemma}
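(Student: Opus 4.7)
The plan is to verify the square by direct computation, reducing both sides to an explicit sum over decorated trees and matching them leaf-by-leaf.

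First, I would use the identity $\overline{\phi} = f\,\overline{\phi}_{\leq 1}$ observed in the text immediately after the definition of $f$. This lets me rewrite the right-hand composition as $V^{d_\gamma^{\mathrm{root}}}\circ f\circ\overline{\phi}_{\leq 1}$, so the lemma reduces to the identity
\[
V^{d_\gamma^{\mathrm{root}}}\circ f\big\vert_{V^{sM}} \;=\; \shuffle(\phi,\chi)^{sM},
\]
as maps $V^{sM}\to V^{\overline{\treemod}(sM)}$, where on the right we post-compose with the tautological inclusion $(V^{\treemod_{\leq 1}(sM)})^{sM}\hookrightarrow V^{\overline{\treemod}(sM)}$. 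This is the heart of the lemma, and also explains why the upper-right corner is the two-level truncation: applying $d_\gamma^{\mathrm{root}}$ to the output of $f\circ\overline{\phi}_{\leq 1}$ only ever uses trees of height $\leq 2$ in the image of $f$.

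Next, I would unpack $f\big\vert_{V^{sM}}$ by tracing through its construction: on a one-level decoration $\psi\in V^{sM}$, the map $\phi^{\mathrm{B}\operad P}$ replaces the value at each leaf by a $\phi$-decorated subtree, and the cooperadic cocomposition then reassembles this into an element of $V^{\mathrm{B}\operad P}$ rooted at the original $sM$-node. Writing $\phi = \id_V - \chi h$, the resulting sum splits over subsets of leaves: at each leaf one either keeps the trivial tree (contribution of $\id_V$) or attaches the non-trivial subtree $-\chi h$. I would then apply $V^{d_\gamma^{\mathrm{root}}}$, which by definition contracts a single inner edge touching the root via the operadic composition $d_\gamma$ of $\operad P$. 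Such an inner edge exists only at leaves where a non-trivial subtree was grown, and summing over the choice of edge amounts to choosing exactly one distinguished leaf.

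This produces a sum indexed by leaf positions $i$, and what remains is to check that the $i$-th summand is precisely $\chi$ at position $i$ and $\phi$ elsewhere. For positions $j\neq i$, no contraction occurs and the $\phi$-subtree survives. For the distinguished position $i$, the contraction $d_\gamma$ composes the operadic piece of the $-\chi h$-subtree with the root; using $\chi = f\overline{\delta}$ and the compatibility of $f$ with the cooperadic cocomposition together with the definition of $\overline{\delta}$, this composite is exactly $\chi$ evaluated on the original leaf label, with signs matching those of the shuffle $\shuffle(\phi,\chi)$.

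The main obstacle is the sign bookkeeping: the degree $+1$ homotopy $h$ inside $\phi$, the degree $-1$ differential $d_\gamma^{\mathrm{root}}$, and the shuffle signs from $\shuffle(\phi,\chi)$ must be reconciled. I expect these to match on the nose after carefully writing out the cooperadic cocomposition on $sM$-labelled trees and comparing with the position-indexed decomposition of $\shuffle(\phi,\chi)^{sM}$, exactly as in the dual computation for algebras in Lemma \ref{lemmahttalgebratwoleveltrees}. Once these signs are pinned down, the identity is a direct verification.
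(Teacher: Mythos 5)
There is a genuine gap, and it sits exactly at your declared ``heart of the lemma''. The identity
$V^{d_{\gamma}^{\mathrm{root}}}\circ f$ restricted to $V^{sM}$ $=\ \shuffle(\phi,\chi)^{sM}$, asserted for an \emph{arbitrary} one-level decoration $\psi \in V^{sM}$, is false. Unwinding the definitions, $f(\psi)$ evaluated on a tree $T$ applies $\psi$ only to the root corolla of $T$ and grows $\phi$-subtrees on its leaves, while $V^{d_{\gamma}^{\mathrm{root}}}$ acts by \emph{precomposition} with root-edge contraction in the indexing trees; hence the left-hand side evaluates $\psi$ at operadic composites $s(p \circ_i q)$, whereas the right-hand side evaluates $\psi$ at $sp$ and then applies the cooperation coming from $q$ through $\chi$. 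For a generic $\psi$ these are unrelated: take $p$ of arity one with $p \circ p = 0$ and $T$ the linear two-node tree with both labels $sp$; then the left-hand side vanishes, while the right-hand side is $\pm\Delta_{V,p}(\psi(sp))$, which is nonzero for a suitable choice of $\psi$. So your reduction discards the hypothesis that the input is $\overline{\phi}_{\leq 1}(v)=\overline{\delta}_{\leq 1}(-h v)$, i.e.\ that it comes from the structure map $\Delta_V$ of the dg $\operad P$-coalgebra, and with it the only place where the actual content of the lemma can enter.

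That content is precisely the compatibility of $\Delta_V$ with the partial compositions of $\operad P$ (the coalgebra axiom, equivalently the fact that $\delta$ arises from a morphism of operads to $\mathrm{coEnd}(V)$): on a two-level tree with root label $sp$ and a single nontrivial top corolla $sq$ at position $i$, the left composite gives $\pm\Delta_{V,p\circ_i q}(hv)$ and the right composite gives $\pm(\id^{\otimes i-1}\otimes\Delta_{V,q}\otimes\id^{\otimes k-i})\Delta_{V,p}(hv)$, and their equality is exactly this axiom, not a formal property of $f$, $\overline{\delta}$ and the cocomposition as you suggest in the ``distinguished position'' step. Your description of $V^{d_{\gamma}^{\mathrm{root}}}$ as ``composing the operadic piece of the grown subtree with the root'' inside the decoration conflates the two sides and is only valid because of this axiom for the specific input. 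The paper's own proof avoids the detour entirely: it notes that both composites are determined by their components on $\overline{\treemod}_{\leq 2}(sM)$ and checks directly that the corresponding $\mathbb S_n$-equivariant maps $V\otimes\overline{\treemod}_{\leq 2}(sM)(n)\to V^{\otimes n}$ agree, the check being exactly the coassociativity of $\Delta_V$ together with the bookkeeping of $h$ and the signs. To repair your argument, state the identity only after precomposing with $\overline{\phi}_{\leq 1}$ and invoke the coalgebra axiom at the distinguished leaf; the rest of your leaf-by-leaf decomposition then goes through.
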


\begin{proof}
A straightforward check shows these two maps from $V$ to $V^{\overline{\treemod}_{\leq 2} (sM)}$ correspond to the same $\mathbb S_n$-equivariant maps from $V \otimes \overline{\treemod}_{\leq 2} (sM)(n)$ to $V^{\otimes n}$.
\end{proof}

\begin{lemma}\label{lemmahttbfdecompositioncoalgebra}
The map $ f{\delta} =  f{\delta}_{\leq 1}: V \longrightarrow V^{{\treemod}(sM)}$ is equal to $(\phi ~ d_V)+ {\chi}$.
\end{lemma}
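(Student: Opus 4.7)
The first equality $f\delta = f\delta_{\leq 1}$ is automatic. By construction, $\delta : V \longrightarrow V^{\treemod(sM)}$ has its $V$-component (trivial tree) equal to $d_V$, and its $V^{\overline{\mathrm{B}\operad P}}$-component factoring through $V^{sM}$ via $-V^s \circ \Delta_V$ (one-node trees). Hence $\delta$ lands inside $V^{\treemod_{\leq 1}(sM)} \subset V^{\treemod(sM)}$, so $\delta = \delta_{\leq 1}$.

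For the main equality, use the splitting $V^{\mathrm{B}\operad P} = V \oplus V^{\overline{\mathrm{B}\operad P}}$ induced by $\mathrm{B}\operad P = \operad I \oplus \overline{\mathrm{B}\operad P}$ to decompose $\delta = \iota \circ d_V + \overline{\delta}$, where $\iota : V \cong V^{\operad I} \hookrightarrow V^{\mathrm{B}\operad P}$ is induced by the cooperad counit $\epsilon_{\mathrm{B}\operad P} : \mathrm{B}\operad P \twoheadrightarrow \operad I$. Applying the linear map $f$ yields
\[
f \delta \;=\; (f \circ \iota) \circ d_V \;+\; f \circ \overline{\delta} \;=\; (f \circ \iota) \circ d_V \;+\; \chi,
\]
using that $\chi = f \circ \overline{\delta}$ by definition. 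It therefore suffices to show $f \circ \iota = \phi$.

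This identity is a formal consequence of the cooperad counit axiom for $\mathrm{B}\operad P$. Restricting the composite
\[
f : V^{\mathrm{B}\operad P} \xrightarrow{\phi^{\mathrm{B}\operad P}} (V^{\mathrm{B}\operad P})^{\mathrm{B}\operad P} \hookrightarrow V^{\mathrm{B}\operad P \comp \mathrm{B}\operad P} \xrightarrow{V^{\Delta_{\mathrm{B}\operad P}}} V^{\mathrm{B}\operad P}
\]
to the subspace $V = V^{\operad I} \subset V^{\mathrm{B}\operad P}$: by naturality of $(-)^{M}$ in $M$ applied to the coaugmentation $\operad I \hookrightarrow \mathrm{B}\operad P$, the first map sends $\iota(v)$ to the element $\phi(v) \in (V^{\mathrm{B}\operad P})^{\operad I} \subset (V^{\mathrm{B}\operad P})^{\mathrm{B}\operad P}$. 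The subsequent composite then reduces to
\[
V^{\mathrm{B}\operad P} \;=\; V^{\operad I \comp \mathrm{B}\operad P} \;\hookrightarrow\; V^{\mathrm{B}\operad P \comp \mathrm{B}\operad P} \;\xrightarrow{V^{\Delta_{\mathrm{B}\operad P}}}\; V^{\mathrm{B}\operad P},
\]
which, by contravariance of $V^{(-)}$, equals $V^{(\epsilon_{\mathrm{B}\operad P} \comp \id_{\mathrm{B}\operad P}) \circ \Delta_{\mathrm{B}\operad P}} = V^{\id} = \id$, by the counit axiom $(\epsilon_{\mathrm{B}\operad P} \comp \id_{\mathrm{B}\operad P}) \circ \Delta_{\mathrm{B}\operad P} = \id_{\mathrm{B}\operad P}$ of the conilpotent cooperad $\mathrm{B}\operad P$. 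Hence $(f \circ \iota)(v) = \phi(v)$, giving $f \iota = \phi$ and therefore $f\delta = \phi d_V + \chi$. The only real obstacle in writing this out is keeping track of the various canonical identifications such as $\operad I \comp \mathrm{B}\operad P \cong \mathrm{B}\operad P$ and $(X^{N})^{\operad I} \cong X^{N}$; once these are organized, the argument is a direct invocation of the cooperad unit axiom, entirely analogous to the algebra-side computation of Lemma \ref{lemmahttbfdecomposition}.
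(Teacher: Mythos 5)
Your proof is correct and takes essentially the same route as the paper, whose proof is just the one-line observation that $\delta = \delta_{\leq 1} = d_V + \overline{\delta}_{\leq 1}$; your contribution is to spell out the implicit step that the restriction of $f$ along $V \cong V^{\operad I} \hookrightarrow V^{\mathrm{B}\operad P}$ equals $\phi$, which you verify correctly via the cooperad counit axiom (equivalently, the unit law for the monad $(-)^{\mathrm{B}\operad P}$). One terminological quibble: that inclusion is induced contravariantly by the counit $\mathrm{B}\operad P \to \operad I$, not by the coaugmentation $\operad I \to \mathrm{B}\operad P$, although since $\mathrm{B}\operad P = \operad I \oplus \overline{\mathrm{B}}\operad P$ as graded $\mathbb S$-modules this does not affect your computation.
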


\begin{proof}
This follows from the equation $ \delta = {\delta}_{\leq 1} = d_V + \overline{\delta}_{\leq 1}$.
\end{proof}

\begin{definition}
    Let $\zeta$ be the degree $-1$ map from $V$ to $V^{B\operad P}$
    whose projection onto to $V \cong V^{\operad I}$ is $d_V$ and whose projection onto 
    $V^{\overline{\mathrm{B}\operad P}} $ is the sum of the two maps
    \begin{align*}
    &V \xrightarrow{\pi_X} V
        \xrightarrow{\chi} V^{\overline{\mathrm{B}\operad P}};
        \\
    & V \xrightarrow{h} V \xrightarrow{V^\theta} V^{sM} \xrightarrow{f} V^{\overline{\treemod}(sM)}.
    \end{align*}
    Moreover, let $D_\zeta$ be the unique degree $-1$ derivation on 
    the graded $\mathrm{B}\operad P$-algebra $V^{{\mathrm{B}\operad P}}$ whose restriction to $V$ is $\zeta$.
\end{definition}

\begin{lemma}\label{lemmahttsumdecompositioncoalgebra}
    The degree $-1$ map $f  \delta - \zeta: V \longrightarrow V^{{\treemod}(sM)}$
    is equal to the sum of the two maps
    \begin{align*}
    &V \xrightarrow{\overline{\phi}_{\leq 1}}V^{sM} \xrightarrow{-V^{d_{sM}}} V^{sM}  \xrightarrow{f} V^{\overline{\treemod}(sM)}
    \\
    & V \xrightarrow{\overline{\phi}_{\leq 1}}V^{sM} \xrightarrow{\shuffle(\id, d_V)^{sM}} V^{sM}
    \xrightarrow{f} V^{\overline{\treemod}(sM)}
\end{align*}
where $d_{sM}$ denotes the restriction of the coderivation of $\mathrm{B}\operad P$ to $sM$.
\end{lemma}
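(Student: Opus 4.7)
I would follow the same strategy as in the dual algebra statement (Lemma \ref{lemmahttsumdecomposition}). Write $g \coloneqq f\delta - \zeta$ for the map to be analyzed and $g'$ for the claimed sum of two compositions. Both $g$ and $g'$ are degree $-1$ maps $V \to V^{\treemod(sM)}$; moreover both factor through $f$ applied to a map $V \to V^{sM}$, so the lemma reduces to an identity of degree $-1$ maps $V \to V^{sM}$.

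The first step is to unpack. By Lemma \ref{lemmahttbfdecompositioncoalgebra}, $f\delta = \phi d_V + \chi$, hence $g = \phi d_V + \chi - \zeta$. The $V \subset V^{\mathrm{B}\operad P}$ component of $g$ vanishes because $\phi|_V^{\mathrm{proj}} = \id_V$ matches the $V$-component $d_V$ of $\zeta$. On the complementary component $V^{\overline{\mathrm{B}\operad P}}$, one uses the relations $\overline{\phi} = f\overline{\phi}_{\leq 1}$, $\overline{\phi}_{\leq 1} = -\overline{\delta}_{\leq 1}h$, and $\chi = f\overline{\delta}_{\leq 1}$, together with $\pi_K = \partial(h) = hd_V + d_Vh$ and $\pi_X + \pi_K = \id_V$. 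A direct substitution collapses $\overline{\phi}d_V + \chi - \chi\pi_X - fV^\theta h$ to
\[
\overline{g} = f\bigl(\overline{\delta}_{\leq 1} d_V - V^\theta\bigr)h.
\]
Similarly $g' = -f\bigl(V^{d_{sM}} - \shuffle(\id, d_V)^{sM}\bigr)\overline{\delta}_{\leq 1}h$. Therefore $\overline{g} = g'$ becomes the identity
\[
\overline{\delta}_{\leq 1} d_V + V^{d_{sM}}\overline{\delta}_{\leq 1} - \shuffle(\id, d_V)^{sM}\overline{\delta}_{\leq 1} = V^\theta
\]
of degree $-1$ maps $V \to V^{sM}$.

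The second step is to recognize this identity as the projection onto $V^{sM}$ of the curvature equation for the complete curved $\mathrm{B}\operad P$-algebra $\widehat{\Omega}_{\mathrm{B}\operad P}V = (V^{\mathrm{B}\operad P}, D_\delta)$, namely $D_\delta^2 = \gamma \cdot (V^{\mathrm{B}\operad P})^\theta$. Restricting to $V$ gives $D_\delta \delta = V^\theta$, and decomposing according to the tree-layer filtration of $V^{\mathrm{B}\operad P}$ one reads off the three contributions listed above: $\overline{\delta}_{\leq 1}d_V$ from precomposing $\overline{\delta}_{\leq 1}$ with $d_V$; $V^{d_{sM}}\overline{\delta}_{\leq 1}$ from the internal differential of $sM \subset \mathrm{B}\operad P$ acting on the first tree layer; and $-\shuffle(\id, d_V)^{sM}\overline{\delta}_{\leq 1}$ from the induced differential from $V$ on $V^{sM}$. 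The remaining contributions to $D_\delta^2|_V$ (coming from the cocomposition $d_\Delta$ and the twisting morphism $d_{CP}$ inside $D_\delta$) land strictly in higher tree layers and do not touch the $V^{sM}$ summand.

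The main obstacle, as in the algebra case, is to track signs and correctly decompose $D_\delta^2$ according to the two-level tree structure of $V^{sM \oplus (sM \circ sM)}$ when projecting onto $V^{sM}$; this amounts to the observation, analogous to Lemma \ref{lemmahttcoalgebratwoleveltrees}, that the ``root-composition'' component $d_\gamma^{\mathrm{root}}$ of the coderivation vanishes on the image of $\overline{\delta}_{\leq 1}$ in $V^{sM}$ (since $\overline{\delta}_{\leq 1}$ already targets the one-node layer). Once this accounting is in place, both sides of the displayed identity agree and the lemma follows.
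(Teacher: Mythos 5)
Your argument follows essentially the same route as the paper's proof: both kill the trivial-tree component, use Lemma \ref{lemmahttbfdecompositioncoalgebra} together with $\overline{\phi} = f\,\overline{\phi}_{\leq 1}$, $\overline{\phi}_{\leq 1} = -\overline{\delta}_{\leq 1}h$ and $\partial(h)=\pi_K$ to collapse the reduced component of $f\delta - \zeta$ to $f\bigl(\overline{\delta}_{\leq 1}d_V - V^{\theta}\bigr)h$, and then conclude by the projection onto the one-node layer of the curvature relation satisfied by $D_\delta$ on $\widehat{\Omega}_{\mathrm{B}\operad P}V$. The paper introduces that last identity with a bare ``Noticing that'', so your justification of it (only $V^{d_{sM}}$ and $\shuffle(\id,d_V)^{sM}$ survive the projection onto $V^{sM}$, while the node-increasing parts of $D_\delta$ contribute nothing there) is a useful expansion of the same idea rather than a different proof.

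One sign slip should be corrected, since the entire content of the lemma is sign-sensitive. Substituting $\overline{\phi}_{\leq 1} = -\overline{\delta}_{\leq 1}h$ into the two maps of the statement gives
$g' = f\bigl(V^{d_{sM}} - \shuffle(\id,d_V)^{sM}\bigr)\overline{\delta}_{\leq 1}h$
(the built-in minus of $-V^{d_{sM}}$ and the minus in $\overline{\phi}_{\leq 1}$ cancel), not its negative as you wrote. Consequently the identity you actually need is
\[
\overline{\delta}_{\leq 1}\, d_V \;=\; V^{\theta} \;-\; \shuffle(\id, d_V)^{sM}\,\overline{\delta}_{\leq 1} \;+\; V^{d_{sM}}\,\overline{\delta}_{\leq 1}~,
\]
that is, the signs of the two middle terms in your displayed equation are reversed; this is the identity the paper uses, and it is what the curvature equation yields with the sign conventions of the complete cobar construction. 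This is a local bookkeeping error, easily repaired, and does not affect the validity of the overall strategy.
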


\begin{proof}
    The projection of the two maps onto $V$ are both equal to $0$. Let us prove that their projection onto $V^{\overline{\treemod}(sM)}$ are equal.
    Let us first notice that
    $$
    \overline{f}d_V = \overline{\phi}d_V = \overline{f} \overline{\phi}_{\leq 1}d_V
    = - \overline{f} \overline{\delta}_{\leq 1} h d_V
    $$
    Then, the first map rewrites as
    \begin{align*}
        \overline{f}\delta - \overline{\zeta}
    &= \overline{f}  (d_V  + \overline{\delta}_{\leq 1} - \overline{\delta}_{\leq 1} \pi_X - V^\theta h) 
    \\
    &= \overline{f} \overline{\delta}_{\leq 1} ( - h d_V  + \id - \pi_X ) -\overline{f} V^\theta h
    \\
    &= \overline{f}(\overline{\delta}_{\leq 1} d_Vh - V^\theta h).
    \end{align*}
    Noticing that
    $$
    \overline{\delta}_{\leq 1} d_V = \overline{(D_\delta)}_{\leq 1} \delta
    - \left(\shuffle(\id, d_V)^{sM} - V^{d_{sM}}\right)\overline{\delta}_{\leq 1}
    = V^{\theta} - \shuffle(\id, d_V)^{sM}\overline{\delta}_{\leq 1} + V^{d_{sM}}\overline{\delta}_{\leq 1}
    $$
    we get
    \begin{align*}
        \overline{f}\delta - \overline{\zeta}
    &=\overline{f}( V^{\theta} -  \shuffle(\id, d_V)^{sM} \overline{\delta}_{\leq 1} + V^{d_{sM}} \overline{\delta}_{\leq 1} - V^\theta) h
    \\
    &= \overline{f}(\shuffle(\id, d_V)^{sM} \overline{\phi}_{\leq 1}
    - V^{d_{sM}} \overline{\phi}_{\leq 1}).
    \end{align*}
\end{proof}

\begin{proposition}
    One has an equality of degree $-1$ maps from $V$ to $V^{B\operad P}$
    $$
    D_{\zeta} \phi = f \delta.
    $$
    Thus $D_{\zeta} f = f D_{\delta}$.
\end{proposition}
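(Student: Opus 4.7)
The strategy is to mirror the analogous proposition proved above in the algebra case, establishing the equality $D_{\zeta}\phi = f\delta$ by induction on the height filtration of the free graded $\mathrm{B}\operad P$-algebra $V^{\mathrm{B}\operad P} = V^{\treemod(sM)}$. For each $n \geq 0$, one shows that both sides agree after projection onto $V^{\treemod_{\leq n}(sM)}$. For the base case $n = 0$, both $D_{\zeta}\phi$ and $f\delta$ project onto $V \cong V^{\operad I}$ as $d_V$: on the left because $\zeta$ restricts to $d_V$ on $V$ and $\phi$ acts as the identity on the $V$-component, and on the right because $\delta$ projects onto $V$ as $d_V$ and $f$ acts as the identity on the $V$-component.

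For the inductive step, consider the projection onto $V^{\overline{\treemod}_{\leq n+1}(sM)} \cong V^{sM \comp \treemod_{\leq n}(sM)}$, assuming the equality holds up to height $n$. On the left-hand side, since $D_{\zeta}$ is the derivation extending $\zeta$, decompose $\zeta$ into its three summands ($d_V$, the $\chi \pi_X$ part, and the $f V^{\theta} h$ part) and split the coderivation action according to whether it touches the root node or a deeper node. On the right-hand side, Lemma \ref{lemmahttbfdecompositioncoalgebra} rewrites $f\delta$ as $\phi\, d_V + \chi$; expanding $\phi$ and $\chi = f\overline{\delta}$ using the fact that $f$ is a morphism of graded $\mathrm{B}\operad P$-algebras produces a comparable decomposition. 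The two-node contribution at the root is identified via Lemma \ref{lemmahttcoalgebratwoleveltrees}, the contributions involving $d_{sM}^{\mathrm{root}}$ and $\shuffle(\id, d_V)^{sM}$ at the root are identified via Lemma \ref{lemmahttsumdecompositioncoalgebra}, and the contributions living deeper in the tree are handled by the inductive hypothesis.

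The implication $D_{\zeta} f = f D_{\delta}$ then follows by uniqueness. Both $D_{\zeta}f$ and $f D_{\delta}$ are degree $-1$ maps $V^{\mathrm{B}\operad P} \to V^{\mathrm{B}\operad P}$ which are $f$-derivations of the free graded $\mathrm{B}\operad P$-algebra (in the standard sense appropriate to algebras over a cooperad), and such an $f$-derivation out of a free algebra is determined by its restriction to the generators $V$. The restriction of $D_{\zeta} f$ to $V$ is $D_{\zeta}\phi$ (since $f$ extends $\phi$), while the restriction of $f D_{\delta}$ to $V$ is $f\delta$ (since $D_{\delta}$ extends $\delta$), so the first identity forces the second.

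The main obstacle is organizing the term-by-term bookkeeping in the inductive step: one must carefully track which contributions of $D_{\zeta}\phi$ act at the root of a labelled tree versus below it, and match them with the corresponding pieces produced by expanding $f\delta$ using the algebra-morphism property of $f$ and the decomposition $\delta = d_V + \overline{\delta}_{\leq 1}$. The three auxiliary lemmas \ref{lemmahttcoalgebratwoleveltrees}, \ref{lemmahttbfdecompositioncoalgebra}, and \ref{lemmahttsumdecompositioncoalgebra} are precisely calibrated to make this matching succeed at the root level, leaving the deeper levels to the inductive hypothesis.
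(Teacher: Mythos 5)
Your plan reproduces the paper's argument: the same induction on the height of the trees in $\treemod(sM)$, with the base case $d_V$, the root/non-root decomposition of the derivation in the inductive step matched against $f\delta = \phi\, d_V + \chi$ via Lemmas \ref{lemmahttcoalgebratwoleveltrees}, \ref{lemmahttbfdecompositioncoalgebra} and \ref{lemmahttsumdecompositioncoalgebra}, and deeper nodes handled by the inductive hypothesis. The deduction of $D_\zeta f = f D_\delta$ from the identity on generators, via uniqueness of $f$-derivations on the free $\mathrm{B}\operad P$-algebra, is exactly the implicit step in the paper, so the proposal is correct and essentially identical to the paper's proof.
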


\begin{proof}
Let us prove the result on the height of the trees that make $\mathrm{B}\operad P = \treemod(sM)$. More precisely, let us prove that for every natural integer $n$
$$
D_{\zeta, \leq n} \phi = f_{\leq n} \delta.
$$

First, for $n=0$
$$
D_{\zeta, \leq 0} \phi = d_V =  f_{\leq 0} \delta
$$

Let us assume that the equality is verifies for some natural integer $n$. The map $\overline{D_{\zeta, \leq n+1}} \phi$ is given on larger trees $V^{\overline{\treemod}_{\leq n+1}(sM)}$ as the following sum of maps

\begin{align*}
    & V \xrightarrow{\phi_{\leq 0} = \id_V} V \xrightarrow{\overline{\zeta}_{\leq n+1}} V^{\overline{\treemod}_{\leq n+1}(sM)}~,
    \\
    &V \xrightarrow{\overline{\phi}} V^{\overline{\treemod}(sM)}
    \xrightarrow{\shuffle(\id_V, \zeta)^{\overline{\treemod}(sM)}} (V^{{\treemod}(sM)})^{\overline{\treemod}(sM)} \to V^{\overline{\treemod}(sM)} \twoheadrightarrow V^{\overline{\treemod}_{\leq n+1}(sM)}~,
    \\
    &V \xrightarrow{\overline{\phi}} V^{\overline{\treemod}(sM)}
    \xrightarrow{-V^{d_{sM}^{\mathrm{root}}}}V^{\overline{\treemod}(sM)} \twoheadrightarrow V^{\overline{\treemod}_{\leq n+1}(sM)}~,
    \\
    &V \xrightarrow{\overline{\phi}} V^{\overline{\treemod}(sM)}
    \xrightarrow{-V^{d_{sM}^{\neq\mathrm{root}}}}V^{\overline{\treemod}(sM)} \twoheadrightarrow V^{\overline{\treemod}_{\leq n+1}(sM)}~,
    \\
    &V \xrightarrow{\overline{\phi}} V^{\overline{\treemod}(sM)}
    \xrightarrow{-V^{d_{\gamma}^{\mathrm{root}}}} V^{\overline{\treemod}(sM)} \twoheadrightarrow V^{\overline{\treemod}_{\leq n+1}(sM)}~,
    \\
    &V \xrightarrow{\overline{\phi}} V^{\overline{\treemod}(sM)}
    \xrightarrow{-V^{d_{\gamma}^{\neq\mathrm{root}}}} V^{\overline{\treemod}(sM)} \twoheadrightarrow V^{\overline{\treemod}_{\leq n+1}(sM)}~,
\end{align*}
where the four last map are actually equal to the composition
$$
V \xrightarrow{\overline{\phi}} V^{\overline{\mathrm{B} \operad P}}
\xrightarrow{-V^{d_{\mathrm{B}\operad P}}}
V^{\overline{\treemod}(sM)} \twoheadrightarrow V^{\overline{\treemod}_{\leq n+1}(sM)}~.
$$

We can the notice that the first map is just $\overline{\zeta}_{\leq n+1}$ and that the sum of the second map, the fourth map and the sixth map is the composition
$$
V \xrightarrow{\overline{\phi}_{\leq 1}} V^{sM} \xrightarrow{\shuffle (\phi_{\leq n}, D_{\zeta, \leq n} \phi)^{sM}} (V^{\treemod_{\leq n}(sM)})^{sM}.
$$
By the induction hypothesis, $D_{\zeta, \leq n} \phi  = f_{\leq n}\delta$ and by Lemma \ref{lemmahttbfdecompositioncoalgebra} $f\delta = \phi d_V + \chi$. So this composition is equal to the composition
$$
V \xrightarrow{\overline{\phi}_{\leq 1}} V^{sM} \xrightarrow{\shuffle (\phi_{\leq n}, \phi_{\leq n}d_V + \chi_{\leq n})^{sM}} (V^{\treemod_{\leq n}(sM)})^{sM}.
$$
using Lemma \ref{lemmahttcoalgebratwoleveltrees}, its sum with the fifth map is
$$
V \xrightarrow{\overline{\phi}_{\leq 1}} V^{sM} \xrightarrow{\shuffle (\phi_{\leq n}, \phi_{\leq n}d_V)^{sM}} (V^{\treemod_{\leq n}(sM)})^{sM}.
$$
To conclude Lemma \ref{lemmahttsumdecompositioncoalgebra} tells that the sum of all the six maps is equal to $\overline{f}_{\leq n+1} \delta$.
\end{proof}

\begin{proposition}
The derivation $D_\zeta$ makes $V^{\mathrm{B}\operad P}$ a qp-complete curved $\mathrm{B}\operad P$-coalgebra.
\end{proposition}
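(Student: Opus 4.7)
The plan is to exploit the identity $D_\zeta f = f D_\delta$ established in the previous proposition and to conclude by conjugation, once we know that $f$ is an isomorphism of qp-complete graded $\mathrm{B}\operad P$-algebras.

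First, I would check that $f:V^{\mathrm{B}\operad P}\longrightarrow V^{\mathrm{B}\operad P}$ is an isomorphism of graded $\mathrm{B}\operad P$-algebras. By construction it is a morphism of qp-complete graded $\mathrm{B}\operad P$-algebras whose restriction to $V$ is the map $\phi=\id_V-\chi h$. Since $\chi$ lands in $V^{\overline{\mathrm{B}\operad P}}$, the composition of $\phi$ with the projection $V^{\mathrm{B}\operad P}\twoheadrightarrow V$ is the identity. Equivalently, the linear part of $f$ is $\id_V$, so on the associated graded of the quasi-planar filtration of $\mathrm{B}\operad P$ the induced morphism of graded $\mathrm{B}\operad P$-algebras reduces to the identity. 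A standard ordinal induction along the quasi-planar coladder of $V^{\mathrm{B}\operad P}$ (as constructed in the previous section, see Proposition \ref{prop: celle qui induit des coladders}) then shows that $f$ is a graded isomorphism.

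Next, I would use this isomorphism to transport the curved structure. We have the identity of degree $-1$ derivations on the graded $\mathrm{B}\operad P$-algebra $V^{\mathrm{B}\operad P}$
\[
D_\zeta = f\,D_\delta\,f^{-1}.
\]
Since $D_\delta$ is the derivation defining the qp-complete curved $\mathrm{B}\operad P$-algebra structure $\widehat{\Omega}_{\mathrm{B}\operad P}V$, it satisfies the curvature equation
\[
D_\delta^2 \;=\; \gamma_{V^{\mathrm{B}\operad P}}\circ (V^{\mathrm{B}\operad P})^{\theta_{\mathrm{B}\operad P}},
\]
where $\theta_{\mathrm{B}\operad P}$ is the curvature of $\mathrm{B}\operad P$ and $\gamma$ is the structural map. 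Both sides of this equation are invariant under conjugation by the graded $\mathrm{B}\operad P$-algebra isomorphism $f$ (the right-hand side is natural in the underlying graded $\mathrm{B}\operad P$-algebra), so
\[
D_\zeta^2 \;=\; f\,D_\delta^2\,f^{-1} \;=\; \gamma_{V^{\mathrm{B}\operad P}}\circ (V^{\mathrm{B}\operad P})^{\theta_{\mathrm{B}\operad P}},
\]
which is exactly the equation required by Definition \ref{def curved alg over a coop}. Hence $(V^{\mathrm{B}\operad P},D_\zeta)$ is a curved $\mathrm{B}\operad P$-algebra.

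Finally, qp-completeness is automatic: the underlying graded $\mathrm{B}\operad P$-algebra is the free one $V^{\mathrm{B}\operad P}$, which is qp-complete, and this property depends only on the graded structure and not on the choice of derivation. The only subtlety in this plan is the verification that $f$ is a graded isomorphism; everything else is formal conjugation. That subtlety is mild, since the linear part of $f$ being the identity makes the inductive construction of $f^{-1}$ along the quasi-planar coladder completely routine.
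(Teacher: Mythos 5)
Your proposal is correct and follows essentially the same route as the paper: from $D_\zeta f = f D_\delta$ one writes $D_\zeta = f D_\delta f^{-1}$, with $f$ an isomorphism by a filtration argument (its linear part being the identity), and the curvature equation is transported by naturality of the curvature operator under graded $\mathrm{B}\operad P$-algebra morphisms. Your additional remarks on qp-completeness of the free graded algebra are consistent with the paper's setup and fill in details the paper leaves implicit.
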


\begin{proof}
    Since $ D_{\zeta} f = fD_{\delta} f$ and since $f$ is an isomorphism (by a standard filtration argument):
    $$
    D_\zeta = fD_{\delta} f^{-1}.
    $$
    Thus, the derivation $D_\zeta$ makes $V^{\mathrm{B}\operad P}$ a curved $\mathrm{B}\operad P$-algebra because so does the derivation $D_\delta$.
\end{proof}

\begin{proposition}
The derivation $D_\zeta$ projects onto the quotient graded $\mathrm{B} \operad P$-algebra $X^{\mathrm{B} \operad P}$ in the sense that there exists a (necessarily unique) derivation on $X^{\mathrm{B} \operad P}$, also denoted $D_{\zeta}$, such that the projection map
$$
V^{\mathrm{B} \operad P} \twoheadrightarrow X^{\mathrm{B} \operad P}
$$
commutes with the derivations. In particular, $(V^{\mathrm{B}\operad P}, D_\zeta)$ is a curved $\mathrm{B}\operad P$-algebra.
\end{proposition}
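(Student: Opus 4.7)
The plan is to exploit the universal property of the free graded $\mathrm{B}\operad P$-algebra on $V$: degree $-1$ derivations on $V^{\mathrm{B}\operad P}$ correspond bijectively, via restriction along the unit $\eta_V : V \to V^{\mathrm{B}\operad P}$, to graded linear maps $V \to V^{\mathrm{B}\operad P}$, and under this bijection $D_\zeta$ is precisely the derivation determined by $\zeta$. Combined with the naturality of the unit $\eta$ with respect to $\pi_X : V \twoheadrightarrow X$, this reduces the descent of $D_\zeta$ along $\pi_X^{\mathrm{B}\operad P}$ to a single concrete condition: the composite $\pi_X^{\mathrm{B}\operad P} \circ \zeta : V \to X^{\mathrm{B}\operad P}$ must factor through $\pi_X$, or equivalently, vanish on the sub-dg-module $K = \ker \pi_X \subset V$. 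Uniqueness of the descended derivation is then automatic, and the two derivations $\pi_X^{\mathrm{B}\operad P} \circ D_\zeta$ and $D_\zeta \circ \pi_X^{\mathrm{B}\operad P}$ (both $\pi_X^{\mathrm{B}\operad P}$-derivations in the evident sense) agree because their restrictions to $V$ coincide.

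Next I would decompose $\zeta|_K$ along its three defining summands. The piece coming from $d_V$ sends $K$ into $K$ since $K$ is a dg submodule, so it is annihilated by $\pi_X^{\mathrm{B}\operad P}$. The piece $\chi \circ \pi_X$ trivially vanishes on $K$. The third summand $f \circ V^\Theta \circ h$ is the only genuinely nontrivial piece: here I would first use naturality of the curvature map $V \mapsto V^\Theta$ in the module variable, together with the identity $\pi_X \circ h = 0$ (valid because $h$ takes values in $K$), to conclude $\pi_X^{\mathrm{B}\operad P} \circ V^\Theta \circ h = X^\Theta \circ \pi_X \circ h = 0$. The remaining task is to propagate this vanishing through the algebra endomorphism $f = \operatorname{mult} \circ \phi^{\mathrm{B}\operad P}$; for this I would show that $\pi_X^{\mathrm{B}\operad P} \circ f$ factors through $\pi_X^{\mathrm{B}\operad P}$, using the explicit formula $\phi = \operatorname{id}_V - \chi h$ and the fact that the only non-natural ingredient in $\phi$ is $h$, which is killed after projection by $\pi_X$.

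The main obstacle is this last compatibility of $f$ with the quotient $\pi_X^{\mathrm{B}\operad P}$: although $\phi$ depends on the non-natural contracting homotopy $h$, the composition $\pi_X \circ \phi$ \emph{is} compatible with $\pi_X$ precisely because the terms involving $h$ are annihilated by $\pi_X$, and by functoriality this compatibility lifts to the free algebra morphisms $\phi^{\mathrm{B}\operad P}$ and $f$. Finally, the concluding assertion that the descended derivation makes $X^{\mathrm{B}\operad P}$ into a curved $\mathrm{B}\operad P$-algebra is immediate from the preceding proposition: since $\pi_X^{\mathrm{B}\operad P}$ is a surjective morphism of graded $\mathrm{B}\operad P$-algebras intertwining the two derivations, the curvature equation $d^2 = -\gamma_{(-)} \circ (\operatorname{id})^\theta$ satisfied on $V^{\mathrm{B}\operad P}$ descends to the same equation on its quotient $X^{\mathrm{B}\operad P}$.
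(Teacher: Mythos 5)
Your overall strategy is the same as the paper's: the authors also identify $X^{\mathrm{B}\operad P}$ with the image of the idempotent algebra endomorphism $\pi_X^{\mathrm{B}\operad P}$ of $V^{\mathrm{B}\operad P}$ and reduce everything to the generator-level identity $\pi_X^{\mathrm{B}\operad P}\,\zeta\,\pi_X=\pi_X^{\mathrm{B}\operad P}\,\zeta$, i.e.\ to the vanishing of $\pi_X^{\mathrm{B}\operad P}\zeta$ on $K$; your treatment of the summands $d_V$ and $\chi\,\pi_X$ agrees with this. The gap is in the third summand $f\,V^\theta\,h$. Your naturality computation $\pi_X^{\mathrm{B}\operad P}\,V^\theta\,h=X^\theta\,\pi_X\,h=0$ is correct but addresses the wrong composite, and the bridge you propose --- that $\pi_X^{\mathrm{B}\operad P}\circ f$ factors through $\pi_X^{\mathrm{B}\operad P}$ --- is not only unproved but false in general. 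Since $f$ is the algebra morphism extending $\phi$, such a factorization is equivalent, by restricting to generators, to the vanishing of $\pi_X^{\mathrm{B}\operad P}\phi$ on $K$, i.e.\ (using $\phi=\mathrm{id}_V-\chi h$) to $\pi_X^{\mathrm{B}\operad P}\,\chi\, h=0$. But $\pi_X^{\mathrm{B}\operad P}$ acts by post-composing the leaf values with $\pi_X^{\otimes n}$, whereas in $\chi h$ the homotopy $h$ sits on the \emph{input} side: for $w\in K$ the component of $\pi_X^{\mathrm{B}\operad P}\phi(w)$ on a one-node tree labelled by $s\mu$, $\mu\in\operad P(k)$, is $\pm\,\pi_X^{\otimes k}\bigl(\Delta_\mu(h(w))\bigr)$, where $\Delta_\mu$ is the co-operation of $V$ attached to $\mu$, and this has no reason to vanish because the co-operations of $V$ do not respect the splitting $V=X\oplus K$. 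So the justification ``the only non-natural ingredient is $h$, which is killed after projection by $\pi_X$'' conflates $\pi_X\circ h=0$ (which concerns the output of $h$) with the situation at hand (where $h$ feeds into $\chi$ and $\pi_X$ only hits the outputs of $\chi$); moreover the factorization claim is circular, being itself a vanishing statement of exactly the type you set out to prove.

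What the identity actually requires --- and what the paper's terse ``one can notice'' check amounts to --- is the vanishing of $\pi_X^{\mathrm{B}\operad P}\,f\,V^\theta\,h$ itself, and this must be read off from the specific shape of $\phi$ rather than from any descent property of $f$: the element $V^\theta h(v)$ is the natural arity-one operation attached to $s^2 1$ applied to the generator $h(v)$, so, $f$ and $\pi_X^{\mathrm{B}\operad P}$ being algebra morphisms, the term reduces to $\pi_X^{\mathrm{B}\operad P}\phi(h(v))$; its trivial-tree component is $\pi_X h(v)=0$, while every nontrivial component of $\phi$ begins by applying $-h$ to its argument, so that these components of $\phi(h(v))$ factor through $h\circ h$. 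This input-side occurrence of $h$ twice (together with a suitable normalization of the contraction, or a direct check in the spirit of the paper) is the actual mechanism; the route through a general compatibility of $f$ with $\pi_X^{\mathrm{B}\operad P}$ does not exist.
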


\begin{proof}
Actually, $X^{\mathrm{B}\operad P}$ is the quotient/kernel in $\mathrm{B}\operad P$-algebras of the idempotent endomorphism $\pi_X^{\mathrm{B}\operad P}$ on $V^{\mathrm{B}\operad P}$. One can notice that the restriction to $V$ of $\pi_X^{\mathrm{B}\operad P}~D_\zeta ~ \pi_X^{\mathrm{B}\operad P}$ and $\pi_X^{\mathrm{B}\operad P} D_\zeta $ are equal:
$$
\pi_X^{\mathrm{B}\operad P}~\zeta ~ \pi_X = \pi_X^{\mathrm{B}\operad P}~\zeta.
$$
Thus 
$$
\pi_X^{\mathrm{B}\operad P}~D_\zeta ~ \pi_X^{\mathrm{B}\operad P}
= \pi_X^{\mathrm{B}\operad P}~D_\zeta 
$$
which proves the result.
\end{proof}

To conclude, we have a composition of morphisms of qp-complete curved $\mathrm{B}\operad P$-algebras
$$
(V^{\mathrm{B} \operad P}, D_\delta)
\xrightarrow{f} (V^{\mathrm{B} \operad P}, D_\zeta)
\twoheadrightarrow
(X^{\mathrm{B} \operad P}, D_\zeta).
$$

\subsubsection{The cooperad version of the homotopy transfer theorem for coalgebras}

\begin{theorem}\label{theoremhttcooperadcoalgebras}
Let $p : V \twoheadrightarrow X$ be an acyclic fibration of dg modules and let $\Delta_V: V \longrightarrow V^{\Omega \operad C}$ be a dg $\Omega \operad C$-coalgebra structure on $V$. There exists another dg $\Omega \operad C$-coalgebra structure
\[
\zeta_V: V \longrightarrow V^{\Omega \operad C}
\]
which projects onto $X$, together with an $\infty$-isotopy
\[
(V, \Delta_V) \rightsquigarrow (V, \zeta_V)
\]
of dg $\Omega \operad C$-coalgebras.
\end{theorem}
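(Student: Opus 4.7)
The proof will parallel that of Theorem~\ref{theoremhttcooperadalgebras}, with the technical construction of the preceding subsection replacing its algebra analogue and with the appropriate adjoint playing the role of the right adjoint used there. First, since $p: V \twoheadrightarrow X$ is an acyclic fibration of dg modules and every dg module is cofibrant, $p$ admits a section $s$, yielding a splitting $V \cong X \oplus K$ in the category of dg modules, where $K = \ker(p)$ is acyclic. Choose a contracting homotopy $h$ of $K$, extended by zero to $V$.

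Now apply the technical construction developed in the preceding paragraphs with $\operad P = \Omega \operad C$ and this data. This produces a qp-complete curved $\mathrm{B}\Omega\operad C$-algebra structure $D_\zeta$ on $V^{\mathrm{B}\Omega\operad C}$ together with a composable diagram
\[
(V^{\mathrm{B}\Omega \operad C}, D_\delta) \xrightarrow{f} (V^{\mathrm{B}\Omega \operad C}, D_\zeta) \twoheadrightarrow (X^{\mathrm{B}\Omega \operad C}, D_\zeta)
\]
of qp-complete curved $\mathrm{B}\Omega\operad C$-algebras, in which $f$ is an isomorphism (by the standard filtration argument) and the second map is induced by the projection $V \twoheadrightarrow X$.

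Next, apply the left adjoint $\eta^{!} : \mathrm{B}\Omega\operad C\text{-}\mathsf{alg} \longrightarrow \operad C\text{-}\mathsf{alg}$ of the adjunction from Subsection~\ref{section : Naturality of (co)algebras over a cooperad.}, associated to the unit map $\eta: \operad C \longrightarrow \mathrm{B}\Omega \operad C$. Because left adjoints between categories of algebras over cooperads commute with the cotensor (free) functors from dg modules, $\eta^{!}$ sends the cofree $\mathrm{B}\Omega\operad C$-algebra $V^{\mathrm{B}\Omega\operad C}$ to the cofree $\operad C$-algebra $V^{\operad C}$. We therefore obtain a diagram
\[
(V^{\operad C}, \tilde{D}_\delta) \xrightarrow{\tilde f} (V^{\operad C}, \tilde{D}_\zeta) \twoheadrightarrow (X^{\operad C}, \tilde{D}_\zeta)
\]
of qp-complete curved $\operad C$-algebras. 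Under the complete cobar-coalgebra correspondence $(V^\operad C, \tilde{D}_\delta) \cong \widehat{\Omega}_{\operad C}(V, \Delta_V)$, while $\tilde D_\zeta$ encodes the desired new dg $\Omega\operad C$-coalgebra structure $\zeta_V$ on $V$ (and likewise on $X$), in such a way that the projection $V \twoheadrightarrow X$ becomes a morphism of dg $\Omega \operad C$-coalgebras.

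To conclude, the isomorphism $\tilde f$ is precisely the datum of an $\infty$-morphism $(V, \Delta_V) \rightsquigarrow (V, \zeta_V)$, and it will be an $\infty$-isotopy provided its linear part equals $\id_V$. This is the main bookkeeping step: one traces through the construction and observes that the underlying map $\phi : V \to V^{\treemod(sM)}$ from which $f$ is built projects to $\id_V$ on $V \cong V^{\operad I}$, so that $\tilde f$ has identity linear part after passing through $\eta^{!}$. The real conceptual work has already been carried out in the preceding paragraphs (existence of the perturbed derivation $D_\zeta$ and its factoring through $X^{\mathrm{B}\Omega\operad C}$); what remains is to transport this structure from $\mathrm{B}\Omega\operad C$-algebras to $\operad C$-algebras via the left adjoint $\eta^{!}$, and to verify the identity-on-linear-part claim, which is the main potential obstacle here.
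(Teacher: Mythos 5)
Your proposal is correct and follows essentially the same route as the paper: split $V \cong X \oplus K$ via a section of $p$, feed this into the technical construction of the preceding subsection with $\operad P = \Omega\operad C$ to obtain the diagram $(V^{\mathrm{B}\Omega\operad C}, D_\delta) \xrightarrow{f} (V^{\mathrm{B}\Omega\operad C}, D_\zeta) \twoheadrightarrow (X^{\mathrm{B}\Omega\operad C}, D_\zeta)$, and then push it along the left adjoint induced by the unit $\operad C \to \mathrm{B}\Omega\operad C$ to land in quasi-free qp-complete curved $\operad C$-algebras, which encode the new coalgebra structure and the $\infty$-isotopy. The extra verification that the linear part of $\tilde f$ is the identity (since $\phi$ projects to $\id_V$ on $V \cong V^{\operad I}$) is exactly the observation the paper relies on implicitly.
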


\begin{proof}
Since $p$ is an acylic fibration of dg modules, it has a section $i$ and one can decompose $V$ as $X \oplus K$ where $K$ is the kernel of $p$. The paragraph just above gives us a diagram of qp-complete curved $\mathrm{B}\Omega \operad C$-algebras
$$
 (V^{\mathrm{B} \Omega \operad C}, D_\delta)
\xrightarrow{f} (V^{\mathrm{B} \Omega \operad C}, D_\zeta)
\twoheadrightarrow
(X^{\mathrm{B} \Omega \operad C}, D_\zeta).
$$
Applying the left adjoint functor from qp-complete $\mathrm{B}\Omega\operad C$-algebras to qp-complete curved $\operad C$-algebra that results from the unit map $\operad C \to \mathrm{B}\Omega\operad C$, we get diagram of curved $\operad C$-coalgebras
$$
 (V^{\operad C}, D_\delta)
\xrightarrow{\tilde f} (V^{\operad C}, D_\zeta)
\twoheadrightarrow
(X^{\operad C}, D_\zeta).
$$
In that context, $D_\zeta$ is the derivation on $V^{\operad C}$ that induces the expected dg $\Omega \operad C$-coalgebra structure $\zeta_V$ on $V$ and $\tilde f$ is the expected $\infty$-isotopy. 
\end{proof}

\subsubsection{The homotopy transfer theorem for coalgebras}

\begin{theorem}
Let $\operad Q$ be a cofibrant dg operad, let $p : V \twoheadrightarrow X$ be an acyclic fibration of dg modules and let $\Delta_V: V \longrightarrow V^{\operad Q}$ a dg $\operad Q$-coalgebra structure on $V$. There exists a dg $\operad Q$-coalgebra structure $\zeta_X$ on $X$, together with a zig-zag of quasi-isomorphisms
\[
(V, \Delta_V) \lqi \cdots \qi (X,\zeta_X)
\]
of dg $\operad Q$-coalgebras. Furthermore, the maps in this zig-zag are homotopic to $p$ in the model category of dg modules.
\end{theorem}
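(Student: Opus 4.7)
The plan is to mirror the strategy used in the proof of the corresponding theorem for algebras, replacing the cooperad-level statement \ref{theoremhttcooperadalgebras} with its coalgebraic counterpart \ref{theoremhttcooperadcoalgebras}. Set $\operad C \coloneqq \mathrm{B}(\operad E \otimes \operad Q)$, which is a quasi-planar conilpotent curved cooperad by Proposition \ref{prop: B(E otimes P) quasi-planar avec la bonne filtration}. The canonical arity-wise quasi-isomorphism $\pi: \Omega \operad C \longrightarrow \operad Q$ is an acyclic fibration of dg operads between cofibrant objects, hence it admits a section $s: \operad Q \longrightarrow \Omega \operad C$ with $\pi s = \id_{\operad Q}$.

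First pull back the $\operad Q$-coalgebra structure on $V$ along $\pi$ to obtain a dg $\Omega \operad C$-coalgebra $\pi^{\ast}(V, \Delta_V)$. Since $p: V \twoheadrightarrow X$ is an acyclic fibration of dg modules, Theorem \ref{theoremhttcooperadcoalgebras} produces a dg $\Omega \operad C$-coalgebra structure $\zeta_V$ on $V$ that projects onto a dg $\Omega \operad C$-coalgebra structure $\zeta_X$ on $X$, together with an $\infty$-isotopy $\pi^{\ast}(V, \Delta_V) \rightsquigarrow (V, \zeta_V)$. Transposing this isotopy across the complete bar-cobar adjunction of Theorem \ref{thm: complete bar-cobar is a Quillen equivalence} yields a strict morphism of dg $\Omega \operad C$-coalgebras $\pi^{\ast} V \longrightarrow \widehat{\mathrm{B}}_{\operad C} \widehat{\Omega}_{\operad C} (V, \zeta_V)$, which together with the unit of the adjunction and the projection induced by $p$ assembles into a zig-zag of quasi-isomorphisms
\[
\pi^{\ast} V \qi \widehat{\mathrm{B}}_{\operad C} \widehat{\Omega}_{\operad C} (V, \zeta_V) \lqi (V, \zeta_V) \qi (X, \zeta_X)
\]
in the category of dg $\Omega \operad C$-coalgebras.

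The final step is to apply the functor $s^{\ast}: \mathsf{dg}~\Omega \operad C\text{-}\mathsf{cog} \longrightarrow \mathsf{dg}~\operad Q\text{-}\mathsf{cog}$ from Subsection \ref{section : Naturality of (co)algebras over a cooperad.} to this zig-zag. Since $\pi s = \id$, one has $s^{\ast} \pi^{\ast} V \cong (V, \Delta_V)$, and $s^{\ast}(X, \zeta_X)$ provides the desired dg $\operad Q$-coalgebra structure on $X$. The functor $s^{\ast}$ does not alter the underlying dg modules, so it preserves quasi-isomorphisms, yielding the required zig-zag of quasi-isomorphisms in dg $\operad Q$-coalgebras. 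For the final assertion, the linear parts of $\infty$-isotopies and the unit of the complete bar-cobar adjunction restrict to identities on underlying dg modules, so each underlying arrow in the zig-zag is homotopic to the identity of $V$ except for the last one which is $p$ itself; hence the composite dg module morphism from $V$ to $X$ is homotopic to $p$. The only mildly delicate point is the existence of the section $s$, which reduces to the observation that $\pi$ is arity-wise degree-wise surjective because the counit $\epsilon: \Omega\mathrm{B}(\operad E \otimes \operad Q) \twoheadrightarrow \operad E \otimes \operad Q$ is surjective (the generators $s^{-1} s(\operad E \otimes \operad Q) \subset s^{-1}\overline{\mathrm{B}}(\operad E \otimes \operad Q)$ map identically onto $\operad E \otimes \operad Q$) and $\operad E \twoheadrightarrow \mathrm{u}\operad C\mathrm{om}$ is surjective in every arity.
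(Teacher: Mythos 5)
Your proposal is correct and follows essentially the same route as the paper: take $\operad C = \mathrm{B}(\operad E \otimes \operad Q)$, restrict the structure along $\Omega\operad C \to \operad Q$, apply the cooperad-level transfer theorem (Theorem \ref{theoremhttcooperadcoalgebras}) to get $\zeta_V$, $\zeta_X$ and the $\infty$-isotopy, and then restrict along a section of $\Omega\operad C \twoheadrightarrow \operad Q$, which exists because $\operad Q$ is cofibrant. You merely spell out a few steps the paper leaves implicit (the explicit four-term zig-zag through $\widehat{\mathrm{B}}_{\operad C}\widehat{\Omega}_{\operad C}(V,\zeta_V)$, the surjectivity of $\Omega\mathrm{B}(\operad E\otimes\operad Q)\to\operad Q$, and the identification of linear parts giving the homotopy to $p$), which is consistent with the paper's argument.
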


\begin{proof}
    Taking $\operad C$ to be the quasi-planar conilpotent dg cooperad $\mathrm{B}(\operad Q \otimes \operad E)$, Theorem \ref{theoremhttcooperadcoalgebras} yields a dg $\Omega \operad C$-coalgebra structure on $X$ together with a zig-zag of quasi-isomorphisms of dg $\Omega \operad C$-coalgebras
    $$
    (V, \Delta_V) \qi  \widehat{\mathrm{B}}_{\operad C}\widehat{\Omega}_{\operad C} (X, \zeta_X) \lqi (X, \zeta_X).
    $$
Moreover, the acyclic fibration of dg operads $\Omega \operad C \twoheadrightarrow \operad Q$ has a section since $\operad Q$ is cofibrant. Applying the related left adjoint forgetful functor from dg $\Omega \operad C$-coalgebras to dg $\operad Q$-coalgebras yields the expected zig-zag.
\end{proof}

\begin{remark}
This last result also follows from model-categorical arguments, as developed in \cite{FressePROP}. 
\end{remark}


\subsection{Further localisations and divided powers operations}
Let $\operad Q$ be an coadmissible dg operad and let $\C$ be a quasi-planar conilpotent curved cooperad. Let us consider a morphism of dg operads $f: \Omega \operad C \longrightarrow \operad Q$. We have two Quillen adjunctions

\[
\begin{tikzcd}[column sep=5pc,row sep=3pc]
          \mathsf{curv}~\C\text{-}\mathsf{alg}^{\mathsf{qp}\text{-}\mathsf{comp}} \arrow[r, shift left=1.1ex, "\widehat{\mathrm{B}}_{\C}"{name=F}]           
         &\mathsf{dg}~ \Omega \operad C \text{-}\mathsf{cog} \arrow[l, shift left=.75ex, "\widehat{\Omega}_{\C}"{name=U}] \arrow[r, shift left=1.1ex, "f_!"{name=A}]
         &\mathsf{dg}~\operad Q\text{-}\mathsf{cog}~. \arrow[l, shift left=.75ex, "f^*"{name=B}] \arrow[phantom, from=F, to=U, , "\dashv" rotate=90]\arrow[phantom, from=A, to=B, , "\dashv" rotate=90]
\end{tikzcd}
\]

Let us denote $\widehat{\Omega}_f$ the composite left adjoint and $\widehat{\mathrm{B}}_f$ the composite right adjoint. 

\begin{proposition}\label{proprightbousfield}
There exists a combinatorial model structure on qp-complete curved $\operad C$-algebras, called the $f$-model structure, transferred from that of dg $\operad Q$-coalgebras, determined by the following sets of morphisms

		\medskip
		
    \begin{enumerate}
        \item the set of $f$-fibrations is given by morphisms $g$ such that $\widehat{\mathrm{B}}_f(g)$ is a fibration,
        
        \medskip
        
        \item the set of $f$-weak-equivalences is given by morphisms $g$ such that $\widehat{\mathrm{B}}_f(g)$ is a weak equivalence.
        
        \medskip
        
        \item the set of $f$-cofibrations is determined by left-lifting property against all acyclic fibrations.
    \end{enumerate}
    
    		\medskip
    Moreover, this is a right Bousfield localisation of the canonical model structure transferred from dg $\Omega \C$-coalgebras. Meaning that the identity functor of qp-complete curved $\C$-algebra, where at the source they are endowed with the canonical model structure, and at the target with the $f$-model structure, is a right Quillen functor. 
\end{proposition}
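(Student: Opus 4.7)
The proof should follow the template of Proposition \ref{propleftbousfield}, with cofibrations and left Quillen functors replaced by fibrations and right Quillen functors. The main step is to establish the inclusions
\begin{equation*}
\{\text{canonical fibrations}\} \subseteq \{f\text{-fibrations}\}
\quad \text{and} \quad
\{\text{canonical weak-equivalences}\} \subseteq \{f\text{-weak-equivalences}\}.
\end{equation*}
The key observation is that $\widehat{\mathrm{B}}_f = f_! \circ \widehat{\mathrm{B}}_\C$ is a composition of two right Quillen functors: $\widehat{\mathrm{B}}_\C$ is right Quillen by Theorem \ref{thm: complete bar-cobar is a Quillen equivalence}, and $f_!$ is right Quillen, since both $\Omega \operad C$ (cofibrant, hence coadmissible by Proposition \ref{propadmissible}) and $\operad Q$ are coadmissible. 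The containment of fibrations is immediate, while for weak-equivalences one applies Ken Brown's lemma together with the observations that every qp-complete curved $\operad C$-algebra is canonically fibrant --- since fibrations are precisely the degree-wise epimorphisms by Proposition \ref{propfibration}, and every map to $0$ is surjective --- and that $\widehat{\mathrm{B}}_\C$, being right Quillen, sends fibrant objects to fibrant objects in dg $\Omega \operad C$-coalgebras.

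\textbf{Existence of the $f$-model structure.} Since $f$-fibrations contain the canonical fibrations, every qp-complete curved $\operad C$-algebra is $f$-fibrant. The natural path object of Proposition \ref{proppath}, namely the factorization $\L \xrightarrow{j} \widehat{D} \xrightarrow{p^\dag \amalg \Delta} \L \oplus \L$, is also a natural path object in the $f$-model structure: the map $p^\dag \amalg \Delta$ is a canonical fibration (hence an $f$-fibration), and $j$ is a canonical acyclic cofibration, in particular a canonical weak-equivalence (hence an $f$-weak-equivalence). Combined with the presentability of the category of qp-complete curved $\operad C$-algebras, the standard transfer criterion of Appendix \ref{appendixtransfer} then yields the transferred $f$-model structure.

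\textbf{Right Bousfield localization.} As a direct consequence of the first step, the identity functor from the canonical to the $f$-model structure preserves fibrations and acyclic fibrations, and is therefore right Quillen. To match the precise description of Proposition \ref{propleftbousfield}, one can further characterize the $f$-fibrations as precisely the degree-wise epimorphisms. Indeed, using the compatibility of the complete bar-cobar adjunctions with morphisms of (co)operads recorded in the previous section, there is a natural isomorphism $\widehat{\mathrm{B}}_f \cong \widehat{\mathrm{B}}_{f \circ \iota}$, so the underlying graded $\operad Q$-coalgebra of $\widehat{\mathrm{B}}_f(\L)$ is the cofree one $L^{\operad Q}(\L)$, which carries a canonical degree-wise surjective counit onto $\L$. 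Comparing $g$ and $\widehat{\mathrm{B}}_f(g)$ degree-wise through the naturality square of these counits, as in the proof of Proposition \ref{propfibration}, yields the characterization.

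\textbf{Main obstacle.} The delicate point is the inclusion of canonical weak-equivalences into $f$-weak-equivalences: the right adjoint $f_!$ on the coalgebra side is described only implicitly as an equalizer involving the comonads $L^{\operad Q}$ and $L^{\Omega \operad C}$, and does not manifestly preserve quasi-isomorphisms of underlying dg modules. The argument circumvents this by leveraging the strong fibrancy properties of the canonical model structure --- every object is fibrant, and $\widehat{\mathrm{B}}_\C$ takes values in fibrant dg $\Omega \operad C$-coalgebras --- which makes Ken Brown's lemma available and avoids any direct manipulation of $f_!$.
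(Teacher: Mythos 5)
Your proposal is correct and follows essentially the same route as the paper: establish that canonical fibrations and weak-equivalences are $f$-fibrations and $f$-weak-equivalences, deduce that every object is $f$-fibrant and that the path object of Proposition \ref{proppath} serves in the $f$-structure, and conclude existence by the transfer criterion of Appendix \ref{appendixtransfer}. The only divergence is in the localisation clause: the paper's proof handles it by showing that $f$-fibrations are degree-wise epimorphisms (so that, combined with the containment, the fibration classes coincide, as in Proposition \ref{propfibration}), whereas you derive the stated right-Quillen property of the identity directly from the two containments and relegate the epimorphism characterisation to a supplementary remark; both suffice for the statement as formulated. Your factorisation $\widehat{\mathrm{B}}_f = f_! \circ \widehat{\mathrm{B}}_{\C}$ together with Ken Brown's lemma and the fibrancy of all qp-complete curved $\C$-algebras supplies exactly the justification that the paper leaves implicit in its first sentence.
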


\begin{proof}
Any fibration and weak-equivalence in model structure transferred from dg $\Omega \C$-coalgebras is an $f$-fibration and an $f$-weak-equivalence. Hence, every object is fibrant and a natural path object is given by Proposition \ref{proppath}. This proves the existence of the transferred model structure. To prove that this is a right Bousfield localisation of that transferred from dg $\Omega \operad C$-coalgebras, it suffices to notice that fibrations are in particular degree-wise epimorphisms, in a similar way as shown in Propostion \ref{propfibration}.
\end{proof}

\textbf{Localizing at quasi-isomorphisms.} Let $\C$ be a quasi-planar conilpotent \textit{differential graded} cooperad. The cobar construction $\Omega \C$ is augmented since $\C$ has zero curvature. Let us denote $\nu: \Omega \C \longrightarrow \operad I$ the canonical morphism of dg operads given by the augmentation. We have the following adjunctions 

\[
\begin{tikzcd}[column sep=5pc,row sep=3pc]
          \mathsf{dg}~\C\text{-}\mathsf{alg}^{\mathsf{qp}\text{-}\mathsf{comp}} \arrow[r, shift left=1.1ex, "\widehat{\mathrm{B}}_{\C}"{name=F}]           
         &\mathsf{dg}~ \Omega \operad C \text{-}\mathsf{cog} \arrow[l, shift left=.75ex, "\widehat{\Omega}_{\C}"{name=U}] \arrow[r, shift left=1.1ex, "\mathrm{Prim}"{name=A}]
         &\mathsf{dg}~\text{-}\mathsf{mod}~, \arrow[l, shift left=.75ex, "\mathrm{Triv}"{name=B}] \arrow[phantom, from=F, to=U, , "\dashv" rotate=90]\arrow[phantom, from=A, to=B, , "\dashv" rotate=90]
\end{tikzcd}
\]

where the adjunction $\nu^* \dashv \nu_! $ is in fact given by the primitive elements functor $\mathrm{Prim}$ (which is $\nu_!$) and by the trivial structure functor $\mathrm{Triv}$ (which is $\nu^*$). Notice that since $\C$ has zero curvature, curved $\C$-algebras in pdg modules are precisely given by dg $\C$-algebras.

\begin{proposition}\label{prop: prim-triv weak equivalences are quasi-isos}
The set of $\nu$-weak-equivalences is precisely the set of quasi-isomorphisms of qp-complete dg $\C$-algebras.
\end{proposition}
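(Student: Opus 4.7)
The plan is to identify the composite functor $\widehat{\Omega}_\nu = \widehat{\Omega}_{\C} \circ \mathrm{Triv}$ explicitly as the free complete dg $\C$-algebra functor $X \mapsto X^{\C}$, and then conclude by uniqueness of adjoints that $\widehat{\mathrm{B}}_\nu$ is the forgetful functor to dg modules. Once this is done, the characterization of weak-equivalences becomes tautological.

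First, I would compute $\widehat{\Omega}_\nu(X) = \widehat{\Omega}_{\C}(\mathrm{Triv}(X))$ for a dg module $X$. By the explicit formula for the complete cobar construction recalled earlier, its underlying graded complete $\C$-algebra is $X^{\C}$, and the derivation is determined on the generators $X$ by the sum of $d_X$ and the composite
\[
X \xrightarrow{\Delta_X} X^{\Omega\C} \xrightarrow{(\id)^\iota} X^{\C},
\]
where $\iota: \C \to \Omega\C$ is the universal twisting morphism. Since $\mathrm{Triv}(X)$ has trivial $\Omega\C$-coalgebra structure, $\Delta_X$ factors through the unit as $(\id)^{\nu}: X \cong X^{\operad I} \to X^{\Omega\C}$, so the second piece of the derivation equals $(\id)^{\nu \circ \iota}$. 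The composite $\nu \circ \iota : \C \to \operad I$ vanishes: it is zero on the coaugmentation of $\C$ because $\iota$ is zero there, and on $\overline{\C}$ the map $\iota$ lands in $s^{-1}\overline{\C} \subset \Omega\C$, which is killed by the augmentation $\nu$. Therefore the second piece of the derivation is zero, and $\widehat{\Omega}_\nu(X) = X^{\C}$ with its natural differential, i.e. the free qp-complete dg $\C$-algebra on $X$.

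By uniqueness of adjoints, the composite right adjoint $\widehat{\mathrm{B}}_\nu$ must then coincide with the forgetful functor $U$ from qp-complete dg $\C$-algebras to dg modules: for every dg module $X$ and qp-complete dg $\C$-algebra $\L$, the classical free-forgetful adjunction gives natural isomorphisms
\[
\mathrm{Hom}_{\mathsf{dg}~\C\text{-}\mathsf{alg}^{\mathsf{qp}\text{-}\mathsf{comp}}}\!\left(\widehat{\Omega}_\nu(X), \L\right) = \mathrm{Hom}_{\mathsf{dg}~\C\text{-}\mathsf{alg}^{\mathsf{qp}\text{-}\mathsf{comp}}}\!\left(X^{\C}, \L\right) \cong \mathrm{Hom}_{\mathsf{dgMod}}(X, U(\L)),
\]
hence $\widehat{\mathrm{B}}_\nu \cong U$. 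Finally, by Proposition \ref{proprightbousfield}, a morphism $f$ of qp-complete dg $\C$-algebras is a $\nu$-weak-equivalence if and only if $\widehat{\mathrm{B}}_\nu(f) = U(f)$ is a quasi-isomorphism of dg modules, which is exactly the definition of a quasi-isomorphism of qp-complete dg $\C$-algebras. The only non-formal step in the whole argument is the vanishing $\nu \circ \iota = 0$, which is the main thing to verify carefully; everything else is abstract nonsense with adjunctions.
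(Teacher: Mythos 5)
Your proof is correct, and it establishes the same key fact as the paper, namely that the composite adjunction $\widehat{\Omega}_{\C}\circ\mathrm{Triv}\dashv\mathrm{Prim}\circ\widehat{\mathrm{B}}_{\C}$ is the free--forgetful adjunction between dg modules and qp-complete dg $\C$-algebras --- but you verify it on the left-adjoint side, whereas the paper's proof is the single assertion that $\mathrm{Prim}\circ\widehat{\mathrm{B}}_{\C}$ is isomorphic to the forgetful functor. Concretely, you compute $\widehat{\Omega}_{\C}(\mathrm{Triv}(X))\cong X^{\C}$ by observing that the twisting part of the cobar derivation vanishes, since the trivial coalgebra structure factors through $\nu$ and $\iota$ lands in the generators $s^{-1}\overline{\C}$, which $\nu$ kills, so $\nu\circ\iota=0$; you then transfer the identification to the right adjoints by uniqueness of adjoints. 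The paper instead identifies the right-adjoint composite directly, which amounts to knowing that the primitives of the quasi-cofree coalgebra $\widehat{\mathrm{B}}_{\C}\L$ recover $\L$ together with its differential. Your route has the advantage of being fully explicit: the complete cobar construction and $\mathrm{Triv}$ are given by formulas, while $L^{\Omega \C}$ and $\mathrm{Prim}$ are only defined via limits and equalizers, so the paper's one-line identification leaves more implicit; the cost is the extra, purely formal, passage through uniqueness of adjoints (using that $X^{\C}$ is qp-complete, which the paper provides, so the free--forgetful adjunction indeed restricts to the qp-complete subcategory). Both arguments conclude identically from the definition of $\nu$-weak-equivalences in Proposition \ref{proprightbousfield}, and your remark that $\nu\circ\iota=0$ is the only non-formal step is accurate.
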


\begin{proof}
The composition $\mathrm{Prim}~\widehat{\mathrm{B}}_{\C}$ is isomorphic to the forgetful functor from qp-complete dg $\C$-algebras to dg modules.
\end{proof}

\begin{corollary}
Let $\C$ be a quasi-planar conilpotent dg cooperad. The set of weak-equivalences in the canonical model structure on dg $\C$-algebras is contained in the set of quasi-isomorphims.
\end{corollary}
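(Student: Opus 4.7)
The plan is to mirror the one-line argument used for the analogous corollary in the dg-coalgebra setting. Since $\C$ has zero curvature, the dg operad $\Omega\C$ is augmented, so the canonical augmentation $\nu:\Omega\C \longrightarrow \operad I$ is well-defined and fits into the composite Quillen adjunction displayed just before Proposition~\ref{prop: prim-triv weak equivalences are quasi-isos}. Applying Proposition~\ref{proprightbousfield} to $f=\nu$ produces the $\nu$-model structure on qp-complete dg $\C$-algebras as a right Bousfield localisation of the canonical one; by Proposition~\ref{prop: prim-triv weak equivalences are quasi-isos}, the $\nu$-weak-equivalences are precisely the quasi-isomorphisms. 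Chaining the two statements gives the claimed inclusion.

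The key input underlying Proposition~\ref{prop: prim-triv weak equivalences are quasi-isos} is the natural isomorphism $\mathrm{Prim}\circ\widehat{\mathrm{B}}_\C \cong \mathrm{Forget}$, which follows from a short adjunction computation: for any dg module $X$, the trivial $\Omega\C$-coalgebra structure $\nu^*X$ carries no twisting contribution, so the complete cobar $\widehat{\Omega}_\iota\nu^*X$ is just the free qp-complete dg $\C$-algebra $X^{\operad C}$. The chain of isomorphisms $\mathrm{Hom}(X,\mathrm{Prim}\,\widehat{\mathrm{B}}_\iota \L) \cong \mathrm{Hom}(\nu^*X,\widehat{\mathrm{B}}_\iota\L) \cong \mathrm{Hom}(\widehat{\Omega}_\iota\nu^*X,\L) \cong \mathrm{Hom}(X,\L)$ then forces $\mathrm{Prim}\,\widehat{\mathrm{B}}_\iota \L \cong \L$ by Yoneda.

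I do not foresee a substantive obstacle: the statement is a formal consequence of the Bousfield-localisation machinery already assembled in the section. The only point potentially warranting a second glance is the containment of canonical weak-equivalences in $\nu$-weak-equivalences that one reads off from the phrase ``right Bousfield localisation''; in the paper's convention (identity from canonical to $\nu$ is right Quillen), this can either be absorbed into the meaning of a localisation, or else checked directly via Ken Brown's lemma applied to the right Quillen functor $\nu_!$, using the fact that every object in the essential image of $\widehat{\mathrm{B}}_\C$ is a quasi-cofree, and hence fibrant, dg $\Omega\C$-coalgebra.
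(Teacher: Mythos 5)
Your argument is exactly the paper's proof: apply Proposition \ref{proprightbousfield} to the augmentation $\nu\colon \Omega\C \to \operad I$ and combine it with Proposition \ref{prop: prim-triv weak equivalences are quasi-isos}, which identifies the $\nu$-weak-equivalences with quasi-isomorphisms. The only caveat concerns your optional fallback claim that quasi-cofree dg $\Omega\C$-coalgebras are fibrant --- the paper proves such a fibrancy characterisation only for curved $\C$-coalgebras in the transferred model structure, not for dg $\Omega\C$-coalgebras --- but this is immaterial here, since the containment of canonical weak-equivalences in $\nu$-weak-equivalences is already part of what Proposition \ref{proprightbousfield} asserts.
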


\begin{proof}
It suffices to apply Proposition \ref{proprightbousfield} to the morphism of dg operads $\nu: \Omega \operad C \longrightarrow \operad I$, combining it with Proposition \ref{prop: prim-triv weak equivalences are quasi-isos}.
\end{proof}

\textbf{Divided power operations in the homotopical setting.}
Let $\C$ be a quasi-planar conilpotent dg cooperad. By Proposition \ref{proprightbousfield}, the category of qp-complete dg $\C$-coalgebras admits a model category structure where 

\medskip
\begin{enumerate}
\item the set of fibrations is given by degree-wise epimorphisms;

\medskip

\item the set of weak-equivalences is given by quasi-isomorphisms;

\medskip

\item the set of cofibrations is given by maps with left lifting property with respect to acyclic fibrations.
\end{enumerate}

Let $(\L,\gamma_\L,d_\L)$ be a qp-complete dg $\C$-algebra. The structural map 

\[
    \gamma_\L: \prod_{n \geq 0} [\C(n), \L^{\otimes n}]^{\mathbb S_n} \longrightarrow \L~,
    \]
    \vspace{0.1pc}
    
comes from the invariants on the left-hand side, therefore divided power operations should appear. Nevertheless, since $\C$ is quasi-planar, there is a natural isomorphism

\[
\prod_{n \geq 0} [\C(n), \L^{\otimes n}]_{\mathbb S_n} \cong \prod_{n \geq 0} [\C(n), \L^{\otimes n}]^{\mathbb S_n}~,
\]
\vspace{0.1pc}

of dg modules induced by the norm map (Proposition \ref{prop: iso avec la norme}). Therefore no divided power operations appear at the algebraic level.

\medskip

These divided power operations do not disappear at the $\infty$-categorical level. The reason is that $\C(n)$ is a quasi-free dg $\kk[\mathbb{S}_n]$-module, which is furthermore \textit{projective} by Proposition \ref{prop: quasi-planaire implique S-projectif dans le cas dg}. Therefore we have that 

\[
\prod_{n \geq 0} [\C(n), \L^{\otimes n}]_{h\mathbb S_n} \not\simeq \prod_{n \geq 0} [\C(n), \L^{\otimes n}]_{\mathbb S_n} \cong \prod_{n \geq 0} [\C(n), \L^{\otimes n}]^{\mathbb S_n} \simeq \prod_{n \geq 0} [\C(n), \L^{\otimes n}]^{h\mathbb S_n} ~,
\]
\vspace{0.1pc}

where on the upmost left-hand side we consider \textit{homotopy coinvariants} and on the upmost right-hand side we consider \textit{homotopy invariants}. Indeed, the dg $\kk[\mathbb{S}_n]$-module $[\C(n), \L^{\otimes n}]$ is automatically cofibrant in the injective model structure by Proposition \ref{lemmatensored}, since $\C(n)$ is projective. This means that the $\infty$-category of qp-complete dg $\C$-algebras localized at quasi-isomorphisms behaves like an $\infty$-category of algebraic objects with \textit{divided power operations}. 


\newpage

\section{Linear duality}

\vspace{2pc}

The two bar-cobar Quillen adjunctions constructed so far are intertwined by linear duality adjunctions: they form a commuting square of Quillen adjunctions called the duality square. This allows us to show that for any cofibrant dg operad $\operad P$, the $\infty$-category of dg $\operad P$-algebras with degree-wise finite dimensional bounded below (resp. bounded above) homology and the $\infty$-category of dg $\operad P$-coalgebras with degree-wise finite dimensional bounded above (resp. bounded below) homology are equivalent. This section is based on the results of \cite{absolutealgebras}, which we extend to a positive characteristic setting. They play a key role in the companion paper \cite{deuxiemepapier} about formal moduli problems and will extended into the context of mapping coalgebras in \cite{mappingcoalgebrascharp}.

\medskip

\subsection{Lifting linear duality} Linear duality lifts to (co)algebras over an operad and to (co)algebras over a cooperad, always sending types of coalgebras to types of algebras.

\medskip

The linear duality functor
\[
\begin{tikzcd}[column sep=3.5pc,row sep=0.5pc]
\catgrmod{\kk} \arrow[r,"(-)^*"]
&\catgrmod{\kk}^\op\\
X \arrow[r,mapsto]
&X^\ast \coloneqq [X, \kk]~,
\end{tikzcd}
\]

lifts to pdg $\kk$-modules and to dg modules. 

\medskip

\textbf{The Sweedler dual functor.} The linear duality functor lifts to (co)algebras over an operad and admits an adjoint which generalizes the Sweedler dual functor of \cite{Sweedler}.

\begin{lemma}
Let $\operad P$ be a dg operad. The linear dual functor lifts to a functor 
\[
\begin{tikzcd}[column sep=3.5pc,row sep=0.5pc]
\catdgcog{\operad P} \arrow[r,"(-)^*"]
&\catdgalg{\operad P}^\op
\end{tikzcd}
\]
between dg $\operad P$-coalgebras and dg $\operad P$-algebras.
\end{lemma}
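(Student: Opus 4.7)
The plan is to construct, for any dg $\operad P$-coalgebra $V$, a dg $\operad P$-algebra structure $\gamma_{V^\ast}: \operad P \circ V^\ast \to V^\ast$ by dualizing the coalgebra structure map against a canonical natural transformation. Concretely, I would first build a natural transformation
\[
\nu_X : \operad P \circ X^\ast \longrightarrow (X^{\operad P})^\ast
\]
of endofunctors of dg modules. For each arity $n$, it sends an elementary tensor $p \otimes_{\mathbb S_n} (\varphi_1 \otimes \cdots \otimes \varphi_n) \in \operad P(n) \otimes_{\mathbb S_n} (X^\ast)^{\otimes n}$ to the functional on $X^{\operad P}$ that evaluates the $n$-th component of an element at $p$ and then applies $\varphi_1 \otimes \cdots \otimes \varphi_n$ via the lax monoidal structure of linear duality. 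Well-definedness on the $\mathbb S_n$-coinvariants uses the $\mathbb S_n$-equivariance built into $[\operad P(n), X^{\otimes n}]^{\mathbb S_n}$, and assembling over $n$ uses the canonical map $\bigoplus_n A_n^\ast \hookrightarrow (\prod_n A_n)^\ast$. I would then define
\[
\gamma_{V^\ast} : \operad P \circ V^\ast \xrightarrow{\nu_V} (V^{\operad P})^\ast \xrightarrow{(\Delta_V)^\ast} V^\ast.
\]

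Unitality of $\gamma_{V^\ast}$ would follow directly from the counitality of $\Delta_V$ together with the fact that $\nu_X$ is compatible with the units $\operad I \hookrightarrow \operad P$ and $(-)^{\operad P} \twoheadrightarrow (-)^{\operad I}$. For associativity, the central point is to establish the commutativity of the diagram
\[
\begin{tikzcd}[column sep=2.3pc]
\operad P \circ \operad P \circ X^\ast \ar[r, "\id \circ \nu"] \ar[d, "\gamma_{\operad P} \circ \id"']
& \operad P \circ (X^{\operad P})^\ast \ar[r, "\nu"]
& ((X^{\operad P})^{\operad P})^\ast \ar[d, "\varphi_{\operad P, \operad P}^\ast"]
\\
\operad P \circ X^\ast \ar[rr, "\nu"']
&& (X^{\operad P})^\ast,
\end{tikzcd}
\]
expressing that $\nu$ intertwines the monad $\operad P \circ -$ with the comonad $((-)^{\operad P})^\ast$ in the appropriate sense. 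Combined with the coassociativity axiom for $\Delta_V$, whose dual is precisely the comonad axiom for $(-)^{\operad P}$, this yields associativity of $\gamma_{V^\ast}$.

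Compatibility with differentials is automatic: $\nu$ is built from natural transformations at the level of underlying graded modules, all of which commute with $d$ by naturality, and $(\Delta_V)^\ast$ is a morphism of dg modules whenever $\Delta_V$ is. Functoriality on morphisms of dg $\operad P$-coalgebras $f: V \to W$ then follows from the naturality of $\nu$ combined with the standard functoriality of linear duality on dg modules, which makes $f^\ast: W^\ast \to V^\ast$ commute with the constructed structure maps. The main obstacle will be the verification of the compatibility square above: it unpacks to a statement about the interaction between operadic composition $\gamma_{\operad P}$ and iterated evaluation through the factors $[\operad P(n), X^{\otimes n}]^{\mathbb S_n}$, and requires careful bookkeeping of signs introduced by the lax monoidal structure of duality and of the invariance/coinvariance isomorphisms appearing in the composition product.
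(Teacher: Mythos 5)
Your construction is correct in substance and is really the same dualization the paper performs, just packaged at the level of the structural map $\Delta_V\colon V \to V^{\operad P}$ rather than operation by operation. The paper works with the equivalent description of a dg $\operad P$-coalgebra as a morphism of dg operads $\delta_V\colon \operad P \to \coEnd(V)$ and, for each $\mu \in \operad P(n)$, dualizes the decomposition map $\Delta_\mu\colon V \to V^{\otimes n}$ through the lax monoidal inclusion $(V^\ast)^{\otimes n} \rightarrowtail (V^{\otimes n})^\ast$; this amounts to composing $\delta_V$ with a canonical morphism of dg operads $\coEnd(V) \to \End(V^\ast)$, so equivariance, unitality, associativity and compatibility with differentials come for free. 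Your route through a natural transformation $\nu_X\colon \operad P \comp X^\ast \to (X^{\operad P})^\ast$ buys a cleaner statement of functoriality, but it forces you to verify by hand the intertwining of the monad $\operad P \comp (-)$ with duality, which is precisely the sign and invariants/coinvariants bookkeeping you flag.

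One concrete correction is needed in that verification: the key square you display does not typecheck. The dual of $\varphi_{\operad P,\operad P}\colon (X^{\operad P})^{\operad P} \to X^{\operad P \comp \operad P}$ is a map $(X^{\operad P \comp \operad P})^\ast \to ((X^{\operad P})^{\operad P})^\ast$, so it cannot be the right-hand vertical arrow $((X^{\operad P})^{\operad P})^\ast \to (X^{\operad P})^\ast$; moreover $(-)^{\operad P}$ is not a comonad (the paper stresses this), so there is no canonical map of that shape to dualize. The fix is to route through the intermediate object $(X^{\operad P \comp \operad P})^\ast$: define $\mu_X\colon (\operad P \comp \operad P)\comp X^\ast \to (X^{\operad P \comp \operad P})^\ast$ by the same evaluation formula as $\nu$, and check the two commutations $\nu_{X^{\operad P}}\circ(\id \comp \nu_X) = \varphi_{\operad P,\operad P}^\ast \circ \mu_X$ and $\nu_X \circ (\gamma_{\operad P}\comp \id) = ((\id)^{\gamma})^\ast \circ \mu_X$. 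Combining these with the naturality of $\nu$ applied to the dg-module map $\Delta_V$ and with the dual of the coassociativity axiom — which the paper states exactly in terms of $\varphi_{\operad P,\operad P}$ and $(\id)^{\gamma}$ — yields associativity of $\gamma_{V^\ast}$; unitality follows from counitality as you say. With this repair your argument goes through.
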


\begin{proof}
Let $C$ be a dg $\operad P$-coalgebra. Any $\mu$ in $\operad P$ gives a decomposition map
\[
\Delta_\mu: C \longrightarrow C^{\otimes n}~.
\]
And any such map induces the following composition map
\[
\gamma_\mu: (C^*)^{\otimes n} \rightarrowtail (C^{\otimes n})^{*} \xrightarrow{(\Delta_\mu)^*} C~.
\]
It can be checked that the collection of $\{\gamma_\mu\}$ induces a dg $\operad P$-algebra structure on $C^*$, and that this defines a functor.
\end{proof}

\begin{proposition}
Let $\operad P$ be a dg operad. There is a contravariant adjunction 
\[
\begin{tikzcd}[column sep=5pc,row sep=3pc]
         \catdgcog{\operad P} \arrow[r, shift left=1.1ex, "(-)^\ast"{name=A}]
         &\catdgalg{\operad P}^\op~, \arrow[l, shift left=.75ex, "(-)^\circ"{name=B}] \arrow[phantom, from=A, to=B, , "\dashv" rotate=-90]
\end{tikzcd}
\]
between dg $\Omega \operad C$-coalgebras and dg $\Omega \operad C$-algebras.
\end{proposition}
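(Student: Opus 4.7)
My intention is to produce the adjoint $(-)^\circ$ via the adjoint functor theorem in the presentable-category framework. Both $\catdgcog{\operad P}$ and $\catdgalg{\operad P}$ are locally presentable (as recalled earlier), so the existence of the right adjoint $(-)^\circ$ reduces to verifying that $(-)^*: \catdgcog{\operad P} \to \catdgalg{\operad P}^{\op}$ preserves all small colimits and is accessible. Equivalently, for each $A \in \catdgalg{\operad P}$ I need to show that the functor
\[
h_A \colon \catdgcog{\operad P}^{\op} \longrightarrow \mathsf{Set}, \qquad C \longmapsto \mathrm{Hom}_{\catdgalg{\operad P}}(A, C^*),
\]
is representable, in which case the representing object is declared to be $A^\circ$.

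The first step is to verify colimit preservation by contemplating the commutative square
\[
\begin{tikzcd}[column sep=3pc,row sep=3pc]
\catdgcog{\operad P} \ar[r, "(-)^*"] \ar[d, "U_{\mathrm{cog}}"'] & \catdgalg{\operad P}^{\op} \ar[d, "U_{\mathrm{alg}}^{\op}"] \\
\catdgmod{\kk} \ar[r, "(-)^*"'] & \catdgmod{\kk}^{\op}.
\end{tikzcd}
\]
The left vertical forgetful functor is a left adjoint (with right adjoint $L^{\operad P}$), hence preserves colimits; the right vertical functor preserves colimits because limits in $\catdgalg{\operad P}$ are created by the forgetful functor $U_{\mathrm{alg}}$, which is a right adjoint. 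The bottom horizontal linear duality on dg modules is its own left adjoint via the tensor--hom identification $\mathrm{hom}(X, Y^*) \cong \mathrm{hom}(X \otimes Y, \kk) \cong \mathrm{hom}(Y, X^*)$, so it preserves colimits. Combining these with the compatibility of the dualized coalgebra structure maps with limits of algebras, one concludes that the top horizontal $(-)^*$ preserves colimits. Equivalently, $h_A$ sends colimits in $\catdgcog{\operad P}$ to limits in $\mathsf{Set}$.

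For accessibility, I invoke the general fact that on a locally $\kappa$-presentable category, every cocontinuous functor is automatically $\kappa$-accessible; since $\catdgcog{\operad P}$ is locally $\kappa$-presentable for some small $\kappa$, this applies. The representability theorem for presentable categories then guarantees that the continuous accessible presheaf $h_A$ is representable, yielding the desired object $A^\circ \in \catdgcog{\operad P}$ and the adjunction isomorphism. The main subtlety here is that $\catdgalg{\operad P}^{\op}$ is not itself locally presentable, so one cannot cite the usual adjoint functor theorem between presentable categories; recasting the question as representability of a set-valued functor on $\catdgcog{\operad P}^{\op}$ sidesteps this obstruction cleanly. The only real work in the body of the proof will be unwinding the explicit description of $A^\circ$ in a useful way, but for the bare existence of the adjunction the representability argument suffices.
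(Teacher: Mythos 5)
Your argument is correct, but it is not the route the paper takes. The paper's proof is a one-line appeal to the adjoint lifting theorem of Appendix \ref{Appendix: Adjoint Lifting Theorem}: both $\catdgcog{\operad P}$ and $\catdgalg{\operad P}^{\op}$ sit (co)monadically over $\catdgmod{\kk}$ and $\catdgmod{\kk}^{\op}$, the lifted duality commutes with the forgetful functors over the dualizing adjunction on dg modules, and the (dual of the) lifting theorem then produces $(-)^\circ$ as an explicit (co)reflexive (co)equalizer of (co)free objects, whose existence is guaranteed by (co)completeness of the presentable categories involved. You instead prove bare existence: you check that $(-)^\ast$ is cocontinuous using that $U_{\mathrm{cog}}$ is a left adjoint, that linear duality of dg modules turns colimits into limits, and that $U_{\mathrm{alg}}$ is monadic, and then invoke representability of the continuous presheaves $h_A$ on the locally presentable category $\catdgcog{\operad P}$. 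Both approaches are sound; yours is slicker for pure existence, while the paper's lifting-theorem argument additionally gives a usable formula for $A^\circ$ (the Sweedler-type dual as an equalizer of cofree coalgebras), which is closer in spirit to how the functor is used later. Two small points of hygiene in your write-up: the step ``combining these ... one concludes that the top horizontal $(-)^\ast$ preserves colimits'' should be justified by saying that the monadic forgetful functor $U_{\mathrm{alg}}$ creates, hence reflects, limits, so it suffices to check the limit after forgetting; and the theorem you actually need is the special adjoint functor theorem (or Freyd representability plus the fact that a locally presentable category is cocomplete, co-wellpowered and has a strong generator), since $\catdgalg{\operad P}^{\op}$ is not presentable --- you correctly flag this, and the accessibility remark is then superfluous rather than wrong.
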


\begin{proof}
This follows directly from the adjoint lifting theorems of Appendix \ref{Appendix: Adjoint Lifting Theorem}.
\end{proof}

\textbf{Topological dual functor.} The linear dual functor also lifts to (co)algebras over a cooperad. It admits an adjoint, which can be though as a \textit{topological dual} type of functor.

\begin{lemma}
Let $\C$ be a quasi-planar conilpotent curved cooperad. The linear dual functor lifts to a functor 
\[
\begin{tikzcd}[column sep=3.5pc,row sep=0.5pc]
\catpdgcog{\operad C} \arrow[r,"(-)^*"]
&\left(\catpdgcompalg{\operad C}\right)^\op.
\end{tikzcd}
\]

between pdg $\C$-coalgebras and qp-complete pdg $\C$-algebras.
Moreover, such a functor sends curved pdg $\C$-coalgebras to qp-complete curved $\C$-algebras
\end{lemma}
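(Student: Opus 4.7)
The plan is to construct the pdg $\C$-algebra structure on $W^*$ by direct dualization, then establish qp-completeness via the quasi-planar ladder of $W$, and finally check the curvature condition in the curved case.

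First, I would dualize the coalgebra structural map $\Delta_W \colon W \to \C \comp W = \bigoplus_n \C(n) \otimes_{\mathbb{S}_n} W^{\otimes n}$. The canonical $\mathbb{S}_n$-equivariant natural transformation $(W^*)^{\otimes n} \to (W^{\otimes n})^*$, together with the tensor-hom adjunction, produces a map
\[
(W^*)^\C = \prod_n [\C(n), (W^*)^{\otimes n}]^{\mathbb{S}_n} \longrightarrow \prod_n [\C(n), (W^{\otimes n})^*]^{\mathbb{S}_n} \cong (\C \comp W)^*,
\]
and postcomposing with the linear dual of $\Delta_W$ gives the structural map $\gamma_{W^*} \colon (W^*)^\C \to W^*$. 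Associativity and unitality for $\gamma_{W^*}$ follow formally from coassociativity and counitality of $\Delta_W$ together with the naturality of the maps above; functoriality in $W$ is immediate.

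Second, for qp-completeness, I would invoke the quasi-planar ladder $\{F_n^{\qp}W\}_{n \in \omega}$ of $W$, defined in direct analogy with Corollary \ref{cor: quasi-planar ladder} by pullback along $F_n^{\qp}\C \hookrightarrow \C$, whose colimit in pdg $\C$-coalgebras is $W$. Since linear duality converts colimits of graded modules into limits, $W^* \cong \lim_n (F_n^{\qp}W)^*$ at the level of graded modules. Applying the construction of the first step to the pdg $F_n^{\qp}\C$-coalgebra $F_n^{\qp}W$ endows each $(F_n^{\qp}W)^*$ with a natural pdg $F_n^{\qp}\C$-algebra structure; viewed as a pdg $\C$-algebra through the fully faithful inclusion $\catpdgalg{F_n^{\qp}\C} \hookrightarrow \catpdgalg{\C}$, it is qp-complete because the completion coladder $\{F^k_{\qp}(-)\}$ stabilizes as soon as $k \geq n$. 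The transition maps $((F_{n+1}^{\qp}W)^* \to (F_n^{\qp}W)^*)_n$ are morphisms of pdg $\C$-algebras by naturality, so the graded limit lifts to a limit in pdg $\C$-algebras (where the forgetful functor creates limits). Since qp-complete pdg $\C$-algebras form a reflective subcategory of $\catpdgalg{\C}$, they are closed under limits, and hence $W^*$ is qp-complete.

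Third, for the curved case, I would dualize the curvature equation $(\theta \comp \id_W)\Delta_W = -d_W^2$. Under the canonical maps of the first step, the left-hand side transposes to $\gamma_{W^*}(W^*)^\theta$, while the right-hand side transposes to $d_{W^*}^2$ once the sign arising in the dual differential is accounted for; the resulting identity $\gamma_{W^*}(W^*)^\theta = d_{W^*}^2$ is precisely the curvature axiom of Definition \ref{def curved alg over a coop}. The main obstacle lies in the second step, namely verifying that the pdg $\C$-algebra structure on $W^*$ built directly in the first step genuinely coincides with the one induced by the limit presentation $W^* \cong \lim_n (F_n^{\qp}W)^*$, and that the coladder transition maps are morphisms of $\C$-algebras; both should follow from a careful unpacking of the naturality of the dualization construction.
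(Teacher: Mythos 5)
Your proposal is correct and follows essentially the same route as the paper: the structural map $\gamma_{W^*}$ is obtained by the same composite $(W^*)^{\C} \rightarrowtail (\C \comp W)^* \xrightarrow{(\Delta_W)^*} W^*$, qp-completeness is deduced exactly as in the paper by dualizing the quasi-planar ladder $W \cong \colim{i}\, F_i^{\qp}W$ to $W^* \cong \lim_i (F_i^{\qp}W)^*$ and noting each $(F_i^{\qp}W)^*$ is an $F_i^{\qp}\C$-algebra hence qp-complete, and the curved case is the same dualization of the curvature equation that the paper dispatches by direct inspection. The additional verifications you flag (that the limit presentation lifts to pdg $\C$-algebras and agrees with the directly constructed structure, and closure of qp-complete algebras under limits) are routine points the paper leaves implicit.
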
 

\begin{proof}
Let $W$ be a pdg $\C$-coalgebra. The following map
\[
\gamma_{W^*}: \prod_{n \geq 0} [\C(n), (W^*)^{\otimes n}]^{\mathbb S_n} \rightarrowtail \prod_{n \geq 0} [\C(n), (W^{\otimes n})^{*}]^{\mathbb S_n} \xrightarrow{(\Delta_W)^*} W^*
\]
can be shown to induce a pdg $\C$-algebra structure on $W^*$. Let us check that $W^*$ is qp-complete. 

\medskip

It follows from Corollary \ref{cor: quasi-planar ladder} that $W$ can be written as the following colimit
\[
W \cong \colim{i \in \omega}~ F_{i}^{\qp} W 
\]

where $F_{i}^{\qp} W$ is the image of $W$ by the idempotent comonad induced by the inclusion $F_{i}^{\qp} \C \rightarrowtail \C$. Therefore 
\[
W^* \cong \lim_{i \in \omega}~ (F_{i}^{\qp} W)^*~. 
\]
If we show that $(F_{i}^{\qp} W)^*$ is qp-complete for every $i \in \omega$, then it is clear that $W$ is qp-complete. But this follows from the fact that since $F_{i}^{\qp} W$ is a curved $F_{i}^{\qp} \C$-coalgebra, its linear dual $(F_{i}^{\qp} W)^*$ is naturally a curved $F_{i}^{\qp} \C$-algebra. Therefore it is qp-complete. 

Finally, the assertion about curved $\C$-coalgebras follows by direct inspection.
\end{proof}

\begin{proposition}
Let $\C$ be a quasi-planar conilpotent curved cooperad. There is an adjunction 
\[
\begin{tikzcd}[column sep=5pc,row sep=3pc]
         \catcurvcog{\operad C} \arrow[r, shift left=1.1ex, "(-)^\ast"{name=A}]
         &\left(\catcurvcompalg{\operad C}\right)^\op~, \arrow[l, shift left=.75ex, "(-)^\vee"{name=B}] \arrow[phantom, from=A, to=B, , "\dashv" rotate=-90]
\end{tikzcd}
\]

between curved $\C$-coalgebras and qp-complete curved $\C$-algebras.
\end{proposition}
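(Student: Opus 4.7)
The plan is to apply the adjoint functor theorem for presentable categories: any colimit-preserving functor between presentable categories admits a right adjoint. Both $\catcurvcog{\operad C}$ and $\catcurvcompalg{\operad C}$ are known to be presentable from earlier in the paper, so it suffices to verify that the linear dual functor
\[
(-)^\ast : \catcurvcog{\operad C} \longrightarrow (\catcurvcompalg{\operad C})^{\op}
\]
defined in the preceding lemma preserves colimits, or equivalently sends colimits in the source to limits in $\catcurvcompalg{\operad C}$.

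I would first reduce this to a computation at the level of underlying pdg $\kk$-modules. Colimits in $\catcurvcog{\operad C}$ coincide with colimits in $\catpdgcog{\operad C}$, since the former embeds as a coreflective full subcategory of the latter by Subsection \ref{sectioncommuteslimitscolimitscog}, and these in turn agree with the corresponding colimits in $\catpdgmod{\kk}$ because the forgetful functor from $\catpdgcog{\operad C}$ is a left adjoint. Dually, limits in $\catcurvcompalg{\operad C}$ coincide with limits in $\catpdgalg{\operad C}$, since the former is reflective in the latter (Subsection \ref{sectioncommuteslimitscolimits}), and those agree with the limits in $\catpdgmod{\kk}$, since the forgetful from $\catpdgalg{\operad C}$ is a right adjoint.

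Now I would invoke the contravariant self-adjunction of linear duality on $\catpdgmod{\kk}$, which ensures that $(-)^\ast$ sends colimits of pdg modules to limits. Combining the three reductions above, the underlying pdg module of the dual of a colimit of curved $\C$-coalgebras $\lbrace W_i \rbrace$ coincides with the underlying pdg module of the limit of the family $\lbrace W_i^\ast \rbrace$ computed in $\catcurvcompalg{\operad C}$. The two qp-complete curved $\C$-algebra structures on this common underlying pdg module must then agree, because both are determined functorially by the same $\C$-coalgebra structure on $\mathrm{colim}_i W_i$ via the explicit formula from the preceding lemma. This establishes colimit preservation and produces the right adjoint $(-)^\vee$ by the adjoint functor theorem for presentable categories.

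The main obstacle is the bookkeeping of the several layers of reflective and coreflective subcategories, and tracking that the various structures line up correctly. The only subtle point is that the qp-completion on the algebra side is a \emph{left} adjoint, which is exactly the direction needed so that qp-complete curved $\C$-algebras are stable under the relevant limits in $\catpdgalg{\operad C}$. A more explicit alternative would realize $\Lambda^\vee$ as the largest sub-curved $\C$-coalgebra of the linear dual $\Lambda^\ast$, in the spirit of the Sweedler finite dual $(-)^\circ$ constructed in the preceding proposition, but this is not required for the purely formal existence statement.
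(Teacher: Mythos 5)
Your argument is correct, but it takes a different route from the paper. The paper's proof is a one-liner appealing to the adjoint lifting theorem of its Appendix: the composite $\catcurvcog{\operad C} \to (\catcurvcompalg{\operad C})^{\op} \hookrightarrow \catcurvalg{\operad C}^{\op}$ is lifted over the (co)monadic forgetful functors to pdg modules, where linear duality already has an adjoint, and the resulting right adjoint is then restricted to qp-complete algebras (which is legitimate precisely because $(-)^{\ast}$ already lands in qp-complete objects). You instead verify cocontinuity of $(-)^{\ast}$ by hand — colimits of curved $\operad C$-coalgebras are created in pdg modules via the coreflective embedding into $\catpdgcog{\operad C}$ and comonadicity, limits of qp-complete curved $\operad C$-algebras are created in pdg modules via the two reflective embeddings and monadicity, and $[-,\kk]$ turns colimits into limits — and then invoke an adjoint functor theorem. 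Both are purely formal existence arguments resting on the same categorical facts from Subsections \ref{sectioncommuteslimitscolimitscog} and \ref{sectioncommuteslimitscolimits}; yours makes the colimit bookkeeping explicit, while the paper's delegates the coequaliser bookkeeping to the adjoint lifting theorem and avoids discussing qp-completeness of limits by restricting the adjunction afterwards.

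One point of precision you should fix: the target of your functor is $(\catcurvcompalg{\operad C})^{\op}$, and the opposite of a presentable category is essentially never presentable, so the adjoint functor theorem "for colimit-preserving functors between presentable categories" does not apply verbatim. What saves the argument is that only the source needs to be presentable: a presentable category is cocomplete, co-wellpowered and has a strong generator, so the Special Adjoint Functor Theorem applies to any cocontinuous functor out of it into a locally small category, which is exactly your situation. With that citation corrected, and phrasing the final identification as "the canonical comparison map $(\mathrm{colim}_i\, W_i)^{\ast} \to \lim_i W_i^{\ast}$ is an isomorphism because it is one on underlying pdg modules and the forgetful functor is conservative", your proof is complete.
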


\begin{proof}
by the adjoint lifting theorem (see Appendix \ref{Appendix: Adjoint Lifting Theorem}) the composite functor
$$
\catcurvcog{\C} \to \left(\catcurvcompalg{\C}\right)^\op
\hookrightarrow \catcurvalg{\C}^\op
$$
has a right adjoint. The expected right adjoint is just its restriction to complete algebras.
\end{proof}

\begin{remark}
It can be shown that the adjunction in the above proposition is given by the same functors as the adjunction 
\[
\begin{tikzcd}[column sep=5pc,row sep=3pc]
         \catpdgcog{\operad C} \arrow[r, shift left=1.1ex, "(-)^\ast"{name=A}]
         &\left(\catpdgcompalg{\operad C}\right)^\op~. \arrow[l, shift left=.75ex, "(-)^\vee"{name=B}] \arrow[phantom, from=A, to=B, , "\dashv" rotate=-90]
\end{tikzcd}
\]
Showing this amounts to showing that the topological dual functor $(-)^\vee$ at the level of qp-complete pdg $\C$-algebras sends qp-complete curved $\C$-algebras to curved $\C$-coalgebras. This is the case since the arguments of \cite[Propositions 2.13 and 2.14]{absolutealgebras} generalize easily to positive characteristic, under the assumption that $\C$ is quasi-planar.
\end{remark}

\textbf{Algebraic duality square.} These two linear duality adjunctions intertwine the standard bar-cobar adjunction with the complete bar-cobar adjunction in the following way. 

\begin{proposition}[{After \cite[Theorem 2.16]{absolutealgebras}}]
Let $\alpha: \operad C \longrightarrow \operad P$ be a curved twisting morphism between a quasi-planar conilpotent curved cooperad $\C$ and a dg operad $\operad P$. The following square of adjunctions 
\[
\begin{tikzcd}[column sep=5pc,row sep=5pc]
\catdgalg{\operad P}^{\mathsf{op}} \arrow[r,"\mathrm{B}_{\alpha}^{\mathsf{op}}"{name=B},shift left=1.1ex] \arrow[d,"(-)^\circ "{name=SD},shift left=1.1ex ]
&\catcurvcog{\operad C}^{\mathsf{op}} \arrow[d,"(-)^*"{name=LDC},shift left=1.1ex ] \arrow[l,"\Omega_{\alpha}^{\mathsf{op}}"{name=C},,shift left=1.1ex]  \\
\catdgcog{\operad P}\arrow[r,"\widehat{\Omega}_{\alpha} "{name=CC},shift left=1.1ex]  \arrow[u,"(-)^*"{name=LD},shift left=1.1ex ]
&\catcurvcompalg{\operad C}~, \arrow[l,"\widehat{\mathrm{B}}_{\alpha}"{name=CB},shift left=1.1ex] \arrow[u,"(-)^\vee"{name=TD},shift left=1.1ex] \arrow[phantom, from=SD, to=LD, , "\dashv" rotate=0] \arrow[phantom, from=C, to=B, , "\dashv" rotate=-90]\arrow[phantom, from=TD, to=LDC, , "\dashv" rotate=0] \arrow[phantom, from=CC, to=CB, , "\dashv" rotate=-90]
\end{tikzcd}
\] 

is commutative in the following sense: right adjoints from the top right to the bottom left are canonically naturally isomorphic. This yields a canonical mate isomorphism between the left adjoints going the other way round.
\end{proposition}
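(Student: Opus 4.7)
The plan is to prove the commutativity of the square by establishing a natural isomorphism $\widehat{\mathrm{B}}_\alpha(W^*) \cong (\Omega_\alpha W)^\circ$ in $\catdgcog{\operad P}$ for every curved $\C$-coalgebra $W$, functorial in $W$. The approach is via the Yoneda lemma: it suffices to check that the two functors $\catdgcog{\operad P}^{\mathsf{op}} \to \categ{Set}$ they corepresent agree naturally, and this reduces in turn to a symmetric identification of sets of twisting morphisms under linear duality.

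For any dg $\operad P$-coalgebra $V$, I would chain the adjunctions along both paths around the square. Going along the bottom and the right-hand side, the complete bar--cobar adjunction and the twisting morphism description yield
\begin{align*}
\mathrm{Hom}_{\catdgcog{\operad P}}(V, \widehat{\mathrm{B}}_\alpha(W^*))
&\cong \mathrm{Hom}_{\catcurvcompalg{\operad C}}(\widehat{\Omega}_\alpha V, W^*) \\
&\cong \mathrm{Tw}^\alpha(V, W^*),
\end{align*}
where the Sweedler-type dual $W^*$ is qp-complete by the topological dual adjunction established above. Going instead along the top and the left-hand side, the Sweedler dual adjunction followed by the standard bar--cobar adjunction gives
\begin{align*}
\mathrm{Hom}_{\catdgcog{\operad P}}(V, (\Omega_\alpha W)^\circ)
&\cong \mathrm{Hom}_{\catdgalg{\operad P}}(V^*, \Omega_\alpha W) \\
&\cong \mathrm{Hom}_{\catcurvcog{\operad C}}(W, \mathrm{B}_\alpha(V^*)) \\
&\cong \mathrm{Tw}^\alpha(W, V^*).
\end{align*}
The heart of the argument is then a natural bijection $\mathrm{Tw}^\alpha(V, W^*) \cong \mathrm{Tw}^\alpha(W, V^*)$: a degree zero graded map $\lambda : V \to W^*$ is the same datum as a degree zero graded map $\lambda^{\top} : W \to V^*$, both being encoded by a single element of $[V \otimes W, \kk]_0$, and a direct inspection of the explicit formulas defining the two Maurer--Cartan equations shows that they are exchanged under this transposition, because the structural composition $\comp$ on $\operad C \comp V$ is linearly dual to the structural cotensor on $(V^*)^{\operad C}$, and the same $\alpha$ intervenes on both sides. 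Yoneda in the variable $V$ then yields the desired natural isomorphism $\widehat{\mathrm{B}}_\alpha(W^*) \cong (\Omega_\alpha W)^\circ$, and the mate isomorphism $(-)^\vee \circ \widehat{\Omega}_\alpha \cong \mathrm{B}_\alpha \circ (-)^*$ between the left adjoints going the other way around follows formally from the uniqueness of adjoints.

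The main obstacle is the careful bookkeeping of signs and structural maps in the identification of the two Maurer--Cartan equations: one must verify that the curved twisting morphism equation for $\lambda : V \to W^*$ (involving the complete $\C$-algebra structure on $W^*$, itself induced from the $\C$-coalgebra structure on $W$ by dualisation) transforms, under linear transposition, into the twisting morphism equation for $\lambda^{\top} : W \to V^*$ (involving the dg $\operad P$-algebra structure on $V^*$, induced from the $\operad P$-coalgebra structure on $V$). This is essentially the compatibility between the pairings $V \otimes V^* \to \kk$ and $W \otimes W^* \to \kk$ and the structural maps on both sides; the quasi-planar hypothesis on $\operad C$ is needed precisely to guarantee that $W^*$ lies in $\catcurvcompalg{\operad C}$ and that the topological dual adjunction is well defined.
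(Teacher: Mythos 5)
Your proposal is correct, but it is organized around a different reduction than the paper's. The paper identifies the two right adjoints at the level of objects: it observes that $(\Omega_\alpha W)^\circ$ and $\widehat{\mathrm{B}}_\alpha (W^*)$ have the same underlying graded $\operad P$-coalgebra, because the Sweedler dual of the free graded algebra satisfies $(\operad P \comp W)^\circ \cong L^{\operad P}(W^*)$ as a formal consequence of how $(-)^\circ$ is constructed via the adjoint lifting theorem, and then the compatibility of the (co)derivations is left implicit. You instead identify the functors these objects represent: for each dg $\operad P$-coalgebra $V$ you unwind both composites of adjunctions to sets of twisting morphisms and reduce everything to the duality bijection $\mathrm{Tw}^{\alpha}(V, W^*) \cong \mathrm{Tw}^{\alpha}(W, V^*)$, then conclude by Yoneda and pass to the mate for the left adjoints. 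Your route has the advantage that the differentials are automatically accounted for (they are encoded in the twisting-morphism equations), whereas the paper's object-level argument must separately match the bar/cobar (co)derivations; the price is that the verification you defer — that linear transposition exchanges the two Maurer--Cartain-type equations, using that $\gamma_{W^*}$ is dual to $\Delta_W$, that $\gamma_{V^*}$ is dual to $\Delta_V$, and that conilpotency of $W$ and qp-completeness of $W^*$ make the relevant sums and products match up — is exactly the sign-and-structure bookkeeping the paper also leaves to the reader, so neither argument is more complete on this point. Your use of quasi-planarity (to know $W^*$ lands in $\catcurvcompalg{\operad C}$ and that the complete bar--cobar adjunction applies) is placed correctly.

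One small slip: in your second chain you wrote $\mathrm{Hom}_{\catdgalg{\operad P}}(V^*, \Omega_\alpha W)$, whereas the adjunction $(-)^* \dashv (-)^\circ$ gives $\mathrm{Hom}_{\catdgalg{\operad P}^{\mathsf{op}}}(V^*, \Omega_\alpha W) = \mathrm{Hom}_{\catdgalg{\operad P}}(\Omega_\alpha W, V^*)$; the omitted $\mathsf{op}$ matters for the variance, but your subsequent application of $\Omega_\alpha \dashv \mathrm{B}_\alpha$ and the final identification with $\mathrm{Tw}^{\alpha}(W, V^*)$ show you were using the correct orientation, so this is notational rather than a gap.
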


\begin{proof}
It follows from the fact that, for any curved $\C$-coalgebra $W$, there is a natural isomorphism
\[
(\Omega_\alpha W)^\circ \cong \widehat{\mathrm{B}}_{\alpha}W^*~,
\]
which comes from the natural isomorphism
\[
(\operad P \circ W)^{\circ} \cong L^{\operad P}(W^*)~,
\]
at the level of graded $\kk$-modules. This later isomorphism is a formal consequence of the construction of $(-)^\circ$. 
\end{proof}

\subsection{Homotopical duality squares} Let us fix a quasi-planar conilpotent curved cooperad $\C$. When we consider the duality square which corresponds to the canonical curved twisting morphism $\iota: \C \longrightarrow \Omega \C$, this square can be promoted in the following way: all the adjunctions in the square are Quillen adjunctions. 

\begin{proposition}[{After \cite[Theorem 2.22]{absolutealgebras}}]
The following square of adjunctions 
\[
\begin{tikzcd}[column sep=5pc,row sep=5pc]
\catdgalg{\Omega \operad C}^{\mathsf{op}} \arrow[r,"\mathrm{B}_{\operad C}^{\mathsf{op}}"{name=B},shift left=1.1ex] \arrow[d,"(-)^\circ "{name=SD},shift left=1.1ex ]
&\catcurvcog{\operad C}^{\mathsf{op}} \arrow[d,"(-)^*"{name=LDC},shift left=1.1ex ] \arrow[l,"\Omega_{\operad C}^{\mathsf{op}}"{name=C},,shift left=1.1ex]  \\
\catdgcog{\Omega \operad C}\arrow[r,"\widehat{\Omega}_{\operad C} "{name=CC},shift left=1.1ex]  \arrow[u,"(-)^*"{name=LD},shift left=1.1ex ]
&\catcurvcompalg{\operad C}~, \arrow[l,"\widehat{\mathrm{B}}_{\operad C}"{name=CB},shift left=1.1ex] \arrow[u,"(-)^\vee"{name=TD},shift left=1.1ex] \arrow[phantom, from=SD, to=LD, , "\dashv" rotate=0] \arrow[phantom, from=C, to=B, , "\dashv" rotate=-90]\arrow[phantom, from=TD, to=LDC, , "\dashv" rotate=0] \arrow[phantom, from=CC, to=CB, , "\dashv" rotate=-90]
\end{tikzcd}
\] 

is made of Quillen adjunctions, when one considers the right/left transferred structure from dg modules on the left hand side, and where one considers the transferred structures along the bar-cobar adjunctions on the right hand side. 
\end{proposition}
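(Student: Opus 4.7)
The plan is to verify the Quillen property for each of the four adjunctions in the square. The two horizontal adjunctions are already known to be Quillen equivalences by Theorem \ref{thm: the bar-cobar is a Quillen equivalence} and Theorem \ref{thm: complete bar-cobar is a Quillen equivalence}, and the top row inherits this from its non-opposite version by swapping the roles of fibrations and cofibrations when passing to opposite model categories. So the core of the argument is to handle the two vertical linear-duality adjunctions.

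For the left column, I would check that $(-)^*: \catdgcog{\Omega\C} \to \catdgalg{\Omega\C}^{\mathsf{op}}$ is left Quillen by direct verification. Cofibrations in $\catdgcog{\Omega\C}$ are the degree-wise injections, and their linear duals are degree-wise surjections, that is, fibrations in $\catdgalg{\Omega\C}$, which are precisely the cofibrations in $\catdgalg{\Omega\C}^{\mathsf{op}}$. Both model structures have quasi-isomorphisms as weak equivalences, and linear duality over the field $\kk$ is exact, so weak equivalences are preserved. Hence $(-)^*$ is left Quillen, and the right adjoint $(-)^\circ$ is automatically right Quillen.

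For the right column, I would instead verify that $(-)^*: \catcurvcog{\C}^{\mathsf{op}} \to \catcurvcompalg{\C}$ is right Quillen, so that its left adjoint $(-)^\vee$ will be left Quillen. Fibrations in $\catcurvcog{\C}^{\mathsf{op}}$ correspond to cofibrations in $\catcurvcog{\C}$, which are degree-wise injections by Proposition \ref{propcofibrations}; their linear duals are degree-wise surjections, and hence fibrations of $\catcurvcompalg{\C}$ by Proposition \ref{propfibration}, so $(-)^*$ preserves fibrations. The main obstacle is to show preservation of acyclic fibrations, equivalently of weak equivalences, and for this I would invoke the commutativity of the duality square, which supplies a natural isomorphism $\widehat{\mathrm{B}}_\C \circ (-)^* \cong (-)^\circ \circ \Omega_\C^{\mathsf{op}}$ between the two right-adjoint compositions running from the top right to the bottom left.

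Given a weak equivalence $f$ in $\catcurvcog{\C}$, the map $\Omega_\C(f)$ is a quasi-isomorphism of dg $\Omega\C$-algebras. Every object of $\catcurvcog{\C}$ is cofibrant, since cofibrations are the degree-wise injections and in particular contain the initial map of each object, so the left Quillen functor $\Omega_\C$ sends $W$ to a cofibrant dg $\Omega\C$-algebra. Hence $\Omega_\C(f)$ is a quasi-isomorphism between cofibrant objects, and Ken Brown's lemma applied to the right Quillen functor $(-)^\circ$ established in the previous step yields that $(\Omega_\C(f))^\circ$ is a quasi-isomorphism. By the commutativity isomorphism, this is exactly $\widehat{\mathrm{B}}_\C(f^*)$, and the characterisation of weak equivalences in $\catcurvcompalg{\C}$ given in Theorem \ref{thm: existence model structure for absolute} then implies that $f^*$ is a weak equivalence, completing the verification that $(-)^*$ is right Quillen.
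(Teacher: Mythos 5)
Your proof is correct, and it rests on the same essential ingredients as the paper's: the Quillen property of the two bar--cobar adjunctions, the Quillen property of the Sweedler-type duality $(-)^\ast \dashv (-)^\circ$ on the left column, and the natural isomorphism $\widehat{\mathrm{B}}_{\operad C}\bigl((-)^\ast\bigr) \cong \bigl(\Omega_{\operad C}(-)\bigr)^\circ$ coming from the commutativity of the algebraic duality square. The difference is in the bookkeeping for the right-hand column. The paper uses that weak equivalences and fibrations of $\catcurvcompalg{\operad C}$ are created by the functor $\widehat{\mathrm{B}}_{\operad C}$ to dg $\Omega\operad C$-coalgebras, so it suffices to note that the composite $\catcurvcog{\operad C} \xrightarrow{\Omega_{\operad C}} \catdgalg{\Omega\operad C} \xrightarrow{(-)^\circ} \catdgcog{\Omega\operad C}^{\op}$ is left Quillen as a composite of two left Quillen functors; since $\Omega_{\operad C}$ also creates acyclic cofibrations, no Ken Brown argument and no separate fibration check are needed. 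You instead verify right-Quillenness of $(-)^\ast$ directly: fibrations are handled by dualizing degree-wise injections to degree-wise surjections (this tacitly invokes the paper's lemma that the linear dual of a curved $\operad C$-coalgebra is qp-complete, so that Proposition \ref{propfibration} applies), and weak equivalences via the same commutation isomorphism together with Ken Brown's lemma, which additionally requires observing that every curved $\operad C$-coalgebra is cofibrant so that $\Omega_{\operad C}(f)$ is a quasi-isomorphism between cofibrant objects. Both routes are valid; the paper's is shorter because the created classes make the acyclicity statement automatic, while yours is more explicit about the left-column verification, which the paper simply asserts as known.
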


\begin{proof}
We already know that the left vertical adjunction and the two horizontal adjunctions are all Quillen adjunctions. The only thing left to check is that the adjunction 
\[
\begin{tikzcd}[column sep=5pc,row sep=3pc]
         \catcurvcog{\operad C} \arrow[r, shift left=1.1ex, "(-)^\ast"{name=A}]
         &\left(\catcurvcompalg{\operad C}\right)^\op~, \arrow[l, shift left=.75ex, "(-)^\vee"{name=B}] \arrow[phantom, from=A, to=B, , "\dashv" rotate=-90]
\end{tikzcd}
\]

is also a Quillen adjunction. This follows from the fact that the model structure on $\catcurvalg{\operad C}^\op$ is transferred from that on $\catdgcog{\Omega \operad C}^\op$, and therefore it suffices to notice that the composite left adjoint functor  
\[
\begin{tikzcd}[column sep=3.5pc,row sep=0.5pc]
\catcurvcog{\operad C} \arrow[r,"\Omega_{\operad C}"]
&\catdgalg{\Omega \operad C} \arrow[r,"(-)^\circ"]
&\catdgcog{\Omega \operad C}^\op
\end{tikzcd}
\]
is left Quillen.
\end{proof}

\begin{remark}
Let $\operad P$ be a cofibrant dg operad. There is an analogue homotopical duality square where the bar-cobar adjunctions are the quasi-planar bar-cobar adjunctions of Subsection \ref{subsection: extended bar-cobar}.
\end{remark}

\begin{proposition}\label{prop: equivalence of infinity cat of finite dimensional objects}
Let $\operad P$ be a cofibrant dg operad. The adjunction induced at the level of $\infty$-categories on the localisations of the model categories
\[
\begin{tikzcd}[column sep=5pc,row sep=3pc]
         \catdgcog{\operad P}~[\mathrm{Q.iso}^{-1}] \arrow[r, shift left=1.1ex, "(-)^\ast"{name=A}]
         &\catdgalg{\operad P}~[\mathrm{Q.iso}^{-1}]^\op~, \arrow[l, shift left=.75ex, "(-)^\circ"{name=B}] \arrow[phantom, from=A, to=B, , "\dashv" rotate=-90]
\end{tikzcd}
\]
restricts to an equivalence 

\[
\begin{tikzcd}[column sep=5pc,row sep=3pc]
         \catdgcog{\operad P}~[\mathrm{Q.iso}^{-1}]_{\mp}^{\mathrm{f.d.}} \arrow[r, shift left=1.1ex, "(-)^\ast"{name=A}]
         &\catdgalg{\operad P}~[\mathrm{Q.iso}^{-1}]_{\pm}^{\mathrm{f.d.}\op}~, \arrow[l, shift left=.75ex, "(-)^\circ"{name=B}] \arrow[phantom, from=A, to=B, , "\dashv" rotate=-90]
\end{tikzcd}
\]
between the full sub-$\infty$-category $\catdgcog{\operad P}[\mathrm{Q.iso}^{-1}]_\mp^{\mathrm{f.d.}}$ of $\catdgcog{\operad P}~[\mathrm{Q.iso}^{-1}]$ spanned by coalgebras with degree-wise finite dimensional and bounded above (resp. bounded below) homology and the full sub-$\infty$-category $\catdgalg{\operad P}~[\mathrm{Q.iso}^{-1}]_{\pm}^{\mathrm{f.d.}}$ of $\catdgalg{\operad P}~[\mathrm{Q.iso}^{-1}]$ spanned by algebras with degree-wise finite dimensional and bounded below (resp. bounded above) homology. 
\end{proposition}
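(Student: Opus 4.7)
The plan is to exploit the homotopical duality square together with the Quillen equivalences furnished by the quasi-planar bar-cobar adjunctions, reducing the statement to the classical double-dual isomorphism at the level of dg modules. First, since $\operad P$ is cofibrant, Corollary \ref{cor: quasi-planar bar-cobar is an equivalence} yields a Quillen equivalence $\catdgalg{\operad P} \simeq \catcurvcog{\C}$, and the quasi-planar version of Theorem \ref{thm: complete bar-cobar is a Quillen equivalence} yields a Quillen equivalence $\catdgcog{\operad P} \simeq \catcurvcompalg{\C}$, where $\C = \mathrm{B}(\operad E \otimes \operad P)$. The commutativity of the homotopical duality square then shows that, through these equivalences of $\infty$-categories, the derived $((-)^\circ, (-)^*)$ adjunction on the left column corresponds to the derived $((-)^\vee, (-)^*)$ adjunction between curved $\C$-coalgebras and qp-complete curved $\C$-algebras. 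It therefore suffices to show that the latter restricts to an equivalence between the full sub-$\infty$-categories of objects whose underlying dg module has degree-wise finite dimensional and bounded (in the appropriate direction) homology.

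Second, I would recall that fibrant curved $\C$-coalgebras are quasi-cofree, of the form $\mathrm{B}_\iota A = (\C \circ A, d)$ for some dg $\Omega\C$-algebra $A$, and cofibrant qp-complete curved $\C$-algebras are quasi-free, of the form $\widehat\Omega_\iota V = (V^{\C}, d)$ for some dg $\Omega\C$-coalgebra $V$. On such objects, the quasi-planarity of $\C$, combined with the norm isomorphism of Proposition \ref{prop: iso avec la norme}, gives explicit identifications at the level of graded $\kk$-modules
\[
(\C \circ A)^* \cong (A^*)^{\C}~,
\]
whenever $A$ is degree-wise finite dimensional and bounded above, and dually $(V^{\C})^* \cong \C \circ V^*$ whenever $V$ is degree-wise finite dimensional and bounded below. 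These identifications upgrade to isomorphisms of qp-complete curved $\C$-algebras (resp. curved $\C$-coalgebras) once equipped with the differentials induced from the original structure maps, and they show in particular that the linear dual functors preserve the corresponding finiteness and boundedness conditions.

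Third, on fibrant-cofibrant replacements the derived unit and counit of the adjunction $((-)^\vee, (-)^*)$ become tractable: for $W = \mathrm{B}_\iota A$ with $A$ degree-wise finite dimensional and bounded above, the derived counit $(W^*)^\vee \to W$ is, up to weak-equivalence, the arity-wise double-dual map of $\C \circ A$, which is a quasi-isomorphism because for every degree-wise finite dimensional and bounded dg module $X$, the classical double-dual map $X \to (X^*)^*$ is an isomorphism. A symmetric argument handles the derived unit on the qp-complete curved $\C$-algebra side, using the identification $(V^{\C})^* \cong \C \circ V^*$ and the fact that products of finite dimensional dg modules become sums under the boundedness assumption. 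Composing with the Quillen equivalences then gives the desired equivalence on the level of $\operad P$-algebras and $\operad P$-coalgebras.

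The main obstacle will be to verify that the boundedness and degree-wise finite-dimensionality hypotheses genuinely persist through the bar and cobar constructions, and that cofibrant-fibrant replacements can be chosen within these subcategories. Concretely, one must check that a bounded-above degree-wise finite dimensional $\C$-coalgebra admits a fibrant replacement of the form $\mathrm{B}_\iota A$ for an $A$ enjoying the dual boundedness and finiteness, and this requires careful use of the filtrations developed in the earlier sections, together with the fact that passing between $\operad P$-algebras and $\Omega\C$-algebras via a cofibrant quasi-isomorphism of operads preserves these conditions on homology. Once these bookkeeping lemmas are in place, the equivalence reduces to the double-dual isomorphism at the level of the underlying dg modules, as in the characteristic zero argument of \cite{absolutealgebras}.
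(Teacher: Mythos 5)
Your overall strategy is the one the paper follows: reduce to operads of the form $\Omega\C$ with $\C = \mathrm{B}(\operad E \otimes \operad P)$ quasi-planar, use the duality-square identifications on bar/cobar resolutions (in particular $(\mathrm{B}_{\C} A)^\ast \cong \widehat{\Omega}_{\C}(A^\ast)$ for suitably finite $A$, together with the finiteness-free mate isomorphism $(\Omega_{\C} W)^\circ \cong \widehat{\mathrm{B}}_{\C} W^\ast$), and conclude from the complete bar-cobar Quillen equivalence and the double-dual isomorphism for degree-wise finite dimensional bounded dg modules. Your detour through the derived $((-)^\vee,(-)^\ast)$ adjunction between curved $\C$-coalgebras and qp-complete curved $\C$-algebras is more roundabout than the paper's direct computation of the derived $(-)^\circ$, but it is not where the problem lies.

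The genuine gap is exactly the step you defer to ``bookkeeping lemmas'': the sub-$\infty$-categories are defined by finiteness and boundedness of the \emph{homology}, not of the underlying complexes, and your explicit identifications such as $(\C \circ A)^\ast \cong (A^\ast)^{\C}$ only hold when $A$ itself is degree-wise finite dimensional and bounded. Appealing to ``careful use of the filtrations'' will not bridge this: no filtration argument produces, from an algebra whose homology is finite dimensional and bounded, a strictly finite dimensional bounded model, and fibrant replacements of the form $\mathrm{B}_{\iota} A$ cannot be forced into these subcategories either. The missing ingredient, and the one the paper uses, is the homotopy transfer theorem: since $\kk$ is a field, one splits $A \simeq \mathrm{H}(A) \oplus K$ with $K$ acyclic, transfers the dg $\Omega\C$-algebra structure to $\mathrm{H}(A)$, and obtains a zig-zag of quasi-isomorphisms $A \simeq \mathrm{H}(A)$; only after this replacement is $\mathrm{H}(A)$ genuinely degree-wise finite dimensional and bounded, so that $(\Omega_{\C}\mathrm{B}_{\C}\mathrm{H}(A))^\circ \cong \widehat{\mathrm{B}}_{\C}\widehat{\Omega}_{\C}(\mathrm{H}(A)^\ast) \simeq \mathrm{H}(A)^\ast$ and the double-duality argument applies. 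Without invoking the homotopy transfer theorem (or an equivalent strictification of the finiteness hypothesis), your reduction to the double-dual isomorphism does not apply to the objects actually in the subcategories, so as written the proof is incomplete at its decisive step.
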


\begin{proof}
Since the weak-equivalence of cofibrant dg operads $\Omega B (\operad E \otimes \operad P) \qi \operad P$ yields Quillen equivalences between their respective the categories of algebras and coalgebras by Theorem \ref{propqeqisoperads}, we can restrict to the case where $\operad P = \Omega \operad C$ for $\operad C$ a quasi-planar conilpotent curved cooperad, without any loss of generality.

\medskip

It suffices then to show that the two derived functors associated to the Quillen adjunction $(-)^\ast \dashv (-)^\circ$ interchange objects with degree-wise finite dimensional and bounded above homology with objects with degree-wise finite dimensional and bounded below homology. One the one hand, it is clear that the (derived) functor $(-)^\ast$ does so. On the other hand, let $A$ be a dg $\operad P$-algebra with degree-wise finite dimensional and bounded below homology (the bounded above case is analogue). 

\medskip

Let us first assume that $A$ is degree-wise finite dimensional and bounded below. Let us denote $\catdgalg{\operad P}_{+}^{\mathrm{f.d.}}$ and $\catdgcog{\operad P}_{-}^{\mathrm{f.d.}}$ respectively the full subcategory of dg $\operad P$-algebras who are degree-wise finite dimensional and bounded below, and the full subcategory of dg $\operad P$-coalgebras spanned by objects who are degree-wise finite dimensional and bounded above. The linear duality functor from dg $\operad P$-coalgebras to dg $\operad P$-algebras restricts to an equivalence of categories
$$
\catdgcog{\operad P}_{-}^{\mathrm{f.d.}} \xrightarrow{(-)^\ast} \catdgalg{\operad P}_{+}^{\mathrm{f.d.}\op}
$$
whose pseudo-inverse is also a lifting $(-)^\ast$ of the linear duality functor of dg modules. The following square diagram is commutative
\[
\begin{tikzcd}[column sep=3pc,row sep=3pc]
\catdgalg{\operad P}_{+}^{\mathrm{f.d.}} \ar[r, "\simeq",swap] \ar[r, "(-)^\ast"]
\ar[d, "B_{\operad C}"']
& \catdgcog{\operad P}_{-}^{\mathrm{f.d.}\op}
\ar[d, "\widehat\Omega_{\operad C}"]
\\
\catcurvcog{\operad C} \ar[r, "(-)^\ast"] 
& \left(\catcurvcompalg{\operad C}\right)^{\op}~.
\end{tikzcd}
\]
This gives a sequence of natural isomorphisms
\[
\begin{tikzcd}[column sep=1.5pc,row sep=0.5pc]
(\Omega_{\operad C}B_{\operad C} A)^\circ \arrow[r,"\cong"]
&\widehat B_{\operad C} ((B_{\operad C} A)^\ast) \arrow[r,"\cong"]
&\widehat B_{\operad C} \widehat\Omega_{\operad C} (A^\ast)~.
\end{tikzcd}
\]
Notice that $\eta_{A^*}: A^\ast \qi \widehat B_{\operad C} \widehat\Omega_{\operad C} A^\ast$ is a quasi-isomorphism of dg $\operad P$-coalgebras. Since $(\Omega_{\operad C}B_{\operad C} A)^\circ$ is weakly-equivalent to the value of the left derived functor of $(-)^\circ$ taken on $A$, then the derived unit of adjunction is a quasi-isomorphism for any $A$ which is degree-wise finite dimensional and bounded below.

\medskip

In the general case where $A$ has degree-wise finite dimensional and bounded below homology, the homotopy transfer theorem endows the dg module $\mathrm{H}(A)$ (with zero differential) with the structure of a dg $\operad P$-algebra and provides a zig-zag of weak-equivalences of dg $\operad P$-algebras relating $A$ to $\mathrm{H}(A)$. Thus the image of $A$ through the derived functor of $(-)^\circ$ is equivalent to that of $\mathrm{H}(A)$.
\end{proof}


\newpage

\appendix
\section{Adjoint lifting theorems, right and left transferred structures}
\subsection{Adjoint lifting theorem}\label{Appendix: Adjoint Lifting Theorem}
The goal of this appendix is to give recollections on the adjoint lifting theorem. We mainly follow the work of P. T. Johnstone in \cite{AdjointLifting}.

\medskip

Let us consider two categories $\categ C, \categ D$. Let $M$ be a monad on $\categ C$ and let $N$ be a monad on $\categ D$. Moreover, let us consider a commutative diagrams of functors
$$
\begin{tikzcd}[column sep=3pc,row sep=3pc]
\categ{Alg}_{\categ C}(M)
\ar[r, "R_m"] \ar[d, "U^M",swap]
& \categ{Alg}_{\categ D}(N)
\ar[d, "U^N"]
\\
\categ C
\ar[r, "R"']
& \categ D~,
\end{tikzcd}
$$
where $U^N$ and $U^M$ are the monadic forgetful functors and where $R$ is a right adjoint functor between the underlying categories. Notice that the commutativity of this diagram induces a natural transformation $\lambda: NR \longrightarrow RM$ that satisfies some coherence conditions.

\medskip

We define the natural transformation $\xi: MLN \longrightarrow ML$ as follows. First we consider the composition
\[
\begin{tikzcd}[column sep=3.5pc,row sep=0pc]
\varphi: L~N \arrow[r,"L~N~\eta_{RL}"]
&L~N~R~L \arrow[r,"L~\lambda~L"]
&L~R~M~L~ \arrow[r,"\epsilon_{LR}~M~L"]
&M~L
\end{tikzcd}
\]
where $\eta_{RL}: \id \longrightarrow R~L$ is the unit of adjunction and $\epsilon_{LR}:L~R \longrightarrow \id $ the counit. Now simply notice that $M \cong U_M~F_M$, where $F_M$ is the free $M$-algebra functor. Therefore by adjunction, we get a natural transformation
\[
\xi: F_M~L~N \longrightarrow F_M~L~,
\]
by taking the transpose of $\varphi$.

\begin{theorem}[Adjoint lifting theorem]\label{theoremadjointlifting}
Let us suppose that $R$ has a left adjoint $L$. Then $R_m$ has a left adjoint $L_m$ if and only if for every $N$-algebra $(A,\gamma_A)$, the reflexive pair

\[
\begin{tikzcd}
        F_M~L~N~(A) \ar[rr, shift left = 0.7ex, "\xi_A"]
        \ar[rr, shift right = 0.7ex, "F_M~L~\gamma_A"'] 
        &&F_M~L~(A)~,
\end{tikzcd}
\]
   
has a coequaliser in $\categ{Alg}_{\categ C}(M)$. If this is the case, then the image of the functor $L_M$ on $A$ is given by the above coequalizer.
\end{theorem}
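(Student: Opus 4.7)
The strategy is standard for adjoint lifting: exhibit the candidate left adjoint $L_m$ explicitly as the claimed coequaliser and verify the adjunction property via a hom-set bijection, while deriving necessity from the fact that any algebra is a canonical reflexive coequaliser of free algebras which a left adjoint must preserve.

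First I would treat the easier direction. Assume $L_m \dashv R_m$ exists. Composing adjunctions, $U^N R_m = R U^M$ has left adjoint $L_m F_N$, but it also has left adjoint $F_M L$. By uniqueness of left adjoints there is a canonical natural isomorphism $L_m F_N \cong F_M L$, and one checks by diagram-chasing against the unit/counit that the mate $\xi$ corresponds, under this isomorphism, to $L_m$ applied to the $N$-algebra action component of the Beck resolution. Any $N$-algebra $(A,\gamma_A)$ is the canonical reflexive coequaliser
\[
F_N N(A) \rightrightarrows F_N(A) \longrightarrow A
\]
in $\mathsf{Alg}_{\mathsf D}(N)$ (with the two maps being $F_N\gamma_A$ and the multiplication action $F_N\mu_{N,A}$ or equivalently the one coming from the free structure). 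Since $L_m$ is a left adjoint it preserves this coequaliser, and transporting through $L_m F_N \cong F_M L$ produces exactly the reflexive pair $F_M L N(A) \rightrightarrows F_M L(A)$ of the statement, with coequaliser $L_m(A)$ in $\mathsf{Alg}_{\mathsf C}(M)$.

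Next I would handle the substantive direction. Assume the reflexive coequaliser
\[
L_m(A) \;\coloneqq\; \mathrm{coeq}\bigl(F_M L N(A) \rightrightarrows F_M L(A)\bigr)
\]
exists in $\mathsf{Alg}_{\mathsf C}(M)$ for every $N$-algebra $A$. Functoriality of $L_m$ is routine from the naturality of $\xi$ and $F_M L$ together with the universal property. For the adjunction, I would compute, for any $M$-algebra $B$,
\[
\mathrm{Hom}_{\mathsf{Alg}_{\mathsf C}(M)}(L_m A, B) \;\cong\; \mathrm{eq}\bigl(\mathrm{Hom}_{\mathsf{Alg}_{\mathsf C}(M)}(F_M L A, B) \rightrightarrows \mathrm{Hom}_{\mathsf{Alg}_{\mathsf C}(M)}(F_M L N A, B)\bigr)
\]
by the defining universal property of the coequaliser, and then successively apply the adjunctions $F_M \dashv U^M$ and $L \dashv R$ to rewrite both sides as
\[
\mathrm{eq}\bigl(\mathrm{Hom}_{\mathsf D}(A, R U^M B) \rightrightarrows \mathrm{Hom}_{\mathsf D}(N A, R U^M B)\bigr).
\]
The key computation is that, under these adjunction bijections, the two maps of this equaliser correspond exactly to pre-composition with $\gamma_A$ and with the map $NA \to R U^M B = U^N R_m B$ induced from the $N$-algebra structure on $U^N R_m B$ (arising from $\lambda$ and the $M$-algebra structure on $B$). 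This is precisely what the definition of $\xi$ was designed to encode: chasing through the formula $\xi = \epsilon_{LR} M L \circ L\lambda L \circ LN\eta_{RL}$ and using the zig-zag identities identifies the transpose of $\xi_A$ with the $N$-algebra action map of $R_m B$ pulled back along $A$. The resulting equaliser is, by Beck's characterisation of algebra morphisms, exactly $\mathrm{Hom}_{\mathsf{Alg}_{\mathsf D}(N)}(A, R_m B)$.

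The main obstacle is the bookkeeping in this last step: verifying that the two maps in the computed equaliser really do agree with the $N$-algebra morphism condition relative to $R_m B$. This is essentially a mate/Beck-Chevalley diagram chase, combining the definition of $\lambda$ (as the mate of the commuting square), the definition of $\xi$, and the $M$-algebra structure of $B$ into the induced $N$-algebra structure on $U^N R_m B = R U^M B$; once one draws out the pentagon involving $\eta$, $\epsilon$, $\lambda$ and the multiplication of $M$, the identification drops out. Everything else is formal manipulation of coequalisers and adjunctions.
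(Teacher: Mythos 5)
Your proposal is correct and follows essentially the same route as the paper, whose proof is only a two-sentence sketch of the standard Johnstone argument: build $L_m(A)$ as the stated reflexive coequaliser and verify the adjunction by the hom-set/equaliser computation (using that the transpose of $\xi_A$ encodes the $N$-algebra structure of $R_m B$ via $\lambda$), with the converse obtained from the universal property of $L_m(A)$, which you make precise through the canonical Beck presentation preserved by any left adjoint. The only quibble is the notation $F_N\,\mu_{N,A}$ for the second leg of the Beck pair — it is the counit $\epsilon_{F_N A}$, whose underlying map is $\mu^N_A$ — a slip you already fix with your parenthetical remark.
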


\begin{proof}
If such coequalisers exists, then the natural construction that sends $A$ to this coequaliser can be checked to be the right adjoint of $R_m$. Conversely, if $R_m$ has a left adjoint $L_m$, then the universal property satisfied by $L_m(A)$ makes it the expected coequaliser.
\end{proof}

Now, let $M, N$ be two monads on a category $\categ C$ and let us consider a morphism of monads $f:N \longrightarrow M$ which leads to a functor $U_f:\catalg{M} \longrightarrow \catalg{N}$ above the identity functor of the category $\categ C$.

\begin{proposition}\label{propadjointliftingepi}
Let us suppose that $\categ C$ has reflexive coequalisers and these coequalisers are preserved by $M$, thus also by the forgetful functor $U_M: \catalg{M}\longrightarrow \categ C$. Let us also suppose that the map $f(X): N(X) \twoheadrightarrow M(X)$ is an epimorphism for every object $X$ in $\categ C$. Then the functor $U^f:\catalg{M} \longrightarrow \catalg{N}$ above $\categ C$ has a left adjoint $T_f$ and the following diagram
$$
\begin{tikzcd}[column sep=3pc,row sep=3pc]
    N(A) \arrow[dr, phantom, "\ulcorner", very near end]
    \ar[r,"f(A)"] \ar[d,"\gamma_A",swap]
    &M(A)
    \ar[d]
    \\
    A
    \ar[r]
    &T_f(A).
\end{tikzcd}
$$
is a pushout diagram in $\categ C$, for every $N$-algebra $(A,\gamma_A)$.
\end{proposition}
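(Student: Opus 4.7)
The plan is to first establish existence of $T_f$ via the adjoint lifting theorem (Theorem \ref{theoremadjointlifting}), and then to identify the resulting coequaliser with the asserted pushout, using the pointwise epimorphism hypothesis on $f$.

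For existence, I would apply Theorem \ref{theoremadjointlifting} to the identity adjunction $\id_{\categ{C}} \dashv \id_{\categ{C}}$ together with the monad morphism $f: N \longrightarrow M$. In this setting the natural transformation $\lambda: NR \longrightarrow RM$ of the appendix is simply $f$, so the map $\xi_A: F_M N(A) \longrightarrow F_M(A)$ has underlying arrow $\mu_M \circ M(f(A)): MN(A) \longrightarrow M(A)$, and the second map of the relevant reflexive pair is $F_M(\gamma_A)$ with underlying arrow $M(\gamma_A)$. The coequaliser of this pair exists in $\catalg{M}$ since $\categ{C}$ has reflexive coequalisers and $M$ preserves them, so $U_M$ creates them. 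Thus $T_f(A)$ exists, and its underlying object in $\categ{C}$ is the coequaliser of $\mu_M \circ M(f(A))$ and $M(\gamma_A)$.

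To identify this coequaliser with the asserted pushout, I denote by $\alpha: M(A) \twoheadrightarrow T_f(A)$ the coequaliser map and set $\beta \coloneqq \alpha \circ \eta_M(A)$. The commutation $\alpha \circ f(A) = \beta \circ \gamma_A$ follows by precomposing the coequalising relation with $\eta_M(N(A))$ and using naturality of $\eta_M$ together with the unit axiom $\mu_M \circ \eta_M(M(A)) = \id$. For the universal property, given $u: M(A) \longrightarrow B$ and $v: A \longrightarrow B$ in $\categ{C}$ satisfying $u \circ f(A) = v \circ \gamma_A$, the key step is to show that $u$ coequalises $\mu_M \circ M(f(A))$ and $M(\gamma_A)$. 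Precomposing each with $f(N(A)): NN(A) \longrightarrow MN(A)$ and using (i) the monad-morphism identity $\mu_M \circ M(f) \circ f(N) = f \circ \mu_N$, (ii) the hypothesis $uf(A) = v\gamma_A$ combined with naturality of $f$, and (iii) the $N$-algebra associativity $\gamma_A \circ \mu_N(A) = \gamma_A \circ N(\gamma_A)$, both composites reduce to $v \circ \gamma_A \circ N(\gamma_A)$. Since $f(N(A))$ is an epimorphism by hypothesis, the two original composites coincide, so $u$ factors uniquely as $u = w \circ \alpha$; then $w \circ \beta = u \circ \eta_M(A) = u \circ f(A) \circ \eta_N(A) = v \circ \gamma_A \circ \eta_N(A) = v$, using $\eta_M = f \circ \eta_N$ and the $N$-unit axiom.

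The main obstacle is this bridging step: the coequaliser produced by the adjoint lifting theorem is of a pair of arrows out of $MN(A)$, whereas the pushout datum only consists of commuting arrows out of $N(A)$. Promoting the latter to a coequalising condition on the former relies precisely on the pointwise epimorphism hypothesis at the object $N(A)$ (not merely at $A$), and so uses the full strength of the assumption that $f(X)$ is epi for \emph{every} $X \in \categ{C}$.
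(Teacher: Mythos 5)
Your proof is correct and takes essentially the same route as the paper: the paper also produces $T_f(A)$ as the reflexive coequaliser of Theorem \ref{theoremadjointlifting}, created by $U_M$ from the coequaliser of $\mu_M \circ M(f(A))$ and $M(\gamma_A)$ in $\categ C$, and then identifies its underlying object with the pushout, simply leaving as a remark ("we can notice") the comparison of universal properties that you carry out explicitly via the epimorphism $f(N(A))$. Your write-up just makes precise where the pointwise-epi hypothesis enters; no gap.
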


\begin{proof}
We can notice that this pushout in $\categ C$ is canonically isomorphic to the image the coequaliser of the reflexive pair of Theorem \ref{theoremadjointlifting} by the forgetful functor $U_M$, which creates and reflects reflexive coequilisers. 
\end{proof}

\subsection{Right and left transfers of model structures}
\label{appendixtransfer}
The goal of this appendix is to review the different results that allow one to transfer a model category structure along an adjunction. For the rest of this appendix, we consider an adjunction between presentable categories
\[
\begin{tikzcd}[column sep=5pc,row sep=3pc]
         \categ C \arrow[r, shift left=1.1ex, "L"{name=A}]
         &\categ D~. \arrow[l, shift left=.75ex, "R"{name=B}] \arrow[phantom, from=A, to=B, , "\dashv" rotate=-90]
\end{tikzcd}
\]


\subsubsection{Right transfer}
Let us suppose that $\categ C$ is endowed with a cofibrantly generated (thus combinatorial) model structure. The sets of maps in $\categ C$ that form its model structure induce a sets of maps in $\categ D$ via the adjunction $L \dashv R$.

\begin{definition}[Sets of maps in $\categ D$]\label{definition : right model structure}
    Let $f: X \longrightarrow Y$ be a morphism in $\categ D$. Let us call it
    \begin{itemize}
    \item a \textit{fibration} if $R(f)$ is a fibration;
    \item a \textit{weak-equivalence} if $R(f)$ is a weak equivalence;
    \item an \textit{acyclic fibration} if $R(f)$ is an acyclic fibration (equivalently if $f$ is both a fibration and a weak-equivalence);
    \item a \textit{cofibration} if it has the left lifting property with respect to acyclic fibrations;
    \item an \textit{acyclic cofibration} if it is both a cofibration and a weak equivalence;
    \item a \textit{generating cofibration} if it is the image through $L$ of a generating cofibration of $\categ C$;
    \item a \textit{generating acyclic cofibration} if it is the image through $L$ of a generating acyclic cofibration of $\categ C$;
    \item a \textit{left fibration-lifting map} is it has the left lifting property with respect to fibrations.
\end{itemize}
We will refer to them as \textit{the sets of maps in} $\categ D$ \textit{induced by the adjunction} $L \dashv R$.
\end{definition}

\begin{lemma}\label{lemma: right model structure}
The sets of maps in $\categ D$ induced by the adjunction $L \dashv R$ satisfy the following properties.
    \begin{enumerate}
        \item The weak-equivalences, the cofibrations and the fibrations are stable through composition and retracts. Furthermore, they contain all isomorphisms.
        \item The weak-equivalences follows the 2-out-of-3 rule and the 2-out-of-6 rule.
        \item Every commutative square in $\categ D$
        $$
        \begin{tikzcd}
            U
            \ar[r] \ar[d, "f"']
            & X \ar[d, "g"]
            \\
            Y \ar[r]
            & Z
        \end{tikzcd}
        $$
        admits a lifting whenever
        \begin{enumerate}
            \item $f$ is a cofibration and $g$ is an acylic fibration,
            \item or $f$ is a left fibration-lifting map and $g$ is a fibration.
        \end{enumerate}
        \item Every map $f: X \longrightarrow Y$ may be factored in a natural way as either
        \begin{enumerate}
            \item the composition of a cofibration followed by an acyclic fibration,
            \item or the composition of a left fibration-lifting map followed by a fibration.
        \end{enumerate}
        \item Cofibrations are retracts of transfinite compositions of pushouts of generating cofibrations.
        \item Left fibration-lifting maps are retracts of transfinite compositions of pushouts of generating acyclic cofibrations.
    \end{enumerate}
\end{lemma}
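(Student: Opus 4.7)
The plan is to deduce each property by either transporting across the adjunction $L \dashv R$ or by invoking the small object argument in the presentable category $\categ D$. Properties (1) and (2) are immediate consequences of the corresponding properties in $\categ C$: the classes of fibrations, weak-equivalences and acyclic fibrations in $\categ D$ are by definition the $R$-preimage of the corresponding classes in $\categ C$, and since $R$ preserves composition, retracts, and identities, all closure properties (including 2-out-of-3 and 2-out-of-6 for weak-equivalences) transfer directly. The classes of cofibrations and left fibration-lifting maps are defined by left lifting properties, and such classes are automatically stable under composition, retracts, pushouts, transfinite composition, and contain all isomorphisms by general categorical nonsense.

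The key observation that drives properties (3)--(6) is the adjunction identity: a morphism $g$ in $\categ D$ has the right lifting property against $L(i)$ if and only if $R(g)$ has the right lifting property against $i$ in $\categ C$. Consequently, using that the model structure on $\categ C$ is cofibrantly generated, a morphism $g$ in $\categ D$ is an acyclic fibration if and only if it has the right lifting property against all generating cofibrations of $\categ D$, and $g$ is a fibration if and only if it has the right lifting property against all generating acyclic cofibrations of $\categ D$. Property (3)(a) is then the definition of a cofibration, and property (3)(b) is the definition of a left fibration-lifting map.

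For properties (4), (5) and (6), I would apply the small object argument. Since $\categ C$ is presentable and its model structure is cofibrantly generated, the domains and codomains of the generating (acyclic) cofibrations of $\categ C$ are small with respect to a suitable regular cardinal. Their images under $L$ are again small in $\categ D$, because $\categ D$ is presentable and $L$ preserves colimits, so one can choose a common regular cardinal $\kappa$ with respect to which all domains of generating (acyclic) cofibrations of $\categ D$ are $\kappa$-small. The small object argument then produces, for any morphism $f: X \longrightarrow Y$ in $\categ D$, a natural factorization as a transfinite composition of pushouts of generating cofibrations followed by a map with the right lifting property against those generating cofibrations, i.e.\ an acyclic fibration, yielding (4)(a); and analogously (4)(b) with generating acyclic cofibrations. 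Property (5) then follows by the standard retract argument: any cofibration $f$ factors as a relative cell complex built from generating cofibrations followed by an acyclic fibration, and since $f$ has the left lifting property against this acyclic fibration, it is a retract of the relative cell complex. Property (6) is obtained by the identical retract argument applied to left fibration-lifting maps and generating acyclic cofibrations.

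The only delicate point, which is not really an obstacle, is verifying that the small object argument applies in $\categ D$: this reduces to checking that $L$ sends small objects to small objects, which follows from the presentability of $\categ D$ together with the fact that $L$ is a left adjoint. No transfer of weak equivalences or path/cylinder constructions is required at this stage; those are needed only to upgrade these properties to a full (semi-)model structure, which is the subject of the subsequent transfer theorem.
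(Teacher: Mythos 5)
Your proposal is correct and follows essentially the same route as the paper, whose proof consists precisely of the "straightforward check" (transport of closure properties along $R$, lifting properties by definition and by the adjunction identity) together with the small object argument and the retract argument. The only cosmetic remark is that the smallness needed for the small object argument is immediate from presentability of $\categ D$ — every object of a presentable category is small — so there is nothing to check about $L$ preserving small objects.
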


\begin{proof}
It manly follows from a straightforward check and from the small object argument.
\end{proof}

\begin{theorem}[Right acyclicity condition, \cite{Hovey}]\label{thmrightacyclicitycondition}
The following assertions about the sets of maps in $\categ D$ induced by the adjunction $L \dashv R$ are equivalent.
    \begin{enumerate}
        \item These maps define a combinatorial model category structure on $\categ D$.
        \item The set of left fibration-lifting maps of $\categ D$ is equal to the set of acyclic cofibrations.
        \item The set of left fibration-lifting maps of $\categ D$ is contained in the set of acyclic cofibrations of $\categ D$.
        \item The set of left fibration-lifting maps of $\categ D$ is contained in the set of weak-equivalences of $\categ D$.
    \end{enumerate}
\end{theorem}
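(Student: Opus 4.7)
My plan follows the classical recognition theorem for right-transferred (Kan-transferred) model structures. The implications $(1) \Rightarrow (2) \Rightarrow (3) \Rightarrow (4)$ are essentially formal: in any model category, acyclic cofibrations coincide with the maps having the left lifting property against all fibrations, so $(1) \Rightarrow (2)$; the remaining two implications are immediate from set-theoretic containment and from the fact that acyclic cofibrations are, by definition, weak-equivalences. The substantive content lies in the reverse implication $(4) \Rightarrow (1)$.

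Assuming $(4)$, the first step is to observe that every left fibration-lifting map in $\categ D$ is automatically a cofibration. Indeed, the generating acyclic cofibrations of $\categ D$ are images under $L$ of generating acyclic cofibrations of $\categ C$; by adjunction, each such $L(j)$ has the left lifting property against any acyclic fibration $g$ of $\categ D$, since $R(g)$ is an acyclic fibration in $\categ C$ and $j$ is a cofibration there. Since cofibrations in $\categ D$ are closed under retracts, pushouts, and transfinite compositions (they are defined by a lifting property), every left fibration-lifting map is a cofibration. Combined with $(4)$, it is an acyclic cofibration, which gives the containment from left fibration-lifting maps into acyclic cofibrations.

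For the reverse containment, given an acyclic cofibration $i: A \longrightarrow B$, I would apply the factorisation of Lemma \ref{lemma: right model structure}(4b) to write $i = p \circ j$ with $j$ a left fibration-lifting map and $p$ a fibration. The previous paragraph makes $j$ a weak-equivalence, whence $p$ is one too by the $2$-out-of-$3$ property, so $p$ is an acyclic fibration. Lifting the cofibration $i$ against $p$ along the square with top edge $j$ and identity bottom edge exhibits $i$ as a retract of $j$; since left fibration-lifting maps are closed under retracts, $i$ is itself a left fibration-lifting map. This establishes the equality between acyclic cofibrations and left fibration-lifting maps, i.e. $(2)$.

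With this equality in hand, the full package of model category axioms follows at once from Lemma \ref{lemma: right model structure}: the stability properties, $2$-out-of-$3$ (and $2$-out-of-$6$), and lifting of cofibrations against acyclic fibrations are listed there; lifting of acyclic cofibrations against fibrations and the factorisation as an acyclic cofibration followed by a fibration are precisely its statements for left fibration-lifting maps, transported via the identification just proved. Combinatoriality is immediate since the generating sets of $\categ D$ are defined as images of those of the combinatorial model structure on $\categ C$ and $\categ D$ is itself presentable. The heart of the argument, and the only genuinely non-formal step, is passing from $(4)$ to the retract argument of the previous paragraph; every concrete application of this theorem (as in Theorems \ref{thm: existence de structure de modèles} and \ref{thm: existence model structure for absolute}) reduces to checking precisely the acyclicity condition $(4)$, typically via the construction of a natural cylinder or path object.
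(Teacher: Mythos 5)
Your proposal is correct and follows essentially the same route as the paper: the formal implications $(1)\Rightarrow(2)\Rightarrow(3)\Rightarrow(4)$, then the retract argument (factor an acyclic cofibration as a left fibration-lifting map followed by a fibration, use 2-out-of-3 and the lifting of Lemma \ref{lemma: right model structure} to exhibit it as a retract), and finally the verification of the axioms via Lemma \ref{lemma: right model structure}. The only cosmetic difference is that you establish that left fibration-lifting maps are cofibrations via the generating (acyclic) cofibrations and closure properties, whereas this follows immediately from the fact that acyclic fibrations are in particular fibrations; both arguments are valid.
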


\begin{proof}
Clearly, (1) implies (2). Conversely, if (2) is satified, then by Lemma \ref{lemma: right model structure}, the sets of maps in $\categ D$ induced by the adjunction $L \dashv R$ form a model category structure. Therefore (1) and (2) are equivalent.

\medskip

It is clear that (2) implies (3) and that (3) implies (4). Let us assume (4) and prove (2). Let $f: U \to Y$ be an acylic cofibration. Let us decompose $f: X \longrightarrow Y$ as a left fibration-lifting map $a: X \longrightarrow U$ followed by a fibration $b: U \longrightarrow Y$. By the 2-out-of-3 rule and since $f$ and $a$ are weak equivalences, $b$ is an acyclic fibration. Thus the square
        $$
        \begin{tikzcd}[column sep=3pc,row sep=3pc]
            X
            \ar[r, "a"] \ar[d, "f"']
            & U \ar[d, "b"]
            \\
            Y \ar[r, "\id"]
            & Y
        \end{tikzcd}
        $$
    has a lifting $i$, which yields the following retract commutative diagram
        $$
        \begin{tikzcd}[column sep=3pc,row sep=3pc]
            X
            \ar[r,"\id"] \ar[d, "f"']
            & X \ar[d, "a"]
            \ar[r, "\id"]
            & X \ar[d, "f"]
            \\
            Y \ar[r, "i"']
            & U
            \ar[r,"b"']
            & Y~.
        \end{tikzcd}
        $$
      	Since $f$ is a retract of $a$, it is a left fibration-lifting map.
\end{proof}

\begin{proposition}[\cite{Hovey}]\label{propositionacyclicitycondition}
Let us suppose that

\medskip
\begin{enumerate}
        \item every object in $\categ D$ has a natural fibrant replacement functor, that is, there exists an endofunctor $F$ of $\categ D$ together with a natural transformation $e: \id \longrightarrow F$ so that for every object $X$, $F(X)$ is fibrant and the map $X \longrightarrow F(X)$ is a weak-equivalence;
        
\medskip

        \item every fibrant object $X$ has a path object, that is, the diagonal map $X \longrightarrow X \times X$ can be factored by a weak-equivalence followed by a fibration.
\end{enumerate}

\medskip

Then the sets of maps in $\categ D$ induced by the adjunction $L \dashv R$ endow $\categ D$ with a model category structure.
\end{proposition}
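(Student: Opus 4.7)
By Theorem~\ref{thmrightacyclicitycondition}, the task reduces to the right acyclicity condition: I must show that every left fibration-lifting map $f\colon X \longrightarrow Y$ in $\categ D$ has the property that $R(f)$ is a weak-equivalence in $\categ C$. The strategy is Quillen's classical path object argument, adapted so that the natural fibrant replacement $e\colon \id \longrightarrow F$ is applied only on the ``target side'' of the lifting diagrams; this avoids trying to replace $f$ itself, which would in general destroy its left lifting property.

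Concretely, since $F(X)$ is fibrant, the map $F(X) \longrightarrow \ast$ is a fibration, so the lifting property of $f$ produces a morphism $g\colon Y \longrightarrow F(X)$ satisfying $gf = e_X$. Next, choose a path object $F(Y) \xrightarrow{s} P \xrightarrow{(p_0,p_1)} F(Y)\times F(Y)$ of the fibrant object $F(Y)$. Using the naturality relation $F(f)\, e_X = e_Y f$, one checks that the diagram
$$
\begin{tikzcd}[column sep=4pc,row sep=3pc]
X \ar[r, "s\, e_Y f"] \ar[d, "f"'] & P \ar[d, "{(p_0, p_1)}"] \\
Y \ar[r, "{(F(f) g,\, e_Y)}"'] & F(Y) \times F(Y)
\end{tikzcd}
$$
commutes, and a lift $H\colon Y \longrightarrow P$ then exhibits a right homotopy from $F(f) g$ to $e_Y$ through the path object $P$.

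Applying $R$ is where the model structure on $\categ C$ enters: since $R$ preserves fibrations and weak-equivalences by the very definition of the induced sets in Definition~\ref{definition : right model structure}, and products as a right adjoint, the image $R(F(Y)) \xrightarrow{R(s)} R(P) \xrightarrow{(Rp_0, Rp_1)} R(F(Y))\times R(F(Y))$ is a path object for the fibrant object $R(F(Y))$ inside $\categ C$. Consequently $R(F(f))\, R(g)$ and $R(e_Y)$ are right-homotopic, and so $R(F(f))\, R(g)$ is a weak-equivalence together with $R(e_Y)$. Combined with the identity $R(g)\, R(f) = R(e_X)$, also a weak-equivalence, the 2-out-of-6 axiom in $\categ C$, applied to the composable triple $R(f),\, R(g),\, R(F(f))$, forces each of them to be a weak-equivalence; in particular $R(f)$ is one, completing the verification.

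The only technical point worth checking is that $R$ really does send the path object $P$ of $F(Y)$ to a path object of $R(F(Y))$ in the already-established model category $\categ C$; this follows at once from the definition of the induced fibrations and weak-equivalences of $\categ D$ and from $R$ being a right adjoint. Everything else is a routine diagram chase of naturality squares, so no genuine obstacle arises.
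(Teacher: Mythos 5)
Your proof is correct and is essentially the paper's argument: the same lift $g\colon Y\to F(X)$ with $gf=e_X$, the same lifting square into the path object of $F(Y)$, and the same 2-out-of-6 applied to the triple $f,g,F(f)$, concluding via Theorem \ref{thmrightacyclicitycondition}. The only differences are cosmetic — you apply $R$ and argue in $\categ C$ (where the paper works directly with the induced classes in $\categ D$, which satisfy 2-out-of-3/6 by Lemma \ref{lemma: right model structure}), and you phrase the middle step as "right-homotopic to a weak-equivalence implies weak-equivalence," which is exactly the paper's explicit 2-out-of-3 chase with the path-object projections.
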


\begin{proof}
Let $f: X \longrightarrow Y$ be a left fibration-lifting map. Its lifting property gives us a map $p : Y \longrightarrow F(X)$ such that $pf = e(X)$. Let $P$ be a path object of $Y$. It fits in the following commutative diagram
    $$
    \begin{tikzcd}[column sep=3pc,row sep=3pc]
        X
        \ar[r, "e(X)"] \ar[d, "f"']
        & F(X)
        \ar[r, "F(f)"]
        & F(Y)
        \ar[r, "\simeq"]
        & P
        \ar[d, two heads]
        \\
        Y
        \ar[rrr,"{(F(f) p, e(Y))}"']
        &&& F(Y) \times F(Y)~.
    \end{tikzcd}
    $$
This square has a lifting $g: Y \longrightarrow P$. Since each of the two projections $P \longrightarrow F(Y)$ are weak-equivalences and since the map $e(Y): Y \longrightarrow F(Y)$ is a weak-equivalence, the 2-out-of-3 rule tells us that $g$ and $F(f)p$ are also weak equivalences. Then $pf = e(X)$ is also a weak-equivalence. The 2-out-of-6 rule implies that the three maps $f, p , F(f)$ are also weak-equivalences. We conclude by Theorem \ref{thmrightacyclicitycondition}.
\end{proof}


\subsubsection{Left transfer}
Let us suppose that $\categ D$ is endowed with a cofibrantly generated model structure. The sets of maps in $\categ D$ that form a model structure induce sets of maps in $\categ C$ via the adjunction $L \dashv R$.

\begin{definition}[Sets of maps in $\categ C$]\label{definition : left model structure}
Let $f: X \longrightarrow Y$ be a morphism in $\categ C$. Let us call it
    \begin{itemize}
    \item a \textit{cofibration} if $L(f)$ is a cofibration;
    \item a \textit{weak-equivalence} if $L(f)$ is a weak-equivalence;
    \item an \textit{acyclic cofibration} if $L(f)$ is an acyclic cofibration (equivalently if $f$ is both a cofibration and a weak-equivalence);
    \item a \textit{fibration} if it has the right lifting property with respect to acyclic cofibrations;
    \item an \textit{acyclic fibration} if it is both a cofibration and a weak-equivalence;
    \item a \textit{right cofibration-lifting} map is it has the right lifting property with respect to cofibrations.
\end{itemize}

We will refer to them as \textit{the sets of maps in} $\categ C$ \textit{induced by the adjunction} $L \dashv R$.
\end{definition}

\begin{proposition}[\cite{MakkaiPare}]\label{propmakkaipare}
There exists a small set $\mathcal I$ of cofibrations of $\categ C$ so that cofibrations are retracts of transfinite compositions of pushouts of maps in $\mathcal I$. Similarly, there exists a small set $\mathcal J$ of acyclic cofibrations of $\categ C$ so that acyclic cofibrations are retracts of transfinite compositions of pushouts of maps in $\mathcal J$.

\medskip

We call $\mathcal I$ and $\mathcal J$ respectively the set of generating cofibrations and the set of generating acyclic cofibrations of $\categ C$.
\end{proposition}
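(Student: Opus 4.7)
The strategy is to reduce the statement to the theorem of Makkai--Paré on accessible subcategories of arrow categories in locally presentable categories. The key observation is that the classes of cofibrations and acyclic cofibrations of $\categ{C}$ are, by construction, the preimages under $L^{[1]} : \categ{C}^{[1]} \to \categ{D}^{[1]}$ of the analogous classes in $\categ{D}$.

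First I would pass to the arrow categories $\categ{C}^{[1]}$ and $\categ{D}^{[1]}$, which are again locally presentable, and note that the adjunction $L \dashv R$ lifts to an adjunction $L^{[1]} \dashv R^{[1]}$ between them. In particular $L^{[1]}$ is a left adjoint between locally presentable categories, hence accessible: there exists a small regular cardinal $\kappa$ such that $L^{[1]}$ preserves $\kappa$-filtered colimits and $\kappa$-presentable objects.

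Next, I would show that the classes $\mathrm{Cof}(\categ{D})$ and $\mathrm{ACof}(\categ{D})$ are accessible and accessibly embedded full subcategories of $\categ{D}^{[1]}$. For cofibrations this is standard: by the small object argument applied to the given generating set $\mathcal{I}_D$, every cofibration is a retract of an $\mathcal{I}_D$-cell complex, and the class of such retracts forms an accessible subcategory (see Makkai--Paré or Adámek--Rosický). For acyclic cofibrations, accessibility follows from the fact that in a cofibrantly generated model category the class of weak equivalences is accessibly embedded (a theorem of Rosický following Dugger and Smith), and the intersection of two accessible accessibly embedded subcategories of a locally presentable category is again accessible and accessibly embedded.

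Since the preimage of an accessible and accessibly embedded subcategory under an accessible functor between locally presentable categories is again accessible and accessibly embedded, the classes $\mathrm{Cof}(\categ{C}) = (L^{[1]})^{-1}(\mathrm{Cof}(\categ{D}))$ and $\mathrm{ACof}(\categ{C}) = (L^{[1]})^{-1}(\mathrm{ACof}(\categ{D}))$ are accessible and accessibly embedded in $\categ{C}^{[1]}$. Moreover they are closed under pushouts, transfinite compositions, and retracts, because $L$ preserves these colimits and cofibrations (resp. acyclic cofibrations) of $\categ{D}$ are closed under them. Applying the theorem of Makkai--Paré to each class then yields the desired small generating sets $\mathcal{I}$ and $\mathcal{J}$, taken for instance as sets of representatives of isomorphism classes of morphisms in the class between $\kappa$-presentable objects of $\categ{C}$.

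The main obstacle lies in establishing the accessibility of $\mathrm{ACof}(\categ{D})$, as it relies on the nontrivial fact that weak equivalences in a cofibrantly generated model category form an accessible class. Once that is granted, the remaining steps are formal manipulations of accessible functors and subcategories, for which the Makkai--Paré machinery applies verbatim.
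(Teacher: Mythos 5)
The paper does not actually prove this statement -- it is quoted as a known result with a citation to Makkai--Paré -- so there is no internal proof to compare against; what can be assessed is whether your reconstruction is sound. Its architecture (pass to arrow categories, note that cofibrations and acyclic cofibrations of $\categ D$ form accessible, accessibly embedded subcategories of $\categ D^{[1]}$, pull them back along the accessible functor $L^{[1]}$, observe closure under pushouts, transfinite compositions and retracts) is indeed the standard route used in the left-transfer literature, and those intermediate facts are all true, although your attributions are loose: accessibility of the class of $\mathcal I_D$-cofibrations and of the weak equivalences is due to Rosický/Dugger/Smith-type results for combinatorial model categories, not to Makkai--Paré, and for acyclic cofibrations you do not need the (harder) accessibility of weak equivalences at all, since $\mathrm{ACof}(\categ D)$ is itself the retract--cell closure of the generating acyclic cofibrations and is handled exactly like $\mathrm{Cof}(\categ D)$.

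The genuine gap is the final step. There is no theorem in Makkai--Paré asserting that an accessible, accessibly embedded, weakly saturated class of morphisms in a locally presentable category is the retract--cell closure of a small set, e.g.\ of its $\kappa$-presentable members; the book contains the limit/pseudo-pullback theorems you use earlier, but nothing about cell complexes or weak factorization systems. Accessibility only gives that every cofibration of $\categ C$ is a $\kappa$-filtered colimit \emph{in the arrow category} of cofibrations between $\kappa$-presentable objects, and such a filtered colimit of arrows is not visibly a retract of a transfinite composition of pushouts of those arrows; equivalently, if one tries the retract argument, lifts against the individual $\kappa$-presentable stages do not assemble to a lift against the colimit for free. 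Bridging exactly this point is the real content of the proposition, and it requires a further nontrivial input -- a Smith-type solution-set argument, the fat small object argument of Makkai--Rosický--Vokřínek, or the theorem that accessible weak factorization systems on locally presentable categories are cofibrantly generated -- none of which your sketch supplies or correctly cites. As written, the proof stops one genuine theorem short of the conclusion.
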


\begin{lemma}\label{lemma:left model structure}
The sets of maps in $\categ D$ induced by the adjunction $L \dashv R$ satisfy the following properties.
    \begin{enumerate}
        \item The weak-equivalences, the cofibrations and the fibrations are stable through composition and retracts, and they contain all isomorphisms.
        \item The weak-equivalences follow the 2-out-of-3 rule and the 2-out-of-6 rule.
        \item Every square in $\categ D$
        $$
        \begin{tikzcd}
            U
            \ar[r] \ar[d, "f"']
            & X \ar[d, "g"]
            \\
            Y \ar[r]
            & Z
        \end{tikzcd}
        $$
        has a lifting whenever 
        \begin{enumerate}
            \item $f$ is an acyclic cofibration and $g$ is a fibration,
            \item or $f$ is a cofibration and $g$ is a left right cofibration-lifting map.
        \end{enumerate}
        \item Every map $f: X \longrightarrow Y$ may be factored in a natural way as either
        \begin{enumerate}
            \item the composition of a cofibration followed by a right cofibration-lifting map,
            \item or the composition of an acyclic cofibration map followed by a fibration.
        \end{enumerate}
        \item Cofibrations are retracts of transfinite compositions of pushouts of generating cofibrations.
        \item Left fibration-lifting maps are retracts of transfinite compositions of pushouts of generating acyclic cofibrations.
    \end{enumerate}
\end{lemma}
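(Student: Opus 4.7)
The plan is to prove each item in sequence, observing that this is the formal dual of Lemma \ref{lemma: right model structure}, with the crucial difference that here we must invoke Proposition \ref{propmakkaipare} (Makkai--Paré) as an essential input. Items (1) and (2) are immediate formal checks: the functor $L$ preserves composition, retracts, and isomorphisms, and sends the identity to the identity. Therefore, since the corresponding classes in $\categ D$ are stable under these operations and contain isomorphisms, so are their preimages in $\categ C$. Similarly, since weak-equivalences in $\categ D$ satisfy the 2-out-of-3 and 2-out-of-6 rules, and $L$ preserves composition, the same holds in $\categ C$.

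For item (3), case (a) is essentially the definition of fibrations in $\categ C$ via Definition \ref{definition : left model structure}: a fibration has the right lifting property with respect to acyclic cofibrations. Case (b) is the dual definition of right cofibration-lifting maps. Neither requires any genuine work.

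For items (5) and (6), I would directly invoke Proposition \ref{propmakkaipare}, which is exactly the statement we need: it provides a small set $\mathcal{I}$ of generating cofibrations and a small set $\mathcal{J}$ of generating acyclic cofibrations, so that cofibrations (resp.\ acyclic cofibrations) are retracts of transfinite compositions of pushouts of maps in $\mathcal{I}$ (resp.\ $\mathcal{J}$). Note that left fibration-lifting maps in the setting of Lemma \ref{lemma:left model structure} really means acyclic cofibrations, so (6) is just the acyclic version of (5).

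The only non-trivial item is (4), which is the main obstacle. To construct the factorisations, I would apply the small object argument to the sets $\mathcal{I}$ and $\mathcal{J}$ obtained from Proposition \ref{propmakkaipare}, using that $\categ C$ is presentable so that all objects are small with respect to sufficiently large cardinals. This produces factorisations of any map $f$ as either a transfinite composite of pushouts of maps in $\mathcal{I}$ followed by a map with RLP against $\mathcal{I}$, or the analogous construction for $\mathcal{J}$. The remaining point is to identify these right-lifting classes: a map has RLP against $\mathcal{I}$ if and only if it has RLP against all cofibrations (i.e.\ it is a right cofibration-lifting map), which uses item (5) together with the fact that RLP is closed under retracts, transfinite composition, and pushouts; symmetrically for $\mathcal{J}$ and fibrations using item (6). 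The essential difficulty, unlike in the right transfer where generators are transparently given by $L$-images of generators in $\categ C$, is that here Makkai--Paré is a genuinely non-trivial input relying on local presentability of $\categ C$; once it is granted, everything else is formal.
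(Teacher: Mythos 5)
Your proposal is correct and takes essentially the same route as the paper, whose entire proof is ``a straightforward check, the small object argument and Proposition \ref{propmakkaipare}''; you simply make the three ingredients explicit. The only points you leave implicit are routine and covered by the ``straightforward check'': fibrations are closed under composition, retracts and isomorphisms because they are defined by a right lifting property (not as preimages under $L$), and the left halves of the small-object factorisations in item (4) are indeed (acyclic) cofibrations because $L$, being a left adjoint, preserves pushouts, transfinite compositions and retracts.
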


\begin{proof}
This follows from a straightforward check, the small object argument and Proposition \ref{propmakkaipare}.
\end{proof}

\begin{theorem}[Left acyclicity condition, \cite{womenintopology}]\label{thm:left transfer}
The following assertions about the sets of maps in $\categ C$ induced by the adjunction $L \dashv R$ are equivalent.
    \begin{enumerate}
        \item These maps define a combinatorial model category structure on $\categ C$.
        \item The set of right cofibration-lifting maps of $\categ C$ is equal to the set of acyclic fibrations.
        \item The set of right cofibration-lifting maps of $\categ C$ is contained in the set of acyclic fibrations of $\categ C$.
        \item The set of right cofibration-lifting maps of $\categ C$ is contained in the set of weak-equivalences of $\categ C$.
    \end{enumerate}
\end{theorem}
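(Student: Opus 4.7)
The plan is to prove this by establishing the chain of implications $(1) \Rightarrow (2) \Rightarrow (3) \Rightarrow (4) \Rightarrow (2) \Rightarrow (1)$, which mirrors the dual proof of Theorem \ref{thmrightacyclicitycondition} (the right acyclicity condition). The implications $(1) \Rightarrow (2)$ and $(2) \Rightarrow (3) \Rightarrow (4)$ are essentially formal. Indeed, if the sets of maps from Definition \ref{definition : left model structure} define a model structure, then right cofibration-lifting maps are by definition precisely the maps with the right lifting property with respect to cofibrations, which in any model category coincide with acyclic fibrations. The other two implications are trivial from the fact that an acyclic fibration is in particular a weak-equivalence.

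The core content is the implication $(4) \Rightarrow (2)$. Given a right cofibration-lifting map $f: X \longrightarrow Y$, by Lemma \ref{lemma:left model structure}(4)(b), one can factor $f$ as an acyclic cofibration $a: X \longrightarrow U$ followed by a fibration $b: U \longrightarrow Y$. Assuming (4), both $f$ and $a$ are weak-equivalences, so the 2-out-of-3 rule forces $b$ to be an acyclic fibration. Since $a$ is a cofibration and $f$ is a right cofibration-lifting map, the commutative square
$$
\begin{tikzcd}[column sep=3pc,row sep=3pc]
X \ar[r, "a"] \ar[d, "\id"']
& U \ar[d, "b"]
\\
X \ar[r, "f"']
& Y
\end{tikzcd}
$$
admits a lift $r: U \longrightarrow X$, exhibiting $f$ as a retract of $b$:
$$
\begin{tikzcd}[column sep=3pc,row sep=3pc]
X \ar[r,"a"] \ar[d, "f"']
& U \ar[d, "b"] \ar[r, "r"]
& X \ar[d, "f"]
\\
Y \ar[r, "\id"']
& Y \ar[r,"\id"']
& Y.
\end{tikzcd}
$$
Since acyclic fibrations are stable through retracts (Lemma \ref{lemma:left model structure}(1)), the map $f$ is an acyclic fibration. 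This gives the containment of right cofibration-lifting maps in acyclic fibrations; the reverse containment is tautological since acyclic cofibrations are in particular cofibrations.

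Finally, for $(2) \Rightarrow (1)$, combine Lemma \ref{lemma:left model structure} with the identification provided by (2). The axioms of a model category involve closure under composition, retracts, 2-out-of-3, the two factorisation systems, and the two lifting properties — all of which are given by Lemma \ref{lemma:left model structure}, once one knows that the factorisation of part (4)(a) is really a factorisation as cofibration followed by acyclic fibration (this requires (2)) and that the lifting property of part (3)(b) is really lifting of acyclic fibrations against cofibrations (also requires (2)). Combinatoriality follows from Proposition \ref{propmakkaipare}, which produces small generating sets for cofibrations and acyclic cofibrations. The main subtlety, as in the proof of Theorem \ref{thmrightacyclicitycondition}, is the retract argument in $(4) \Rightarrow (2)$, where one uses the weaker hypothesis (4) only to rule out non-weak-equivalences among right cofibration-lifting maps, and then extracts the full acyclic-fibration property from the existing factorisation-and-lifting data.
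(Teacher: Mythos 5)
Your chain of implications follows the paper's, and the formal implications $(1)\Rightarrow(2)\Rightarrow(3)\Rightarrow(4)$ and $(2)\Rightarrow(1)$ are handled as in the paper, but your proof of $(4)\Rightarrow(2)$ has the retract argument pointed in the wrong direction, and the genuinely non-trivial containment is missing. Your retract argument establishes that right cofibration-lifting maps are acyclic fibrations; this is the \emph{easy} half, which already follows from (4) together with the trivial remark that a map with the right lifting property against all cofibrations has it against acyclic cofibrations, hence is a fibration. The containment you dismiss as ``tautological'', namely that acyclic fibrations are right cofibration-lifting maps, is not tautological at all: the fact that acyclic cofibrations are cofibrations only gives the inclusion of right cofibration-lifting maps into fibrations, and knowing that a map has the right lifting property against acyclic cofibrations and is a weak-equivalence does not formally yield the lifting property against \emph{all} cofibrations. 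That containment is precisely the content of the theorem, and it is indispensable for $(2)\Rightarrow(1)$: the lifting axiom of a model category requires acyclic fibrations to lift on the right against all cofibrations, and the factorisation of Lemma~\ref{lemma:left model structure}~(4)(a) only becomes ``cofibration followed by acyclic fibration'' once this identification holds.

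The paper closes exactly this gap by running the retract argument the other way: given an acyclic fibration $f\colon X \to Y$, factor it by Lemma~\ref{lemma:left model structure}~(4)(a) as a cofibration $a\colon X \to U$ followed by a right cofibration-lifting map $b\colon U \to Y$; hypothesis (4) makes $b$ a weak-equivalence, so $a$ is an acyclic cofibration by 2-out-of-3; since $f$ is a fibration it has the right lifting property against $a$, the resulting lift exhibits $f$ as a retract of $b$, and right cofibration-lifting maps are closed under retracts, so $f$ is one. You should replace your retract argument by this one (your direction can then be disposed of in one line, as above). A minor cosmetic point: your lifting square is drawn with $a$ and $f$ as horizontal arrows, so the lift $r\colon U \to X$ you invoke is the anti-diagonal of the square as drawn; the correctly oriented square has $a$ as the left vertical map and $f$ as the right vertical map.
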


\begin{proof}
Clearly, (1) implies (2). Conversely, if (2) is satified, then by Lemma \ref{lemma: left model structure}, the sets of maps in $\categ C$ induced by the adjunction $L \dashv R$ are equivalent form a model category structure on $\categ C$. Therefore (1) and (2) are equivalent.

\medskip

It is clear that (2) implies (3) and that (3) implies (4). Let us assume (4) and prove (2). Let $f: U \to Y$ be an acylic fibration. Let us decompose $f: X \longrightarrow Y$ as a cofibration $a: X \longrightarrow U$ followed by a right cofibration-lifting map $b: U \longrightarrow Y$. By the 2-out-of-3 rule and since $b$ and $f$ are weak-equivalences, $a$ is an acyclic cofibration. Thus the commutative square
        $$
        \begin{tikzcd}[column sep=3pc,row sep=3pc]
            X
            \ar[r, "\id"] \ar[d, "a"']
            & X \ar[d, "f"]
            \\
            U \ar[r, "b"']
            & Y
        \end{tikzcd}
        $$
    has a lifting $p$, which yields the following retract commutative diagram
        $$
        \begin{tikzcd}[column sep=3pc,row sep=3pc]
            X
            \ar[r, "a"] \ar[d, "f"']
            & U \ar[d, "b"]
            \ar[r, "p"]
            & X \ar[d, "f"]
            \\
            Y \ar[r, "\id"]
            & Y
            \ar[r,"\id"]
            & U~.
        \end{tikzcd}
        $$
        Since the map $f$ is a retract of $b$, it is a right cofibration-lifting map.
\end{proof}

\begin{proposition}[\cite{womenintopology2}]
    Let us suppose that
    
\medskip

    \begin{enumerate}
        \item every object in $\categ C$ has a natural cofibrant replacement functor, that is, there exists an endofunctor $Q$ of $\categ C$ together with a natural transformation $c: Q \longrightarrow \id$ so that for every object $X$, $Q(X)$ is cofibrant and the map $Q(X) \longrightarrow X$ is a weak-equivalence;
        
\medskip

        \item every cofibrant object $X$ has a cylinder object, that is, the codiagonal map $X \sqcup X \longrightarrow X$ may be factored by a cofibration followed by a weak-equivalence.
    \end{enumerate}
    
\medskip

Then the sets of maps in $\categ C$ induced by the adjunction $L \dashv R$ endow $\categ C$ with a model category structure.
\end{proposition}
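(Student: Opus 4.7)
The plan is to apply Theorem~\ref{thm:left transfer}, for which it suffices to prove that every right cofibration-lifting map $f: X \to Y$ is a weak-equivalence. The argument is a direct categorical dual of the proof of Proposition~\ref{propositionacyclicitycondition}: the natural fibrant replacement and the path object of a fibrant codomain are replaced by the natural cofibrant replacement $c: Q \Rightarrow \id$ and a cylinder object of the cofibrant domain.

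The first step is to lift the weak-equivalence $c(Y): Q(Y) \to Y$ through $f$. Since $Q(Y)$ is cofibrant, the morphism $0 \rightarrowtail Q(Y)$ is a cofibration in the sense of Definition~\ref{definition : left model structure} (its image $0 \rightarrowtail L Q(Y)$ is a cofibration of $\categ D$ because $L$ preserves the initial object), so the right cofibration-lifting property of $f$ applied to the square with bottom arrow $c(Y)$ produces a morphism $p: Q(Y) \to X$ with $fp = c(Y)$. Next, I would pick a cylinder object $Q(X) \sqcup Q(X) \hookrightarrow C \xrightarrow{\simeq} Q(X)$ for $Q(X)$ and form the commutative square whose top edge is $(c(X), p Q(f)) : Q(X) \sqcup Q(X) \to X$, left edge is the cofibration into $C$, right edge is $f$, and bottom edge is the composite $C \xrightarrow{\simeq} Q(X) \xrightarrow{f c(X)} Y$. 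Commutativity is immediate from the naturality identity $f c(X) = c(Y) Q(f)$ combined with $f p = c(Y)$, so a lift $g: C \to X$ exists and satisfies $g i_1 = c(X)$ and $g i_2 = p Q(f)$, where $i_1, i_2 : Q(X) \to C$ are the two cylinder inclusions. Each $i_k$ is a weak-equivalence, since it composes with the weak-equivalence $C \xrightarrow{\simeq} Q(X)$ to the identity.

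The conclusion then drops out of the 2-out-of-3 and 2-out-of-6 axioms, exactly as in Proposition~\ref{propositionacyclicitycondition}. From $c(X) = g i_1$ being a weak-equivalence and $i_1$ being one, 2-out-of-3 gives that $g$ is a weak-equivalence; hence so is $p Q(f) = g i_2$. Combining this with the fact that $fp = c(Y)$ is a weak-equivalence and invoking the 2-out-of-6 axiom on the composable triple $Q(X) \xrightarrow{Q(f)} Q(Y) \xrightarrow{p} X \xrightarrow{f} Y$ forces each of $Q(f)$, $p$, and $f$ to be a weak-equivalence. This verifies condition (4) of Theorem~\ref{thm:left transfer} and produces the transferred model category structure.

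The only point that requires genuine thought is the legitimacy of the very first lifting, that is, that the interpretation of ``$Q(Y)$ is cofibrant'' in the hypotheses genuinely delivers a cofibration $0 \rightarrowtail Q(Y)$ in the sense used by Definition~\ref{definition : left model structure}; this is handled by the remark that $L$ preserves initial objects. Once this is in place, the rest of the proof is a purely formal diagram chase, and in fact is formally dual to that of Proposition~\ref{propositionacyclicitycondition} step by step.
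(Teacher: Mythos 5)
Your proof is correct and is exactly the argument the paper intends: the paper's proof simply says to dualize Proposition \ref{propositionacyclicitycondition} and invoke Theorem \ref{thm:left transfer}, and your write-up carries out precisely that dualization (cofibrant replacement and cylinder in place of fibrant replacement and path object, then 2-out-of-3 and 2-out-of-6). The extra care you take with the initial lifting against $0 \rightarrowtail Q(Y)$ is a fine, if routine, clarification and does not change the approach.
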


\begin{proof}
This follows from dual arguments as those used to prove Proposition \ref{propositionacyclicitycondition}, using Theorem \ref{thm:left transfer}.
\end{proof}

\vspace{4pc}

\bibliographystyle{alpha}
\bibliography{bibax}
\end{document}